\newtheorem{theorem}{Theorem}[section]
\newtheorem{corollary}[theorem]{Corollary}
\newtheorem{lemma}[theorem]{Lemma}
\newtheorem{definition}[theorem]{Definition}
\newtheorem{remark}[theorem]{Remark}
\numberwithin{equation}{section}
\newcounter{mnote}
\let\oldmarginpar\marginpar
  \renewcommand\marginpar[1]{\-\oldmarginpar[\raggedleft\footnotesize #1]%
  {\raggedright\footnotesize #1}}
\newcommand{\grad}{\nabla}
\newcommand{\reals}{\mathbb{R}}
\newcommand{\lab}{\label}
\newcommand{\floor}[1]{\lfloor{#1}\rfloor}
\definecolor{mve}{rgb}{0.7,0.35,0.15}
\definecolor{brght}{rgb}{0.825,0.2625,0.15}
\definecolor{yello}{rgb}{1,0.925,0.65}
\definecolor{bluu}{rgb}{0.65, 0.95, 1}
\definecolor{bluu2}{rgb}{0.2, 0.5, 0.8}
\newcommand{\brpt}[1]{{{\textcolor{brght}{#1}}}} % lovely shade of reddish text
\DeclareMathOperator*{\esssup}{ess\,sup}
\newenvironment{itemizeX}
{\begin{list}{\labelitemi}
 {\setlength{\leftmargin}{1.5em}
  \setlength{\topsep}{0.5em}
  \setlength{\itemsep}{0.5em}
  \setlength{\labelwidth}{50.0em}}}
 {\end{list}}
\newenvironment{enumerateX}
{\begin{list}{(\arabic{enumi})}
 {\usecounter{enumi}
  \setlength{\leftmargin}{2.5em}
  \setlength{\topsep}{0.5em}
  \setlength{\itemsep}{0.5em}
  \setlength{\labelwidth}{50.0em}}}
 {\end{list}}
\newenvironment{enumerateXALI}
{\begin{list}{(\arabic{enumi})}
 {\usecounter{enumi}
  \setlength{\leftmargin}{1em}
  \setlength{\topsep}{0.5em}
  \setlength{\itemsep}{0.5em}
  \setlength{\labelwidth}{50.0em}}}
 {\end{list}}
 \newenvironment{itemizeXALI}
{\begin{list}{\labelitemi}
 {\setlength{\leftmargin}{0.4em}
  \setlength{\topsep}{0.5em}
  \setlength{\itemsep}{0.5em}
  \setlength{\labelwidth}{50.0em}}}
 {\end{list}}
\newenvironment{itemizeXXALI}
{\begin{list}{$\circ$}
 {\setlength{\leftmargin}{0.5em}
  \setlength{\topsep}{0.5em}
  \setlength{\itemsep}{0.5em}
  \setlength{\labelwidth}{50.0em}}}
 {\end{list}}
\begin{document}

\title[Sobolev Spaces on Compact Manifolds]
      {Sobolev-Slobodeckij Spaces on Compact Manifolds, Revisited}

\author[A. Behzadan]{A. Behzadan}
\email{abehzada@ucsd.edu}

\author[M. Holst]{M. Holst}
\email{mholst@ucsd.edu}

\address{Department of Mathematics\\
         University of California San Diego\\
         La Jolla CA 92093}

\thanks{AB was supported by NSF Award~1262982.}
\thanks{MH was supported in part by
        NSF Awards~1262982, 1318480, and 1620366.}

\date{}
\keywords{Sobolev spaces, Compact manifolds, Tensor bundles,
          Differential operators}

\begin{abstract}
In this article we present a coherent rigorous overview of the main properties of Sobolev-Slobodeckij spaces of sections of vector bundles on compact manifolds; results of this type are scattered through the literature and can be difficult to find.
A special emphasis has been put on spaces with noninteger smoothness order, and a special attention has been paid to the peculiar fact that for a general nonsmooth domain $\Omega$ in $\reals^n$, $0<t<1$, and $1<p<\infty$, it is not necessarily true that $W^{1,p}(\Omega)\hookrightarrow W^{t,p}(\Omega)$.
This has dire consequences in the multiplication properties of Sobolev-Slobodeckij spaces and subsequently in the study of Sobolev spaces on manifolds. To the authors' knowledge, some of the proofs, especially those that are pertinent to the properties of Sobolev-Slobodeckij spaces of sections of general vector bundles, cannot be found in the literature in the generality appearing here.
\end{abstract}

\maketitle
%\vspace*{-1.10cm}
%\clearpage
{\footnotesize \tableofcontents }

%\clearpage

%%%%%%%%%%%%%%%%%%%%%%%%%%%%%%%%%%%%%%%%%%%%%%%%%%%%%%%%%%%%%%%%%%%%%%%%%%%%%%
\vspace*{-0.75cm}
\section{Introduction}
 \label{sec:intro}
 Suppose $s\in\reals$ and $p\in (1,\infty)$. With each nonempty open set $\Omega$ in $\reals^n$ we can associate a complete normed function space denoted by $W^{s,p}(\Omega)$ called the Sobolev-Slobodeckij space with smoothness degree $s$ and integrability degree $p$. Similarly, given a compact smooth manifold $M$ and a vector bundle $E$ over $M$, there are several ways to define the normed spaces $W^{s,p}(M)$ and more generally $W^{s,p}(E)$. The main goal of this manuscript is to review these various definitions and rigorously study the key properties of these spaces. Some of the properties that we are interested in are as follows:
\begin{itemizeX}
\item Density of smooth functions
\item Completeness, separability, reflexivity
\item Embedding properties
\item Behavior under differentiation
\item Being closed under multiplication by smooth functions
\begin{equation*}
u\in W^{s,p},\quad \textrm{$\varphi$ is smooth}
\stackrel{\brpt{?}}{\Longrightarrow}\varphi u\in W^{s,p}
\end{equation*}
\item Invariance under change of coordinates
\begin{equation*}
u\in W^{s,p},\quad \textrm{$T$ is a
diffeomorphism}\stackrel{\brpt{?}}{\Longrightarrow} u\circ T\in
W^{s,p}
\end{equation*}
\item Invariance under composition by a smooth function
\begin{equation*}
u\in W^{s,p},\quad \textrm{$F$ is
smooth}\stackrel{\brpt{?}}{\Longrightarrow} F(u)\in W^{s,p}
\end{equation*}
\end{itemizeX}
As we shall see, there are several ways to define $W^{s,p}(E)$. In particular, $\|u\|_{W^{s,p}(E)}$ can be defined using the components of the local representations of $u$ with respect to a fixed augmented total trivialization atlas $\Lambda$, or it can be defined using the notion of connection in $E$. Here are some of the questions that we have studied in this paper regarding this issue:
\begin{itemizeX}
\item Are the different characterizations that exist in the literature equivalent? If not, what is the relationship between the various characterizations of Sobolev-Slobodeckij spaces on $M$?
\item In particular, does the corresponding space depend on the chosen atlas (more precisely the chosen augmented total trivialization atlas) used in the definition?
\item Suppose $f\in W^{s,p}(M)$. Does this imply that the local representation of $f$ with respect to each chart $(U_\alpha,\varphi_\alpha)$ is in $W^{s,p}(\varphi_\alpha(U_\alpha))$? If $g$ is a metric on $M$ and $g\in W^{s,p}$, can we conclude that $g_{ij}\in W^{s,p}(\varphi_\alpha(U_\alpha))$?
\item Suppose that $P:C^\infty(M)\rightarrow C^{\infty}(M)$ is a linear differential operator. Is it possible to gain information about the mapping properties of $P$ by studying the mapping properties of its local representations with respects to charts in a given atlas? For example, suppose that the local representations of $P$ with respect to each chart $(U_\alpha,\varphi_\alpha)$ in an atlas is continuous from $W^{s,p}(\varphi_\alpha(U_\alpha))$ to $W^{\tilde{s},\tilde{p}}(\varphi_\alpha(U_\alpha))$. Is it possible to extend $P$ to a continuous linear map from $W^{s,p}(M)$ to $W^{\tilde{s},\tilde{p}}(M)$?
\end{itemizeX}
To further motivate the questions that are studied in this paper and the study of the key properties mentioned above, let us consider a concrete example. For any two sets $A$ and $B$, let $\textrm{Func}(A,B)$ denote the collection of all functions from $A$ to $B$. Consider the differential operator
\begin{equation*}
\textrm{div}_g: C^{\infty}(TM)\rightarrow \textrm{Func}(M,\reals), \qquad \textrm{div}\,X=(\textrm{tr}\circ \textrm{sharp}_g\circ \grad\circ \textrm{flat}_g) X
\end{equation*}
on a compact Riemannian manifold $(M,g)$ with $g\in W^{s,p}$. Let $\{(U_\alpha,\varphi_\alpha)\}$ be a smooth atlas for $M$. It can be shown that for each $\alpha$
\begin{equation*}
(\textrm{div}_g X)\circ
\varphi_\alpha^{-1}=\sum_{j=1}^n\frac{1}{\sqrt{\textrm{det}\,g_\alpha}}\frac{\partial}{\partial
x^j}\big[(\sqrt{\textrm{det}\,g_\alpha}) (X^j\circ
\varphi_\alpha^{-1}) \big]
\end{equation*}
where $g_\alpha(x)$ is the matrix whose $(i,j)$-entry is $(g_{ij}\circ \varphi_\alpha^{-1})(x)$.
As it will be discussed in detail in Section 10, we call $Q^\alpha:
C^{\infty}(\varphi_\alpha(U_\alpha),\reals^n)\rightarrow
\textrm{Func}(\varphi_\alpha(U_\alpha),\reals)$ defined by
\begin{equation*}
Q^\alpha(Y)=\sum_{j=1}^n\underbrace{\frac{1}{\sqrt{\textrm{det}\,g_\alpha}}\frac{\partial}{\partial
x^j}\big[(\sqrt{\textrm{det}\,g_\alpha}) (Y^j) \big]}_{\brpt{Q^\alpha_j(Y^j)}}
\end{equation*}
the \textit{local representation} of $\textrm{div}_g$  with respect to the local chart $(U_\alpha,\varphi_\alpha)$. Let's say we can prove that for each $\alpha$ and $j$, $Q^\alpha_j$ maps $W^{e,q}_{0}(\varphi_\alpha(U_\alpha))$ to $W^{e-1,q}(\varphi_\alpha(U_\alpha))$. Can we conclude that $\textrm{div}_g$ maps $W^{e,q}(TM)$ to $W^{e-1,q}(M)$? And how can we find exponents $e$ and $q$ such that
\begin{equation*}
Q^\alpha_j: W^{e,q}_{0}(\varphi_\alpha(U_\alpha)) \rightarrow W^{e-1,q}(\varphi_\alpha(U_\alpha))
\end{equation*}
is a well-defined continuous map? We will see how the properties we mentioned above play a key role in answering these questions.\\

Since $W^{0,p}(\Omega)=L^p(\Omega)$, Sobolev-Slobodeckij spaces can be viewed as a generalization of classical Lebesgue spaces. Of course, unlike Lebesgue spaces, some of the key properties of $W^{s,p}(\Omega)$ (for $s\neq 0$) depend on the geometry of the boundary of $\Omega$. Indeed, to thoroughly study the properties of $W^{s,p}(\Omega)$ one should consider the following cases independently:
 \begin{enumerate}
\item $\Omega=\reals^n$
\item $\Omega$ is an arbitrary open subset of $\reals^n$ $\begin{cases}2a) \textrm{bounded}\\2b) \textrm{unbounded}\end{cases}$
\item $\Omega$ is an open subset of $\reals^n$ with
smooth
boundary $\begin{cases}3a) \textrm{bounded}\\3b) \textrm{unbounded}\end{cases}$
\end{enumerate}
Let us mention here four facts to highlight the dependence on domain and some atypical behaviors of certain fractional Sobolev spaces. Let $s\in (0,\infty)$ and $p\in (1,\infty)$.
\begin{itemizeX}
\item  \textbf{Fact 1: }
\begin{align*}
\forall\,j \qquad\frac{\partial}{\partial x^j} : W^{s,p}(\reals^n)\rightarrow W^{s-1,p}(\reals^n)
\end{align*}
is a well-defined bounded linear operator.
\item \textbf{Fact 2: } If we further assume that $s\neq \frac{1}{p}$ and $\Omega$ has smooth boundary
then
\begin{equation*}
\forall\,j \qquad\frac{\partial}{\partial x^j} : W^{s,p}(\Omega)\rightarrow W^{s-1,p}(\Omega)
\end{equation*}
is a well-defined bounded linear operator.
\item  \textbf{Fact 3: } If $\tilde{s}\leq s$, then
\begin{align*}
W^{s,p}(\reals^n)\hookrightarrow W^{\tilde{s},p}(\reals^n)\,.
\end{align*}
${}$
\item  \textbf{Fact 4: } If $\Omega$ does NOT have Lipschitz boundary, then it is NOT necessarily true that
\begin{align*}
W^{1,p}(\Omega)\hookrightarrow W^{\tilde{s},p}(\Omega)
\end{align*}
for $0<\tilde{s}<1$.
\end{itemizeX}
Let $M$ be an $n$-dimensional compact smooth manifold and let $\{(U_\alpha,\varphi_\alpha)\}$ be a smooth atlas for $M$. As we will see, the properties of Sobolev-Slobodeckij spaces of sections of vector bundles on $M$ are closely related to the properties of spaces of locally Sobolev-Slobodeckij functions on domains in $\reals^n$. Primarily we will be interested in the properties of $W^{s,p}(\varphi_{\alpha}(U_\alpha))$ and  $W^{s,p}_{loc}(\varphi_{\alpha}(U_\alpha))$. Also when we want to patch things together consistently and move from "local" to "global", we will need to consider spaces $W^{s,p}(\varphi_{\alpha}(U_\alpha\cap U_\beta))$ and $W^{s,p}(\varphi_{\beta}(U_\alpha\cap U_\beta))$. However, as we pointed out earlier, some of the properties of $W^{s,p}(\Omega)$ depend heavily on the geometry of the boundary of $\Omega$. Considering that the intersection of two Lipschitz domains is not necessarily a Lipschitz domain, we need to consider the following question:
\begin{itemizeX}
\item Is it possible to find an atlas such that the image of each coordinate domain in the atlas (and the image of the intersection of any two coordinate domains in the atlas) under the corresponding coordinate map is either the entire $\reals^n$ or a nonempty bounded set with smooth boundary? And if we define the Sobolev spaces using such an atlas, will the results be independent of the chosen atlas?
\end{itemizeX}
This manuscript is an attempt to collect some results concerning these questions and certain other fundamental questions similar to the ones stated above, and we pay special attention to spaces with noninteger smoothness order and to general sections of vector bundles.
There are a number of standard sources for properties of integer order
Sobolev spaces of functions and related elliptic operators on domains
in $\mathbb{R}^n$ (cf.~\cite{32, Evans2010, 18}),
real order Sobolev spaces of functions (\cite{Gris85,Trie83,37,12,33}),
Sobolev spaces of functions on manifolds (\cite{Trie92, Hebey96, Aubin1998,Hebey2000}),
and Sobolev spaces of sections of vector bundles on manifolds
(\cite{Palais65, Eichhorn2007}).
However, most of these works focus on spaces of functions rather than
general sections, and in many cases the focus is on integer order spaces.
This paper should be viewed as a part of our efforts to build a more complete foundation for the study and use of Sobolev-Slobodeckij spaces on manifolds through a sequence of related manuscripts \cite{holstbehzadan2015b, holstbehzadan2017c, holstbehzadan2018c, holstbehzadan2018d}.
%Interested readers can find other results in this direction in 

%However, when trying to assemble the tools one needs to study our
%main question here, namely the impact of a rough metric on various
%geometric differential operators, one needs to work with real order
%Sobolev spaces of sections of vector bundles on Riemannian manifolds;
%the intersection of these topics among the books listed above thins
%out very quickly.

{\bf\em Outline of Paper.}
In Section 2 we summarize some of
the basic notations and conventions used throughout the paper. In Section 3 we will review a number of basic
constructions in linear algebra that are essential in the study
of function spaces of generalized sections of vector bundles. In
Section 4 we will recall some useful tools from analysis and
topology. In particular, a concise overview of some of the main
properties of topological vector spaces is presented in this
 section. Section 5 deals with reviewing some results we need from
differential geometry. The main purpose of this section is to
set the notations, definitions, and conventions straight. This
section also includes some less well known facts about topics
such as higher order covariant derivatives in vector bundles. In
Section 6 we collect the results that we need from the theory of
generalized functions on Euclidean spaces and vector bundles.
Section 7 is concerned with various definitions
and properties of Sobolev spaces that are needed for developing a
coherent theory of such spaces on the vector bundles. In
Section 8 and Section 9 we introduce Lebesgue spaces and
Sobolev-Slobodeckij spaces of sections of vector bundles and we present a
rigorous account of their various properties. Finally in Section 10 we study the continuity of certain differential operators between Sobolev spaces of sections of vector bundles. Although the purpose of sections 3 through 7 is to give a quick overview of the prerequisites that are needed to understand the proofs of the results in later sections  and set the notations straight, as it was pointed out earlier, several theorems and proofs that appear in these sections cannot be found elsewhere in the generality that are stated here.

%\clearpage

\section{Notation and Conventions}
\label{sec:notation}

Throughout this paper, $\reals$ denotes the set of real numbers, $\mathbb{N}$ denotes the set of positive integers, and $\mathbb{N}_0$ denotes the set of nonnegative integers. For any nonnegative real number $s$, the integer part of $s$ is denoted by $\floor{s}$. The letter $n$ is a positive integer and stands for the dimension of the space.\\

 $\Omega$ is a nonempty open set in $\reals^n$. The collection of all compact subsets of $\Omega$ will be denoted by $\mathcal{K}(\Omega)$. Lipschitz domain in $\reals^n$ refers to a nonempty bounded open set in $\reals^n$ with Lipschitz continuous boundary.\\

Each element of $\mathbb{N}_0^n$ is called a multi-index. For a multi-index $\alpha=(\alpha_1,\cdots,\alpha_n)\in \mathbb{N}_0^n$, we let
\begin{itemize}
\item $|\alpha|:=\alpha_1+\cdots+\alpha_n$
\item $\alpha!:=\alpha_1!\cdots\alpha_n!$
\end{itemize}
If $\alpha,\beta\in \mathbb{N}_0^n$, we say $\beta\leq \alpha$ provided that $\beta_i\leq \alpha_i$ for all $1\leq i\leq n$. If $\beta\leq \alpha$, we let
\begin{equation*}
{\alpha\choose \beta}:=\frac{\alpha!}{\beta!(\alpha-\beta)!}={\alpha_1\choose \beta_1}\cdots {\alpha_1\choose \beta_1}\,.
\end{equation*}
Suppose that $\alpha\in \mathbb{N}_0^n$. For sufficiently smooth functions $u:\Omega\rightarrow \reals$ (or for any distribution $u$) we define the $\alpha$th order partial derivative of $u$ as follows:
\begin{equation*}
\partial^\alpha u:=\frac{\partial^{|\alpha|}u}{\partial x_1^{\alpha_1}\cdots \partial x_n^{\alpha_n}}\,.
\end{equation*}

We use the notation $A\preceq B$ to mean $A\leq cB$, where $c$ is a positive constant that does not depend on the non-fixed parameters appearing in $A$ and $B$. We write $A\simeq B$ if $A\preceq B$ and $B\preceq A$.\\

For any nonempty set $X$ and $r\in \mathbb{N}$, $X^{\times r}$ stands for $\underbrace{X\times \cdots \times X}_{\textrm{$r$ times}}$.

For any two nonempty sets $X$ and $Y$, $\textrm{Func} (X,Y)$
 denotes the collection of all functions from $X$ to $Y$.

We write $L(X,Y)$ for the space of all \emph{continuous} linear
maps from the normed space $X$ to the normed space $Y$. $L(X,\reals)$ is called the (topological) dual of $X$ and is denoted by $X^*$.  We use the notation $X\hookrightarrow Y$ to mean $X \subseteq Y$ and the
inclusion map is continuous.

$\textrm{GL}(n,\reals)$ is the set of all $n\times n$ invertible
matrices with real entries. Note that $\textrm{GL}(n,\reals)$ can
be identified with an open subset of $\reals^{n^2}$ and so it can
be viewed as a smooth manifold (more precisely,
$\textrm{GL}(n,\reals)$ is a Lie group).

Throughout this manuscript, all manifolds are assumed to be
smooth, Hausdorff, and second-countable.

Let $M$ be an $n$-dimensional compact smooth manifold. The
tangent space of the manifold $M$ at point $p\in M$ is denoted by
$T_pM$, and the cotangent space by $T^{*}_pM$. If
$(U,\varphi=(x^i))$ is a local coordinate chart and $p\in U$, we
denote the corresponding coordinate basis for $T_pM$ by
$\partial_i|_p$ while $\frac{\partial}{\partial x^i}|_{x}$
denotes the basis for the tangent space to $\reals^n$ at $x=\varphi(p)\in
\reals^n$; that is
\begin{equation*}
\varphi_{*}\partial_i=\frac{\partial}{\partial x^i}
\end{equation*}
Note that for any smooth function $f:M\rightarrow \reals$ we have
\begin{equation*}
(\partial_i f)\circ \varphi^{-1}=\frac{\partial}{\partial
x^i}(f\circ \varphi^{-1})
\end{equation*}
The vector space of all $k$-covariant, $l$-contravariant tensors
on $T_pM$ is denoted by $T^k_l(T_pM)$. So each element of
$T^k_l(T_pM)$ is a multilinear map of the form
\begin{equation*}
F:\underbrace{T^{*}_pM\times \cdots \times T^{*}_pM}_{\textrm{$l$
copies }}\times \underbrace{T_pM\times \cdots \times
T_pM}_{\textrm{$k$ copies}}\rightarrow \reals
\end{equation*}

We are primarily interested in the vector bundle of $k\choose l$-tensors
on $M$ whose total space is
\begin{equation*}
T^k_l(M)=\bigsqcup_{p\in M} T^k_l(T_pM)
\end{equation*}
A section of this bundle is called a $k\choose l$-tensor field.
We set $T^k M:=T^k_0(M)$. $TM$ denotes the tangent bundle of $M$
and $T^{*}M$ is the cotangent bundle of $M$. We set $\tau^k_l(M)=C^\infty(M,T^k_l(M))$ and $\chi(M)=C^\infty(M,TM)$.

A symmetric positive definite section of $T^2M$ is called a
 Riemannian metric on $M$. If $M$ is equipped with a Riemannian
metric $g$, the combination $(M,g)$ will be referred to as a
 Riemannian manifold. If there is no possibility of confusion, we
may write $\langle X,Y\rangle$ instead of $g(X,Y)$. The norm induced by $g$ on each tangent space will be denoted by $\|.\|_g$. We say that
$g$ is smooth (or the Riemannian manifold is smooth) if $g\in
C^{\infty}(M,T^2M)$. $d$ denotes the exterior derivative and $\textrm{grad}: C^{\infty}(M)\rightarrow C^\infty (M,TM)$ denotes the gradient operator which is defined by $g(\textrm{grad}\,f,X)=d\,f(X)$ for all $f\in C^{\infty}(M)$ and $X\in C^\infty(M,TM)$.

Given a metric $g$ on
$M$, one can define the musical isomorphisms as follows:
\begin{align*}
\textrm{flat}_g: T_pM&\rightarrow T_p^*M\\
          X&\mapsto X^\flat:=g(X,\,\cdot\,)\,,\\
\textrm{sharp}_g: T^*_pM&\rightarrow T_pM\\
        \psi&\mapsto \psi^\sharp:=\textrm{flat}_g^{-1}(\psi)\,.
\end{align*} Using
$\textrm{sharp}_g$ we can define the $0\choose 2$-tensor field
$g^{-1}$ (which is called the \textbf{inverse metric tensor}) as follows
\begin{equation*}
g^{-1}(\psi_1,\psi_2):=g(\textrm{sharp}_g(\psi_1),\textrm{sharp}_g(\psi_2))\,.
\end{equation*}
 Let $\{E_i\}$ be a local
frame on an open subset $U\subset M$ and $\{\eta^i\}$ be the
corresponding dual coframe. So we can write $X=X^iE_i$ and
$\psi=\psi_i\eta^i$. It is standard practice to denote the
$i^{{\rm th}}$ component of $\textrm{flat}_g X$ by $X_i$ and the
$i^{{\rm th}}$ component of $\textrm{sharp}_g(\psi)$ by $\psi^i$:
\begin{equation*}
\textrm{flat}_g X=X_i \eta^i\,,\quad \textrm{sharp}_g \psi=\psi^i
E_i\,.
\end{equation*}
It is easy to show that
\begin{equation*}
X_i=g_{ij}X^j\,,\quad \psi^i=g^{ij}\psi_j\,,
\end{equation*}
where $g_{ij}=g(E_i,E_j)$ and $g^{ij}=g^{-1}(\eta^i,\eta^j)$. It
is said that $\textrm{flat}_g X$ is obtained from $X$ by lowering
an index and $\textrm{sharp}_g \psi$ is obtained from $\psi$ by
raising an index.

\section{Review of Some Results From Linear Algebra}
   \label{app:spacesuseful}

   In this section we summarize a collection of definitions and results from linear algebra that play an important role in our study of function spaces and differential operators on manifolds.

   There are several ways to construct new vector spaces from old ones: subspaces, products, direct sums, quotients, etc. The ones that are particularly important for the study of Sobolev spaces of sections of vector bundles
   are the vector space of linear maps between two given vector spaces, the tensor product of vector spaces, and the vector space of all densities on a given vector space which we briefly review here in order to set the notations straight.

\begin{itemizeXALI}
\item Let $V$ and $W$ be two vector spaces. The collection of all linear maps from $V$ to $W$ is a new vector space which we denote by $\textrm{Hom}(V,W)$. In particular, $\textrm{Hom}(V,\reals)$ is the (algebraic) dual of $V$. If $V$ and $W$ are finite-dimensional, then $\textrm{Hom}(V,W)$ is a vector space whose dimension is equal to
the product of dimensions of $V$ and $W$. Indeed, if we choose a
basis for $V$ and a basis for $W$, then $\textrm{Hom}(V,W)$ is
isomorphic with the space of matrices with $\textrm{dim}\, W$
rows and $\textrm{dim}\, V$ columns.
\item  Let
$U$ and $V$ be two vector spaces. Roughly speaking, the
tensor product of $U$ and $V$ (denoted by $U\otimes V$) is the
unique vector space (up to isomorphism of vector spaces) such that for any vector space $W$, $\textrm{Hom}(U\otimes V, W)$ is isomorphic to the collection of bilinear maps from $U\times V$ to $W$.
Informally, $U\otimes V$ consists of finite linear combinations of
symbols $u\otimes v$, where $u\in U$ and $v\in V$. It is assumed
that these symbols satisfy the following identities:
\begin{align*}
&(u_1+u_2)\otimes v-u_1\otimes v-u_2\otimes v=0\\
& u\otimes (v_1+v_2)-u\otimes v_1-u\otimes v_2=0\\
& \alpha (u\otimes v)-(\alpha u)\otimes v=0\\
& \alpha(u\otimes v)-u\otimes (\alpha v)=0
\end{align*}
for all $u, u_1, u_2\in U$, $v, v_1,v_2\in V$ and $\alpha \in
\reals$.These identities simply say that the map
\begin{equation*}
\otimes:U\times V\rightarrow U\otimes V,\quad (u,v)\mapsto
u\otimes v
\end{equation*}
is a bilinear map. The image of this map spans $U\otimes V$.
\begin{definition}\lab{winter1}
Let $U$ and $V$ be two vector spaces. Tensor product is a vector
space $U\otimes V$ together with a bilinear map $\otimes:U\times
V\rightarrow U\otimes V,\,\, (u,v)\mapsto u\otimes v$ such that
given any vector space $W$ and any \textbf{bilinear map} $b:U\times V\rightarrow W$, there
is a unique \textbf{linear map} $\bar{b}: U\otimes V\rightarrow W$
with $\bar{b}(u\otimes v)=b(u,v)$. That is the following diagram
commutes:
\begin{center}
\begin{tikzcd}
U\otimes V\arrow[dotted]{rd}{\bar{b}}\\
U\times V \arrow{r}{b} \arrow{u}{\otimes} & W
\end{tikzcd}
\end{center}
\end{definition}
For us, the most useful property of the tensor product of finite dimensional vector spaces is the following property:
\begin{equation*}
\textrm{Hom}(V,W)\cong V^*\otimes W
\end{equation*}
Indeed, the following map is an isomorphism of vector spaces:
\begin{equation*}
F:V^*\otimes W\rightarrow \textrm{Hom}(V,W),\quad
\underbrace{F(v^*\otimes w)}_{\textrm{an element of
$\textrm{Hom}(V,W)$} }(v)=\underbrace{[v^*(v)]}_{\textrm{a real
number}}w
\end{equation*}
It is useful to obtain an expression for the inverse of $F$ too. That is, given $T\in \textrm{Hom}(V,W)$, we want to find an expression for the corresponding element of $V^*\otimes W$. To
this end, let $\{e_i\}_{1\leq i \leq n}$ be a basis for $V$ and
$\{e^i\}_{1\leq i \leq n}$ denote the corresponding dual basis. Let $\{s_a\}_{1\leq
a\leq r}$ be a basis for $W$. Then $\{e^i\otimes s_b\}$ is a basis
for $V^*\otimes W$. Suppose $\sum_{i,a}R^a_i e^i\otimes s_a$ is
the element of $V^*\otimes W$ that corresponds to $T$. We have
\begin{align*}
F(\sum_{i,a}R^a_i e^i\otimes s_a)=T&\Longrightarrow \forall\, u\in
V\quad \sum_{i,a}R^a_iF[e^i\otimes s_a](u)=T(u)\\
&\Longrightarrow
\forall\, u\in V\quad \sum_{i,a}R^a_ie^i(u)s_a=T(u)
\end{align*}
In particular, for all $1\leq j\leq n$
\begin{equation*}
T(e_j)=\sum_{i,a}R^a_i
\underbrace{e^i(e_j)}_{\delta^i_j}s_a=\sum_a R^a_j s_a
\end{equation*}
That is, $R^a_i$ is the entry in the $a^{th}$ row and $i^{th}$
column of the matrix of the linear transformation $T$.
\item Let $V$ be an $n$-dimensional vector space. A density on $V$ is a function $\mu:\underbrace{ V\times \cdots \times V}_{\textrm{$n$ copies}}\rightarrow \reals$ with the property that
\begin{equation*}
\mu(Tv_1,\cdots,Tv_n)=|\textrm{det} T|\mu(v_1,\cdots,v_n)
\end{equation*}
for all $T\in \textrm{Hom}(V,V)$.
\end{itemizeXALI}
We denote the collection of all densities on $V$ by $\mathcal{D}(V)$. It can be shown that $\mathcal{D}(V)$ is a
 one dimensional vector space under the obvious vector space operations. Indeed, if  $(e_1,\cdots,e_n)$ is a basis for $V$, then each element $\mu\in \mathcal{D}(V)$ is uniquely determined by its value at $(e_1,\cdots,e_n)$ because for any $(v_1,\cdots,v_n)\in V^{\times n}$, we have $\mu(v_1,\cdots,v_n)=|\textrm{det} T|\mu(e_1,\cdots,e_n)$ where $T:V\rightarrow V$ is the linear transformation defined by $T(e_i)=v_i$ for all $1\leq i\leq n$. Thus
 \begin{equation*}
 F: \mathcal{D}(V)\rightarrow \reals,\qquad F(\mu)=\mu(e_1,\cdots,e_n)
 \end{equation*}
 will be an isomorphism of vector spaces.

 Moreover, if $\omega\in \Lambda^n (V)$ where $\Lambda^n (V)$ is the collection of all alternating covariant $n$-tensors, then $|\omega|$ belongs to $\mathcal{D}(V)$. Thus if $\omega$ is any nonzero element of $\Lambda^n (V) $, then $\{|\omega|\}$ will be a basis for $\mathcal{D}(V)$ (\cite{Lee3}, Page 428).

%\begin{theorem}[Riesz Representation Theorem for Finite Inner Product Spaces]\lab{thmapp1}
%Let $(V,\langle .,.\rangle)$ be a finite dimensional (real) inner
%product space and let $L:V\rightarrow \reals$ be a linear
%function. Then there exits $z\in V$ such that
%\begin{equation*}
%\forall\, y\in V\qquad L(y)=\langle z,y\rangle
%\end{equation*}
%\end{theorem}

\section{Review of Some Results From Analysis and Topology}
\subsection{Euclidean Space}

Let $\Omega$ be a nonempty open set in $\reals^n$ and $m\in
\mathbb{N}_0$. Here is a list of several useful function spaces on
$\Omega$:
\begin{align*}
& C(\Omega)=\{f:\Omega\rightarrow \reals: \textrm{$f$ is
continuous}\}\\
& C^m(\Omega)=\{f: \Omega\rightarrow \reals:
\textrm{$\forall\,|\alpha|\leq m\quad \partial^\alpha f \in
C(\Omega)$}\}\qquad (C^0(\Omega)=C(\Omega))\\
& BC(\Omega)=\{f:\Omega\rightarrow \reals: \textrm{$f$ is
continuous and bounded on $\Omega$}\}\\
& BC^m(\Omega)=\{f\in C^m(\Omega): \textrm{$\forall\,|\alpha|\leq
m\quad \partial^\alpha f$ is bounded on $\Omega$}\}\\
& BC(\bar{\Omega})=\{f:\Omega\rightarrow \reals: \textrm{$f\in
BC(\Omega)$ and $f$ is uniformly continuous on
$\Omega$}\}\\
& BC^m(\bar{\Omega})=\{f:\Omega\rightarrow \reals: \textrm{$f\in
BC^m(\Omega), \forall\,|\alpha|\leq m \quad \partial^\alpha f$ is
uniformly continuous on $\Omega$ }\}\\
&C^\infty(\Omega)=\bigcap_{m\in \mathbb{N}_0}C^m(\Omega),\quad
BC^\infty(\Omega)=\bigcap_{m\in \mathbb{N}_0}BC^m(\Omega), \quad
BC^\infty(\bar{\Omega})=\bigcap_{m\in
\mathbb{N}_0}BC^m(\bar{\Omega})
\end{align*}
\begin{remark}\cite{32}\lab{winter4}
If $g:\Omega\rightarrow \reals$ is in $BC(\bar{\Omega})$, then it
possesses a unique, bounded, continuous extension to the closure
$\bar{\Omega}$ of $\Omega$.
\end{remark}
\noindent \textbf{Notation :} Let $\Omega$ be a nonempty open set in $\reals^n$. The collection of all compact sets in $\Omega$ is denoted by $\mathcal{K}(\Omega)$. If $f:\Omega \rightarrow \reals$ is a function, the support of $f$ is denoted by $\textrm{supp}\,f$. Notice that, in some references $\textrm{supp}\,f$ is defined as the closure of $\{x\in \Omega: f(x)\neq 0\}$ in $\Omega$, while in certain other references it is defined as the closure of $\{x\in \Omega: f(x)\neq 0\}$ in $\reals^n$. Of course, if we are concerned with functions whose support is inside an element of $\mathcal{K}(\Omega)$, then the two definitions agree. For the sake of definiteness, in this manuscript we always use the former interpretation of support. Also support of a distribution will be discussed in Section $6$.
\begin{remark}\lab{winter5}
If $\mathcal{F}(\Omega)$ is any function space on $\Omega$ and
$K\in \mathcal{K}(\Omega)$, then $\mathcal{F}_K(\Omega)$ denotes
the collection of elements in $\mathcal{F}(\Omega)$ whose support
is inside $K$. Also
\begin{equation*}
\mathcal{F}_c(\Omega)=\mathcal{F}_{comp}(\Omega)=\bigcup_{K\in
\mathcal{K}(\Omega)}\mathcal{F}_K(\Omega)
\end{equation*}
\end{remark}
Let $0<\lambda\leq 1$. A function $F: \Omega\subseteq \reals^n
\rightarrow \reals^k$ is called \textit{$\lambda$-Holder continuous} if
there exists a constant $L$ such that
\begin{equation*}
|F(x)-F(y)|\leq L|x-y|^\lambda\quad \forall\,x,y\in \Omega
\end{equation*}
Clearly a $\lambda$-Holder continuous function on $\Omega$ is
uniformly continuous on $\Omega$. $1$-Holder continuous functions
are also called \textit{Lipschitz continuous} functions or simply Lipschitz
functions. We define
\begin{align*}
BC^{m,\lambda}(\Omega)&=\{f:\Omega\rightarrow \reals:
\textrm{$\forall\,|\alpha|\leq m\quad
\partial^\alpha f$ is $\lambda$-Holder continuous and bounded}\}\\
&=\{f\in BC^m(\Omega): \textrm{$\forall\,|\alpha|\leq m\quad
\partial^\alpha f$ is $\lambda$-Holder continuous}\}\\
&=\{f\in BC^m(\bar{\Omega}): \textrm{$\forall\,|\alpha|\leq m\quad
\partial^\alpha f$ is $\lambda$-Holder continuous}\}
\end{align*}
and
\begin{equation*}
BC^{\infty,\lambda}(\Omega):=\bigcap_{m\in \mathbb{N}_0}BC^{m,\lambda}(\Omega)
\end{equation*}
\begin{remark}\lab{fallremcomplip1}
Let $F:\Omega\subseteq \reals^n\rightarrow \reals^k$
($F=(F^1,\cdots,F^k)$). Then
\begin{align*}
\textrm{$F$ is Lipschitz}\Longleftrightarrow \forall\,1\leq i\leq
k\quad \textrm{$F^i$ is Lipschitz}
\end{align*}
Indeed, for each $i$
\begin{equation*}
|F^i(x)-F^i(y)|\leq
\sqrt{\sum_{j=1}^k|F^j(x)-F^j(y)|^2}=|F(x)-F(y)|\leq L|x-y|
\end{equation*}
which shows that if $F$ is Lipschitz so will be its components.
Also if for each $i$, there exists $L_i$ such that
\begin{equation*}
|F^i(x)-F^i(y)|\leq L_i|x-y|
\end{equation*}
Then
\begin{equation*}
\sum_{j=1}^k|F^j(x)-F^j(y)|^2\leq n L^2|x-y|^2
\end{equation*}
where $L=\max{\{L_1,\cdots,L_k\}}$. This proves that if each
component of $F$ is Lipschitz so is $F$ itself.
\end{remark}

\begin{theorem}\cite{9}
Let $\Omega$ be a nonempty open set in $\reals^n$ and let $K\in \mathcal{K}(\Omega)$. There is a function $\psi\in C_c^\infty(\Omega)$ taking values in $[0,1]$ such that $\psi=1$ on a neighborhood of $K$.
\end{theorem}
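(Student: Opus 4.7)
The plan is to construct $\psi$ by mollifying the characteristic function of a suitable thickening of $K$. The key geometric ingredient is a positive number $\epsilon$ such that the closed $\epsilon$-neighborhood of $K$ lies well inside $\Omega$; convolution with a standard mollifier supported in a ball of radius $\delta \ll \epsilon$ will then smooth things out without pushing the support outside $\Omega$.

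First I would produce such an $\epsilon$. Since $K \subset \Omega$ with $\Omega$ open and $K$ compact, for each $x \in K$ there is $r_x > 0$ with $\overline{B(x,r_x)} \subset \Omega$. Extracting a finite subcover of $K$ by balls $B(x_i, r_{x_i}/2)$ and setting $\epsilon = \tfrac{1}{4}\min_i r_{x_i}$, a standard argument shows that the closed set
\begin{equation*}
K_{2\epsilon} := \{ x \in \reals^n : \operatorname{dist}(x, K) \leq 2\epsilon\}
\end{equation*}
is a compact subset of $\Omega$. (If $\Omega = \reals^n$ any $\epsilon > 0$ works.)

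Next I would fix a standard mollifier: pick $\eta \in C_c^\infty(\reals^n)$ with $\eta \geq 0$, $\operatorname{supp} \eta \subset \overline{B(0,1)}$, and $\int \eta = 1$, and let $\eta_\delta(x) = \delta^{-n}\eta(x/\delta)$ for $0 < \delta < \epsilon$. Let $\chi$ be the characteristic function of $K_\epsilon = \{x : \operatorname{dist}(x,K) \leq \epsilon\}$, viewed as an element of $L^1(\reals^n)$, and define
\begin{equation*}
\psi(x) := (\chi * \eta_\delta)(x) = \int_{\reals^n} \chi(y)\,\eta_\delta(x-y)\,dy.
\end{equation*}
Standard properties of convolution give $\psi \in C^\infty(\reals^n)$ with $0 \leq \psi \leq 1$ (since $\chi$ takes values in $[0,1]$ and $\eta_\delta$ is a probability density).

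The remaining point is to check the support and the value-one properties, which is the only step requiring a small amount of care. If $x \in K$, then $B(x,\delta) \subset K_\epsilon$, so $\eta_\delta(x-y)$ is supported where $\chi(y) = 1$ and hence $\psi(x) = \int \eta_\delta = 1$; by the same argument $\psi \equiv 1$ on the open neighborhood $\{x : \operatorname{dist}(x,K) < \epsilon - \delta\}$ of $K$. Conversely, if $\operatorname{dist}(x,K) > \epsilon + \delta$ then $B(x,\delta)$ misses $K_\epsilon$ and $\psi(x) = 0$, so $\operatorname{supp} \psi \subset K_{\epsilon+\delta} \subset K_{2\epsilon} \subset \Omega$, which is compact. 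Thus $\psi \in C_c^\infty(\Omega)$ with all the required properties. The only genuine obstacle is verifying that $K_{2\epsilon}$ lies in $\Omega$, which is a standard consequence of compactness of $K$; the rest is routine.
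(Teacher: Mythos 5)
Your proof is correct, and it is the standard mollification argument that the cited reference \cite{9} (Grubb) also uses: thicken $K$ slightly, convolve the indicator of the thickened set with a mollifier at a scale small relative to the thickening, and verify the value and support properties by elementary distance estimates. The paper gives no proof of its own (it simply cites \cite{9}), so there is nothing more to compare; your argument is complete and the support/value bookkeeping is carried out correctly.
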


\begin{theorem}[Exhaustion by Compact Sets]\cite{9}\lab{winter2}
Let $\Omega$ be a nonempty open subset of $\reals^n$. There
exists a sequence of compact subsets $(K_j)_{j\in\mathbb{N}}$
such that $\cup_{j\in \mathbb{N}} \mathring{K}_j=\Omega$ and
\begin{equation*}
K_1\subseteq \mathring{K}_2\subseteq K_2\subseteq \cdots\subseteq
\mathring{K}_j\subseteq K_j\subseteq \cdots
\end{equation*}
Moreover, as a direct consequence, if $K$ is any compact subset
of the open set $\Omega$, then there exists an open set $V$ such
that $K\subseteq V\subseteq \bar{V}\subseteq \Omega$.
\end{theorem}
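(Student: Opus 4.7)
The plan is to construct the sets $K_j$ explicitly and then verify the four required properties (compactness, closed nesting, interior nesting, and union property), deducing the consequence by a standard compactness argument.

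First I would define, for each $j \in \mathbb{N}$,
\begin{equation*}
K_j := \{x \in \reals^n : |x| \leq j\} \cap \{x \in \Omega : d(x, \reals^n \setminus \Omega) \geq 1/j\},
\end{equation*}
with the convention that $d(x,\emptyset) = +\infty$ so that the second factor is all of $\reals^n$ when $\Omega = \reals^n$. Compactness of $K_j$ follows because the first set is the closed ball $\overline{B(0,j)}$, the second set is closed in $\reals^n$ (since $x \mapsto d(x, \reals^n \setminus \Omega)$ is $1$-Lipschitz, hence continuous), and the intersection is bounded. Moreover $K_j \subseteq \Omega$, since any $x$ in the second factor satisfies $d(x, \reals^n \setminus \Omega) > 0$.

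Next I would verify the key nesting $K_j \subseteq \mathring{K}_{j+1}$. For $x \in K_j$ we have $|x| \leq j < j+1$ and $d(x, \reals^n \setminus \Omega) \geq 1/j > 1/(j+1)$; both inequalities are strict, so by continuity of $|\cdot|$ and of $d(\cdot, \reals^n \setminus \Omega)$ there is an open ball around $x$ entirely in $K_{j+1}$, giving $x \in \mathring{K}_{j+1}$. The union property $\bigcup_{j} \mathring{K}_j = \Omega$ follows because, given $x \in \Omega$, the numbers $|x|$ and $1/d(x, \reals^n \setminus \Omega)$ are finite, so for $j$ sufficiently large both strict inequalities place $x$ inside $\mathring{K}_{j+1}$; the reverse inclusion $\bigcup_j \mathring{K}_j \subseteq \Omega$ is immediate from $K_j \subseteq \Omega$.

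For the consequence, let $K$ be a compact subset of $\Omega$. The family $\{\mathring{K}_j\}_{j \in \mathbb{N}}$ is an open cover of $K$ by the union property, so by compactness there is a finite subcover, and the nesting $\mathring{K}_{j} \subseteq \mathring{K}_{j+1}$ implies that a single $\mathring{K}_{j_0}$ already covers $K$. Taking $V := \mathring{K}_{j_0}$ gives $K \subseteq V \subseteq \bar{V} \subseteq K_{j_0} \subseteq \Omega$, as required. The main obstacle I anticipate is merely bookkeeping rather than a deep idea: one must use strict inequalities in the definition so that the interior containment $K_j \subseteq \mathring{K}_{j+1}$ actually holds, and one must handle the degenerate case $\Omega = \reals^n$ either by the convention $d(x,\emptyset) = +\infty$ or by treating it separately with $K_j = \overline{B(0,j)}$.
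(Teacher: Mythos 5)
Your proof is correct. The paper cites this result from Grubb's book without giving its own argument, so there is nothing internal to compare against; the construction you use, $K_j = \overline{B(0,j)} \cap \{x : d(x, \reals^n \setminus \Omega) \geq 1/j\}$, is the standard one, and you carry out all the verifications correctly, including the strict-inequality argument for $K_j \subseteq \mathring{K}_{j+1}$, the handling of the degenerate case $\Omega = \reals^n$, and the deduction of the consequence via a finite subcover and the fact that $\bar{V} = \overline{\mathring{K}_{j_0}} \subseteq K_{j_0}$ because $K_{j_0}$ is closed.
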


\begin{theorem}\cite{9}\lab{winter3}
Let $\Omega$ be a nonempty open subset of $\reals^n$. Let
$\{K_j\}_{j\in\mathbb{N}}$ be an exhaustion of $\Omega$ by compact
sets. Define
\begin{equation*}
V_0=\mathring{K}_4,\qquad \forall\, j\in \mathbb{N}\quad
V_j=\mathring{K}_{j+4}\setminus K_j
\end{equation*}
Then
\begin{enumerateXALI}
\item Each $V_j$ is an open bounded set and $\Omega=\cup_j V_j$.
\item The cover $\{V_j\}_{j\in\mathbb{N}_0}$ is \textbf{locally finite} in
$\Omega$, that is, each compact subset of $\Omega$ has nonempty
intersection with only a finite number of the $V_j$'s.
\item There is a family of functions $\psi_j\in
C_c^{\infty}(\Omega)$ taking values in $[0,1]$ such that
$\textrm{supp}\,\psi_j\subseteq V_j$ and
\begin{equation*}
\sum_{j\in \mathbb{N}_0}\psi_j(x)=1\qquad \textrm{for all $x\in
\Omega$}
\end{equation*}
\end{enumerateXALI}
\end{theorem}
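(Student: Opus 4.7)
The plan is to verify the three assertions in order, with the partition of unity in (3) as the main substance. For (1), openness of each $V_j$ is immediate: $V_0=\mathring{K}_4$ is open by definition, and for $j\geq 1$, $V_j=\mathring{K}_{j+4}\cap(\reals^n\setminus K_j)$ is the intersection of two open sets (since $K_j$ is compact, hence closed). Boundedness follows from $V_j\subseteq K_{j+4}$. For the covering property, I would fix $x\in\Omega$ and let $j_0$ be the smallest positive integer with $x\in K_{j_0}$. If $j_0\leq 2$, then $x\in K_{j_0}\subseteq K_2\subseteq \mathring{K}_3\subseteq \mathring{K}_4=V_0$. If $j_0\geq 2$, then $x\notin K_{j_0-1}$ while $x\in K_{j_0}\subseteq \mathring{K}_{j_0+1}\subseteq \mathring{K}_{(j_0-1)+4}$, so $x\in V_{j_0-1}$.

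For (2), given $K\in\mathcal{K}(\Omega)$, the cover $\{\mathring{K}_j\}$ has a finite subcover of $K$, so there exists $N$ with $K\subseteq K_N$. Since the $K_j$ are nested, for every $j\geq N$ we have $V_j\subseteq\Omega\setminus K_j\subseteq\Omega\setminus K_N\subseteq\Omega\setminus K$, so only $V_0,\dots,V_{N-1}$ can meet $K$.

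The heart of the argument is (3), which requires a standard two-step construction. First I would construct a \emph{shrinking} $\{W_j\}$ with $W_j$ open, $\overline{W_j}\subset V_j$ compact, and $\bigcup_j W_j=\Omega$. A natural choice is $W_0=\mathring{K}_3$ and $W_j=\mathring{K}_{j+3}\setminus K_{j+1}$ for $j\geq 1$. The closure of such a $W_j$ is contained in $K_{j+3}\cap(\Omega\setminus \mathring{K}_{j+1})$; combined with $K_{j+3}\subset\mathring{K}_{j+4}$ and $K_j\subset\mathring{K}_{j+1}$, this yields $\overline{W_j}\subset V_j$. The covering $\bigcup_j W_j=\Omega$ is proved exactly as in (1), by locating the smallest $j_0$ with $x\in K_{j_0}$ and showing $x\in W_{\max(0,j_0-2)}$. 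Next, I would invoke the cutoff-function theorem stated earlier in the excerpt to produce $\tilde\psi_j\in C_c^\infty(\Omega)$ with values in $[0,1]$, $\tilde\psi_j\equiv 1$ on $\overline{W_j}$, and $\operatorname{supp}\tilde\psi_j\subset V_j$.

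Finally I would define $S(x)=\sum_j \tilde\psi_j(x)$ and $\psi_j(x)=\tilde\psi_j(x)/S(x)$. The local finiteness proved in (2) ensures that near each point only finitely many terms contribute, so $S$ is smooth; since every $x\in\Omega$ lies in some $W_{j(x)}$, at least one term equals $1$ at $x$, giving $S(x)\geq 1$ and hence $\psi_j\in C_c^\infty(\Omega)$ with $\sum_j\psi_j\equiv 1$. The only delicate point is the shrinking step, where one must verify $\overline{W_j}\subset V_j$; this reduces to the set-theoretic observation $\overline{\mathring{K}_{j+3}\setminus K_{j+1}}\subset K_{j+3}\setminus\mathring{K}_{j+1}$, using that $\mathring{K}_{j+1}$ is open and $K_{j+3}$ is closed, which I expect to be the only genuinely nontrivial bookkeeping in the argument.
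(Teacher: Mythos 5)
The paper states this result with a citation to Grubb's text and does not include a proof, so there is no in-paper argument to compare against. Your proof is correct and is the standard one: parts (1) and (2) follow directly from the nesting property $K_j\subseteq\mathring{K}_{j+1}$ of the exhaustion, and for (3) you build a shrinking $\{W_j\}$ with $\overline{W_j}\subset V_j$ and $\bigcup W_j=\Omega$, apply the cutoff theorem on each pair $(\overline{W_j},V_j)$, and normalize; the index bookkeeping ($W_0=\mathring{K}_3$, $W_j=\mathring{K}_{j+3}\setminus K_{j+1}$), the verification $\overline{W_j}\subseteq K_{j+3}\setminus\mathring{K}_{j+1}\subseteq V_j$, and the covering argument via the minimal $j_0$ with $x\in K_{j_0}$ are all sound. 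This is essentially the proof one finds in the cited reference.
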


%\begin{theorem}(\cite{Adams75}, Page 15)\lab{winter7}
%Let $A\subseteq \reals^n$ be a nonempty (Lebesgue) measurable set and $f\in L^1(A)$. Then for all measurable sets %$B$ contained in $A$ we have $f\in L^1(B)$. Moreover, if $f(x)\geq 0$ for all $x\in A$, then $\int_B f(x) dx\leq %\int_A f(x) dx$.
%\end{theorem}
\begin{theorem}[\cite{Folland07}, Page 74]\lab{winter8}
Suppose $\Omega$ is an open set in $\reals^n$ and $G:\Omega\rightarrow G(\Omega)\subseteq \reals^n$ is a $C^1$-diffeomorphism (i.e. $G$ and $G^{-1}$ are both $C^1$ maps). If $f$ is a Lebesgue measurable function on $G(\Omega)$, then $f\circ G$ is Lebesgue measurable on $\Omega$. If $f\geq 0$ or $f\in L^1(G(\Omega))$, then
\begin{equation*}
\int_{G(\Omega)}f(x)dx=\int_\Omega f\circ G(x)|\textrm{det}G'(x)|dx
\end{equation*}
\end{theorem}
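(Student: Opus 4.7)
The plan is to establish the identity first for indicator functions of measurable sets, then extend it successively to simple functions, nonnegative measurable functions, and finally to $L^1$ functions, in the standard hierarchy used for constructing Lebesgue integrals. The core nontrivial content therefore reduces to proving the measure-theoretic statement
\begin{equation*}
m(G(E))=\int_E |\textrm{det}\,G'(x)|\,dx
\end{equation*}
for every Lebesgue measurable $E\subseteq \Omega$, together with the preliminary claim that $G$ carries Lebesgue measurable sets to Lebesgue measurable sets, so that $f\circ G$ is measurable whenever $f$ is.

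First I would handle measurability. Since $G^{-1}$ is $C^1$ (hence locally Lipschitz), it maps sets of Lebesgue measure zero to sets of measure zero; this follows from covering a null set in $G(\Omega)$ by small balls, applying the mean value theorem to bound the diameters of the images under $G^{-1}$, and observing that the total measure of the covering is scaled by a bounded factor. By Theorem \ref{winter2}, I can exhaust $\Omega$ and $G(\Omega)$ by compact sets, so the global Lipschitz bound need only be controlled on each compact piece. Writing a Lebesgue measurable set as the disjoint union of a Borel set and a null set, and using that $G^{-1}$ is a homeomorphism (so preserves Borel structure), I conclude that $G$ and $G^{-1}$ preserve Lebesgue measurability; consequently $f\circ G$ is measurable.

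The heart of the argument is establishing the measure formula, and this is the step I expect to be the main obstacle. I would proceed in two stages. In the affine stage, I would prove $m(T(E))=|\textrm{det}\,T|\,m(E)$ for every invertible linear $T:\reals^n\to\reals^n$ and every measurable $E$; this is classical and follows by factoring $T$ into elementary matrices (permutations, diagonal scalings, shears), each of which is handled directly on rectangles via Tonelli, and then extending to measurable sets by outer regularity. In the nonlinear stage, I would exhaust $\Omega$ by compact sets $K_j$ via Theorem \ref{winter2}, so on each $K_j$ the derivative $x\mapsto |\textrm{det}\,G'(x)|$ is uniformly continuous and bounded below away from $0$. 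Given $\varepsilon>0$, I would partition $K_j$ into small cubes $Q_k$ on which $G$ is well approximated by the affine map $y\mapsto G(x_k)+G'(x_k)(y-x_k)$, use the affine case on each piece, and sum: the error terms are controlled by the modulus of continuity of $G'$ and vanish as the partition is refined. This gives $m(G(E))=\int_E|\textrm{det}\,G'(x)|\,dx$ first for $E\subset K_j$, then globally by monotone convergence.

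With the measure formula in hand, the identity
\begin{equation*}
\int_{G(\Omega)}\chi_A(x)\,dx=\int_\Omega \chi_A\circ G(x)\,|\textrm{det}\,G'(x)|\,dx
\end{equation*}
holds for every measurable $A\subseteq G(\Omega)$, since $\chi_A\circ G=\chi_{G^{-1}(A)}$. Linearity extends this to nonnegative simple functions, monotone convergence promotes it to nonnegative measurable $f$, and splitting $f=f^+-f^-$ together with $|f\circ G|\,|\textrm{det}\,G'|=|f|\circ G\,|\textrm{det}\,G'|$ handles the $L^1$ case, showing simultaneously that $f\circ G\cdot|\textrm{det}\,G'|\in L^1(\Omega)$ whenever $f\in L^1(G(\Omega))$.
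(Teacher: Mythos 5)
The paper does not prove this theorem; it is stated as a citation to Folland's \emph{Real Analysis} (page 74) and used as a black box. So there is no in-paper argument to compare against. Your sketch is, in outline, the standard proof found in Folland and similar texts: measurability via local Lipschitz control of $G^{-1}$ on null sets, the affine/linear case by factoring into elementary matrices, the nonlinear case by approximating $G$ locally by its linearization on small cubes inside a compact exhaustion, and then the usual bootstrap from indicators to simple functions to nonnegative measurable functions to $L^1$. One thing worth flagging in the nonlinear stage: ``sum the approximations over cubes'' gives you the formula for finite unions of dyadic cubes (equivalently, for open sets), and to push from there to all measurable $E$ you still need a regularity argument (inner/outer approximation of $E$ by compacts/opens), just as you used outer regularity in the affine case. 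Also, to turn the affine approximation into an equality rather than a one-sided estimate, the cleanest route is to prove the inequality $m(G(E))\le\int_E|\det G'|\,dx$ in general and then apply it to $G^{-1}$ to get the reverse; your draft doesn't say this explicitly but it is the standard way to close the argument. With those two points made precise, the plan is sound and is exactly the proof the cited source gives.
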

\begin{theorem}[\cite{Folland07}, Page 79]\lab{winter9}
If $f$ is a nonnegative measurable function on $\reals^n$ such that $f(x)=g(|x|)$ for some function $g$ on $(0,\infty)$, then
\begin{equation*}
\int f(x)dx=\sigma(S^{n-1})\int_0^\infty g(r)r^{n-1}dr
\end{equation*}
where $\sigma(S^{n-1})$ is the surface area of $(n-1)$-sphere.
\end{theorem}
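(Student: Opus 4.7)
The plan is to reduce the integral over $\reals^n$ to an iterated integral using polar coordinates, and then exploit the fact that $f(x)=g(|x|)$ depends only on the radial variable. Since the origin has Lebesgue measure zero we may replace $\reals^n$ by $\reals^n\setminus\{0\}$, which is the image of the smooth bijection $\Phi:(0,\infty)\times S^{n-1}\to \reals^n\setminus\{0\}$, $\Phi(r,\omega)=r\omega$.

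First I would establish the standard disintegration of Lebesgue measure: there exists a unique Borel measure $\sigma$ on $S^{n-1}$ (the surface measure) such that for every nonnegative measurable function $h$ on $\reals^n$,
\begin{equation*}
\int_{\reals^n} h(x)\,dx = \int_0^\infty \int_{S^{n-1}} h(r\omega)\, r^{n-1}\, d\sigma(\omega)\, dr.
\end{equation*}
To prove this, I would cover $S^{n-1}$ by finitely many coordinate charts $\psi_\alpha:V_\alpha\to U_\alpha\subseteq \reals^{n-1}$, form the local parametrization $(r,\xi)\mapsto r\,\psi_\alpha^{-1}(\xi)$ of a corresponding open cone in $\reals^n\setminus\{0\}$, and check by a direct computation that its Jacobian determinant factors as $r^{n-1}\,J_\alpha(\xi)$ for a positive smooth function $J_\alpha$ depending only on $\xi$. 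Applying Theorem~\ref{winter8} chart by chart and combining with Fubini's theorem yields the formula locally with $d\sigma$ defined on $V_\alpha$ as the pushforward $(\psi_\alpha^{-1})_*(J_\alpha\,d\xi)$. A partition of unity subordinate to $\{V_\alpha\}$ glues these into a single Borel measure $\sigma$ on $S^{n-1}$; consistency on overlaps follows from the chain rule for Jacobians. The identity then extends from compactly supported continuous functions on $\reals^n\setminus\{0\}$ to all nonnegative measurable $h$ by monotone convergence and standard density arguments.

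With the disintegration formula in hand, applying it to $h(x)=g(|x|)$ yields
\begin{equation*}
\int_{\reals^n} g(|x|)\,dx = \int_0^\infty \int_{S^{n-1}} g(r)\, r^{n-1}\, d\sigma(\omega)\, dr = \sigma(S^{n-1})\int_0^\infty g(r)\, r^{n-1}\, dr,
\end{equation*}
since $g(r)$ is independent of $\omega$ and can be pulled out of the inner integral; the measurability of $g$ on $(0,\infty)$ follows from that of $f$ by restricting to a radial ray where $\Phi$ is smooth.

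The principal obstacle is the rigorous construction of $\sigma$ and verification of the disintegration identity: one must reconcile local parametrizations of $S^{n-1}$ with the global Lebesgue measure on $\reals^n$, and ensure that the measure $\sigma$ built chart-by-chart is independent of the chosen atlas and partition of unity. Once this framework is settled, the specialization to radial integrands is immediate.
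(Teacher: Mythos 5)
Your argument is correct in spirit and constitutes a genuine proof, whereas the paper disposes of this statement by citing Folland, so the natural comparison is with Folland's own argument (Theorem 2.49 and Corollary 2.51 of \emph{Real Analysis}). Folland defines the surface measure outright by the cone construction $\sigma(E) := n\, m\bigl(\{r\omega : 0 < r \le 1,\ \omega \in E\}\bigr)$ and deduces the disintegration $dm = r^{n-1}\,dr\,d\sigma$ from the dilation identity $m(cE)=c^n m(E)$ together with a uniqueness-of-measures argument --- no charts, Jacobians, or partitions of unity appear. You instead build $\sigma$ chart by chart on $S^{n-1}$, factor the Jacobian of $(r,\xi)\mapsto r\psi_\alpha^{-1}(\xi)$ as $r^{n-1}J_\alpha(\xi)$, push Theorem~\ref{winter8} through each patch, and glue with a partition of unity after checking overlap compatibility. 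Your route costs more machinery (and an independence-of-atlas check), but it is the one that generalizes to coarea-type formulas on Riemannian manifolds, which fits the theme of this paper; Folland's is shorter and relies on nothing beyond the scaling behavior of Lebesgue measure.

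One step deserves more care. Measurability of $g$ does \emph{not} ``follow by restricting $f$ to a radial ray'': the restriction of a Lebesgue measurable function to a lower-dimensional set need not be measurable (take $f$ vanishing off a line but equal to a nonmeasurable function on it; $f$ is a.e.\ zero, hence measurable, while the restriction is not). A clean fix in your framework: put $N(x)=|x|$ and let $A$ be a bounded annulus; the pushforward $\mu := N_*(m|_A)$ on the corresponding interval $I$ is absolutely continuous with respect to one-dimensional Lebesgue measure, since $m(N^{-1}(a,b)) = c_n(b^n-a^n) \le C(b-a)$ on $I$ and outer regularity then forces $\mu$ to vanish on Lebesgue-null sets. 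Because $f|_A = g\circ N$ is $m$-measurable, $g|_I$ is $\mu$-measurable, hence Lebesgue measurable modulo a null set; exhausting $(0,\infty)$ by such intervals completes the argument without any circular appeal to the disintegration formula itself. Apart from this, the outline --- Jacobian factorization, partition-of-unity gluing, extension from compactly supported continuous integrands to general nonnegative measurable ones by monotone convergence, then specialization to radial integrands --- is sound.
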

\begin{theorem}(\cite{Apostol74}, Section 12.11)\lab{thmfallconvexapostol1} Suppose $U$ is an open set in
$\reals^n$ and $f: U\rightarrow \reals$ is differentiable. Let
$x$ and $y$ be two points in $U$ and suppose the line segment
joining $x$ and $y$ is contained in $U$. Then there exists a
point $z$ on the line joining $x$ to $y$ such that
\begin{equation*}
f(y)-f(x)=\grad f(z).(y-x)
\end{equation*}
As a consequence, if $U$ is \textbf{convex} and all first order
partial derivatives of $f$ are bounded, then $f$ is Lipschitz on
$U$.
\end{theorem}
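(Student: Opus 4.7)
The plan is to reduce the multivariable statement to the one-dimensional Mean Value Theorem via restriction to the line segment, and then derive the Lipschitz consequence by a Cauchy--Schwarz bound on the gradient.

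First I would parametrize the segment from $x$ to $y$ by $\gamma(t) = x + t(y-x)$ for $t \in [0,1]$, and set $g(t) = f(\gamma(t))$. Since the segment lies in $U$ and $f$ is differentiable on $U$, the chain rule applies: $g$ is differentiable on $[0,1]$ with
\begin{equation*}
g'(t) = \grad f(\gamma(t)) \cdot (y-x).
\end{equation*}
Since $g$ is continuous on $[0,1]$ and differentiable on $(0,1)$, the classical one-variable Mean Value Theorem furnishes $c \in (0,1)$ with $g(1) - g(0) = g'(c)$. Setting $z = \gamma(c)$, which lies on the open segment between $x$ and $y$, yields
\begin{equation*}
f(y) - f(x) = g(1) - g(0) = \grad f(z)\cdot (y-x),
\end{equation*}
which is the desired identity.

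For the consequence, assume $U$ is convex and that each partial derivative $\partial_i f$ satisfies $|\partial_i f(u)| \leq M$ for all $u \in U$, for some constant $M$. Fix any two points $x, y \in U$. By convexity, the segment joining them lies in $U$, so the first part applies and produces a point $z$ on that segment with $f(y) - f(x) = \grad f(z)\cdot(y-x)$. Applying the Cauchy--Schwarz inequality,
\begin{equation*}
|f(y) - f(x)| \leq \|\grad f(z)\|\, |y - x| \leq \sqrt{n}\, M\, |y-x|,
\end{equation*}
where we used $\|\grad f(z)\|^2 = \sum_{i=1}^n (\partial_i f(z))^2 \leq nM^2$. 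Since the constant $\sqrt{n}\, M$ is independent of $x$ and $y$, this proves that $f$ is Lipschitz on $U$ with constant $\sqrt{n}\, M$.

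There is no real obstacle here: the only subtlety is making sure the one-variable MVT is applicable, which requires $g$ to be continuous on the closed interval and differentiable on the open interval. Both follow from the fact that the \emph{closed} segment lies in $U$ (the hypothesis), which ensures $\gamma([0,1]) \subset U$ and hence $g$ inherits differentiability from $f$ at every $t \in [0,1]$. Convexity of $U$ in the corollary is used precisely to guarantee this segment hypothesis holds uniformly for arbitrary pairs of points.
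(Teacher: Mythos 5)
Your proof is correct and follows the standard argument found in the cited reference (Apostol, Section 12.11): restrict $f$ to the parametrized segment, apply the chain rule, invoke the one-variable Mean Value Theorem, and then bound the gradient via Cauchy--Schwarz to obtain the Lipschitz constant $\sqrt{n}\,M$ on a convex domain. The paper cites this result without reproducing a proof, so there is no alternative route to compare against; your argument is complete and the attention you paid to the hypotheses of the one-variable MVT (continuity on $[0,1]$, differentiability on $(0,1)$, both guaranteed because the closed segment lies in $U$) is exactly the right care to take.
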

\noindent\textbf{Warning:} Suppose $f\in BC^{\infty}(U)$. By the above
item, if $U$ is convex, then $f$ is Lipschitz. However, if $U$ is
not convex, then $f$ is not necessarily Lipschitz. For example,
let $U=\cup_{n=0}^\infty (n,n+1)$ and define
\begin{equation*}
f:U\rightarrow \reals,\qquad f(x)=(-1)^n,\, \forall\,x\in (n,n+1)
\end{equation*}
Clearly all derivatives of $U$ are equal to zero, so $f\in
BC^\infty (U)$. But $f$ is not uniformly continuous and thus it
is not Lipschitz. Indeed, for any $1>\delta>0$, we can let
$x=2-\delta/4$ and $y=2+\delta/4$. Clearly $|x-y|<\delta$,
however, $|f(x)-f(y)|=2$.\\\\
\noindent Of course if $f\in C_c^1(U)$, then $f$ can be extended by
zero to a function in $C_c^1(\reals^n)$. Since $\reals^n$ is
convex, we may conclude that the extension by zero of $f$ is
Lipschitz which implies that $f:U\rightarrow \reals$ is
Lipschitz. As a consequence, $C_c^1(U)\subseteq BC^{0,1}(U)$ and $C_c^\infty(U)\subseteq BC^{\infty,1}(U)$. Also Theorem \ref{sobislip31} and the following theorem provide useful information regarding this isuue.
\begin{theorem}\lab{thmfalllipbound1}
Let $U\subseteq \reals^n$ and $V\subseteq \reals^k$ be two
nonempty open sets and let $T:U\rightarrow V$
$(T=(T^1,\cdots,T^k))$ be a $C^1$ map (that is for each $1\leq
i\leq k$, $T^i\in C^1(U)$). Suppose $B\subseteq U$ is a bounded
set such that $B\subseteq \bar{B}\subseteq U$. Then $T:
B\rightarrow V$ is Lipschitz.
\end{theorem}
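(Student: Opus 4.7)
The plan is to reduce the problem to the scalar case via Remark \ref{fallremcomplip1}, and then handle the non-convexity of $U$ by combining the mean value theorem on short segments with a crude bound on the diameter for points that are far apart.

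First I would note that, by Remark \ref{fallremcomplip1}, it suffices to prove that each component $T^i : B \to \reals$ is Lipschitz for $1 \le i \le k$. So fix $i$ and set $f = T^i \in C^1(U)$.

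Next I would manufacture a compact "thickening" of $\bar B$ inside $U$. Since $\bar B$ is a closed bounded subset of $\reals^n$, it is compact; since $\bar B \subseteq U$ and $U$ is open, the sets $\bar B$ and $\reals^n \setminus U$ are disjoint closed sets with $\bar B$ compact, so
\begin{equation*}
2\delta \;:=\; \mathrm{dist}(\bar B,\, \reals^n \setminus U) \;>\; 0.
\end{equation*}
Define $K = \{x \in \reals^n : \mathrm{dist}(x,\bar B) \le \delta\}$. Then $K$ is closed and bounded (hence compact) and $K \subseteq U$. Because $f \in C^1(U)$, each partial derivative $\partial_j f$ is continuous on $K$ and therefore bounded there, say $|\partial_j f(z)| \le M$ for all $z \in K$ and $1 \le j \le n$. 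Also, $f$ is continuous on the compact set $\bar B$, hence bounded there by some constant $C$.

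Now for arbitrary $x, y \in B$ I would split into two cases. If $|x - y| < \delta$, then every point $z$ on the line segment from $x$ to $y$ satisfies $\mathrm{dist}(z,\bar B) \le |z-x| \le |x-y| < \delta$, so the entire segment lies in $K \subseteq U$. Theorem \ref{thmfallconvexapostol1} then yields some $z$ on this segment with
\begin{equation*}
|f(x) - f(y)| \;=\; |\grad f(z) \cdot (y-x)| \;\le\; \sqrt{n}\, M\, |x-y|.
\end{equation*}
If instead $|x-y| \ge \delta$, then simply
\begin{equation*}
|f(x) - f(y)| \;\le\; 2C \;\le\; \frac{2C}{\delta}\, |x-y|.
\end{equation*}
Setting $L = \max\{\sqrt{n}\, M,\, 2C/\delta\}$ gives $|T^i(x) - T^i(y)| \le L|x-y|$ for all $x,y \in B$, completing the proof by Remark \ref{fallremcomplip1}.

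The only real obstacle is that $U$ (and $B$) need not be convex, which rules out a direct application of Theorem \ref{thmfallconvexapostol1} to arbitrary pairs in $B$; the two-case split resolves this by using the MVT only on line segments short enough to fit inside the thickened compact set $K$, and bounding $f$ directly on the compact set $\bar B$ when the segment may leave $U$. The compactness of $\bar B$ (which gives both $\delta > 0$ and the bound $C$) is the essential ingredient.
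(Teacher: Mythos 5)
Your proof is correct, and it takes a genuinely different route from the paper's. The paper proves the theorem by fixing a cutoff $\varphi\in C_c^\infty(\reals^n)$ with $\varphi=1$ on $\bar{B}$ and $\varphi=0$ off $U$, observing that $\varphi T^i$ extends by zero to an element of $C_c^1(\reals^n)$, and then invoking the convexity of $\reals^n$ (via Theorem \ref{thmfallconvexapostol1}) to conclude that $\varphi T^i$, and hence $T^i$ on $B$, is Lipschitz. You instead work directly in $U$: you thicken $\bar{B}$ to a compact $K\subseteq U$ at distance $\delta$, bound $\grad T^i$ on $K$ and $T^i$ on $\bar{B}$, and split pairs $x,y\in B$ into the cases $|x-y|<\delta$ (segment stays in $K$, apply the MVT) and $|x-y|\ge\delta$ (use the $L^\infty$ bound and the lower bound on $|x-y|$). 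The paper's argument is shorter and leans on the existence of smooth bump functions; yours is more elementary and makes the role of compactness of $\bar{B}$ explicit (it is what yields both $\delta>0$ and the two bounds $M$, $C$). Both are sound; yours also has the small advantage of not needing $T^i$ to be more than $C^1$ on any thickened set or any smoothness of the cutoff.
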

\begin{proof}
By Remark \ref{fallremcomplip1} it is enough to show that each
$T^i$ is Lipschitz on $B$. Fix a function $\varphi\in
C_c^\infty(\reals^n)$ such that $\varphi=1$ on $\bar{B}$ and
$\varphi=0$ on $\reals^n\setminus U$. Then $\varphi T^i$ can be
viewed as an element of $C_c^1(\reals^n)$. Therefore it is
Lipschitz ($\reals^n$ is convex) and there exists a constant $L$,
which may depend on $\varphi$, $B$ and $T^i$, such that
\begin{equation*}
|\varphi T^i(x)-\varphi T^i(y)|\leq  L|x-y|\quad \forall\,x,y\in
\reals^n
\end{equation*}
Since $\varphi=1$ on $\bar{B}$ it follows that
\begin{equation*}
|T^i(x)-T^i(y)|\leq L|x-y| \quad \forall\,x,y\in B
\end{equation*}
\end{proof}
\subsection{Normed Spaces}

\begin{theorem}\lab{thmwinterdualab}
Let $X$ and $Y$ be normed spaces. Let $A$ be a dense subspace of $X$ and $B$ be a dense subspace of $Y$. Then
\begin{itemizeX}
\item  $A\times B$ is dense in $X\times Y$;
\item if $T:A\times B\rightarrow \reals$
is a continuous bilinear map, then $T$ has a unique extension to a
continuous bilinear operator $\tilde{T}:X\times Y\rightarrow
\reals$.
\end{itemizeX}
\end{theorem}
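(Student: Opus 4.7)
The plan is to dispatch the two bullets separately, with the first being essentially trivial and the second being a standard bounded-bilinear extension argument analogous to the classical extension theorem for continuous linear maps, adapted to handle the bilinear structure.

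For the density claim, I would equip $X \times Y$ with any of the equivalent product norms, say $\|(x,y)\| = \|x\|_X + \|y\|_Y$. Given any $(x,y) \in X \times Y$, by density of $A$ in $X$ there exists a sequence $(a_n) \subset A$ with $a_n \to x$, and by density of $B$ in $Y$ there exists $(b_n) \subset B$ with $b_n \to y$. Then $\|(a_n,b_n) - (x,y)\| = \|a_n - x\|_X + \|b_n - y\|_Y \to 0$, so $A \times B$ is dense in $X \times Y$.

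For the extension, I would first record that since $T$ is a continuous bilinear map on a normed space, it is bounded: there is a constant $C > 0$ such that $|T(a,b)| \leq C\|a\|_X \|b\|_Y$ for all $(a,b) \in A \times B$. Given $(x,y) \in X \times Y$, fix sequences $(a_n) \subset A$ and $(b_n) \subset B$ with $a_n \to x$ and $b_n \to y$. The sequences are bounded in $X$ and $Y$ respectively. The identity
\begin{equation*}
T(a_n,b_n) - T(a_m,b_m) = T(a_n - a_m, b_n) + T(a_m, b_n - b_m)
\end{equation*}
together with the bound yields
\begin{equation*}
|T(a_n,b_n) - T(a_m,b_m)| \leq C\|a_n - a_m\|_X \|b_n\|_Y + C\|a_m\|_X \|b_n - b_m\|_Y,
\end{equation*}
which shows $(T(a_n,b_n))$ is Cauchy in $\reals$. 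Define $\tilde{T}(x,y) := \lim_n T(a_n,b_n)$. A nearly identical two-term split shows the limit is independent of the chosen approximating sequences, so $\tilde{T}$ is well-defined on $X \times Y$ and agrees with $T$ on $A \times B$.

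It remains to verify that $\tilde{T}$ is bilinear, continuous, and unique. Bilinearity follows by picking approximating sequences for each argument and passing to the limit in the bilinear relations satisfied by $T$; linearity in scalars, and in sums, both commute with the limit. Continuity follows from the bound
\begin{equation*}
|\tilde{T}(x,y)| = \lim_n |T(a_n,b_n)| \leq C \lim_n \|a_n\|_X \|b_n\|_Y = C \|x\|_X \|y\|_Y,
\end{equation*}
so $\tilde{T}$ is a bounded bilinear map on $X \times Y$ and therefore jointly continuous. Uniqueness is immediate: any continuous bilinear extension of $T$ agrees with $\tilde{T}$ on the dense subset $A \times B$ (by the first bullet), hence on all of $X \times Y$ by continuity. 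The main delicate point, though not really hard, is ensuring the well-definedness step handles mixing of approximating sequences; everything else is routine once the boundedness estimate $|T(a,b)| \leq C\|a\|_X\|b\|_Y$ is in hand.
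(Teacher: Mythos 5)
Your proof is correct, and it follows the standard route (boundedness of the continuous bilinear form, the telescoping identity $T(a_n,b_n)-T(a_m,b_m)=T(a_n-a_m,b_n)+T(a_m,b_n-b_m)$ to get the Cauchy property and well-definedness, then passing to the limit for bilinearity and the estimate $|\tilde{T}(x,y)|\leq C\|x\|_X\|y\|_Y$ for continuity, with uniqueness from density). The paper states Theorem \ref{thmwinterdualab} without any proof, so there is nothing to compare your argument against; your write-up supplies exactly the standard argument one would expect, and the one small point you flag as delicate --- that mixing approximating sequences is handled by the same two-term split --- is indeed the only place where care is needed.
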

\begin{theorem}\cite{32}\lab{thmfallnormedreflexive1}
Let $X$ be a normed space and let $M$ be a closed vector subspace
of $X$. Then
\begin{enumerateXALI}
\item If $X$ is reflexive, then $X$ is a Banach space.
\item $X$ is reflexive if and only if $X^*$ is reflexive.
\item If $X^*$ is separable, then $X$ is separable.
\item If $X$ is reflexive and separable, then so is $X^*$.
\item If $X$ is a reflexive Banach space, then so is $M$.
\item If $X$ is a separable Banach space, then so is $M$.
\end{enumerateXALI}
Moreover, if $X_1,\cdots,X_r$ are reflexive Banach spaces, then
$X_1\times \cdots\times X_r$ equipped with the norm
\begin{equation*}
\|(x_1,\cdots,x_r)\|=\|x_1\|_{X_1}+\cdots+\|x_r\|_{X_r}
\end{equation*}
is also a reflexive Banach space.
\end{theorem}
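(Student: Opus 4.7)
The plan is to prove the seven assertions in the order (1), (3), (5), (2), (6), (4), and finally the product statement, so that each piece feeds into the next. Claim (1) is essentially free: the double dual $X^{**}=L(X^*,\reals)$ is complete simply because its codomain $\reals$ is, independently of whether $X$ is Banach; so reflexivity (surjectivity of the canonical isometric embedding $J_X\colon X\to X^{**}$) forces $X\cong X^{**}$ to be Banach.

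For (3) I would run the classical separating-functional argument. Fix $\{f_n\}$ countable and dense in the unit sphere of $X^*$, pick $x_n\in X$ with $\|x_n\|=1$ and $|f_n(x_n)|>1/2$, and let $Y$ denote the closed linear span of $\{x_n\}$. If $Y\neq X$, Hahn--Banach produces a unit $f\in X^*$ vanishing on $Y$; approximating $f$ by some $f_n$ then yields $\tfrac12 < |f_n(x_n)| = |(f_n-f)(x_n)| \leq \|f_n-f\|$, a contradiction once $\|f-f_n\|<\tfrac12$. Hence the rational-linear span of $\{x_n\}$ is countable and dense in $X$.

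For (5) I use Hahn--Banach once more: given $m^{**}\in M^{**}$, the formula $x^{**}(f):=m^{**}(f|_M)$ defines an element of $X^{**}$, and reflexivity of $X$ writes it as $J_X(x)$ for some $x\in X$. If $x\notin M$, Hahn--Banach supplies $f\in X^*$ vanishing on $M$ with $f(x)\neq 0$, producing the contradiction $0=m^{**}(f|_M)=x^{**}(f)=f(x)\neq 0$. Thus $x\in M$ and one checks $J_M(x)=m^{**}$. The forward direction of (2) is then direct: given $\Phi\in X^{***}$, set $f:=\Phi\circ J_X\in X^*$; reflexivity of $X$ yields $J_{X^*}(f)=\Phi$. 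For the reverse direction of (2), apply the forward direction to $X^*$ to conclude $X^{**}$ is reflexive; since $X$ (which must be Banach for the notion of reflexivity to apply nontrivially) embeds isometrically as the closed subspace $J_X(X)\subseteq X^{**}$, (5) then gives reflexivity of $X$.

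Claim (6) is the elementary metric fact that a subset of a separable metric space is separable, applied to $M\subseteq X$. Claim (4) follows by applying (3) to the pair $(X^*,X^{**})$: reflexivity identifies $X^{**}$ with $X$, so $X^{**}$ is separable, and (3) yields $X^*$ separable. For the product statement, I would verify that $(X_1\times\cdots\times X_r)^*$ is canonically isometric to $X_1^*\times\cdots\times X_r^*$ with dual norm $\|(f_1,\ldots,f_r)\|=\max_i\|f_i\|_{X_i^*}$; applying this twice identifies the canonical embedding $J_{X_1\times\cdots\times X_r}$ with the componentwise product of the individual $J_{X_i}$, so surjectivity of each gives surjectivity of the whole, and completeness is componentwise. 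The main obstacle is the reverse direction of (2), whose clean proof is only available once (5) has been established and one interprets ``$X$ reflexive'' as already containing ``$X$ Banach''; the remaining pieces are routine applications of Hahn--Banach, the metric characterization of separability, and the universal property of finite products.
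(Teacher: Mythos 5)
The paper itself offers no proof here; it merely cites this omnibus collection of functional-analysis facts to reference \cite{32}, so there is no ``paper's own proof'' to compare against. Your plan is essentially the standard textbook route, and most of the pieces are sound: (1) via the surjectivity of the canonical isometry $J_X$ onto the always-complete $X^{**}$; (3) via the Hahn--Banach separating-functional argument (note you implicitly use that a subset of a separable metric space is separable, to get the countable dense set into the unit sphere, but that is fine); (5) via the restriction map $f\mapsto f|_M$, reflexivity of $X$, and a second Hahn--Banach application to extend $g\in M^*$ to $X^*$ when verifying $J_M(x)=m^{**}$; (4) by applying (3) to the pair $(X^*,X^{**})$ and identifying $X^{**}$ with $X$; (6) trivially; the product statement via the canonical isometry $(X_1\times\cdots\times X_r)^*\cong X_1^*\times\cdots\times X_r^*$ applied twice.

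The one genuine gap is in the ``only if'' direction of (2). Your argument requires $J_X(X)$ to be a \emph{closed} subspace of $X^{**}$ so that (5) applies, and $J_X(X)$ is closed precisely when $X$ is complete. But the hypothesis at that point is only ``$X^*$ reflexive,'' and this does not force $X$ to be Banach: take $X$ to be a dense proper subspace of any infinite-dimensional reflexive Banach space $Y$; then the restriction map $Y^*\to X^*$ is an isometric isomorphism, so $X^*\cong Y^*$ is reflexive, yet $X$ is incomplete and hence not reflexive. So the direction ``$X^*$ reflexive $\Rightarrow X$ reflexive'' is literally false as stated for a general normed space $X$. Your parenthetical ``one interprets `$X$ reflexive' as already containing `$X$ Banach' '' does not repair this, because in that direction ``$X$ reflexive'' is the conclusion, not the hypothesis; you need ``$X$ Banach'' as a standing assumption. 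The resolution is that the theorem should be read with the implicit hypothesis, standard in the cited source, that $X$ is a Banach space; with that assumption in place your proof of (2) is correct, and the rest of the proposal goes through without further issues.
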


\subsection{Topological Vector Spaces}
There are different, generally nonequivalent, ways to define
topological vector spaces. The conventions in this section mainly follow Rudin's functional analysis \cite{Rudi73}. Statements in this section are either taken from Rudin's functional analysis, Grubb's distributions and operators \cite{9}, excellent presentation of Reus \cite{Reus1}, and Treves' topological vector spaces \cite{Treves1} or are direct consequences of statements in the aforementioned references. Therefore we will not give the proofs.
\begin{definition}\lab{winter10}
A topological vector space is a vector space $X$ together with a
topology $\tau$ with the following properties:
\begin{enumerate}[i)]
\item For all $x\in X$, the singleton $\{x\}$ is a closed set.
\item The maps
\begin{align*}
& (x,y)\mapsto x+y \qquad (\textrm{from $X\times X$ into $X$})\\
& (\lambda,x)\mapsto \lambda x\qquad (\textrm{from $\reals\times X
$ into $X$})
\end{align*}
are continuous where $X\times X$ and $\reals \times X$ are
equipped with the product topology.
\end{enumerate}
\end{definition}
\begin{definition}\lab{winter11}
Suppose $(X,\tau)$ is a topological vector space and $Y\subseteq
X$.
\begin{itemizeXALI}
\item $Y$ is said to be \textbf{convex} if for all $y_1, y_2\in Y$ and $t\in
(0,1)$ it is true that $ty_1+(1-t)y_2\in Y$.
\item $Y$ is said to be \textbf{balanced} if for all $y\in Y$ and $|\lambda|\leq
1$ it holds that $\lambda y\in Y$. In particular, any balanced
set contains the origin.
\item We say $Y$ is \textbf{bounded} if for any  neighborhood $U$ of the
origin (i.e. any open set containing the origin), there exits
$t>0$ such that $Y\subseteq tU$.
\end{itemizeXALI}
\end{definition}
\begin{theorem}[Important Properties of Topological Vector Spaces]\lab{thmfall1}
\leavevmode
\begin{itemizeXALI}
\item Every topological vector space is Hausdorff.
\item If $(X,\tau)$ is a topological vector space, then
\begin{enumerate}
\item for all $a\in X$:  $E\in \tau \Longleftrightarrow a+E\in
\tau$ (that is $\tau$ is \textbf{translation invariant})
\item for all $\lambda\in \reals\setminus \{0\}$: $E\in \tau \Longleftrightarrow \lambda E\in
\tau$ (that is $\tau$ is \textbf{scale invariant})
\item if $A\subseteq X$ is convex and $x\in X$, then so is $A+x$
\item if $\{A_i\}_{i\in I}$ is a family of convex subsets of $X$,
then $\cap_{i\in I}A_i$ is convex.%$\mathcal{E}$
\end{enumerate}
\end{itemizeXALI}
\end{theorem}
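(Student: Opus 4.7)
The plan is to dispatch the five assertions in the order that their proofs depend on each other, leaning on the continuity of the vector-space operations required by Definition \ref{winter10}. The two convexity claims (the last two bullets of the second item) are elementary, so I would dispose of them first, then prove translation/scale invariance, and finally use translation invariance together with the continuity of subtraction to obtain the Hausdorff property.

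For the convexity of $A+x$, I would fix $b_1,b_2\in A+x$, write $b_i=a_i+x$ with $a_i\in A$, and observe that for $t\in(0,1)$
\begin{equation*}
tb_1+(1-t)b_2 = \bigl(ta_1+(1-t)a_2\bigr)+x \in A+x,
\end{equation*}
using convexity of $A$. For the intersection, if $y_1,y_2\in\bigcap_{i\in I}A_i$, then for each $i$ convexity gives $ty_1+(1-t)y_2\in A_i$, so the combination lies in the intersection. These are one-line verifications.

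For translation invariance I would introduce the translation map $T_a:X\to X$, $T_a(x)=x+a$; by the continuity of addition, $T_a$ is continuous, and since $T_{-a}$ is its two-sided inverse and is also continuous, $T_a$ is a homeomorphism. Hence $E\in\tau$ iff $a+E=T_a(E)\in\tau$. Scale invariance follows identically using $M_\lambda(x)=\lambda x$ and its inverse $M_{1/\lambda}$ (which requires $\lambda\neq 0$).

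The most delicate piece will be the Hausdorff property, which is where the "singletons are closed" half of Definition \ref{winter10} gets used. My plan is to consider the subtraction map $\phi:X\times X\to X$, $\phi(u,v)=u-v$, which is continuous as a composition of addition with scalar multiplication by $-1$ in the second coordinate. Given distinct $x,y\in X$, we have $\phi(x,y)=x-y\neq 0$; since $\{0\}$ is closed in $X$, the set $\phi^{-1}(X\setminus\{0\})$ is open in $X\times X$ and contains $(x,y)$, so by the definition of the product topology there exist open neighborhoods $U\ni x$ and $V\ni y$ with $U\times V\subseteq\phi^{-1}(X\setminus\{0\})$. This last inclusion says $u-v\neq 0$ for every $u\in U$, $v\in V$, which forces $U\cap V=\emptyset$: any common point $z$ would yield $z-z=0$. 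Thus $U$ and $V$ separate $x$ and $y$. The only subtle point to check is that closedness of every singleton $\{x\}$ implies closedness of $\{0\}$ specifically; this of course is immediate by taking $x=0$, but it is worth noting because it is precisely the place where the convention built into Definition \ref{winter10} (which is stronger than requiring only a topological group structure with continuous scalar multiplication) does the work.
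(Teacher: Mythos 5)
Your proposal is correct. Note, however, that the paper itself does not prove Theorem~\ref{thmfall1}: the surrounding text explicitly declares that all statements in this subsection are taken from (or are direct consequences of) Rudin, Grubb, Reus, and Treves, and that proofs will not be supplied. So there is no in-text argument to compare against; the comparison can only be against the standard references. Your five verifications are all sound, and the only item with real content is the Hausdorff property. Your route there — continuity of $\phi(u,v)=u-v$ plus closedness of $\{0\}$ gives an open $\phi^{-1}(X\setminus\{0\})$ containing $(x,y)$, then pull out a basic product neighborhood $U\times V$ and observe it forces $U\cap V=\emptyset$ — is a clean, direct argument. It is worth noting that Rudin's own treatment (the paper's primary cited source) obtains Hausdorffness as a corollary of a stronger local-base separation theorem (separating a compact set from a disjoint closed set via neighborhoods of the form $V+K$), which is more machinery than is needed for the bare $T_2$ conclusion. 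Your more economical argument buys exactly Hausdorff and nothing else, which is all the theorem as stated asks for; the Rudin route buys the regularity that the paper later relies on implicitly (e.g.\ in its treatment of local bases and metrizability) but at the cost of a longer setup. Either path is legitimate; yours is the shorter proof of the stated claim.
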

\noindent\textbf{Note:} Some authors do not include condition (i) in the definition of topological vector spaces. In that case, a topological vector space will not necessarily be Hausdorff.
\begin{definition}\lab{winter12}
Let $(X,\tau)$ be a topological space.
\begin{itemizeXALI}
\item A collection $\mathcal{B}\subseteq \tau$ is said to be a
\textbf{\emph{basis}} for $\tau$, if every element of $\tau$ is a union of
elements in $\mathcal{B}$.
\item Let $p\in X$. If $\gamma\subseteq \tau$ is such that each
element of $\gamma$ contains $p$ and every neighborhood of $p$
(i.e. every open set containing $p$) contains at least one
element of $\gamma$, then we say $\gamma$ is a \textbf{\emph{local base
at $p$}}. If $X$ is a vector space, then the local base $\gamma$
is said to be convex if each element of $\gamma$ is a
convex set.
\item $(X,\tau)$ is called \textbf{first-countable} if
each point has a countable local base.
\item $(X,\tau)$ is called \textbf{second-countable} if there is a
countable basis for $\tau$.
\end{itemizeXALI}
\end{definition}
\begin{theorem}\lab{winter13}
Let $(X,\tau)$ be a topological space and suppose for all $x\in
X$, $\gamma_x$ is a local base at $x$. Then
$\mathcal{B}=\cup_{x\in X}\gamma_x$ is a basis for $\tau$.
\end{theorem}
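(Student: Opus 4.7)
The plan is to verify the two requirements of the definition of a basis directly: namely that $\mathcal{B}\subseteq\tau$, and that every $U\in\tau$ can be written as a union of elements of $\mathcal{B}$. The first is immediate from the definition of a local base, which requires $\gamma_x\subseteq\tau$ for every $x$; taking the union over $x\in X$ keeps us inside $\tau$.

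For the second requirement, I would fix an arbitrary open set $U\in\tau$ and produce, for each point $x\in U$, a particular member $B_x$ of $\gamma_x$ that sits inside $U$. Since $U$ is a neighborhood of $x$ in the sense of Definition \ref{winter12} (it is an open set containing $x$), the defining property of a local base at $x$ gives at least one $B_x\in\gamma_x$ with $B_x\subseteq U$; also $x\in B_x$ because every element of a local base at $x$ contains $x$. Each such $B_x$ lies in $\mathcal{B}=\bigcup_{y\in X}\gamma_y$.

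Next I would check the set equality $U=\bigcup_{x\in U}B_x$. The inclusion $\bigcup_{x\in U}B_x\subseteq U$ is clear since every $B_x\subseteq U$, and the reverse inclusion $U\subseteq\bigcup_{x\in U}B_x$ follows from the fact that $x\in B_x$ for each $x\in U$. This exhibits $U$ as a union of elements of $\mathcal{B}$, completing the verification.

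There is no real obstacle here; the statement is a direct unpacking of the two definitions and the whole proof reduces to choosing, for each point of an open set, a local-base element contained in that open set. The only point worth flagging is the (trivial) use of the axiom of choice implicit in selecting one $B_x\in\gamma_x$ per $x\in U$, which is not an issue since any such selection works.
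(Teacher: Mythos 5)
Your proof is correct and is the standard argument; the paper itself omits the proof of this statement (it falls under the blanket remark in Section 4.3 that such facts are taken from or are immediate consequences of the cited references on topological vector spaces), so there is no competing approach to compare against. One small remark: the invocation of the axiom of choice, while harmless, can be avoided entirely by writing $U=\bigcup_{x\in U}\bigcup\{B\in\gamma_x : B\subseteq U\}$, i.e.\ taking the union over \emph{all} qualifying basis elements at each point rather than selecting one; this gives the same set equality without any selection function.
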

\begin{theorem}\lab{thmfall2}
Let $X$ be a vector space and suppose $\tau$ is a translation
invariant topology on $X$. Then for all $x_1,x_2\in X$ we have
{\fontsize{10}{10}{\begin{equation*}
\textrm{the collection $\gamma_{x_1}$ is a local base at
$x_1$}\Longleftrightarrow \textrm{the collection
$\{A+(x_2-x_1)\}_{A\in \gamma_{x_1}}$ is a local base at $x_2$}
\end{equation*}}}
\end{theorem}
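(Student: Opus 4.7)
The plan is to exploit the fact that translation invariance of $\tau$ makes each translation map $T_v:X\to X$, $T_v(x)=x+v$, a homeomorphism of $X$ onto itself. Indeed, property (1) of Theorem~\ref{thmfall1} says $E\in\tau\Longleftrightarrow E+v\in\tau$, which is exactly the statement that $T_v$ and its inverse $T_{-v}$ map open sets to open sets. Once this is in hand, the theorem is simply the fact that a homeomorphism sending $x_1$ to $x_2$ must carry local bases at $x_1$ bijectively to local bases at $x_2$; the relevant homeomorphism here is $T_v$ with $v=x_2-x_1$.

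For the forward implication, assume $\gamma_{x_1}$ is a local base at $x_1$ and set $v=x_2-x_1$. First I would verify that each set in the translated collection $\{A+v\}_{A\in\gamma_{x_1}}$ is open (by translation invariance) and contains $x_2=x_1+v$ (since $x_1\in A$). Then, given any neighborhood $U$ of $x_2$, I would observe that $U-v$ is a neighborhood of $x_1$ (again by translation invariance applied to the map $T_{-v}$), so by the local-base property of $\gamma_{x_1}$ there exists $A\in\gamma_{x_1}$ with $A\subseteq U-v$. Adding $v$ to both sides yields $A+v\subseteq U$, which is exactly what is needed.

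For the reverse implication, rather than repeating the argument, I would invoke symmetry: swap the roles of $x_1$ and $x_2$ and apply the forward direction with translation vector $-v=x_1-x_2$ to the collection $\{A+v\}_{A\in\gamma_{x_1}}$. Since $(A+v)+(-v)=A$, this immediately yields that $\gamma_{x_1}$ is a local base at $x_1$.

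There is no serious obstacle here; the entire content of the statement is the translation-invariance hypothesis, and the only subtlety is being careful to note that translation invariance implies not just that translates of open sets are open but also that preimages under translation are open, which is what legitimizes pulling neighborhoods of $x_2$ back to neighborhoods of $x_1$.
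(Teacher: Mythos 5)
Your proof is correct and is the standard argument: translation invariance makes $T_v(x) = x+v$ a homeomorphism sending $x_1$ to $x_2$, so it carries local bases at $x_1$ to local bases at $x_2$, and the reverse direction follows by symmetry with the vector $-v$. The paper omits the proof of this result entirely (the surrounding section defers to standard references on topological vector spaces), so there is no competing approach in the paper to compare against.
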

\begin{remark}\lab{winter14}
Let $X$ be a vector space and suppose $\tau$ is a translation
invariant topology on $X$. As a direct consequence of the
previous theorems the topology $\tau$ is uniquely determined by
giving a local base $\gamma_{x_0}$ at some point $x_0\in X$.
\end{remark}
\begin{definition}\lab{winter15}
Let $(X,\tau)$ be a topological vector space. $X$ is said to be
\textbf{metrizable} if there exists a metric $d: X\times X\rightarrow
[0,\infty)$ whose induced topology is $\tau$. In this case we say
that the metric $d$ is compatible with the topology $\tau$.
\end{definition}
\begin{theorem}\lab{winter16}
Let $(X,\tau)$ be a topological vector space. Then
\begin{itemizeXALI}
\item $X$ is metrizable $\,\Longleftrightarrow\,$  there exists a
metric $d$ on $X$ such that for all $x\in X$,
$\{B(x,\frac{1}{n})\}_{n\in \mathbb{N}}$ is a local base at $x$.
\item A metric $d$ on $X$ is compatible with $\tau$ $\,\Longleftrightarrow\,$ for all $x\in X$,
$\{B(x,\frac{1}{n})\}_{n\in \mathbb{N}}$ is a local base at $x$.
\end{itemizeXALI}
($B(x,\frac{1}{n})$ is the open ball of radius $\frac{1}{n}$ centered at $x$)
\end{theorem}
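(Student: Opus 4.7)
The plan is to observe that the first equivalence is an immediate consequence of the second, so the real content is the latter. Indeed, $X$ is metrizable exactly when some metric $d$ on $X$ is compatible with $\tau$, and by the second equivalence that happens exactly when some metric has $\{B(x,1/n)\}_{n\in\mathbb{N}}$ as a local base at every $x$. So I would fix a metric $d$ on $X$, let $\tau_d$ denote the metric topology induced by $d$, and aim to show $\tau=\tau_d$ if and only if $\{B(x,1/n)\}_{n\in\mathbb{N}}$ is a $\tau$-local base at every $x\in X$.

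For the $(\Rightarrow)$ direction, assume $\tau=\tau_d$. By the very definition of the metric topology, the balls $\{B(x,r):r>0\}$ are $\tau_d$-open and constitute a neighborhood base at $x$. Given $U\in\tau=\tau_d$ with $x\in U$, I would choose $r>0$ so that $B(x,r)\subseteq U$ and then $n\in\mathbb{N}$ with $1/n<r$, so that $B(x,1/n)\subseteq B(x,r)\subseteq U$. This exhibits $\{B(x,1/n)\}_{n\in\mathbb{N}}$ as a local base at $x$ in $\tau$.

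For the $(\Leftarrow)$ direction, suppose $\{B(x,1/n)\}_{n\in\mathbb{N}}\subseteq\tau$ is a local base at $x$ for every $x$. To get $\tau\subseteq\tau_d$, take $U\in\tau$; for each $x\in U$ the local-base hypothesis yields $n_x$ with $B(x,1/n_x)\subseteq U$, so $U=\bigcup_{x\in U}B(x,1/n_x)$ is a union of $d$-balls and therefore $\tau_d$-open. For the reverse inclusion $\tau_d\subseteq\tau$, it is enough to verify that each $d$-ball $B(x,r)$ belongs to $\tau$: for every $y\in B(x,r)$ pick $n$ with $1/n<r-d(x,y)$, so that $B(y,1/n)\subseteq B(x,r)$ by the triangle inequality, and since $B(y,1/n)\in\tau$ by hypothesis, $B(x,r)$ is a union of $\tau$-open sets.

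The argument is essentially bookkeeping on bases of topologies, with Theorem \ref{winter13} available but not strictly needed; no real obstacle arises because the local-base hypothesis already places the relevant $1/n$-balls inside $\tau$, so the translation-invariant TVS structure plays no role beyond providing the ambient setup. The only place where any care is required is in confirming that the $1/n$-balls generate not merely the same neighborhood filters but the same topology — this is handled by writing arbitrary $\tau$-open and $\tau_d$-open sets as unions of such balls from either side.
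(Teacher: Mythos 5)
Your proof is correct. The paper states this theorem without proof (citing Rudin, Grubb, Reus, and Treves as general references for the section), so there is no in-paper argument to compare against; your approach is the standard one. You correctly reduce the first bullet to the second, and your two-sided check that $\tau = \tau_d$ is sound: both inclusions go through precisely because the paper's definition of local base requires the members of the base to already be $\tau$-open, which you invoke (correctly) in the $\tau_d \subseteq \tau$ direction when expressing an arbitrary $d$-ball as a union of $\tau$-open $1/n$-balls. You are also right that, despite being stated in the TVS section, the claim is a purely topological fact about when a metric induces a given topology, and the vector-space structure plays no role.
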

\begin{definition}\lab{winter17}
Let $X$ be a vector space and $d$ be a metric on $X$. $d$ is said
to be translation invariant provided that
\begin{equation*}
\forall\,x,y,a\in X\qquad d(x+a,y+a)=d(x,y)
\end{equation*}
\end{definition}
\begin{remark}\lab{winter18}
Let $(X,\tau)$ be a topological vector space and suppose $d$ is a
translation invariant metric on $X$. Then the following
statements are equivalent
\begin{enumerate}
\item for all $x\in X$, $\{B(x,\frac{1}{n})\}_{n\in \mathbb{N}}$ is a local base at $x$
\item there exists $x_0\in X$ such that $\{B(x_0,\frac{1}{n})\}_{n\in \mathbb{N}}$ is a local base at $x_0$
\end{enumerate}
Therefore $d$ is compatible with $\tau$ if and only if
$\{B(0,\frac{1}{n})\}_{n\in \mathbb{N}}$ is a local base at the
origin.
\end{remark}
\begin{theorem}\lab{winter19}
Let $(X,\tau)$ be a topological vector space. Then $(X,\tau)$ is
metrizable if and only if it has a countable local base at the
origin. Moreover, if $(X,\tau)$ is metrizable, then one can find a
translation invariant metric that is compatible with $\tau$.
\end{theorem}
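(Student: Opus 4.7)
The plan is to handle the two implications separately. The forward direction is essentially a restatement of Theorem \ref{winter16}: if $(X,\tau)$ is metrizable with compatible metric $d$, then translating by any $x\in X$ shows that $\{B(x,1/n)\}_{n\in\mathbb{N}}$ is a local base at $x$, so in particular $\{B(0,1/n)\}$ is a countable local base at the origin. The real content lies in the converse, together with the claim that the metric can be chosen translation invariant, and this is what I will focus on.

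For the converse, suppose $\{V_n\}_{n\in\mathbb{N}}$ is a countable local base at $0$. The first step is a refinement: using the continuity of addition and scalar multiplication at $0$, together with the standard fact (from the TVS machinery cited in this section) that every neighborhood of $0$ contains a balanced neighborhood of $0$, I would inductively construct a sequence of balanced open neighborhoods $\{U_k\}_{k\in\mathbb{N}}$ of $0$ such that
\begin{equation*}
U_{k+1}+U_{k+1}+U_{k+1}+U_{k+1}\subseteq U_k\quad\text{and}\quad U_k\subseteq V_k\text{ for all }k.
\end{equation*}
By construction $\{U_k\}$ is still a local base at $0$. The fourfold additive shrinking (rather than a single twofold one) is the technical workhorse that will later be needed to establish the triangle inequality for dyadic sums.

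With the $U_k$ in hand, the second step is to build a translation invariant gauge. Let $D$ be the set of finite dyadic rationals $r=\sum_{k\geq 1} c_k(r)2^{-k}$ with $c_k(r)\in\{0,1\}$ and $r<1$, and define
\begin{equation*}
A(r)=\sum_{k\geq 1} c_k(r)\,U_k\qquad (r\in D),\qquad A(r)=X\text{ for }r\geq 1.
\end{equation*}
Set $f(x)=\inf\{r:x\in A(r)\}$ and then
\begin{equation*}
d(x,y):=f(x-y).
\end{equation*}
Translation invariance of $d$ is automatic from the definition; nonnegativity and $d(x,x)=0$ are immediate. The main work is: (a) subadditivity $f(x+y)\leq f(x)+f(y)$, which follows from the nesting property $A(r)+A(s)\subseteq A(r+s)$, itself established inductively using the fourfold additive condition on $\{U_k\}$; (b) symmetry $f(-x)=f(x)$, which uses that each $U_k$ is balanced; and (c) the separation property $d(x,y)=0\Rightarrow x=y$, which follows because $\{U_k\}$ is a local base and $(X,\tau)$ is Hausdorff by Theorem~\ref{thmfall1}, so $\bigcap_k U_k=\{0\}$.

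The final step is to check that the topology induced by $d$ coincides with $\tau$. By translation invariance and Remark~\ref{winter18}, it suffices to show that $\{B(0,1/n)\}$ is a local base at the origin in $\tau$. From the construction one verifies the two-sided inclusions $U_{n+1}\subseteq\{x:f(x)<2^{-n}\}\subseteq U_n$, which directly yields that the $d$-balls and the $U_n$'s generate the same neighborhood filter at $0$; combined with translation invariance this gives that $d$ is compatible with $\tau$. I expect the main obstacle to be the bookkeeping in step~(a): proving $A(r)+A(s)\subseteq A(r+s)$ requires a careful induction on the number of nonzero dyadic digits of $r$ and $s$, handling the carries produced by $U_k+U_k\subseteq U_{k-1}$ (or rather its stronger fourfold version), and this is the only place where the precise shrinking condition imposed in step~one gets used in an essential way.
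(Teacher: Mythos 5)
Your proof is correct, and it is the standard Rudin construction (his Theorem 1.24), which is exactly what the paper itself defers to: Section 4.3 explicitly states that the statements there are taken from Rudin, Grubb, Reus, and Treves and are presented without proof, so there is no in-paper argument to compare against. One small remark: the fourfold shrinking $U_{k+1}+U_{k+1}+U_{k+1}+U_{k+1}\subseteq U_k$ that you single out as essential is actually slightly more than necessary --- the twofold condition $U_{k+1}+U_{k+1}\subseteq U_k$ already suffices for the inductive proof of $A(r)+A(s)\subseteq A(r+s)$ once you handle the carry case by applying the induction hypothesis a second time to the pair $(r'+s',\,2^{-N})$, using $U_{N+1}+U_{N+1}\subseteq U_N=A(2^{-N})$ --- but imposing the stronger condition (as Rudin does) costs nothing and keeps the bookkeeping a bit more visibly one-directional.
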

\begin{definition}\lab{winter20}
Let $(X,\tau)$ be a topological vector space and let $\{x_n\}$ be
a sequence in $X$.
\begin{itemizeXALI}
\item We say that $\{x_n\}$ converges to a point $x\in X$ provided
that
\begin{equation*}
\forall\, U\in \tau,\,x\in U\quad \exists N\quad \forall\,n\geq
N\quad x_n\in U
\end{equation*}
\item We say that $\{x_n\}$ is a Cauchy sequence provided that
\begin{equation*}
\forall\, U\in \tau,\,0\in U\quad \exists N\quad \forall\,m,n\geq
N\quad x_n-x_m\in U
\end{equation*}
\end{itemizeXALI}
\end{definition}
\begin{theorem}\lab{winter21}
Let $(X,\tau)$ be a topological vector space, $\{x_n\}$ be a
sequence in $X$, and $x,y\in X$. Also suppose $\gamma$ is a local
base at the origin. The following statements are equivalent:
\begin{enumerate}
\item $x_n\rightarrow x$
\item $(x_n-x)\rightarrow 0$
\item $x_n+y\rightarrow x+y$
\item $\forall\, V\in \gamma\quad \exists\, N\quad \forall\,n\geq N\quad x_n-x\in V$
\end{enumerate}
Moreover $\{x_n\}$ is a Cauchy sequence if and only if
\begin{equation*}
\forall\, V\in \gamma\quad \exists\, N\quad \forall\,n,m\geq
N\quad x_n-x_m\in V
\end{equation*}
\end{theorem}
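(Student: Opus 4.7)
The plan is to reduce all four conditions to the single fact that addition is continuous and translation by a fixed vector is a homeomorphism of $X$ (recall that $\tau$ is translation invariant by Theorem \ref{thmfall1}), and then to use the definition of a local base to pass from arbitrary neighborhoods of $0$ to members of $\gamma$. I would first record the observation that, by Theorem \ref{thmfall2}, if $\gamma$ is a local base at $0$, then $\{a + V : V \in \gamma\}$ is a local base at any point $a \in X$. This single observation drives essentially every implication.

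First I would prove $(1) \Leftrightarrow (2)$. Let $U$ be an open neighborhood of $x$. By translation invariance, $U - x$ is an open neighborhood of $0$, and the map $z \mapsto z - x$ is a bijection of neighborhoods of $x$ onto neighborhoods of $0$. Since $x_n \in U \Leftrightarrow x_n - x \in U - x$, the statements $x_n \to x$ and $x_n - x \to 0$ are equivalent. The equivalence $(1) \Leftrightarrow (3)$ is handled symmetrically: $U$ is a neighborhood of $x+y$ if and only if $U - y$ is a neighborhood of $x$, and $x_n + y \in U \Leftrightarrow x_n \in U - y$.

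Next, for $(2) \Leftrightarrow (4)$, the forward implication is immediate because each $V \in \gamma$ is, by definition, an open neighborhood of $0$. For the converse, let $W$ be an arbitrary open neighborhood of $0$; by the defining property of a local base, there exists $V \in \gamma$ with $V \subseteq W$, and the eventual inclusion $x_n - x \in V$ then forces $x_n - x \in W$ for the same indices. This closes the loop $(1) \Leftrightarrow (2) \Leftrightarrow (4)$ and, together with the previous step, the full equivalence of $(1)$–$(4)$. The Cauchy statement is proved by an identical local-base argument: any neighborhood of $0$ contains some $V \in \gamma$, so the two-parameter condition on $x_n - x_m$ tested against all neighborhoods of $0$ reduces to testing against $V \in \gamma$ only.

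There is no substantive obstacle here; the argument is essentially bookkeeping on top of translation invariance and the meaning of a local base. The only thing that requires care is not to mix up neighborhoods of $x$, $y$, and $0$ when juggling shifts, but this is handled uniformly by the homeomorphism $z \mapsto z + a$. No continuity of scalar multiplication is needed, nor is metrizability; the proof goes through for any topological vector space in the sense of Definition \ref{winter10}.
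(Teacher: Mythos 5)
Your proof is correct. Note, however, that the paper deliberately gives no proof of this theorem: at the start of Section 4.3 it states that the results on topological vector spaces are taken from or are direct consequences of standard references (Rudin, Grubb, De Reus, Treves) and omits their proofs, so there is no in-paper argument to compare against. Your argument is the standard one: translation invariance (Theorem \ref{thmfall1}) makes $z \mapsto z + a$ a homeomorphism, which reduces all of (1), (2), (3) to convergence at $0$; the definition of a local base then reduces ``for all open neighborhoods of $0$'' to ``for all $V \in \gamma$,'' giving (2) $\Leftrightarrow$ (4), and the same local-base reduction handles the Cauchy characterization. Each step is complete and correct; in particular you correctly avoid any appeal to continuity of scalar multiplication or to metrizability, neither of which is needed.
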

\begin{remark}\lab{winter22}
In contrast with properties like continuity of a function and
convergence of a sequence which depend only on the topology of the
space, the property of being a Cauchy sequence is not a
topological property. Indeed, it is easy to construct examples of
two metrics $d_1$ and $d_2$ on a vector space $X$ that induce the
same topology (i.e. the metrics are equivalent) but have different
collection of Cauchy sequences. However, it can be shown that if
$d_1$ and $d_2$ are two translation invariant metrics that induce
the same topology on $X$, then the Cauchy sequences of $(X,d_1)$
will be exactly the same as the Cauchy sequences of $(X,d_2)$.
\end{remark}
\begin{theorem}\lab{winter23}
Let $(X,\tau)$ be a metrizable topological vector space and $d$
be a translation invariant metric on $X$ that is compatible with
$\tau$. Let $\{x_n\}$ be a sequence in $X$. The following
statements are equivalent:
\begin{enumerate}
\item $\{x_n\}$ is a Cauchy sequence in the topological vector
space $(X,\tau)$.
\item $\{x_n\}$ is a Cauchy sequence in the metric space $(X,d)$.
\end{enumerate}
\end{theorem}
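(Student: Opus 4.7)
The plan is to prove both implications directly from the definitions, using translation invariance of $d$ to rewrite metric distances in terms of differences $x_n - x_m$, and using Remark \ref{winter18} to convert between the metric local base $\{B(0,\tfrac{1}{n})\}$ at the origin and arbitrary $\tau$-neighborhoods of $0$. The key algebraic identity on which everything hinges is that, by translation invariance,
\begin{equation*}
d(x_n, x_m) \;=\; d(x_n - x_m,\, 0),
\end{equation*}
so the condition $d(x_n,x_m)<\tfrac{1}{k}$ is precisely the condition $x_n-x_m \in B(0,\tfrac{1}{k})$.

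For (1)$\Rightarrow$(2), I would fix $\varepsilon>0$ and take $U := B(0,\varepsilon)$. Since $d$ induces $\tau$, the set $U$ is an open $\tau$-neighborhood of $0$, so the topological Cauchy condition (as reformulated in Theorem \ref{winter21}) gives an $N$ such that $x_n - x_m \in B(0,\varepsilon)$ whenever $n,m\geq N$. By the displayed identity above, this is exactly $d(x_n,x_m)<\varepsilon$, proving (2).

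For (2)$\Rightarrow$(1), I would invoke Remark \ref{winter18} (using that $d$ is translation invariant and compatible with $\tau$) to conclude that $\{B(0,\tfrac{1}{n})\}_{n\in\mathbb{N}}$ is a local base at the origin. Given any open $\tau$-neighborhood $U$ of $0$, choose $k\in\mathbb{N}$ with $B(0,\tfrac{1}{k})\subseteq U$. Applying the metric Cauchy hypothesis with $\varepsilon = \tfrac{1}{k}$ yields an $N$ with $d(x_n,x_m)<\tfrac{1}{k}$ for all $n,m\geq N$. Translation invariance then gives $x_n - x_m \in B(0,\tfrac{1}{k}) \subseteq U$, which is the topological Cauchy condition.

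Honestly, there is no serious obstacle: the statement is essentially a bookkeeping exercise, and the only place where a hypothesis is crucial is the use of translation invariance of $d$, which is needed in both directions to move between $d(x_n,x_m)$ and the membership relation $x_n - x_m \in B(0,\varepsilon)$. Without translation invariance one obtains only the conclusion of Remark \ref{winter22}, namely that convergence (hence a local base at $0$) is intrinsic to $\tau$ but the Cauchy property need not be. The compatibility of $d$ with $\tau$ is used exactly once, via Remark \ref{winter18}, to identify metric balls at the origin as a local base at the origin.
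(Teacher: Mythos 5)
Your proof is correct. The paper gives no proof of Theorem \ref{winter23}: the preamble to the Topological Vector Spaces subsection explicitly states that proofs are omitted there, being taken from or direct consequences of the cited references (Rudin, Grubb, Reus, Treves). So there is no argument in the paper to compare against; your proof is the natural and standard one. The identity $d(x_n,x_m)=d(x_n-x_m,0)$ coming from translation invariance converts the metric Cauchy condition into the membership condition $x_n-x_m\in B(0,\varepsilon)$, and compatibility identifies the balls $B(0,\tfrac{1}{k})$ with a local base at the origin, so the two Cauchy conditions (Definition \ref{winter20} versus the usual metric one) coincide.

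One minor observation: in (2)$\Rightarrow$(1) you invoke Remark \ref{winter18}, which bundles translation invariance with compatibility in order to deduce that $\{B(0,\tfrac{1}{n})\}$ is a local base at $0$. In fact, Theorem \ref{winter16} already gives that conclusion from compatibility alone (applying the equivalence with $x=0$), so translation invariance is not needed for the local-base step. As you correctly stress at the end, translation invariance is genuinely used only to rewrite $d(x_n,x_m)$ as $d(x_n-x_m,0)$; this is consistent with Remark \ref{winter22}'s warning that equivalent but non-translation-invariant metrics may have different Cauchy sequences.
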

\begin{definition}\lab{winter24}
Let $(X,\tau)$ be a topological vector space. We say $(X,\tau)$
is \textbf{\emph{locally convex}} if it has a convex local base at the
origin.
\end{definition}
Note that, as a consequence of theorems (\ref{thmfall1}) and
(\ref{thmfall2}), the following statements are equivalent:
\begin{enumerate}
\item $(X,\tau)$ is a locally convex topological vector space.
\item There exists $p\in X$ with a convex local base at $p$.
\item For every $p\in X$ there exists a convex local base at $p$.
\end{enumerate}
\begin{definition}\lab{winter25}
Let $(X,\tau)$ be a metrizable locally convex topological vector
space. Let $d$ be a translation invariant metric on $X$ that is
compatible with $\tau$. We say that $X$ is \textbf{complete} if and only if
the metric space $(X,d)$ is a complete metric space. A complete
metrizable locally convex topological vector space is called a
\textbf{Frechet space}.
\end{definition}
\begin{remark}\lab{winter26}
Our previous remark about Cauchy sequences shows that the above
definition of completeness is independent of the chosen
translation invariant metric $d$. Indeed one can show that the
locally convex topological vector space $(X,\tau)$ is complete in
the above sense if and only if every Cauchy net in $(X,\tau)$ is
convergent.
\end{remark}
\begin{definition}\lab{winter27}
A \textbf{seminorm} on a vector space $X$ is a real-valued function
$p:X\rightarrow \reals$ such that
\begin{enumerate}[i.]
\item $\forall\,x,y\in X\qquad p(x+y)\leq p(x)+p(y)$
\item $\forall\,x\in X\,\,\forall\,\alpha\in \reals\qquad p(\alpha
x)=|\alpha| p(x)$
\end{enumerate}
If $\mathcal{P}$ is a family of seminorms on $X$, then we say
$\mathcal{P}$ is \textbf{separating} provided that for all $x\neq 0$ there
exists at least one $p\in\mathcal{P}$ such that $p(x)\neq 0$
(that is if $p(x)=0$ for all $p\in\mathcal{P}$, then $x=0$).
\end{definition}

\begin{remark}
It follows from conditions (i) and (ii) that if $p:X\rightarrow \reals$ is a seminorm, then $p(x)\geq 0$ for all $x\in X$.
\end{remark}

%A family of seminorms $\mathcal{P}$ on a vector space $X$ is
%called separating, if for every $w\in X\setminus \{0\}$ there
%exists $p\in \mathcal{P}$ such that $p(w)>0$.
\begin{theorem}\lab{winter28}
Suppose $\mathcal{P}$ is a separating family of seminorms on a
vector space $X$. For all $p\in\mathcal{P}$ and $n\in \mathbb{N}$
 let
 \begin{equation*}
 V(p,n):=\{x\in X: p(x)<\frac{1}{n}\}
 \end{equation*}
Also let $\gamma$ be the collection of all finite intersections
of $V(p,n)$'s. That is,
\begin{equation*}
A\in \gamma \Longleftrightarrow \exists k\in \mathbb{N},\,\exists
p_1,\cdots,p_k\in \mathcal{P},\,\exists n_1,\cdots,n_k\in
\mathbb{N}\,\,\textrm{such that}\,\,A=\cap_{i=1}^k V(p_i,n_i)
\end{equation*}
Then each element of $\gamma$ is a convex balanced subset of $X$.
Moreover, there exists a unique topology $\tau$ on $X$ that
satisfies both of the following properties:
\begin{enumerate}
\item $\tau$ is translation invariant (that is, if $U\in \tau$ and $a\in X$, then
$a+U\in\tau$).
\item $\gamma$ is a local base at the origin for $\tau$.
\end{enumerate}
This unique topology is called the \textbf{natural topology} induced by
the family of seminorms $\mathcal{P}$. Furthermore, if $X$ is
equipped with the natural topology $\tau$, then
\begin{enumerate}[i)]
\item $(X,\tau)$ is a locally convex topological vector space.
\item every $p\in\mathcal{P}$ is a continuous function from $X$ to
$\reals$.
\end{enumerate}
\end{theorem}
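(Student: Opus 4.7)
The plan is to build the proof in four layers, following the structure of the statement: (i) convexity and balancedness of the basic sets, (ii) existence and uniqueness of the topology, (iii) verification of the topological vector space axioms together with local convexity, and (iv) continuity of the seminorms.

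First I would dispatch the easy geometric claim. For each $p\in\mathcal{P}$ and $n\in\mathbb{N}$, the set $V(p,n)$ is convex because $p(tx+(1-t)y)\leq tp(x)+(1-t)p(y)<1/n$ when $x,y\in V(p,n)$ and $t\in(0,1)$, and it is balanced because $p(\lambda x)=|\lambda|p(x)<1/n$ whenever $|\lambda|\leq 1$. Finite intersections preserve both properties (using Theorem \ref{thmfall1}(4) for convexity and an obvious check for balancedness), so every $A\in\gamma$ is convex and balanced.

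Next, I would construct $\tau$ directly: declare $U\subseteq X$ to be in $\tau$ iff for every $x\in U$ there exists $A\in\gamma$ with $x+A\subseteq U$. A routine check shows $\tau$ is closed under arbitrary unions and finite intersections (using that $\gamma$ itself is closed under finite intersection), it is translation invariant by construction, and $\gamma$ is a local base at $0$. Uniqueness follows from Remark \ref{winter14}: any translation-invariant topology is determined by a local base at a single point.

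The main work is showing $(X,\tau)$ is a topological vector space in the sense of Definition \ref{winter10}. For closed singletons, I would use that $\mathcal{P}$ is separating: if $x\neq 0$ there is $p\in\mathcal{P}$ with $p(x)>0$, hence $p(x)>1/n$ for some $n$, so $V(p,n)$ is a neighborhood of $0$ not containing $x$; translating, every point different from a given $y$ has a neighborhood missing $y$, so $\{y\}$ is closed. For continuity of addition at $(x,y)$, given a basic neighborhood $x+y+\bigcap_{i=1}^k V(p_i,n_i)$, I would use the seminorm inequality $p_i((x'+y')-(x+y))\leq p_i(x'-x)+p_i(y'-y)$ and take each of $x'-x$ and $y'-y$ in $\bigcap_i V(p_i,2n_i)$. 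For continuity of scalar multiplication at $(\lambda_0,x_0)$, the standard trick is to split
\begin{equation*}
\lambda x-\lambda_0 x_0=(\lambda-\lambda_0)(x-x_0)+(\lambda-\lambda_0)x_0+\lambda_0(x-x_0),
\end{equation*}
apply each $p_i$, bound by $|\lambda-\lambda_0|\,p_i(x-x_0)+|\lambda-\lambda_0|\,p_i(x_0)+|\lambda_0|\,p_i(x-x_0)$, and choose the neighborhoods of $\lambda_0$ and $x_0$ small enough. This is the step I expect to be the most delicate, since one has to juggle all three terms simultaneously against the finite collection $\{p_1,\dots,p_k\}$. Local convexity is then immediate from (i).

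Finally, for the continuity of each $p\in\mathcal{P}$, I would note the reverse triangle inequality $|p(x)-p(y)|\leq p(x-y)$ (a direct consequence of subadditivity), whence for any $\epsilon>0$, choosing $n$ with $1/n<\epsilon$ gives $y-x\in V(p,n)\Rightarrow |p(y)-p(x)|<\epsilon$; combined with translation invariance of $\tau$, this yields continuity of $p$ at every point. This completes all four items of the statement.
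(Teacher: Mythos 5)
Your proof is correct. The paper does not itself give a proof; this theorem is one of the standard facts in Section 4.3 cited to Rudin, Grubb, Reus, and Treves, and your argument is the standard one used in those references. The only step you fold into ``a routine check'' that is worth making explicit is that each set in $\gamma$ is itself open in your constructed $\tau$, which is what Definition \ref{winter12} demands in order for $\gamma$ to qualify as a local base. For $A=\bigcap_{i=1}^k V(p_i,n_i)$ and $x\in A$, choose $m_i$ with $p_i(x)+1/m_i<1/n_i$; subadditivity then gives $x+\bigcap_{i=1}^k V(p_i,m_i)\subseteq A$, so $A\in\tau$. Everything else, including the three-term split for continuity of scalar multiplication, the use of separation (together with balancedness of $V(p,n)$) to get closed singletons, and the reverse triangle inequality for continuity of the seminorms, is exactly as it should be.
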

\begin{theorem}\lab{winter29}
Suppose $\mathcal{P}$ is a separating family of seminorms on a
vector space $X$. Let $\tau$ be the natural topology induced by
$\mathcal{P}$. Then
\begin{enumerate}
\item $\tau$ is the smallest topology on $X$ that is translation
invariant and with respect to which every $p\in \mathcal{P}$ is
continuous.
\item $\tau$ is the smallest topology on $X$ with respect to which
 addition is continuous and every $p\in \mathcal{P}$ is
 continuous.
\end{enumerate}
\end{theorem}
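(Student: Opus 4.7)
The plan is to prove part (1) directly by using the characterization of $\tau$ through its local base $\gamma$ at the origin, and then deduce part (2) by showing that continuity of addition already forces translation invariance, so (2) reduces to (1). Theorem \ref{winter28} already certifies that $\tau$ itself is translation invariant and makes every $p\in\mathcal{P}$ continuous, so the only remaining content of the statement is \emph{minimality}, and similarly for (2).

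For part (1), let $\tau'$ be any topology on $X$ which is translation invariant and with respect to which each $p\in\mathcal{P}$ is continuous. I would first check that every element of the defining local base $\gamma$ lies in $\tau'$: indeed, for each $p\in\mathcal{P}$ and $n\in\mathbb{N}$,
\begin{equation*}
V(p,n)=\{x\in X: p(x)<\tfrac{1}{n}\}=p^{-1}\bigl((-\infty,\tfrac{1}{n})\bigr)
\end{equation*}
is $\tau'$-open by continuity of $p$, and finite intersections of $\tau'$-open sets are $\tau'$-open, so $\gamma\subseteq \tau'$. Now take an arbitrary $U\in \tau$ and a point $x\in U$. By Theorem \ref{thmfall2}, the collection $\{x+A:A\in\gamma\}$ is a local base at $x$ for $\tau$, hence there exists $A_x\in\gamma$ with $x+A_x\subseteq U$. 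Since $A_x\in\tau'$ and $\tau'$ is translation invariant, $x+A_x\in\tau'$, and therefore
\begin{equation*}
U=\bigcup_{x\in U}(x+A_x)
\end{equation*}
is a union of $\tau'$-open sets, so $U\in\tau'$. This gives $\tau\subseteq \tau'$ and establishes (1).

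For part (2), let $\tau'$ be any topology on $X$ with respect to which addition is continuous and each $p\in\mathcal{P}$ is continuous. The claim is that $\tau'$ is automatically translation invariant, so that (1) applies. Fix $a\in X$; the translation map $T_a:X\rightarrow X$, $T_a(x)=x+a$, equals the composition of $x\mapsto (x,a)$ with addition, and the first map is continuous by the universal property of the product topology (both coordinate projections, namely the identity and the constant map $a$, are continuous). Thus $T_a$ is continuous, and since $T_{-a}$ is its inverse and is continuous by the same argument, $T_a$ is a homeomorphism. Hence $E\in\tau'\Longleftrightarrow E+a\in\tau'$, i.e.\ $\tau'$ is translation invariant, and (1) now yields $\tau\subseteq\tau'$.

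The main (and essentially only) subtlety is the step that turns continuity of addition into translation invariance; once that is in hand, both parts follow from the same local-base argument. There are no calculations to grind through, just careful bookkeeping with the product topology and the explicit form of the seminorm neighborhoods $V(p,n)$.
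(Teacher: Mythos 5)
The paper does not supply its own proof of this theorem; the surrounding text explicitly defers to the cited references (Rudin, Grubb, Reus, Treves). Judged on its own merits, your argument is correct and complete. For part (1): the sets $V(p,n)=p^{-1}\bigl((-\infty,\tfrac{1}{n})\bigr)$ lie in $\tau'$ by continuity of $p$, finite intersections preserve openness so $\gamma\subseteq\tau'$, and since $0\in A$ for every $A\in\gamma$ the union $\bigcup_{x\in U}(x+A_x)$ really does recover $U$; invoking Theorem \ref{thmfall2} to translate the local base at the origin to a local base at each $x$ is exactly the right tool, and translation invariance of $\tau'$ puts $x+A_x$ in $\tau'$. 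The only silent prerequisite for minimality — that $\tau$ itself satisfies the hypotheses — is handled by your appeal to Theorem \ref{winter28}. For part (2): your reduction is the clean way to do it. Writing $T_a$ as the composition of $x\mapsto(x,a)$ (continuous into the product, since both coordinate maps are continuous) with addition shows $T_a$ is continuous; the same argument for $T_{-a}$ makes $T_a$ a homeomorphism, hence $\tau'$ is translation invariant and part (1) applies. Since $\tau$ makes $(X,\tau)$ a topological vector space by Theorem \ref{winter28}, addition is $\tau$-continuous, so $\tau$ is indeed among the competing topologies in (2). No gaps.
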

\begin{theorem}\lab{thmapptvconvergence1}
Let $X$ and $Y$ be two vector spaces and suppose $\mathcal{P}$
and $\mathcal{Q}$ are two separating families of seminorms on $X$
and $Y$, respectively. Equip $X$ and $Y$ with the corresponding
natural topologies. Then
\begin{enumerateXALI}
\item A sequence $x_n$ converges to $x$ in $X$ if and only if for
all $p\in \mathcal{P}$, $p(x_n-x)\rightarrow 0$.
\item A linear operator $T:X\rightarrow Y$ is continuous if and
only if
{\fontsize{10}{10}{\begin{equation*}
\forall\,q\in \mathcal{Q}\quad \exists\,c>0,\,k\in
\mathbb{N},\,p_1,\cdots,p_k\in \mathcal{P}\quad \textrm{such
that}\quad \forall\,x\in X\quad |q\circ T(x)|\leq c\max_{1\leq
i\leq k}p_i(x)
\end{equation*}}}
\item A linear operator $T:X\rightarrow \reals$ is continuous if and
only if
\begin{equation*}
\exists\,c>0,\,k\in \mathbb{N},\,p_1,\cdots,p_k\in
\mathcal{P}\quad \textrm{such that}\quad \forall\,x\in X\quad
|T(x)|\leq c\max_{1\leq i\leq k}p_i(x)
\end{equation*}
\end{enumerateXALI}
\end{theorem}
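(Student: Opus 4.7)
The plan is to convert the abstract topological statements into concrete seminorm estimates by using the local base $\gamma$ from Theorem \ref{winter28}, whose elements are finite intersections of the balanced convex sets $V(p,n) = \{x \in X : p(x) < 1/n\}$.

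For part (1), I would invoke Theorem \ref{winter21}: $x_n \to x$ in $(X,\tau)$ iff for every $V \in \gamma$, eventually $x_n - x \in V$. If $p(x_n - x) \to 0$ for every $p \in \mathcal{P}$, then given any basic neighborhood $V = \bigcap_{i=1}^k V(p_i, n_i)$ of $0$, choosing $N$ large enough that $p_i(x_n - x) < 1/n_i$ for all $i$ and all $n \geq N$ places $x_n - x$ inside $V$. Conversely, for fixed $p \in \mathcal{P}$ and $n \in \mathbb{N}$, the single set $V(p,n)$ lies in $\gamma$ and eventually contains $x_n - x$, giving $p(x_n - x) < 1/n$ eventually, i.e.\ $p(x_n - x) \to 0$.

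For part (2), linearity of $T$ together with translation invariance of $\tau$ reduces continuity to continuity at $0$. It is enough to produce the estimate for a single $q \in \mathcal{Q}$, since basic neighborhoods in $Y$ are finite intersections of sets $V(q_j, n_j)$ and taking the maximum of the finitely many resulting constants recovers the general case. Fixing $q$, continuity at $0$ applied to the neighborhood $V(q,1) \subset Y$ produces $p_1,\dots,p_k \in \mathcal{P}$ and integers $\ell_1,\dots,\ell_k$ such that, writing $M(x) := \max_i p_i(x)$ and $\delta := \min_i 1/\ell_i$, one has $M(x) < \delta \Longrightarrow q(T(x)) < 1$.

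The decisive step is a homogeneity argument using positive homogeneity of seminorms and linearity of $T$. For $x$ with $M(x) > 0$ set $\lambda = \delta/(2M(x))$, so that $M(\lambda x) = \delta/2 < \delta$ and hence $\lambda q(T(x)) = q(T(\lambda x)) < 1$, yielding $q(T(x)) \leq (2/\delta) M(x)$. For $x$ with $M(x) = 0$, applying the same implication to $Nx$ for every $N \in \mathbb{N}$ gives $q(T(x)) < 1/N$ for all $N$, forcing $q(T(x)) = 0$, so the global bound persists trivially with $c = 2/\delta$. The converse direction is routine: if the estimate holds for every $q$, then pulling back a basic neighborhood $\bigcap_j V(q_j, n_j)$ of $0$ in $Y$ through the bounds produces an explicit basic neighborhood of $0$ in $X$ mapped inside it. Part (3) is the special case $Y = \reals$ with $\mathcal{Q} = \{|\cdot|\}$. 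The most delicate point, and the only one that is not essentially definition-chasing, is precisely this promotion of the local implication at $0$ to a global linear inequality; the careful treatment of vectors $x$ annihilated by all the $p_i$, where one must scale up by $N$ rather than down, is what makes the argument work.
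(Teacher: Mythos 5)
The paper does not give a proof of this theorem: it is one of the preliminary facts about topological vector spaces that the authors explicitly defer to standard references (Rudin, Grubb, Treves, de Reus) in the preamble to Section 4.3. So there is no proof in the text against which your argument could be compared; all I can do is check it on its own terms, and it checks out. Part (1) follows correctly from Theorem~\ref{winter21} by unwinding the basic neighborhoods of $0$ in $\gamma$. In part (2) the reduction to continuity at $0$, the passage from a basic neighborhood $\bigcap_i V(p_i,\ell_i)\subseteq T^{-1}(V(q,1))$ to the implication ``$M(x)<\delta \Rightarrow q(T(x))<1$'', and then the rescaling $\lambda=\delta/(2M(x))$ to obtain $q(T(x))\le (2/\delta)M(x)$ are all sound; the separate handling of vectors with $M(x)=0$ by scaling up by $N$ and letting $N\to\infty$ is exactly the right care, and is in fact the one spot that is easy to botch. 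The converse is indeed routine once the estimate is in hand, and part (3) is the special case $Y=\reals$, $\mathcal{Q}=\{|\cdot|\}$. This is the standard textbook argument, and it is correct.
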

\begin{theorem}\lab{thmapptvconvergence2}
Let $X$ be a Frechet space and let $Y$ be a topological vector
space. When $T$ is a linear map of $X$ into $Y$, the following
two properties are equivalent
\begin{enumerate}
\item $T$ is continuous.
\item $x_n\rightarrow 0$ in $X$ $\Longrightarrow$ $Tx_n\rightarrow
0$ in $Y$.
\end{enumerate}
\end{theorem}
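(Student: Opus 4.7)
The implication $(1) \Rightarrow (2)$ is immediate from the definition of continuity together with linearity: if $T$ is continuous then $T(0)=0$, and $x_n\to 0$ in $X$ forces $T(x_n)\to T(0)=0$ in $Y$. The content of the theorem lies in $(2) \Rightarrow (1)$, and my plan exploits the fact that a Frechet space, being metrizable by definition, is first-countable at the origin.

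For the non-trivial direction, since $X$ is Frechet, Theorem \ref{winter19} furnishes a translation invariant metric $d$ compatible with the topology $\tau$ of $X$, and Theorem \ref{winter16} then gives that $\{B(0,1/n)\}_{n\in\mathbb{N}}$ is a countable decreasing local base at the origin. Because the topology on $Y$ is translation invariant (Theorem \ref{thmfall1}) and $T$ is linear, continuity of $T$ on all of $X$ is equivalent to continuity of $T$ at $0$. So it suffices to show that for every neighborhood $V$ of $0$ in $Y$, there exists $n\in\mathbb{N}$ with $T(B(0,1/n))\subseteq V$.

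I will argue by contraposition. Assuming $T$ fails to be continuous at $0$, I can pick a neighborhood $V$ of $0$ in $Y$ such that $T(B(0,1/n))\not\subseteq V$ for every $n$; for each such $n$ choose $x_n\in B(0,1/n)$ with $T(x_n)\notin V$. Since $d(x_n,0)<1/n$ and $d$ is compatible with $\tau$, the sequence $\{x_n\}$ converges to $0$ in $X$. By hypothesis (2), $T(x_n)\to 0$ in $Y$, which by the characterization of convergence in Theorem \ref{winter21} requires $T(x_n)\in V$ for all sufficiently large $n$, a contradiction.

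The only real subtlety is the invocation of first-countability: sequential continuity is strictly weaker than continuity for general topological vector spaces, and the Frechet hypothesis is used exactly to guarantee a countable local base at $0$ so that a single sequence $\{x_n\}$ extracted by choice can witness the failure of continuity. Neither completeness nor local convexity of $X$ actually enters the argument; the proof goes through for any metrizable topological vector space $X$.
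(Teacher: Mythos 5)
Your proof is correct. Note that the paper itself states this result without proof, deferring to its references for Section 4.3 (Rudin, Grubb, Reus, Treves), so there is no in-paper argument to compare against; the route you take (reduce to continuity at the origin via translation invariance and linearity, then use the countable local base $\{B(0,1/n)\}$ furnished by metrizability to extract a contradicting sequence) is the standard one found in those sources. Your closing remark is also accurate: the argument only uses that $X$ is metrizable, and completeness and local convexity play no role, so the theorem holds for any metrizable topological vector space $X$.
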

\begin{theorem}\lab{winter30}
Let $\mathcal{P}=\{p_k\}_{k\in \mathbb{N}}$ be a
\textbf{countable} separating family of seminorms on a vector
space $X$. Let $\tau$ be the corresponding natural topology. Then
 the locally convex topological vector space $(X,\tau)$ is metrizable and the
 following translation invariant metric on $X$ is compatible with
 $\tau$:
 \begin{equation*}
 d(x,y)=\sum_{k=1}^\infty\frac{1}{2^k}\frac{p_k(x-y)}{1+p_k(x-y)}
 \end{equation*}
\end{theorem}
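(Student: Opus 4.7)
The plan is to verify that the proposed $d$ is a translation-invariant metric and that the topology it induces agrees with $\tau$. Metrizability itself follows quickly from Theorem \ref{winter19}: when $\mathcal{P}=\{p_k\}_{k\in\mathbb{N}}$ is countable, the collection $\gamma$ of finite intersections of the sets $V(p_k,n)$ from Theorem \ref{winter28} is countable and forms a local base at the origin, so the existence of \emph{some} compatible translation-invariant metric is automatic. The substance of the theorem is that the explicit formula for $d$ works.

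First I would verify that $d$ is a translation-invariant metric. Nonnegativity, symmetry, and translation invariance are immediate from the analogous properties of each $p_k$ and from the fact that $x-y$ depends only on the difference. The identity $d(x,y)=0\iff x=y$ is exactly where the \textbf{separating} hypothesis on $\mathcal{P}$ enters: $d(x,y)=0$ forces $p_k(x-y)=0$ for every $k$, hence $x-y=0$. The only real calculation is the triangle inequality, and the standard trick is to observe that $\phi(t)=t/(1+t)$ is nondecreasing and concave on $[0,\infty)$ with $\phi(0)=0$, which makes it subadditive: $\phi(a+b)\le \phi(a)+\phi(b)$ for $a,b\ge 0$. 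Since each $p_k$ itself satisfies $p_k(x-y)\le p_k(x-z)+p_k(z-y)$ and $\phi$ is nondecreasing, applying $\phi$ and using subadditivity yields the triangle inequality termwise, and then one sums over $k$ with the weights $2^{-k}$. Absolute convergence of the series follows from $\phi\le 1$ and $\sum 2^{-k}<\infty$.

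Next I would show that the metric topology coincides with $\tau$. Both topologies are translation invariant, so by Theorem \ref{thmfall2} and Remark \ref{winter14} it suffices to prove that the families $\{B_d(0,1/n)\}_{n\in\mathbb{N}}$ and $\gamma$ (the local base at $0$ furnished by Theorem \ref{winter28}) generate the same neighborhood filter at $0$. In one direction: given a basic neighborhood $\bigcap_{i=1}^{k}V(p_{k_i},n_i)\in\gamma$, I want a $d$-ball inside it. Since $\phi(t)\le t$ and $\phi^{-1}(s)=s/(1-s)$ for $s<1$, if $d(0,x)<r$ then for each fixed index $k_i$ one has $2^{-k_i}\phi(p_{k_i}(x))<r$, hence $p_{k_i}(x)<2^{k_i}r/(1-2^{k_i}r)$ provided $r<2^{-k_i}$, which can be made smaller than $1/n_i$ by choosing $r$ small enough. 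In the other direction: given $r>0$, pick $N$ with $\sum_{k>N}2^{-k}<r/2$ and then choose $\epsilon>0$ so small that $\phi(\epsilon)<r/2$; the neighborhood $\bigcap_{k=1}^{N}V(p_k,\lceil 1/\epsilon\rceil)\in\gamma$ is then contained in $B_d(0,r)$, because on it the first $N$ terms of the defining series contribute less than $r/2$ and the tail contributes less than $r/2$ by the bound $\phi\le 1$.

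The only step that takes a little care is the two-sided comparison of neighborhood bases, and the main technical point is the subadditivity of $\phi(t)=t/(1+t)$, which is the one non-algebraic ingredient. Everything else is bookkeeping with the defining series and with the $V(p_k,n)$ sets from Theorem \ref{winter28}.
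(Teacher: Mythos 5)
Your proof is correct. Note that the paper does not actually supply its own proof of this theorem: the preamble to the subsection on topological vector spaces explicitly states that results there are taken from Rudin, Grubb, De Reus, and Treves ``or are direct consequences of statements in the aforementioned references. Therefore we will not give the proofs.'' So there is no in-paper argument to compare against; you are simply being asked to reconstruct a standard result. Your reconstruction is the canonical one: the triangle inequality for $d$ reduces to subadditivity of $\phi(t)=t/(1+t)$, which you correctly extract from concavity together with $\phi(0)=0$; the separating hypothesis is used precisely and only to get $d(x,y)=0\Rightarrow x=y$; and since both the $d$-topology and $\tau$ are translation invariant, it suffices (as you do, invoking Theorem \ref{thmfall2} and Remark \ref{winter14}) to match neighborhood filters at the origin, which you carry out correctly in both directions. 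One minor simplification in the second containment: since $\phi(t)\le t$, you can take any $\epsilon<r/2$ without inverting $\phi$; the inversion $\phi^{-1}(s)=s/(1-s)$ is genuinely needed only in the first containment.
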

Let $(X,\tau)$ be a locally convex topological vector space. Consider the
topological dual of $X$,
\begin{equation*}
X^*:=\{f: X\rightarrow \reals: \textrm{$f$ is linear and
continuous}\}
\end{equation*}
There are several ways to topologize $X^*$: the weak$^*$
topology, the topology of convex compact convergence, the
topology of compact convergence, and the strong topology (see
\cite{Treves1}, Chapter 19). Here we describe the weak$^*$
topology and the strong topology on $X^*$.
\begin{definition}\lab{winter31}
Let $(X,\tau)$ be a locally convex topological vector space.
\begin{itemizeXALI}
\item The \textbf{weak$^*$ topology} on $X^*$ is the natural topology
induced by the separating family of seminorms $\{p_x\}_{x\in X}$ where
\begin{equation*}
\forall\,x\in X\qquad p_x: X^*\rightarrow \reals,\quad
p_x(f):=|f(x)|
\end{equation*}
A sequence $\{f_m\}$ converges to $f$ in $X^*$ with respect to
the weak$^*$ topology if and only if $f_m(x)\rightarrow f(x)$ in
$\reals$ for all $x\in X$.
\item The \textbf{strong topology} on $X^*$ is the natural topology induced
by the separating family of seminorms $\{p_B\}_{B\subseteq X
\textrm{bounded}}$ where for any bounded subset $B$ of $X$
\begin{equation*}
p_B: X^*\rightarrow \reals\qquad p_B(f):=\sup \{|f(x)|: x\in B\}
\end{equation*}
(it can be shown that for any bounded subset $B$ of $X$ and $f\in
X^*$, $f(B)$ is a bounded subset of $\reals$)
\end{itemizeXALI}
\end{definition}
\begin{remark}\lab{winter32}
\leavevmode
\begin{enumerateXALI}
\item If $X$ is a normed space, then the topology induced by the
norm
\begin{equation*}
\forall\,f\in X^*\qquad \|f\|_{op}=\sup_{\|x\|_X=1}|f(x)|
\end{equation*}
on $X^*$ is the same as the strong topology on $X^*$
(\cite{Treves1}, Page 198).
\item In this manuscript we always consider the topological dual of a locally convex
topological vector space with the strong topology. Of course, it
is worth mentioning that for many of the spaces that we will
consider (including $X=\mathcal{E}(\Omega)$ or $X=D(\Omega)$
where $\Omega$ is an open subset of $\reals^n$) a sequence in
$X^*$ converges with respect to the weak$^*$ topology if and only
if it converges with respect to the strong topology (for more
details on this see the definition and properties of \textbf{Montel
spaces} in section 34.4, page 356 of \cite{Treves1}).
\end{enumerateXALI}
\end{remark}
The following theorem, which is easy to prove, will later be used in the proof of completeness of Sobolev spaces of sections of vector bundles.
 \begin{theorem}[\cite{Reus1}, Page 160]\lab{thmwinter1101}
If $X$ and $Y$ are topological vector spaces and $I:X\rightarrow
 Y$ and $P:Y\rightarrow X$ are continuous linear maps such that $P\circ
 I=id_X$, then $I: X\rightarrow I(X)\subseteq Y$ is a linear
 topological isomorphism and $I(X)$ is closed in $Y$.
 \end{theorem}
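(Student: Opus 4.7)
The plan is to extract everything from the single algebraic identity $P \circ I = \mathrm{id}_X$. First I would observe that this identity forces $I$ to be injective: if $I(x_1) = I(x_2)$, applying $P$ to both sides gives $x_1 = (P\circ I)(x_1) = (P\circ I)(x_2) = x_2$. Thus $I: X \to I(X)$ is a continuous linear bijection onto its image, and the entire question reduces to (a) continuity of the inverse and (b) closedness of $I(X)$ in $Y$.

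For part (a), the key observation is that the inverse of $I: X \to I(X)$ is exactly the restriction $P|_{I(X)}$. Indeed, for any $y \in I(X)$, writing $y = I(x)$, we have $P(y) = P(I(x)) = x = I^{-1}(y)$. Since $P: Y \to X$ is continuous, its restriction to any subspace is continuous (with the subspace topology), so $I^{-1}$ is continuous and $I: X \to I(X)$ is a linear topological isomorphism.

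For part (b), I would characterize $I(X)$ as the fiber of a continuous map over $0$. Namely, $I(X) = \{y \in Y : y = I(P(y))\}$: the inclusion $\subseteq$ follows because if $y = I(x)$ then $I(P(y)) = I(P(I(x))) = I(x) = y$, and $\supseteq$ is immediate since $I(P(y)) \in I(X)$. Equivalently, $I(X) = (\mathrm{id}_Y - I\circ P)^{-1}(\{0\})$. The map $\mathrm{id}_Y - I\circ P : Y \to Y$ is continuous because $I$, $P$, and $\mathrm{id}_Y$ are continuous linear, and the TVS axioms guarantee that composition and the difference operation $y \mapsto y - z$ are continuous. By the first condition in Definition~\ref{winter10} (singletons are closed in any TVS considered here), $\{0\}$ is closed in $Y$, so its preimage $I(X)$ is closed in $Y$.

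I do not anticipate a genuine obstacle: this is a formal consequence of having a continuous linear left inverse. The only subtlety worth stating carefully is that closedness of $\{0\}$ in $Y$ requires the Hausdorff-type axiom built into the paper's definition of topological vector space; absent that axiom one would only get that $I(X)$ is the preimage of $\overline{\{0\}}$, which need not be $\{0\}$.
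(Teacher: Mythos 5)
Your proof is correct. The paper itself does not supply a proof for this result (it is stated with a citation to \cite{Reus1} and no argument), but your argument is the standard one: injectivity and continuity of $I^{-1} = P|_{I(X)}$ from the identity $P\circ I = \mathrm{id}_X$, and closedness of $I(X)$ as the preimage of $\{0\}$ under the continuous map $\mathrm{id}_Y - I\circ P$, using the axiom that singletons are closed in a TVS. You are also right to flag the subtlety: closedness of $\{0\}$ is exactly the point where the Hausdorff-type axiom in Definition~\ref{winter10} is used, and without it one would only obtain that $I(X)$ contains $\overline{\{0\}}$-translates and is the preimage of $\overline{\{0\}}$.
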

Now we briefly review the relationship between the dual of a product of topological vector spaces and the product of the dual spaces. This will play an important role in our discussion of local representations of distributions in vector bundles in later sections.\\\\
Let $X_1,\cdots,X_r$ be topological vector spaces. Recall that the product topology on $X_1\times \cdots\times X_r$ is the smallest topology such
 that the projection maps
 \begin{equation*}
 \pi_k: X_1\times \cdots\times X_r\rightarrow
X_k,\qquad \pi_k(x_1,\cdots,x_r)=x_k
 \end{equation*}
are continuous for all $1\leq k\leq r$. It can be shown that if each $X_k$ is a locally convex topological vector space whose topology is induced by a family of seminorms $\mathcal{P}_k$, then $X_1\times \cdots\times X_r$ equipped with the product topology is a locally convex topological vector space whose topology is induced by the following family of seminorms
\begin{equation*}
\{p_1\circ\pi_1+\cdots+p_r\circ \pi_r: p_k\in\mathcal{P}_k\,\,\forall\, 1\leq k\leq r\}
\end{equation*}
\begin{theorem}[\cite{Reus1}, Page 164]\lab{winter33}
Let $X_1,\cdots,X_r$ be locally convex topological vector spaces. Equip
$X_1\times \cdots\times X_r$ and $X^*_1\times \cdots\times X^*_r$
 with the product topology.
The mapping $\tilde{L}: X_1^*\times \cdots\times X_r^*\rightarrow
(X_1\times \cdots\times X_r)^*$ defined by
\begin{equation*}
\tilde{L}(u_1,\cdots,u_r)=u_1\circ \pi_1+\cdots+u_r\circ \pi_r
\end{equation*}
is a linear topological isomorphism. Its inverse is
\begin{equation*}
L(v)=(v\circ i_1,\cdots,v\circ i_r)
\end{equation*}
where for all $1\leq k\leq r$, $i_k: X_k\rightarrow X_1\times
\cdots\times X_r$ is defined by
\begin{equation*}
i_k(z)=(0,\cdots,0,\underbrace{z}_{\textrm{$k^{th}$
position}},0,\cdots,0)
\end{equation*}
\end{theorem}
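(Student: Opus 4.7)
The plan is to decompose the statement into three pieces: (i) both maps are well-defined linear maps into the stated codomains; (ii) they are mutually inverse; (iii) both are continuous with respect to the given topologies. Together these yield that $\tilde{L}$ is a linear topological isomorphism whose inverse is $L$. An attractive shortcut would be Theorem \ref{thmwinter1101}: once $\tilde{L}$ and $L$ are shown to be continuous linear with $L \circ \tilde{L} = \mathrm{id}$, that theorem already makes $\tilde{L}$ a topological isomorphism onto a closed subspace, after which the remaining identity $\tilde{L}\circ L = \mathrm{id}$ upgrades this to surjectivity onto the entire dual; however, it is just as quick to verify everything directly.

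For well-definedness, each projection $\pi_k:X_1\times\cdots\times X_r\to X_k$ is continuous and linear (by definition of the product topology), so if $u_k\in X_k^*$ then $u_k\circ\pi_k$ is a continuous linear functional on the product and so is the finite sum $\sum_k u_k\circ\pi_k$; this puts $\tilde{L}(u_1,\ldots,u_r)$ in $(X_1\times\cdots\times X_r)^*$. Similarly, each inclusion $i_k$ is continuous and linear (its composition with each $\pi_j$ is either $0$ or $\mathrm{id}_{X_k}$), so $v\circ i_k\in X_k^*$ for $v\in(X_1\times\cdots\times X_r)^*$. Linearity of $\tilde{L}$ and $L$ in their arguments is immediate. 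Using the identities $\pi_j\circ i_k=\delta_{jk}\,\mathrm{id}_{X_k}$ and $\sum_k i_k\circ\pi_k=\mathrm{id}_{X_1\times\cdots\times X_r}$, a short direct calculation yields $L\circ\tilde{L}=\mathrm{id}$ and $\tilde{L}\circ L=\mathrm{id}$.

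The substantive part is showing continuity, for which I invoke the seminorm criterion of Theorem \ref{thmapptvconvergence1}(ii). The strong topology on $(X_1\times\cdots\times X_r)^*$ is generated by the seminorms $p_{\widetilde{B}}(v)=\sup_{\tilde{x}\in\widetilde{B}}|v(\tilde{x})|$, indexed by bounded $\widetilde{B}$ in the product, while the product topology on $X_1^*\times\cdots\times X_r^*$ is generated by the seminorms $(u_1,\ldots,u_r)\mapsto p_{B_k}(u_k)$, one for each index $k$ and each bounded $B_k\subseteq X_k$. For continuity of $\tilde{L}$, given a bounded $\widetilde{B}$ I set $B_k:=\pi_k(\widetilde{B})$, which is bounded because continuous linear maps send bounded sets to bounded sets, and estimate
\[
p_{\widetilde{B}}\bigl(\tilde{L}(u_1,\ldots,u_r)\bigr) = \sup_{(x_1,\ldots,x_r)\in\widetilde{B}}\Bigl|\sum_{k=1}^r u_k(x_k)\Bigr| \le \sum_{k=1}^r p_{B_k}(u_k).
\]
For continuity of $L$, given bounded $B_k\subseteq X_k$ I set $\widetilde{B}_k:=i_k(B_k)$, again bounded, and obtain
\[
p_{B_k}(v\circ i_k) = \sup_{x_k\in B_k}|v(i_k(x_k))| = p_{\widetilde{B}_k}(v).
\]
Each of these fits the seminorm criterion directly.

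Nothing deep is at stake: the only potential pitfall is correctly identifying the generating seminorms for the strong dual of the product and for the product of the strong duals, and invoking the fact that continuous linear maps carry bounded sets to bounded sets. Once that bookkeeping is in place, both continuity estimates reduce to one-line computations, so I expect no real obstacle.
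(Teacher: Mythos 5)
The paper does not actually prove Theorem \ref{winter33}: it is quoted from \cite{Reus1}, and, as stated at the beginning of the subsection on topological vector spaces, proofs of results in that subsection are omitted. So there is no paper proof to compare against; I can only evaluate your argument on its own terms, and it is correct. The decomposition into well-definedness, mutual inversion, and continuity is the natural one. The algebraic identities $\pi_j\circ i_k=\delta_{jk}\,\mathrm{id}_{X_k}$ and $\sum_k i_k\circ\pi_k=\mathrm{id}$ do give $L\circ\tilde L=\mathrm{id}$ and $\tilde L\circ L=\mathrm{id}$ by the computations you indicate. The continuity estimates via Theorem \ref{thmapptvconvergence1}(ii) are exactly the right tool, and the crucial ingredient you correctly isolate is that continuous linear maps send bounded sets to bounded sets, so that $\pi_k(\widetilde B)$ and $i_k(B_k)$ remain bounded and the seminorms $p_{\pi_k(\widetilde B)}$ and $p_{i_k(B_k)}$ are admissible. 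One cosmetic remark: the paper describes the generating seminorms for a product of locally convex spaces as the sums $p_1\circ\pi_1+\cdots+p_r\circ\pi_r$, whereas you work with the individual seminorms $p_{B_k}\circ\pi_k$; these families generate the same topology, so your argument goes through unchanged.
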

The notion of adjoint operator, which frequently appears in the future sections, is introduced in the following theorem.
\begin{theorem}[\cite{Reus1}, Page 163] \lab{thmfallinjectiveadjoint1} Let $X$ and $Y$ be locally
convex topological vector spaces and suppose $T:X\rightarrow Y$
is a continuous linear map. Then
\begin{enumerateXALI}
\item the map
\begin{equation*}
T^*: Y^*\rightarrow X^*\qquad \langle T^*y,x \rangle_{X^*\times
X}=\langle y,Tx \rangle_{Y^*\times Y}
\end{equation*}
is well-defined, linear, and continuous. ($T^*$ is called the
\textbf{adjoint} of $T$.)
\item If $T(X)$ is dense in $Y$, then $T^*:Y^*\rightarrow X^*$ is
injective.
\end{enumerateXALI}
\end{theorem}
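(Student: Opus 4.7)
The plan is to verify the three claims of part (1) — well-definedness, linearity, continuity — separately, and then deduce part (2) from density together with continuity.

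For well-definedness of $T^\ast$, I would observe that for any $y\in Y^\ast$, the map $T^\ast y := y\circ T$ is a composition of two continuous linear maps $X\xrightarrow{T} Y\xrightarrow{y}\reals$, hence continuous and linear, so $T^\ast y\in X^\ast$. Linearity of $T^\ast$ itself is immediate from the bilinearity of composition: $(\alpha y_1 + y_2)\circ T = \alpha(y_1\circ T) + y_2\circ T$.

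For continuity of $T^\ast:Y^\ast\to X^\ast$, where both spaces carry the strong topology, I would use the seminorm characterization of continuity given in Theorem \ref{thmapptvconvergence1}. The strong topology on $X^\ast$ is generated by the seminorms $p_A(f)=\sup_{x\in A}|f(x)|$ indexed by bounded $A\subseteq X$, and similarly for $Y^\ast$. The key auxiliary fact is that a continuous linear map between locally convex spaces sends bounded sets to bounded sets; this follows directly from the definition of boundedness (absorption by neighborhoods of the origin) together with continuity of $T$ at $0$. Granting this, for any bounded $A\subseteq X$ the set $B:=T(A)\subseteq Y$ is bounded, and
\begin{equation*}
p_A(T^\ast y) \;=\; \sup_{x\in A}|y(Tx)| \;=\; \sup_{v\in T(A)}|y(v)| \;=\; p_B(y)
\end{equation*}
for all $y\in Y^\ast$. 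By the continuity criterion of Theorem \ref{thmapptvconvergence1}, this inequality (with $c=1$ and a single seminorm on the right) establishes continuity of $T^\ast$.

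For part (2), suppose $T(X)$ is dense in $Y$ and $y\in Y^\ast$ satisfies $T^\ast y = 0$. Then $y(Tx)=0$ for every $x\in X$, so $y$ vanishes on the dense subspace $T(X)$. Since $y$ is continuous and $\{0\}\subseteq\reals$ is closed, the preimage $y^{-1}(0)$ is a closed subset of $Y$ containing a dense set, hence equals $Y$; thus $y=0$, proving injectivity of $T^\ast$.

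I do not anticipate a serious obstacle here — the only non-tautological ingredient is the fact that continuous linear maps between locally convex spaces preserve boundedness, which is standard (and consistent with the conventions cited from Rudin and Treves). All other steps are direct manipulations of compositions and of the defining seminorms for the strong dual topology.
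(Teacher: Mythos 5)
Your proof is correct and follows the standard argument; the paper itself does not prove this theorem but cites it directly from \cite{Reus1}, and your reasoning---composition for well-definedness, the preservation of bounded sets under continuous linear maps to get $p_A(T^*y)=p_{T(A)}(y)$ for the strong-topology seminorms, and the closed-kernel/dense-image argument for injectivity---is precisely the standard proof one finds in that reference and in Treves. Nothing is missing: the auxiliary fact that continuous linear maps between topological vector spaces carry bounded sets to bounded sets follows immediately from the absorption definition of boundedness (Definition \ref{winter11}) together with continuity of $T$ at the origin, and the continuity criterion you invoke is exactly item (2) of Theorem \ref{thmapptvconvergence1}.
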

\begin{remark}\lab{winter34}
In the subsequent sections we will focus heavily on certain
function spaces on domains $\Omega$ in the Euclidean space. For approximation purposes, it is always
 desirable to have $D(\Omega)(=C_c^{\infty}(\Omega))$ as a dense subspace of our function spaces. However, there is another, may be more profound, reason for being interested in having $D(\Omega)$ as a dense subspace. It is
important to note that we would like to use the term ``function
spaces" for topological vector spaces that can be continuously
embedded in $D'(\Omega)$ (see Section 6 for the definition of $D'(\Omega)$) so that concepts such as differentiation
will be meaningful for the elements of our function spaces. Given
a function space $A(\Omega)$ it is usually helpful to consider its
dual too. In order to be able to view the dual of $A(\Omega)$ as
a function space we need to ensure that $[A(\Omega)]^*$ can be
viewed as a subspace of $D'(\Omega)$. To this end, according to
the above theorem, it is enough to ensure that the identity map from $D(\Omega)$ to $A(\Omega)$ is continuous with
dense image in $A(\Omega)$.
\end{remark}
Let us consider more closely two special cases of Theorem
\ref{thmfallinjectiveadjoint1}.
\begin{enumerateXALI}
\item Suppose $Y$ is a normed space and $H$ is a dense subspace of
$Y$. Clearly the identity map $i: H\rightarrow Y$ is continuous
with dense image. Therefore $i^*: Y^*\rightarrow H^*$ ($F\mapsto
F|_H $) is continuous and injective. Furthermore, by the
Hahn-Banach theorem for all $\varphi\in H^*$ there exists $F\in
Y^*$ such that $F|_H=\varphi$ and $\|F\|_{Y^*}=\|\varphi\|_{H^*}$. So the above map
is indeed bijective and $Y^*$ and $H^*$ are isometrically
isomorphic. As an important example, let $\Omega$ be a nonempty open set in $\reals^n$, $s\geq 0$, and $1<p<\infty$. Consider the space $W^{s,p}_0(\Omega)$ (see Section 7 for the definition of $W^{s,p}_0(\Omega)$). $C_c^\infty(\Omega)$ is a dense subspace of $W^{s,p}_0(\Omega)$. Therefore $W^{-s,p'}(\Omega):= [W^{s,p}_0(\Omega)]^*$ is isometrically
isomorphic to $[(C_c^\infty(\Omega), \|.\|_{s,p})]^*$. In particular, if $F\in W^{-s,p'}(\Omega)$, then
\begin{equation*}
\|F\|_{W^{-s,p'}(\Omega)}=\sup_{0\not\equiv \psi\in C_c^\infty(\Omega)}\frac{|F(\psi)|}{\|\psi\|_{s,p}}
\end{equation*}
%\begin{remark}
%a
%\end{remark}
\item Suppose $(Y,\|.\|_Y)$ is a normed space, $(X,\tau)$ is a locally convex
topological vector space,  $X\subseteq Y$, and the identity map
$i: (X,\tau)\rightarrow (Y,\|.\|_Y)$ is continuous with dense
image. So $i^*: Y^*\rightarrow X^*$ ($F\mapsto F|_X$) is
continuous and injective and can be used to identify $Y^*$ with a
subspace of $X^*$.
\begin{itemize}
\item \textbf{Question:} Exactly what elements of $X^*$ are in the
image of $i^*$? That is, which elements of $X^*$ ``belong to"
$Y^*$?
\item \textbf{Answer:} $\varphi\in X^*$ belongs to the image of $i^*$
if and only if $\varphi: (X,\|.\|_Y)\rightarrow \reals$ is
continuous, that is, $\varphi\in X^*$ belongs to the image of $i^*$
if and only if {\fontsize{10}{10}{$\sup_{x\in X\setminus
\{0\}}\frac{|\varphi(x)|}{\|x\|_Y}<\infty$}}.
\end{itemize}
So an element $\varphi\in X^*$ can be considered as an element of
$Y^*$ if and only if
\begin{equation*}
\sup_{x\in X\setminus
\{0\}}\frac{|\varphi(x)|}{\|x\|_Y}<\infty\,.
\end{equation*}
Furthermore if we denote the unique corresponding element in $Y^*$ by
$\tilde{\varphi}$ (normally we identify $\varphi$ and
$\tilde{\varphi}$ and we use the same notation for both) then since
$X$ is dense in $Y$
\begin{equation*}
\|\tilde{\varphi}\|_{Y^*}=\sup_{y\in Y\setminus
\{0\}}\frac{|\tilde{\varphi}(y)|}{\|y\|_Y}=\sup_{x\in
X\setminus \{0\}}\frac{|\varphi(x)|}{\|x\|_Y}<\infty
\end{equation*}
\begin{remark}\lab{winter35}
To sum up, given an element $\varphi\in X^*$ in order to show
that $\varphi$ can be considered as an element of $Y^*$ we just
need to show that $\sup_{x\in X\setminus
\{0\}}\frac{|\varphi(x)|}{\|x\|_Y}<\infty$ and in that case, norm
of $\varphi$ as an element of $Y^*$ is $\sup_{x\in X\setminus
\{0\}}\frac{|\varphi(x)|}{\|x\|_Y}$. However, it is important to
notice that if $F:Y\rightarrow \reals$ is a linear map, $X$ is a dense subspace of $Y$, and $F|_X: (X,\|.\|_Y)\rightarrow \reals$
is bounded, that does NOT imply that $F\in Y^*$. It just shows
that there exists $G\in Y^*$ such that $G|_X=F|_X$.
\end{remark}
\end{enumerateXALI}
We conclude this section by a quick review of the inductive limit topology.
\begin{definition}\lab{winter36}
Let $X$ be a vector space and let $\{X_\alpha\}_{\alpha\in I}$ be a family of vector subspaces of $X$ with the property that
\begin{itemize}
\item for each $\alpha \in I$, $X_\alpha$ is equipped with a topology that makes it a locally convex topological vector space, and
\item $\bigcup_{\alpha\in I} X_\alpha=X$.
\end{itemize}
The \textbf{inductive limit topology} on $X$ with respect to the family $\{X_\alpha\}_{\alpha\in I}$ is defined to be the largest topology with
respect to which
\begin{enumerate}
\item $X$ is a locally convex topological vector space, and
\item all the inclusions $X_\alpha\subseteq
X$ are continuous.
\end{enumerate}
\end{definition}
\begin{theorem}\lab{winter37}
Let $X$ be a vector space equipped with the inductive limit topology with respect to $\{X_\alpha\}$ as described above. If $Y$ is a locally convex vector space, then a linear map $T:X\rightarrow Y$ is continuous if and only if $T|_{X_\alpha}: X_\alpha\rightarrow Y$ is continuous for all $\alpha\in I$.
\end{theorem}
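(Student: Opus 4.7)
The forward direction is immediate from the definition: each inclusion $i_\alpha: X_\alpha \hookrightarrow X$ is continuous by the very definition of the inductive limit topology $\tau$, so if $T:X\to Y$ is continuous then $T|_{X_\alpha}=T\circ i_\alpha$ is continuous as a composition.

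For the reverse direction, my plan is to reduce the continuity of $T$ to the continuity of scalar seminorms, and then invoke the maximality built into Definition \ref{winter36}. Assume each $T|_{X_\alpha}$ is continuous. Choose a separating family $\mathcal{Q}$ of seminorms generating the topology on $Y$, and a separating family $\mathcal{P}$ of seminorms generating $\tau$. By Theorem \ref{thmapptvconvergence1}(ii), it suffices to show that $q\circ T$ is a continuous seminorm on $(X,\tau)$ for every $q\in\mathcal{Q}$. Once that is established, the required seminorm estimate $|q\circ T(x)|\leq c\max_i p_i(x)$ follows directly from the characterization in Theorem \ref{thmapptvconvergence1}(iii).

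So fix $q\in\mathcal{Q}$. First note that for each $\alpha$ the restriction $(q\circ T)|_{X_\alpha}=q\circ T|_{X_\alpha}$ is a continuous seminorm on $X_\alpha$, as a composition of two continuous maps. To lift this local continuity to $(X,\tau)$, consider the locally convex topology $\sigma$ on $X$ generated (via Theorem \ref{winter28}) by the separating family $\mathcal{P}\cup\{q\circ T\}$. By construction $\sigma$ is at least as fine as $\tau$. On the other hand, every generating seminorm of $\sigma$ restricts continuously to each $X_\alpha$: the seminorms in $\mathcal{P}$ because $i_\alpha:X_\alpha\to(X,\tau)$ is continuous, and the adjoined seminorm $q\circ T$ by the observation just made. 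Hence Theorem \ref{thmapptvconvergence1}(ii) shows that each inclusion $i_\alpha:X_\alpha\to(X,\sigma)$ is continuous. The maximality clause in Definition \ref{winter36} then forces $\sigma\subseteq\tau$, so $\sigma=\tau$, and therefore $q\circ T$ is $\tau$-continuous.

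The hard part is really this last step: everything else is a formal manipulation of seminorms. The substance of the argument is that one can enlarge a generating family of $\tau$ by a single seminorm and check that maximality keeps the topology unchanged, provided the new seminorm is detected on each $X_\alpha$. Put differently, the whole statement is just the universal property of the inductive limit repackaged through Theorem \ref{thmapptvconvergence1}; no compactness, countability, or countability-of-the-index-set hypothesis is needed.
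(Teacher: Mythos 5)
The paper does not prove Theorem \ref{winter37}; the preamble to Section 4.3 explicitly states that results there are taken from the cited references and that proofs will not be given. So there is no in-paper argument to compare against, and I am judging your proof on its own.

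Your argument is correct, and the central move is the right one: adjoin the single seminorm $q\circ T$ to a family of seminorms generating $\tau$, observe that the resulting topology $\sigma$ still makes every inclusion $i_\alpha:X_\alpha\to X$ continuous, and invoke the maximality clause in Definition \ref{winter36} to force $\sigma=\tau$. This is a clean realization of the universal property that avoids having to exhibit the neighborhood base of the inductive limit explicitly (the usual route in Tr\`eves works with absolutely convex absorbing sets $U$ with each $U\cap X_\alpha$ a neighborhood of $0$). Two citations should be tightened, neither affecting soundness. First, Theorem \ref{thmapptvconvergence1}(iii) is stated for \emph{linear} maps $X\to\reals$, whereas $q\circ T$ is a seminorm; both places where you pass from ``$s$ is a continuous seminorm'' to ``$s\le c\max_{i}p_i$'' actually rest on the standard lemma that a $\tau$-continuous seminorm is dominated by a constant times a max of finitely many generating seminorms --- the set $\{s<1\}$ contains a basic neighborhood $\bigcap_{i}\{p_i<\varepsilon\}$ and homogeneity supplies the constant --- which is the same proof idea but a different statement from the one you cite. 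Second, you implicitly assume that the inductive limit topology $\tau$ is induced by a separating family of seminorms; this is true because Definition \ref{winter10} makes every topological vector space Hausdorff (Theorem \ref{thmfall1}) and every Hausdorff locally convex topology arises from the Minkowski functionals of a convex balanced local base, but the paper records only the forward direction (seminorms $\Rightarrow$ topology, Theorem \ref{winter28}) and not this converse, so it is a fact you are importing and should flag.
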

\begin{theorem}\lab{winter38}
Let $X$ be a vector space and let $\{X_j\}_{j\in \mathbb{N}_0}$ be a nested family of vector subspaces of $X$:
\begin{equation*}
X_0\subsetneq X_1\subsetneq \cdots \subsetneq X_j\subsetneq \cdots
\end{equation*}
Suppose each $X_j$ is equipped with a topology that makes it a locally convex topological vector space. Equip $X$ with the inductive limit topology with respect to $\{X_j\}$.
Then the following topologies on $X^{\times r}$ are equivalent (=they are the same)
\begin{enumerate}
\item The product topology
\item The inductive limit topology with respect to the family $\{X_j^{\times r}\}$. (For each $j$, $X_j^{\times r}$ is equipped with the product topology)
\end{enumerate}
As a consequence, if $Y$ is a locally convex vector space, then a
linear map $T:X^{\times r}\rightarrow Y$ is continuous if and
only if $T|_{X_j^{\times r}}: X_j^{\times r}\rightarrow Y$ is
continuous for all $j\in \mathbb{N}_0$.
\end{theorem}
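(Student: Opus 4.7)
The plan is to denote by $\tau_1$ the product topology on $X^{\times r}$ (with $X$ carrying its inductive limit topology) and by $\tau_2$ the inductive limit topology with respect to the family $\{X_j^{\times r}\}$, and then to prove the equality $\tau_1=\tau_2$ by double inclusion; the consequence about continuity of $T\colon X^{\times r}\to Y$ will then follow at once by applying Theorem~\ref{winter37} to $(X^{\times r},\tau_2)$.

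For $\tau_1\subseteq\tau_2$ I would invoke the maximality built into Definition~\ref{winter36}. A product of locally convex TVS topologies is locally convex, so $(X^{\times r},\tau_1)$ is a locally convex TVS. Moreover, for each $j$ the inclusion $X_j^{\times r}\hookrightarrow(X^{\times r},\tau_1)$ is continuous, because a map into a product is continuous iff each coordinate composition is, and each $X_j^{\times r}\xrightarrow{\pi_k}X_j\hookrightarrow X$ is continuous by the product topology on $X_j^{\times r}$ and the definition of the inductive limit on $X$. Since $\tau_2$ is the largest locally convex TVS topology on $X^{\times r}$ making all these inclusions continuous, one concludes $\tau_1\subseteq\tau_2$.

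The reverse inclusion $\tau_2\subseteq\tau_1$ is the crux and cannot be handled directly by Theorem~\ref{winter37}, which only characterizes continuity \emph{out of} the inductive limit. I would argue at the level of $0$-neighborhood bases, using the standard consequence of Definition~\ref{winter36} that the convex balanced absorbing subsets $V$ with $V\cap X_j$ a $0$-neighborhood in $X_j$ for every $j$ form a base of $0$-neighborhoods for the inductive limit (and likewise for $X^{\times r}$ with the family $\{X_j^{\times r}\}$). Given a convex balanced $\tau_2$-neighborhood $U$ of the origin, so that $U\cap X_j^{\times r}$ is a $0$-neighborhood in the product $X_j^{\times r}$ for each $j$, I would define
\[
W_k := \{\,x\in X : (0,\ldots,0,rx,0,\ldots,0)\in U\,\},\qquad 1\le k\le r,
\]
with $rx$ placed in the $k$-th slot. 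Each $W_k$ is convex, balanced, and absorbing, and $W_k\cap X_j$ is the preimage of $U\cap X_j^{\times r}$ under the continuous linear map $X_j\to X_j^{\times r}$ sending $x$ to $(0,\ldots,rx,\ldots,0)$, hence a $0$-neighborhood of $X_j$. Thus each $W_k$ is a $0$-neighborhood of $X$ in the inductive limit, and for any $(x_1,\ldots,x_r)\in W_1\times\cdots\times W_r$ convexity of $U$ gives
\[
(x_1,\ldots,x_r)=\frac{1}{r}\sum_{k=1}^{r}(0,\ldots,0,rx_k,0,\ldots,0)\in U,
\]
so $W_1\times\cdots\times W_r\subseteq U$ is a $\tau_1$-neighborhood of $0$ sitting inside $U$.

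The hard part is precisely this construction of the $W_k$: since one is showing continuity of the identity $(X^{\times r},\tau_1)\to(X^{\times r},\tau_2)$ \emph{into} the inductive limit, the universal property is unavailable, and one must exhibit a product-shaped neighborhood inside an arbitrary convex balanced $\tau_2$-neighborhood by hand; the averaging-over-coordinates idea, encoded in the factor $r$ and the convexity identity, is what bridges the product base of $\tau_1$ with the more exotic $\tau_2$-neighborhoods. Once $\tau_1=\tau_2$ is in hand, the stated continuity criterion for $T\colon X^{\times r}\to Y$ is immediate from Theorem~\ref{winter37} applied to $(X^{\times r},\tau_2)$.
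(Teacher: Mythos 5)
Your proof is correct. The paper itself does not prove Theorem~\ref{winter38}: the preamble to Section~4.3 states that all results there are quoted from or direct consequences of standard references (Rudin, Grubb, Reus, Tr\`eves), so there is no in-paper argument to compare against. Your two-sided argument is sound: the easy inclusion $\tau_1\subseteq\tau_2$ follows from the maximality in Definition~\ref{winter36} exactly as you say, and the reverse inclusion is correctly handled at the level of $0$-neighborhood bases — the sets $W_k$ are indeed convex, balanced, absorbing, and have $W_k\cap X_j$ a $0$-neighborhood in $X_j$ (as the preimage of $U\cap X_j^{\times r}$ under the continuous map $x\mapsto(0,\ldots,rx,\ldots,0)$), so they are $0$-neighborhoods of the inductive limit on $X$; the factor $r$ in the definition is precisely what makes the convex-combination identity $(x_1,\ldots,x_r)=\tfrac{1}{r}\sum_k(0,\ldots,rx_k,\ldots,0)$ place $W_1\times\cdots\times W_r$ inside $U$. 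The one ingredient you import tacitly — that a convex balanced absorbing $V$ with each $V\cap X_j$ a $0$-neighborhood of $X_j$ is a $0$-neighborhood of the inductive limit, and that such sets form a base — is standard for locally convex inductive limits (and is exactly the form in which the base is described in Tr\`eves), so the appeal is legitimate given that the paper already treats inductive limit facts of this kind as citable.
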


%Let $X$ and $Y$ be two topological vector spaces. A continuous
%bilinear map $B:X\times Y\rightarrow \reals$ is called a
%\textbf{pairing} of $X$ and $Y$. Given a pairing $B: X\times
%Y\rightarrow \reals$, it induces two continuous linear maps:
%\begin{align*}
%& B_1:X\rightarrow Y^{*},\quad B_1(x):y\mapsto B(x,y)\\
%& B_2:Y\rightarrow X^{*},\quad B_2(y): x\mapsto B(x,y)
%\end{align*}
%The pairing $B$ is said to be \textbf{nondegenerate} if both $B_1$
%and $B_2$ are injective. $B$ is called \textbf{perfect} if
%$B_1:X\rightarrow Y^{*}$ and $B_2:Y\rightarrow X^{*}$ are
%isomorphisms. So any perfect pairing is automatically
%nondegenerate.

%\section{Banach Spaces}
\section{Review of Some Results From Differential Geometry}
The main purpose of this section is to set the notations and
terminology straight. To this end we cite the definitions of
several basic terms and a number of basic properties that we will
frequently use. The main reference for the majority of the
definitions is the invaluable book by John M. Lee (\cite{Lee3}).

\subsection{Smooth Manifolds}
 Suppose $M$ is a topological space. We say that $M$ is a
topological manifold of dimension $n$ if it is Hausdorff,
second-countable, and locally Euclidean in the sense that each
point of $M$ has a neighborhood that is homeomorphic to an open
subset of $\reals^n$. It is easy to see that the following
statements are equivalent (\cite{Lee3}, Page 3):
\begin{enumerateXALI}
\item Each point of $M$
 has a neighborhood that is homeomorphic to an open subset of
 $\reals^n$.
\item Each point of $M$
 has a neighborhood that is homeomorphic to an open ball in
 $\reals^n$.
\item Each point of $M$
 has a neighborhood that is homeomorphic to
 $\reals^n$.
\end{enumerateXALI}
By a \textbf{coordinate chart} (or just \textbf{chart}) on $M$ we
mean a pair $(U,\varphi)$, where $U$ is an open subset of $M$ and
$\varphi: U\rightarrow \hat{U}$ is a homeomorphism from $U$ to an
open subset $\hat{U}=\varphi(U)\subseteq \reals^n$. $U$ is called
a \textbf{coordinate domain} or a \textbf{coordinate neighborhood}
of each of its points and $\varphi$ is called a
\textbf{coordinate map}. An \textbf{atlas for $M$} is a
collection of charts whose domains cover $M$. Two charts
$(U,\varphi)$ and $(V,\psi)$ are said to be \textbf{smoothly
compatible} if either $U\cap V=\emptyset$ or the transition map
$\psi\circ \varphi^{-1}$ is a $C^\infty$-diffeomorphism. An atlas
$\mathcal{A}$ is called a \textbf{smooth atlas} if any two charts
in $\mathcal{A}$ are smoothly compatible with each other. A
smooth atlas $\mathcal{A}$ on $M$ is \textbf{maximal} if
it is not properly contained in any larger smooth atlas.  A
\textbf{smooth structure} on $M$ is a maximal smooth atlas. A
\textbf{smooth manifold} is a pair $(M,\mathcal{A})$, where $M$ is a
topological manifold and $\mathcal{A}$ is a smooth structure on
$M$. Any chart $(U,\varphi)$ contained in the given maximal
smooth atlas is called a \textbf{smooth chart}. If $M$ and $N$ are two smooth manifolds, a map $F:M\rightarrow N$ is said to be a smooth $(C^\infty)$ map if for every $p\in M$, there exist smooth charts $(U,\varphi)$ containing $p$ and $(V,\psi)$ containing $F(p)$ such that $F(U)\subseteq V$ and $\psi\circ F\circ \varphi^{-1}\in C^\infty(\varphi(U))$. It can be shown that if $F$ is smooth, then its restriction to every open subset of $M$ is smooth. Also if every $p\in M$ has a neighborhood $U$ such that $F|_U$ is smooth, then $F$ is smooth.
%, and the
%corresponding coordinate map $\varphi$ is called a \textbf{smooth
%coordinate map}. If $(U,\varphi)$ is a smooth chart, then $U$ is
%called a \textbf{smooth coordinate domain} or a \textbf{smooth
%coordinate neighborhood}.
\begin{remark}\lab{winter39}
\leavevmode
\begin{itemizeX}
\item Sometimes we use the shorthand notation $M^n$ to indicate that $M$ is $n$-dimensional.
\item Clearly if $(U,\varphi)$ is a smooth chart and $V$ is an open
subset of $U$, then $(V,\psi)$ where $\psi=\varphi|_V$ is also a
smooth chart (i.e. it belongs to the same maximal atlas).
\item Every smooth atlas $\mathcal{A}$ for $M$ is contained in a unique maximal smooth
atlas, called the \textbf{smooth structure determined by
$\mathcal{A}$}.
\item If $M$ is a compact smooth manifold, then there exists a smooth atlas with finitely many elements that determines the smooth
structure of $M$ (this is immediate from the definition of
compactness).
\end{itemizeX}
\end{remark}
\begin{definition}{\noindent}\lab{winter40}
\begin{itemizeX}
\leavevmode
\item We say that a smooth atlas for a smooth manifold $M$ is a
\textbf{geometrically Lipschitz (GL)} smooth atlas if the image of
each coordinate domain in the atlas under the corresponding
coordinate map is a nonempty bounded open set with Lipschitz
boundary.
\item We say that a smooth atlas for a smooth manifold $M^n$ is a
\textbf{generalized geometrically Lipschitz (GGL)} smooth atlas if the image of
each coordinate domain in the atlas under the corresponding
coordinate map is the entire $\reals^n$ or a nonempty bounded open set with Lipschitz
boundary.
\item We say that a smooth atlas for a smooth manifold $M^n$ is a
\textbf{nice} smooth atlas if the image of each coordinate domain
in the atlas under the corresponding coordinate map is a ball in
$\reals^n$.% or the entire $\reals^n$.
\item We say that a smooth atlas for a smooth manifold $M^n$ is a
\textbf{super nice} smooth atlas if the image of each coordinate domain
in the atlas under the corresponding coordinate map is the entire
$\reals^n$.% or the entire $\reals^n$.
\item We say that two smooth atlases $\{(U_\alpha,\varphi_\alpha)\}_{\alpha\in I}$ and $\{(\tilde{U}_\beta,\tilde{\varphi}_\beta)\}_{\beta\in J}$
for a smooth manifold $M^n$ are \textbf{geometrically Lipschitz
compatible (GLC)} smooth atlases provided that each atlas is GGL and moreover for all $\alpha\in I$ and $\beta\in J$ with
$U_\alpha\cap\tilde{U}_\beta \neq \emptyset$,
$\varphi_\alpha(U_\alpha\cap\tilde{U}_\beta)$ and
$\tilde{\varphi}_\beta(U_\alpha\cap\tilde{U}_\beta)$ are nonempty
bounded open sets with Lipschitz boundary or the entire $\reals^n$.
\end{itemizeX}
\end{definition}
Clearly every super nice smooth atlas is also a GGL smooth atlas; every nice smooth atlas is also a GL smooth atlas, and every GL smooth atlas is also a GGL smooth atlas. Also
note that two arbitrary GL smooth atlases are not necessarily GLC
smooth atlases because the intersection of two Lipschitz domains is
not necessarily Lipschitz (see e.g. \cite{Bastos2014}, pages
115-117).\\

Given a smooth atlas $\{(U_\alpha,\varphi_\alpha)\}$ for a
compact smooth manifold $M$, it is not necessarily possible to construct
a new atlas
 $\{(U_\alpha,\tilde{\varphi}_\alpha)\}$ such that this new atlas is
 nice; for instance if $U_\alpha$ is not connected
 we cannot find $\tilde{\varphi}_\alpha$ such that
 $\tilde{\varphi}_\alpha(U_\alpha)=\reals^n$ (or any ball in
 $\reals^n$). However, as the following lemma states it is always
 possible to find a refinement that is nice.
 \begin{lemma}\lab{lemfallniceatlas1}
Suppose $\{(U_\alpha,\varphi_\alpha)\}_{1\leq \alpha\leq N}$ is a
smooth atlas for a compact smooth manifold $M$. Then there exists
a finite open cover $\{V_\beta \}_{1\leq \beta\leq
 L}$ of $M$ such that
\begin{equation*}
\forall\,\beta\qquad \exists 1\leq \alpha(\beta)\leq
N\,\,\textrm{s.t.}\,\,\quad V_\beta\subseteq
U_{\alpha(\beta)},\quad \varphi_{\alpha(\beta)}(V_\beta)\,
\textrm{is a ball in $\reals^n$}
\end{equation*}
Therefore $\{(V_\beta,\varphi_{\alpha(\beta)}|_{V_\beta})\}_{1\leq
\beta\leq L}$ is a nice smooth atlas.
\end{lemma}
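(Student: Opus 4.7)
The plan is to build the refinement pointwise and then invoke compactness. First, for each $p\in M$, I would pick an index $\alpha(p)\in\{1,\ldots,N\}$ with $p\in U_{\alpha(p)}$, which exists because the original atlas covers $M$. Since $\varphi_{\alpha(p)}(U_{\alpha(p)})$ is an open subset of $\reals^n$ containing $\varphi_{\alpha(p)}(p)$, I can choose an open ball $B_p\subseteq \varphi_{\alpha(p)}(U_{\alpha(p)})$ centered at $\varphi_{\alpha(p)}(p)$, and then define $V_p:=\varphi_{\alpha(p)}^{-1}(B_p)$. Since $\varphi_{\alpha(p)}$ is a homeomorphism onto its image, $V_p$ is open in $M$, it contains $p$, it sits inside $U_{\alpha(p)}$, and it satisfies $\varphi_{\alpha(p)}(V_p)=B_p$, which is a ball in $\reals^n$.

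Next, the collection $\{V_p\}_{p\in M}$ is an open cover of the compact space $M$, so I can extract a finite subcover $\{V_{p_1},\ldots,V_{p_L}\}$ and relabel these as $\{V_\beta\}_{1\le \beta\le L}$ with $\alpha(\beta):=\alpha(p_\beta)$. By construction, the two required conditions $V_\beta\subseteq U_{\alpha(\beta)}$ and $\varphi_{\alpha(\beta)}(V_\beta)$ being a ball in $\reals^n$ hold immediately. To conclude that $\{(V_\beta,\varphi_{\alpha(\beta)}|_{V_\beta})\}_{1\le \beta\le L}$ is a nice smooth atlas, I would invoke the bullet point in the earlier remark stating that the restriction of a smooth chart to an open subset is itself a smooth chart (i.e., lies in the same maximal smooth atlas); smooth compatibility of any two such restricted charts then follows from the smooth compatibility of the original charts $(U_\alpha,\varphi_\alpha)$.

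There is essentially no substantive obstacle here: the argument is a direct combination of compactness of $M$ with the fact that $\reals^n$ has a local basis of open balls. The only mild point requiring a bit of care is the final step, where one must verify that the restricted charts genuinely form an atlas in the sense of the paper's definition, but this is precisely what the remark on restrictions of smooth charts guarantees.
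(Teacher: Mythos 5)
Your proof is correct and follows essentially the same route as the paper's: pick a coordinate ball around each point inside some chart image, pull it back, and use compactness to extract a finite subcover. The only cosmetic difference is that the paper indexes these sets over all pairs $(\alpha, p)$ with $p \in U_\alpha$ rather than choosing a single $\alpha(p)$ per point, and your closing remark verifying smooth compatibility of the restricted charts makes explicit a step the paper leaves implicit.
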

\begin{proof}
For each $1\leq \alpha\leq N$ and $p\in U_\alpha$, there exists
$r_{\alpha p}>0$ such that $B_{r_{\alpha
p}}(\varphi_\alpha(p))\subseteq \varphi_\alpha(U_\alpha)$. Let
$V_{\alpha p}:=\varphi_\alpha^{-1}(B_{r_{\alpha
p}}(\varphi_\alpha(p)))$. $\bigcup_{1\leq \alpha\leq
N}\bigcup_{p\in U_\alpha}V_{\alpha p}$ is an open cover of $M$ and
so it has a finite subcover $\{V_{\alpha_1p_1},\cdots,
V_{\alpha_Lp_L}\}$. Let $V_\beta=V_{\alpha_\beta p_\beta}$.
Clearly, $V_\beta\subseteq U_{\alpha_\beta}$ and
$\varphi_{\alpha_\beta}(V_\beta)$ is a ball in $\reals^n$.
\end{proof}
\begin{remark}\lab{remwinter41b}
Every open ball in $\reals^n$ is $C^\infty$-diffeomorphic to $\reals^n$. Also compositions of diffeomorphisms is a diffeomorphism. Therefore existence of a finite nice smooth atlas on a compact smooth manifold (which is guaranteed by the above lemma) implies the existence of a finite super nice smooth atlas.
\end{remark}
\begin{lemma}\lab{winter41}
Let $M$ be a compact smooth manifold. Let $\{U_\alpha\}_{1\leq
\alpha\leq N}$ be an open cover of $M$. Suppose $C$ is a closed
set in $M$ (so $C$ is compact) which is contained in $U_\beta$
for some $1\leq \beta\leq N$. Then there exists an open cover
$\{A_\alpha\}_{1\leq \alpha\leq N}$ of $M$ such that $C\subseteq
A_\beta\subseteq \bar{A}_\beta\subseteq U_\beta$ and
$A_\alpha\subseteq \bar{A}_\alpha\subseteq U_\alpha$ for all
$\alpha\neq \beta$.
\end{lemma}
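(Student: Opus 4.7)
The plan is to prove a standard shrinking-type lemma, exploiting normality of compact Hausdorff spaces (every compact smooth manifold is Hausdorff by assumption, hence normal), together with a straightforward induction on $\alpha$. The only twist compared to the textbook shrinking lemma is that we must force $C$ into $A_\beta$ from the very first step rather than only shrinking into $U_\beta$.

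First I would relabel so that $\beta=1$ (this is merely notational and does not affect the conclusion). I then construct $A_1$ by considering the closed set
\begin{equation*}
K_1 \;:=\; C \,\cup\, \bigl(M \setminus (U_2 \cup \cdots \cup U_N)\bigr).
\end{equation*}
Since $M = U_1 \cup \cdots \cup U_N$, the complement $M\setminus(U_2\cup\cdots\cup U_N)$ lies in $U_1$, and by hypothesis so does $C$; hence $K_1 \subseteq U_1$. Because $M$ is compact Hausdorff and thus normal, there is an open set $A_1$ with $K_1 \subseteq A_1 \subseteq \bar{A}_1 \subseteq U_1$. By construction $C \subseteq A_1$, and $\{A_1,U_2,\dots,U_N\}$ still covers $M$ since any point not in $U_2\cup\cdots\cup U_N$ lies in $K_1\subseteq A_1$.

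Next I iterate: suppose for some $2\le \alpha\le N$ I have already produced open sets $A_1,\ldots,A_{\alpha-1}$ with $\bar{A}_i\subseteq U_i$ for $i<\alpha$ and such that
\begin{equation*}
\{A_1,\ldots,A_{\alpha-1},U_\alpha,U_{\alpha+1},\ldots,U_N\}
\end{equation*}
covers $M$. Put
\begin{equation*}
K_\alpha \;:=\; M\setminus\bigl(A_1\cup\cdots\cup A_{\alpha-1}\cup U_{\alpha+1}\cup\cdots\cup U_N\bigr),
\end{equation*}
which is closed and, by the inductive covering hypothesis, contained in $U_\alpha$. Normality provides an open $A_\alpha$ with $K_\alpha\subseteq A_\alpha\subseteq \bar{A}_\alpha\subseteq U_\alpha$, and the family with $U_\alpha$ replaced by $A_\alpha$ still covers $M$ by the definition of $K_\alpha$. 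Note also that replacing $U_\alpha$ by $A_\alpha$ for $\alpha\ge 2$ never removes any point of $C$, since $C\subseteq A_1$ is already covered.

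After $N-1$ inductive steps I obtain the desired cover $\{A_1,\dots,A_N\}$ with $C\subseteq A_1$ (i.e.\ $C\subseteq A_\beta$ after un-relabeling) and $\bar{A}_\alpha\subseteq U_\alpha$ for every $\alpha$. The only nontrivial point is the very first one, namely verifying that at each stage the modified family still covers $M$ so that the next $K_\alpha$ is genuinely contained in the corresponding $U_\alpha$; this is an immediate set-theoretic check from the definition of $K_\alpha$, so I do not anticipate any real obstacle. The essential ingredients are just finiteness of the cover (given), normality of $M$, and the bookkeeping trick of folding $C$ into the very first closed set $K_1$.
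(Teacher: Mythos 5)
Your proof is correct, and it takes a genuinely different (and cleaner) route than the paper. The paper constructs the shrinking explicitly with coordinate-ball machinery: for each $p\in U_\alpha$ it chooses a chart ball $V_{\alpha p}$ whose closure sits inside $U_\alpha$, extracts a finite subcover by compactness of $M$, groups the surviving balls by index $\alpha$ to form $W_\alpha$, separately produces $W$ with $C\subseteq W\subseteq\bar W\subseteq U_1$ by applying the Euclidean "compact inside open" fact to $\varphi_1(C)\subset\varphi_1(U_1)$, and finally sets $A_1=W\cup W_1$, $A_\alpha=W_\alpha$. Your argument instead appeals directly to normality of the compact Hausdorff space $M$ and runs the classical finite shrinking lemma induction, with the one bookkeeping modification of folding $C$ into the first closed set $K_1=C\cup\bigl(M\setminus(U_2\cup\cdots\cup U_N)\bigr)$ so that $C\subseteq A_1$ automatically. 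The tradeoff is purely pedagogical: the paper's version is self-contained at the level of charts and does not quote normality, while yours is shorter, more conceptual, and generalizes immediately to any normal space with a finite open cover. Both are valid; your verification that the cover property is preserved at each inductive step (any point missed by the remaining $U$'s and the already-shrunk $A$'s lies in $K_\alpha\subseteq A_\alpha$) is exactly the right check.
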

\begin{proof}
Without loss of generality we may assume that $\beta=1$. For each
$1\leq \alpha\leq N$ and $p\in U_\alpha$, there exists $r_{\alpha
p}>0$ such that $B_{2r_{\alpha p}}(\varphi_\alpha(p))\subseteq
\varphi_\alpha(U_\alpha)$. Let $V_{\alpha
p}:=\varphi_\alpha^{-1}(B_{r_{\alpha p}}(\varphi_\alpha(p)))$.
Clearly $p\in V_{\alpha p}\subseteq \bar{V}_{\alpha p}\subseteq
U_\alpha$. Since $M$ is compact, the open cover $\bigcup_{1\leq
\alpha\leq N}\bigcup_{p\in U_\alpha} V_{\alpha p}$ of $M$ has a
finite subcover $\mathcal{A}$. For each $1\leq \alpha\leq N$ let
$E_\alpha=\{p\in U_\alpha: V_{\alpha p}\in \mathcal{A}\}$ and
\begin{equation*}
I_1=\{\alpha: E_\alpha\neq \emptyset\}
%,\qquad I_2=\{\alpha:
%E_\alpha=\emptyset\}
\end{equation*}
If $\alpha\in I_1$, we let $W_\alpha=\bigcup_{p\in
E_\alpha}V_{\alpha p}$. For $\alpha \not\in I_1$ choose one point
$p\in U_\alpha$ and let $W_\alpha=V_{\alpha p}$.\\
 $C$ is compact
 so $\varphi_1(C)$ is a compact set inside the open set
$\varphi_1(U_1)$. Therefore there exists an open set $B$ such that
\begin{equation*}
\varphi_1(C)\subseteq B\subseteq \bar{B}\subseteq \varphi_{1}(U_1)
\end{equation*}
Let $W=\varphi_1^{-1}(B)$. Clearly $C\subseteq W\subseteq
\bar{W}\subseteq U_\alpha$. Now Let
\begin{align*}
& A_1=W\bigcup W_1\\
& A_\alpha= W_\alpha\quad \forall \alpha>1
\end{align*}
Clearly $A_1$ contains $W$ which contains $C$. Also union of
$A_\alpha$'s contains $\bigcup_{\alpha=1}^N\bigcup_{p\in E_\alpha}
V_{\alpha p}$ which is equal to $M$. Closure of a union of sets
is a subset of the union of closures of those sets. Therefore for
each $\alpha$, $\bar{A}_\alpha\subseteq U_\alpha$.
\end{proof}
\begin{theorem}[Exhaustion by Compact Sets for
Manifolds]\lab{winter42} Let $M$ be a smooth manifold. There
exists a sequence of compact subsets $(K_j)_{j\in\mathbb{N}}$
such that $\cup_{j\in \mathbb{N}} \mathring{K}_j=M$,
$\mathring{K}_{j+1}\setminus K_j\neq \emptyset$ for all $j$ and
\begin{equation*}
K_1\subseteq \mathring{K}_2\subseteq K_2\subseteq \cdots\subseteq
\mathring{K}_j\subseteq K_j\subseteq \cdots
\end{equation*}
\end{theorem}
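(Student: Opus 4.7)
The plan is to lift the Euclidean exhaustion result (Theorem \ref{winter2}) to the manifold setting, exploiting that $M$ is Hausdorff, second-countable, and locally Euclidean. The core construction will be essentially identical once a countable basis of $M$ consisting of precompact open sets is in hand.

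First I would produce such a basis. For each $p \in M$, pick a smooth chart $(U_p, \varphi_p)$ with $\varphi_p(U_p) = \reals^n$, which exists by Remark \ref{remwinter41b}. The sets $V_p^r := \varphi_p^{-1}(B_r(\varphi_p(p)))$, for $r \in \mathbb{Q}_+$, are open neighborhoods of $p$ whose closures in $M$ equal $\varphi_p^{-1}(\overline{B_r(\varphi_p(p))})$ and hence are compact (they are homeomorphic via $\varphi_p$ to closed Euclidean balls). Their totality is a basis for the topology of $M$, and second-countability extracts from it a countable subfamily $\{W_i\}_{i \in \mathbb{N}}$ which still covers $M$ and still has each $\bar{W}_i$ compact.

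Next I would build $\{K_j\}$ inductively, in parallel with the proof of Theorem \ref{winter2}. Set $K_1 := \bar{W}_1$. Assuming $K_j = \bar{W}_1 \cup \cdots \cup \bar{W}_{n_j}$ has been constructed, compactness of $K_j$ together with $\bigcup_i W_i = M$ yields a smallest $m_j \geq n_j$ with $K_j \subseteq W_1 \cup \cdots \cup W_{m_j}$. I would then set
\begin{equation*}
K_{j+1} := \bar{W}_1 \cup \cdots \cup \bar{W}_{\ell_j},
\end{equation*}
where $\ell_j > m_j$ is the smallest index for which $W_{\ell_j} \not\subseteq K_j$. The inclusions $K_j \subseteq W_1 \cup \cdots \cup W_{m_j} \subseteq \mathring{K}_{j+1}$ yield the required nested chain, and $W_{\ell_j} \setminus K_j$ is a nonempty open subset of $\mathring{K}_{j+1} \setminus K_j$, securing the strict-gap condition. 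Finally, $\bigcup_j \mathring{K}_j = M$ follows because $n_j \to \infty$, so every $W_i$ is eventually contained in some $K_j$.

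The main technical point is the existence of the index $\ell_j$: if it failed to exist, every $W_i$ with $i > m_j$ would be contained in $K_j$, forcing $M = \left(\bigcup_{i \leq m_j} W_i\right) \cup K_j = K_j$, hence $M$ compact. So the construction is genuinely an argument for the non-compact case; in the compact case the strict condition $\mathring{K}_{j+1}\setminus K_j\neq \emptyset$ cannot hold and the conclusion must be read as vacuous or one simply replaces it by the trivial exhaustion $K_j = M$. Managing this dichotomy, and the modest bookkeeping required to keep the auxiliary indices $n_j$ strictly increasing via the $\ell_j$, is the only real work.
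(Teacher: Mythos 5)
The paper states Theorem \ref{winter42} without proof (even the Euclidean ancestor, Theorem \ref{winter2}, is only cited to a reference), so there is no paper proof to compare against; you have supplied one from scratch. Your construction is the standard one and is essentially correct: the charts onto $\reals^n$ with rational radii give a countable family of coordinate balls with compact closures, second-countability (Lindel\"of) extracts a countable subcover $\{W_i\}$, and the induction $K_1=\bar W_1$, $K_{j+1}=\bar W_1\cup\cdots\cup\bar W_{\ell_j}$ delivers $K_j\subseteq W_1\cup\cdots\cup W_{m_j}\subseteq\mathring K_{j+1}$, a nonempty open subset $W_{\ell_j}\setminus K_j$ of $\mathring K_{j+1}\setminus K_j$, and $\bigcup_j\mathring K_j=M$ since $n_j\to\infty$.

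One small slip in your last paragraph: from ``every $W_i$ with $i>m_j$ lies in $K_j$'' you get $M=\bigl(\bigcup_{i\le m_j}W_i\bigr)\cup K_j$, but the next step $=K_j$ does not follow (nothing forces $W_1,\dots,W_{m_j}$ into $K_j$, since $m_j\ge n_j$). What you actually get is $M\subseteq\bigl(\bigcup_{i\le m_j}\bar W_i\bigr)\cup K_j$, a finite union of compact sets, so $M$ is compact --- which is all you need. Your closing observation is correct and worth having on record: when $M$ is compact the condition $\mathring K_{j+1}\setminus K_j\neq\emptyset$ is genuinely unattainable (a finite subcover of $\{\mathring K_j\}$ forces $K_j=M$ eventually), so the theorem as literally stated holds only for noncompact $M$; for compact $M$ the gap condition must be dropped in favor of the trivial exhaustion.
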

\begin{definition}\lab{winter43}
A $C^{\infty}$ partition of unity on a smooth manifold is a collection
of nonnegative $C^{\infty}$ functions $\{\psi_\alpha:M\rightarrow
\reals\}_{\alpha\in A}$ such that
\begin{enumerate}[(i)]
\item the collection of supports, $\{\textrm{supp}\,\psi_\alpha\}_{\alpha\in
A}$ is locally finite in the sense that every point in $M$ has a neighborhood that intersects only finitely many
 of the sets in $\{\textrm{supp}\,\psi_\alpha\}_{\alpha\in
A}$.
\item $\sum \psi_\alpha=1$.
\end{enumerate}
Given an open cover $\{U_\alpha\}_{\alpha\in A}$ of $M$, we say
that a partition of unity $\{\psi_\alpha\}_{\alpha\in A}$ is
subordinate to the open cover $\{U_\alpha\}$ if
$\textrm{supp}\,\psi_\alpha\subseteq U_\alpha$ for every
$\alpha\in A$.
\end{definition}
\begin{theorem}(\cite{loring2011}, Page
146)\lab{thmapp5} Let $M$ be a \textbf{compact} smooth manifold and
$\{U_{\alpha}\}_{\alpha\in A}$ an open cover of $M$. There exists
a $C^\infty$ partition of unity $\{\psi_\alpha\}_{\alpha\in A}$
subordinate to $\{U_{\alpha}\}_{\alpha\in A}$. (Notice that the
index sets are the same.)
\end{theorem}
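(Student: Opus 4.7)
The plan is to reduce to the standard finite case and then redistribute the resulting bump functions back onto the original (possibly uncountable) index set $A$, using that the support requirement forces only finitely many $\psi_\alpha$ to be nonzero anyway.

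First, since $M$ is compact, extract a finite subcover $\{U_{\alpha_1},\ldots,U_{\alpha_N}\}$ of the given cover $\{U_\alpha\}_{\alpha\in A}$, where $\alpha_1,\ldots,\alpha_N\in A$. Next, I would construct an auxiliary $C^\infty$ partition of unity $\{\rho_i\}_{i=1}^N$ with $\operatorname{supp}\rho_i\subseteq U_{\alpha_i}$ subordinate to this \emph{finite} cover. This is the standard construction: iterate the shrinking lemma (Lemma \ref{winter41}), choosing $C=M\setminus\bigcup_{j\neq i}U_{\alpha_j}$ at the $i$th step, to produce an open cover $\{A_i\}_{i=1}^N$ with $\bar A_i\subseteq U_{\alpha_i}$; then for each $i$ and each $p\in A_i$, pull back a standard Euclidean bump supported in a ball inside $\varphi(U_{\alpha_i})$ (via a smooth chart and Theorem on existence of smooth cutoffs cited earlier in Section 4.1) to obtain $\theta_{i,p}\in C_c^\infty(M)$ with $\theta_{i,p}\ge 0$, $\theta_{i,p}>0$ at $p$, and $\operatorname{supp}\theta_{i,p}\subseteq U_{\alpha_i}$; by compactness of $\bar A_i$ extract finitely many and sum to obtain $\theta_i\in C_c^\infty(M)$ with $\theta_i>0$ on $\bar A_i$ and $\operatorname{supp}\theta_i\subseteq U_{\alpha_i}$; finally normalize $\rho_i:=\theta_i/\sum_{j=1}^N\theta_j$, which is well-defined because $\sum_j\theta_j>0$ everywhere on $M$ (each point lies in some $A_j$).

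Now I index back onto $A$. For $\alpha\in A$ define
\begin{equation*}
\psi_\alpha \;:=\; \sum_{\,i:\,\alpha_i=\alpha\,}\rho_i,
\end{equation*}
with the convention that an empty sum is the zero function; in particular $\psi_\alpha\equiv 0$ whenever $\alpha\notin\{\alpha_1,\ldots,\alpha_N\}$. Each $\psi_\alpha\in C^\infty(M)$ is nonnegative, and $\operatorname{supp}\psi_\alpha\subseteq\bigcup_{i:\alpha_i=\alpha}\operatorname{supp}\rho_i\subseteq U_\alpha$. Local finiteness of $\{\operatorname{supp}\psi_\alpha\}_{\alpha\in A}$ is automatic because at most $N$ of the $\psi_\alpha$ are not identically zero, so every point of $M$ has an open neighborhood meeting only finitely many supports. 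Finally, $\sum_{\alpha\in A}\psi_\alpha=\sum_{i=1}^N\rho_i=1$ pointwise on $M$.

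The only genuine content is the finite-cover construction in the first paragraph, which relies on (i) the shrinking lemma already proved as Lemma \ref{winter41}, (ii) the existence of smooth compactly supported bump functions in $\reals^n$ cited in Section 4.1, and (iii) the smooth-chart machinery from Section 5.1 to pull such bumps back to $M$. The relabeling step is bookkeeping; the point of the theorem is precisely that it is \emph{permissible} to use the original index set $A$, and the device of letting $\psi_\alpha\equiv 0$ for $\alpha$ outside the finite selection makes this trivial once one has the finite partition. I do not anticipate any real obstacle; the only subtle point is to make sure, when iterating Lemma \ref{winter41}, that after shrinking each $U_{\alpha_i}$ in turn the shrunken sets $\{A_i\}$ still cover $M$, which is built into the statement of that lemma.
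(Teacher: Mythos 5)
The paper itself gives no proof of this theorem—it merely cites Tu—and your argument is correct and is essentially the standard one found on the cited page: pass to a finite subcover, build a smooth partition of unity $\{\rho_i\}_{i=1}^N$ subordinate to the finite cover via shrinking and bump functions, then re-index onto $A$ by letting $\psi_\alpha$ be the sum of those $\rho_i$ with $\alpha_i=\alpha$ and zero otherwise. Two small points: the notation $\varphi(U_{\alpha_i})$ tacitly treats $U_{\alpha_i}$ as a coordinate domain, which it need not be; as you surely intend, for each $p\in A_i$ one should pick a smooth chart around $p$ \emph{contained in} $U_{\alpha_i}$ and place the Euclidean bump in a small ball inside that chart's image. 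Also, Lemma \ref{winter41} already shrinks the \emph{entire} finite cover in a single application (with, say, $C=\emptyset$), so the iteration you describe—and the accompanying worry that the shrunken sets still cover at each stage—is unnecessary.
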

\begin{theorem}(\cite{loring2011}, Page
347)\lab{thmapp6} Let $\{U_{\alpha}\}_{\alpha\in A}$ be an open
cover of a smooth manifold $M$.
\begin{enumerate}[(i)]
\item There is a $C^\infty$ partition of unity
$\{\varphi_k\}_{k=1}^\infty$ with every $\varphi_k$ \textbf{having
compact support} such that for each $k$,
$\textrm{supp}\,\varphi_k\subseteq U_\alpha$ for some $\alpha\in
A$.
\item If we do not require compact support, then there is a
$C^\infty$ partition of unity $\{\psi_\alpha\}_{\alpha\in A}$
subordinate to $\{U_{\alpha}\}_{\alpha\in A}$.
\end{enumerate}
\end{theorem}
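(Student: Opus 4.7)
The plan is to establish (i) first by a standard shell/exhaustion argument, and then derive (ii) from (i) by grouping the resulting functions according to the cover.

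For (i), the first step is to invoke Theorem \ref{winter42} to obtain a compact exhaustion $K_1 \subseteq \mathring{K}_2 \subseteq K_2 \subseteq \cdots$ of $M$ with $\bigcup_j \mathring{K}_j = M$; by convention I set $K_{-1} = K_0 = \emptyset$. For each $j \geq 1$ the closed shell $S_j := K_j \setminus \mathring{K}_{j-1}$ is compact and is contained in the open thick shell $T_j := \mathring{K}_{j+1} \setminus K_{j-2}$. For every $p \in S_j$, I pick an index $\alpha(p) \in A$ with $p \in U_{\alpha(p)}$ and a smooth chart $(V_p, \phi_p)$ centered at $p$ with $V_p \subseteq U_{\alpha(p)} \cap T_j$; then I transport the Euclidean bump function (the theorem immediately preceding Theorem \ref{winter2}) through $\phi_p$ to produce a $C^\infty$ function $\eta_p : M \to [0,1]$ compactly supported in $V_p$ and equal to $1$ on a neighborhood $W_p \subseteq V_p$ of $p$. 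Compactness of $S_j$ lets me extract a finite subcover $W_{p_{j,1}}, \ldots, W_{p_{j,N_j}}$ of $S_j$, yielding finitely many bump functions $\eta_{j,1}, \ldots, \eta_{j,N_j}$.

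The second step is to relabel the countable collection $\{\eta_{j,i}\}$ as $\{\eta_k\}_{k=1}^\infty$, set $\sigma := \sum_k \eta_k$, and define $\varphi_k := \eta_k/\sigma$. Local finiteness of $\{\textrm{supp}\,\eta_k\}$ follows because each $\textrm{supp}\,\eta_{j,i} \subseteq \overline{T_j} \subseteq K_{j+1}$, so any compact $K \subseteq M$ lies in some $\mathring{K}_J$ and meets only those $\eta_{j,i}$ with $j-2 \leq J$, finitely many. Consequently $\sigma$ is locally a finite sum of smooth nonnegative functions, hence smooth; and $\sigma > 0$ everywhere since every $q \in M$ lies in some $S_j$ and hence in some $W_{p_{j,i}}$ on which $\eta_{j,i} = 1$. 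Thus $\{\varphi_k\}$ has compact supports, is smooth, is locally finite, sums to $1$, and satisfies $\textrm{supp}\,\varphi_k \subseteq V_{p_{j,i}} \subseteq U_{\alpha(p_{j,i})}$, proving (i). For (ii), I choose for each $k$ an index $\alpha(k) \in A$ with $\textrm{supp}\,\varphi_k \subseteq U_{\alpha(k)}$, and define
\begin{equation*}
\psi_\alpha := \sum_{k:\, \alpha(k) = \alpha} \varphi_k,
\end{equation*}
interpreted as $0$ if the indexing set is empty. By local finiteness of $\{\textrm{supp}\,\varphi_k\}$, each $\psi_\alpha$ is smooth, and the support of a locally finite sum equals the closure of the union of the summand supports, giving $\textrm{supp}\,\psi_\alpha \subseteq U_\alpha$. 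Summing over $\alpha$ recovers $\sum_\alpha \psi_\alpha = \sum_k \varphi_k = 1$, and $\{\textrm{supp}\,\psi_\alpha\}$ inherits local finiteness since it is a coarser grouping of a locally finite family.

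The main obstacle is the bookkeeping that yields \emph{local finiteness} of $\{\varphi_k\}$ rather than mere countability; this is exactly what forces the shell construction $S_j \subseteq T_j$ in the first step, confining supports to bounded bands of the exhaustion. A secondary subtlety in (ii) is that $k \mapsto \alpha(k)$ need not be injective and $A$ need not be countable, so some $\psi_\alpha$ may vanish identically; this is harmless, and the sum $\sum_\alpha \psi_\alpha$ is pointwise well-defined because only finitely many summands are nonzero at each point.
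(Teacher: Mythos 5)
Your proof is correct and follows essentially the same shell/exhaustion construction found in the cited reference (Tu, p.~347): partition $M$ into compact shells $S_j=K_j\setminus\mathring{K}_{j-1}$ cushioned by the open bands $T_j$, transport Euclidean bump functions into chart neighborhoods subordinate to both $T_j$ and the cover, normalize by the locally finite sum $\sigma$, and then obtain (ii) by grouping the $\varphi_k$ according to a choice function $k\mapsto\alpha(k)$. The only quibble is cosmetic: your local-finiteness bound $j-2\le J$ is slightly loose (one can take $j\le J+1$), which of course does not affect the conclusion.
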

\begin{remark}\lab{winter44}
Let $M$ be a compact smooth manifold. Suppose
$\{U_{\alpha}\}_{\alpha\in A}$ is an open cover of $M$ and
 $\{\psi_\alpha\}_{\alpha\in A}$ is a partition of unity
 subordiante to $\{U_{\alpha}\}_{\alpha\in A}$.
\begin{itemizeXXALI}
\item For all $m\in \mathbb{N}$, $\{\tilde{\psi}_\alpha=\frac{\psi_\alpha^m}{\sum_{\alpha\in
A}\psi_\alpha^m}\}$ is another partition of unity subordinate to
$\{U_{\alpha}\}_{\alpha\in A}$.
\item If $\{V_{\beta}\}_{\beta\in B}$ is an open cover of $M$ and
$\{\xi_\beta\}$ is a partition of unity subordinate to
$\{V_{\beta}\}_{\beta\in B}$, then $\{\psi_\alpha
\xi_\beta\}_{(\alpha,\beta)\in A\times B}$ is a partition of
unity subordinate to the open cover $\{U_\alpha\cap
V_\beta\}_{(\alpha,\beta)\in A\times B}$.
\end{itemizeXXALI}
\end{remark}
\begin{lemma}\lab{lemapp6}
Let $M$ be a compact smooth manifold. Suppose $\{U_\alpha\}_{1\leq
\alpha \leq N}$ is an open cover of $M$. Suppose $C$ is a closed
set in $M$ (so $C$ is compact) which is contained in $U_\beta$
for some $1\leq \beta\leq N$. Then there exists a partition of
unity $\{\psi_{\alpha}\}_{1\leq \alpha\leq N}$ subordinate to
$\{U_\alpha\}_{1\leq \alpha\leq N}$ such that $\psi_\beta=1$ on
$C$.
\end{lemma}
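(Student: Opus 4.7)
The plan is to first build a smooth cut-off $\eta:M\to[0,1]$ that equals $1$ on $C$ and is supported in $U_\beta$, then splice it to a standard partition of unity subordinate to $\{U_\alpha\}$ so that the distinguished index $\beta$ inherits the value $1$ on $C$ while the overall collection still sums to one. The algebraic key is the identity $\eta+(1-\eta)\sum_\alpha \rho_\alpha =1$, which lets us perturb an existing partition of unity at index $\beta$ without destroying either the partition condition or the support conditions at the other indices.

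For the cut-off, since $C$ is compact in $M$ and $C\subseteq U_\beta$, the pair $\{U_\beta,\, M\setminus C\}$ is an open cover of $M$. Apply Theorem \ref{thmapp5} to obtain a smooth partition of unity $\{\eta,\tau\}$ subordinate to this cover. Then $\textrm{supp}\,\tau\subseteq M\setminus C$ forces $\tau\equiv 0$ on $C$, hence $\eta=1-\tau\equiv 1$ on $C$, while $\textrm{supp}\,\eta\subseteq U_\beta$ by subordination; in particular $0\le\eta\le 1$.

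Next, apply Theorem \ref{thmapp5} to the original cover $\{U_\alpha\}_{1\le\alpha\le N}$ to obtain a smooth partition of unity $\{\rho_\alpha\}_{1\le\alpha\le N}$ with $\textrm{supp}\,\rho_\alpha\subseteq U_\alpha$. Define
\begin{equation*}
\psi_\beta := \eta+(1-\eta)\rho_\beta,\qquad \psi_\alpha := (1-\eta)\rho_\alpha \quad (\alpha\neq\beta).
\end{equation*}
Each $\psi_\alpha$ is smooth, and nonnegative because $0\le\eta\le 1$ and $\rho_\alpha\ge 0$. For $\alpha\neq\beta$, $\textrm{supp}\,\psi_\alpha\subseteq \textrm{supp}\,\rho_\alpha\subseteq U_\alpha$; and $\textrm{supp}\,\psi_\beta\subseteq \textrm{supp}\,\eta\cup \textrm{supp}\,\rho_\beta\subseteq U_\beta$, so the family is subordinate to $\{U_\alpha\}_{1\le\alpha\le N}$. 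The telescoping computation
\begin{equation*}
\sum_{\alpha=1}^{N}\psi_\alpha = \eta+(1-\eta)\sum_{\alpha=1}^{N}\rho_\alpha = \eta+(1-\eta)=1
\end{equation*}
exhibits it as a partition of unity, and on $C$ we have $\eta=1$, hence $\psi_\beta=1$ and $\psi_\alpha=0$ for $\alpha\neq\beta$.

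I do not anticipate a serious obstacle. The only delicate point is that one must simultaneously force $\psi_\beta\equiv 1$ on $C$, preserve $\textrm{supp}\,\psi_\beta\subseteq U_\beta$, and keep $\sum_\alpha\psi_\alpha=1$ with each $\psi_\alpha$ supported in the correct $U_\alpha$; the additive-multiplicative splitting above handles all three at once, which is why this argument is more economical than, say, trying to first invoke Lemma \ref{winter41} to shrink the cover.
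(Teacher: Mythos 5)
Your proof is correct, and it takes a genuinely different route from the paper's. The paper's argument (following \cite{13}) first invokes Lemma \ref{winter41} to shrink the cover to $\{A_\alpha\}$ with $\bar A_\alpha\subseteq U_\alpha$ and $C\subseteq A_\beta$, then builds bump functions $\eta_\alpha\in C_c^\infty(U_\alpha)$ with $\eta_\alpha=1$ on a neighborhood of $\bar A_\alpha$, and assembles the partition of unity by the telescoping rule $\psi_1=\eta_1$, $\psi_\alpha=\eta_\alpha\prod_{\gamma<\alpha}(1-\eta_\gamma)$, relying on the inductive identity $1-\sum_{\alpha\le l}\psi_\alpha=\prod_{\gamma\le l}(1-\eta_\gamma)$ to conclude the sum is $1$. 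You instead treat the existence of partitions of unity (Theorem \ref{thmapp5}) as a black box, apply it once to the cover $\{U_\beta, M\setminus C\}$ to produce a single cut-off $\eta$, and once more to $\{U_\alpha\}$ to produce $\{\rho_\alpha\}$, then splice them via $\psi_\beta=\eta+(1-\eta)\rho_\beta$ and $\psi_\alpha=(1-\eta)\rho_\alpha$. What your route buys is economy: it avoids Lemma \ref{winter41} and the telescoping induction entirely, and the verification of all required properties (smoothness, nonnegativity, subordination, sum $=1$, $\psi_\beta|_C=1$) reduces to the one-line identity $\eta+(1-\eta)\cdot 1=1$. What the paper's route buys is self-containedness: it essentially re-derives the partition of unity from bump functions and a shrinking argument, so it exposes the underlying mechanism rather than citing the existence theorem twice. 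For the purposes of this lemma either argument is perfectly serviceable, and yours is arguably the cleaner of the two.
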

\begin{proof}
We follow the argument in \cite{13}. Without loss of generality we may assume $\beta=1$. We can
construct a partition of unity with the desired property as
follows: Let $A_{\alpha}$ be a collection of open sets that
covers $M$ and such that $C\subseteq A_1\subseteq
\bar{A}_1\subseteq U_1$ and for $\alpha>1$, $A_{\alpha}\subseteq
\bar{A}_{\alpha}\subseteq U_{\alpha}$ (see Lemma \ref{winter41}). Let $\eta_\alpha\in
C_c^{\infty}(U_{\alpha})$ be such that $0\leq \eta_\alpha\leq 1$
and $\eta_\alpha=1$ on a neighborhood of $\bar{A}_{\alpha}$. Of
course $\sum_{\alpha=1}^N \eta_{\alpha}$ is not necessarily equal to $1$ for
all $x\in M$. However, if we define $\psi_1=\eta_1$ and for
$\alpha>1$
\begin{equation*}
\psi_{\alpha}=\eta_{\alpha}(1-\eta_1)\cdots(1-\eta_{\alpha-1})
\end{equation*}
by induction one can easily show that for $1\leq l \leq N$
\begin{equation*}
1-\sum_{\alpha=1}^{l}\psi_{\alpha}=(1-\eta_1)\cdots(1-\eta_l)
\end{equation*}
In particular,
\begin{equation*}
1-\sum_{\alpha=1}^{N}\psi_{\alpha}=(1-\eta_1)\cdots(1-\eta_N)=0
\end{equation*}
since for each $x\in M$ there exists $\alpha$ such that $x\in
A_{\alpha}$ and so $\eta_{\alpha}(x)=1$. Consequently
$\sum_{\alpha=1}^{N}\psi_{\alpha}=1$.
\end{proof}
\subsection{Vector Bundles, Basic Definitions}
Let $M$ be a smooth manifold. A (smooth real) \textbf{vector
bundle} of rank $r$ over $M$ is a smooth manifold $E$ together
with a surjective smooth map $\pi:E\rightarrow M$ such that
\begin{enumerate}
\item for each $x\in M$, $E_x=\pi^{-1}(x)$ is an $r$-dimensional
(real) vector space.
\item for each $x\in M$, there exists a neighborhood $U$ of $x$ in
 $M$ and a smooth map $\rho= (\rho^1,\cdots,\rho^r)$ from
$E|_U:=\pi^{-1}(U)$ onto $\reals^r$ such that
\begin{itemize}
\item for every $x\in U$, $\rho|_{E_x}: E_x\rightarrow \reals^r$
is an isomorphism of vector spaces
\item $\Phi=(\pi|_{E_U},\rho):E_U\rightarrow U\times \reals^r$ is a
diffeomorphism.
\end{itemize}
\end{enumerate}
We denote the projection onto the last $r$ components by $\pi'$.
So $\pi'\circ \Phi=\rho$. The expressions "$E$ is a vector bundle over $M$", or "$E\rightarrow M$ is a vector bundle", or "$\pi:E\rightarrow M$ is a vector bundle"
 are all considered to be equivalent in this manuscript. We refer to both $\Phi: E_U\rightarrow
U\times \reals^r$ and $\rho: E_U\rightarrow \reals^r$ as a
(smooth) \textbf{local trivialization} of $E$ over $U$ (it will
be clear from the context which one we are referring to). We say
that $E|_U$ is trivial. The pair $(U,\rho)$ (or $(U,\Phi)$) is
sometimes called a \textbf{vector bundle chart}. It is easy to
see that if $(U,\rho)$ is a vector bundle chart and
$\emptyset\neq V\subseteq U$ is open, then $(V,\rho|_{E_V})$ is
also a vector bundle chart for $E$. Moreover, if $V$ is any
nonempty open subset of $M$, then $E_V$ is a vector bundle over
the manifold $V$.
 We say that a triple $(U,\varphi, \rho)$ is
a \textbf{total trivialization triple} of the vector bundle $\pi:
E\rightarrow M$ provided that $(U,\varphi)$ is a smooth
coordinate chart and $\rho= (\rho^1,\cdots,\rho^r):
E_U\rightarrow \reals^r$ is a trivialization of $E$ over $U$. A
collection $\{(U_\alpha, \varphi_\alpha,\rho_\alpha)\}$ is called
a \textbf{total trivialization atlas} for the vector bundle
$E\rightarrow M$ provided that for each $\alpha$, $(U_\alpha,
\varphi_\alpha,\rho_\alpha)$ is a total trivialization triple and
$\{(U_\alpha,\varphi_\alpha)\}$ is a smooth atlas for $M$. The
following statements show that any vector bundle has a total
trivialization atlas.
\begin{lemma}(\cite{Wal2004}, Page 77) \lab{winter45}
Let $E$ be a vector bundle over an $n$-dimensional smooth
manifold $M$ ($M$ does not need to be compact). Then $M$ can be
covered by $n+1$ open sets $V_0,\cdots,V_n$ where the restriction
$E|_{V_i}$ is trivial.
\end{lemma}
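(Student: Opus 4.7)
The plan is to exploit the combinatorial structure of a smooth triangulation of $M$, subordinate to a trivializing cover, together with the classical dimension coloring of its barycentric subdivision. The key observation is that in an $n$-dimensional simplicial complex $K$, the barycentric subdivision $K'$ admits a proper vertex coloring by $n+1$ colors whose color classes have pairwise disjoint open stars, and this yields a cover of $|K| = M$ by $n+1$ open sets, each of which is a disjoint union of small trivializing neighborhoods.

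First I would choose a trivializing open cover $\{W_\alpha\}_{\alpha \in A}$ of $M$ for $E$ using the local trivialization property of a vector bundle. By paracompactness of $M$ together with Whitney's smooth triangulation theorem, one can find a smooth triangulation $K$ of $M$, and by iterated barycentric subdivision it can be arranged that the closed star of every vertex of $K$ is contained in some $W_\alpha$. Pass to the barycentric subdivision $K'$, whose vertices are the barycenters $b_\sigma$ of the simplices $\sigma$ of $K$.

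Next, define a coloring $c:\mathrm{Vert}(K') \to \{0, 1, \ldots, n\}$ by $c(b_\sigma) = \dim \sigma$. The defining property of barycentric subdivision is that $\{b_{\sigma_0}, \ldots, b_{\sigma_k}\}$ spans a simplex of $K'$ if and only if the $\sigma_i$ form a strictly nested chain of faces in $K$; hence any two distinct vertices of $K'$ that are joined by an edge have distinct dimensions, and so distinct colors. In particular, for each fixed color $k$, the open stars $\mathrm{st}_{K'}(b_\sigma)$ over all $b_\sigma$ with $c(b_\sigma) = k$ are pairwise disjoint, since in any simplicial complex two open stars meet if and only if their vertices span an edge. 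Setting $V_k$ to be the (disjoint) union of these open stars, and noting that the open stars of the vertices of $K'$ always cover $|K'| = M$, we obtain the required cover $M = V_0 \cup \cdots \cup V_n$.

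Finally, $E|_{V_k}$ is trivial because $V_k$ is a disjoint union of open sets, each contained in the open star $\mathrm{st}_K(v)$ of some vertex $v$ of $K$ (the barycentric open star of $b_\sigma$ sits inside the $K$-open star of any vertex $v$ of $\sigma$), hence inside some trivializing $W_\alpha$; a disjoint union of trivial bundles is trivial, as there are no transition functions between disconnected components. The main obstacle, and where one must be careful, is the noncompact case: one needs that iterated barycentric subdivision (performed locally, as controlled by a locally finite refinement of $\{W_\alpha\}$) can be carried out so that every star of $K'$ lands inside some $W_\alpha$. This is a standard paracompactness argument but is the only step that genuinely requires more than naive combinatorics.
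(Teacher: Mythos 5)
The paper does not prove this lemma; it only cites Walschap. Your triangulation argument is nevertheless correct and is, to my knowledge, essentially the argument given in the standard references (including the one cited): a smooth triangulation subordinate to a trivializing cover, the dimension coloring of the barycentric subdivision (a proper $(n+1)$-coloring, since distinct vertices of $K'$ joined by an edge are barycenters of strictly nested faces of $K$), disjointness of same-colored open stars, and the observation that a bundle restricted to a disjoint union of trivializing open sets is trivial. The combinatorial facts you use --- that two open stars meet iff the vertices span an edge, and that the $K'$-open star of $b_\sigma$ lies inside the $K$-open star of any vertex of $\sigma$ --- are both correct.

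The one real technical point is the one you flag at the end. Iterated (global) barycentric subdivision of a fixed triangulation will not in general make every closed star subordinate to a given cover when $M$ is noncompact, since the amount of subdivision needed is unbounded. The clean fix is to first pass to a star refinement $\{W_\alpha'\}$ of the trivializing cover (paracompact spaces admit these), and then invoke Whitney's theorem for a smooth triangulation $K$ subordinate to $\{W_\alpha'\}$ in the sense that every closed simplex lies in some $W_\alpha'$. The closed star of any vertex $v$ is then a union of simplices all containing $v$, hence lying in a family of $W_\alpha'$ that meet at $v$, hence (by the star-refinement property) inside a single $W_\gamma$. With that in place the rest of your argument goes through verbatim. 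One small wording correction: in the last step, what you need is that the pieces of $V_k$ are pairwise \emph{disjoint}, not disconnected --- disjointness is what lets you glue the trivializations with no compatibility condition.
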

\begin{theorem}\lab{thmfalltrivatlas}
Let $E$ be a vector bundle of rank $r$ over an $n$-dimensional
smooth manifold $M$. Then $E\rightarrow M$ has a total trivialization atlas. In
particular, if $M$ is compact, then it has a total trivialization
atlas that consists of only finitely many total trivialization
triples.
\end{theorem}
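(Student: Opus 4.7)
The plan is to combine Lemma \ref{winter45} (Walschap's cover by $n+1$ trivializing open sets) with a standard refinement of any smooth atlas of $M$, exploiting the two elementary facts already recorded in the excerpt: that the restriction of a smooth chart to any nonempty open subset is again a smooth chart (Remark \ref{winter39}), and that the restriction of a local trivialization $\rho:E_U\to\reals^r$ to $E_V$ for any nonempty open $V\subseteq U$ is again a local trivialization.

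First I would invoke Lemma \ref{winter45} to obtain open sets $V_0,\ldots,V_n$ covering $M$ together with trivializations $\rho_i:E_{V_i}\to\reals^r$. Next, fix any smooth atlas $\{(W_\lambda,\varphi_\lambda)\}_{\lambda\in\Lambda}$ belonging to the smooth structure of $M$. For each point $p\in M$, pick some $i(p)\in\{0,\ldots,n\}$ with $p\in V_{i(p)}$ and some $\lambda(p)\in\Lambda$ with $p\in W_{\lambda(p)}$; then set $U_p:=W_{\lambda(p)}\cap V_{i(p)}$, which is an open neighborhood of $p$, and define
\[
\psi_p:=\varphi_{\lambda(p)}|_{U_p},\qquad \sigma_p:=\rho_{i(p)}|_{E_{U_p}}.
\]
By the two facts above, $(U_p,\psi_p)$ is a smooth chart in the maximal smooth atlas and $\sigma_p:E_{U_p}\to\reals^r$ is a local trivialization, so $(U_p,\psi_p,\sigma_p)$ is a total trivialization triple. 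Since the $U_p$'s obviously cover $M$, the collection $\{(U_p,\psi_p,\sigma_p)\}_{p\in M}$ is a total trivialization atlas for $E\to M$.

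For the second assertion, when $M$ is compact the open cover $\{U_p\}_{p\in M}$ admits a finite subcover $\{U_{p_1},\ldots,U_{p_N}\}$, and the corresponding finite subfamily $\{(U_{p_k},\psi_{p_k},\sigma_{p_k})\}_{1\leq k\leq N}$ is then a total trivialization atlas with only finitely many triples, which completes the proof.

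I do not anticipate any real obstacle: the entire argument is just a refinement procedure, and both nontrivial inputs (existence of a smooth atlas, and existence of the finite trivializing cover $\{V_i\}$) are already available. The only point that requires a moment's care is verifying that the restriction of a trivialization $\rho_i$ to $E_{U_p}$ still satisfies the two bullet conditions in the definition of a local trivialization; but this is immediate because the vector-space isomorphism property on each fiber is inherited from $\rho_i$, and the diffeomorphism $\Phi_i=(\pi|_{E_{V_i}},\rho_i)$ restricts to a diffeomorphism $E_{U_p}\to U_p\times\reals^r$ since $U_p\subseteq V_i$ is open.
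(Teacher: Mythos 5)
Your proof is correct and takes essentially the same approach as the paper: both use Lemma \ref{winter45} to obtain the finite trivializing cover $\{V_0,\ldots,V_n\}$ and then refine against a smooth atlas by intersecting and restricting. The only cosmetic difference is that you index the refined triples by points of $M$ and extract a finite subcover when $M$ is compact, whereas the paper indexes by pairs $(\alpha,\beta)$ and simply starts from a finite atlas in the compact case.
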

\begin{proof}
Let $V_0,\cdots,V_n$ be an open cover of $M$ such that $E$ is
trivial over $V_\beta$ with the mapping
$\rho_\beta:E_{V_\beta}\rightarrow \reals^r$. Let
$\{(U_\alpha,\varphi_\alpha)\}_{\alpha\in I}$ be a smooth atlas
for $M$ (if $M$ is compact, the index set $I$ can be chosen to be
finite). For all $\alpha\in I$ and $0\leq \beta\leq n$ let
$W_{\alpha \beta}=U_\alpha\cap V_\beta$. Let $J=\{(\alpha,\beta):
W_{\alpha \beta}\neq \emptyset \}$. Clearly $\{(W_{\alpha \beta},
\varphi_{\alpha \beta}, \rho_{\alpha \beta} )\}_{(\alpha,\beta)\in
J }$ where $\varphi_{\alpha
\beta}=\varphi_\alpha|_{W_{\alpha\beta}}$ and
$\rho_{\alpha\beta}=\rho_\beta|_{\pi^{-1}(W_{\alpha\beta})}$ is a total
trivialization atlas for $E\rightarrow M$.
\end{proof}

\begin{definition}{\noindent}\lab{winter46}
\begin{itemizeX}
\leavevmode
\item We say that a total trivialization triple $(U,\varphi,\rho)$
is \textbf{geometrically Lipschitz (GL)} provided that
$\varphi(U)$ is a nonempty bounded open set with Lipschitz
boundary. A total trivialization atlas is called
\textbf{geometrically Lipschitz} if each of its total
trivialization triples is GL.
\item We say that a total trivialization triple $(U,\varphi,\rho)$
is \textbf{nice} provided that $\varphi(U)$ is equal to %$\reals^n$ or
a ball in $\reals^n$. A total trivialization atlas is called nice
if each of its total trivialization triples is nice.
\item We say that a total trivialization triple $(U,\varphi,\rho)$
is \textbf{super nice} provided that $\varphi(U)$ is equal to $\reals^n$. A total trivialization atlas is called super nice if each of its total trivialization triples is super nice.
\item A total trivialization atlas is called
\textbf{generalized geometrically Lipschitz (GGL)} if each of its total
trivialization triples is GL or super nice.
\item We say that two total trivialization atlases
$\{(U_\alpha,\varphi_\alpha,\rho_\alpha)\}_{\alpha\in I}$ and
$\{(\tilde{U}_\beta,\tilde{\varphi}_\beta,\tilde{\rho}_\beta)\}_{\beta\in
J}$ are \textbf{geometrically Lipschitz compatible (GLC)} if the
corresponding atlases\\ $\{(U_\alpha,\varphi_\alpha)\}_{\alpha\in
I}$ and $\{(\tilde{U}_\beta,\tilde{\varphi}_\beta)\}_{\beta\in
J}$ are GLC.
\end{itemizeX}
\end{definition}
\begin{theorem}\lab{thmfallnicetrivatlas2}
Let $E$ be a vector bundle of rank $r$ over an $n$-dimensional
compact smooth manifold $M$. Then $E$ has a nice total
trivialization atlas (and a super nice total trivialization atlas) that consists of only finitely many total
trivialization triples.
\end{theorem}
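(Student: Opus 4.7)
The plan is to combine Theorem \ref{thmfalltrivatlas} with Lemma \ref{lemfallniceatlas1} and Remark \ref{remwinter41b} to successively refine a total trivialization atlas into a nice one and then into a super nice one. The key observation that makes everything work is the fact, noted earlier in the excerpt, that if $(U,\rho)$ is a vector bundle chart and $V\subseteq U$ is any nonempty open subset, then $(V,\rho|_{E_V})$ is again a vector bundle chart. This lets us shrink the base coordinate neighborhoods freely while carrying the trivializations along.

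First, I would invoke Theorem \ref{thmfalltrivatlas} to fix a finite total trivialization atlas $\{(U_\alpha,\varphi_\alpha,\rho_\alpha)\}_{1\leq\alpha\leq N}$ for $E\to M$, which exists because $M$ is compact. Then I would apply Lemma \ref{lemfallniceatlas1} to the underlying smooth atlas $\{(U_\alpha,\varphi_\alpha)\}$ to obtain a finite refinement $\{(V_\beta,\varphi_{\alpha(\beta)}|_{V_\beta})\}_{1\leq\beta\leq L}$ with the property that $V_\beta\subseteq U_{\alpha(\beta)}$ and $\varphi_{\alpha(\beta)}(V_\beta)$ is a ball in $\reals^n$ for each $\beta$. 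Setting $\tilde\varphi_\beta:=\varphi_{\alpha(\beta)}|_{V_\beta}$ and $\tilde\rho_\beta:=\rho_{\alpha(\beta)}|_{E_{V_\beta}}$, the triples $(V_\beta,\tilde\varphi_\beta,\tilde\rho_\beta)$ are total trivialization triples (by the shrinking observation above), and by construction they form a finite nice total trivialization atlas.

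To produce the super nice atlas, I would post-compose the coordinate maps with diffeomorphisms onto $\reals^n$. Specifically, for each $\beta$ fix, via Remark \ref{remwinter41b}, a $C^\infty$-diffeomorphism $\sigma_\beta:\varphi_{\alpha(\beta)}(V_\beta)\to\reals^n$, and define $\hat\varphi_\beta:=\sigma_\beta\circ\tilde\varphi_\beta:V_\beta\to\reals^n$. Then $\hat\varphi_\beta$ is a homeomorphism onto $\reals^n$, and the transition maps $\hat\varphi_\beta\circ\hat\varphi_{\beta'}^{-1}=\sigma_\beta\circ(\tilde\varphi_\beta\circ\tilde\varphi_{\beta'}^{-1})\circ\sigma_{\beta'}^{-1}$ are compositions of $C^\infty$-diffeomorphisms, hence $C^\infty$-diffeomorphisms; thus $(V_\beta,\hat\varphi_\beta)$ lies in the smooth structure of $M$. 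The trivializations $\tilde\rho_\beta$ are unchanged, so $(V_\beta,\hat\varphi_\beta,\tilde\rho_\beta)$ is a total trivialization triple, and the finite collection of these triples is a super nice total trivialization atlas.

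There is no real obstacle here; the argument is essentially a bookkeeping exercise chaining together three earlier results. The one small point worth being careful about is verifying that replacing each chart by a refinement (or a diffeomorphic reparametrization) preserves the property of being a total trivialization atlas, i.e.\ smooth compatibility of the underlying smooth atlas and compatibility of the vector bundle trivializations on overlaps. Both are immediate: smooth compatibility is inherited from the original atlas under restriction and under composition with a smooth diffeomorphism, and the trivializations automatically satisfy the vector-bundle chart condition on any open subdomain.
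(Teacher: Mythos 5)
Your proposal is correct and takes essentially the same route as the paper: start from Theorem~\ref{thmfalltrivatlas}, refine via Lemma~\ref{lemfallniceatlas1} to a nice atlas carrying restricted trivializations, and use Remark~\ref{remwinter41b} to pass to a super nice one. You spell out the post-composition with diffeomorphisms onto $\reals^n$ a bit more explicitly than the paper does, but the argument is the same.
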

\begin{proof}
By Theorem \ref{thmfalltrivatlas}, $E\rightarrow M$ has a finite
total trivialization atlas $\{(U_\alpha,
\varphi_\alpha,\rho_\alpha)\}$. By Lemma \ref{lemfallniceatlas1} (and Remark \ref{remwinter41b})
there exists a finite open cover $\{V_\beta \}_{1\leq \beta\leq
 L}$ of $M$ such that
\begin{align*}
&\forall\,\beta\qquad \exists 1\leq \alpha(\beta)\leq
N\,\,\textrm{s.t.}\,\,\quad V_\beta\subseteq
U_{\alpha(\beta)},\quad \varphi_{\alpha(\beta)}(V_\beta)\,
\textrm{is a ball in $\reals^n$}\\
(& \textrm{or}\,\,\forall\,\beta\qquad \exists 1\leq \alpha(\beta)\leq
N\,\,\textrm{s.t.}\,\,\quad V_\beta\subseteq
U_{\alpha(\beta)},\quad \varphi_{\alpha(\beta)}(V_\beta)=\reals^n)
\end{align*}
and thus $\{(V_\beta,\varphi_{\alpha(\beta)}|_{V_\beta})\}_{1\leq
\beta\leq L}$ is a nice (resp. super nice) smooth atlas. Now clearly $\{(V_\beta,\varphi_{\alpha(\beta)}|_{V_\beta},\rho_{\alpha(\beta)}|_{E_{V_\beta}})\}_{1\leq
\beta\leq L}$ is a nice (resp. super nice) total trivialization atlas.
\end{proof}
\begin{theorem}\lab{winter47jan}
Let $E$ be a vector bundle of rank $r$ over an $n$-dimensional
compact smooth manifold $M$. Then $E$ admits a finite total
trivialization atlas that is GL compatible with itself. In fact, there exists a total trivialization atlas $\{(U_\alpha,\varphi_\alpha,\rho_\alpha)\}_{1\leq \alpha\leq N}$ such that
\begin{itemizeX}
\item for all $1\leq \alpha\leq N$, $\varphi_\alpha(U_\alpha)$ is bounded with Lipschitz continuous boundary, and,
\item for all $1\leq \alpha, \beta\leq N$, $U_\alpha\cap U_\beta$ is either empty or else $\varphi_\alpha(U_\alpha\cap U_\beta)$ and $\varphi_\beta(U_\alpha\cap U_\beta)$ are bounded with Lipschitz continuous boundary.
\end{itemizeX}
\end{theorem}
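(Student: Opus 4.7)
The plan is to refine a nice total trivialization atlas, supplied by Theorem \ref{thmfallnicetrivatlas2}, down to one whose coordinate domains are small Euclidean coordinate balls with generically chosen radii, ensuring that both the domains and their pairwise intersections have Lipschitz images.

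First I would invoke Theorem \ref{thmfallnicetrivatlas2} to fix a finite nice total trivialization atlas $\{(\tilde U_i,\tilde\varphi_i,\tilde\rho_i)\}_{i=1}^{N_0}$ of $E$, so each $\tilde\varphi_i(\tilde U_i)=B_i\subset\reals^n$ is a Euclidean ball. Euclidean balls are bounded and have smooth (hence Lipschitz) boundary, so the first bullet already holds for this atlas; the work lies in controlling pairwise intersections. For each $p\in M$ I would fix an index $i(p)$ with $p\in\tilde U_{i(p)}$, and for a small $r_p>0$ with $\overline{B(\tilde\varphi_{i(p)}(p),r_p)}\subset B_{i(p)}$ define the coordinate ball $V_{p,r_p}:=\tilde\varphi_{i(p)}^{-1}\bigl(B(\tilde\varphi_{i(p)}(p),r_p)\bigr)$. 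Using compactness of $M$, extract a finite subcover $\{V_{p_\alpha,r_\alpha}\}_{\alpha=1}^{N}$ and define the candidate atlas by $U_\alpha:=V_{p_\alpha,r_\alpha}$, $\varphi_\alpha:=\tilde\varphi_{i(p_\alpha)}|_{U_\alpha}$, $\rho_\alpha:=\tilde\rho_{i(p_\alpha)}|_{E_{U_\alpha}}$. Then $\varphi_\alpha(U_\alpha)$ is an open Euclidean ball, so the first bullet holds for the new atlas regardless of the radii.

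Next, for any pair $(\alpha,\beta)$ with $U_\alpha\cap U_\beta\neq\emptyset$, taking all $r_\gamma$'s small enough guarantees that both $U_\alpha$ and $U_\beta$ lie inside $\tilde U_{i(p_\alpha)}\cap\tilde U_{i(p_\beta)}$, so that the transition map $\Phi_{\alpha\beta}:=\tilde\varphi_{i(p_\alpha)}\circ\tilde\varphi_{i(p_\beta)}^{-1}$ is a smooth diffeomorphism on a neighborhood of $\overline{\varphi_\beta(U_\beta)}$. Hence
\begin{equation*}
\varphi_\alpha(U_\alpha\cap U_\beta)=\varphi_\alpha(U_\alpha)\cap\Phi_{\alpha\beta}\bigl(\varphi_\beta(U_\beta)\bigr)
\end{equation*}
is the intersection of two bounded open sets with smooth boundaries (the second being the smooth diffeomorphic image of a Euclidean ball). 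The intersection of two smoothly bounded bounded open sets in $\reals^n$ is bounded with Lipschitz boundary provided their boundaries meet transversely, so it remains to choose the radii so that this transversality holds for every pair, in both coordinate charts.

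To arrange transversality, I would pick the radii $r_\alpha$ inductively: at stage $\alpha$, having fixed $r_1,\dots,r_{\alpha-1}$, I would choose $r_\alpha$ in a short interval (narrow enough that the covering property of $\{V_{p_\alpha,r_\alpha}\}$ is preserved) but outside the measure-zero exceptional set of values for which the sphere $\partial B(\tilde\varphi_{i(p_\alpha)}(p_\alpha),r_\alpha)$ fails to be transverse to one of the finitely many already-fixed smooth hypersurfaces of the form $\Phi_{\alpha\beta}(\partial\varphi_\beta(U_\beta))$ (and the counterparts obtained by swapping the roles of $\alpha$ and $\beta$). That these exceptional sets have measure zero follows from Sard's theorem applied to the smooth distance functions $x\mapsto|x-\tilde\varphi_{i(p_\alpha)}(p_\alpha)|$ restricted to the relevant smooth hypersurfaces: critical values of such a function on a compact smooth hypersurface form a measure-zero set, and non-critical level values correspond to transverse intersections. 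Once all pairs are transverse, both bullets of the theorem hold. The main obstacle is precisely this transversality step: without it, the intersection of two smoothly bounded domains can fail to be Lipschitz (for example by producing a cusp), which is exactly the pathology highlighted by the fact that intersections of Lipschitz domains need not be Lipschitz; handling this for all $\binom{N}{2}$ pairs simultaneously is what the Sard-plus-induction argument above is designed to accomplish. The rest of the construction is a routine ball-refinement of the nice atlas.
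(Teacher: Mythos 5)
Your proposal reaches the same conclusion by a genuinely different route. The paper equips $M$ with a smooth auxiliary Riemannian metric, works with normal coordinate charts of radius less than half the injectivity radius (so each coordinate image is a Euclidean ball), and then delegates the crucial verification that pairwise intersections of such geodesic balls are Lipschitz to the cited proof of Lemma 3.1 in \cite{Inc2013}. You instead start from an arbitrary nice atlas, refine to small coordinate balls, and enforce Lipschitz intersections by a Sard-type genericity argument: choose the radii inductively so that each new coordinate sphere is transverse to the finitely many already-fixed hypersurfaces after pushing through the transition diffeomorphisms, using the fact that the critical values of a Euclidean distance function restricted to a compact smooth hypersurface form a null set. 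This is a legitimate alternative; it avoids introducing the metric, exponential map, and injectivity radius altogether, and it makes the reason for Lipschitz-compatibility completely explicit rather than deferred to a citation.

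Two points deserve more care than the proposal gives them. First, the step ``the intersection of two bounded open sets with smooth boundary meeting transversally is a Lipschitz domain'' is the load-bearing fact in your argument, and you assert it without reference or proof. It is true (near a point of the codimension-two edge one can straighten one boundary to a coordinate hyperplane $\{y_n=0\}$, write the other as a graph $y_{n-1}=h(y',y_n)$ by transversality, and then the shear $(y',y_{n-1},y_n)\mapsto(y',y_{n-1}-h,y_n)$ carries the domain to a quarter-space, which is Lipschitz), but since avoiding exactly this kind of unjustified step is the whole point of the theorem, it should be proved or at least cited. Second, the logistics of shrinking radii while simultaneously preserving the covering and the containment $\overline{U_\alpha}\cup\overline{U_\beta}\subset\tilde U_{i(p_\alpha)}\cap\tilde U_{i(p_\beta)}$ whenever $U_\alpha\cap U_\beta\neq\emptyset$ need to be set up before the finite subcover is extracted (e.g.\ via a Lebesgue number for $\{\tilde U_i\}$ and by first covering with balls of half-radius so that each $r_\alpha$ may later be moved freely within an interval of positive length); as written, ``taking all $r_\gamma$'s small enough'' comes after the finite subcover is fixed, where arbitrary shrinking is no longer available. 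Both points are repairable, so I regard the proposal as a correct alternative proof that needs these two gaps filled.
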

\begin{proof}
The proof of this theorem is based on the argument presented in the proof of Lemma 3.1 in \cite{Inc2013}. Equip $M$ with a smooth Riemannian metric $g$. Let $r_{inj}$ denote the injectivity radius of $M$ which is strictly positive because $M$ is compact. Let $V_0,\cdots,V_n$ be an open cover of $M$ such that $E$ is
trivial over $V_\beta$ with the mapping
$\rho_\beta:E_{V_\beta}\rightarrow \reals^r$. For every $x\in M$ choose $0\leq i(x)\leq n$ such that $x\in V_{i(x)}$.   For all $x\in M$ let $r_{x}$ be a positive number less than $\frac{r_{inj}}{2}$ such that $\textrm{exp}_x(B_{r_{x}})\subseteq V_{i(x)}$ where $B_{r_{x}}$ denotes the open ball in $T_x M$ of radius $r_{x}$ (with respect to the inner product induced by the Riemannian metric $g$) and $\textrm{exp}_x:T_x M\rightarrow M$ denotes the exponential map at $x$. For every $x\in M$ define the normal coordinate chart centered at $x$ , $(U_x,\varphi_x)$, as follows:
\begin{equation*}
U_x=\textrm{exp}_x(B_{r_{x}}),\quad \varphi_x:=\lambda_x^{-1}\circ \textrm{exp}_x^{-1}: U_x\rightarrow  \reals^n,
\end{equation*}
where $\lambda_x:\reals^n\rightarrow T_xM$ is an isomorphism defined by $\lambda_x(y^1,\cdots,y^n)=y^iE_{ix}$; Here $\{E_{ix}\}_{i=1}^n$ is a an arbitrary but fixed orthonormal basis for $T_xM$. It is well-known that (see e.g. \cite{Lee2})
\begin{itemizeX}
\item $\varphi_x(x)=(0,\cdots,0)$
\item $g_{ij}(x)=\delta_{ij}$ where $g_{ij}$ denotes the components of the metric with respect to the normal coordinate chart $(U_x,\varphi_x)$.
\item $E_{ix}=\partial_i|_x$ where $\{\partial_i\}_{1\leq i\leq n}$ is the coordinate basis induced by $(U_x,\varphi_x)$.
\end{itemizeX}
As a consequence of the previous items, it is easy to show that if $X\in T_xM$ $(X=X^i\partial_i|_x)$, then the Euclidean norm of $X$ will be equal to the norm of $X$ with respect to the metric $g$, that is $|X|_g=|X|_{\bar{g}}$ where
\begin{equation*}
|X|_{\bar{g}}=\sqrt{(X^1)^2+\cdots+(X^n)^2}\,\quad |X|_g=\sqrt{g(X,X)}
\end{equation*}
Consequently, for every $x\in M$, $\varphi_x(U_x)$ will be a ball in the Euclidean space, in particular, $\{(U_x,\varphi_x)\}_{x\in M}$ is a GL atlas. The proof of Lemma 3.1 in \cite{Inc2013} in part shows that the atlas $\{(U_x,\varphi_x)\}_{x\in M}$ is GL compatible with itself. Since $M$ is compact there exists $x_1,\cdots,x_N\in M$ such that $\{U_{x_j}\}_{1\leq j\leq N}$ also covers $M$.\\ Now clearly $\{(U_{x_j},\varphi_{x_j},\rho_{i(x_j)}|_{U_{x_j}})\}_{1\leq j\leq N}$ is a total trivialization atlas for $E$ that is GL compatible with itself.
\end{proof}
\begin{corollary}\lab{winter47bjan}
Let $E$ be a vector bundle of rank $r$ over an $n$-dimensional
compact smooth manifold $M$. Then $E$ admits a finite super nice total
trivialization atlas that is GL compatible with itself.
\end{corollary}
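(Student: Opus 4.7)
The plan is to start from the finite GL-compatible total trivialization atlas supplied by Theorem \ref{winter47jan}, whose coordinate images are open balls in $\reals^n$ (by the normal coordinate construction in the proof of that theorem), and to post-compose each coordinate map with a $C^\infty$-diffeomorphism from the source ball onto $\reals^n$ (guaranteed by Remark \ref{remwinter41b}). This upgrades each GL triple to a super nice one, and the residual task is to argue that GL compatibility of the underlying smooth atlas is preserved under the composition.

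More concretely, I would first apply Theorem \ref{winter47jan} to fix a finite total trivialization atlas $\{(U_\alpha,\varphi_\alpha,\rho_\alpha)\}_{1\leq\alpha\leq N}$ that is GL compatible with itself, with each $\varphi_\alpha(U_\alpha)$ an open ball. For each $\alpha$ I would then pick a $C^\infty$-diffeomorphism $h_\alpha:\varphi_\alpha(U_\alpha)\to\reals^n$ and set $\tilde\varphi_\alpha:=h_\alpha\circ\varphi_\alpha$, so that $\tilde\varphi_\alpha(U_\alpha)=\reals^n$. The resulting family $\{(U_\alpha,\tilde\varphi_\alpha,\rho_\alpha)\}_{1\leq\alpha\leq N}$ is then a finite super nice total trivialization atlas for $E\to M$, and what remains is to check that it is GL compatible with itself.

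For GL compatibility one examines $\tilde\varphi_\alpha(U_\alpha\cap U_\beta)=h_\alpha(\varphi_\alpha(U_\alpha\cap U_\beta))$ for each overlapping pair. When $U_\alpha\cap U_\beta=U_\alpha$ the image is all of $\reals^n$, which is admissible in the GLC definition. Otherwise, by Theorem \ref{winter47jan} the set $\varphi_\alpha(U_\alpha\cap U_\beta)$ is a bounded Lipschitz subdomain of the source ball, and since $h_\alpha$ is a smooth diffeomorphism it is bi-Lipschitz on compact subsets of the ball and hence carries bounded Lipschitz domains whose closures sit strictly inside the ball to bounded Lipschitz domains in $\reals^n$; the analogous statement for the $\beta$-image follows by interchanging the roles of the indices.

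The main obstacle is securing the "strictly inside" qualifier in the preceding sentence: because $h_\alpha$ blows up at the bounding sphere of the source ball, if the closure of $\varphi_\alpha(U_\alpha\cap U_\beta)$ in $\reals^n$ meets that sphere then the image under $h_\alpha$ is unbounded and the bounded Lipschitz property fails. The hard part of the proof is therefore a preliminary shrinking step: use Lemma \ref{winter41} iteratively to build a nested pair of finite refinements $\{W_\alpha\}\subset\{V_\alpha\}\subset\{U_\alpha\}$ with $\overline{W_\alpha}\subset V_\alpha$ and $\overline{V_\alpha}\subset U_\alpha$, arranged so that $\varphi_\alpha(V_\alpha)$ and $\varphi_\alpha(W_\alpha)$ are concentric subballs of $\varphi_\alpha(U_\alpha)$; then use the $W_\alpha$'s as the super nice coordinate domains, define $h_\alpha$ on the slightly larger ball $\varphi_\alpha(V_\alpha)$, and appeal to the inclusion $\overline{W_\alpha\cap W_\beta}\subset\overline{W_\alpha}\subset V_\alpha$ to conclude that the relevant intersection closures are compactly contained in the ball on which $h_\alpha$ is bi-Lipschitz, completing the verification.
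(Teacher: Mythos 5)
You are right that the paper's proof has a gap: the assertion that being a bounded Lipschitz domain ``is preserved under diffeomorphisms'' is incorrect for a diffeomorphism $\xi_\alpha$ from the open ball $\varphi_\alpha(U_\alpha)$ onto $\reals^n$, precisely because $\xi_\alpha$ blows up near the bounding sphere, and for the normal-coordinate atlas of Theorem~\ref{winter47jan} the closure of $\varphi_\alpha(U_\alpha\cap U_\beta)$ in $\reals^n$ generically does touch that sphere (the intersection of two overlapping balls is not compactly contained in either one). But your proposed repair does not close the gap either. If $h_\alpha$ is a diffeomorphism from the \emph{larger} ball $\varphi_\alpha(V_\alpha)$ onto $\reals^n$, then $\tilde\varphi_\alpha(W_\alpha)=h_\alpha(\varphi_\alpha(W_\alpha))$ is precompact in $\reals^n$, so the resulting atlas is not super nice; if instead $h_\alpha$ maps $\varphi_\alpha(W_\alpha)$ onto $\reals^n$, the inclusion $\overline{W_\alpha\cap W_\beta}\subset\overline{W_\alpha}$ does not give compact containment in $W_\alpha$, and the image of the intersection can again be unbounded.

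In fact no construction can work, because the statement is false whenever $M$ is connected. Suppose $\{(U_\alpha,\tilde\varphi_\alpha)\}_{1\leq\alpha\leq N}$ were a super nice atlas that is GL compatible with itself, and fix distinct $\alpha,\beta$ with $U_\alpha\cap U_\beta\neq\emptyset$. Since $\tilde\varphi_\alpha$ is a homeomorphism of $U_\alpha$ onto $\reals^n$, $\tilde\varphi_\alpha(U_\alpha\cap U_\beta)$ is bounded if and only if $\overline{U_\alpha\cap U_\beta}\subset U_\alpha$ (closure in $M$), and equals $\reals^n$ if and only if $U_\alpha\subseteq U_\beta$. If both $\tilde\varphi_\alpha(U_\alpha\cap U_\beta)$ and $\tilde\varphi_\beta(U_\alpha\cap U_\beta)$ were bounded, then $\overline{U_\alpha\cap U_\beta}\subset U_\alpha\cap U_\beta$, so $U_\alpha\cap U_\beta$ would be clopen in $M$, hence all of $M$ by connectedness, forcing $M\cong\reals^n$, which is impossible; so for each overlapping distinct pair one coordinate domain must be compactly contained in the other. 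Take $U_{\alpha_0}$ maximal for inclusion: every domain meeting $U_{\alpha_0}$ lies inside it and every other is disjoint from it, so $U_{\alpha_0}$ and the union of the domains disjoint from it form an open cover of $M$ by disjoint sets; since $M$ is connected and $U_{\alpha_0}\neq\emptyset$, this forces $M=U_{\alpha_0}\cong\reals^n$, contradicting compactness. So the corollary must be weakened --- Theorem~\ref{winter47jan} already supplies a finite \emph{nice} atlas that is GL compatible with itself, but a \emph{super nice} atlas cannot simultaneously be GL compatible with itself.
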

\begin{proof}
Let $\{(U_\alpha,\varphi_\alpha,\rho_\alpha)\}_{1\leq \alpha\leq N}$ be the total trivialization atlas that was constructed above. For each $\alpha$, $\varphi_\alpha(U_\alpha)$ is a ball in the Euclidean space and so it is diffeomorphic to $\reals^n$; let $\xi_\alpha: \varphi_{\alpha}(U_\alpha)\rightarrow \reals^n$ be such a diffeomorphism. We let $\tilde{\varphi}_\alpha:=\xi_\alpha\circ \varphi_\alpha: U_\alpha\rightarrow \reals^n$. A composition of diffeomorphisms is a diffeomorphism, so for all $1\leq \alpha,\beta\leq N$,  $\tilde{\varphi}_\alpha\circ \tilde{\varphi}_\beta^{-1}:\tilde{\varphi}_\beta(U_\alpha\cap U_\beta)\rightarrow \tilde{\varphi}_\alpha(U_\alpha\cap U_\beta)$ is a diffeomorphism. So $\{(U_\alpha,\tilde{\varphi}_\alpha,\rho_\alpha)\}_{1\leq \alpha\leq N}$ is clearly a smooth super nice total trivialization atlas. Moreover, if $1\leq \alpha,\beta\leq N$ are such that $U_\alpha\cap
 U_\beta$ is nonempty, then $\tilde{\varphi}_\alpha (U_\alpha\cap U_\beta)$ is $\reals^n$ or a bounded open set with Lipschitz continuous boundary. The reason is that $\tilde{\varphi}_\alpha=\xi_\alpha\circ \varphi_\alpha$, and $\varphi_\alpha(U_\alpha\cap U_\beta)$ is $\reals^n$ or Lipschitz, $\xi_\alpha$ is a diffeomorphism and being equal to $\reals^n$ or Lipschitz is a property that is preserved under diffeomorphisms. Therefore $\{(U_\alpha,\tilde{\varphi}_\alpha,\rho_\alpha)\}_{1\leq \alpha\leq N}$ is a finite super nice total
trivialization atlas that is GL compatible with itself.
\end{proof}
A \textbf{section} of $E$ is a map $u:M\rightarrow E$ such that $\pi\circ u=Id_M$. The collection of all sections of $E$ is denoted by $\Gamma(M,E)$. A section $u\in \Gamma(M,E)$
 is said to be smooth if it is smooth as a map from the smooth manifold $M$ to the smooth manifold $E$. The collection of all smooth sections of $E\rightarrow M$ is denoted by
 $C^\infty(M,E)$. Note that if $\{(U_\alpha, \varphi_\alpha,\rho_\alpha)\}_{\alpha\in
 I}$ is a total trivialization atlas for the vector bundle $E\rightarrow M$ of rank $r$, then
 for $u\in \Gamma(M,E)$ we have
 \begin{equation*}
 u\in C^\infty(M,E)\Longleftrightarrow \forall\,\alpha\in I,\,\forall 1\leq l\leq
 r\quad \rho^l_\alpha\circ u\circ \varphi_\alpha^{-1}\in C^\infty
 (\varphi_\alpha (U_\alpha))
 \end{equation*}
 A local section of $E$ over an open set $U\subseteq M$ is a map $u: U\rightarrow E$ where $u$ has the property that $\pi\circ
 u=Id_U$ (that is, $u$ is a section of the vector bundle $E_U\rightarrow
 U$). We denote the collection of all local sections on $U$ by
 $\Gamma(U,E)$ or $\Gamma(U,E_U)$.
 \begin{remark}\lab{winter47}
As a consequence of $\rho|_{E_x}: E_x\rightarrow \reals^r$ being
an isomorphism, if $u$ is a section of $E|_U\rightarrow U$ and
$f: U\rightarrow \reals$ is a function, then $\rho
(fu)=f\rho(u)$. In particular $\rho(0)=0$.
\end{remark}
Given a total trivialization triple $(U,\varphi,\rho)$ we have the
following commutative diagram:
%we define $\xi: E|_{U}\rightarrow \varphi(U)\times \reals^r$ as
%follows
%\begin{equation*}
%\xi=(\varphi\times Id)\circ \rho
%\end{equation*}
\begin{center}
\begin{tikzcd}
E|_U \arrow{r}{(\varphi\circ \pi, \rho^j)} \arrow{d}{\pi}
&\varphi(U)\times \reals \arrow{d}{\tilde{\pi}}\\
U \arrow{r}{\varphi} &\varphi(U)\subseteq \reals^n
\end{tikzcd}
\end{center}
If $s$ is a section of $E|_U\rightarrow U$, then by definition the
push forward of $s$ by $\rho^j$ (the $j^{th}$ component of $\rho$)
is a section of $\varphi(U)\times\reals \rightarrow \varphi(U)$
which is defined by
\begin{equation*}
\rho^j_{*}(s)=\rho^j\circ s \circ \varphi^{-1}\quad (\textrm{i.e. $z\in \varphi(U)\mapsto (z,\rho^j\circ s \circ \varphi^{-1}(z) )$})
\end{equation*}

Let $E\rightarrow M$ be a vector bundle of rank $r$ and
$U\subseteq M$ be an open set. A (smooth) \textbf{local frame} for $E$ over
$U$ is an ordered $r$-tuple $(s_1,\cdots,s_r)$ of (smooth) local
sections over $U$ such that for each $x\in U$, $(s_1(x),\cdots,
s_r(x))$ is a basis for $E_x$. Given any vector bundle chart
$(V,\rho)$, we can define the associated local frame on $V$ as
follows:
\begin{equation*}
\forall\,1\leq l\leq r\,\,\forall\,x\in V\qquad
s_l(x)=\rho|_{E_x}^{-1}(e_l)
\end{equation*}
where $(e_1,\cdots,e_r)$ is the standard basis of $\reals^r$. The
following theorem states the converse of this observation is also
true.
\begin{theorem}\lab{thmfalllocalframe1}(\cite{Lee3}, Page 258)
Let $E\rightarrow M$ be a vector bundle of rank $r$ and let $(s_1,\cdots,s_r)$ be a smooth local frame over an open
set $U\subseteq M$. Then $(U,\rho)$ is a vector bundle chart where
the map $\rho: E_U\rightarrow \reals^r$ is defined by
\begin{equation*}
\forall\,x\in U,\forall u\in E_x\qquad
\rho(u)=u^1e_1+\cdots+u^re_r
\end{equation*}
where $u=u^1s_1(x)+\cdots+u^rs_r(x)$.
\end{theorem}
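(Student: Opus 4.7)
The plan is to verify the two conditions in the definition of a vector bundle chart: fiberwise linear isomorphism and global diffeomorphism. The fiberwise part is essentially automatic, and the real work is establishing smoothness.

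First I would check that $\rho$ is well-defined and fiberwise an isomorphism. Because $(s_1(x),\ldots,s_r(x))$ is a basis of $E_x$, every $u\in E_x$ has a unique expression $u=u^1 s_1(x)+\cdots+u^r s_r(x)$, so the formula $\rho(u)=u^1 e_1+\cdots+u^r e_r$ makes sense, and by construction $\rho|_{E_x}$ sends the basis $(s_i(x))$ to the standard basis of $\reals^r$. Hence $\rho|_{E_x}$ is a linear bijection, and $\Phi=(\pi|_{E_U},\rho):E_U\to U\times\reals^r$ is set-theoretically a bijection with inverse $(x,v)\mapsto v^i s_i(x)$.

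Next I would reduce smoothness of $\Phi$ and $\Phi^{-1}$ to a local statement. Fix an arbitrary $x_0\in U$. Since $E\to M$ is a vector bundle, there exists a vector bundle chart $(V,\tilde\rho)$ with $x_0\in V$, and by replacing $V$ with $V\cap U$ (still a vector bundle chart by the hereditary property noted just before the definition of total trivialization triple) I may assume $V\subseteq U$. Let $(\tilde s_1,\ldots,\tilde s_r)$ be the local frame associated with $\tilde\rho$, so $\tilde s_l(x)=\tilde\rho|_{E_x}^{-1}(e_l)$. On $V$ each section $s_i$ can be expanded as
\begin{equation*}
s_i(x)=a^j_i(x)\,\tilde s_j(x),\qquad a^j_i(x)=\tilde\rho^j\bigl(s_i(x)\bigr).
\end{equation*}
Because $s_i:U\to E$ is smooth and $\tilde\rho^j:E_V\to\reals$ is smooth, each $a^j_i:V\to\reals$ is smooth. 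At each $x\in V$ the matrix $(a^j_i(x))$ is invertible (it is the change-of-basis matrix between two bases of $E_x$), so by the cofactor formula its inverse $(b^i_j(x))=((a^j_i)^{-1}(x))$ is also smooth on $V$.

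The main step is then to express $\rho$ in terms of $\tilde\rho$ on $E_V$. For $u\in E_x$, writing $u=\tilde u^j\tilde s_j(x)=u^i s_i(x)$ yields $\tilde u^j=a^j_i(x)u^i$, hence
\begin{equation*}
\rho^i(u)=b^i_j\!\bigl(\pi(u)\bigr)\,\tilde\rho^j(u).
\end{equation*}
Since $\pi$, $\tilde\rho$, and the $b^i_j$ are smooth, $\rho|_{E_V}$ is smooth; together with smoothness of $\pi$ this gives smoothness of $\Phi|_{E_V}:E_V\to V\times\reals^r$. Similarly, the inverse $\Phi^{-1}(x,v)=v^i s_i(x)$ is smooth on $V\times\reals^r$ because it equals the composition $(x,v)\mapsto\tilde\Phi^{-1}(x,a^j_i(x)v^i)$ of smooth maps, where $\tilde\Phi=(\pi,\tilde\rho)$ is the diffeomorphism coming from the bundle chart $(V,\tilde\rho)$. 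Since $x_0$ was arbitrary and smoothness is a local property, $\Phi:E_U\to U\times\reals^r$ is a diffeomorphism. Combined with the fiberwise isomorphism property, this shows $(U,\rho)$ is a vector bundle chart.

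The only subtle point, and what I would call the main obstacle, is being careful with the interplay between the hypothesis ``$(s_i)$ is a \emph{smooth} local frame''—which refers to smoothness of each $s_i$ as a map into the manifold $E$—and the concrete smoothness we need for $\rho$ and its inverse. The identity $a^j_i=\tilde\rho^j\circ s_i$ is precisely the bridge that converts the former into the latter, and every other step is a formal consequence.
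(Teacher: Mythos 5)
Your proof is correct and follows the same standard strategy as the cited source (Lee, \emph{Introduction to Smooth Manifolds}, Lemma 10.32): check that $\rho$ is fiberwise a linear isomorphism, then verify smoothness of $\Phi=(\pi,\rho)$ and $\Phi^{-1}$ locally by passing through an existing vector bundle chart $(V,\tilde\rho)$ and using that the transition matrix $a^j_i=\tilde\rho^j\circ s_i$ is smooth and pointwise invertible with smooth inverse. The paper itself gives no proof (it only cites Lee), so there is no alternative argument in the paper to contrast with.
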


\begin{theorem}\lab{thmfalllocalframe21}(\cite{Lee3}, Page 260)
Let $E\rightarrow M$ be a vector bundle of rank $r$ and let $(s_1,\cdots,s_r)$ be a smooth local frame over an open
set $U\subseteq M$. If $f\in \Gamma(M,E)$, then $f$ is smooth on $U$ if and only if its component functions with respect to $(s_1,\cdots,s_r)$ are smooth.
\end{theorem}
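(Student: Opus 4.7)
The plan is to leverage Theorem \ref{thmfalllocalframe1}, which turns the given smooth local frame $(s_1,\ldots,s_r)$ into a \emph{vector bundle chart} $(U,\rho)$ with the property that for $x\in U$ and $v=\sum_i v^i s_i(x) \in E_x$, one has $\rho(v)=(v^1,\ldots,v^r)$. Consequently, for any section $f\in\Gamma(M,E)$ with component functions $f^1,\ldots,f^r: U\to\reals$ defined by $f(x)=\sum_i f^i(x)s_i(x)$, we get the key identity $\rho^l\circ f = f^l$ on $U$ for each $1\leq l\leq r$. The entire argument rests on reducing the smoothness of $f$ on $U$ to the smoothness of the coordinate expressions $\rho^l\circ f\circ\varphi^{-1}$ via the smoothness criterion for sections recalled earlier in this section.

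First I would observe that smoothness of $f$ on $U$ is a local property of manifold maps: $f|_U: U\to E$ is smooth if and only if each point $p\in U$ has a neighborhood on which $f$ is smooth. Likewise, smoothness of each component $f^l:U\to\reals$ is local. So it suffices to prove the equivalence in a neighborhood of each $p\in U$.

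Next, fix $p\in U$, pick a smooth coordinate chart $(V,\varphi)$ for $M$ with $p\in V\subseteq U$, and form the total trivialization triple $(V,\varphi,\rho|_{E_V})$. By the characterization stated just before Theorem \ref{thmfalllocalframe1}, the restriction $f|_V$ is smooth as a map into $E$ if and only if $\rho^l\circ f\circ \varphi^{-1} \in C^\infty(\varphi(V))$ for every $l$. Using the identity $\rho^l\circ f = f^l$ on $U\supseteq V$, this is the same as $f^l\circ \varphi^{-1}\in C^\infty(\varphi(V))$, which is precisely the statement that $f^l:V\to\reals$ is smooth. Since $p\in U$ was arbitrary and a chart $(V,\varphi)$ around $p$ with $V\subseteq U$ always exists (take any smooth chart and intersect its domain with $U$), I conclude that $f$ is smooth on $U$ if and only if each $f^l$ is smooth on $U$.

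I do not anticipate a real obstacle here; the only subtle point is making sure that the vector bundle chart $\rho$ obtained from the frame is legitimately part of a total trivialization triple together with an arbitrary smooth chart $(V,\varphi)$ of $M$ lying inside $U$. This is immediate from the remark (earlier in this section) that if $(U,\rho)$ is a vector bundle chart and $V\subseteq U$ is open, then $(V,\rho|_{E_V})$ is still a vector bundle chart, so $(V,\varphi,\rho|_{E_V})$ qualifies as a total trivialization triple and the componentwise smoothness criterion applies.
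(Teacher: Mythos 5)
The paper does not actually supply a proof of this theorem; it is cited directly from Lee's textbook (\cite{Lee3}, Page 260), so there is no in-paper argument to compare against. That said, your proof is correct and follows the natural route that Lee himself takes: pass via Theorem \ref{thmfalllocalframe1} to the vector bundle chart $(U,\rho)$ determined by the frame, observe the identity $\rho^l\circ f = f^l$ on $U$, and then reduce smoothness of $f$ on $U$ to smoothness of the coordinate expressions $\rho^l\circ f\circ\varphi^{-1}$ by localizing to total trivialization triples $(V,\varphi,\rho|_{E_V})$ with $V\subseteq U$. Your attention to the two small points that make this airtight --- that $(V,\rho|_{E_V})$ remains a legitimate vector bundle chart for any open $V\subseteq U$, and that smoothness of both $f$ and the $f^l$ is a local condition --- is exactly what is needed, so the argument is complete.
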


 A (smooth) \textbf{fiber metric} on a vector bundle $E$ is a (smooth)
function which assigns to each $x\in M$ an inner product
\begin{equation*}
\langle .,.\rangle_E: E_x\times E_x\rightarrow \reals
\end{equation*}
Note that the smoothness of the fiber metric means that for all $u,v\in C^\infty(M,E)$ the mapping
\begin{equation*}
M\rightarrow \reals,\qquad x\mapsto \langle u(x),v(x)\rangle_E
\end{equation*}
is smooth. One can show that every (smooth) vector bundle can be equipped
with a (smooth) fiber metric (\cite{Taubes2011}, Page 72).
\begin{remark}\lab{winter48}
If $(M,g)$ is a Riemannian manifold, then $g$ can be viewed as a
fiber metric on the tangent bundle. The metric $g$ induces fiber
metrics on all tensor bundles; it can be shown that (\cite{Lee2}) if
$(M,g)$ is a Riemannian manifold, then there exists a unique
inner product on each fiber of $T^k_l(M)$ with the property that
for all $x\in M$, if $\{e_i\}$ is an orthonormal basis of $T_xM$
with dual basis $\{\eta^i\}$, then the corresponding basis of
$T^k_l(T_xM)$ is orthonormal. We
denote this inner product by $\langle .,.\rangle_F$ and the
corresponding norm by $|.|_F$. If $A$ and $B$ are two tensor
fields, then with respect to any local frame
\begin{equation*}
\langle A,B\rangle_F=g^{i_1r_1}\cdots g^{i_kr_k}g_{j_1s_1}\cdots
g_{j_ls_l}A^{j_1\cdots j_l}_{i_1\cdots i_k}B^{s_1\cdots
s_l}_{r_1\cdots r_k}
\end{equation*}
\end{remark}
\begin{theorem}\lab{thmfalltrivializametric1}
Let $\pi: E\rightarrow M$ be a vector bundle with rank $r$
equipped with a fiber metric $\langle .,.\rangle_E$. Then given
any total trivialization triple $(U,\varphi,\rho)$, there exists a
smooth map $\tilde{\rho}:E_U\rightarrow \reals^r$ such that with
respect to the new total trivialization triple
$(U,\varphi,\tilde{\rho})$ the fiber metric trivializes on $U$,
that is
\begin{equation*}
\forall\,x\in U\,\,\forall\,u,v\in E_x\qquad\langle
u,v\rangle_E=u^1v^1+\cdots+u^rv^r
\end{equation*}
where for each $1\leq l\leq r$, $u^l$ and $v^l$ denote the
$l^{th}$ components of $u$ and $v$, respectively (with respect to
the local frame associated with the bundle chart
$(U,\tilde{\rho})$).
\end{theorem}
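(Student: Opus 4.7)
The plan is to construct $\tilde{\rho}$ via the local frame formalism of Theorem \ref{thmfalllocalframe1}, after first orthonormalizing the frame associated to $\rho$ by a pointwise Gram--Schmidt procedure with respect to the fiber metric. Concretely, starting from the total trivialization triple $(U,\varphi,\rho)$, I would form the associated smooth local frame $(s_1,\ldots,s_r)$ defined by $s_l(x)=\rho|_{E_x}^{-1}(e_l)$. Smoothness of this frame follows from the fact that $\rho$ is a diffeomorphism onto $\varphi(U)\times\reals^r$ and each $s_l$ is the composition of smooth maps.

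Next I would apply Gram--Schmidt to $(s_1,\ldots,s_r)$ pointwise on $U$ with respect to $\langle\cdot,\cdot\rangle_E$, producing a new ordered tuple $(\tilde s_1,\ldots,\tilde s_r)$ of local sections over $U$. The key point is that the coefficient functions appearing in Gram--Schmidt are built from the quantities $x\mapsto \langle s_i(x),s_j(x)\rangle_E$ by subtraction, division, and taking square roots of strictly positive smooth functions; since the fiber metric is smooth and each $s_i$ is smooth, these coefficient functions are smooth on $U$, and hence each $\tilde s_l\in C^\infty(U,E)$ by Theorem \ref{thmfalllocalframe21}. By construction, $(\tilde s_1(x),\ldots,\tilde s_r(x))$ is an orthonormal basis of $E_x$ with respect to $\langle\cdot,\cdot\rangle_E$ for every $x\in U$, so $(\tilde s_1,\ldots,\tilde s_r)$ is a smooth local orthonormal frame.

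With this orthonormal frame in hand, Theorem \ref{thmfalllocalframe1} yields a total trivialization $\tilde\rho:E_U\to\reals^r$ defined by the rule
\begin{equation*}
\tilde\rho(u)=u^1 e_1+\cdots+u^r e_r,\qquad u=u^1\tilde s_1(x)+\cdots+u^r\tilde s_r(x)\in E_x,
\end{equation*}
and $(U,\varphi,\tilde\rho)$ is a total trivialization triple. A direct computation then gives, for $u,v\in E_x$,
\begin{equation*}
\langle u,v\rangle_E=\Big\langle\sum_l u^l\tilde s_l(x),\sum_m v^m\tilde s_m(x)\Big\rangle_E=\sum_{l,m}u^lv^m\,\delta_{lm}=\sum_{l=1}^r u^l v^l,
\end{equation*}
which is the desired trivialization of the fiber metric.

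The only nontrivial point in this argument is the smooth dependence on $x$ of the Gram--Schmidt output, which is where one must be careful; everything else is formal and reduces to the equivalence between local frames and local trivializations established earlier. I do not anticipate a substantive obstacle beyond verifying that the smooth positivity of the norms $|\tilde s_l(x)|_E$ at each intermediate step of Gram--Schmidt (automatic because linear independence of $(s_1(x),\ldots,s_l(x))$ implies the $l$-th orthogonalized vector is nonzero) guarantees that the divisions remain smooth.
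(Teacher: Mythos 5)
Your proposal is correct and follows essentially the same route as the paper: form the local frame associated to $\rho$, apply pointwise Gram--Schmidt with respect to $\langle\cdot,\cdot\rangle_E$, check smoothness of the resulting orthonormal frame (you via Theorem \ref{thmfalllocalframe21}, the paper via the $C^\infty(U)$-module structure of smooth local sections — two phrasings of the same fact), and then invoke Theorem \ref{thmfalllocalframe1} to produce $\tilde\rho$, finishing with the direct computation showing the metric trivializes.
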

\begin{proof}
Let $(t_1,\cdots,t_r)$ be the local frame on $U$ associated with the vector bundle chart $(U,\rho)$. That is
\begin{equation*}
\forall\,x\in U,\,\forall 1\leq l\leq r\qquad t_l(x)=\rho|_{E_x}^{-1}(e_l)
\end{equation*}
Now we apply the Gram-Schmidt algorithm to the local frame $(t_1,\cdots,t_r)$ to construct an orthonormal frame $(s_1,\cdots,s_r)$ where
\begin{equation*}
\forall\,1\leq l\leq r\qquad s_l=\frac{t_l-\sum_{j=1}^{l-1}\langle t_l,s_j\rangle_E s_j}{|t_l-\sum_{j=1}^{l-1}\langle t_l,s_j\rangle_E s_j|}
\end{equation*}
$s_l:U\rightarrow E$ is smooth because
\begin{enumerate}
\item smooth local sections over $U$ form a module over the ring $C^\infty(U)$,
\item the function $x\mapsto \langle t_l(x), s_j(x) \rangle_E$ from $U$ to $\reals$ is smooth,
\item Since $\textrm{Span}\{s_1,\cdots,s_{l-1}\}=\textrm{Span}\{t_1,\cdots,t_{l-1}\}$, $t_l-\sum_{j=1}^{l-1}\langle t_l,s_j\rangle_E s_j$ is nonzero on $U$
 and $x\mapsto |t_l(x)-\sum_{j=1}^{l-1}\langle t_l(x),s_j(x)\rangle_E s_j(x)|$ as a function from $U$ to $\reals$ is nonzero on $U$ and it is a composition of smooth functions.
\end{enumerate}
Thus for each $l$, $s_l$ is a linear combination of elements of the $C^\infty(U)$-module of smooth local sections over $U$, and so it is a smooth local section over $U$.
 Now we let $(U,\tilde{\rho})$ be the associated vector bundle chart described in Theorem \ref{thmfalllocalframe1}. For all $x\in U$ and for all $u,v\in E_x$ we have
 \begin{equation*}
 \langle u,v\rangle_E=\langle u^l s_l, v^js_j\rangle_E=u^lv^j\langle s_l,s_j\rangle_E=u^lv^j\delta_{lj}=u^1v^1+\cdots+u^rv^r\,.
 \end{equation*}
\end{proof}
\begin{corollary}\lab{winter49}
As a consequence of Theorem \ref{thmfalltrivializametric1}, Theorem \ref{winter47jan}, and
Theorem \ref{thmfallnicetrivatlas2}  every vector bundle on a
compact manifold equipped with a fiber metric admits a nice
finite total trivialization atlas (and a super nice finite total trivialization atlas and a finite total trivialization atlas that is GL compatible with itself) such that the fiber metric is
trivialized with respect to each total trivialization triple in
the atlas.
\end{corollary}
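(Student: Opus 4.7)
The plan is to combine the three cited theorems in the obvious way, using the fact that all of the geometric conditions in Definition \ref{winter46} (nice, super nice, GL, GLC) depend only on the coordinate part $(U_\alpha,\varphi_\alpha)$ of a total trivialization triple $(U_\alpha,\varphi_\alpha,\rho_\alpha)$, not on the trivialization map $\rho_\alpha$. Therefore I will first select a total trivialization atlas whose coordinate parts have the desired geometric property, and then independently modify each trivialization map $\rho_\alpha$ so that the fiber metric becomes Euclidean, without disturbing $(U_\alpha,\varphi_\alpha)$.

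First, I would invoke the appropriate existence result, depending on which variant of the claim is being proved: Theorem \ref{thmfallnicetrivatlas2} supplies a finite nice (respectively, super nice) total trivialization atlas $\{(U_\alpha,\varphi_\alpha,\rho_\alpha)\}_{1\leq \alpha \leq N}$ for $E\to M$, while Theorem \ref{winter47jan} and Corollary \ref{winter47bjan} supply a finite total trivialization atlas (GL, respectively super nice) that is GL compatible with itself. In each case the finiteness follows from compactness of $M$.

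Next, for each $\alpha$ I would apply Theorem \ref{thmfalltrivializametric1} to the triple $(U_\alpha,\varphi_\alpha,\rho_\alpha)$ to obtain a new smooth trivialization map $\tilde{\rho}_\alpha: E_{U_\alpha}\to \reals^r$ with the property that for every $x\in U_\alpha$ and every $u,v\in E_x$,
\begin{equation*}
\langle u,v\rangle_E = u^1 v^1 + \cdots + u^r v^r,
\end{equation*}
where the components are taken with respect to the local frame associated with $(U_\alpha,\tilde{\rho}_\alpha)$. Concretely, this is just the Gram-Schmidt orthonormalization applied to the local frame determined by $\rho_\alpha$, as carried out in the proof of Theorem \ref{thmfalltrivializametric1}. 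The resulting family $\{(U_\alpha,\varphi_\alpha,\tilde{\rho}_\alpha)\}_{1\leq \alpha\leq N}$ is still a total trivialization atlas, since the coordinate charts $(U_\alpha,\varphi_\alpha)$ are unchanged and cover $M$, and each $\tilde{\rho}_\alpha$ is a smooth trivialization by construction.

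Finally, I would observe that the geometric labels carry over automatically: niceness, super niceness, and GL all are statements about $\varphi_\alpha(U_\alpha)$, and GL compatibility is a statement about the sets $\varphi_\alpha(U_\alpha\cap U_\beta)$ and $\varphi_\beta(U_\alpha\cap U_\beta)$. None of these involve the maps $\rho_\alpha$ or $\tilde{\rho}_\alpha$, so the modified atlas inherits exactly the property of the original atlas. There is no real obstacle here; the only thing worth being careful about is keeping straight that Theorem \ref{thmfalltrivializametric1} is applied separately on each $U_\alpha$ (so the Gram-Schmidt procedures on different trivialization domains are independent and need not be compatible on overlaps), which is harmless because the claim only requires the metric to trivialize within each chart.
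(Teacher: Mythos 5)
Your proof is correct and matches the approach the paper intends (the paper states the corollary without a written-out proof, leaving exactly these steps implicit). The key observation you make explicit — that niceness, super niceness, GL, and GL compatibility are all conditions on $\varphi_\alpha(U_\alpha)$ and $\varphi_\alpha(U_\alpha\cap U_\beta)$ alone, and are therefore preserved under replacing $\rho_\alpha$ by $\tilde{\rho}_\alpha$ via Theorem \ref{thmfalltrivializametric1} — is precisely the justification needed, and your remark about the chart-by-chart Gram--Schmidt not needing to agree on overlaps correctly identifies the only point where one might hesitate.
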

\begin{lemma}(\cite{Lee3},Page 252)\lab{lemapp8}
Let $\pi: E\rightarrow M$ be a smooth vector bundle of rank $r$
over $M$. Suppose $\Phi: \pi^{-1}(U)\rightarrow U\times \reals^r$
and $\Psi: \pi^{-1}(V)\rightarrow V\times \reals^r$ are two
smooth local trivializations of $E$ with $U\cap V\neq \emptyset$.
There exists a smooth map $\tau: U\cap V\rightarrow \textrm{GL}(r,\reals)$
such that the composition
\begin{equation*}
\Phi\circ \Psi^{-1}: (U\cap V)\times \reals^r \rightarrow  (U\cap
V)\times \reals^r
\end{equation*}
has the form
\begin{equation*}
\Phi\circ \Psi^{-1}(p,v)=(p,\tau(p)v)
\end{equation*}
\end{lemma}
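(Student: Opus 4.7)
The plan is to construct $\tau$ pointwise first, verify its range lies in $\textrm{GL}(r,\reals)$, and then deduce smoothness from the smoothness of the transition map $\Phi\circ\Psi^{-1}$.

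First I would observe that $\Phi\circ\Psi^{-1}$ preserves the base point. Indeed, by the definition of a local trivialization we have $\pi = \pi_1 \circ \Phi = \pi_1 \circ \Psi$ on the relevant preimages, where $\pi_1$ denotes projection onto the first factor of $(U\cap V)\times\reals^r$. Consequently, for any $(p,v)\in (U\cap V)\times\reals^r$, if we write $\Phi\circ\Psi^{-1}(p,v)=(q,w)$, then applying $\pi_1$ and using $\Psi^{-1}(p,v)\in E_p$ gives $q = p$. Therefore there is a well-defined map $f:(U\cap V)\times\reals^r\to\reals^r$ such that $\Phi\circ\Psi^{-1}(p,v) = (p, f(p,v))$, and $f$ is smooth because $\Phi\circ\Psi^{-1}$ is a diffeomorphism and the projection onto the second factor is smooth.

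Next I would fix $p\in U\cap V$ and examine $f(p,\cdot):\reals^r\to\reals^r$. Since $\Phi|_{E_p}:E_p\to\{p\}\times\reals^r$ and $\Psi|_{E_p}:E_p\to\{p\}\times\reals^r$ are both vector space isomorphisms (after identifying $\{p\}\times\reals^r$ with $\reals^r$), the composition $f(p,\cdot) = \pi_2\circ\Phi\circ\Psi^{-1}(p,\cdot)$ is a linear isomorphism of $\reals^r$. Hence there is a unique matrix $\tau(p)\in\textrm{GL}(r,\reals)$ with $f(p,v)=\tau(p)v$ for all $v\in\reals^r$, which gives the desired pointwise form of the composition.

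Finally I would verify that $\tau:U\cap V\to\textrm{GL}(r,\reals)$ is smooth. Since $\textrm{GL}(r,\reals)$ is an open subset of $\reals^{r^2}$, it suffices to check that each entry $\tau^i{}_j$ is a smooth real-valued function on $U\cap V$. But $\tau^i{}_j(p)$ equals the $i$-th component of $f(p,e_j)$, where $e_j$ is the $j$-th standard basis vector. The map $p\mapsto(p,e_j)$ is smooth, $f$ is smooth by the first step, and taking the $i$-th component is smooth, so $\tau^i{}_j\in C^\infty(U\cap V)$.

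The only subtle point is justifying that $f(p,\cdot)$ is linear, which is why the proof rests on the fiberwise isomorphism property built into the definition of a local trivialization; once that is in hand, the rest is essentially unpacking what smoothness of a matrix-valued map means. I do not expect any serious obstacle here — this is a bookkeeping argument that reduces the statement to the smoothness of the transition diffeomorphism $\Phi\circ\Psi^{-1}$ together with the linear structure on the fibers.
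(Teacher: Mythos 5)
Your proof is correct and follows the standard argument (indeed essentially the one in Lee's \emph{Introduction to Smooth Manifolds}, the source the paper cites for this lemma); the paper itself gives no proof, deferring entirely to that reference. The three-step structure — base-point preservation from $\pi_1\circ\Phi=\pi_1\circ\Psi=\pi$, fiberwise linearity giving $\tau(p)\in\textrm{GL}(r,\reals)$, and smoothness of $\tau$ read off from the matrix entries $\tau^i{}_j(p)=\pi_i\circ f(p,e_j)$ — is exactly right.
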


 \subsection{Standard Total Trivialization Triples}
Let $M^n$ be a smooth manifold and $\pi:E\rightarrow M$ be a
vector bundle of rank $r$. For certain vector bundles there are
standard methods to associate with any given smooth coordinate
chart $(U,\varphi=(x^i))$ a total trivialization triple
$(U,\varphi,\rho)$. We call such a total trivialization triple
 the \textbf{standard total trivialization} associated with
$(U,\varphi)$. Usually this is done by first associating with $(U,\varphi)$ a local frame for $E_U$ and then applying Theorem \ref{thmfalllocalframe1} to construct
 a total trivialization triple.
\begin{itemizeXALI}
\item $E=T^k_l(M)$: The collection of the following tensor fields on $U$ form
a local frame for $E_U$ associated with $(U,\varphi=(x^i))$
\begin{equation*}
\frac{\partial}{\partial x^{i_1}}\otimes \cdots \otimes
\frac{\partial}{\partial x^{i_l}}\otimes dx^{j_1}\otimes
\cdots\otimes dx^{j_k}
\end{equation*}
So given any atlas $\{(U_\alpha,\varphi_\alpha)\}$ of a manifold $M^n$, there is a corresponding total
trivialization atlas for the tensor bundle $T^k_l(M)$, namely
$\{(U_\alpha,\varphi_\alpha,\rho_\alpha)\}$
where for each $\alpha$, $\rho_\alpha$ has $n^{k+l}$ components
which we denote by $(\rho_\alpha)^{j_1\cdots j_l}_{i_1\cdots
i_k}$. For all $F\in \Gamma (M,T^{k}_{l}(M))$, we have
\begin{equation*}
(\rho_\alpha)^{j_1\cdots j_l}_{i_1\cdots
i_k}(F)=(F_\alpha)^{j_1\cdots j_l}_{i_1\cdots i_k}
\end{equation*}
Here $(F_\alpha)^{j_1\cdots j_l}_{i_1\cdots i_k}$ denotes the
components of $F$ with respect to the standard frame for $T^k_l
U_\alpha$ described above. When there is no
possibility of confusion, we may write $F^{j_1\cdots
j_l}_{i_1\cdots i_k}$ instead of $(F_\alpha)^{j_1\cdots
j_l}_{i_1\cdots i_k}$.
\item $E=\Lambda^k(M)$: This is the bundle whose fiber over each $x\in M$ consists of alternating covariant tensors of order $k$. The collection of the following forms on $U$ form
a local frame for $E_U$ associated with $(U,\varphi=(x^i))$
\begin{equation*}
dx^{j_1}\wedge \cdots\wedge dx^{j_k} \quad
(\textrm{$(j_1,\cdots,j_k)$ is increasing})
\end{equation*}
\item $E=\mathcal{D}(M)$ (the density bundle): The density bundle over $M$ is the vector bundle whose fiber over each $x\in M$ is $\mathcal{D}(T_x M)$. More precisely, if we let
\begin{equation*}
\mathcal{D}(M)=\coprod_{x\in M}\mathcal{D}(T_x M)
\end{equation*}
then $\mathcal{D}(M)$ is a smooth vector bundle of rank $1$ over $M$ (\cite{Lee3}, Page 429). Indeed, for every smooth chart $(U,\varphi=(x^i))$, $|dx^1\wedge\cdots\wedge dx^n|$ on $U$ is a local frame for $\mathcal{D}(M)|_U$. We denote the corresponding trivialization by $\rho_{\mathcal{D},\varphi}$, that is, given $\mu\in \mathcal{D}(T_y M)$, there exists a number $a$ such that
\begin{equation*}
\mu= a(|dx^1\wedge\cdots\wedge dx^n|_y)
\end{equation*}
and $\rho_{\mathcal{D},\varphi}$ sends $\mu$ to $a$.
Sometimes we write $\mathcal{D}$ instead of $\mathcal{D}(M)$ if $M$ is clear from the context. Also when there is no possibility of confusion we may write $\rho_\mathcal{D}$ instead of $\rho_{\mathcal{D},\varphi}$.
\end{itemizeXALI}
\begin{remark}[Integration of densities on
manifolds]\lab{winter50}
 Elements of $C_c(M,\mathcal{D})$ can be integrated over $M$. Indeed, for $\mu\in C_c(M,\mathcal{D})$ we may consider two cases
 \begin{itemizeXALI}
 \item \textbf{Case 1:} There exists a smooth chart $(U,\varphi)$ such that $\textrm{supp}\mu\subseteq U$.
 \begin{equation*}
 \int_M \mu:=\int_{\varphi(U)} \rho_{\mathcal{D},\varphi}\circ \mu\circ \varphi^{-1}\,dV
 \end{equation*}
\item \textbf{Case 2:} If $\mu$ is an arbitrary element of $C_c(M,\mathcal{D})$, then we consider a smooth atlas $\{(U_\alpha, \varphi_\alpha)\}_{\alpha\in I}$ and a partition of unity $\{\psi_\alpha\}_{\alpha\in I}$ subordinate to $\{U_\alpha\}$ and we let
    \begin{equation*}
    \int_M \mu:=\sum_{\alpha\in I}\int_M\psi_\alpha \mu
    \end{equation*}
 \end{itemizeXALI}
 It can be shown that the above definitions are independent of the choices (charts and partition of unity) involved (\cite{Lee3}, Pages 431 and 432).
\end{remark}
\subsection{Constructing New Bundles From Old Ones}
\subsubsection{Hom Bundle, Dual Bundle, Functional Dual Bundle}
\begin{itemizeXALI}
\item The construction $\textrm{Hom}(.,.)$ can be applied
fiberwise to a pair of vector bundles $E$ and $\tilde{E}$ over a
manifold $M$ to give a new vector bundle denoted by
$\textrm{Hom}(E,\tilde{E})$. The fiber of
$\textrm{Hom}(E,\tilde{E})$ at any given point $p\in M$ is the
vector space $\textrm{Hom}(E_p,\tilde{E}_p)$. Clearly if
$\textrm{rank}\,E=r$ and  $\textrm{rank}\,\tilde{E}=\tilde{r}$,
then $\textrm{rank}\,\textrm{Hom}(E,\tilde{E})=r\tilde{r}$.\\
If $\{(U_\alpha,\varphi_\alpha,\rho_\alpha)\}$ and
$\{(U_\alpha,\varphi_\alpha,\tilde{\rho}_\alpha)\}$ are total
trivialization atlases for the vector bundles $\pi: E\rightarrow
M$ and $\tilde{\pi}: \tilde{E}\rightarrow M$, respectively, then
$\{U_\alpha,\varphi_\alpha,\hat{\rho}_\alpha\}$ will be a total
trivialization atlas for
$\pi_{\textrm{Hom}}: \textrm{Hom}(E,\tilde{E})\rightarrow M$ where $\hat{\rho}_\alpha: \pi_{\textrm{Hom}}^{-1}(U_\alpha)\rightarrow
\textrm{Hom}(\reals^r,\reals^{\tilde{r}})\cong
\reals^{r\tilde{r}}$ is defined as follows: for $p\in U_\alpha$, $A_p\in \textrm{Hom}(E_p,\tilde{E}_p)$ is mapped to
 $[\tilde{\rho}_\alpha|_{\tilde{E}_{p}}] \circ A \circ
[\rho_\alpha|_{E_{p}}]^{-1}$.
%\begin{equation*}
%\hat{\rho}_\alpha: \pi_{\textrm{Hom}}^{-1}(U_\alpha)\rightarrow
%\textrm{Hom}(\reals^r,\reals^{\tilde{r}})\cong
%\reals^{r\tilde{r}},\quad A \mapsto
%[\tilde{\rho}_\alpha|_{\tilde{E}_{\pi_{\textrm{Hom}}(A)}}] \circ A \circ
%[\rho_\alpha|_{E_{\pi_{\textrm{Hom}}(A)}}]^{-1}
%\end{equation*}
\item Let $\pi: E\rightarrow M$ be a vector bundle. The \textbf{dual
bundle} $E^*$ is defined by $E^*=\textrm{Hom}(E,\tilde{E}=M\times
\reals)$.
\item Let $\pi: E\rightarrow M$ be a vector bundle and let $\mathcal{D}$ denote the density bundle of $M$. The \textbf{functional dual bundle} $E^\vee$ is defined by $E^\vee=\textrm{Hom}(E,\mathcal{D})$(see \cite{Reus1}). Let's describe explicitly
what the standard total trivialization triples of this bundle are.
 Let $(U,\varphi,\rho)$ be a total
  trivialization triple for $E$. We can associate with this triple the total trivialization triple $(U,\varphi,\rho^\vee)$ for $E^\vee$ where $\rho^\vee: E_U
^\vee\rightarrow \reals^{r}$ is defined as follows: for $p\in U$, $L_p\in
\textrm{Hom} (E_p,\mathcal{D}_p)$ is mapped to $\rho_{\mathcal{D},\varphi}\circ L_p\circ
(\rho|_{E_p})^{-1}\in (\reals^r)^{*}\simeq \reals^r$. Note that $(\reals^r)^{*}\simeq \reals^r$ under the following isomorphism
\begin{equation*}
(\reals^r)^{*}\rightarrow \reals^r,\qquad u\mapsto u(e_1)e_1+\cdots+u(e_r)e_r
\end{equation*}
That is, $u$ as an element of $\reals^r$ is the vector whose components are $(u(e_1),\cdots,u(e_r))$. In particular,  if $z=z_1e_1+\cdots+z_re_r$ is an arbitrary vector in $\reals^r$, then
\begin{equation*}
u(z)=u(z_1e_1+\cdots+z_re_r)=z_1u(e_1)+\cdots+z_ru(e_r)=z\cdot u
\end{equation*}
where on the LHS $u$ is viewed as an element of $(\reals^r)^*$ and on the RHS $u$ is viewed as an element of $\reals^r$.
\end{itemizeXALI}
\subsubsection{Tensor Product Of Bundles}
Let $\pi: E\rightarrow
M$ and $\tilde{\pi}: \tilde{E}\rightarrow M$ be two vector
bundles. Then $E\otimes \tilde{E}$ is a new vector bundle whose
fiber at $p\in M$ is $E_p\otimes \tilde{E}_p$. If $\{(U_\alpha,\varphi_\alpha,\rho_\alpha)\}$ and
$\{(U_\alpha,\varphi_\alpha,\tilde{\rho}_\alpha)\}$ are total
trivialization atlases for the vector bundles $\pi: E\rightarrow
M$ and $\tilde{\pi}: \tilde{E}\rightarrow M$, respectively, then
$\{(U_\alpha,\varphi_\alpha,\hat{\rho}_\alpha))\}$ will be a total
trivialization atlas for $\pi_{\textrm{tensor}}:
E\otimes \tilde{E}\rightarrow M$ where $\hat{\rho}_\alpha: \pi_{\textrm{tensor}}^{-1}(U_\alpha)\rightarrow
(\reals^r\otimes\reals^{\tilde{r}})\cong \reals^{r\tilde{r}}$ is defined as follows: for $p\in U_\alpha$, $a_p\otimes \tilde{a}_p\in E_p\otimes \tilde{E}_p$ is mapped to $\rho_\alpha|_{E_{p}} (a_p)\otimes
\tilde{\rho}_\alpha|_{\tilde{E}_{p}} (\tilde{a}_p)$.\\
%\begin{equation*}
%\hat{\rho}_\alpha: \pi_{\textrm{tensor}}^{-1}(U_\alpha)\rightarrow
%(\reals^r\otimes\reals^{\tilde{r}})\cong \reals^{r\tilde{r}},\quad
%a \mapsto \rho_\alpha|_{E_{\pi(a)}} (a)\otimes
%\tilde{\rho}_\alpha|_{E_{\tilde{\pi}(a)}} (a)
%\end{equation*}
 It can be shown that $\textrm{Hom}(E,\tilde{E})\cong E^*\otimes \tilde{E}$ (isomorphism of vector bundles over
$M$).
\begin{remark}[Fiber Metric on Tensor Product]\lab{winter51}
Consider the inner product spaces $(U,\langle .,.\rangle_U)$
and $(V,\langle .,.\rangle_V)$. We can turn the tensor product of
$U$ and $V$, $U\otimes V$ into an inner product space by defining
\begin{equation*}
\langle u_1\otimes v_1,u_2\otimes v_2\rangle_{U\otimes V}=\langle
u_1,u_2\rangle _U\langle v_1,v_2\rangle_V
\end{equation*}
and extending by linearity. As a consequence, if $E$ is a vector
bundle (on a Riemannian manifold $(M,g)$) equipped with a fiber
metric $\langle .,.\rangle_E$, then there is a natural fiber
metric on the bundle $(T^*M)^{\otimes k}$ and subsequently on the
bundle $(T^*M)^{\otimes k}\otimes E$. If $F=F^a_{i_1\cdots
i_k}dx^{i_1}\otimes\cdots\otimes dx^{i_k}\otimes s_a$ and $G=G^b_{j_1\cdots
 j_k}dx^{j_1}\otimes\cdots \otimes dx^{j_k}\otimes s_b$ are two local sections of this
bundle on a domain $U$ of a chart, then at any point in $U$ we have
\begin{align*}
\langle F, G\rangle_{(T^*M)^{\otimes k}\otimes E}&=F^a_{i_1\cdots i_k}G^b_{j_1\cdots
 j_k}\langle dx^{i_1},dx^{j_1}\rangle_{T^*M}\cdots \langle
 dx^{i_k},dx^{j_k}\rangle_{T^*M}\langle s_a,
 s_b\rangle_E\\
 &=g^{i_1j_1}\cdots g^{i_kj_k}h_{ab}F^a_{i_1\cdots i_k}G^b_{j_1\cdots
 j_k}
\end{align*}
where $h_{ab}:=\langle s_a,
 s_b\rangle_E$.
\end{remark}

%\clearpage

\subsection{Connection on Vector Bundles, Covariant Derivative}
\subsubsection{Basic Definitions}
Let $\pi:E\rightarrow M$ be a vector bundle.
\begin{definition}\lab{winter52}
 A \textbf{connection} in $E$ is a map
\begin{equation*}
\grad: C^\infty(M,TM)\times C^{\infty}(M,E)\rightarrow
C^\infty(M,E),\quad (X,u)\mapsto \grad_X u
\end{equation*}
satisfying the following properties:
\begin{enumerate}
\item $\grad_X u$ is linear over $C^\infty(M)$ in $X$
\begin{equation*}
\forall\, f,g\in C^\infty(M)\qquad
\grad_{fX_1+gX_2}u=f\grad_{X_1}u+g\grad_{X_2}u
\end{equation*}
\item $\grad_X u$ is linear over $\reals$ in $u$:
\begin{equation*}
\forall\, a,b\in \reals\qquad \grad_X(au_1+bu_2)=a\grad_X
u_1+b\grad_X u_2
\end{equation*}
\item $\grad$ satisfies the following product rule
\begin{equation*}
\forall\, f\in C^{\infty}(M)\qquad \grad_X (fu)=f\grad_X u+(Xf)u
\end{equation*}
\end{enumerate}
A \textbf{metric connection} in a real vector bundle $E$ with a
fiber metric is a connection $\grad$ such that
\begin{equation*}
\forall\, X\in C^\infty (M,TM), \forall\, u,v\in C^\infty
(M,E)\qquad X\langle u,v\rangle_E=\langle \grad_X u,
v\rangle_E+\langle u,\grad_X v\rangle_E
\end{equation*}
\end{definition}
Here is a list of useful facts about connections:
\begin{itemizeXALI}
\item (\cite{Moore2009},Page183) Using a partition of unity, one can show that any real vector
bundle with a smooth fiber metric admits a metric connection
\item (\cite{Lee3}, Page 50) If $\grad$ is a
connection in a bundle $E$, $X\in C^{\infty}(M,TM)$, $u\in
C^\infty (M,E)$, and $p\in M$, then $\grad_X u|_p$ depends only
on the values of $u$ in a neighborhood of $p$ and the value of
$X$ at $p$. More precisely, if $u=\tilde{u}$ on a neighborhood of
$p$ and $X_p=\tilde{X}_p$, then $\grad_X
u|_p=\grad_{\tilde{X}}\tilde{u}|_p$.
\item (\cite{Lee3}, Page 53) If $\grad$ is a connection in $TM$, then there exists a
unique connection in each tensor bundle $T^k_l(M)$, also denoted
by $\grad$, such that the following conditions are satisfied:
\begin{enumerate}
\item On the tangent bundle, $\grad$ agrees with the given
connection.
\item On $T^0(M)$, $\grad$ is given by ordinary differentiation of functions, that is, for all real-valued smooth functions $f:M\rightarrow
\reals$: $\grad_X f=Xf$.
\item $\grad_X(F\otimes G)=(\grad_X F)\otimes G+F\otimes (\grad_X
G)$.
\item If ${\rm tr}$ denotes the trace on any pair of indices, then
$\grad_X({\rm tr}F)={\rm tr}(\grad_X F)$.
\end{enumerate}
This connection satisfies the following additional property: for
any $T\in C^\infty(M,T^k_l(M))$, vector fields $Y_i$, and differential
$1$-forms $\omega^j$,
\begin{align*}
(\grad_X
T)(\omega^1,\dots, &\omega^l,Y_1,\dots,Y_k)=X(T(\omega^1,\dots,\omega^l,Y_1,\dots,Y_k))\\
&-\sum_{j=1}^l
T(\omega^1,\dots,\grad_X\omega^j,\dots,\omega^l,Y_1,\dots,Y_k)\\
&-\sum_{i=1}^k
T(\omega^1,\dots,\omega^l,Y_1,\dots,\grad_XY_i,\dots,Y_k)\,.
\end{align*}
\end{itemizeXALI}
\begin{definition}\lab{winter53}
Let $\grad$ be a connection in $\pi: E\rightarrow M$. We define
the corresponding \textbf{covariant derivative} on $E$, also
denoted $\grad$, as follows
\begin{equation*}
\grad: C^\infty (M,E)\rightarrow C^\infty
(M,\textrm{Hom}(TM,E))\cong C^{\infty}(M,T^*M\otimes E),\qquad
u\mapsto \grad u
\end{equation*}
where for all $p\in M$, $\grad u(p): T_p M\rightarrow E_p$ is
defined by
\begin{equation*}
X_p\mapsto \grad_X u|_p
\end{equation*}
where $X$ on the RHS is any smooth vector field whose value at $p$ is $X_p$.
\end{definition}
\begin{remark}\lab{winter54}
Let $\grad$ be a connection in $TM$. As it was discussed $\grad$
induces a connection in any tensor bundle $E=T^k_l(M)$, also
denoted by $\grad$. Some authors (including Lee in \cite{Lee3},
Page 53) define the corresponding covariant derivative on
$E=T^k_l(M)$ as follows:
\begin{equation*}
\grad: C^\infty (M,T^k_l(M))\rightarrow C^\infty
(M,T^{k+1}_l(M)),\qquad F\mapsto \grad F
\end{equation*}
where
\begin{equation*}
\grad F(\omega^1,\cdots,\omega^l, Y_1,\cdots,Y_k,X)=(\grad_X
F)(\omega^1,\cdots,\omega^l,Y_1,\cdots,Y_k)
\end{equation*}
This definition agrees with the previous definition of covariant
derivative that we had for general vector bundles because
\begin{equation*}
T^*M\otimes T^k_l M\cong T^*M\otimes \underbrace{T^*M\otimes
\cdots\otimes T^*M}_{\textrm{$k$ factors}}\otimes
\underbrace{TM\otimes \cdots\otimes TM}_{\textrm{$l$
factors}}\cong T^{k+1}_l M
\end{equation*}
Therefore
\begin{equation*}
C^\infty(M,\textrm{Hom}(TM,T^k_l M))\cong  C^\infty(M,T^*M\otimes
T^k_l M )\cong C^\infty (M,T^{k+1}_l M)
\end{equation*}
More concretely, we have the following one-to-one correspondence
between\\ $C^\infty(M,\textrm{Hom}(TM,T^k_l M))$ and $C^\infty
(M,T^{k+1}_l M)$:
\begin{enumerate}
\item Given $u\in C^\infty (M,T^{k+1}_l M)$, the
corresponding element {\fontsize{8}{9}{$\tilde{u}\in
C^\infty(M,\textrm{Hom}(TM,T^k_l M))$}} is given by
\begin{equation*}
\forall\,p\in M\qquad \tilde{u}(p): T_pM\rightarrow
T^k_l(T_pM),\quad X\mapsto u(p)(\cdots,\cdots,X)
\end{equation*}
\item Given {\fontsize{10}{10}{$\tilde{u}\in C^\infty(M,\textrm{Hom}(TM,T^k_l M))$}}, the
corresponding element {\fontsize{9}{9}{$u\in C^\infty (M,T^{k+1}_l M)$}} is given by
\begin{equation*}
\forall\,p\in M\qquad
u(p)(\omega^1,\cdots,\omega^l,Y_1,\cdots,Y_k,X)=[\tilde{u}(p)(X)](\omega^1,\cdots,\omega^l,Y_1,\cdots,Y_k)
\end{equation*}
\end{enumerate}
\end{remark}
\subsubsection{Covariant Derivative on Tensor Product of Bundles}

 (\cite{Palais65}, Page 87) If $E$ an $\tilde{E}$ are vector bundles over $M$ with
covariant derivatives $\grad^E: C^\infty(M,E)\rightarrow C^\infty
(M, T^*M\otimes E) $ and $\grad^{\tilde{E}}:
C^\infty(M,\tilde{E})\rightarrow C^\infty (M, T^*M\otimes
\tilde{E})$, respectively, then there is a uniquely determined
covariant derivative
\begin{equation*}
\grad^{E\otimes \tilde{E}}: C^\infty(M,E\otimes
\tilde{E})\rightarrow C^{\infty}(M,T^*M\otimes E\otimes \tilde{E})
\end{equation*}
such that
\begin{equation*}
\grad^{E\otimes \tilde{E}}(u\otimes \tilde{u})=\grad^E u\otimes
\tilde{u}+\grad^{\tilde{E}}\tilde{u}\otimes u
\end{equation*}
The above sum makes sense because of the following isomorphisms:
\begin{equation*}
(T^*M\otimes E)\otimes \tilde{E}\cong  T^*M\otimes E\otimes
\tilde{E}\cong T^*M\otimes \tilde{E}\otimes E\cong (T^*M\otimes
\tilde{E})\otimes E
\end{equation*}
\begin{remark}\lab{winter55}
Recall that for tensor fields covariant derivative can be
considered as a map from $C^\infty(M,T^k_l M)\rightarrow
C^{\infty}(M,T^{k+1}_l M)$. Using this, we can give a second
description of covariant derivative on $E\otimes \tilde{E}$ when
$E=T^k_l M$. In this new description we have
\begin{equation*}
\grad^{T^k_l M\otimes \tilde{E}}: C^\infty (M,T^k_l M\otimes
\tilde{E})\rightarrow C^\infty(M,T^{k+1}_l M\otimes \tilde{E})
\end{equation*}
Indeed, for $F\in C^\infty (M,T^k_l M)$ and $u\in
C^\infty(M,\tilde{E})$
\begin{equation*}
\grad^{T^k_l M\otimes \tilde{E}}(F\otimes
u)=\underbrace{(\grad^{T^k_l M}F)}_{T^{k+1}_lM}\otimes
u+\underbrace{\underbrace{F}_{T^k_lM}\otimes
\underbrace{\grad^{\tilde{E}} u}_{T^*M\otimes
\tilde{E}}}_{T^{k+1}_l M\otimes \tilde{E}}
\end{equation*}
In particular, if $f\in C^\infty (M)$ and $u\in C^\infty(M,E)$ we
have $\grad^E (fu)\in C^{\infty}(M,T^*M\otimes E)$ and it is
equal to
\begin{equation*}
\grad^E (fu)=df\otimes u+f\grad^E u
\end{equation*}
\end{remark}

\subsubsection{Higher Order Covariant Derivatives}
Let $\pi: E\rightarrow M$ be a vector bundle. Let $\grad^E$ be a
connection in $E$ and $\grad$ be a connection in $TM$ which
induces a connection in $T^*M$. We have the following chain
{\scriptsize{\begin{align*}
C^\infty(M,E)\xrightarrow{\grad^E}C^\infty (M,T^*M\otimes
E)&\xrightarrow{\grad^{T^*M\otimes E}}C^\infty (M,(T^*M)^{\otimes
2 }\otimes E)\xrightarrow{\grad^{(T^*M)^{\otimes 2}\otimes
E}}\cdots \\
&\quad\cdots\xrightarrow{\grad^{(T^*M)^{\otimes (k-1)}\otimes
E}}C^\infty (M,(T^*M)^{\otimes k }\otimes
E)\xrightarrow{\grad^{(T^*M)^{\otimes k}\otimes E}}\cdots
\end{align*}}}
In what follows we denote all the maps in the above chain by
$\grad^E$. That is, for any $k\in \mathbb{N}_0$ we consider
$\grad^E$ as a map from $C^\infty (M,(T^*M)^{\otimes k }\otimes
E)$ to $C^\infty (M,(T^*M)^{\otimes (k+1) }\otimes E)$. So
\begin{equation*}
(\grad^E)^k: C^\infty(M,E)\rightarrow C^\infty (M,(T^*M)^{\otimes
k }\otimes E)
\end{equation*}
As an example, let's consider $(\grad^E)^k(fu)$ where $f\in
C^\infty (M)$ and $u\in C^\infty(M,E)$. We have
\begin{align*}
& \grad^E (fu)=df\otimes u+f\grad^E u\\
& (\grad^E)^2 (fu)=\grad^{T^*M\otimes E}\big[ df\otimes u+f\grad^E
u\big]\\
&\qquad\quad=[\grad^{T^*M}(df)\otimes u+df\otimes \grad^E
u]+[df\otimes
\grad^E u+f(\grad^E)^2u]\\
&\qquad\quad=\sum_{j=0}^2 {2\choose j} (\grad^{T^*M})^j f\otimes
(\grad^E)^{2-j}u
\end{align*}
In general, we can show by induction that
\begin{equation*}
(\grad^E)^k (fu)=\sum_{j=0}^k {k\choose j} (\grad^{T^*M})^j
f\otimes (\grad^E)^{k-j}u
\end{equation*}
where $(\grad^{T^*M})^0=Id$. Here $(\grad^{T^*M})^j
f$ should be interpreted as applying $\grad$ (in the sense described in Remark \ref{winter54}) $j$ times; so $(\grad^{T^*M})^j
f$ at each point is an element of $T^j_0M=(T^*M)^{\otimes j}$.
\subsubsection{Three Useful Rules, Two Important Observations}

Let $\pi:E\rightarrow M$ and $\tilde{\pi}:\tilde{E}\rightarrow M$
be two vector bundles over $M$ with ranks $r$ and $\tilde{r}$,
respectively. Let $\grad$ be a
 connection in $TM$ (which automatically induces a connection in all tensor bundles), $\grad^{E}$ be
  a connection in $E$ and $\grad^{\tilde{E}}$ be a connection in $\tilde{E}$. Let $(U,\varphi,\rho)$ be a total trivialization triple for $E$.
\begin{enumerate}
\item $\{\partial_i=\varphi_*^{-1}\frac{\partial}{\partial x^i}\}_{1\leq i\leq n}$ is a coordinate frame for $TM$ over $U$.
\item $\{s_a=\rho^{-1}(e_a)\}_{1\leq a\leq r}$ is a local frame for $E$ over
$U$.($\{e_a\}_{1\leq a \leq r}$ is the standard basis for
$\reals^r$ where $r=\textrm{rank}\,E$.)
\item Christoffel Symbols for $\grad$ on $(U,\varphi,\rho)$: $\grad_{\partial_i}\partial_j=\Gamma^k_{ij}\partial_k$
\item Christoffel Symbols for $\grad^E$ on $(U,\varphi,\rho)$: $\grad_{\partial_i}s_a=(\Gamma_E)^b_{ia}s_b$
\end{enumerate}
Also recall that for any 1-form $\omega$
\begin{equation*}
\grad_X\omega=(X^i\partial_i\omega_k-X^i\omega_j\Gamma^j_{ik})dx^k
\end{equation*}
Therefore
\begin{equation*}
\grad_{\partial_i}dx^j=-\Gamma^j_{ik}dx^k
\end{equation*}
\begin{itemizeXALI}
\item \textbf{Rule 1:} For all $u\in C^{\infty}(M,E)$
\begin{equation*}
\grad^E u=dx^i\otimes \grad_{\partial_i}^E u\qquad \textrm{on $U$}
\end{equation*}
The reason is as follows: Recall that for all $p\in M$, $\grad^E u(p)\in T^*M\otimes E$. Since $\{dx^i\otimes s_a\}$ is a local frame for $T^*M\otimes E$ on $U$ we have
\begin{equation*}
\grad^E u=R^a_i dx^i\otimes s_a= dx^i\otimes (R^a_i s_a)
\end{equation*}
According to what was discussed in the study of the isomorphism
$\textrm{Hom}(V,W)\cong V^*\otimes W$ in Section 3 we know that at
any point $p\in M$, $R^a_i$ is the element in column $i$ and row
$a$ of the matrix of $\grad^E u (p)$ as an element of
$\textrm{Hom}(T_pM,E_p)$. Therefore
\begin{equation*}
 \grad_{\partial_i}^E u=R^a_i s_a
\end{equation*}
Consequently we have $\grad^E u=dx^i\otimes (R^a_i
s_a)=dx^i\otimes \grad_{\partial_i}^E u$.
%So we just need to show that
%\begin{equation*}
% \grad_{\partial_i}^E u=R^a_i s_a
%\end{equation*}
%At any point $p\in M$, $\grad^E u(p)$ as an element of
%$\textrm(Hom)(T_pM,E_p)$ acts as follows
%\begin{equation*}
%X\mapsto \grad^E_X u|_p
%\end{equation*}
%Therefore the $i^{th}$ column of the matrix of $\grad^E u(p)$
%with respect to the bases $\{\partial_i\}$ for the domain
%$(T_pM)$ and $\{s_a\}$ for the codomain $(E_p)$ is (everything is
%evaluated at $p$ )
%\begin{equation*}
%\grad^E_i (u^bs_b)=u^b(\Gamma_E)_{ib}^as_a+\partial_i u^a s_a
%\end{equation*}
\item \textbf{Rule 2:} For all $v_1\in C^\infty(M,E)$ and $v_2\in C^\infty(M,\tilde{E})$
\begin{equation*}
\grad_{\partial_j}^{E\otimes \tilde{E}}(v_1\otimes
v_2)=(\grad_{\partial_j}^E v_1)\otimes v_2+v_1\otimes
(\grad_{\partial_j}^{\tilde{E}} v_2)
\end{equation*}
\item \textbf{Rule 3:} For all $u\in C^{\infty}(M,E)$ and $f\in C^\infty(M)$
\begin{equation*}
\grad^E(fu)=f\grad^E u+df\otimes u
\end{equation*}
\end{itemizeXALI}
The following two examples are taken from \cite{Gavrilov2008}.
\begin{itemizeXALI}
\item \textbf{Example 1:} Let $u\in C^\infty (M,E)$. On $U$ we may
write $u=u^a s_a$. We have
\begin{align*}
\grad^E u &=\grad^E (u^as_a)\stackrel{\text{Rule 3}}{=}
u^a\grad^Es_a+du^a\otimes s_a=u^a\grad^E s_a+(\partial_i
u^adx^i)\otimes s_a\\
&\stackrel{\text{Rule 1}}{=}u^adx^i\otimes
\grad^E_{\partial_i}s_a+(\partial_i u^adx^i)\otimes s_a\\
&=u^adx^i\otimes \big((\Gamma_E)^b_{ia}s_b\big)+(\partial_i
u^adx^i)\otimes s_a=dx^i\otimes
\big(u^a(\Gamma_E)^b_{ia}s_b\big)+dx^i\otimes (\partial_i u^a
s_a)\\
&= dx^i\otimes \big(u^b(\Gamma_E)^a_{ib}s_a\big)+dx^i\otimes
(\partial_i u^a s_a)\\
&=[\partial_i u^a+(\Gamma_E)^a_{ib}u^b]dx^i\otimes s_a
\end{align*}
That is, $\grad^E u= (\grad^E u)^a_i dx^i\otimes s_a$ where
\begin{equation*}
(\grad^E u)^a_i=\partial_i u^a+(\Gamma_E)^a_{ib}u^b
\end{equation*}
\item \textbf{Example 2:} Let $u\in C^\infty (M,E)$. On $U$ we may
write $u=u^a s_a$. We have
{\fontsize{8}{8}{\begin{align*}
(\grad^E)^2 u &=\grad^{T^*M\otimes E}\big( [\partial_i
u^a+(\Gamma_E)^a_{ib}u^b]dx^i\otimes s_a\big)\\
&\stackrel{\text{Rule 3}}{=}[\partial_i
u^a+(\Gamma_E)^a_{ib}u^b]\grad^{T^*M\otimes E }(dx^i\otimes
s_a)+d [\partial_i u^a+(\Gamma_E)^a_{ib}u^b]\otimes (dx^i\otimes
s_a)\\
&\stackrel{\text{Rule 1}}{=}[\partial_i
u^a+(\Gamma_E)^a_{ib}u^b]dx^j\otimes \grad^{T^*M\otimes E
}_{\partial_j}(dx^i\otimes s_a)+d [\partial_i
u^a+(\Gamma_E)^a_{ib}u^b]\otimes (dx^i\otimes s_a)\\
& \stackrel{\text{Def. of $d$}}{=}[\partial_i
u^a+(\Gamma_E)^a_{ib}u^b]dx^j\otimes \grad^{T^*M\otimes E
}_{\partial_j}(dx^i\otimes s_a)+\partial_{j}[\partial_i
u^a+(\Gamma_E)^a_{ib}u^b]dx^j\otimes dx^i\otimes s_a\\
& \stackrel{\text{Rule 2}}{=}[\partial_i
u^a+(\Gamma_E)^a_{ib}u^b]dx^j\otimes
\big[\grad^{T^*M}_{\partial_j}dx^i\otimes s_a+dx^i\otimes
\grad^E_{\partial_j} s_a\big]+\partial_{j}[\partial_i
u^a+(\Gamma_E)^a_{ib}u^b]dx^j\otimes dx^i\otimes s_a\\
&=[\partial_i u^a+(\Gamma_E)^a_{ib}u^b]dx^j\otimes
\big[-\Gamma_{jk}^idx^k\otimes s_a+dx^i\otimes
 (\Gamma_E)_{ja}^cs_c\big]+\partial_{j}[\partial_i
u^a+(\Gamma_E)^a_{ib}u^b]dx^j\otimes dx^i\otimes s_a\\
& \hspace{-1.25cm}\stackrel{\text{$i\leftrightarrow k$ in the first
summand}}{=}[\partial_k u^a+(\Gamma_E)^a_{kb}u^b]dx^j\otimes
\big[-\Gamma_{ji}^kdx^i\otimes s_a+dx^k\otimes
 (\Gamma_E)_{ja}^cs_c\big]+\partial_{j}[\partial_i
u^a+(\Gamma_E)^a_{ib}u^b]dx^j\otimes dx^i\otimes s_a\\
&\hspace{-1.25cm}= \{\partial_{j}[\partial_i
u^a+(\Gamma_E)^a_{ib}u^b]-\Gamma_{ji}^k[\partial_k
u^a+(\Gamma_E)^a_{kb}u^b]\}dx^j\otimes dx^i\otimes
s_a+[\partial_k u^a+(\Gamma_E)^a_{kb}u^b](\Gamma_E)_{ja}^c
dx^j\otimes dx^k\otimes s_c\\
& \hspace{-1.25cm}\stackrel{\text{$i\leftrightarrow k$ in the last
summand}}{=}\{\partial_{j}[\partial_i
u^a+(\Gamma_E)^a_{ib}u^b]-\Gamma_{ji}^k[\partial_k
u^a+(\Gamma_E)^a_{kb}u^b]\}dx^j\otimes dx^i\otimes s_a\\
&\hspace{8cm}+[\partial_i
u^a+(\Gamma_E)^a_{ib}u^b](\Gamma_E)_{ja}^c
dx^j\otimes dx^i\otimes s_c\\
& \hspace{-1.25cm}\stackrel{\text{$c\leftrightarrow a$ in the last
summand}}{=}\{\partial_{j}[\partial_i
u^a+(\Gamma_E)^a_{ib}u^b]-\Gamma_{ji}^k[\partial_k
u^a+(\Gamma_E)^a_{kb}u^b]\}dx^j\otimes dx^i\otimes s_a\\
&\hspace{8cm}+[\partial_i
u^c+(\Gamma_E)^c_{ib}u^b](\Gamma_E)_{jc}^a dx^j\otimes dx^i\otimes
s_a
\end{align*}}}
\end{itemizeXALI}
Considering the above examples we make the following two useful
observations that can be proved by induction.
\begin{itemizeXALI}
\item \textbf{Observation 1:} In general $(\grad^E)^k u =
\big((\grad^E)^k u \big)^a_{i_1\cdots
 i_k}dx^{i_1}\otimes \cdots\otimes dx^{i_k}\otimes s_a$ $(1\leq a\leq r,\, 1\leq i_1,\cdots,i_k\leq
 n)$ where $((\grad^E)^k u \big)^a_{i_1\cdots
 i_k}\circ \varphi^{-1}$ is a linear combination of $u^1\circ \varphi^{-1},\cdots, u^r\circ
 \varphi^{-1}$ and their partial derivatives up to order $k$ and the coefficients
are polynomials in terms of Christoffel symbols (of the linear
connection on $M$ and connection in $E$) and their derivatives (on
a compact manifold these coefficients are uniformly bounded provided that the metric and the fiber metric are smooth).
That is,
\begin{equation*}
((\grad^E)^k u \big)^a_{i_1\cdots
 i_k}\circ \varphi^{-1}=\sum_{|\eta|\leq k}\sum_{l=1}^r C_{\eta l}\partial^\eta (u^l\circ \varphi^{-1})
\end{equation*}
where for each $\eta$ and $l$, $C_{\eta l}$ is a polynomial in
terms of Christoffel symbols (of the linear connection on $M$ and
connection in $E$) and their derivatives.
\item \textbf{Observation 2:}
The highest order term in $((\grad^E)^k u \big)^a_{i_1\cdots
 i_k}\circ
\varphi^{-1}$ is $\frac{\partial}{x^{i_1}}\cdots
\frac{\partial}{x^{i_k}}(u^a\circ \varphi^{-1})$; that is
{\fontsize{8}{9}{\begin{equation*}
((\grad^E)^k u \big)^a_{i_1\cdots
 i_k}\circ \varphi^{-1}=\frac{\partial}{\partial x^{i_1}}\cdots \frac{\partial}{\partial x^{i_k}}(u^a\circ
\varphi^{-1})+\textrm{terms that contain derivatives of order at
most $k-1$ of $u^l\circ \varphi^{-1}$ $(1\leq l\leq r)$}
\end{equation*}}}
So
\begin{equation*}
((\grad^E)^k u \big)^a_{i_1\cdots
 i_k}\circ \varphi^{-1}=\frac{\partial^k}{\partial x^{i_1}\cdots \partial x^{i_k}}(u^a\circ
\varphi^{-1})+\sum_{|\eta|<k}\sum_{l=1}^rC_{\eta
l}\partial^\eta(u^l\circ \varphi^{-1})
\end{equation*}
\end{itemizeXALI}

\section{Some Results From the Theory of Generalized Functions}
In this section we collect some results from the theory of
distributions that will be needed for our definition of function
spaces on manifolds. Our main reference for this part is the
exquisite exposition by Marcel De Reus (\cite{Reus1}).

\subsection{Distributions on Domains in Euclidean Space}
Let $\Omega$ be a nonempty open set in $\reals^n$.
\begin{enumerateXALI}
\item Recall that
\begin{itemizeX}
\item $\mathcal{K}(\Omega)$ is the collection of all compact
subsets of $\Omega$.
\item $C^\infty (\Omega)=\textrm{the collection of all infinitely differentiable (real-valued) functions on $\Omega$}$.
\item For all $K\in \mathcal{K}(\Omega)$, $C_K^\infty(\Omega)=\{\varphi\in C^\infty(\Omega): \textrm{supp}\,\varphi\subseteq
K\}$.
\item $C^{\infty}_c(\Omega)=\bigcup_{K\in \mathcal{K}(\Omega)}C^\infty_K(\Omega)=\{\varphi\in C^\infty(\Omega): \textrm{supp}\,\varphi\, \textrm{is compact in
$\Omega$}\}$.
\end{itemizeX}
\item For all $\varphi\in C^{\infty}(\Omega)$, $j\in \mathbb{N}$ and $K\in \mathcal{K}(\Omega)$ we define
\begin{equation*}
\|\varphi\|_{j,K}:=\sup \{|\partial^\alpha \varphi(x)|:
|\alpha|\leq j, x\in K\}
\end{equation*}
\item For all $j\in \mathbb{N}$ and $K\in \mathcal{K}(\Omega)$, $\|.\|_{j,K}$
is a seminorm on $C^{\infty}(\Omega)$. We define
$\mathcal{E}(\Omega)$ to be $C^\infty(\Omega)$ equipped with the
natural topology induced by the separating family of seminorms
$\{\|.\|_{j,K}\}_{j\in \mathbb{N},K\in \mathcal{K}(\Omega)}$. It
can be shown that $\mathcal{E}(\Omega)$ is a Frechet space.
\item For all $K\in \mathcal{K}(\Omega)$ we define $\mathcal{E}_K(\Omega)$ to be
$C_K^\infty(\Omega)$ equipped with the subspace topology. Since
$C^\infty_K(\Omega)$ is a closed subset of the Frechet space
$\mathcal{E}(\Omega)$, $\mathcal{E}_K(\Omega)$ is also a Frechet
space.
\item We define $D(\Omega)=\bigcup_{K\in
\mathcal{K}(\Omega)}\mathcal{E}_K(\Omega)$ equipped with the
inductive limit topology with respect to the family of vector subspaces $\{\mathcal{E}_K(\Omega)\}_{K\in
\mathcal{K}(\Omega)}$.  It can be shown that if $\{K_j\}_{j\in \mathbb{N}_0}$ is an exhaustion by compacts sets of $\Omega$, then the inductive limit topology on $D(\Omega)$ with respect to the family $\{\mathcal{E}_{K_j}\}_{j\in \mathbb{N}_0}$ is exactly the same as the inductive limit topology with respect to $\{\mathcal{E}_K(\Omega)\}_{K\in
\mathcal{K}(\Omega)}$.
\end{enumerateXALI}
\begin{remark}\lab{remfallcontintotest1}
Let us mention a trivial but extremely useful consequence of the
above description of the inductive limit topology on $D(\Omega)$. Suppose $Y$ is a topological space and the mapping $T:
Y\rightarrow D(\Omega)$ is such that $T(Y)\subseteq
\mathcal{E}_K(\Omega)$ for some $K\in \mathcal{K}(\Omega)$. Since
$\mathcal{E}_K(\Omega)\hookrightarrow D(\Omega)$, if $T:
Y\rightarrow \mathcal{E}_K(\Omega)$ is continuous, then $T:
Y\rightarrow D(\Omega)$ will be continuous.
\end{remark}
\begin{theorem}[Convergence and Continuity for
$\mathcal{E}(\Omega)$]\lab{winter56} Let $\Omega$ be a nonempty
open set in $\reals^n$. Let $Y$ be a topological vector space
whose topology is induced by a separating family of seminorms $\mathcal{Q}$.
\begin{enumerateXALI}%[{1}-1)]
\item A sequence $\{\varphi_m\}$ converges to $\varphi$ in
$\mathcal{E}(\Omega)$ if and only if
$\|\varphi_m-\varphi\|_{j,K}\rightarrow 0$ for all $j\in
\mathbb{N}$ and $K\in \mathcal{K}(\Omega)$.
\item  Suppose $T:\mathcal{E}(\Omega)\rightarrow Y$ is a linear map.
Then the followings are equivalent
\begin{itemize}
\item $T$ is continuous.
\item For every $q\in\mathcal{Q}$, there exist $j\in \mathbb{N}$ and $K\in\mathcal{K}(\Omega)$, and $C>0$ such that
\begin{equation*}
\forall\,\varphi\in \mathcal{E}(\Omega)\qquad q(T(\varphi))\leq C\|\varphi\|_{j,K}
\end{equation*}
\item If $\varphi_m\rightarrow 0$ in $\mathcal{E}(\Omega)$, then $T(\varphi_m)\rightarrow 0$ in $Y$.
\end{itemize}
\item In particular, a linear map $T:\mathcal{E}(\Omega)\rightarrow
\reals$ is continuous if and only if there exist $j\in \mathbb{N}$
and $K\in\mathcal{K}(\Omega)$, and $C>0$ such that
\begin{equation*}
\forall\,\varphi\in \mathcal{E}(\Omega)\qquad |T(\varphi)|\leq
C\|\varphi\|_{j,K}
\end{equation*}
\item A linear map $T: Y\rightarrow \mathcal{E}(\Omega)$ is continuous if and only if
{\fontsize{10}{10}{\begin{equation*}
\forall\, j\in \mathbb{N},\,\forall\, K\in
\mathcal{K}(\Omega)\qquad \exists\,C>0,\,k\in \mathbb{N}\,,
q_1,\cdots,q_k \in \mathcal{Q}\quad \textrm{such that $\forall\,y$} \quad
\|T(y)\|_{j,K}\leq C \max_{1\leq i\leq k} q_i(y)
\end{equation*}}}
\end{enumerateXALI}
\end{theorem}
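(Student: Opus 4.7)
The plan is to derive all four items as direct specializations of the general principles for locally convex topological vector spaces whose topology is generated by a separating family of seminorms. Here the relevant family is $\mathcal{P}=\{\|\cdot\|_{j,K}\}_{j\in\mathbb{N},\,K\in\mathcal{K}(\Omega)}$, which by definition induces the natural topology of $\mathcal{E}(\Omega)$, and the relevant general results are Theorem \ref{thmapptvconvergence1} and Theorem \ref{thmapptvconvergence2}. A key bookkeeping observation used throughout is the monotonicity
\begin{equation*}
\|\varphi\|_{j',K'}\leq \|\varphi\|_{j,K}\qquad\text{whenever }j'\leq j\text{ and }K'\subseteq K,
\end{equation*}
together with the fact that a finite union of compacts in $\Omega$ is again compact. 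This lets us collapse any maximum $\max_{1\leq i\leq k}\|\varphi\|_{j_i,K_i}$ to a single seminorm $\|\varphi\|_{j,K}$ with $j=\max_i j_i$ and $K=\bigcup_i K_i$.

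Item (1) is an immediate instance of Theorem \ref{thmapptvconvergence1}(1) with $X=\mathcal{E}(\Omega)$ and the seminorm family $\mathcal{P}$: convergence in the natural topology is equivalent to $p(\varphi_m-\varphi)\to 0$ for every $p\in\mathcal{P}$, which is exactly the stated condition.

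For item (2), the equivalence between continuity and the displayed seminorm estimate is Theorem \ref{thmapptvconvergence1}(2) applied with source $\mathcal{E}(\Omega)$ (seminorm family $\mathcal{P}$) and target $Y$ (seminorm family $\mathcal{Q}$); the collapse observation above replaces the generic finite maximum by a single $\|\cdot\|_{j,K}$, yielding precisely the form stated. For the equivalence with sequential continuity at the origin, we invoke Theorem \ref{thmapptvconvergence2}, which is applicable because $\mathcal{E}(\Omega)$ is a Frechet space (as already recorded in the discussion immediately preceding the theorem). Item (3) is then just the $Y=\reals$ case of (2), or equivalently Theorem \ref{thmapptvconvergence1}(3).

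Item (4) is again Theorem \ref{thmapptvconvergence1}(2), but with the roles of source and target reversed: take $X=Y$ (seminorm family $\mathcal{Q}$) and target $\mathcal{E}(\Omega)$ (seminorm family $\mathcal{P}$). The general criterion then reads: for each $p\in\mathcal{P}$, i.e.\ for each pair $(j,K)\in\mathbb{N}\times\mathcal{K}(\Omega)$, there exist $C>0$, $k\in\mathbb{N}$, and $q_1,\dots,q_k\in\mathcal{Q}$ such that $\|T(y)\|_{j,K}\leq C\max_{1\leq i\leq k} q_i(y)$, which is exactly the claim.

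Overall, no step presents a genuine obstacle; this is essentially a translation exercise. The only point deserving a moment of care is the collapse of finite maxima to a single seminorm in item (2), which is what makes the right-hand side there a single $\|\cdot\|_{j,K}$ rather than a generic maximum. Everything else follows by direct citation of the previously established topological-vector-space machinery.
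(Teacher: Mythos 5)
The paper does not give a proof of Theorem \ref{winter56}; it is stated as one of the standard facts about $\mathcal{E}(\Omega)$, to be obtained from the general topological-vector-space machinery recorded in Section~4.3. Your proposal fills that in correctly, and in the way the paper clearly intends: every item is a direct specialization of Theorem~\ref{thmapptvconvergence1} to the seminorm family $\{\|\cdot\|_{j,K}\}$, together with Theorem~\ref{thmapptvconvergence2} for the sequential criterion in item~(2) (legitimate since $\mathcal{E}(\Omega)$ is Fr\'echet), and the collapse of $\max_{1\le i\le k}\|\cdot\|_{j_i,K_i}$ to a single $\|\cdot\|_{j,K}$ via $j=\max_i j_i$, $K=\bigcup_i K_i$ is exactly what reduces the generic seminorm estimate of Theorem~\ref{thmapptvconvergence1}(2) to the single-seminorm form stated in items~(2) and~(3). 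No gaps.
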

\begin{theorem}[Convergence and Continuity for
$\mathcal{E}_K(\Omega)$]\lab{winter57} Let $\Omega$ be a nonempty
open set in $\reals^n$ and $K\in \mathcal{K}(\Omega)$. Let $Y$ be
a topological vector space whose topology is induced by a separating family
of seminorms $\mathcal{Q}$.
\begin{enumerateXALI}%[{2}-1)]
\item A sequence $\{\varphi_m\}$ converges to $\varphi$ in
$\mathcal{E}_K(\Omega)$ if and only if
$\|\varphi_m-\varphi\|_{j,K}\rightarrow 0$ for all $j\in
\mathbb{N}$.
\item Suppose $T:\mathcal{E}_K(\Omega)\rightarrow Y$ is a linear map.
Then the followings are equivalent
\begin{itemize}
\item $T$ is continuous.
\item For every $q\in\mathcal{Q}$, there exists $j\in \mathbb{N}$ and $C>0$ such that
\begin{equation*}
\forall\,\varphi\in \mathcal{E}_K(\Omega)\qquad q(T(\varphi))\leq
C\|\varphi\|_{j,K}
\end{equation*}
\item If $\varphi_m\rightarrow 0$ in $\mathcal{E}_K(\Omega)$, then $T(\varphi_m)\rightarrow 0$ in $Y$.
\end{itemize}
\end{enumerateXALI}
\end{theorem}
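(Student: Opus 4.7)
The plan is to reduce the statement to the previously established theorems about natural topologies induced by separating families of seminorms (Theorem \ref{thmapptvconvergence1}) and continuity on Frechet spaces (Theorem \ref{thmapptvconvergence2}). The key preparatory observation is that although $\mathcal{E}_K(\Omega)$ a priori carries the subspace topology inherited from $\mathcal{E}(\Omega)$, and hence is induced by the uncountable family of restrictions $\{\|\cdot\|_{j,K'}|_{\mathcal{E}_K(\Omega)}\}_{j\in \mathbb{N},\,K'\in\mathcal{K}(\Omega)}$, this family is equivalent to the countable subfamily $\{\|\cdot\|_{j,K}\}_{j\in \mathbb{N}}$.

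The first step is to prove this equivalence. For any $\varphi\in C_K^\infty(\Omega)$, the function $\varphi$ and all its partial derivatives vanish on $\Omega\setminus K$, so for every $K'\in \mathcal{K}(\Omega)$ and every $j\in\mathbb{N}$,
\begin{equation*}
\|\varphi\|_{j,K'}=\sup_{|\alpha|\leq j,\,x\in K'}|\partial^\alpha\varphi(x)|=\sup_{|\alpha|\leq j,\,x\in K'\cap K}|\partial^\alpha\varphi(x)|\leq \|\varphi\|_{j,K}.
\end{equation*}
Thus each basic neighborhood $V(\|\cdot\|_{j,K'},n)\cap \mathcal{E}_K(\Omega)$ of the origin contains $V(\|\cdot\|_{j,K},n)\cap \mathcal{E}_K(\Omega)$, so the topology generated by $\{\|\cdot\|_{j,K}\}_{j\in \mathbb{N}}$ on $\mathcal{E}_K(\Omega)$ is at least as fine as the subspace topology; the reverse inclusion is trivial because $\|\cdot\|_{j,K}$ is itself one of the restricted seminorms. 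Hence the subspace topology on $\mathcal{E}_K(\Omega)$ coincides with the natural topology induced by the countable separating family $\mathcal{P}_K:=\{\|\cdot\|_{j,K}\}_{j\in\mathbb{N}}$.

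With this identification in hand, part (a) is immediate from Theorem \ref{thmapptvconvergence1}(i): convergence in a topological vector space whose topology is induced by a family of seminorms is equivalent to convergence in each seminorm. The equivalence of the first two bullets in part (b) is Theorem \ref{thmapptvconvergence1}(ii) applied to the families $\mathcal{P}_K$ on $\mathcal{E}_K(\Omega)$ and $\mathcal{Q}$ on $Y$, noting that since $\mathcal{P}_K$ is a \emph{totally ordered} (nested) family of seminorms ($\|\varphi\|_{j,K}\leq \|\varphi\|_{j',K}$ whenever $j\leq j'$), the ``max over finitely many $p_i$'' in that theorem collapses to a single seminorm $\|\cdot\|_{j,K}$ with $j$ equal to the largest index in the list. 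Finally, the equivalence of continuity with sequential continuity at $0$ (the third bullet) follows from Theorem \ref{thmapptvconvergence2}, since $\mathcal{E}_K(\Omega)$ is a Frechet space as noted in the excerpt (it is a closed subspace of the Frechet space $\mathcal{E}(\Omega)$, hence metrizable and complete in the induced translation invariant metric).

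There is no real obstacle here; the only conceptual step is the initial reduction from the subspace topology to the countable family $\mathcal{P}_K$, and this is essentially a one-line consequence of the support condition $\mathrm{supp}\,\varphi\subseteq K$. Everything else is an application of the general framework for locally convex topological vector spaces developed earlier in Section 4.
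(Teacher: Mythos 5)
Your proof is correct. The paper does not include its own proof of Theorem~\ref{winter57}; the introduction to the distribution-theory review states that these results are collected from De~Reus's exposition, so there is no in-text argument to compare against. Your derivation is self-contained and proceeds along the natural route: the one genuinely content-bearing step is the observation that the support condition $\operatorname{supp}\varphi\subseteq K$ forces $\|\varphi\|_{j,K'}\leq\|\varphi\|_{j,K}$ for every $K'\in\mathcal{K}(\Omega)$, which identifies the subspace topology on $\mathcal{E}_K(\Omega)$ with the natural topology induced by the countable nested family $\{\|\cdot\|_{j,K}\}_{j\in\mathbb{N}}$; once this is established, part (a) and the first equivalence in part (b) are literal instances of Theorem~\ref{thmapptvconvergence1} (with the ``max over finitely many $p_i$'' collapsing to a single seminorm by nestedness), and the sequential-continuity equivalence follows from Theorem~\ref{thmapptvconvergence2} together with the paper's earlier remark that $\mathcal{E}_K(\Omega)$ is a Fr\'echet space.
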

\begin{theorem}[Convergence and Continuity for $D(\Omega)$]\lab{thmfallconvcont13}
Let $\Omega$ be a nonempty open set in $\reals^n$. Let $Y$ be a topological vector space whose topology is induced by a separating family of seminorms $\mathcal{Q}$.
\begin{enumerateXALI}%[{3}-1)]
\item A sequence $\{\varphi_m\}$ converges to $\varphi$ in
$D(\Omega)$ if and only if there is a $K\in\mathcal{K}(\Omega)$
such that $\textrm{supp} \varphi_m\subseteq K$ and
$\varphi_m\rightarrow \varphi$ in $\mathcal{E}_K(\Omega)$.
\item Suppose $T:D(\Omega)\rightarrow Y$ is a linear map.
Then the followings are equivalent
\begin{itemize}
\item $T$ is continuous.
\item For all $K\in \mathcal{K}(\Omega)$, $T: \mathcal{E}_K(\Omega)\rightarrow
Y$ is continuous.
\item For every $q\in\mathcal{Q}$ and $K\in \mathcal{K}(\Omega)$, there exists $j\in \mathbb{N}$ and $C>0$ such that
\begin{equation*}
\forall\,\varphi\in \mathcal{E}_K(\Omega)\qquad q(T(\varphi))\leq
C\|\varphi\|_{j,K}
\end{equation*}
\item If $\varphi_m\rightarrow 0$ in $D(\Omega)$, then $T(\varphi_m)\rightarrow 0$ in $Y$.
\end{itemize}
\item In particular, a linear map $T: D(\Omega)\rightarrow \reals$
 is continuous if and only if for every $K\in \mathcal{K}(\Omega)$, there exists $j\in \mathbb{N}$ and $C>0$ such that
\begin{equation*}
\forall\,\varphi\in \mathcal{E}_K(\Omega)\qquad |T(\varphi)|\leq
C\|\varphi\|_{j,K}
\end{equation*}
\end{enumerateXALI}
\end{theorem}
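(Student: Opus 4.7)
The plan is to establish the three items in order, with item (1) doing the heavy lifting and items (2) and (3) being largely a matter of combining earlier results. For item (1), the reverse direction is essentially free: if $\mathrm{supp}\,\varphi_m\subseteq K$ and $\varphi_m\to\varphi$ in $\mathcal{E}_K(\Omega)$, then by definition of the inductive limit topology (Definition \ref{winter36}) the inclusion $\mathcal{E}_K(\Omega)\hookrightarrow D(\Omega)$ is continuous, and convergence transfers across. The forward direction is the classical ``support-compactness'' fact for convergent sequences in strict LF-spaces, and I expect this to be the main obstacle.

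For the forward direction I would argue by contradiction. Fix an exhaustion by compact sets $\{K_j\}_{j\in\mathbb{N}}$ of $\Omega$ via Theorem \ref{winter2}, so that each compact subset of $\Omega$ lies in some $\mathring{K}_j$. Suppose no single $K_j$ contains $\mathrm{supp}\,\varphi_m$ for all $m$. Then after passing to a subsequence I can extract $\varphi_{m_j}$ and points $x_j\in\Omega\setminus K_j$ with $\varphi_{m_j}(x_j)\neq 0$; note that $\{x_j\}$ meets each compact set finitely often. I would then build a seminorm
\begin{equation*}
p(\psi):=\sum_{j=1}^{\infty} c_j\,|\psi(x_j)|,\qquad
c_j:=\frac{j}{|\varphi_{m_j}(x_j)|},
\end{equation*}
on $D(\Omega)$. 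On each $\mathcal{E}_{K_i}(\Omega)$ only finitely many $x_j$ contribute, so $p$ is dominated by a seminorm of the form $\|\cdot\|_{0,K_i}$ and is therefore continuous on every $\mathcal{E}_{K_i}(\Omega)$; by the universal property of the inductive limit (Theorem \ref{winter37} with $Y=\reals$ and the family $\{\mathcal{E}_{K_i}(\Omega)\}$ as in the remark following Definition \ref{winter36}), $p$ is continuous on $D(\Omega)$. But $p(\varphi_{m_j})\geq j\to\infty$, contradicting the boundedness of any convergent sequence in a topological vector space. Hence some $K=K_j$ contains all supports, and convergence in $D(\Omega)$ forces $\|\varphi_m-\varphi\|_{j,K}\to 0$ for every $j$ (since these seminorms are continuous on $D(\Omega)$), which by Theorem \ref{winter57} is exactly convergence in $\mathcal{E}_K(\Omega)$.

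Item (2) is then bookkeeping. The equivalence of ``$T$ continuous on $D(\Omega)$'' with ``$T|_{\mathcal{E}_K(\Omega)}$ continuous for every $K$'' is immediate from Theorem \ref{winter37}, since the collection $\{\mathcal{E}_K(\Omega)\}_{K\in\mathcal{K}(\Omega)}$ is the defining family for the inductive limit. The further equivalence with the seminorm estimate $q(T(\varphi))\leq C\|\varphi\|_{j,K}$ is a direct application of Theorem \ref{winter57} on each $\mathcal{E}_K(\Omega)$. For the sequential characterization, continuity trivially implies sequential continuity at $0$; conversely, suppose $T$ is sequentially continuous at $0$ in $D(\Omega)$. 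Fix $K\in\mathcal{K}(\Omega)$ and a sequence $\psi_m\to 0$ in $\mathcal{E}_K(\Omega)$; by item (1) (reverse direction) we have $\psi_m\to 0$ in $D(\Omega)$, hence $T(\psi_m)\to 0$ in $Y$. Since $\mathcal{E}_K(\Omega)$ is Frechet, Theorem \ref{thmapptvconvergence2} promotes sequential continuity of $T|_{\mathcal{E}_K(\Omega)}$ to continuity, and the previous equivalence closes the loop.

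Item (3) is then the specialization of item (2) to $Y=\reals$ equipped with the single seminorm $|\cdot|$, in which case the second bullet of item (2) reduces verbatim to the stated estimate. The main conceptual work is therefore concentrated in the support-compactness argument in item (1); the rest is a systematic application of Theorems \ref{winter37}, \ref{winter57}, and \ref{thmapptvconvergence2}.
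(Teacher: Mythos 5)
Your proposal follows the standard argument that underlies the references the paper defers to (the paper itself gives no proof of this theorem), and the overall structure is right: establish the support-compactness fact for convergent sequences, then reduce everything to the Fr\'echet spaces $\mathcal{E}_K(\Omega)$ via Theorems \ref{winter37}, \ref{winter57}, and \ref{thmapptvconvergence2}. Items (2) and (3) are handled correctly.

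The one step that needs patching is your claim that the auxiliary seminorm $p(\psi)=\sum_j c_j|\psi(x_j)|$ (and, in the last line of item (1), the seminorms $\|\cdot\|_{j,K}$) are continuous on $D(\Omega)$ ``by Theorem \ref{winter37}.'' That theorem is stated for \emph{linear} maps into a locally convex space, and $p$ is a seminorm, not a linear map, so it does not literally apply. The conclusion you want is true, but for a different reason: a seminorm $p$ continuous on each $\mathcal{E}_K(\Omega)$ is continuous on $D(\Omega)$ because $\{p<1\}$ is a convex, balanced, absorbing set whose trace on every $\mathcal{E}_K(\Omega)$ is a neighborhood of $0$, and by Definition \ref{winter36} such sets form a local base at $0$ for the locally convex inductive limit topology. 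You should either state and use this seminorm version, or restructure the contradiction to avoid seminorms altogether by exhibiting directly a neighborhood $W$ of $0$ that the subsequence escapes, for instance
\begin{equation*}
W=\Bigl\{\psi\in D(\Omega):\ |\psi(x_j)|<\tfrac{1}{j}\,|\varphi_{m_j}(x_j)|\ \text{for all}\ j\Bigr\},
\end{equation*}
which is convex, balanced, meets each $\mathcal{E}_K(\Omega)$ in a neighborhood of $0$ (only finitely many $x_j$ lie in a given $K$), and satisfies $\varphi_{m_j}\notin W$ for every $j$. A further very small omission: before concluding $\varphi_m\to\varphi$ in $\mathcal{E}_K(\Omega)$, you need $\textrm{supp}\,\varphi\subseteq K$; this follows since point evaluations $\delta_x$ are continuous linear functionals on $D(\Omega)$ (a legitimate use of Theorem \ref{winter37}), so $\varphi(x)=\lim_m\varphi_m(x)=0$ for $x\notin K$.
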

\begin{remark}\lab{remfallcontintotest2}
Let $\Omega$ be a nonempty open set in $\reals^n$. Here are two immediate consequences of the previous theorems and remark:
\begin{enumerateXALI}
\item The identity map
\begin{equation*}
i_{D,\mathcal{E}}:D(\Omega)\rightarrow \mathcal{E}(\Omega)
\end{equation*}
is continuous (that is, $D(\Omega)\hookrightarrow \mathcal{E}(\Omega)$ ).
\item If $T:\mathcal{E}(\Omega)\rightarrow \mathcal{E}(\Omega)$ is a continuous linear map such that $\textrm{supp}(T \varphi)\subseteq \textrm{supp}\varphi$ for all $\varphi\in \mathcal{E}(\Omega)$ (i.e. $T$ is a \textbf{local} continuous linear map), then $T$ restricts to a continuous linear map from $D(\Omega)$ to $D(\Omega)$. Indeed, the assumption $\textrm{supp}(T \varphi)\subseteq \textrm{supp}\varphi$ implies that $T(D(\Omega))\subseteq D(\Omega)$. Moreover $T:D(\Omega)\rightarrow D(\Omega)$ is continuous if and only if for $K\in \mathcal{K}(\Omega)$ $T: \mathcal{E}_K(\Omega)\rightarrow D(\Omega)$ is continuous. Since $T(\mathcal{E}_K(\Omega))\subseteq \mathcal{E}_K(\Omega)$, this map is continuous if and only if $T: \mathcal{E}_K(\Omega)\rightarrow \mathcal{E}_K(\Omega)$ is continuous (see Remark \ref{remfallcontintotest1}). However, since the topology of $\mathcal{E}_K(\Omega)$ is the induced topology from $\mathcal{E}(\Omega)$, the continuity of the preceding map follows from the continuity of $T:\mathcal{E}(\Omega)\rightarrow \mathcal{E}(\Omega)$.
\end{enumerateXALI}
\end{remark}
\begin{theorem}\lab{thmfallprodinductcont11}
Let $\Omega$ be a nonempty open set in $\reals^n$. Let $Y$ be a topological vector space whose topology is induced by a separating family of seminorms $\mathcal{Q}$. Suppose $T: [D(\Omega)]^{\times r}\rightarrow Y$ is a linear map. The following are equivalent: (product spaces are equipped with the product topology)
\begin{enumerate}
\item $T: [D(\Omega)]^{\times r}\rightarrow Y$ is continuous.
\item For all $K\in \mathcal{K}(\Omega)$, $T: [\mathcal{E}_K(\Omega)]^{\times r}\rightarrow Y$ is continuous.
\item For all $q\in \mathcal{Q}$ and $K\in \mathcal{K}(\Omega)$, there exists $j_1,\cdots,j_l\in\mathbb{N}$ such that
\begin{equation*}
\forall\, (\varphi_1,\cdots,\varphi_r)\in [\mathcal{E}_K(\Omega)]^{\times r}\qquad |q\circ T(\varphi_1,\cdots,\varphi_r)|\leq C(\|\varphi_1\|_{j_1,K}+\cdots+\|\varphi_r\|_{j_r,K})
\end{equation*}
\end{enumerate}
\end{theorem}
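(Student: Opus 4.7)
The plan is to derive all three equivalences from two general machinery results already established: the identification of product and inductive limit topologies (Theorem \ref{winter38}) and the seminorm characterization of continuity for maps out of locally convex spaces (Theorem \ref{thmapptvconvergence1}). The statement actually has a typo --- the indices $j_1,\dots,j_l$ in item (3) should be $j_1,\dots,j_r$, one for each factor.

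\textbf{Step 1 (reformulating the product topology).} Fix an exhaustion by compact sets $\{K_j\}_{j\in\mathbb{N}_0}$ of $\Omega$ so that $D(\Omega)$ carries the inductive limit topology with respect to the nested family $\{\mathcal{E}_{K_j}(\Omega)\}$. By Theorem \ref{winter38}, the product topology on $[D(\Omega)]^{\times r}$ agrees with the inductive limit topology with respect to $\{[\mathcal{E}_{K_j}(\Omega)]^{\times r}\}$. Consequently the final clause of Theorem \ref{winter38} applies, giving immediately that $T$ is continuous on $[D(\Omega)]^{\times r}$ if and only if each restriction $T|_{[\mathcal{E}_{K_j}(\Omega)]^{\times r}}$ is continuous.

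\textbf{Step 2 ((1) $\Leftrightarrow$ (2)).} Clearly (2) implies continuity on every $[\mathcal{E}_{K_j}(\Omega)]^{\times r}$ and hence (1) by Step 1. Conversely, assume (1) and let $K\in\mathcal{K}(\Omega)$. Pick $j$ with $K\subseteq K_j$. The inclusion $\mathcal{E}_K(\Omega)\hookrightarrow \mathcal{E}_{K_j}(\Omega)$ is continuous (the topology on $\mathcal{E}_K(\Omega)$ is the subspace topology from $\mathcal{E}(\Omega)$, which agrees with the one induced from $\mathcal{E}_{K_j}(\Omega)$), so the product inclusion $[\mathcal{E}_K(\Omega)]^{\times r}\hookrightarrow [\mathcal{E}_{K_j}(\Omega)]^{\times r}$ is continuous. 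Composing with $T|_{[\mathcal{E}_{K_j}(\Omega)]^{\times r}}$ yields (2).

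\textbf{Step 3 ((2) $\Leftrightarrow$ (3)).} Fix $K\in\mathcal{K}(\Omega)$. The topology on $\mathcal{E}_K(\Omega)$ is induced by the separating family of seminorms $\{\|\cdot\|_{j,K}\}_{j\in\mathbb{N}}$, so by the observation preceding Theorem \ref{winter33} the product topology on $[\mathcal{E}_K(\Omega)]^{\times r}$ is induced by the family of seminorms
\begin{equation*}
P_{j_1,\dots,j_r}(\varphi_1,\dots,\varphi_r):=\|\varphi_1\|_{j_1,K}+\cdots+\|\varphi_r\|_{j_r,K},\qquad j_1,\dots,j_r\in\mathbb{N}.
\end{equation*}
Applying Theorem \ref{thmapptvconvergence1}(ii), continuity of $T:[\mathcal{E}_K(\Omega)]^{\times r}\to Y$ is equivalent to the assertion that for every $q\in\mathcal{Q}$ there exist finitely many such seminorms $P_{j_1^{(1)},\dots,j_r^{(1)}},\dots,P_{j_1^{(m)},\dots,j_r^{(m)}}$ and a constant $C>0$ dominating $q\circ T$. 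Since $\|\cdot\|_{j,K}$ is nondecreasing in $j$, taking $j_k:=\max_\ell j_k^{(\ell)}$ collapses the $\max$ of finitely many such seminorms into a single $P_{j_1,\dots,j_r}$, yielding the bound in (3). The reverse implication is immediate from the same theorem.

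The argument is essentially bookkeeping once one recognizes that Theorem \ref{winter38} is exactly the tool that lets one pass from products to inductive limits; the only mild subtlety is the final seminorm-consolidation step in Step 3, where one must use monotonicity of $\|\cdot\|_{j,K}$ in $j$ to combine finitely many product seminorms into one of the prescribed form.
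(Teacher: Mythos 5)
The paper states Theorem \ref{thmfallprodinductcont11} without proof; it sits in the background section on distributions whose results are mostly quoted from the references. Your argument is correct and complete, and it is the natural route given the tools already set up: Step 1 invokes Theorem \ref{winter38} (product topology on $[D(\Omega)]^{\times r}$ equals the inductive limit topology over $\{[\mathcal{E}_{K_j}(\Omega)]^{\times r}\}$) to reduce continuity on the product to continuity on each slice $[\mathcal{E}_{K_j}(\Omega)]^{\times r}$; Step 2 upgrades this to arbitrary $K\in\mathcal{K}(\Omega)$ by picking $K\subseteq K_j$ and using that the inclusion $[\mathcal{E}_K(\Omega)]^{\times r}\hookrightarrow[\mathcal{E}_{K_j}(\Omega)]^{\times r}$ is continuous (both carrying the subspace topology from $[\mathcal{E}(\Omega)]^{\times r}$); Step 3 then applies the seminorm criterion of Theorem \ref{thmapptvconvergence1}(ii) to the product seminorms $P_{j_1,\dots,j_r}$, using monotonicity of $\|\cdot\|_{j,K}$ in $j$ to collapse a finite max into a single $P_{j_1,\dots,j_r}$. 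The observation that the statement has a typo ($j_1,\dots,j_l$ should read $j_1,\dots,j_r$) is also correct — there should be one index per factor. Nothing is missing.
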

\begin{theorem}\lab{winter58}
Let $\Omega$ be a nonempty open set in $\reals^n$.
\begin{enumerate}
\item A set $B\subseteq D(\Omega)$ is bounded if and only if there
exists $K\in \mathcal{K}(\Omega)$ such that $B$ is a bounded
susbset of $\mathcal{E}_K(\Omega)$ which is in turn equivalent to
the following statement
\begin{equation*}
\forall\,j\in \mathbb{N}\, \exists r_j\geq 0 \quad \textrm{such
that}\quad \forall\, \varphi \in B \quad \|\varphi\|_{j,K}\leq r_j
\end{equation*}
\item If $\{\varphi_m\}$ is a Cauchy sequence in $D(\Omega)$, then
it converges to a function $\varphi\in D(\Omega)$. We say
$D(\Omega)$ is sequentially complete.
\end{enumerate}
\end{theorem}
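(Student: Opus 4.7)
The plan is to prove (1) and then deduce (2) from it.

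For (1), the $(\Leftarrow)$ direction is immediate: the inclusion $\mathcal{E}_K(\Omega)\hookrightarrow D(\Omega)$ is continuous (Remark \ref{remfallcontintotest1}), and continuous linear maps between locally convex topological vector spaces send bounded sets to bounded sets; the second equivalence (in terms of the seminorms $\|\cdot\|_{j,K}$) is just the description of bounded sets in the Fr\'echet space $\mathcal{E}_K(\Omega)$ via its defining family of seminorms. For the $(\Rightarrow)$ direction I would argue by contradiction. Assume $B\subseteq D(\Omega)$ is bounded but no $K\in\mathcal{K}(\Omega)$ contains $\mathrm{supp}\,\varphi$ for every $\varphi\in B$. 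Fix an exhaustion $\{K_j\}$ of $\Omega$ by compact sets (Theorem \ref{winter2}). Then for each $j$ there exists $\varphi_j\in B$ and $x_j\in\Omega\setminus K_j$ with $\varphi_j(x_j)\neq 0$. Define
\begin{equation*}
p(\varphi):=\sum_{j=1}^{\infty} j\,\frac{|\varphi(x_j)|}{|\varphi_j(x_j)|},\qquad \varphi\in D(\Omega).
\end{equation*}
Because $\{x_j\}$ leaves every compact subset of $\Omega$, any $\varphi\in D(\Omega)$, having compact support, satisfies $\varphi(x_j)=0$ for all but finitely many $j$; hence $p$ is well-defined and a seminorm. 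On any $\mathcal{E}_K(\Omega)$ only finitely many terms are nonzero, each dominated by a multiple of $\|\varphi\|_{0,K}$, so $p$ is continuous on every $\mathcal{E}_K(\Omega)$, and therefore continuous on $D(\Omega)$ by Theorem \ref{thmfallconvcont13}. But $p(\varphi_j)\geq j$, contradicting boundedness of $B$. So some $K$ contains every support, and the induced topology from $D(\Omega)$ agrees with the original Fr\'echet topology on $\mathcal{E}_K(\Omega)$ (an easy consequence of Remark \ref{remfallcontintotest1} and the definition of the inductive limit topology), so $B$ is bounded in $\mathcal{E}_K(\Omega)$; the seminorm characterization then follows from Theorem \ref{winter57}.

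For (2), I would first note that any Cauchy sequence in a topological vector space is bounded (choose $N$ so that $\varphi_m-\varphi_N\in V$ for all $m\geq N$ and any balanced neighborhood $V$ of $0$, then absorb the finite set $\{\varphi_1,\ldots,\varphi_N\}$ into a dilate of $V$). Applying (1), there exists $K\in\mathcal{K}(\Omega)$ with $\{\varphi_m\}\subseteq\mathcal{E}_K(\Omega)$. The inductive limit topology restricted to $\mathcal{E}_K(\Omega)$ coincides with its Fr\'echet topology, so $\{\varphi_m\}$ is Cauchy in the Fr\'echet space $\mathcal{E}_K(\Omega)$; hence it converges to some $\varphi\in\mathcal{E}_K(\Omega)\subseteq D(\Omega)$. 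By continuity of the inclusion (Remark \ref{remfallcontintotest1}), $\varphi_m\to\varphi$ in $D(\Omega)$.

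The main obstacle is the $(\Rightarrow)$ direction of (1): constructing the seminorm $p$ that is simultaneously continuous on $D(\Omega)$ and unbounded on $B$. The local finiteness of the sequence $\{x_j\}$ relative to compact subsets of $\Omega$ is the key structural fact that makes the construction work, and it is what allows the sum defining $p$ to reduce, on each $\mathcal{E}_K(\Omega)$, to a finite sum controlled by $\|\cdot\|_{0,K}$. Once (1) is established, (2) is a short formal consequence that requires only the sequential compatibility between the topologies on $\mathcal{E}_K(\Omega)$ and $D(\Omega)$ and the completeness of the Fr\'echet space $\mathcal{E}_K(\Omega)$.
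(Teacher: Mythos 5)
The paper does not prove Theorem \ref{winter58}; it is cited as a standard fact about $D(\Omega)$ drawn from the references given at the head of the section, so there is no in-paper argument to compare against. Your proposal follows the standard LF-space route and it works: construct a continuous seminorm on $D(\Omega)$ that is unbounded on any subset whose supports escape every compact set, deduce that a bounded set is confined to some $\mathcal{E}_K(\Omega)$, and then transfer boundedness and Cauchyness to that Fr\'echet space.

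Two steps should be tightened. First, Theorem \ref{thmfallconvcont13} characterizes continuity of \emph{linear} maps out of $D(\Omega)$, and your $p$ is a seminorm, not a linear map, so the citation does not apply verbatim. The fact you actually use — a seminorm on a locally convex inductive limit is continuous iff its restriction to each defining subspace is — is true, but it comes from observing that $\{p<1\}$ is a convex balanced set, and a convex balanced set is a $0$-neighborhood in $D(\Omega)$ iff its intersection with each $\mathcal{E}_K(\Omega)$ is a $0$-neighborhood there. Second, the claim that $D(\Omega)$ induces on $\mathcal{E}_K(\Omega)$ its original Fr\'echet topology is \emph{not} an easy consequence of Remark \ref{remfallcontintotest1}; it is the substantive ``strict inductive limit'' property and requires a Hahn–Banach-type extension argument. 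Fortunately you do not need it. It suffices to note that each $\|\cdot\|_{j,K}$ is a seminorm on all of $D(\Omega)$ whose restriction to every $\mathcal{E}_{K'}(\Omega)$ is continuous, hence is a continuous seminorm on $D(\Omega)$ by the same convex-set criterion. Then a bounded $B\subseteq D(\Omega)$ with $\textrm{supp}\,\varphi\subseteq K$ for all $\varphi\in B$ satisfies $\sup_{\varphi\in B}\|\varphi\|_{j,K}<\infty$ for every $j$, which is exactly boundedness in $\mathcal{E}_K(\Omega)$; and a Cauchy sequence in $D(\Omega)$ supported in $K$ is Cauchy in every $\|\cdot\|_{j,K}$, hence Cauchy in the complete space $\mathcal{E}_K(\Omega)$. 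With those two repairs, the argument is correct and complete.
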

\begin{remark}\lab{winter59}
Topological spaces whose topology is determined by knowing the
convergent sequences and their limits exhibit nice properties and
are of particular interest. Let us recall a number of useful
definitions related to this topic:
\begin{itemizeXALI}
\item Let $X$ be a topological space and let $E\subseteq X$. The
\textbf{sequential closure} of $E$, denoted $\textrm{scl}(E)$ is
defined as follows:
\begin{equation*}
\textrm{scl}(E)=\{x\in X: \textrm{there is a sequence $\{x_n\}$
in $E$ such that $x_n\rightarrow x$}\}
\end{equation*}
Clearly $\textrm{scl}(E)$ is contained in the closure if $E$.
\item A topological space $X$ is called a \textbf{Frechet-Urysohn} space if
for every $E\subseteq X$ the sequential closure of $E$ is equal
to the closure of $E$.
\item A subset $E$ of a topological space $X$ is said to be
\textbf{sequentially closed} if $E=\textrm{scl}(E)$.
\item A topological space $X$ is said to be \textbf{sequential} if
for every $E\subseteq X$, $E$ is closed if and only if $E$ is
sequentially closed. If $X$ is a sequential topological space and
$Y$ is any topological space, then a map $f:X\rightarrow Y$ is
continuous if and only if
\begin{equation*}
\lim_{n\rightarrow \infty}f(x_n)=f(\lim_{n\rightarrow \infty}x_n)
\end{equation*}
for each convergent sequence $\{x_n\}$ in $X$.
\end{itemizeXALI}
The following implications hold for a topological space $X$:
\begin{equation*}
\textrm{$X$ is metrizable} \rightarrow \textrm{$X$ is first-countable} \rightarrow \textrm{$X$ is Frechet-Urysohn}
\rightarrow \textrm{$X$ is sequential}
\end{equation*}
As it was stated, $\mathcal{E}$ and $\mathcal{E}_K$ (For all
$K\in \mathcal{K}(\Omega)$) are Frechet and subsequently they are
metrizable. However, it can be shown that $D(\Omega)$ is not
first-countable and subsequently it is not metrizable. In fact,
although according to Theorem \ref{thmfallconvcont13}, the elements of the dual
of $D(\Omega)$ can be determined by knowing the convergent
sequences in $D(\Omega)$, it can be proved that $D(\Omega)$ is not
sequential.
\end{remark}
\begin{definition}\lab{winter60}
Let $\Omega$ be a nonempty open set in $\reals^n$. The topological
dual of $D(\Omega)$, denoted $D'(\Omega)$ $(D'(\Omega)
=[D(\Omega)]^*)$, is called the \textbf{space of distributions} on
$\Omega$. Each element of $D'(\Omega)$ is called a \textbf{distribution} on
$\Omega$.
\end{definition}
\begin{remark}\lab{winter61}
Every function $f\in L^1_{loc}(\Omega)$ defines a distribution
$u_f\in D'(\Omega)$ as follows
\begin{equation}\lab{eqnwinter61}
\forall\, \varphi\in D(\Omega)\qquad u_f(\varphi):=\int_\Omega
f\varphi dx
\end{equation}
In particular, every function $\varphi\in \mathcal{E}(\Omega)$
defines a distribution $u_\varphi$. It can be shown that the map
$j:\mathcal{E}(\Omega)\rightarrow D'(\Omega)$ which sends
$\varphi$ to $u_\varphi$ is an injective linear continuous map
(\cite{Reus1}, Page 11). Therefore we can identify
$\mathcal{E}(\Omega)$ with a subspace of $D'(\Omega)$.
\end{remark}
\begin{remark}\lab{winter61b}
Let $\Omega\subseteq \reals^n$ be a nonempty open set. Recall that $f:\Omega\rightarrow \reals$ is locally integrable ($f\in L^1_{loc}(\Omega)$) if it satisfies any
 of the following equivalent conditions.
 \begin{enumerateX}
 \item $f\in L^1(K)$ for all $K\in \mathcal{K}(\Omega)$.
 \item For all $\varphi\in C_c^{\infty}(\Omega)$, $f\varphi\in L^1(\Omega)$.
 \item For every nonempty open set $V\subseteq \Omega$ such that $\bar{V}$ is compact and contained in $\Omega$, $f\in L^1(V)$.
 \end{enumerateX}
(It can be shown that every locally integrable function is measurable (\cite{debnath2005}, Page 70).)\\
As a consequence, if we define $\textrm{Func}_{reg}(\Omega)$ to be the set
\begin{equation*}
\{f:\Omega\rightarrow \reals: \textrm{$u_f:D(\Omega)\rightarrow \reals$ defined by Equation \ref{eqnwinter61} is well-defined and continuous}\}
\end{equation*}
then $\textrm{Func}_{reg}(\Omega)=L^1_{loc}(\Omega)$.
\end{remark}
\begin{definition}[Calculus Rules for Distributions]\lab{winter62}
Let $\Omega$ be a nonempty open set in $\reals^n$. Let $u\in
D'(\Omega)$.
\begin{itemizeXALI}
\item For all $\varphi \in C^\infty (\Omega)$, $\varphi u$ is
defined by
\begin{equation*}
\forall\,\psi\in C_c^\infty(\Omega)\qquad [\varphi
u](\psi):=u(\varphi \psi)
\end{equation*}
It can be shown that $\varphi u\in D'(\Omega)$.
\item For all multiindices $\alpha$, $\partial^\alpha u$ is
defined by
\begin{equation*}
\forall\,\psi\in C_c^\infty(\Omega)\qquad [\partial^\alpha
u](\psi)=(-1)^{|\alpha|}u(\partial^\alpha \psi)
\end{equation*}
It can be shown that $\partial^\alpha u\in D'(\Omega)$.
\end{itemizeXALI}
\end{definition}
Also it is possible to make sense of "change of coordinates" for
distributions.  Let $\Omega$ and $\Omega'$ be two open sets in
$\reals^n$. Suppose $T:\Omega\rightarrow \Omega'$ is a $C^\infty$
 diffeomorphism. $T$ can be used to move any function on
 $\Omega$ to a function on $\Omega'$ and vice versa.
\begin{align*}
&T^*: \textrm{Func}(\Omega',\reals)\rightarrow \textrm{Func}(\Omega,\reals),\qquad T^*(f)=f\circ T\\
& T_{*}: \textrm{Func}(\Omega,\reals)\rightarrow
\textrm{Func}(\Omega',\reals),\qquad T_{*}(f)=f\circ T^{-1}
\end{align*}
$T^*f$ is called the \textbf{pullback} of the function $f$ under the
mapping $T$ and $T_*f$ is called the \textbf{pushforward} of the function
$f$ under the mapping $T$. Clearly $T^*$ and $T_*$ are inverses of
each other and $T_*=(T^{-1})^*$. One can show that $T_*$ sends
functions in $L^1_{loc}(\Omega)$ to $L^1_{loc}(\Omega')$ and
furthermore $T_*$ restricts to linear topological isomorphisms
$T_*: \mathcal{E}(\Omega)\rightarrow \mathcal{E}(\Omega')$ and
$T_*: D(\Omega)\rightarrow D(\Omega')$. Note that for all $f\in
L^1_{loc}(\Omega)$ and $\varphi\in C_c^{\infty}(\Omega')$
\begin{align*}
<u_{T_*f},\varphi>_{D'(\Omega')\times
D(\Omega')}&=\int_{\Omega'}(T_*f)(y)\varphi(y) dy=\int_{\Omega'}
(f\circ T^{-1})(y)\varphi(y) dy\\
&\stackrel{x=T^{-1}(y)}{=}\int_{\Omega}
f(x)\varphi(T(x))|\textrm{det}T'(x)|
dx\\
&=<u_f,|\textrm{det}T'(x)|\varphi(T(x))>_{D'(\Omega)\times
D(\Omega)}
\end{align*}
The above observation motivates us to define the pushforward of
any distribution $u\in D'(\Omega)$ as follows
\begin{equation*}
\forall \varphi\in D(\Omega')\qquad \langle T_*u,
\varphi\rangle_{D'(\Omega')\times D(\Omega')}:=\langle u,
|\textrm{det}T'(x)|\varphi(T(x))\rangle_{D'(\Omega)\times
D(\Omega)}
\end{equation*}
It can be shown that $T_*u:D(\Omega')\rightarrow \reals$ is
continuous and so it is in fact an element of $D'(\Omega')$.
Similarly, the pullback $T^*:D'(\Omega')\rightarrow D'(\Omega)$ is
defined by
\begin{equation*}
\forall \varphi\in D(\Omega)\qquad \langle T^*u,
\varphi\rangle_{D'(\Omega)\times D(\Omega)}:=\langle u,
|\textrm{det}(T^{-1})'(y)|\varphi(T^{-1}(y))\rangle_{D'(\Omega')\times
D(\Omega')}
\end{equation*}
It can be shown that $T^*u:D(\Omega)\rightarrow \reals$ is
continuous and so it is in fact an element of $D'(\Omega)$.
\begin{definition}[Extension by Zero of a Function]\lab{winter63}
Let $\Omega$ be an open subset of $\reals^n$ and $V$ be an open
susbset of $\Omega$. We define the linear map
$\textrm{ext}_{V,\Omega}^0: \textrm{Func}(V,\reals)\rightarrow
\textrm{Func}(\Omega,\reals)$ as follows
\begin{equation*}
\textrm{ext}_{V,\Omega}^0(f)(x)=
\begin{cases}
f(x)\quad \textrm{if $x\in V$}\\
0\quad \textrm{if $x\in \Omega\setminus V$}
\end{cases}
\end{equation*}
$\textrm{ext}_{V,\Omega}^0$ restricts to a continuous linear map
$D(V)\rightarrow D(\Omega)$.
\end{definition}
\begin{definition}[Restriction of a Distribution]\lab{winter64}
Let $\Omega$ be an open subset of $\reals^n$ and $V$ be an open
susbset of $\Omega$. We define the restriction map
$\textrm{res}_{\Omega,V}: D'(\Omega)\rightarrow D'(V)$ as follows
\begin{equation*}
\langle \textrm{res}_{\Omega,V} u,\varphi \rangle_{D'(V)\times
D(V)}:=\langle u,
\textrm{ext}_{V,\Omega}^0\varphi\rangle_{D'(\Omega)\times
D(\Omega)}
\end{equation*}
This is well-defined; indeed, $\textrm{res}_{\Omega,V}:
D'(\Omega)\rightarrow D'(V)$ is a continuous linear map as it is
the adjoint of the continuous map
$\textrm{ext}_{V,\Omega}^0:D(V)\rightarrow D(\Omega)$. Given
$u\in D'(\Omega)$, we sometimes write $u|_V$ instead of
$\textrm{res}_{\Omega,V} u$.
\end{definition}
\begin{remark}\lab{winter65}
It is easy to see that the restriction of the map
$\textrm{res}_{\Omega,V}: D'(\Omega)\rightarrow D'(V)$ to
$\mathcal{E}(\Omega)$ agrees with the usual restriction of smooth
functions.
\end{remark}
\begin{definition}[Support of a Distribution]\lab{winter66}
Let $\Omega$ be a nonempty open set in $\reals^n$. Let $u\in
D'(\Omega)$.
\begin{itemizeXALI}
\item We say $u$ is equal to zero on some open subset $V$ of
$\Omega$ if $u|_V=0$.
\item Let $\{V_i\}_{i\in I}$ be the collection of all open subsets of
$\Omega$ such that $u$ is equal to zero on $V_i$. Let
$V=\bigcup_{i\in I} V_i$. The support of $u$ is defined as follows
\begin{equation*}
\textrm{supp}\,u:=\Omega\setminus V
\end{equation*}
Note that $\textrm{supp}u$ is closed in $\Omega$ but it is not
necessarily closed in $\reals^n$.
\end{itemizeXALI}
\end{definition}
\begin{theorem}[Properties of the Support]\cite{Reus1, Rudi73,9}\lab{winter67}
Let $\Omega$ and $\Omega'$ be nonempty open sets in $\reals^n$.
\begin{itemizeXALI}
\item If $f\in L^1_{loc}(\Omega)$, then $\textrm{supp} f=\textrm{supp}\,
 u_f$.
\item For all $u\in D'(\Omega)$, $u=0$ on $\Omega\setminus
\textrm{supp}\,u$.
\item  Let $u\in D'(\Omega)$. If $\varphi\in
D(\Omega)$ vanishes on an open neighborhood of $\textrm{supp}\, u$,
then $u(\varphi)=0$.
\item  For every closed subset $A$ of
$\Omega$ and every $u\in D'(\Omega)$, we have $\textrm{supp}\,
u\subseteq A$ if and only if $u(\varphi)=0$ for every $\varphi\in
D(\Omega)$ with $\textrm{supp}\,\varphi\subseteq \Omega\setminus A$.
\item  For every $u\in D'(\Omega)$ and $\psi\in
\mathcal{E}(\Omega)$, $\textrm{supp}(\psi u)\subseteq
\textrm{supp}(\psi)\cap \textrm{supp}(u)$.
\item  Let $u, v\in D'(\Omega)$. If there exists a nonempty open subset $U$ of $\Omega$ such that
 $\textrm{supp} \,u\subseteq U$ and $\textrm{supp}\, v\subseteq U$ and
 \begin{equation*}
 \langle u|_U,\varphi\rangle_{D'(U)\times D(U)}= \langle v|_U,\varphi\rangle_{D'(U)\times D(U)}\qquad \forall\,\varphi\in C_c^\infty(U)
 \end{equation*}
 then $u=v$ as elements of $D'(\Omega)$.
 \item  Let $u, v\in D'(\Omega)$. Then $\textrm{supp}(u+v)\subseteq \textrm{supp}\,u\cup\textrm{supp}\, v$.
 \item  Let $\{u_i\}$ be a sequence in $D'(\Omega)$, $u\in D(\Omega)$, and $K\in\mathcal{K}(\Omega)$ such that $u_i\rightarrow u$ in $D'(\Omega)$ and $\textrm{supp}\,u_i\subseteq K$ for all $i$. Then also $\textrm{supp}\,u\subseteq K$.
% \item Let $\{u_i\}$ be a sequence in $D'(\Omega)$, $u\in D(\Omega)$, and $K\in\mathcal{K}(\Omega)$ such that %$u_i\rightarrow u$ in $D'(\Omega)$ and $\textrm{supp}\,u\subseteq K$. If $V$ is an open bounded set in $\Omega$ that %contains $K$, then there exists $N$ such that $\textrm{supp}\,u_i\subseteq \bar{V}$ for all $i\geq N$.
 \item For every $u\in D'(\Omega)$ and $\alpha\in \mathbb{N}_0^n$,
$\textrm{supp}(\partial^\alpha u)\subseteq \textrm{supp}(u)$.
\item If $T:\Omega\rightarrow \Omega'$ is a diffeomorphism, then
$\textrm{supp}(T_*u)=T(\textrm{supp}\, u)$. In particular, if $u$
has compact support, then so has $T_*u$.
\end{itemizeXALI}
\end{theorem}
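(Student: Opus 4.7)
The plan is to establish item (ii) first---the statement that $u$ vanishes identically on $V := \Omega \setminus \textrm{supp}\, u$---since nearly every other item reduces to the ``vanishing on the complement of the support'' criterion that (ii) provides. Item (ii) is the main technical obstacle: by definition $V = \bigcup_{i\in I} V_i$ is a possibly uncountable union of open sets on which $u$ separately vanishes, and these local vanishings must be glued together. Given $\varphi \in C_c^\infty(V)$, the compact set $K = \textrm{supp}\, \varphi$ is covered by finitely many $V_{i_1},\dots,V_{i_N}$, and a standard partition-of-unity construction (built from the cutoff theorem stated just before Theorem \ref{winter2}, exactly in the spirit of the argument in Lemma \ref{lemapp6}) produces $\psi_k \in C_c^\infty(V_{i_k})$ with $\sum_k \psi_k = 1$ on a neighborhood of $K$; then $u(\varphi) = \sum_k u(\psi_k \varphi) = 0$ since each $\psi_k \varphi \in C_c^\infty(V_{i_k})$ and $u|_{V_{i_k}} = 0$.

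Items (iii) and (iv) are then immediate corollaries: in (iii), if $\varphi$ vanishes on a neighborhood of $\textrm{supp}\, u$ then $\textrm{supp}\, \varphi \subseteq V$, so (ii) gives $u(\varphi) = 0$; in (iv), the forward direction is the same, while the reverse direction uses the fact that if $u(\varphi) = 0$ for every $\varphi$ supported in $\Omega \setminus A$, then $\Omega \setminus A$ is one of the open sets $V_i$ in the definition of $V$, so $V \supseteq \Omega \setminus A$ and hence $\textrm{supp}\, u \subseteq A$. Item (i) is the classical assertion that an $L^1_{loc}$ function vanishes a.e.\ on an open set if and only if it annihilates every test function supported in that set (fundamental lemma of the calculus of variations), so the largest open set on which $f$ vanishes a.e.\ coincides with the largest open set on which $u_f$ vanishes, and taking complements yields the equality of supports. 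Items (v), (vii), and (ix) then follow by straightforward applications of (iv): for (v), if $\textrm{supp}\, \varphi \cap (\textrm{supp}\, \psi \cap \textrm{supp}\, u) = \emptyset$, then $\textrm{supp}(\psi\varphi) \subseteq \Omega \setminus \textrm{supp}\, u$, so $(\psi u)(\varphi) = u(\psi\varphi) = 0$; (vii) holds because a test function supported off $\textrm{supp}\, u \cup \textrm{supp}\, v$ is annihilated by both $u$ and $v$; and (ix) uses $\textrm{supp}(\partial^\alpha \varphi) \subseteq \textrm{supp}\, \varphi$, so $\partial^\alpha u(\varphi) = \pm u(\partial^\alpha \varphi) = 0$ whenever $\textrm{supp}\, \varphi \subseteq \Omega \setminus \textrm{supp}\, u$.

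For (viii), any $\varphi \in C_c^\infty(\Omega)$ with $\textrm{supp}\, \varphi \subseteq \Omega \setminus K$ satisfies $u_i(\varphi) = 0$ for every $i$ by (iv), hence $u(\varphi) = \lim_i u_i(\varphi) = 0$; a second application of (iv) gives $\textrm{supp}\, u \subseteq K$. For (x), the definition of pushforward yields $\langle T_* u, \varphi\rangle = \langle u, |\textrm{det}\,T'|(\varphi \circ T)\rangle$, and since $T$ is a diffeomorphism with $|\textrm{det}\,T'|$ nowhere zero, $\textrm{supp}(|\textrm{det}\,T'|(\varphi\circ T)) = T^{-1}(\textrm{supp}\, \varphi)$; both inclusions in $\textrm{supp}(T_*u) = T(\textrm{supp}\, u)$ follow from (iv) together with the bijectivity of $T$. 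Finally, (vi) is handled by a cutoff argument: setting $w = u - v$, (vii) yields $\textrm{supp}\, w \subseteq U$, and for any $\varphi \in C_c^\infty(\Omega)$ one chooses $\chi \in C_c^\infty(U)$ equal to $1$ on a neighborhood of the compact set $\textrm{supp}\, w \cap \textrm{supp}\, \varphi$ (possible by the aforementioned cutoff theorem); then $w(\varphi) = w(\chi\varphi) + w((1-\chi)\varphi)$, where the first term vanishes by the hypothesis applied to $\chi\varphi \in C_c^\infty(U)$ and the second by (iii) since $(1-\chi)\varphi$ vanishes on a neighborhood of $\textrm{supp}\, w$.
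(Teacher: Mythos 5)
Your proof is correct and follows the standard route used in the cited references (the paper itself does not supply a proof, only citations to Reus, Rudin, and Grubb). The key structural decision---establishing item (ii) first via a finite subcover and a smooth partition of unity to glue the local vanishings of $u$ on the pieces $V_i$, and then funneling almost all the remaining items through the vanishing criterion of (iv)---is exactly the classical argument. Your handling of the potential trouble spots is sound: for (vi) you correctly observe that $(1-\chi)\varphi$ vanishes on $N\cup(\Omega\setminus\mathrm{supp}\,\varphi)$, which is an open neighborhood of $\mathrm{supp}\,w$ (not merely of the smaller set $\mathrm{supp}\,w\cap\mathrm{supp}\,\varphi$), so (iii) applies as needed; for (x) the fact that $|\mathrm{det}\,T'|$ is nowhere zero is what makes $\mathrm{supp}\bigl(|\mathrm{det}\,T'|(\varphi\circ T)\bigr)$ equal to $T^{-1}(\mathrm{supp}\,\varphi)$ rather than merely contained in it, and the reverse inclusion via applying the result to $T^{-1}$ is the right move. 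One small point worth making explicit in (vi): you invoke (vii) for $w=u-v$, which requires the (easy, but unstated) observation that $\mathrm{supp}(-v)=\mathrm{supp}\,v$. Also note that in item (i) the equality $\mathrm{supp}\,f=\mathrm{supp}\,u_f$ is only true if $\mathrm{supp}\,f$ is read as the essential support of $f$ (complement of the largest open set where $f=0$ a.e.), which is how your fundamental-lemma argument treats it; the paper's pointwise definition of $\mathrm{supp}$ for functions is, strictly speaking, not invariant under a.e.\ modification, so your implicit reinterpretation is the correct one.
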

\begin{theorem}(\cite{Reus1}, Pages 10 and 20)\lab{winter68}
Let $\Omega$ be a nonempty open set in $\reals^n$. Let
$\mathcal{E}'(\Omega)$ denote the topological dual of
$\mathcal{E}(\Omega)$ equipped with the strong topology. Then
\begin{itemizeXALI}
\item The map that sends $u\in \mathcal{E}'(\Omega)$ to
$u|_{D(\Omega)}$ is an injective continuous linear map from
$\mathcal{E}'(\Omega)$ into $D'(\Omega)$.
\item The image of the above map consists precisely of those $u\in
D'(\Omega)$ for which $\textrm{supp}\, u$ is compact.
\end{itemizeXALI}
\end{theorem}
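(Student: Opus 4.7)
The plan is to prove both parts by exploiting the adjoint framework of Theorem \ref{thmfallinjectiveadjoint1} together with the continuity characterizations for $\mathcal{E}(\Omega)$ and $D(\Omega)$ coming from Theorems \ref{winter56} and \ref{thmfallconvcont13}.

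For the first bullet, I would observe that the map $u \mapsto u|_{D(\Omega)}$ is precisely the adjoint $i^*$ of the inclusion $i : D(\Omega) \hookrightarrow \mathcal{E}(\Omega)$, which is continuous by Remark \ref{remfallcontintotest2}. Continuity of $i^*$ with respect to the strong topologies is then immediate from Theorem \ref{thmfallinjectiveadjoint1}(i). For injectivity, by Theorem \ref{thmfallinjectiveadjoint1}(ii) it suffices to prove that $D(\Omega)$ is dense in $\mathcal{E}(\Omega)$. This follows by a standard truncation argument: take an exhaustion by compacts $\{K_j\}$ as in Theorem \ref{winter2} and a sequence of cutoffs $\psi_j \in C_c^\infty(\Omega)$ with $\psi_j \equiv 1$ on a neighborhood of $K_j$; for any $\varphi \in \mathcal{E}(\Omega)$ and any seminorm $\|\cdot\|_{k,K}$, one has $\|\psi_j \varphi - \varphi\|_{k,K} = 0$ once $K \subseteq \mathring{K}_j$, so $\psi_j \varphi \to \varphi$ in $\mathcal{E}(\Omega)$.

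For the second bullet, one direction is a direct consequence of the continuity characterization. If $u \in \mathcal{E}'(\Omega)$, then by Theorem \ref{winter56}(iii) there exist $j \in \mathbb{N}$, $K \in \mathcal{K}(\Omega)$, and $C > 0$ with $|u(\varphi)| \leq C\|\varphi\|_{j,K}$ for every $\varphi \in \mathcal{E}(\Omega)$. Any $\varphi \in D(\Omega)$ with $\textrm{supp}\,\varphi \cap K = \emptyset$ satisfies $\|\varphi\|_{j,K} = 0$, so $(u|_{D(\Omega)})(\varphi) = 0$, which shows $\textrm{supp}(u|_{D(\Omega)}) \subseteq K$ and is therefore compact.

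For the converse, let $v \in D'(\Omega)$ with $K := \textrm{supp}\, v$ compact. Choose $\psi \in C_c^\infty(\Omega)$ with $\psi \equiv 1$ on an open neighborhood of $K$ (such $\psi$ exists by the cutoff theorem quoted earlier in the excerpt), and define $\tilde{v}: \mathcal{E}(\Omega) \rightarrow \reals$ by $\tilde{v}(\varphi) := v(\psi \varphi)$. Well-definedness is clear since $\psi \varphi \in \mathcal{E}_{\textrm{supp}\,\psi}(\Omega) \subseteq D(\Omega)$. Continuity of $\tilde{v}$ on $\mathcal{E}(\Omega)$ follows from the continuity of $v|_{\mathcal{E}_{\textrm{supp}\,\psi}(\Omega)}$ (Theorem \ref{thmfallconvcont13}) combined with the Leibniz rule applied to $\partial^\alpha(\psi \varphi)$, which yields an estimate of the form $|\tilde{v}(\varphi)| \leq C'\|\varphi\|_{j,\textrm{supp}\,\psi}$ where $C'$ absorbs finitely many derivatives of $\psi$. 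To check that $\tilde{v}$ restricts to $v$ on $D(\Omega)$, note that for any $\varphi \in D(\Omega)$ the function $(1-\psi)\varphi$ vanishes on the neighborhood of $\textrm{supp}\, v$ where $\psi \equiv 1$, so by the support property in Theorem \ref{winter67} we have $v((1-\psi)\varphi) = 0$, giving $\tilde{v}(\varphi) = v(\psi \varphi) = v(\varphi)$. The main obstacle is arranging the extension so that it is simultaneously continuous on the larger space $\mathcal{E}(\Omega)$ and agrees with $v$ on $D(\Omega)$; the choice of $\psi$ with $\psi \equiv 1$ near $\textrm{supp}\, v$ is precisely what reconciles these two requirements.
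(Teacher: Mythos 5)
Your proof is correct. The paper itself does not supply a proof of this theorem but simply cites Reus (pages 10 and 20), and your argument is the standard one that fills that gap: identifying $u \mapsto u|_{D(\Omega)}$ as the adjoint of the inclusion $D(\Omega) \hookrightarrow \mathcal{E}(\Omega)$, proving density of $D(\Omega)$ in $\mathcal{E}(\Omega)$ by cutoff and exhaustion to get injectivity, reading off compactness of support from the continuity estimate $|u(\varphi)| \leq C\|\varphi\|_{j,K}$, and recovering the converse by multiplying by a plateau function $\psi$ and invoking the Leibniz estimate plus the support vanishing property from Theorem \ref{winter67}. All the steps are sound; in particular the density argument correctly uses that for any fixed $(k,K)$ the seminorm $\|\psi_j\varphi-\varphi\|_{k,K}$ is eventually zero, and the verification that $\tilde v$ restricts to $v$ on $D(\Omega)$ correctly reduces to $v((1-\psi)\varphi)=0$ via the fact that $(1-\psi)\varphi$ vanishes on an open neighborhood of $\operatorname{supp} v$.
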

Due to the above theorem we may identify $\mathcal{E}'(\Omega)$
with distributions on $\Omega$ with compact support.
\begin{definition}[Extension by Zero of Distributions With Compact
Support]\lab{winter69} Let $\Omega$ be a nonempty open set in
$\reals^n$ and $V$ be a nonempty open subset of $\Omega$. We
define the linear map
$\textrm{ext}^0_{V,\Omega}:\mathcal{E}'(V)\rightarrow
\mathcal{E}'(\Omega)$ as the adjoint of the continuous linear map
$\textrm{res}_{\Omega, V}:\mathcal{E}(\Omega)\rightarrow
\mathcal{E}(V)$; that is
\begin{equation*}
\langle \textrm{ext}^0_{V,\Omega} u,\varphi
\rangle_{\mathcal{E}'(\Omega)\times \mathcal{E}(\Omega)}:=\langle
u, \varphi|_V\rangle_{\mathcal{E}'(V)\times \mathcal{E}(V)}
\end{equation*}
\end{definition}
Suppose $\Omega'$ and $\Omega$ are two nonempty open sets in
$\reals^n$ such that $\Omega'\subseteq \Omega$ and $K\in
\mathcal{K}(\Omega')$. One can easily show that
\begin{itemize}
\item For all $u\in \mathcal{E}_K(\Omega')$, $\textrm{res}_{\reals^n,\Omega}\circ
\textrm{ext}^0_{\Omega',\reals^n}u=\textrm{ext}^0_{\Omega',\Omega}u$.
\item For all $u\in \mathcal{E}_K(\Omega')$, $\textrm{ext}^0_{\Omega,\reals^n}\circ
\textrm{ext}^0_{\Omega',\Omega}u=\textrm{ext}^0_{\Omega',\reals^n}u$.
\item For all $u\in \mathcal{E}_K(\Omega)$, $\textrm{ext}^0_{\Omega',\Omega}\circ
\textrm{res}_{\Omega,\Omega'}u=u$.
\end{itemize}

We summarize the important topological properties of the spaces
of test functions and distributions in the table below.
\begin{table}[H]
\begin{center}
\begin{tabular}{|l|l|l|l|l|l|l|}
\hline & $D(\Omega)$ & $\mathcal{E}(\Omega)$ & \pbox{20cm}{$D'(\Omega)$\\
 Strong}
 & \pbox{20cm}{$\mathcal{E}'(\Omega)$\\ Strong} & \pbox{20cm}{$D'(\Omega)$\\
Weak} &  \pbox{20cm}{$\mathcal{E}'(\Omega)$\\ Weak}\\
\hline Sequential & \quad No   & \quad Yes &\quad No & \quad No &
\quad No & \quad No
\\
\hline First-Countable & \quad
 No & \quad Yes &\quad No &\quad No& \quad No& \quad No\\
 \hline Metrizable & \quad
 No & \quad Yes &\quad No &\quad No& \quad No& \quad No\\
 \hline Second-Countable & \quad
 No & \quad Yes &\quad No &\quad No& \quad No& \quad No\\
% \hline Separable & \quad
% Yes & \quad Yes &\quad Yes &\quad Yes& \quad ?& \quad ?\\
% \hline Reflexive & \quad
% Yes & \quad Yes &\quad Yes &\quad Yes& \quad -& \quad -\\
 \hline Sequentially Complete & \quad
 Yes & \quad Yes &\quad Yes &\quad Yes& \quad Yes& \quad Yes\\
 \hline Complete & \quad
 Yes & \quad Yes &\quad Yes &\quad Yes& \quad No& \quad No\\
\hline
\end{tabular}
\end{center}%\label{table:cmc}
%\caption{Solvability of the Lichnerowicz equation on compact
%manifolds (CMC case).} %\label{table:cmc}
\end{table}

\subsection{Distributions on Vector Bundles}
\subsubsection{Basic Definitions, Notations}
Let $M^n$ be a smooth manifold ($M$ is not necessarily compact).
Let $\pi:E\rightarrow M$ be a vector bundle of rank $r$.
\begin{enumerateXALI}
\item $\mathcal{E}(M,E)$ is defined as $C^{\infty}(M,E)$ equipped with the locally convex
topology induced by the following family of seminorms: let
$\{(U_\alpha,\varphi_\alpha,\rho_\alpha)\}_{\alpha\in I}$ be a total
trivialization atlas. Then for every $\alpha\in I$, $1\leq l\leq r$,
and $f\in C^{\infty}(M,E)$, $\tilde{f}_\alpha^l:=\rho_\alpha^l\circ
f\circ \varphi^{-1}$ is an element of
$C^{\infty}(\varphi_\alpha(U_\alpha))$. For every $4$-tuple
$(l,\alpha, j,K)$ with $1\leq l\leq r$, $\alpha\in I$, $j\in\mathbb{N}$, $K$ a compact subset of
$U_\alpha$ (i.e. $K\in \mathcal{K}(U_\alpha)$) we define
\begin{equation*}
\|.\|_{l,\alpha,j,K}:C^{\infty}(M,E)\rightarrow \reals,\quad
f\mapsto \|\rho_\alpha^l\circ f\circ
\varphi_\alpha^{-1}\|_{j,\varphi_{\alpha}(K)}
\end{equation*}
It is easy to check that $\|.\|_{l,\alpha,j,K}$ is a seminorm on
$C^{\infty}(M,E)$ and the locally convex topology induced by the
above family of seminorms does not depend on the choice of the
total trivialization atlas. Sometimes we may write
$\|.\|_{l,\varphi_\alpha,j,K}$ instead of $\|.\|_{l,\alpha,j,K}$.
\item For any compact subset $K\subseteq M$ we define
\begin{equation*}
\mathcal{E}_K(M,E):=\{f\in \mathcal{E}(M,E):
\textrm{supp}\,f\subseteq K\}\quad \textrm{equipped with the
subspace topology}
\end{equation*}
\item $D(M,E):=C_c^\infty(M,E)=\cup_{K\in \mathcal{K}(M)} \mathcal{E}_K(M,E)$ (union over all compact subsets of
$M$) equipped with the inductive limit topology with respect to the family {\fontsize{6}{7}{$\{\mathcal{E}_K(M,E)\}_{K\in\mathcal{K}(M)}$}}.
%\item $D(M,E)$ and $\mathcal{E}(M,E)$ are Hausdorff and reflexive.
%\item $D(M,E)\hookrightarrow \mathcal{E}(M,E)$.
%\item $D(M,E)$ is sequentially dense in $\mathcal{E}(M,E)$.
 Clearly if $M$ is compact, then $D(M,E)=\mathcal{E}(M,E)$ (as topological vector spaces).
\end{enumerateXALI}
\begin{remark}\lab{winter70}
\leavevmode
\begin{itemizeXALI}
\item If for each $\alpha \in I$,
$\{K^\alpha_m\}_{m\in\mathbb{N}}$ is an exhaustion by compact
sets of $U_\alpha$, then the topology induced by the family of
seminorms
\begin{equation*}
\{\|.\|_{l,\alpha,j,K^\alpha_m}: 1\leq l\leq r, \alpha\in I, j\in
\mathbb{N}, m\in \mathbb{N} \}
\end{equation*}
on $C^{\infty}(M,E)$ is the same as the topology of
$\mathcal{E}(M,E)$. This together with the fact that every manifold
has a countable total trivialization atlas shows that the
topology of $\mathcal{E}(M,E)$ is induced by a countable family
of seminorms. So $\mathcal{E}(M,E)$ is metrizable.
\item If $\{K_j\}_{j\in\mathbb{N}}$ is an exhuastion by compact
sets of $M$, then the inductive limit topology on
$C_c^\infty(M,E)$ with respect to the family
$\{\mathcal{E}_{K_j}(M,E)\}$ is the same as the topology on
$D(M,E)$.
\end{itemizeXALI}
\end{remark}
\begin{definition}\lab{winter71}
The space of distributions on the vector bundle $E$, denoted $D'(M,E)$, is defined as the topological dual of $D(M,E^\vee)$. That is,
\begin{equation*}
D'(M,E)=[D(M,E^\vee)]^*
\end{equation*}
As usual we equip the dual space with the strong topology. Recall that $E^\vee$ denotes the bundle $\textrm{Hom}(E,\mathcal{D}(M))$ where $\mathcal{D}(M)$ is the density bundle of $M$.
\end{definition}
\begin{remark}\lab{winter72}
The reason that space of distributions on the vector bundle $E$ is defined as the dual of $D(M,E^\vee)$ rather than the dual of the seemingly natural choice $D(M,E)$ is well explained in \cite{Grosser2001} and \cite{Reus1}. Of course,  there are other non-equivalent ways to make sense of distributions
on vector bundles (see \cite{Grosser2001} for a detailed discussion). Also see Lemma \ref{lemspringapr13} where it is proved that Riemannian density can be used to identify $D'(M,E)$ with $[D(M,E)]^*$.
%However, it turns out
%that our definition of Sobolev spaces based on the above
%interpretation of distributions will be equivalent to a number of other
%well known definitions of Sobolev spaces that are not based on
% the definition of distributions on vector bundles.
\end{remark}
\begin{remark}\lab{winter73}
Let $U$ and $V$ be nonempty open sets in $M$ with $V\subseteq U$.
\begin{itemizeX}
\item As in the Euclidean case, the linear map $\textrm{ext}^0_{V,U}:\Gamma(V,E_V^\vee)\rightarrow
\Gamma(U,E_U^\vee)$ defined by
\begin{equation*}
\textrm{ext}^0_{V,U}f(x)=
\begin{cases}
f(x)\quad x\in V\\
0\quad x\in U\setminus V
\end{cases}
\end{equation*}
restricts to a continuous linear map from $D(V,E_V^\vee)$ to
$D(U,E_U^\vee)$.
\item As in the Euclidean case, the restriction map $\textrm{res}_{U,V}:D'(U,E_U)\rightarrow
D'(V,E_V)$ is defined as the adjoint of $ext^0_{V,U}$:
\begin{equation*}
\langle \textrm{res}_{U,V}u,\varphi\rangle_{D'(V,E_V)\times
D(V,E_V^\vee)}=\langle
u,\textrm{ext}^0_{V,U}\varphi\rangle_{D'(U,E_U)\times
D(U,E_U^\vee)}
\end{equation*}
\item Support of a distribution $u\in D'(M,E)$ is defined in the
exact same way as for distributions in the Euclidean space. It
can be shown that
\begin{enumerateX}
\item (\cite{Reus1}, Page 105) If $u\in D'(M,E)$ and $\varphi\in
D(M,E^\vee)$ vanishes on an open neighborhood of $\textrm{supp}
u$, then $u(\varphi)=0$.
\item (\cite{Reus1}, Page 104) For every closed subset $A$ of
$M$ and every $u\in D'(M,E)$, we have $\textrm{supp} u\subseteq A$
if and only if $u(\varphi)=0$ for every $\varphi\in D(M,E^\vee)$
with $\textrm{supp}\varphi\subseteq M\setminus A$.
\end{enumerateX}
\end{itemizeX}
\end{remark}
The strength of the theory of distributions in the Euclidean case
is largely due to the fact that it is possible to identify a huge
class of ordinary functions with distributions. A question that
arises is that whether there is a natural way to identify regular
sections of $E$ (i.e. elements of $\Gamma(M,E)$) with
distributions. The following theorem provides a partial answer to
this question. Recall that compactly supported continuous
sections of the density bundle can be integrated over $M$.
\begin{theorem}\lab{winter74}
Every $f\in \mathcal{E}(M,E)$ defines the following continuous map:
\begin{equation}\lab{eqnwinter74b}
u_f:D(M,E^\vee)\rightarrow \reals,\quad \psi\mapsto \int_M [
\psi,f]
\end{equation}
where the pairing $[ \psi,f]$ defines a compactly supported
continuous section of the density bundle:
\begin{equation*}
\forall\,x\in M\quad [\psi,f](x):=[\psi(x)][f(x)]\quad
(\textrm{$\psi(x)\in \textrm{Hom}(E_x,\mathcal{D}_x)$ evaluated at $f(x)\in E_x$})
\end{equation*}
\end{theorem}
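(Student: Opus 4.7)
The plan is to prove three things in turn: (i) for every $\psi\in D(M,E^\vee)$ the pairing $[\psi,f]$ is a well-defined compactly supported smooth section of $\mathcal{D}(M)$ so that the integral in the definition makes sense via Remark \ref{winter50}; (ii) linearity of $u_f$; (iii) continuity of $u_f$ as a map from $D(M,E^\vee)$ into $\reals$.

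For (i), I would argue pointwise and then locally. Pointwise, $\psi(x)\in\textrm{Hom}(E_x,\mathcal{D}_x)$ and $f(x)\in E_x$, so $[\psi,f](x)=\psi(x)(f(x))\in\mathcal{D}_x$, hence $[\psi,f]$ is a section of $\mathcal{D}(M)$. To see that it is smooth, fix a total trivialization triple $(U,\varphi,\rho)$ for $E$ and work with the associated local frame $\{s_a\}$ for $E|_U$ (Theorem \ref{thmfalllocalframe1}) and the frame $|dx^1\wedge\cdots\wedge dx^n|$ for $\mathcal{D}|_U$. Writing $f=f^a s_a$ and defining smooth functions $\psi_a$ on $U$ by $\psi(s_a)=\psi_a|dx^1\wedge\cdots\wedge dx^n|$ (these are precisely the components of $\psi$ in the standard trivialization of $E^\vee$ from Section 5.4.3.1), one obtains on $U$ the local expression
\begin{equation*}
[\psi,f]=\Big(\sum_a\psi_a f^a\Big)|dx^1\wedge\cdots\wedge dx^n|,
\end{equation*}
which is smooth. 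Finally, $[\psi,f](x)=0$ whenever $\psi(x)=0$, so $\textrm{supp}[\psi,f]\subseteq\textrm{supp}\,\psi$ is compact, and $\int_M[\psi,f]$ is defined. Linearity in (ii) follows immediately because $\psi\mapsto\psi(f(x))$ is linear at each point and integration is linear.

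For (iii), I would invoke the (manifold analogue of) Theorem \ref{thmfallconvcont13}: a linear map out of $D(M,E^\vee)$ is continuous if and only if its restriction to $\mathcal{E}_K(M,E^\vee)$ is continuous for every $K\in\mathcal{K}(M)$. Fix such a $K$. Choose a finite total trivialization atlas $\{(U_\alpha,\varphi_\alpha,\rho_\alpha)\}_{\alpha=1}^{N}$ together with smooth functions $\chi_\alpha\in C_c^{\infty}(U_\alpha)$ with $\sum_\alpha\chi_\alpha\equiv 1$ on a neighborhood of $K$ (Theorem \ref{thmapp5} and Lemma \ref{lemapp6}). For $\psi\in\mathcal{E}_K(M,E^\vee)$ the local expression from Step (i) and the definition of $\int_M$ in Remark \ref{winter50} give
\begin{equation*}
u_f(\psi)=\sum_{\alpha=1}^N\int_M[\chi_\alpha\psi,f]=\sum_{\alpha=1}^N\int_{\varphi_\alpha(U_\alpha)}(\chi_\alpha\circ\varphi_\alpha^{-1})\sum_{a=1}^r(\psi_a\circ\varphi_\alpha^{-1})(f^a\circ\varphi_\alpha^{-1})\,dx.
\end{equation*}
Setting $K_\alpha:=\textrm{supp}\,\chi_\alpha\in\mathcal{K}(U_\alpha)$, the integrand is supported in $\varphi_\alpha(K_\alpha)$, a compact set of finite Lebesgue measure; pulling out the $L^\infty$ norms of $\chi_\alpha\circ\varphi_\alpha^{-1}$ and $f^a\circ\varphi_\alpha^{-1}$ (finite since $f$ is smooth and $K_\alpha$ is compact) yields a bound of the form
\begin{equation*}
|u_f(\psi)|\leq C\sum_{\alpha=1}^N\sum_{a=1}^r\|\psi\|_{a,\alpha,0,K_\alpha},
\end{equation*}
with $C$ depending only on $f$, $K$, the atlas, and the partition of unity. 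Since this is precisely an estimate by finitely many of the seminorms defining the topology of $\mathcal{E}_K(M,E^\vee)$, continuity follows.

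The only nontrivial point I anticipate is the first step, namely correctly identifying the local expression for $[\psi,f]$: one must match the standard trivialization of $E^\vee=\textrm{Hom}(E,\mathcal{D})$ (which identifies $\psi$ with its components $\psi_a$) against the standard trivialization of $E$ (which identifies $f$ with $f^a$), so that the density $[\psi,f]$ reads off as $(\sum_a\psi_a f^a)|dx^1\wedge\cdots\wedge dx^n|$. Once this bilinear pairing is in hand, the bound in Step (iii) is essentially a finite sum of ordinary integral estimates of products of bounded smooth functions over compact sets, and no further surprises arise.
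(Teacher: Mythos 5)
The paper states Theorem \ref{winter74} without supplying a proof, so there is no official argument to compare against line by line; but your proposal is correct and matches what the paper's surrounding material clearly has in mind. Your Step (i) essentially reproduces, in frame notation, the local computation the paper gives immediately \emph{after} the theorem statement (the identity $\rho_{\mathcal{D}}\circ[\psi,f]\circ\varphi^{-1}=\sum_i\tilde{f}^i\tilde{\psi}^i$); your $\psi_a$, defined by $\psi(s_a)=\psi_a\,|dx^1\wedge\cdots\wedge dx^n|$, agrees with $(\rho^\vee)^a\circ\psi$ from the paper's construction of the trivialization of $E^\vee$, so the local representation $(\sum_a\psi_af^a)\,|dx^1\wedge\cdots\wedge dx^n|$ is exactly right. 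Your Step (iii) is the standard reduction that the paper leaves implicit: by the inductive limit description of $D(M,E^\vee)$ (Theorem \ref{winter37}), continuity on $D(M,E^\vee)$ is equivalent to continuity on each $\mathcal{E}_K(M,E^\vee)$; and since $[\chi_\alpha\psi,f]=\chi_\alpha[\psi,f]$, the decomposition $u_f(\psi)=\sum_\alpha\int_M[\chi_\alpha\psi,f]$ is legitimate, each summand is a local integral over a fixed compact set, and the resulting estimate $|u_f(\psi)|\leq C\sum_{\alpha,a}\|\psi\|_{a,\alpha,0,K_\alpha}$ is a bound by finitely many defining seminorms, which is precisely the continuity criterion of Theorem \ref{thmapptvconvergence1}. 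Two cosmetic remarks: the section you cite for the standard trivialization of $E^\vee$ is Section 5.4.1 rather than 5.4.3.1, and the local components $\psi_a,f^a$ implicitly carry a chart index $\alpha$; neither affects the argument.
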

In general, we define $\Gamma_{reg}(M,E)$ as the set
\begin{equation*}
\{f\in \Gamma(M,E):\textrm{$u_f$ defined by Equation \ref{eqnwinter74b} is well-defined and continuous}\}
\end{equation*}
(Compare this with the definition of
$\textrm{Func}_{reg}(\Omega)$ in Remark \ref{winter61b}.) Theorem
\ref{winter74} tells us that $\mathcal{E}(M,E)$ is contained in
$\Gamma_{reg}(M,E)$. If $u\in D'(M,E)$ is such that $u=u_f$ for some $f\in \Gamma_{reg}(M,E)$, then we say that $u$ is a \textbf{regular distribution}. \\\\
Now let $(U,\varphi,\rho)$ be a total trivialization triple for
$E$ and let $(U,\varphi,\rho_{\mathcal{D}})$ and
$(U,\varphi,\rho^\vee)$ be the corresponding standard total
trivialization triples for $\mathcal{D}(M)$ and $E^\vee$,
respectively. The local representation of the pairing $[\psi,f]$
has a very simple expression in terms of the local
representations of $f$ and $\psi$:
 {\fontsize{10}{10}{\begin{align*}
    & f\in \Gamma_{reg}(M,E)\Longrightarrow (\tilde{f}^1,\cdots,\tilde{f}^r):=(f^1\circ\varphi^{-1},\cdots,f^r\circ\varphi^{-1}):=\rho\circ f\circ\varphi^{-1}\in [\textrm{Func}(\varphi(U),\reals)]^{\times r}\\
    & \quad\textrm{$(\tilde{f}^1,\cdots,\tilde{f}^r)$ is the local representation of $f$}\\
    & \psi\in D(M,E^\vee)\Longrightarrow (\tilde{\psi}^1,\cdots,\tilde{\psi}^r):=(\psi^1\circ\varphi^{-1},\cdots,\psi^r\circ\varphi^{-1}):=\rho^\vee\circ \psi\circ\varphi^{-1}\in [\textrm{Func}(\varphi(U),\reals)]^{\times r} \\
    &\quad\textrm{$(\tilde{\psi}^1,\cdots,\tilde{\psi}^r)$ is the local representation of $\psi$}
    \end{align*}}}
    Our claim is that the local representation of $[\psi,f]$, that is $\rho_{\mathcal{D}}\circ[\psi,f]\circ\varphi^{-1}$, is equal to the Euclidean dot product of the local representations of $f$ and $\psi$:
    \begin{equation*}
    \rho_{\mathcal{D}}\circ[\psi,f]\circ\varphi^{-1}=\sum_i \tilde{f}^i\tilde{\psi} ^i
    \end{equation*}
    The reason is as follows: Let $y\in \varphi(U)$ and $x=\varphi^{-1}(y)$
\begin{align*}
[&\rho_{\mathcal{D}}\circ [\psi, f]\circ \varphi^{-1}](y)=\rho_{\mathcal{D}}\big([\psi(x)][f(x)]\big)=\rho_{\mathcal{D}}\big([\psi(x)][(\rho|_{E_x})^{-1}(\tilde{f}^1(y),\cdots,\tilde{f}^r(y))]\big)\\
&=[\rho_{\mathcal{D}}\circ \psi(x)\circ  (\rho|_{E_x})^{-1}](\tilde{f}^1(y),\cdots,\tilde{f}^r(y))\\
&=[\rho^\vee(\psi(x))][(\tilde{f}^1(y),\cdots,\tilde{f}^r(y))] \quad \textrm{the left bracket is applied to the right bracket}\\
&=\rho^\vee(\psi(x))\cdot (\tilde{f}^1(y),\cdots,\tilde{f}^r(y))\quad \textrm{dot product! $\rho^\vee(\psi(x))$ viewed as an element of $\reals^r$}\\
&=(\tilde{\psi}^1(y),\cdots,\tilde{\psi}^r(y))\cdot
(\tilde{f}^1(y),\cdots,\tilde{f}^r(y))
\end{align*}
\subsubsection{Local Representation of Distributions}

Let $(U,\varphi,\rho)$ be a total trivialization triple for $\pi:
E\rightarrow M$. We know that each $f\in \Gamma(M,E)$ can locally
be represented by $r$ components $\tilde{f}^1,\cdots,\tilde{f}^r$
defined by
\begin{equation*}
\forall\,1\leq l\leq r\quad \tilde{f}^l: \varphi(U)\rightarrow
\reals, \quad \tilde{f}^l=\rho^l\circ f\circ \varphi^{-1}
\end{equation*}
These components play a crucial role in our study of Sobolev
spaces. Now the question is that whether we can similarly use the
total trivialization triple $(U,\varphi,\rho)$ to locally
associate with each distribution $u\in D'(M,E)$, $r$ components
$\tilde{u}^1,\cdots,\tilde{u}^r$ belonging to $D'(\varphi(U))$.
That is, we want to see whether we can define a nice map
\begin{equation*}
D'(U,E_U)=[D(U,E_U^{\vee})]^*\rightarrow
\underbrace{D'(\varphi(U))\times\cdots\times
D'(\varphi(U))}_{\textrm{$r$ times}}
\end{equation*}
(Note that according to Remark \ref{winter73}, if $u\in D'(M,E)$,
then $u|_U\in D'(U,E_U)$.) Such a map, in particular, will be
important when we want to
 make sense of Sobolev spaces
with negative exponents of sections of vector bundles. Also it
would be desirable to ensure that if $u$ is a regular distribution
then the components of $u$ as a distribution agree with the
components
obtained when $u$ is viewed as an element of $\Gamma(M,E)$.\\\\
We begin with the following map at the level of compactly supported smooth
functions:
{\fontsize{10}{10}{\begin{equation*}
\tilde{T}_{E^\vee,U,\varphi}: D(U,E_U^{\vee})\rightarrow
[D(\varphi(U))]^{\times r},\quad \xi\rightarrow \rho^{\vee}\circ
\xi\circ \varphi^{-1}= ((\rho^{\vee})^1\circ \xi\circ
\varphi^{-1},\cdots,(\rho^{\vee})^r\circ \xi\circ \varphi^{-1})
\end{equation*}}}
Note that $\tilde{T}_{E^\vee,U,\varphi}$ has the property that for
all $\psi\in C^{\infty}(U)$ and $\xi\in D(U,E_U^\vee)$
\begin{equation*}
\tilde{T}_{E^\vee,U,\varphi}(\psi \xi)=(\psi\circ
\varphi^{-1})\tilde{T}_{E^\vee,U,\varphi}(\xi)\,.
\end{equation*}
\begin{theorem}\lab{winter75}
The map $\tilde{T}_{E^\vee,U,\varphi}: D(U,E_U^{\vee})\rightarrow
[D(\varphi(U))]^{\times r}$ is a linear topological isomorphism.
($[D(\varphi(U))]^{\times r}$ is equipped with the product
topology.)
\end{theorem}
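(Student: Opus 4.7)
The plan is to prove linearity, bijectivity, continuity, and continuity of the inverse separately, exploiting that $\rho^\vee$ is a fiberwise linear isomorphism and $\varphi$ is a diffeomorphism, so that locally $\tilde{T}_{E^\vee,U,\varphi}$ essentially amounts to a change of coordinates on smooth $\reals^r$-valued test functions. Linearity is immediate from the fiberwise linearity of $\rho^\vee$. For bijectivity, I would exhibit an explicit inverse: given $(f^1,\ldots,f^r)\in [D(\varphi(U))]^{\times r}$, define
\begin{equation*}
\xi(x) := (\rho^\vee|_{E^\vee_x})^{-1}\bigl(f^1(\varphi(x)),\ldots,f^r(\varphi(x))\bigr),\qquad x\in U.
\end{equation*}
Since $(U,\varphi,\rho^\vee)$ is a smooth local trivialization, Theorem \ref{thmfalllocalframe21} (applied to the local frame associated with $\rho^\vee$) shows $\xi\in C^\infty(U,E_U^\vee)$. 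Moreover, $\mathrm{supp}\,\xi \subseteq \varphi^{-1}\bigl(\bigcup_l \mathrm{supp}\,f^l\bigr)$, which is compact in $U$ because $\varphi^{-1}$ is a homeomorphism and the finite union of compact subsets of $\varphi(U)$ is compact in $\varphi(U)$; hence $\xi\in D(U,E_U^\vee)$. One checks immediately that this provides the two-sided inverse of $\tilde{T}_{E^\vee,U,\varphi}$.

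For continuity, the key observation is the support-preserving property: if $\xi\in \mathcal{E}_K(U,E_U^\vee)$ for $K\in\mathcal{K}(U)$, then every component $(\rho^\vee)^l\circ\xi\circ\varphi^{-1}$ is supported in $\varphi(K)\in\mathcal{K}(\varphi(U))$, so $\tilde{T}_{E^\vee,U,\varphi}$ maps $\mathcal{E}_K(U,E_U^\vee)$ into $[\mathcal{E}_{\varphi(K)}(\varphi(U))]^{\times r}$. By Theorem \ref{thmfallconvcont13} and Theorem \ref{winter38} (characterizing continuity out of inductive limits, and identifying the product topology on $[D(\varphi(U))]^{\times r}$ with the inductive limit of $[\mathcal{E}_{K'}(\varphi(U))]^{\times r}$), it is enough to show that for every $K\in\mathcal{K}(U)$ the restricted map
\begin{equation*}
\tilde{T}_{E^\vee,U,\varphi}\bigl|_{\mathcal{E}_K(U,E_U^\vee)}:\mathcal{E}_K(U,E_U^\vee)\longrightarrow [\mathcal{E}_{\varphi(K)}(\varphi(U))]^{\times r}
\end{equation*}
is continuous between Fréchet spaces. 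This reduces to a seminorm comparison: the product topology on the right is generated by seminorms $(f^1,\ldots,f^r)\mapsto \|f^l\|_{j,\varphi(K)}$, while the topology on the left is generated (by the very definition of $\mathcal{E}(M,E^\vee)$) by the seminorms $\|\xi\|_{l,\varphi,j,K}=\|(\rho^\vee)^l\circ\xi\circ\varphi^{-1}\|_{j,\varphi(K)}$, and these are equal under the map. Continuity then follows at once from Theorem \ref{winter57}.

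For continuity of the inverse $\tilde{T}_{E^\vee,U,\varphi}^{-1}$, the same support analysis shows it sends $[\mathcal{E}_{K'}(\varphi(U))]^{\times r}$ into $\mathcal{E}_{\varphi^{-1}(K')}(U,E_U^\vee)$ for each compact $K'\subset\varphi(U)$. By Theorems \ref{winter38} and \ref{thmfallconvcont13} applied on the domain side, it suffices to check continuity on each such Fréchet piece, which again follows from the identity of seminorms described above. Hence the inverse is continuous and $\tilde{T}_{E^\vee,U,\varphi}$ is a linear topological isomorphism.

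I do not anticipate a serious obstacle; the only mild subtlety is bookkeeping the inductive limit topologies on both sides and the fact (recorded in Theorem \ref{winter38}) that the product of inductive limits is itself the inductive limit of products, so that continuity can be tested Fréchet-piece by Fréchet-piece. Once that reduction is in place, the argument is a direct seminorm comparison built into the definition of the topology on $\mathcal{E}(M,E^\vee)$.
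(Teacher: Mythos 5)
Your proof is correct and follows essentially the same strategy as the paper: an explicit two-sided inverse for bijectivity, and the reduction of continuity (in both directions) to a seminorm comparison on Fréchet pieces $\mathcal{E}_K$ and $\mathcal{E}_{\varphi(K)}$, using the inductive-limit description of the topologies and the defining seminorms $\|\cdot\|_{l,\varphi,j,K}$. The only cosmetic difference is that the paper checks continuity of the forward map componentwise (each $\pi^l\circ\tilde{T}_{E^\vee,U,\varphi}$ into $D(\varphi(U))$) while you treat the product directly as an inductive limit of Fréchet products via Theorem \ref{winter38}; these reductions are interchangeable.
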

\begin{proof}
Clearly $\tilde{T}_{E^\vee,U,\varphi}$ is linear. Also the map
$\tilde{T}_{E^\vee,U,\varphi}$ is bijective. Indeed, the inverse
of $\tilde{T}_{E^\vee,U,\varphi}$ (which we denote by
$T_{E^\vee,U,\varphi}$) is given by
\begin{align*}
&T_{E^\vee,U,\varphi}: [D(\varphi(U))]^{\times r}\rightarrow D(U,E_U^{\vee})\\
& \forall\,x\in U\quad
T_{E^\vee,U,\varphi}(\xi_1,\cdots,\xi_r)(x)=\big(\rho^{\vee}|_{E_x^{\vee}}\big)^{-1}\circ(\xi_1,\cdots,\xi_r)\circ\varphi(x)
\end{align*}
Now we show that $\tilde{T}_{E^\vee,U,\varphi}:
D(U,E_U^{\vee})\rightarrow [D(\varphi(U))]^{\times r}$ is
continuous. To this end, it is enough to prove that for each
$1\leq l\leq r$ the map
\begin{equation*}
\pi^l\circ \tilde{T}_{E^\vee,U,\varphi}:D(U,E_U^{\vee})\rightarrow
 D(\varphi(U)),\qquad \xi\mapsto (\rho^\vee)^l\circ
 \xi\circ \varphi^{-1}
\end{equation*}
is continuous. The topology on $D(U,E_U^{\vee})$ is the inductive
limit topology with respect to
$\{\mathcal{E}_K(U,E_U^{\vee})\}_{K\in \mathcal{K}(U)}$, so it is
enough to show that for each $K\in \mathcal{K}(U)$,
$\pi^l\circ\tilde{T}_{E^\vee,U,\varphi}:\mathcal{E}_K(U,E_U^{\vee})\rightarrow
 D(\varphi(U))$ is continuous. Note that $\pi^l\circ\tilde{T}_{E^\vee,U,\varphi}[\mathcal{E}_K(U,E_U^{\vee})]\subseteq
 \mathcal{E}_{\varphi(K)}(\varphi(U))$. Considering that $\mathcal{E}_{\varphi(K)}(\varphi(U))\hookrightarrow
 D(\varphi(U))$, it is enough to show that
 \begin{equation*}
 \pi^l\circ\tilde{T}_{E^\vee,U,\varphi}:
 \mathcal{E}_K(U,E_U^{\vee})\rightarrow \mathcal{E}_{\varphi(K)}(\varphi(U))
 \end{equation*}
 is continuous. For all $\xi\in \mathcal{E}_K(U,E_U^{\vee})$ and $j\in \mathbb{N}$ we
 have
 \begin{equation*}
 \| \pi^l\circ\tilde{T}_{E^\vee,U,\varphi}
 (\xi)\|_{j,\varphi(K)}=\|(\rho^\vee)^l\circ
 \xi\circ\varphi^{-1}\|_{j,\varphi(K)}=\|\xi\|_{l,\varphi,j,K}
 \end{equation*}
 which implies the continuity (note that even an inequality in place of the last equality would have been enough to prove the
 continuity). It remains to prove the continuity of $T_{E^\vee,U,\varphi}: [D(\varphi(U))]^{\times r}\rightarrow D(U,E_U^{\vee})$. By Theorem
 \ref{thmfallprodinductcont11} it is enough to show that for all
 $K\in \mathcal{K}(\varphi(U))$, $T_{E^\vee,U,\varphi}: [\mathcal{E}_K(\varphi(U))]^{\times r}\rightarrow
 D(U,E_U^{\vee})$ is continuous. It is clear that $T_{E^\vee,U,\varphi}([\mathcal{E}_K(\varphi(U))]^{\times r})\subseteq
 \mathcal{E}_{\varphi^{-1}(K)}(U,E_U^{\vee})$. Since $\mathcal{E}_{\varphi^{-1}(K)}(U,E_U^{\vee})\hookrightarrow
 D(U,E_U^{\vee})$, it is sufficient to show that $T_{E^\vee,U,\varphi}: [\mathcal{E}_K(\varphi(U))]^{\times r}\rightarrow
 \mathcal{E}_{\varphi^{-1}(K)}(U,E_U^{\vee})$ is continuous. To
 this end,
 by Theorem \ref{thmfallprodinductcont11}, we just need to show that for all $j\in \mathbb{N}$ and $1\leq l\leq r$ there exists
 $j_1,\cdots,j_r$ such that
 \begin{equation*}
 \|T_{E^\vee,U,\varphi}(\xi_1,\cdots,\xi_r)\|_{l,\varphi,j,\varphi^{-1}(K)
 }\leq C (\|\xi_1\|_{j_1,K}+\cdots\|\xi_r\|_{j_r,K})
 \end{equation*}
 But this obviously holds because
 \begin{equation*}
\|T_{E^\vee,U,\varphi}(\xi_1,\cdots,\xi_r)\|_{l,\varphi,j,\varphi^{-1}(K)
 }=\|\xi_l\|_{j,K}
 \end{equation*}
\end{proof}

 The adjoint of $T_{E^\vee,U,\varphi}$ is
\begin{align*}
&T_{E^\vee,U,\varphi}^{*}: [D(U,E_U^{\vee})]^*\rightarrow \big([D(\varphi(U))]^{\times r}\big)^{*}\\
&\langle T_{E^\vee,U,\varphi}^{*}u,(\xi_1,\cdots,\xi_r)\rangle=\langle u,T_{E^\vee,U,\varphi}(\xi_1,\cdots,\xi_r)\rangle
\end{align*}
Note that, since $T_{E^\vee,U,\varphi}$ is a linear topological
isomorphism, $T_{E^\vee,U,\varphi}^{*}$ is also a linear
topological isomorphism (and in particular it is bijective).
For every $u\in [D(U,E_U^{\vee})]^*$, $T_{E^\vee,U,\varphi}^{*}u$
is in $\big([D(\varphi(U))]^{\times r}\big)^{*}$; we can combine
this with the bijective map
\begin{equation*}
L: \big([D(\varphi(U))]^{\times r}\big)^{*}\rightarrow
[D'(\varphi(U))]^{\times r},\quad L(v)=(v\circ i_1,\cdots,v\circ
i_r)
\end{equation*}
(see Theorem \ref{winter33}) to send $u\in [D(U,E_U^{\vee})]^*$
into an element of $[D'(\varphi(U))]^{\times r}$:
\begin{equation*}
L(T_{E^\vee,U,\varphi}^{*}u)=((T_{E^\vee,U,\varphi}^*u)\circ i_1,\cdots,(T_{E^\vee,U,\varphi}^*u)\circ i_r)
\end{equation*}
where for all $1\leq l\leq r$, $(T_{E^\vee,U,\varphi}^*u)\circ
i_l\in D'(\varphi(U))$ is given by
\begin{align*}
((T_{E^\vee,U,\varphi}^*u)\circ i_l)(\xi)&=(T_{E^\vee,U,\varphi}^*u)(i_l(\xi))=(T_{E^\vee,U,\varphi}^*u)(0,\cdots,0,\underbrace{\xi}_{\textrm{$l^{th}$ position}},0,\cdots,0)\\
&=\langle u,T_{E^\vee,U,\varphi}(0,\cdots,0,\underbrace{\xi}_{\textrm{$l^{th}$ position}},0,\cdots,0)\rangle
\end{align*}
If we define $g_{l,\xi, U,\varphi}\in D(U,E_U^{\vee})$ by
\begin{align*}
g_{l,\xi,U,\varphi}(x)&=T_{E^\vee,U,\varphi}(0,\cdots,0,\underbrace{\xi}_{\textrm{$l^{th}$  position}},0,\cdots,0)(x)\\
&=\big(\rho^{\vee}|_{E_x^{\vee}}\big)^{-1}\circ
(0,\cdots,0,\underbrace{\xi}_{\textrm{$l^{th}$
position}},0,\cdots,0)\circ \varphi(x)
\end{align*}
then we may write
\begin{equation*}
\langle (T_{E^\vee,U,\varphi}^*u)\circ
i_l,\xi\rangle_{D'(\varphi(U))\times D(\varphi(U))}=\langle
u,g_{l,\xi,U,\varphi}\rangle_{[D(U,E_U^\vee)]^*\times
D(U,E_U^\vee)}
\end{equation*}
\textbf{Summary:} We can associate with $u\in
D'(U,E_U)=(D(U,E_U^{\vee}))^{*}$ the following $r$ distributions
in $D'(\varphi(U))$:
\begin{equation*}
\forall\, 1\leq l\leq r\quad
\tilde{u}^l=T_{E^\vee,U,\varphi}^*u\circ i_l
\end{equation*}
that is
\begin{equation*}
\forall\,\xi\in D(\varphi(U))\quad \langle
\tilde{u}^l,\xi\rangle=\langle u,g_{l,\xi,U,\varphi}\rangle
\end{equation*}
where  $g_{l,\xi,U,\varphi}\in D(U,E_U^{\vee})$ is defined by
\begin{equation*}
\big(\rho^{\vee}|_{E_x^{\vee}}\big)^{-1}\circ
(0,\cdots,0,\underbrace{\xi}_{\textrm{$l^{th}$
position}},0,\cdots,0)\circ \varphi(x)
\end{equation*}
In particular,
\begin{equation*}
\rho^\vee\circ g_{l,\xi,U,\varphi}\circ
\varphi^{-1}=(0,\cdots,0,\underbrace{\xi}_{\textrm{$l^{th}$
position}},0,\cdots,0)
\end{equation*}
and so $(\rho^\vee\circ g_{l,\xi,U,\varphi}\circ
\varphi^{-1})^l=\xi$.\\\\
Let's give a name to the composition of $L$ with
$T_{E^\vee,U,\varphi}^*$ that we used above. We set
$H_{E^\vee,U,\varphi}:=L\circ T_{E^\vee,U,\varphi}^*$:
\begin{equation*}
H_{E^\vee,U,\varphi}: [D(U,E_U^\vee)]^*\rightarrow
(D'(\varphi(U)))^{\times r},\qquad u\mapsto
L(T_{E^\vee,U,\varphi}^*u)=(\tilde{u}^1,\cdots,\tilde{u}^r)
\end{equation*}
\begin{remark}\lab{remfall134}
Here we make three observations about the mapping
$H_{E^\vee,U,\varphi}$.
\begin{enumerateX}
\item For every $u\in [D(U,E_U^\vee)]^*$
\begin{equation*}
\textrm{supp}[H_{E^\vee,U,\varphi}\,u]^l=\textrm{supp}\tilde{u}^l\subseteq
\varphi (\textrm{supp}\,u)
\end{equation*}
Indeed, let $A=\varphi(\textrm{supp} u)$. By Theorem
\ref{winter67}, it is enough to show that if $\eta\in D(\varphi
(U))$ is such that $\textrm{supp} \eta \subseteq
\varphi(U)\setminus A$, then $\tilde{u}^l(\eta)=0$. Note that
\begin{align*}
\langle \tilde{u}^l,\eta\rangle=\langle
u,g_{l,\eta,U,\varphi}\rangle
\end{align*}
So by Remark \ref{winter73} we just need to show that
$g_{l,\eta,U,\varphi}=0$ on an open neighborhood of
$\textrm{supp} u$. Let $K=\textrm{supp} \eta$.
%Since $\textrm{supp}(\eta)$ is compact and $\varphi(U)\setminus
%A$ is open, there exists a compact set $K$ such that
%\begin{equation*}
%\textrm{supp} \eta \subseteq \mathring{K}\subseteq K \subseteq
%\varphi(U)\setminus A
%\end{equation*}
clearly $U\setminus \varphi^{-1}(K)$ is an open neighborhood of
$\textrm{supp} u$. We will show that $g_{l,\eta,U,\varphi}$
vanishes on this open neighborhood. Note that
\begin{equation*}
g_{l,\eta,U,\varphi}(x)=(\rho^\vee|_{E_x^\vee})^{-1}(0,\cdots,0,\underbrace{\eta\circ
\varphi(x)}_{\textrm{$l^{th}$ position}},0,\cdots,0 )
\end{equation*}
Since $\rho^\vee|_{E_x^\vee}$ is an isomorphism and $\eta=0$ on
$\varphi(U)\setminus K$, we conclude that $g_{l,\eta,U,\varphi}=0$
on $\varphi^{-1}(\varphi(U)\setminus K)=U\setminus
\varphi^{-1}(K)$.
\item Clearly $H_{E^\vee, U,\varphi}: D'(U,E_U)\rightarrow [D'(\varphi(U))]^{\times
r}$ preserves addition. Moreover if $f\in C^\infty(U)$ and $u\in
D'(U,E_U)$, then $H_{E^\vee, U,\varphi}(fu)=(f\circ \varphi^{-1}
)H_{E^\vee, U,\varphi}(u)$. Recall that $H=L\circ T^*_{E^\vee,
U,\varphi}$.
\begin{align*}
\langle
T_{E^\vee,U,\varphi}^{*}(fu),(\xi_1,\cdots,\xi_r)\rangle&=\langle
fu,T_{E^\vee,U,\varphi}(\xi_1,\cdots,\xi_r)\rangle\\
&=\langle u,fT_{E^\vee,U,\varphi}(\xi_1,\cdots,\xi_r)\rangle\\
&=\langle
u,T_{E^\vee,U,\varphi}[(f\circ\varphi^{-1})(\xi_1,\cdots,\xi_r)]\rangle\\
&= \langle T^*_{E^\vee,U,\varphi}u,(f\circ \varphi^{-1}
)(\xi_1,\cdots,\xi_r)\rangle\\
&=\langle (f\circ \varphi^{-1})T^*_{E^\vee,U,\varphi}u,
(\xi_1,\cdots,\xi_r)\rangle\\
\end{align*}
(the third equality follows directly from the definition of
$T_{E^\vee,U,\varphi}$.) Therefore
\begin{equation*}
T_{E^\vee,U,\varphi}^{*}(fu)=(f\circ
\varphi^{-1})T^*_{E^\vee,U,\varphi}u
\end{equation*}
The fact that $L((f\circ
\varphi^{-1})T^*_{E^\vee,U,\varphi}u)=(f\circ
\varphi^{-1})L(T^*_{E^\vee,U,\varphi}u)$ is an immediate
consequence of the definition of $L$.
\item Since $T_{E^\vee,U,\varphi}$ and $L$ are both linear
topological isomorphisms, $H_{E^\vee,U,\varphi}^{-1}=(L\circ
T_{E^\vee,U,\varphi}^*)^{-1}:(D'(\varphi(U)))^{\times
r}\rightarrow D^*(U,E_U^\vee)$ is also a linear topological
isomorphism. It is useful for our later considerations to find an
explicit formula for this map. Note that
\begin{align*}
H_{E^\vee,U,\varphi}^{-1}&=(L\circ
T_{E^\vee,U,\varphi}^*)^{-1}=(T_{E^\vee,U,\varphi}^*)^{-1}\circ
L^{-1}=(T_{E^\vee,U,\varphi}^{-1})^{*}\circ
L^{-1}\\
&=(\tilde{T}_{E^\vee,U,\varphi})^*\circ
L^{-1}=(\tilde{T}_{E^\vee,U,\varphi})^*\circ \tilde{L}
\end{align*}
Recall that
\begin{align*}
& \tilde{L}: [D^*(\varphi(U))]^{\times r}\rightarrow
[(D(\varphi(U)))^{\times r}]^*,\quad (v^1,\cdots,v^r)\mapsto
v^1\circ \pi_1+\cdots+v^r\circ \pi_r\\
& \tilde{T}_{E^\vee,U,\varphi}^*:[(D(\varphi(U)))^{\times
r}]^*\rightarrow D^*(U,E_U^\vee)
\end{align*}
Therefore for all $\xi\in D(U,E_U^\vee)$
\begin{align*}
H_{E^\vee,U,\varphi}^{-1}(v^1,\cdots,v^r)(\xi)&=\langle
\tilde{T}_{E^\vee,U,\varphi}^*(v^1\circ \pi_1+\cdots+v^r\circ
\pi_r),\xi\rangle\\
&=\langle (v^1\circ
\pi_1+\cdots+v^r\circ \pi_r),\tilde{T}\xi\rangle\\
&=\langle (v^1\circ \pi_1+\cdots+v^r\circ
\pi_r),((\rho^{\vee})^1\circ \xi\circ
\varphi^{-1},\cdots,(\rho^{\vee})^r\circ \xi\circ
\varphi^{-1})\rangle\\
&=\sum_i v^i[(\rho^{\vee})^i\circ \xi\circ \varphi^{-1}]
\end{align*}
\end{enumerateX}
\end{remark}
\begin{remark}\lab{remfall135}
Suppose $u\in D'(M,E)$ is a regular distribution, that is $u=u_f$
where $f\in \Gamma_{reg}(M,E)$. We want to see whether the local
components of such a distribution agree with its components as an
element of $\Gamma(M,E)$. With respect to the total
trivialization triple $(U,\varphi,\rho)$ we have
\begin{enumerate}
\item $f\mapsto (\tilde{f}^1,\cdots,\tilde{f}^r),\quad \tilde{f}^l=\rho^l\circ f\circ\varphi^{-1}$
\item $u_f\mapsto (\tilde{u_f}^1,\cdots,\tilde{u_f}^l)$
\end{enumerate}
The question is whether $u_{\tilde{f}^l}=\tilde{u_f}^l$? Here we
will show that the answer is positive. Indeed, for all $\xi\in
D(\varphi(U))$ we have
\begin{align*}
\langle \tilde{u_f}^l,\xi\rangle&=\langle
u_f,g_{l,\xi,U,\varphi}\rangle=\int_M [
g_{l,\xi,U,\varphi},f]=\int_{\varphi(U)}\sum_i
(\tilde{g}_{l,\xi,U,\varphi})^i\tilde{f}^i dV=\int_{\varphi(U)}
(\tilde{g}_{l,\xi,U,\varphi})^l\tilde{f}^l dV\\
&=\int_{\varphi(U)} \tilde{f}^l \xi dV=\langle
u_{\tilde{f}^l},\xi\rangle
\end{align*}
Note that the above calculation in fact shows that the
restriction of $H_{E^\vee,U,\varphi}$ to $D(U,E_U)$ is $\tilde{T}_{E,U,\varphi}$.
\end{remark}

\section{Spaces of Sobolev and Locally Sobolev Functions in $\reals^n$}
In this section we present a brief overview of the basic
definitions and properties related to Sobolev spaces on Euclidean
spaces. %The material of this section is mostly taken from
%\cite{holstbehzadan2015b}.
\subsection{Basic Definitions}
\begin{definition}\lab{winter76}
Let $s\geq 0$ and $p\in [1,\infty]$. The Sobolev-Slobodeckij space
$W^{s,p}(\mathbb{R}^n)$ is defined as follows:
\begin{itemize}
\item If $s=k\in \mathbb{N}_0$, $p\in[1,\infty]$,
\begin{equation*}
W^{k,p}(\mathbb{R}^n)=\{u\in L^p (\mathbb{R}^n):
\|u\|_{W^{k,p}(\mathbb{R}^n)}:=\sum_{|\nu|\leq
k}\|\partial^{\nu}u\|_p<\infty\}
\end{equation*}
\item If $s=\theta\in(0,1)$, $p\in[1,\infty)$,
\begin{equation*}
W^{\theta,p}(\mathbb{R}^n)=\{u\in L^p (\mathbb{R}^n):
 |u|_{W^{\theta,p}(\mathbb{R}^n)}:=\big(\int\int_{\mathbb{R}^n\times
\mathbb{R}^n}\frac{|u(x)-u(y)|^p}{|x-y|^{n+\theta p}}dx
dy\big)^{\frac{1}{p}} <\infty\}
\end{equation*}
\item If $s=\theta\in(0,1)$, $p=\infty$,
\begin{equation*}
W^{\theta,\infty}(\mathbb{R}^n)=\{u\in L^{\infty} (\mathbb{R}^n):
 |u|_{W^{\theta,\infty}(\mathbb{R}^n)}:=\esssup_{x,y \in
\mathbb{R}^n, x\neq y}\frac{|u(x)-u(y)|}{|x-y|^{\theta}} <\infty\}
\end{equation*}
\item If $s=k+\theta,\, k\in \mathbb{N}_0,\, \theta\in(0,1)$,
$p\in[1,\infty]$,
\begin{equation*}
W^{s,p}(\mathbb{R}^n)=\{u\in
W^{k,p}(\mathbb{R}^n):\|u\|_{W^{s,p}(\mathbb{R}^n)}:=\|u\|_{W^{k,p}(\mathbb{R}^n)}+\sum_{|\nu|=k}
|\partial^{\nu}u|_{W^{\theta,p}(\mathbb{R}^n)}<\infty\}
\end{equation*}
%\item If $s<0$ and $p\in(1,\infty)$,
%\begin{equation*}
%W^{s,p}(\mathbb{R}^n)=(W^{-s,p'}(\mathbb{R}^n))^{*} \quad
%(\frac{1}{p}+\frac{1}{p'}=1).
%\end{equation*}
\end{itemize}
\end{definition}
\begin{remark}\lab{winter77}
Clearly for all $s\geq 0$, $W^{s,p}(\reals^n)\subseteq
L^{p}(\reals^n)$. Recall that $L^p(\reals^n)\subseteq
L^1_{loc}(\reals^n)\subseteq D'(\reals^n)$. So we may consider elements
of $W^{s,p}(\reals^n)$ as distributions in
$D'(\reals^n)$. Indeed, for $s\geq 0$, $p\in (1,\infty)$, and $u\in D'(\reals^n)$ we define
\begin{align*}
\begin{cases}
\|u\|_{W^{s,p}(\reals^n)}:=\|f\|_{W^{s,p}(\reals^n)}\quad &\textrm{if $u=u_f$ for some $f\in L^p(\reals^n)$}\\
 \|u\|_{W^{s,p}(\reals^n)}:=\infty \quad &\textrm{otherwise}
\end{cases}
\end{align*}
As a consequence we may write
\begin{equation*}
W^{s,p}(\reals^n)=\{u\in D'(\reals^n): \|u\|_{W^{s,p}(\reals^n)}<\infty\}
\end{equation*}
\end{remark}
\begin{remark}\lab{remfalldecomint1}
Let us make some observations that will be helpful in the proof of a number of important theorems. Let $A$ be a nonempty measurable set in $\reals^n$.
\begin{enumerateXALI}
\item  We may write:
\begin{align*}
&\int\int_{\mathbb{R}^n\times
\mathbb{R}^n}\frac{|\partial^\nu u(x)-\partial^\nu u(y)|^p}{|x-y|^{n+\theta p}}dxdy\\
&= \int\int_{A\times
 A}\cdots dx dy+\int_A\int_{
 \reals^n\setminus A}\cdots dx dy+\int_{\reals^n\setminus A}\int_A\cdots dx dy+\int_{
 \reals^n\setminus A}\int_{
 \reals^n\setminus A}\cdots dx dy
\end{align*}
In particular, if $\textrm{supp} u\subseteq A$, then the last integral vanishes and the sum of the two middle integrals will be equal to $2\int_A\int_{
 \reals^n\setminus A}\frac{|\partial^\nu u(x)|^p}{|x-y|^{n+\theta p}} dy dx$. Therefore in this case
 \begin{align*}
 &\int\int_{\mathbb{R}^n\times
\mathbb{R}^n}\frac{|\partial^\nu u(x)-\partial^\nu u(y)|^p}{|x-y|^{n+\theta p}}dxdy=\\
& \int\int_{A\times
 A}\frac{|\partial^\nu u(x)-\partial^\nu u(y)|^p}{|x-y|^{n+\theta p}} dx dy+2\int_A\int_{
 \reals^n\setminus A}\frac{|\partial^\nu u(x)|^p}{|x-y|^{n+\theta p}} dy dx
 \end{align*}
 \item If $A$ is open, $K\subseteq A$ is compact and $\alpha>n$, then there exists a number $C$ such that for all $x\in K$ we have
 \begin{equation*}
 \int_{\reals^n\setminus A}\frac{1}{|x-y|^{\alpha}}dy\leq C
 \end{equation*}
 ($C$ may depend on $A$, $K$, $n$, and $\alpha$ but is independent of $x$.)
 The reason is as follows: Let $R=\frac{1}{2}\textrm{dist}(K,A^c)>0$. Clearly for all $x\in K$ the ball $B_R(x)$ is inside $A$. Therefore for all $x\in K$, $\reals^n\setminus A\subseteq\reals^n\setminus B_R(x)$ which implies that for all $x\in K$
 {\fontsize{10}{10}{\begin{equation*}
 \int_{\reals^n\setminus A}\frac{1}{|x-y|^{\alpha}}dy\leq  \int_{\reals^n\setminus B_R(x)}\frac{1}{|x-y|^{\alpha}}dy\stackrel{z=y-x}{=} \int_{\reals^n\setminus B_R(0)}\frac{1}{|z|^\alpha}dz=\sigma(S^{n-1})\int_R^\infty \frac{1}{r^\alpha}r^{n-1}dr
 \end{equation*}}}
 which converges because $\alpha>n$. We can let $C=\sigma(S^{n-1})\int_R^\infty \frac{1}{r^\alpha}r^{n-1}dr$.
 \item If $A$ is bounded and $\alpha<n$, then there exists a number $C$ such that for all $x\in A$
 \begin{equation*}
 \int_A\frac{1}{|x-y|^\alpha}dy\leq C
 \end{equation*}
  ($C$ depends on $A$, $n$, and $\alpha$ but is independent of $x$.)  The reason is as follows: Since $A$ is bounded there exists $R>0$ such that for all $x,y\in A$ we have $|x-y|<R$. So for all $x\in A$
  \begin{equation*}
   \int_A\frac{1}{|x-y|^\alpha}dy\leq \sigma(S^{n-1})\int_0^R\frac{1}{r^\alpha}r^{n-1}dr
  \end{equation*}
   which converges because $\alpha<n$.
 \end{enumerateXALI}
\end{remark}
\begin{theorem}\lab{winter78}
Let $s\geq 0$ and $p\in(1,\infty)$. $C_c^\infty(\reals^n)$ is
dense in $W^{s,p}(\reals^n)$. In fact, the identity map
$i_{D,W}: D(\reals^n)\rightarrow W^{s,p}(\reals^n)$ is a linear
continuous map with dense image.
\end{theorem}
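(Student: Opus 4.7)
I would establish the two assertions separately. For the continuity of $i_{D,W}\colon D(\reals^n)\to W^{s,p}(\reals^n)$, Theorem \ref{thmfallconvcont13} reduces the task to exhibiting, for each $K\in\mathcal{K}(\reals^n)$, an index $j\in\mathbb{N}$ and a constant $C>0$ such that
\[
\|\varphi\|_{W^{s,p}(\reals^n)}\leq C\,\|\varphi\|_{j,K}\qquad\forall\,\varphi\in\mathcal{E}_K(\reals^n).
\]
Write $s=\floor{s}+\theta$ with $\theta\in[0,1)$. The integer-order pieces $\|\partial^{\nu}\varphi\|_{L^p}$ are dominated by $|K|^{1/p}\|\varphi\|_{|\nu|,K}$. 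For a fractional seminorm $|\partial^{\nu}\varphi|_{W^{\theta,p}}$ (with $|\nu|=\floor{s}$ and $\theta\in(0,1)$), I would fix a bounded open set $A$ with $K\subset A$ and decompose the double integral as in Remark \ref{remfalldecomint1}. On the region $A\times A$, I would further split into $|x-y|<1$ and $|x-y|\geq 1$; the mean-value inequality $|\partial^{\nu}\varphi(x)-\partial^{\nu}\varphi(y)|\leq\|\varphi\|_{|\nu|+1,K}|x-y|$ makes the first subregion integrable, since $|x-y|^{(1-\theta)p-n}$ integrates over a ball of radius $1$ in view of $(1-\theta)p>0$, while the second subregion is trivially bounded by $C|A|^{2}\|\varphi\|_{|\nu|,K}^{p}$. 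On the tail $A\times(\reals^n\setminus A)$, the uniform bound of Remark \ref{remfalldecomint1} provides $C$ such that $\int_{\reals^n\setminus A}|x-y|^{-n-\theta p}\,dy\leq C$ for every $x\in K$. Summing these contributions yields an estimate of the form $\|\varphi\|_{W^{s,p}(\reals^n)}\leq C\,\|\varphi\|_{\floor{s}+1,K}$, which is the desired continuity bound.

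For the density claim, given $u\in W^{s,p}(\reals^n)$ I would construct an approximating sequence in $C_c^{\infty}(\reals^n)$ in two stages. Fix $\chi\in C_c^{\infty}(\reals^n)$ with $\chi\equiv 1$ on $B_1$ and $0\leq\chi\leq 1$, and set $\chi_R(x):=\chi(x/R)$. First, show $\chi_R u\to u$ in $W^{s,p}(\reals^n)$ as $R\to\infty$: the $L^p$ control of $\partial^{\nu}(\chi_R u-u)$ follows from the Leibniz rule together with dominated convergence (noting $\|\partial^{\beta}\chi_R\|_{\infty}\lesssim R^{-|\beta|}$ and supports inside $\{R\leq|x|\leq 2R\}$). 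Second, for a function $v:=\chi_R u$ with compact support, convolve with a standard mollifier $\eta_{\varepsilon}\geq 0$ of unit mass and support in $B_{\varepsilon}$; then $v\ast\eta_{\varepsilon}\in C_c^{\infty}(\reals^n)$ and I would verify $v\ast\eta_{\varepsilon}\to v$ in $W^{s,p}(\reals^n)$ as $\varepsilon\to 0$. The $L^p$ convergence of every $\partial^{\nu}(v\ast\eta_{\varepsilon})=(\partial^{\nu}v)\ast\eta_{\varepsilon}$ to $\partial^{\nu}v$ is the classical $L^p$-continuity of convolution with an approximate identity. A diagonal selection $R_m\to\infty$, $\varepsilon_m\to 0$ then delivers the required $C_c^{\infty}$ approximation of $u$.

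The main obstacle is the propagation of both steps through the fractional Slobodeckij seminorm. For the mollification, I would use the identity
\[
(v\ast\eta_{\varepsilon})(x)-(v\ast\eta_{\varepsilon})(y)-[v(x)-v(y)]=\int\eta_{\varepsilon}(z)\bigl\{[v(x-z)-v(y-z)]-[v(x)-v(y)]\bigr\}dz,
\]
and apply Minkowski's integral inequality to obtain
\[
|v\ast\eta_{\varepsilon}-v|_{W^{\theta,p}(\reals^n)}\leq\int\eta_{\varepsilon}(z)\,|\tau_z v-v|_{W^{\theta,p}(\reals^n)}\,dz,
\]
where $\tau_z v(x)=v(x-z)$. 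The continuity of translation in $W^{\theta,p}(\reals^n)$, namely $|\tau_z v-v|_{W^{\theta,p}}\to 0$ as $z\to 0$, is proved by splitting $\reals^n\times\reals^n$ into $\{|x-y|<\delta\}$ and $\{|x-y|\geq\delta\}$: on the former, the absolute continuity of the Slobodeckij integrand controls the contribution uniformly in $z$ as $\delta\to 0$; on the latter, the integral is dominated by $C\delta^{-\theta p}\|\tau_z v-v\|_{L^p}^{p}$, which vanishes for fixed $\delta$ by the $L^p$-continuity of translations. For the cutoff step, the same decomposition of Remark \ref{remfalldecomint1} applied to the Leibniz expansion of $\partial^{\nu}(\chi_R u-u)$, combined with dominated convergence on each resulting double integral, shows $|\partial^{\nu}(\chi_R u)-\partial^{\nu}u|_{W^{\theta,p}}\to 0$. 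These two Slobodeckij estimates are the technical core; the integer-order analogues are standard once the fractional case is handled.
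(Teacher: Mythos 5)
Your continuity estimate for $i_{D,W}$ is essentially the paper's argument: both reduce the claim via Theorem \ref{thmfallconvcont13}, handle the integer-order seminorms by the trivial $|K|^{1/p}\|\varphi\|_{m,K}$ bound, and handle the Slobodeckij seminorms by fixing a bounded open set $A\supset K$, applying the decomposition of Remark \ref{remfalldecomint1}, and using the mean-value inequality on $A\times A$ together with the tail estimate on $A\times(\reals^n\setminus A)$. Your additional split of $A\times A$ into $|x-y|<1$ and $|x-y|\geq 1$ is harmless but redundant: once the mean-value bound gives the integrand $\|\varphi\|_{|\nu|+1,K}^{p}\,|x-y|^{(1-\theta)p-n}$, part 3 of Remark \ref{remfalldecomint1} already controls the full $A\times A$ integral, since the exponent $n-(1-\theta)p$ is strictly less than $n$ and $A$ is bounded.

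Where you genuinely diverge from the paper is the density claim. The paper disposes of it in one line by citing Theorem~7.38 and Lemma~7.44 of \cite{Adams75} together with Remark \ref{remfallequivnorm120}, while you supply a self-contained cutoff--then--mollify argument. Your plan is correct in outline: $\chi_R u\to u$ in $W^{s,p}(\reals^n)$ (using Theorem \ref{winter87} to know $\chi_R u\in W^{s,p}$ in the first place), then mollification $v\ast\eta_\varepsilon\to v$ via Minkowski's integral inequality and $W^{s,p}$-continuity of translations, and a diagonal selection. The Minkowski/translation argument for the Slobodeckij seminorm is the right tool, and your $\delta$-split for translation continuity is sound. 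The benefit of your route is that the proof becomes reproducible from first principles inside the paper's own framework of Remark \ref{remfalldecomint1}; the cost is the book-keeping you flag at the end, particularly making the dominated-convergence claim for $|\partial^\nu(\chi_R u)-\partial^\nu u|_{W^{\theta,p}}\to 0$ precise (one must expand $w_R(x)-w_R(y)$ via Leibniz and handle the cross term $\partial^\nu u(y)\bigl(\chi_R(x)-\chi_R(y)\bigr)$ with the $\|\grad\chi_R\|_\infty\lesssim R^{-1}$ bound on small scales and the support localization on large scales), whereas the paper sidesteps all of this by citation. Both approaches are valid; yours is longer but self-contained, the paper's is terse but outsources the core estimate.
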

\begin{proof}
The fact that $C_c^\infty(\reals^n)$ is
dense in $W^{s,p}(\reals^n)$ follows from Theorem 7.38 and Lemma 7.44 in \cite{Adams75} combined with Remark \ref{remfallequivnorm120}.
Linearity of $i_{D,W}$ is obvious. It remains to prove that this map is continuous. By Theorem \ref{thmfallconvcont13} it is enough to show that
\begin{equation*}
\forall\,K\in \mathcal{K}(\reals^n), \forall\,\varphi\in \mathcal{E}_K(\reals^n)\quad \exists j\in \mathbb{N}\quad \textrm{s.t.}\quad \|\varphi\|_{W^{s,p}(\reals^n)}\preceq \|\varphi\|_{j,K}
\end{equation*}
Let $s=m+\theta$ where $m\in\mathbb{N}_0$ and $\theta\in [0,1)$. If $\theta\neq 0$, by definition $\|\varphi\|_{W^{s,p}(\reals^n)}=\|\varphi\|_{W^{m,p}(\reals^n)}+\sum_{|\nu|=m}|\partial^\nu \varphi|_{W^{\theta,p}(\reals^n)}$. It is enough to show that each summand can be bounded by a constant multiple of $\|\varphi\|_{j,K}$ for some $j$.
\begin{itemizeXALI}
\item \textbf{Step 1:} If $\theta=0$,
\begin{align*}
\|\varphi\|_{W^{m,p}(\reals^n)}&=\sum_{|\nu|\leq m}\|\partial^\nu \varphi\|_{L^p(\reals^n)}=\sum_{|\nu|\leq m}\|\partial^\nu \varphi\|_{L^p(K)}\\
&=\sum_{|\nu|\leq m} (\|\varphi\|_{m,K}|K|^{\frac{1}{p}})\preceq \|\varphi\|_{m,K}
\end{align*}
where the implicit constant depends on $m$, $p$, and $K$ but is independent of $\varphi$.
\item \textbf{Step 2:} Let $A$ be an open ball that contains $K$ (in particular, $A$ is bounded). As it was pointed out in Remark \ref{remfalldecomint1} we may write
\begin{align*}
&\int\int_{\mathbb{R}^n\times
\mathbb{R}^n}\frac{|\partial^\nu \varphi(x)-\partial^\nu \varphi(y)|^p}{|x-y|^{n+\theta p}}dxdy=\\
&
\int\int_{A\times
A}\frac{|\partial^\nu \varphi(x)-\partial^\nu \varphi(y)|^p}{|x-y|^{n+\theta p}}dxdy+
2\int_A\int_{
\mathbb{R}^n\setminus A}\frac{|\partial^\nu \varphi(x)|^p}{|x-y|^{n+\theta p}}dydx
\end{align*}

First note that $\reals^n$ is a convex open set; so by Theorem \ref{thmfallconvexapostol1} every function $f\in \mathcal{E}_K(\reals^n)$ is Lipschitz; indeed, for all $x,y\in \reals^n$ we have $|f(x)-f(y)|\preceq \|f\|_{1,K}\|x-y\|$. Hence
\begin{align*}
\int\int_{A\times
A}\frac{|\partial^\nu \varphi(x)-\partial^\nu \varphi(y)|^p}{|x-y|^{n+\theta p}}dxdy&\leq \int_A \|\partial^\nu \varphi\|^p_{1,K}\int_A \frac{|x-y|^p}{|x-y|^{n+\theta p}}dydx\\
&=\int_A \|\partial^\nu \varphi\|^p_{1,K}\int_A \frac{1}{|x-y|^{n+(\theta-1) p}}dydx
\end{align*}
By part 3 of Remark \ref{remfalldecomint1} $\int_A \frac{1}{|x-y|^{n+(\theta-1) p}}dy$ is bounded by a constant independent of $x$; also clearly $\|\partial^\nu \varphi\|_{1,K}\leq \|\varphi\|_{m+1,K}$. Considering that $|A|$ is finite we get
\begin{equation*}
\int\int_{A\times
A}\frac{|\partial^\nu \varphi(x)-\partial^\nu \varphi(y)|^p}{|x-y|^{n+\theta p}}dxdy\preceq \|\varphi\|^p_{m+1,K}
\end{equation*}
Finally for the remaining integral we have
\begin{align*}
\int_A\int_{
\mathbb{R}^n\setminus A}\frac{|\partial^\nu \varphi(x)|^p}{|x-y|^{n+\theta p}}dydx=\int_K\int_{
\mathbb{R}^n\setminus A}\frac{|\partial^\nu \varphi(x)|^p}{|x-y|^{n+\theta p}}dydx
\end{align*}
because the inner integral is zero for $x\not \in K$. Now we can write
\begin{align*}
\int_K\int_{
\mathbb{R}^n\setminus A}\frac{|\partial^\nu \varphi(x)|^p}{|x-y|^{n+\theta p}}dydx\preceq \int_K \|\varphi\|_{m,K}^p\int_{
\mathbb{R}^n\setminus A}\frac{1}{|x-y|^{n+\theta p}}dydx
\end{align*}
By part 2 of Remark \ref{remfalldecomint1} for all $x\in K$, the inner integral is bounded by a constant. Since $|K|$ is finite we conclude that
\begin{equation*}
\int_A\int_{
\mathbb{R}^n\setminus A}\frac{|\partial^\nu \varphi(x)|^p}{|x-y|^{n+\theta p}}dydx\preceq \|\varphi\|_{m,K}^p
\end{equation*}
\end{itemizeXALI}
Hence
\begin{equation*}
\|u\|_{W^{s,p}(\reals^n)}\preceq \|\varphi\|_{m+1,K}
\end{equation*}
\end{proof}
\begin{definition}\lab{winter79}
Let $s>0$ and $p\in(1,\infty)$. We define
\begin{equation*}
W^{-s,p'}(\mathbb{R}^n)=(W^{s,p}(\mathbb{R}^n))^{*} \quad
(\frac{1}{p}+\frac{1}{p'}=1).
\end{equation*}
\end{definition}
\begin{remark}\lab{winter80}
Note that since the identity map from $D(\reals^n)$ to
$W^{s,p}(\reals^n)$ is continuous with dense image, the dual space
$W^{-s,p'}(\reals^n)$ can be viewed as a subspace of
$D'(\reals^n)$. Indeed, by Theorem \ref{thmfallinjectiveadjoint1}
the adjoint of the identity map, $i_{D,W}^*:
W^{-s,p'}(\reals^n)\rightarrow D'(\reals^n)$ is an injective
linear continuous map and we can use this map to identify
$W^{-s,p'}(\reals^n)$ with a subspace of $D'(\reals^n)$. It is a
direct consequence of the definition of adjoint that for all
$u\in W^{-s,p'}(\reals^n)$, $i_{D,W}^*u=u|_{D(\reals^n)}$. So by
identifying $u: W^{s,p}(\reals^n)\rightarrow \reals$ with
$u|_{D(\reals^n)}:D(\reals^n)\rightarrow \reals$, we can view
$W^{-s,p'}(\reals^n)$ as a subspace of $D'(\reals^n)$.
\end{remark}
\begin{remark}\lab{remfallsobolevbesov1}
\leavevmode
\begin{itemizeXALI}
\item It is a direct consequence of the contents of pages 88 and
178 of \cite{Trie83} that for $m\in \mathbb{Z}$ and $1<p<\infty$
\begin{equation*}
W^{m,p}(\reals^n)=H^m_p(\reals^n)=F^m_{p,2}(\reals^n)
\end{equation*}
\item It is a direct consequence of the contents of pages 38, 51, 90 and
178 of \cite{Trie83} that for $s\not \in \mathbb{Z}$ and
$1<p<\infty$
\begin{equation*}
W^{s,p}(\reals^n)=B^s_{p,p}(\reals^n)
\end{equation*}
\end{itemizeXALI}
\end{remark}
\begin{theorem}\lab{winter81}
For all $s\in \reals$ and $1<p<\infty$, $W^{s,p}(\reals^n)$ is
reflexive.
\end{theorem}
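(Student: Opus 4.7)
The plan is to treat the three regimes $s=k\in\mathbb{N}_0$, $s=k+\theta$ with $\theta\in(0,1)$, and $s<0$ separately, and in each of the first two cases exhibit $W^{s,p}(\reals^n)$ as (isomorphic to) a closed subspace of a finite product of $L^p$ spaces. Since $L^p(\reals^n)$ and $L^p(\reals^n\times\reals^n)$ are reflexive for $1<p<\infty$, Theorem \ref{thmfallnormedreflexive1} will then give reflexivity of the product and of any closed subspace, hence of $W^{s,p}(\reals^n)$.

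For $s=k\in\mathbb{N}_0$ I would consider the linear map
\begin{equation*}
T: W^{k,p}(\reals^n)\rightarrow \prod_{|\nu|\leq k} L^p(\reals^n),\qquad Tu=(\partial^\nu u)_{|\nu|\leq k},
\end{equation*}
where the product space is equipped with the norm $\|(f_\nu)\|=\sum_\nu\|f_\nu\|_p$. By definition of $W^{k,p}$, $T$ is a linear isometry. The key step is to check that $T(W^{k,p}(\reals^n))$ is closed: if $Tu_m\to(v_\nu)$ in the product, then in particular $u_m\to v_0$ in $L^p$, hence in $D'(\reals^n)$, so $\partial^\nu u_m\to\partial^\nu v_0$ in $D'(\reals^n)$; but also $\partial^\nu u_m\to v_\nu$ in $L^p\hookrightarrow D'(\reals^n)$, so $v_\nu=\partial^\nu v_0$ and $v_0\in W^{k,p}(\reals^n)$ with $Tv_0=(v_\nu)$. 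Reflexivity of $W^{k,p}(\reals^n)$ follows.

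For $s=k+\theta$ with $\theta\in(0,1)$ I would enlarge the target: define
\begin{equation*}
S: W^{s,p}(\reals^n)\rightarrow\Bigl(\prod_{|\nu|\leq k}L^p(\reals^n)\Bigr)\times\Bigl(\prod_{|\nu|=k} L^p(\reals^n\times\reals^n)\Bigr),
\end{equation*}
\begin{equation*}
Su=\Bigl((\partial^\nu u)_{|\nu|\leq k},\;\Bigl(\tfrac{\partial^\nu u(x)-\partial^\nu u(y)}{|x-y|^{n/p+\theta}}\Bigr)_{|\nu|=k}\Bigr),
\end{equation*}
which is again a linear isometry when the product is endowed with the obvious sum norm. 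The closedness argument proceeds as before on the first factor to identify the limit $u\in W^{k,p}(\reals^n)$, and for the slope components one extracts from $\partial^\nu u_m\to\partial^\nu u$ in $L^p$ a subsequence converging a.e., so that the difference quotients converge a.e.\ to $(\partial^\nu u(x)-\partial^\nu u(y))/|x-y|^{n/p+\theta}$; matching this with the $L^p$ limit of the quotients along the same subsequence identifies the limit. This is the step I expect to be the main obstacle, because one must be careful that the $L^p(\reals^n\times\reals^n)$ limit really coincides with the quotient built from $u$, rather than merely agree distributionally.

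Finally, for $s<0$ the definition $W^{s,p}(\reals^n)=(W^{-s,p'}(\reals^n))^*$ (with $1/p+1/p'=1$) reduces matters to the previous cases: since $1<p<\infty$ iff $1<p'<\infty$, and $-s>0$, the space $W^{-s,p'}(\reals^n)$ is reflexive by what has already been shown; by Theorem \ref{thmfallnormedreflexive1}(2) its dual $W^{s,p}(\reals^n)$ is reflexive as well. This completes the proof in all three regimes.
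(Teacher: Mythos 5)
Your proof is essentially the paper's argument: for $s\geq 0$ both exhibit $W^{s,p}(\reals^n)$ as an isometric copy of a closed linear subspace of a finite product of reflexive spaces (the paper embeds into $W^{k,p}(\reals^n)\times[L^p(\reals^n\times\reals^n)]^{\times r}$ via $u\mapsto(u,(\partial^\nu u(x)-\partial^\nu u(y))/|x-y|^{n/p+\theta})$; you just unfold the $W^{k,p}$ factor further into $L^p$'s, which is the standard integer-order argument the paper cites to its references), and for $s<0$ both invoke the fact that a normed space is reflexive iff its dual is. The one place you work harder than the paper is closedness of the image: the paper notes that an isometric image of the Banach space $W^{s,p}$ inside a Banach space is automatically complete, hence closed, while you verify closedness by hand (distributional limits for the integer derivatives, a.e.\ subsequence extraction to identify the $L^p(\reals^n\times\reals^n)$ limit of the difference quotients); your verification is correct — the worry you flag about matching the $L^p$ limit with the quotient built from $u$ is resolved precisely by the subsequence device you describe — and amounts to re-proving completeness of $W^{s,p}(\reals^n)$, so there is no gap.
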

\begin{proof}
See the proof of Theorem \ref{thmfallreflexivity1}. Also see
\cite{36}, Section 2.6, Page 198.
\end{proof}
Note that by definition for all $s>0$ we have
$[W^{s,p}(\reals^n)]^*=W^{-s,p'}(\reals^n)$. Now since
$W^{s,p}(\reals^n)$ is reflexive, $[W^{-s,p'}(\reals^n)]^*$ is
isometrically isomorphic to $W^{s,p}(\reals^n)$ and so they can
be identified with one another. Thus for all $s\in \reals$ and
$1<p<\infty$ we may write
\begin{equation*}
[W^{s,p}(\reals^n)]^*=W^{-s,p'}(\reals^n)
\end{equation*}
Let $s\geq 0$ and $p\in (1,\infty)$. Every function $\varphi \in
C_c^\infty (\reals^n)$ defines a linear functional $L_\varphi:
W^{s,p}(\reals^n)\rightarrow \reals$ defined by
\begin{equation*}
L_\varphi (u)=\int_{\reals^n}u\varphi dx
\end{equation*}
$L_\varphi$ is continuous because by Holder's inequality
\begin{equation*}
|L_\varphi(u)|=|\int_{\reals^n}u\varphi dx|\leq
\|u\|_{L^p(\reals^n)}\|\varphi\|_{L^{p'}(\reals^n)}\leq
\|\varphi\|_{L^{p'}(\reals^n)}\|u\|_{W^{s,p}(\reals^n)}
\end{equation*}
Also the map $L: C_c^\infty (\reals^n)\rightarrow
W^{s,p}(\reals^n)$ which maps $\varphi$ to $L_\varphi$ is
injective because
\begin{align*}
L_\varphi=L_\psi\rightarrow \forall\,u\in W^{s,p}(\reals^n)\quad
\int_{\reals^n}u(\varphi-\psi)dx=0\rightarrow
\int_{\reals^n}|\varphi-\psi|^2 dx=0\rightarrow \varphi=\psi
\end{align*}
Thus we may identify $\varphi$ with $L_\varphi$ and consider
$C_c^{\infty}(\reals^n)$ as a subspace of $W^{-s,p'}(\reals^n)$.
\begin{theorem}\lab{winter82}
For all $s>0$ and $p\in (1,\infty)$, $C_c^{\infty}(\reals^n)$ is
dense in $W^{-s,p'}(\reals^n)$.
\end{theorem}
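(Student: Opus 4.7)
The plan is a standard Hahn--Banach plus reflexivity argument. The key ingredients, all established earlier in the excerpt, are: (i) the identification $[W^{s,p}(\reals^n)]^{*}=W^{-s,p'}(\reals^n)$; (ii) reflexivity of $W^{s,p}(\reals^n)$ (Theorem~\ref{winter81}), which gives $[W^{-s,p'}(\reals^n)]^{*}\cong W^{s,p}(\reals^n)$ isometrically via the canonical map $u\mapsto J_u$, where $J_u(T)=T(u)$; and (iii) the embedding $\varphi\mapsto L_\varphi$ of $C_c^\infty(\reals^n)$ into $W^{-s,p'}(\reals^n)$ described just before the theorem.

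First I would let $V$ denote the closure of (the image of) $C_c^\infty(\reals^n)$ in $W^{-s,p'}(\reals^n)$, and aim to show $V=W^{-s,p'}(\reals^n)$. Assume for contradiction that $V$ is a proper closed subspace; then by the Hahn--Banach theorem there exists a nonzero continuous linear functional $\Phi\in[W^{-s,p'}(\reals^n)]^{*}$ that vanishes identically on $V$, hence on every $L_\varphi$ with $\varphi\in C_c^\infty(\reals^n)$.

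Next I would invoke reflexivity: $\Phi=J_u$ for a unique $u\in W^{s,p}(\reals^n)$, and $u\neq 0$ since $\Phi\neq 0$. Unpacking the definitions,
\[
0=\Phi(L_\varphi)=J_u(L_\varphi)=L_\varphi(u)=\int_{\reals^n}u\,\varphi\,dx
\qquad\text{for every }\varphi\in C_c^\infty(\reals^n).
\]
Since $W^{s,p}(\reals^n)\hookrightarrow L^p(\reals^n)\subseteq L^1_{\mathrm{loc}}(\reals^n)$, the function $u$ defines the distribution $u_u\in D'(\reals^n)$ via the usual integral pairing (Remark~\ref{winter61}), and the displayed identity says exactly that $u_u=0$ in $D'(\reals^n)$. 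The injectivity of the map $L^1_{\mathrm{loc}}(\reals^n)\to D'(\reals^n)$ (which follows from the fundamental lemma of the calculus of variations, equivalently from $L^1_{\mathrm{loc}}=\mathrm{Func}_{\mathrm{reg}}$ combined with the standard DuBois--Reymond argument) then forces $u=0$ a.e., contradicting $u\neq 0$ in $W^{s,p}(\reals^n)$.

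The argument is short and I do not expect a real obstacle; the one point worth being explicit about is the bookkeeping that identifies $\Phi(L_\varphi)$ with $\int u\varphi\,dx$ under the two successive dualities, and then confirming that the vanishing of this integral for all $\varphi\in C_c^\infty(\reals^n)$ actually implies $u=0$ as an element of $W^{s,p}(\reals^n)$ and not merely as a distribution. Both reduce to facts already in the excerpt: the explicit formula $L_\varphi(u)=\int u\varphi\,dx$ given just before the theorem, and the embedding $W^{s,p}(\reals^n)\hookrightarrow L^p(\reals^n)\hookrightarrow L^1_{\mathrm{loc}}(\reals^n)\hookrightarrow D'(\reals^n)$ recalled in Remark~\ref{winter77}.
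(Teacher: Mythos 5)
Your proof is correct and is exactly the standard reflexivity-plus-Hahn--Banach argument; the paper's proof simply cites the same argument (from \cite{32}, given there for density of $L^{p'}$ in $W^{-m,p'}$) and observes that it carries over verbatim. You have written it out explicitly, including the bookkeeping step $\Phi(L_\varphi)=\int u\varphi\,dx$, but there is no difference in method.
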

\begin{proof}
The proof given in Page 65 of \cite{32} for the density of $L^{p'}$ in the integer order Sobolev
space $W^{-m,p'}$, which is based on reflexivity of Sobolev spaces, works
equally well for establishing the density of $C_c^\infty(\reals^n)$ in $W^{-s,p'}(\reals^n)$.
\end{proof}
\begin{remark}\lab{winter83}
As a consequence of the above theorems, for all $s\in \reals$ and
$p\in (1,\infty)$, $W^{s,p}(\reals^n)$ can be considered as a
subspace of $D'(\reals^n)$.
%\end{itemizeXALI}
See Theorem \ref{thmfallinjectiveadjoint1} and the discussion
thereafter for further insights. Also see Remark \ref{winter99}.
\end{remark}

Next we list several definitions pertinent to Sobolev spaces on
open subsets of $\reals^n$.
\begin{definition}\lab{winter84}
Let $\Omega$ be a nonempty open set in $\reals^n$. Let $s\in
\mathbb{R}$ and $p\in (1,\infty)$.
\begin{enumerateXALI}
\item
\begin{itemize}
\item If $s=k\in \mathbb{N}_0$,
\begin{equation*}
W^{k,p}(\Omega)=\{u\in L^p (\Omega):
\|u\|_{W^{k,p}(\Omega)}:=\sum_{|\nu|\leq
k}\|\partial^{\nu}u\|_{L^p(\Omega)}<\infty\}
\end{equation*}
\item If $s=\theta\in(0,1)$,
\begin{equation*}
W^{\theta,p}(\Omega)=\{u\in L^p (\Omega):
 |u|_{W^{\theta,p}(\Omega)}:=\big(\int\int_{\Omega\times
\Omega}\frac{|u(x)-u(y)|^p}{|x-y|^{n+\theta p}}dx
dy\big)^{\frac{1}{p}} <\infty\}
\end{equation*}
\item If $s=k+\theta,\, k\in \mathbb{N}_0,\, \theta\in(0,1)$,
\begin{equation*}
W^{s,p}(\Omega)=\{u\in
W^{k,p}(\Omega):\|u\|_{W^{s,p}(\Omega)}:=\|u\|_{W^{k,p}(\Omega)}+\sum_{|\nu|=k}
|\partial^{\nu}u|_{W^{\theta,p}(\Omega)}<\infty\}
\end{equation*}
\item If $s<0$,
\begin{equation*}
W^{s,p}(\Omega)=(W^{-s,p'}_{0}(\Omega))^{*} \quad
(\frac{1}{p}+\frac{1}{p'}=1).
\end{equation*}
where for all $e\geq 0$ and $1<q<\infty$, $W^{e,q}_0(\Omega)$ is
defined as the closure of $C_c^{\infty}(\Omega)$ in
$W^{e,q}(\Omega)$.
\end{itemize}
\item $W^{s,p}(\bar{\Omega})$ is defined as the restriction of
$W^{s,p}(\mathbb{R}^n)$ to $\Omega$. That is,
$W^{s,p}(\bar{\Omega})$ is the collection of all $u\in D'(\Omega)$
such that there is a $v\in W^{s,p}(\reals^n)$ with
$v|_{\Omega}=u$. Here $v|_{\Omega}$ should be interpreted as the
restriction of a distribution in $D'(\reals^n)$ to a distribution
in $D'(\Omega)$. $W^{s,p}(\bar{\Omega})$ is equipped with the
following norm:
\begin{equation*}
\|u\|_{W^{s,p}(\bar{\Omega})}=\inf_{v\in W^{s,p}(\mathbb{R}^n),
v|_{\Omega}=u}\|v\|_{W^{s,p}(\mathbb{R}^n)}.
\end{equation*}
\item
\begin{equation*}
\tilde{W}^{s,p}(\bar{\Omega})=\{u\in W^{s,p}(\reals^n):
\textrm{supp}\, u\subseteq \bar{\Omega}\}
\end{equation*}
$\tilde{W}^{s,p}(\bar{\Omega})$ is equipped with the norm
$\|u\|_{\tilde{W}^{s,p}(\bar{\Omega})}=\|u\|_{W^{s,p}(\reals^n)}$.
\item
\begin{equation}\lab{eqtildefall111}
\tilde{W}^{s,p}(\Omega)=\{u=v|_\Omega, v\in
\tilde{W}^{s,p}(\bar{\Omega})\}
\end{equation}
Again $v|_{\Omega}$ should be interpreted as the restriction of an
element in $D'(\reals^n)$ to $D'(\Omega)$. So
$\tilde{W}^{s,p}(\Omega)$ is a subspace of $D'(\Omega)$. This
space is equipped with the norm $\|u\|_{\tilde{W}^{s,p}}=\inf
 \|v\|_{W^{s,p}(\reals^n)}$ where the infimum is taken over all
$v$ that satisfy (\ref{eqtildefall111}). Note that two elements
$v_1$ and $v_2$ of $\tilde{W}^{s,p}(\bar{\Omega})$ restrict to
the same element in $D'(\Omega)$ if and only if $\textrm{supp}
(v_1-v_2)\subseteq \partial \Omega$. Therefore
\begin{equation*}
\tilde{W}^{s,p}(\Omega)=\frac{\tilde{W}^{s,p}(\bar{\Omega})}{\{v\in
W^{s,p}(\reals^n): \textrm{supp}\,v\subseteq \partial \Omega \}}
\end{equation*}
\item For $s\geq 0$ we define
\begin{equation*}
W^{s,p}_{00}(\Omega)=\{u\in W^{s,p}(\Omega):
\textrm{ext}^0_{\Omega,\reals^n} u\in W^{s,p}(\reals^n)\}
\end{equation*}
We equip this space with the norm
\begin{equation*}
\|u\|_{W^{s,p}_{00}(\Omega)}:=\|\textrm{ext}^0_{\Omega,\reals^n}
u\|_{W^{s,p}(\reals^n)}
\end{equation*}
Note that previously we defined the operator
$\textrm{ext}^0_{\Omega,\reals^n}$ only for distributions with
compact support and functions; this is why the values of $s$ are
restricted to be nonnegative in this definition.
\item For all $K\in \mathcal{K}(\Omega)$ we define
\begin{equation*}
W^{s,p}_K(\Omega)=\{u\in W^{s,p}(\Omega):
\textrm{supp}\,u\subseteq K\}
\end{equation*}
with $\|u\|_{W^{s,p}_K(\Omega)}:=\|u\|_{W^{s,p}(\Omega)}$.
\item
\begin{equation*}
W^{s,p}_{comp}(\Omega)=\bigcup_{K\in\mathcal{K}(\Omega)}W^{s,p}_K(\Omega)
\end{equation*}
This space is normally equipped with the inductive limit topology with respect to the family $\{W^{s,p}_K(\Omega)\}_{K\in \mathcal{K}(\Omega)}$. \textbf{However, in these notes we always consider $W^{s,p}_{comp}(\Omega)$ as a normed space equipped with the norm induced from $W^{s,p}(\Omega)$.}
\end{enumerateXALI}
\end{definition}

\begin{remark}\lab{winter85}
Each of these definitions has its advantages and disadvantages.
For example, the way we defined the spaces $W^{s,p}(\Omega)$ is
well suited for using duality arguments while proving the usual
embedding theorems for these spaces on an arbitrary open set
$\Omega$ is not trivial; on the other hand, duality arguments do
not work as well for spaces $W^{s,p}(\bar{\Omega})$ but the
embedding results for these spaces on an arbitrary open set
$\Omega$ automatically follow from the corresponding results on
$\reals^n$. Various authors adopt different definitions for
Sobolev spaces on domains based on the
applications that they are interested in. Unfortunately the
notations used in the literature for the various spaces
introduced above are not uniform. First note that it is a direct
consequence of Remark \ref{remfallsobolevbesov1} and the
definitions of $B^s_{p,q}(\Omega)$, $H^s_p(\Omega)$ and
$F^s_{p,q}(\Omega)$ in \cite{36} Page 310 and \cite{Trie2002} that
\begin{equation*}
W^{s,p}(\bar{\Omega})=
\begin{cases}
F^s_{p,2}(\Omega)=H^s_p(\Omega)\quad \textrm{if $s\in
\mathbb{Z}$}\\
B^s_{p,p}(\Omega)\quad \textrm{if $s\not\in \mathbb{Z}$}
\end{cases}
\end{equation*}
With this in mind, we have the following table which displays the
connection between the notations used in this work with the notations in a
number of well known references.
\begin{table}[H]
\centering
%\begin{center}
\begin{tabular}{| >{\centering\arraybackslash}m{1in} | >{\centering\arraybackslash}m{1in} | >{\centering\arraybackslash}m{1in} | >{\centering\arraybackslash}m{1in} |>{\centering\arraybackslash}m{1in} |@{}m{0cm}@{}}
%\begin{tabular}{|l|l|l|l|l|}
\hline this manuscript& Triebel \cite{36} & Triebel \cite{Trie2002} &
Grisvard \cite{Gris85}
 & Bhattacharyya \cite{33}&\\
%\hline $W^{s,p}(\reals^n)$ & \quad $W^{s,p}(\reals^n)$   & \quad
%$W^{s,p}(\reals^n)$ &\quad $W^s_p(\reals^n)$ & \quad
%$W^{s,p}(\reals^n)$&
%\\[.5cm]
\hline $W^{s,p}(\Omega)$ &   &  & \quad $W^s_p(\Omega)$& \quad
$W^{s,p}(\Omega)$&
\\[.5cm]
 \hline $W^{s,p}(\bar{\Omega})$ & \quad $W^s_p(\Omega)$   & \quad
$W^s_p(\Omega)$ &\quad $W^s_p(\bar{\Omega})$ & \quad
$W^{s,p}(\bar{\Omega})$&
\\[.5cm]
\hline $\tilde{W}^{s,p}(\bar{\Omega})$ & \quad
$\tilde{W}^s_p(\Omega)$ & \quad $\tilde{W}^s_p(\bar{\Omega})$ && &
\\[.5cm]
\hline $\tilde{W}^{s,p}(\Omega)$ & & \quad
$\tilde{W}^s_p(\Omega)$ && &
\\[.5cm]
\hline $W^{s,p}_{00}(\Omega)$ & & &$\tilde{W}^s_p(\Omega)$&
$W^{s,p}_{00}(\Omega)$&
\\[.5cm]
\hline
\end{tabular}
%\end{center}%\label{table:cmc}
%\caption{Solvability of the Lichnerowicz equation on compact
%manifolds (CMC case).} %\label{table:cmc}
\end{table}
\end{remark}

\clearpage

\begin{remark}\lab{remfallequivnorm120}
\leavevmode
\begin{itemizeXALI}
\item Note that
\begin{align*}
\|u\|_{W^{k,p}(\Omega)}&+\sum_{|\nu|=k}|\partial^\nu u|_{W^{\theta,p}(\Omega)}\leq \|u\|_{W^{k,p}(\Omega)}+\sum_{|\nu|=k}\|\partial^\nu u\|_{W^{\theta,p}(\Omega)}\\
&=\|u\|_{W^{k,p}(\Omega)}+\sum_{|\nu|=k}\bigg(\|\partial^\nu u\|_{L^p(\Omega)}+|\partial^\nu u|_{W^{\theta,p}(\Omega)}\bigg)\\
&\preceq \|u\|_{W^{k,p}(\Omega)}+\sum_{|\nu|=k}|\partial^\nu u|_{W^{\theta,p}(\Omega)}\qquad (\textrm{since $\sum_{|\nu|=k}\|\partial^\nu u\|_{L^p(\Omega)}\leq \|u\|_{W^{k,p}(\Omega)}$})
\end{align*}
Therefore the following is an equivalent norm on $W^{s,p}(\Omega)$
\begin{equation*}
\|u\|_{W^{s,p}(\Omega)}:= \|u\|_{W^{k,p}(\Omega)}+\sum_{|\alpha|=k}\|\partial^\alpha u\|_{W^{\theta,p}(\Omega)}
\end{equation*}
\item For $p\in (1,\infty)$ and $a,b>0$ we have
$(a^p+b^p)^{\frac{1}{p}}\simeq a+b$; indeed,
\begin{equation*}
a^p+b^p\leq (a+b)^p\leq (2\max \{a,b\})^p\leq 2^p(a^p+b^p)
\end{equation*}
More generally, if $a_1,\cdots, a_m$ are nonnegative numbers, then $(a_1^p+\cdots+a_m^p)^{\frac{1}{p}}\simeq a_1+\cdots+a_m$. Therefore for any nonempty open set $\Omega$ in $\reals^n$, $s>0$,
the following expressions are both equivalent to the original norm on
$W^{s,p}(\Omega)$
\begin{align*}
&\|u\|_{W^{s,p}(\Omega)}:=\big[\|u\|^p_{W^{k,p}(\Omega)}+\sum_{|\nu|=k}
|\partial^{\nu}u|^p_{W^{\theta,p}(\Omega)}\big]^{\frac{1}{p}}\\
&\|u\|_{W^{s,p}(\Omega)}:=\big[\|u\|^p_{W^{k,p}(\Omega)}+\sum_{|\nu|=k}
\|\partial^{\nu}u\|^p_{W^{\theta,p}(\Omega)}\big]^{\frac{1}{p}}
\end{align*}
where $s=k+\theta,\, k\in \mathbb{N}_0,\, \theta\in(0,1)$.
\end{itemizeXALI}
\end{remark}
\subsection{Properties of Sobolev Spaces on the Whole Space $\reals^n$}

\begin{theorem}[Embedding Theorem I, \cite{36}, Section 2.8.1]
\lab{winter86} Suppose $1< p\leq q<\infty$ and $-\infty< t\leq
s<\infty$ satisfy $s-\frac{n}{p}\geq t-\frac{n}{q}$. Then
$W^{s,p}(\reals^n)\hookrightarrow W^{t,q}(\reals^n)$. In
particular, $W^{s,p}(\reals^n)\hookrightarrow W^{t,p}(\reals^n)$.
\end{theorem}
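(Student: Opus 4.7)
The plan is to deduce the embedding from the classical embedding theorems for the Besov and Triebel--Lizorkin scales on $\reals^n$, via the identifications recorded in Remark~\ref{remfallsobolevbesov1}:
\[
W^{m,p}(\reals^n) = F^m_{p,2}(\reals^n) \quad (m\in\mathbb{Z}), \qquad W^{s,p}(\reals^n) = B^s_{p,p}(\reals^n) \quad (s\notin\mathbb{Z}).
\]
Once the problem is phrased in these scales, the result becomes an instance of the standard embeddings $A^{s_0}_{p_0,q_0}\hookrightarrow A^{s_1}_{p_1,q_1}$ valid whenever $p_0\leq p_1$ and $s_0-n/p_0\geq s_1-n/p_1$ (for either $A\in\{B,F\}$, with the appropriate, usually automatic, monotonicity in the fine index), together with the scale-comparison inclusions $F^{\sigma}_{p,\min(p,2)}\hookrightarrow B^{\sigma}_{p,p}\hookrightarrow F^{\sigma}_{p,\max(p,2)}$ at fixed $(\sigma,p)$; all of these are collected in \cite{36}, Section~2.7.

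First I would treat nonnegative $s$ and $t$, splitting according to whether each of $s$ and $t$ is an integer. In each of the four resulting cases, a chain of at most two embeddings -- one Besov or Triebel--Lizorkin embedding in the smoothness/integrability direction, together with at most one $B\leftrightarrow F$ scale comparison at fixed $p$ or $q$ -- delivers $W^{s,p}(\reals^n)\hookrightarrow W^{t,q}(\reals^n)$. When the scale inequality is strict, there is enough slack to interpose an intermediate non-integer exponent $\tau\in(t,s)$ with $s-n/p\geq \tau-n/q\geq t-n/q$ and chain through $W^{\tau,q}(\reals^n)$, which bypasses the integer cases entirely.

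Next, the case where $s$ or $t$ is negative is handled by duality. By Definition~\ref{winter84} and the reflexivity of Theorem~\ref{winter81}, for every $\sigma\in\reals$ and $1<r<\infty$ we have $W^{\sigma,r}(\reals^n)=(W^{-\sigma,r'}(\reals^n))^{*}$, and Theorem~\ref{winter78} together with its negative-order analogue (Theorem~\ref{winter82}) tells us that $C_c^{\infty}(\reals^n)$ is dense in all these spaces. The conditions $p\leq q$ and $s-n/p\geq t-n/q$ are preserved under the substitution $(s,p,t,q)\mapsto(-t,q',-s,p')$, so the already established positive-order embedding $W^{-t,q'}(\reals^n)\hookrightarrow W^{-s,p'}(\reals^n)$ has dense image (its range contains $C_c^{\infty}(\reals^n)$), and Theorem~\ref{thmfallinjectiveadjoint1} converts it into the required dual embedding $W^{s,p}(\reals^n)\hookrightarrow W^{t,q}(\reals^n)$; the mixed case $s\geq 0>t$ factors through $L^q(\reals^n)=W^{0,q}(\reals^n)$.

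The hard part will be the endpoint $s-n/p=t-n/q$: when the differential dimension is preserved exactly, one loses the freedom to perturb the smoothness, and the identification of a Sobolev--Slobodeckij space with a Besov or Triebel--Lizorkin space of a \emph{specific} fine index becomes essential. Verifying that the sharp embedding in \cite{36} covers all four integer/non-integer configurations for the Sobolev--Slobodeckij scale, and that the fine indices produced by Remark~\ref{remfallsobolevbesov1} match the monotonicity demands of those embeddings (possibly after inserting an $F\hookrightarrow B$ or $B\hookrightarrow F$ step), is where the real bookkeeping lies -- there is no deep analytic obstacle, but the case analysis must be done cleanly.
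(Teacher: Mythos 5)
The paper gives no proof of this theorem; it is cited directly from Triebel \cite{36}, Section~2.8.1, so there is nothing in the manuscript to compare your argument against. Your sketch is a reasonable reconstruction of how the cited result is established in the Besov/Triebel--Lizorkin framework, and the tool kit you name (the identifications $W^{m,p}=F^m_{p,2}$, $W^{s,p}=B^s_{p,p}$, the sharp scale embeddings, and duality via Theorem~\ref{thmfallinjectiveadjoint1}) is the right one.

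There is, however, a genuine gap in the mixed case $s\geq 0>t$: factoring through $L^q(\reals^n)=W^{0,q}(\reals^n)$ requires $s-\frac{n}{p}\geq -\frac{n}{q}$, which is \emph{strictly stronger} than the hypothesis $s-\frac{n}{p}\geq t-\frac{n}{q}$ once $t<0$. Concretely, with $n=1$, $s=0.1$, $p=1.1$, $q=10$, and $t=s-\frac{1}{p}+\frac{1}{q}\approx -0.71$, the hypotheses of the theorem hold with equality, yet $s-\frac{1}{p}\approx -0.81<-\frac{1}{q}=-0.1$, so $W^{s,p}(\reals)\not\hookrightarrow L^{q}(\reals)$. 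The fix is to interpose $L^{r}(\reals^n)$ for a well-chosen $r\in[p,q]$: because $t\leq 0\leq s$, $p\leq q$, and $s-\frac{n}{p}\geq t-\frac{n}{q}$, the interval $\bigl[\max\bigl(t-\tfrac{n}{q},-\tfrac{n}{p}\bigr),\ \min\bigl(s-\tfrac{n}{p},-\tfrac{n}{q}\bigr)\bigr]$ is nonempty, and any $r$ with $-\tfrac{n}{r}$ in it gives $W^{s,p}\hookrightarrow L^{r}$ from the nonnegative case and $L^{r}\hookrightarrow W^{t,q}$ from the dual case. Alternatively, since Remark~\ref{remfallsobolevbesov1} identifies $W^{s,p}(\reals^n)$ with a Besov or Triebel--Lizorkin space for \emph{every} real $s$, negative included, you could dispense with duality and the mixed case entirely and run the scale argument uniformly in $s\in\reals$. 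One further imprecision worth flagging: at the endpoint $s-\frac{n}{p}=t-\frac{n}{q}$ with exactly one of $s,t$ an integer, the two auxiliary facts you list (within-scale sharp embedding with fine-index monotonicity, plus the $F^{\sigma}_{p,\min(p,2)}\hookrightarrow B^{\sigma}_{p,p}\hookrightarrow F^{\sigma}_{p,\max(p,2)}$ comparison) do not close the case $p<q<2$; what does is either a Jawerth--Franke embedding or the fine-index-free endpoint $F\hookrightarrow F$ embedding together with $F^{\sigma}_{r,r}=B^{\sigma}_{r,r}$. Both are in \cite{36}, Section~2.7, so your overall assessment is sound, but the specific lemmas you quote do not quite cover this subcase as written.
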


\begin{theorem}[Multiplication by smooth functions, \cite{Trie92}, Page
203] \lab{winter87} Let $s\in \reals$, $1<p<\infty$, and
$\varphi\in BC^\infty(\reals^n)$. Then the linear map
\begin{equation*}
m_\varphi: W^{s,p}(\reals^n)\rightarrow W^{s,p}(\reals^n),\qquad
u\mapsto \varphi u
\end{equation*}
is well-defined and bounded.
\end{theorem}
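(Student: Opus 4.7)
The plan is to prove the theorem in four stages, matching the structure of the definition of $W^{s,p}(\reals^n)$: nonnegative integer $s$, then $s \in (0,1)$, then general $s > 0$, then $s < 0$ via duality.

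For $s = k \in \mathbb{N}_0$ the Leibniz rule gives $\partial^\alpha(\varphi u) = \sum_{\beta \leq \alpha}\binom{\alpha}{\beta}\partial^\beta\varphi\,\partial^{\alpha-\beta}u$, and since every $\partial^\beta \varphi$ is bounded, $\|\varphi u\|_{W^{k,p}} \preceq \|\varphi\|_{BC^k}\|u\|_{W^{k,p}}$. For $s = \theta \in (0,1)$ I would use the pointwise identity $\varphi u(x) - \varphi u(y) = \varphi(x)\bigl(u(x)-u(y)\bigr) + u(y)\bigl(\varphi(x)-\varphi(y)\bigr)$, raise to the $p$-th power, and split $|\varphi u|_{W^{\theta,p}(\reals^n)}^p$ into two integrals. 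The first is immediately bounded by $\|\varphi\|_\infty^p |u|_{W^{\theta,p}}^p$. For the second,
\begin{equation*}
\iint \frac{|u(y)|^p|\varphi(x)-\varphi(y)|^p}{|x-y|^{n+\theta p}}\,dx\,dy,
\end{equation*}
I would show that $\int_{\reals^n} |\varphi(x)-\varphi(y)|^p / |x-y|^{n+\theta p}\,dx$ is bounded uniformly in $y$ by splitting at $|x-y|=1$: on $|x-y|<1$ use the Lipschitz bound $|\varphi(x)-\varphi(y)| \leq \|\nabla\varphi\|_\infty |x-y|$ (Theorem \ref{thmfallconvexapostol1}, which applies because $\reals^n$ is convex and $\varphi \in BC^\infty \subseteq BC^{0,1}$), giving an integrand behaving like $|x-y|^{p(1-\theta)-n}$ which is integrable near $0$; on $|x-y|\geq 1$ use $|\varphi(x)-\varphi(y)| \leq 2\|\varphi\|_\infty$, giving an integrand like $|x-y|^{-n-\theta p}$ which is integrable at infinity (Theorem \ref{winter9}). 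This yields $|\varphi u|_{W^{\theta,p}}^p \preceq \|u\|_{W^{\theta,p}}^p$ with an implicit constant depending only on $\|\varphi\|_{BC^{0,1}}$, $\theta$, $p$, $n$.

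For general $s = k + \theta$ with $k \in \mathbb{N}_0$ and $\theta \in (0,1)$, I would combine the two preceding arguments using the equivalent norm $\|u\|_{W^{s,p}} \simeq \|u\|_{W^{k,p}} + \sum_{|\nu|=k}|\partial^\nu u|_{W^{\theta,p}}$ from Remark \ref{remfallequivnorm120}. The Leibniz rule applied to $\partial^\nu(\varphi u)$ expresses it as a linear combination of products $(\partial^\beta \varphi)(\partial^{\nu-\beta}u)$ with $\partial^\beta \varphi \in BC^\infty$; each such product can be bounded in the $W^{\theta,p}$ seminorm using the previously established $s\in(0,1)$ case applied to the function $\partial^{\nu-\beta}u \in W^{\theta,p}(\reals^n)$ (since $|\nu - \beta| \leq k$ and $u \in W^{s,p}$).

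For $s < 0$ I would use duality. By Definition \ref{winter79} and Theorem \ref{winter81}, $W^{s,p}(\reals^n) = [W^{-s,p'}(\reals^n)]^*$. Given $u \in W^{s,p}(\reals^n) \subseteq D'(\reals^n)$, define $\varphi u \in D'(\reals^n)$ by $(\varphi u)(\psi) := u(\varphi\psi)$ for $\psi \in D(\reals^n)$ (Definition \ref{winter62}). Since $-s > 0$, by the cases already treated the map $v \mapsto \varphi v$ is bounded on $W^{-s,p'}(\reals^n)$ with $\|\varphi v\|_{W^{-s,p'}} \leq C\|v\|_{W^{-s,p'}}$; hence for $\psi \in D(\reals^n)$,
\begin{equation*}
|(\varphi u)(\psi)| = |u(\varphi\psi)| \leq \|u\|_{W^{s,p}} \|\varphi\psi\|_{W^{-s,p'}} \leq C\|u\|_{W^{s,p}}\|\psi\|_{W^{-s,p'}}.
\end{equation*}
Using Theorem \ref{winter78} (density of $D(\reals^n)$ in $W^{-s,p'}(\reals^n)$) and Remark \ref{winter35}, this bounded functional on the dense subspace extends uniquely to an element of $[W^{-s,p'}(\reals^n)]^* = W^{s,p}(\reals^n)$ with norm $\leq C\|u\|_{W^{s,p}}$, and agrees with the distributional product $\varphi u$ on $D(\reals^n)$. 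The main technical obstacle is the bookkeeping in the $s \in (0,1)$ estimate—specifically verifying the uniform-in-$y$ bound on the kernel $\int|\varphi(x)-\varphi(y)|^p/|x-y|^{n+\theta p}\,dx$—while the negative-$s$ case requires only care that the distributional definition and the dual-space definition of $\varphi u$ coincide, which follows from density.
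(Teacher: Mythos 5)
Your proof is correct. For this statement the paper itself offers no proof: it simply attributes the result to Triebel \cite{Trie92}, page~203, where the multiplier theorem is established in the Besov/Triebel--Lizorkin framework (recall $W^{s,p}(\reals^n)=F^s_{p,2}(\reals^n)$ for integer $s$ and $B^s_{p,p}(\reals^n)$ for noninteger $s$, Remark~\ref{remfallsobolevbesov1}) using Littlewood--Paley/Fourier-multiplier machinery. What you give instead is a self-contained real-variable argument. The four-stage structure you use---integer case by Leibniz, $(0,1)$ case by the Gagliardo seminorm, general positive $s$ by Leibniz plus the $(0,1)$ case applied to $\partial^{\nu-\beta}u$, negative $s$ by duality---is in fact the same skeleton the paper itself employs for the bounded Lipschitz domain analog (Theorem~\ref{thmfallmultsmooth20}, Steps 1--4), except that the paper delegates its Steps 1 and 2 to external references while you prove them. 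Your $(0,1)$ estimate, splitting the kernel $\int|\varphi(x)-\varphi(y)|^p/|x-y|^{n+\theta p}\,dx$ at $|x-y|=1$ and using the Lipschitz bound on $\reals^n$ (justified by Theorem~\ref{thmfallconvexapostol1} because $\reals^n$ is convex) near the diagonal and the $L^\infty$ bound away from it, is the standard Slobodeckij computation and is correct; the polar-coordinate integrability checks (Theorem~\ref{winter9}) rely precisely on $0<\theta<1$. Your general-$s$ step correctly invokes $\partial^{\nu-\beta}:W^{s,p}(\reals^n)\to W^{\theta,p}(\reals^n)$ via Theorems~\ref{winter86} and~\ref{winter88}, both of which hold unconditionally on $\reals^n$, so you avoid the domain-regularity caveats that force the paper to assume Lipschitz $\partial\Omega$ in the bounded case. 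And your handling of the $s<0$ case---defining $\varphi u$ distributionally, bounding it on $D(\reals^n)$, and extending by density of $D(\reals^n)$ in $W^{-s,p'}(\reals^n)$ (Theorem~\ref{winter78}) via the identification $W^{s,p}(\reals^n)=[W^{-s,p'}(\reals^n)]^*$---is exactly right and correctly navigates the subtlety flagged in Remark~\ref{winter35}, since you start from the distributional product on $D(\reals^n)$ rather than a functional defined a priori on all of $W^{-s,p'}(\reals^n)$. The Fourier-analytic route cited by the paper buys a uniform treatment across the entire Besov/Triebel--Lizorkin scale; your approach buys elementarity and aligns the $\reals^n$ theorem with the proof technique the paper already develops for domains.
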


%\textbf{Notation:} Let $A_i$ and $B_i$ ($i=1,2$) and $C$ be
%sobolev spaces. By writing $A_1 \times A_2 \hookrightarrow B_1
%\times B_2$ we merely mean
% that $A_1 \times A_2 \subseteq B_1 \times B_2$ and if $u\in
%A_1$ and $v\in A_2$, then $\|u\|_{B_1}\|v\|_{B_2}\preceq
%\|u\|_{A_1}\|v\|_{A_2}$. $(A_1\times A_2=\{a_1a_2:a_1\in
%A_1\,,a_2\in A_2\})$

A detailed study of the following multiplication theorems can be
found in \cite{holstbehzadan2015b}.
\begin{theorem}\lab{thm4.6}
Let $s_i, s$ and $1 \leq p, p_i < \infty$ ($i=1,2$) be real
numbers satisfying
\begin{enumerate}[(i)]
\item  $s_i \geq s\geq 0$ %($i=1,2$),
\item  $s\in \mathbb{N}_0$,
\item  $s_i-s\geq n(\dfrac{1}{p_i}-\dfrac{1}{p})$,
\item  $s_1+s_2-s>n(\dfrac{1}{p_1}+\dfrac{1}{p_2}-\dfrac{1}{p})\geq
0$.
\end{enumerate}
where the strictness of the inequalities in items (iii) and (iv)
can be interchanged.\\ If $u\in W^{s_1,p_1}(\mathbb{R}^n)$
and $v\in W^{s_2,p_2}(\mathbb{R}^n)$, then $uv \in
W^{s,p}(\mathbb{R}^n)$ and moreover the pointwise multiplication
of functions is a continuous bilinear map
\begin{equation*}
W^{s_1,p_1}(\mathbb{R}^n)\times
W^{s_2,p_2}(\mathbb{R}^n)\rightarrow W^{s,p}(\mathbb{R}^n).
\end{equation*}
\end{theorem}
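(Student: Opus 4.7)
The plan is to exploit the fact that $s \in \mathbb{N}_0$, so that $\|uv\|_{W^{s,p}(\reals^n)} \simeq \sum_{|\alpha| \leq s} \|\partial^\alpha(uv)\|_{L^p(\reals^n)}$. I would first establish the estimate for $u, v \in C_c^\infty(\reals^n)$ and then extend by density using Theorem~\ref{winter78}, together with standard bilinearity/continuity arguments (since if the estimate $\|uv\|_{W^{s,p}} \preceq \|u\|_{W^{s_1,p_1}}\|v\|_{W^{s_2,p_2}}$ holds on a dense subspace, bilinearity plus completeness yield a unique continuous bilinear extension agreeing with pointwise multiplication).

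The core analytic step is the Leibniz expansion
\begin{equation*}
\partial^\alpha(uv) = \sum_{\beta \leq \alpha}\binom{\alpha}{\beta}\,\partial^\beta u\;\partial^{\alpha-\beta}v,
\end{equation*}
valid for $|\alpha| \leq s$ since $u, v$ are smooth. For each term I would use H\"older's inequality with a pair of exponents $r_1 = r_1(\beta), r_2 = r_2(\beta)$ depending on $\beta$, with $\tfrac{1}{r_1}+\tfrac{1}{r_2}=\tfrac{1}{p}$, to obtain
\begin{equation*}
\|\partial^\beta u\,\partial^{\alpha-\beta}v\|_{L^p} \leq \|\partial^\beta u\|_{L^{r_1}}\,\|\partial^{\alpha-\beta}v\|_{L^{r_2}},
\end{equation*}
and then apply Theorem~\ref{winter86} to bound each factor by the ambient Sobolev norm: $\|\partial^\beta u\|_{L^{r_1}} \preceq \|u\|_{W^{s_1-|\beta|,p_1}} \leq \|u\|_{W^{s_1,p_1}}$ provided $r_1 \geq p_1$ and $s_1-|\beta|-\tfrac{n}{p_1} \geq -\tfrac{n}{r_1}$, and analogously for $v$.

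The heart of the proof, and the main obstacle, is producing a valid choice of $r_1, r_2$. Setting $a_1 := s_1-|\beta|$ and $a_2 := s_2-|\alpha|+|\beta|$ (both $\geq 0$ by (i)), the constraints read
\begin{equation*}
\tfrac{1}{r_1}+\tfrac{1}{r_2} = \tfrac{1}{p},\qquad 0 \leq \tfrac{1}{r_i}\leq \tfrac{1}{p_i},\qquad \tfrac{1}{r_i} \geq \tfrac{1}{p_i}-\tfrac{a_i}{n}.
\end{equation*}
Adding the last inequalities gives the necessary condition $\tfrac{1}{p} \geq \tfrac{1}{p_1}+\tfrac{1}{p_2}-\tfrac{a_1+a_2}{n}$, which follows from hypothesis (iv) since $|\alpha| \leq s$ and $a_1+a_2 = s_1+s_2-|\alpha|$; the strict inequality in (iv) provides the slack needed to find admissible $r_i$ in the boundary subcases. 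I would make the explicit choice $\tfrac{1}{r_i} = \tfrac{1}{p_i} - \tfrac{a_i}{n} + \varepsilon_i$ for appropriately chosen $\varepsilon_i \geq 0$ with $\varepsilon_1+\varepsilon_2 = \tfrac{1}{p}-\tfrac{1}{p_1}-\tfrac{1}{p_2}+\tfrac{a_1+a_2}{n}$, adjusting $\varepsilon_i$ so that $\tfrac{1}{r_i} \in (0, \tfrac{1}{p_i}]$. The case analysis splits according to whether $a_i \leq \tfrac{n}{p_i}$ (where a finite $r_i$ suffices) or $a_i > \tfrac{n}{p_i}$ (where $W^{a_i,p_i}\hookrightarrow L^\infty$ and one should take $r_i=\infty$ on that factor, using instead $\|\partial^\beta u\|_{L^\infty}\preceq \|u\|_{W^{s_1,p_1}}$; condition (iii) applied to the other index guarantees the complementary embedding up to $L^{r_j}$ with $r_j = p$). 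The interchangeable-strictness clause is handled by noting that if (iii) is strict while (iv) is non-strict, one can shift a small amount of regularity between the two factors.

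Collecting: $\|\partial^\alpha(uv)\|_{L^p} \preceq \|u\|_{W^{s_1,p_1}}\|v\|_{W^{s_2,p_2}}$ for every $|\alpha|\leq s$, hence $\|uv\|_{W^{s,p}} \preceq \|u\|_{W^{s_1,p_1}}\|v\|_{W^{s_2,p_2}}$. The constant depends only on $n, s, s_i, p, p_i$, so bilinear continuity and the density extension complete the argument.
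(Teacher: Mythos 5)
The paper itself gives no proof of Theorem~\ref{thm4.6}; it defers to the reference~\cite{holstbehzadan2015b}, so there is no ``paper proof'' to compare against line by line. That said, your Leibniz--H\"older--Sobolev-embedding strategy is exactly the standard route for integer-order multiplication estimates, and your reduction to the existence of admissible auxiliary exponents $r_1,r_2$ is the right skeleton: the summed constraint $\max(0,\tfrac{1}{p_1}-\tfrac{a_1}{n})+\max(0,\tfrac{1}{p_2}-\tfrac{a_2}{n})\leq \tfrac1p\leq \tfrac{1}{p_1}+\tfrac{1}{p_2}$ is implied by (iii) and (iv), and a convexity argument then yields a valid $(r_1,r_2)$ for each fixed $(\alpha,\beta)$.

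There is, however, a concrete misstep in your case analysis. Nothing in the hypotheses forces $p_i\leq p$ (unlike Theorem~\ref{thm4.1}, where $p_i\leq p$ is explicitly assumed). In the subcase $a_1>\tfrac{n}{p_1}$ you prescribe $r_1=\infty$ and $r_2=p$, asserting that (iii) gives $W^{a_2,p_2}(\reals^n)\hookrightarrow L^p(\reals^n)$. But on $\reals^n$ that embedding requires $p_2\leq p$ in addition to $a_2\geq n(\tfrac1{p_2}-\tfrac1p)$ (Theorem~\ref{winter86}). If $p_2>p$ the embedding fails outright, e.g.\ with $n=2$, $p=p_1=2$, $p_2=4$, $s=0$, $s_1=2$, $s_2=0$: all of (i)--(iv) hold, $a_1=2>n/p_1$, yet $L^4(\reals^2)\not\hookrightarrow L^2(\reals^2)$. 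The fix is to abandon the rigid $r_2=p$ assignment and instead pick $\tfrac1{r_2}\in[\max(0,\tfrac1{p_2}-\tfrac{a_2}{n}),\tfrac1{p_2}]$ and $\tfrac1{r_1}=\tfrac1p-\tfrac1{r_2}$; your own aggregate inequality already guarantees this interval meets $[\max(0,\tfrac1{p_1}-\tfrac{a_1}{n}),\tfrac1{p_1}]$. You should also address two smaller technical points: (a) Theorem~\ref{winter86} as quoted requires $1<p$, whereas your theorem admits $p_i=1$, so you need a version of the Sobolev embedding valid down to $p=1$; and (b) when the choice forces $r_i=\infty$ exactly, you must verify $a_ip_i>n$ strictly (so Theorem~\ref{thm3.3} applies), which is where the strict inequality in (iv) (or, in the interchanged variant, in (iii)) is genuinely used --- at critical equality $a_ip_i=n$ the $L^\infty$ embedding fails and one must keep $r_i$ finite.
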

\begin{theorem}[Multiplication theorem for Sobolev spaces on the whole space, nonnegative exponents]\lab{thm4.1}
Assume $s_i,s$ and $1 \leq p_i \leq p< \infty$ ($i=1,2$) are real
numbers satisfying
\begin{enumerate}[(i)]
\item  $s_i \geq s$ %($i=1,2$),
\item  $s\geq 0$,
\item  $s_i-s\geq n(\dfrac{1}{p_i}-\dfrac{1}{p})$,
\item  $s_1+s_2-s>n(\dfrac{1}{p_1}+\dfrac{1}{p_2}-\dfrac{1}{p})$.
\end{enumerate}
If $u\in W^{s_1,p_1}(\mathbb{R}^n)$ and $v\in
W^{s_2,p_2}(\mathbb{R}^n)$, then $uv \in W^{s,p}(\mathbb{R}^n)$
and moreover the pointwise multiplication of functions is a
continuous bilinear map
\begin{equation*}
W^{s_1,p_1}(\mathbb{R}^n)\times
W^{s_2,p_2}(\mathbb{R}^n)\rightarrow W^{s,p}(\mathbb{R}^n).
\end{equation*}
\end{theorem}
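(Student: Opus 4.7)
The plan is to write $s = k + \theta$ with $k \in \mathbb{N}_0$ and $\theta \in [0,1)$ and, invoking Remark \ref{remfallequivnorm120}, reduce the bound on $\|uv\|_{W^{s,p}(\mathbb{R}^n)}$ to two pieces: the integer-order norm $\|uv\|_{W^{k,p}(\mathbb{R}^n)}$ and, if $\theta > 0$, the Slobodeckij seminorms $|\partial^\alpha(uv)|_{W^{\theta,p}(\mathbb{R}^n)}$ for $|\alpha| = k$. The first piece is exactly the content of Theorem \ref{thm4.6} with integer-order target $k$, whose hypotheses follow from (i)--(iv) of Theorem \ref{thm4.1} by replacing $s$ with $k$ (and noting that (iv) remains strict and (iii) only relaxes). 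Thus the new content is entirely in the fractional piece.

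For the fractional seminorms, first apply the Leibniz rule
\[
\partial^\alpha(uv) = \sum_{\beta+\gamma=\alpha} \binom{\alpha}{\beta}\, \partial^\beta u \cdot \partial^\gamma v,
\]
so it suffices to bound $|f g|_{W^{\theta,p}(\mathbb{R}^n)}$ for $f = \partial^\beta u$ and $g = \partial^\gamma v$. The identity $f(x)g(x) - f(y)g(y) = f(x)(g(x)-g(y)) + g(y)(f(x)-f(y))$ together with Hölder's inequality applied in the $dx$-integration (using the factorization $1/p = 1/a + 1/b$) yields the model estimate
\[
|fg|_{W^{\theta,p}(\mathbb{R}^n)} \;\lesssim\; \|f\|_{L^{a}(\mathbb{R}^n)}\, |g|_{W^{\theta,b}(\mathbb{R}^n)} \;+\; \|g\|_{L^{a'}(\mathbb{R}^n)}\, |f|_{W^{\theta,b'}(\mathbb{R}^n)},
\]
so the problem reduces to choosing intermediate exponents $(a,b)$ and $(a',b')$ compatible with the Sobolev embedding Theorem \ref{winter86}. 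Concretely, for the first summand one asks that $W^{s_1-|\beta|,p_1}(\mathbb{R}^n) \hookrightarrow L^{a}(\mathbb{R}^n)$ and $W^{s_2-|\gamma|,p_2}(\mathbb{R}^n) \hookrightarrow W^{\theta,b}(\mathbb{R}^n)$, which Theorem \ref{winter86} ensures provided $s_1-|\beta| \geq n(1/p_1 - 1/a)$ and $s_2-|\gamma|-\theta \geq n(1/p_2 - 1/b)$. Summing these and using $|\beta|+|\gamma|=k$ yields exactly
\[
s_1 + s_2 - s \;\geq\; n\!\left(\tfrac{1}{p_1} + \tfrac{1}{p_2} - \tfrac{1}{p}\right),
\]
which is hypothesis (iv); the strictness of (iv) produces the open slack needed to actually pick admissible $(a,b)$, and the symmetric summand is handled by swapping the roles of $u$ and $v$.

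The main obstacle will be the simultaneous bookkeeping: for every split $(\beta,\gamma)$ with $|\beta|+|\gamma|=k$ one must exhibit intermediate exponents $(a,b)$ and $(a',b')$ with $1/a+1/b = 1/a'+1/b' = 1/p$ that make both embedding inequalities hold at the same time and that lie in $[p, \infty]$ (so the Hölder split is meaningful). Here condition (iii) is used to keep each of the factorwise embeddings feasible, the assumption $p_i \leq p$ guarantees the window of admissible $b$ is nonempty, and the strict inequality (iv) gives the $\varepsilon$-room. Finally, edge cases in which one factor must be taken in $L^{\infty}$ or $W^{\theta,\infty}$ (which occur when $s_i - |\beta| > n/p_i$ or when $\theta = 0$) are handled by the $L^{\infty}$-endpoint in Theorem \ref{winter86} combined with Theorem \ref{winter87} for multiplication by bounded smooth factors, after approximating in $W^{s_i,p_i}(\mathbb{R}^n)$ by $BC^\infty$ truncations via Theorem \ref{winter78}.
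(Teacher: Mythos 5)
Your outline is correct in its scaffolding (reducing the integer part to Theorem~\ref{thm4.6}, Leibniz on the derivatives of order $k$, and the bookkeeping showing that summing the embedding conditions reproduces hypothesis (iv)), but it breaks at the step you call the ``model estimate.'' Writing $h = y-x$, applying H\"older in $x$ to the first term after the identity split gives
\[
\int\!\!\int \frac{|f(x)|^p\,|g(x)-g(y)|^p}{|x-y|^{n+\theta p}}\,dx\,dy
\;\leq\; \|f\|_{L^a}^p \int_{\mathbb{R}^n} \frac{\|\Delta_h g\|_{L^b}^p}{|h|^{n+\theta p}}\,dh ,
\]
and the last factor is the $p$-th power of the Besov seminorm $|g|_{B^\theta_{b,p}}$, \emph{not} the Slobodeckij seminorm $|g|_{W^{\theta,b}} = |g|_{B^\theta_{b,b}}$. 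Since $p \leq b$ one has $B^\theta_{b,p} \hookrightarrow B^\theta_{b,b}$ strictly, so $|g|_{B^\theta_{b,p}}$ is the larger seminorm, and the model estimate you assert (with $W^{\theta,b}$ on the right) is genuinely stronger than what H\"older delivers. It is, in fact, a Kato--Ponce-type fractional Leibniz inequality that requires paraproduct or Littlewood--Paley methods, not a one-line consequence of the pointwise identity.

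This matters because the rest of your proof leans on Theorem~\ref{winter86}, which only furnishes $W^{s,p}\hookrightarrow W^{t,q}$ embeddings; what the H\"older step actually demands is $W^{s_2-|\gamma|,p_2}\hookrightarrow B^\theta_{b,p}$, an embedding into a Besov space whose third index $p$ is smaller than $p_2$ --- this needs the Franke--Jawerth/Triebel embedding theory (and it is precisely here that the strict inequality in (iv) is consumed), which the present paper does not develop. The paper itself proves none of these multiplication theorems; it defers to \cite{holstbehzadan2015b}, where the argument runs through the Besov/Triebel--Lizorkin characterizations $W^{s,p}=B^s_{p,p}$ ($s\notin\mathbb{Z}$) or $F^s_{p,2}$ ($s\in\mathbb{Z}$) and the corresponding product estimates in those scales. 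Finally, your edge-case remark invokes an ``$L^\infty$-endpoint of Theorem~\ref{winter86},'' but that theorem as stated requires $q<\infty$; the $L^\infty$-cases have to be run separately (they are in fact the only cases where the elementary identity argument does close, since one may pull $\|f\|_{L^\infty}$ out of the Gagliardo integral directly, without any H\"older split), and invoking Theorems~\ref{winter87} and \ref{winter78} does not substitute for this.
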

\begin{theorem}[Multiplication theorem for Sobolev spaces on the whole space, negative exponents I]\lab{thm4.3}
Assume $s_i, s$ and $1 < p_i \leq p < \infty$ ($i=1,2$) are real
numbers satisfying
\begin{enumerate}[(i)]
\item  $s_i \geq s$, %($i=1,2$),
\item  $\min\{s_1, s_2\}<0$,
\item  $s_i-s\geq n(\dfrac{1}{p_i}-\dfrac{1}{p})$,
\item  $s_1+s_2-s>n(\dfrac{1}{p_1}+\dfrac{1}{p_2}-\dfrac{1}{p})$.
\item $s_1+s_2\geq n(\dfrac{1}{p_1}+\dfrac{1}{p_2}-1)\geq 0$.
%\item If $\min\{s_1,s_2\}=s_1$, then $p', p_2\leq p_1'$ and if
%$\min\{s_1,s_2\}=s_2$, then $p',p_1\leq p_2'$.
\end{enumerate}
Then the pointwise multiplication of smooth functions extends uniquely
to a continuous bilinear map
\begin{equation*}
W^{s_1,p_1}(\mathbb{R}^n)\times
W^{s_2,p_2}(\mathbb{R}^n)\rightarrow W^{s,p}(\mathbb{R}^n).
\end{equation*}
\end{theorem}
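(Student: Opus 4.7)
The plan is to reduce the case with negative exponents to the nonnegative-exponent multiplication theorem (Theorem~\ref{thm4.1}) by a duality argument. Since pointwise multiplication is symmetric, I would first reduce to the case $s_1<0$ by swapping the roles of the two factors if necessary. Condition~(v) then forces $s_1+s_2\geq 0$, so $s_2\geq -s_1>0$; that is, exactly one of $s_1,s_2$ is negative. Condition~(i) forces $s\leq s_1<0$, so $-s>0$ as well. Uniqueness of the continuous extension is immediate from Theorem~\ref{thmwinterdualab} combined with the density of $C_c^\infty(\reals^n)$ in each factor (Theorem~\ref{winter78} for $s_2\geq 0$ and Theorem~\ref{winter82} for $s_1<0$), so it suffices to exhibit one such extension.

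For the construction, I would use the duality identifications $W^{s,p}(\reals^n)=[W^{-s,p'}(\reals^n)]^*$ and $W^{s_1,p_1}(\reals^n)=[W^{-s_1,p_1'}(\reals^n)]^*$ and define, for $u\in W^{s_1,p_1}$, $v\in W^{s_2,p_2}$, and $\varphi\in C_c^\infty(\reals^n)$,
\begin{equation*}
\langle uv,\varphi\rangle := \langle u,\,v\varphi\rangle_{W^{s_1,p_1}\times W^{-s_1,p_1'}}.
\end{equation*}
The central step is to verify that $v\varphi\in W^{-s_1,p_1'}$ with the bound $\|v\varphi\|_{W^{-s_1,p_1'}}\leq C\|v\|_{W^{s_2,p_2}}\|\varphi\|_{W^{-s,p'}}$, which I would obtain by applying Theorem~\ref{thm4.1} to the auxiliary product $W^{s_2,p_2}\times W^{-s,p'}\to W^{-s_1,p_1'}$. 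Both smoothness exponents on the left are nonnegative, so Theorem~\ref{thm4.1} is applicable in principle.

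The main technical work, and the principal obstacle, is verifying that the hypotheses of the present theorem translate, under the swap $p\leftrightarrow p'$ and $s\leftrightarrow -s$, into the hypotheses of Theorem~\ref{thm4.1}. Using $1/p_i'=1-1/p_i$, I would check: the inequality $1/p_1+1/p_2\geq 1$ from~(v) gives $p_2,p'\leq p_1'$ (the integrability ordering required by Theorem~\ref{thm4.1}); conditions~(i) and~(v) yield the two smoothness-ordering inequalities $s_2\geq -s_1$ and $-s\geq -s_1$; condition~(iii) for $i=1$ rewrites as $-s-(-s_1)\geq n(1/p'-1/p_1')$, which is the ``$i=2$'' scaling condition for the auxiliary product; condition~(v) is precisely the ``$i=1$'' scaling condition; and condition~(iv) matches the strict scaling inequality verbatim. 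Once this index-chasing is complete, combining the estimate above with $|\langle u,v\varphi\rangle|\leq \|u\|_{W^{s_1,p_1}}\|v\varphi\|_{W^{-s_1,p_1'}}$ and extending by the density of $C_c^\infty$ in $W^{-s,p'}$ produces a bounded functional on $W^{-s,p'}$, i.e.\ an element of $W^{s,p}$, with norm controlled by $\|u\|_{W^{s_1,p_1}}\|v\|_{W^{s_2,p_2}}$. For smooth $u,v$ the pairing reduces to $\int uv\varphi\,dx$, so the constructed bilinear map agrees with pointwise multiplication on $C_c^\infty\times C_c^\infty$ and is therefore, by the uniqueness observation above, the desired extension.
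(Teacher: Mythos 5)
Your proof is correct. The paper itself does not reproduce a proof of this theorem; it cites \cite{holstbehzadan2015b}, where the argument is precisely the duality reduction to the nonnegative-exponent case that you describe. So your approach coincides with the intended one.

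The index-chasing, which you rightly flag as the real work, does go through: with $\sigma_1=s_2$, $\sigma_2=-s$, $\sigma=-s_1$ and $q_1=p_2$, $q_2=p'$, $q=p_1'$, your conditions~(i), (iii) (for $i=1$), (iv), and~(v) become exactly the hypotheses (i)--(iv) of Theorem~\ref{thm4.1} for the auxiliary product $W^{s_2,p_2}\times W^{-s,p'}\to W^{-s_1,p_1'}$. The one small slip is attribution: you credit condition~(v) with giving both $p_2\leq p_1'$ and $p'\leq p_1'$, but only the first follows from $1/p_1+1/p_2\geq 1$; the second, $p'\leq p_1'$, is equivalent to $p_1\leq p$, which is one of the standing hypotheses $1<p_i\leq p$. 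Both inequalities needed for Theorem~\ref{thm4.1} do hold, so this is a bookkeeping misstatement rather than a gap. Everything else --- the WLOG reduction to $s_1<0$ by symmetry, the observation that (v) and (ii) force exactly one exponent to be negative, the consistency of the defined pairing with $\int uv\varphi\,dx$ on smooth data, and the uniqueness via density of $C_c^\infty(\reals^n)$ in each factor --- is sound.
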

\begin{theorem}[Multiplication theorem for Sobolev spaces on the whole space, negative exponents II]\lab{thm4.5}
Assume $s_i,s$ and $1 < p,  p_i < \infty$ ($i=1,2$) are real
numbers satisfying
\begin{enumerate}[(i)]
\item  $s_i \geq s$, %($i=1,2$),
\item  $\min\{s_1, s_2\}\geq0$ and $s<0$,
\item  $s_i-s\geq n(\dfrac{1}{p_i}-\dfrac{1}{p})$,
\item  $s_1+s_2-s>n(\dfrac{1}{p_1}+\dfrac{1}{p_2}-\dfrac{1}{p})\geq
0$.
\item $s_1+s_2> n(\dfrac{1}{p_1}+\dfrac{1}{p_2}-1)$. \quad
(\textrm{the inequality is strict})
\end{enumerate}
Then the pointwise multiplication of smooth functions extends uniquely
to a continuous bilinear map
\begin{equation*}
W^{s_1,p_1}(\reals^n)\times W^{s_2,p_2}(\reals^n)\rightarrow
W^{s,p}(\reals^n).
\end{equation*}
\end{theorem}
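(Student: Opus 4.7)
The plan is to combine a duality argument with a three-factor Hölder inequality and the Sobolev embedding theorem. Since $s_i\geq 0$, the space $C_c^\infty(\reals^n)$ is dense in $W^{s_i,p_i}(\reals^n)$ for $i=1,2$ (Theorem \ref{winter78}); by the unique-extension principle for continuous bilinear maps (Theorem \ref{thmwinterdualab}), it suffices to establish the bilinear estimate
\begin{equation*}
\|uv\|_{W^{s,p}(\reals^n)}\preceq \|u\|_{W^{s_1,p_1}(\reals^n)}\|v\|_{W^{s_2,p_2}(\reals^n)},\qquad u,v\in C_c^\infty(\reals^n),
\end{equation*}
and the unique continuous bilinear extension of pointwise multiplication then has the required mapping property.

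Since $s<0$ and $C_c^\infty(\reals^n)$ is dense in $W^{-s,p'}(\reals^n)$ (so $W^{-s,p'}_0=W^{-s,p'}$), reflexivity of the positive-order space $W^{-s,p'}(\reals^n)$ (Theorem \ref{winter81}) combined with Theorem \ref{winter82} identifies $W^{s,p}(\reals^n)\cong (W^{-s,p'}(\reals^n))^*$, with the norm realized by testing against $\varphi\in C_c^\infty(\reals^n)$. The bilinear estimate therefore reduces to the trilinear bound
\begin{equation*}
\Bigl|\int_{\reals^n} uv\varphi\,dx\Bigr|\preceq \|u\|_{W^{s_1,p_1}}\|v\|_{W^{s_2,p_2}}\|\varphi\|_{W^{-s,p'}},\qquad \varphi\in C_c^\infty(\reals^n),
\end{equation*}
which I would prove by applying Hölder's inequality with three exponents $q_1,q_2,r\in(1,\infty)$ satisfying $1/q_1+1/q_2+1/r=1$ and then invoking the Sobolev embeddings $W^{s_i,p_i}\hookrightarrow L^{q_i}$ ($i=1,2$) and $W^{-s,p'}\hookrightarrow L^r$ supplied by Theorem \ref{winter86}. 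The canonical choice is to saturate the first two embeddings by taking $1/q_i=1/p_i-s_i/n$, which forces $1/r=1-1/p_1-1/p_2+(s_1+s_2)/n$; condition (ii) secures $q_i\geq p_i$, the strict inequality in (iv) yields $1/r>1/p'+s/n$ and hence admissibility of the third embedding, and the strict inequality in (v) guarantees $1/r<1$, i.e.\ $r>1$.

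The main obstacle lies in the degenerate regimes where the canonical choice of exponents fails: namely when $s_i\geq n/p_i$ (so that $W^{s_i,p_i}$ embeds into $L^{q_i}$ only in a truncated range of $q_i$, or into $L^\infty$ when $s_i>n/p_i$), or when the gap in (iv) or (v) is small so that the forced value of $r$ is pushed close to $p'$ or close to $1$. In such cases one must perturb $q_1,q_2$ slightly away from their saturated values and exploit the strictness of (iv) and (v) to retain admissibility of all three Sobolev embeddings simultaneously; condition (iii), $s_i-s\geq n(1/p_i-1/p)$, supplies the additional slack required to keep $q_i\geq p_i$ (and, correspondingly, $r\leq p'$) after the perturbation. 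Once the trilinear bound is established uniformly across all regimes, density together with Theorem \ref{thmwinterdualab} produces the unique continuous bilinear extension $W^{s_1,p_1}(\reals^n)\times W^{s_2,p_2}(\reals^n)\to W^{s,p}(\reals^n)$, completing the proof.
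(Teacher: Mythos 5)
The overall strategy — reduce to the trilinear bound $|\int uv\varphi|\preceq\|u\|_{W^{s_1,p_1}}\|v\|_{W^{s_2,p_2}}\|\varphi\|_{W^{-s,p'}}$ via the identification $W^{s,p}(\reals^n)\cong(W^{-s,p'}(\reals^n))^*$, then establish the trilinear bound by H\"older together with the Sobolev embeddings of Theorem~\ref{winter86}, and finally extend by density — is sound, and it is the natural proof of this theorem. The paper itself gives no proof of Theorem~\ref{thm4.5} (it defers to the cited reference), so you are not being compared against a written argument in the text.

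However, your accounting of which hypothesis controls which H\"older exponent contains several errors that need to be fixed before the sketch becomes a proof. First, for $W^{-s,p'}(\reals^n)\hookrightarrow L^r(\reals^n)$ Theorem~\ref{winter86} requires $r\geq p'$ (and $r<\infty$), not $r\leq p'$ as you write; the admissible window for $1/r$ is $[\max(0,\,1/p'+s/n),\,1/p']$. Second, with the canonical saturation $1/q_i=1/p_i-s_i/n$ one gets $1/r=1-1/p_1-1/p_2+(s_1+s_2)/n$, and condition~(v) (strict) gives $1/r>0$, i.e.\ $r<\infty$ — not $1/r<1$ as you claim; that upper bound is automatic once $1/q_1,1/q_2>0$. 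Third, the canonical choice can fail not only in the borderline regimes you flag ($s_ip_i\geq n$, or (iv)/(v) nearly tight) but also whenever $s_1+s_2>n(1/p_1+1/p_2-1/p)$, in which case the canonical $1/r$ exceeds $1/p'$ (so $r<p'$) and the third embedding is inadmissible; this forces one to take $1/q_i$ strictly larger than the saturated value, something your description does not anticipate. Fourth, condition~(iii) is not used to keep $q_i\geq p_i$ — that follows automatically from $s_i\geq 0$ once $1/q_i\leq 1/p_i$; rather, (iii) guarantees that when one of the $a_j:=\max(0,1/p_j-s_j/n)$ vanishes, the sum of lower bounds $a_1+a_2+\max(0,1/p'+s/n)$ is still $\leq1$. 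A clean way to finish is to verify directly that the three intervals $[a_1,1/p_1]$, $[a_2,1/p_2]$, $[\max(0,1/p'+s/n),1/p']$ always admit a triple summing to $1$ with the additional requirement that each coordinate be strictly positive whenever the corresponding borderline $L^\infty$ embedding is unavailable (e.g.\ $1/q_1>0$ when $s_1p_1\leq n$). For this one uses: (iv)$_2$ for the sum of upper endpoints $\geq1$; (iv)$_1$ strict for $\sum a_i<1$ when both $a_1,a_2>0$; condition~(v) for $r<\infty$; and the observation that if $s_1=n/p_1$ exactly, then (iv)$_1$ forces (iii) for $i=2$ to be strict, yielding the slack needed for $1/q_1>0$. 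With that bookkeeping in place, the argument is complete.
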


\begin{remark}\lab{multinterpremark}
Let's discuss further how we should interpret multiplication in the case where negative exponents are involved. Suppose for instance $s_1<0$ ($s_2$ may be positive or negative). A moment's thought shows that the relation
\begin{equation*}
W^{s_1,p_1}(\reals^n)\times W^{s_2,p_2}(\reals^n)\hookrightarrow
W^{s,p}(\reals^n).
\end{equation*}
in the above theorems can be interpreted as follows: for all $u\in W^{s_1,p_1}(\reals^n)$ and $v\in W^{s_2,p_2}(\reals^n)$, if $\{\varphi_i\}$ in $C^\infty(\reals^n)\cap W^{s_1,p_1}(\reals^n)$ is any sequence such that $\varphi_i\rightarrow u$  in $W^{s_1,p_1}(\reals^n)$, then
\begin{enumerate}
\item for all $i$, $\varphi_i v\in W^{s,p}(\reals^n)$ (multiplication of a smooth function and a distribution),
\item  $\varphi_i v$ converges to some element $g$ in $W^{s,p}(\reals^n)$ as $i\rightarrow \infty$,
\item $\|g\|_{W^{s,p}(\reals^n)}\preceq \|u\|_{W^{s_1,p_1}(\reals^n)}\|v\|_{W^{s_2,p_2}(\reals^n)}$ where the implicit constant does not depend on $u$ and $v$,
\item $g\in W^{s,p}(\reals^n)$ is independent of the sequence $\{\varphi_i\}$ and can be regarded as the product of $u$ and $v$.
\end{enumerate}
In particular, $\varphi_i v\rightarrow uv$ in $D'(\reals^n)$ and for all $\psi\in C_c^\infty(\reals^n)$
\begin{equation*}
\langle uv,\psi\rangle_{D'(\reals^n)\times D(\reals^n)}=\lim_{i\rightarrow \infty} \langle \varphi_i v,\psi\rangle_{D'(\reals^n)\times D(\reals^n)}=\langle v,\varphi_i\psi\rangle_{D'(\reals^n)\times D(\reals^n)}\,.
\end{equation*}
\end{remark}

\subsection{Properties of Sobolev Spaces on Smooth Bounded Domains}

In this section we assume that $\Omega$ is an open bounded set in
$\reals^n$ with smooth boundary unless a weaker assumption is
stated. First we list some facts that can be useful in understanding the relationship between various definitions of Sobolev spaces on domains.
\begin{itemizeXALI}
\item (\cite{33}, Page 584)[Theorem 8.10.13 and its proof] Suppose $s> 0$ and $1< p<\infty$.
Then $W^{s,p}(\Omega)=W^{s,p}(\bar{\Omega})$ in the sense of
equivalent normed spaces.
\item For $s>0$ and $1<p<\infty$, $W_{00}^{s,p}(\Omega)$ is isomorphic to
$\tilde{W}^{s,p}(\bar{\Omega})$. Moreover
$[W_{00}^{s,p}(\Omega)]^*=[\tilde{W}^{s,p}(\bar{\Omega})]^*=W^{-s,p'}(\bar{\Omega})$.
\item Let $s\geq 0$ and $1<p<\infty$. Then for $s\neq
\frac{1}{p}, 1+\frac{1}{p}, 2+\frac{1}{p},\cdots$ (that is, when
the fractional part of $s$ is not equal to $\frac{1}{p}$) we have
\begin{enumerate}
\item $W_{00}^{s,p}(\Omega)=W^{s,p}_0(\Omega)$ and so (in the sense of equivalent normed spaces)
\begin{equation*}
[W^{s,p}_0(\bar{\Omega})]^*=[W^{s,p}_0(\Omega)]^*=[W_{00}^{s,p}(\Omega)]^*=W^{-s,p'}(\bar{\Omega})%\qquad
%(\textrm{in the sense of equivalent normed spaces})
\end{equation*}
where $W^{s,p}_0(\bar{\Omega})$ is the closure of $C_c^\infty
(\Omega)$ in $W^{s,p}(\bar{\Omega})$. This claim is a direct
consequence of Theorem 1 Page 317 and Theorem 4.8.2 Page 332 of
\cite{36}.
\item
\begin{equation*}
\textrm{ext}_{\Omega,\mathbb{R}^n}^0: \big(C_c^\infty(\Omega),
\|.\|_{s,p}\big)\rightarrow
 W^{s,p}(\mathbb{R}^n)
\end{equation*}
is a well-defined bounded linear
 operator.
\item \begin{equation*}
\textrm{res}_{\mathbb{R}^n,\Omega}:W^{-s,p'}(\mathbb{R}^n)\rightarrow
W^{-s,p'}(\Omega)\qquad u\mapsto u|_\Omega
\end{equation*}
is a well-defined
 bounded linear operator.
\item $W^{-s,p'}(\Omega)=W^{-s,p'}(\bar{\Omega})$.
%\item \begin{equation*}
%\partial^\alpha: W^{s,p}(\Omega)\rightarrow
%W^{s-|\alpha|,p}(\Omega)
%\end{equation*}
%is a well defined bounded linear operator for all $\alpha\in
%\mathbb{N}_0^n$ (see Theorem \ref{winter88} for a more general statement).
\end{enumerate}
\item As a consequence of the above items
 $W^{s,p}(\Omega)=W^{s,p}(\bar{\Omega})$ in the sense of
equivalent normed spaces for $1<p<\infty$, $s\in \reals$ with
$s\neq \frac{1}{p}-1, \frac{1}{p}-2, \frac{1}{p}-3,\cdots$. (Note
that if we want the definitions agree for $s<0$, it is enough to
assume that $-s\neq \frac{1}{p'}, 1+\frac{1}{p'},
2+\frac{1}{p'},\cdots$.)
\item (\cite{Trie2002}, Pages 481 and 494) For $s>\frac{1}{p}-1$,
$\tilde{W}^{s,p}(\bar{\Omega})=\tilde{W}^{s,p}(\Omega)$. That is
for $s>\frac{1}{p}-1$
\begin{equation*}
\{v\in W^{s,p}(\reals^n): \textrm{supp}\,v\subseteq \partial
\Omega \}=\{0\}
\end{equation*}
\end{itemizeXALI}
Next we recall some facts about extension operators and embedding properties of Sobolev spaces. The existence of extension operator can be helpful in transferring known results for Sobolev spaces defined on $\reals^n$ to Sobolev spaces defined on bounded domains.
\begin{theorem}[Extension Property I]\lab{thm3.1}(\cite{33}, Page
584) Let $\Omega \subset \mathbb{R}^n$ be a bounded open set with
Lipschitz continuous boundary. Then for all $s>0$ and for $1\leq p
< \infty$, there exists a continuous linear extension operator $P:
W^{s,p}(\Omega)\hookrightarrow W^{s,p}(\mathbb{R}^n)$ such that
$(Pu)|_{\Omega}=u$ and $\parallel P
u\parallel_{W^{s,p}(\mathbb{R}^n)}\leq C\parallel u
\parallel_{W^{s,p}(\Omega)}$ for some constant $C$ that may
depend on $s$, $p$, and $\Omega$ but is independent of $u$.
\end{theorem}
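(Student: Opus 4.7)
The plan is to reduce the problem to the half-space case via a partition of unity near the boundary and then to exhibit a single concrete extension operator on the half-space that is simultaneously bounded on $W^{s,p}$ for every $s>0$ and $1\le p<\infty$. First, using compactness of $\partial\Omega$ together with the hypothesis that $\partial\Omega$ is Lipschitz, I would choose finitely many open sets $U_1,\dots,U_N$ covering $\partial\Omega$ together with an extra interior open set $U_0$ with $\bar U_0\subset\Omega$ and $\{U_i\}_{i=0}^N$ covering $\bar\Omega$, such that on each $U_i$ ($i\ge 1$) there is a bi-Lipschitz map $\Phi_i:U_i\to Q$ (with $Q$ a cube in $\reals^n$) that straightens the boundary, sending $\Omega\cap U_i$ to $Q\cap\{x_n>0\}$. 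Let $\{\eta_i\}_{i=0}^N$ be a smooth partition of unity subordinate to $\{U_i\}$, so that $u=\sum_{i=0}^N\eta_i u$ in $\Omega$; the piece $\eta_0 u$ is extended by $0$ to all of $\reals^n$, and each boundary piece $\eta_i u$ ($i\ge 1$) is pushed forward by $\Phi_i$ to a function supported in $\overline{Q\cap\{x_n>0\}}$, extended to $\{x_n<0\}$ by the half-space operator $\mathcal{E}$ described below, and then pulled back by $\Phi_i^{-1}$ and extended by $0$ outside $U_i$.

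For the half-space extension $\mathcal{E}:W^{s,p}(\reals^n_+)\to W^{s,p}(\reals^n)$ I would use Stein's total extension operator (Stein, \emph{Singular Integrals}, Chapter~VI): fix a continuous positive function $\lambda(t)$ on $[1,\infty)$ and a rapidly decaying $\psi(t)$ satisfying $\int_1^\infty \psi(t)\,dt=1$ and $\int_1^\infty t^k\psi(t)\,dt=0$ for all $k\in\mathbb{N}$, and define
\begin{equation*}
(\mathcal{E}u)(x',x_n)=\begin{cases} u(x',x_n), & x_n\ge 0,\\[2pt] \displaystyle\int_1^\infty u(x',-t x_n)\,\psi(t)\,dt, & x_n<0.\end{cases}
\end{equation*}
The vanishing-moment condition guarantees that for integer $s=k$ all partial derivatives up to order $k$ match across $\{x_n=0\}$, and a direct computation combined with the Slobodeckij double-integral definition from Definition~\ref{winter84} yields boundedness for every noninteger $s>0$ as well; the point is that $\mathcal{E}$ is independent of $s$ and $p$, so the same operator works in every $W^{s,p}$ simultaneously, provided we verify the bound $\|\mathcal{E}u\|_{W^{s,p}(\reals^n)}\le C(s,p)\|u\|_{W^{s,p}(\reals^n_+)}$.

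Assembling these pieces, the desired operator is
\begin{equation*}
Pu=\eta_0 u^{(0)}+\sum_{i=1}^N \Phi_i^*\!\bigl(\mathcal{E}((\eta_i u)\circ\Phi_i^{-1})\bigr),
\end{equation*}
with the zero extension $u^{(0)}$ for the interior cutoff and each boundary summand interpreted as zero outside $U_i$. That $(Pu)|_\Omega=u$ follows from the partition-of-unity identity, and linearity is evident. The norm estimate is then reduced to three ingredients: boundedness of multiplication by the smooth compactly supported $\eta_i$ (Theorem~\ref{winter87}, applied on $\reals^n$ after extension), invariance of $W^{s,p}$ under bi-Lipschitz change of variables with compactly supported cutoffs, and the half-space bound for $\mathcal{E}$.

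The main obstacle, and the step where care is genuinely needed, is the second ingredient: at noninteger $s>1$, pullback under a merely bi-Lipschitz diffeomorphism need not preserve $W^{s,p}$ in general, because $k$-fold classical derivatives of the chart map exist only up to $k=1$. This is precisely why the hypothesis ``Lipschitz boundary'' suffices on its own for \emph{all} $s>0$ only if we first reduce to the half-space by a chart that is at least as smooth as required, or alternatively if we treat the integer case by Stein's argument and then recover fractional orders by real interpolation between $W^{k,p}$ and $W^{k+1,p}$, using the fact that Stein's extension is a bounded linear operator simultaneously on both endpoints and hence on the interpolation space, which by the standard identification coincides with $W^{s,p}$ in our range. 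I would present the interpolation route as the cleanest way to sidestep the chart-regularity issue: verify Stein's operator on the integer spaces $W^{k,p}(\Omega)\to W^{k,p}(\reals^n)$ for all $k\in\mathbb{N}_0$ using only Lipschitz charts (which is classical), then invoke the real interpolation identity $(W^{k,p},W^{k+1,p})_{\theta,p}=W^{k+\theta,p}$ on both $\Omega$ and $\reals^n$ to obtain the conclusion for every $s>0$.
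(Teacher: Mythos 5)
The paper does not prove this theorem; it cites it (\cite{33}, Page 584). Your proposal, however, has a genuine gap that the interpolation argument does not repair. You correctly observe that pullback under a merely bi-Lipschitz chart fails to preserve $W^{s,p}$ for $s>1$, and you offer real interpolation between the integer endpoints $W^{k,p}$ and $W^{k+1,p}$ as the fix. But the endpoint verification you propose---``verify Stein's operator on the integer spaces $W^{k,p}(\Omega)\to W^{k,p}(\reals^n)$ for all $k\in\mathbb{N}_0$ using only Lipschitz charts''---runs into exactly the same obstruction: to transport the problem to the half-space you must compose with the chart map $\Phi_i$ and its inverse, and composition with a bi-Lipschitz map does not preserve $W^{k,p}$ for $k\ge 2$, since second (and higher) classical derivatives of $\Phi_i$ need not exist as bounded functions (cf.\ Theorem~\ref{lemfall1}, which requires a $k$-smooth map, not merely a Lipschitz one). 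So the integer case you want to feed into the interpolation is not actually established by your argument; the reduction is circular.

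The operator you wrote down, $(\mathcal{E}u)(x',x_n)=\int_1^\infty u(x',-tx_n)\psi(t)\,dt$, is the Hestenes--Seeley-type reflection across a flat hyperplane, which indeed requires a straightened boundary. Stein's total extension operator for Lipschitz domains is a different construction: it is defined \emph{directly} on the epigraph $\{y>\varphi(x)\}$ of a Lipschitz function $\varphi$ by integrating along rays scaled by the \emph{regularized distance} $\delta^*(x,y)$ to $\partial\Omega$, namely $(\mathcal{E}u)(x,y)=\int_1^\infty u(x,\,y+t\,\delta^*(x,y))\psi(t)\,dt$, and the chain-rule computations are carried out against the (smooth inside the domain) function $\delta^*$ rather than against the boundary chart. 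This is precisely how one avoids ever needing a $C^k$ straightening map for $k\ge2$. With the regularized-distance version, boundedness on $W^{k,p}(\Omega)\to W^{k,p}(\reals^n)$ for all $k\in\mathbb{N}_0$ holds for Lipschitz $\Omega$, and your interpolation step (using $W^{s,p}=B^s_{p,p}=(W^{k,p},W^{k+1,p})_{\theta,p}$ for $s=k+\theta$, $\theta\in(0,1)$, as recorded in Remark~\ref{remfallsobolevbesov1}) then carries through. In short, the partition-of-unity reduction and the interpolation step are fine, but the half-space reduction via Lipschitz flattening must be replaced by Stein's regularized-distance construction on the un-flattened epigraph.
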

The next theorem states that the claim of Theorem \ref{thm3.1}
holds for all values of $s$ (positive and negative) if we replace
$W^{s,p}(\Omega)$ with $W^{s,p}(\bar{\Omega})$.
\begin{theorem}[Extension Property II]\lab{winter89}(\cite{Trie2002}, Page 487, \cite{Trie83},Page 201)
Let $\Omega \subset \mathbb{R}^n$ be a bounded open set with
Lipschitz continuous boundary, $p\in(1,\infty)$ and  $s\in
\reals$. Let $R: W^{s,p}(\reals^n)\rightarrow
W^{s,p}(\bar{\Omega})$ be the restriction operator
$(R(u)=u|_{\Omega})$. Then there exists a continuous linear
operator $S: W^{s,p}(\bar{\Omega})\rightarrow W^{s,p}(\reals^n)$
such that $R\circ S=Id$.
\end{theorem}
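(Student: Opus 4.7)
The plan is to split on the sign of $s$. For $s\geq 0$ the Lipschitz hypothesis on $\partial\Omega$ makes the first bullet preceding this theorem applicable, so $W^{s,p}(\Omega)$ and $W^{s,p}(\bar\Omega)$ coincide as equivalent normed spaces. Theorem~\ref{thm3.1} then furnishes a continuous linear extension $P:W^{s,p}(\Omega)\to W^{s,p}(\reals^n)$ with $R\circ P=\mathrm{Id}$, and composing with the norm-equivalence produces the desired $S$ in this range.

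For $s<0$ I would set $t:=-s>0$, $q:=p'$ and apply the previous case (with $(t,q)$ in place of $(s,p)$) to get a bounded extension $E:W^{t,q}(\bar\Omega)\to W^{t,q}(\reals^n)$ satisfying $R_{+}\circ E=\mathrm{Id}$, where $R_{+}$ denotes the restriction quotient at positive order $t$. Taking Banach adjoints yields $E^{*}\circ R_{+}^{*}=\mathrm{Id}$ on $[W^{t,q}(\bar\Omega)]^{*}$. The canonical duality $[W^{t,q}(\reals^n)]^{*}=W^{s,p}(\reals^n)$ together with the quotient-dual identification
\begin{equation*}
[W^{t,q}(\bar\Omega)]^{*}=\bigl(W^{t,q}(\reals^n)/\{v:v|_\Omega=0\}\bigr)^{*}\cong\tilde W^{s,p}(\bar\Omega)
\end{equation*}
(the annihilator of the kernel is precisely the set of distributions in $W^{s,p}(\reals^n)$ supported in $\bar\Omega$, as one sees by testing against bump functions outside $\bar\Omega$) turns $R_{+}^{*}$ into the natural inclusion $\iota:\tilde W^{s,p}(\bar\Omega)\hookrightarrow W^{s,p}(\reals^n)$ and $E^{*}$ into a continuous left inverse of $\iota$. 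Composing $\iota$ with the order-$s$ restriction quotient $R$ from the theorem statement gives $\Phi:=R\circ\iota:\tilde W^{s,p}(\bar\Omega)\to W^{s,p}(\bar\Omega)$, whose kernel is $\{v\in W^{s,p}(\reals^n):\mathrm{supp}\,v\subseteq\partial\Omega\}$; the discussion preceding this theorem shows this kernel is trivial whenever $s>\tfrac{1}{p}-1$, so once one also has surjectivity of $\Phi$ the open mapping theorem delivers a Banach-space isomorphism, and we set $S:=\iota\circ\Phi^{-1}$, which satisfies $R\circ S=\mathrm{Id}$ by construction.

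The principal obstacle is the surjectivity of $\Phi$, i.e., exhibiting for every class in $W^{s,p}(\bar\Omega)$ a representative in $W^{s,p}(\reals^n)$ whose distributional support lies in $\bar\Omega$. For integer $-s=m\in\mathbb{N}$ one can argue by hand: write $u$ as a sum of distributional derivatives of $L^p(\Omega)$-functions, extend each function by zero to $\reals^n$, and differentiate in the distributional sense; the result is supported in $\bar\Omega$. For non-integer $-s$ a wavelet or Littlewood-Paley atomic decomposition is required. The exceptional exponents $s\in\{\tfrac{1}{p}-1,\tfrac{1}{p}-2,\dots\}$, where $\Phi$ can also fail to be injective because of nonzero distributions supported on $\partial\Omega$, force one to abandon the duality shortcut and build instead a single universal extension operator in the style of Rychkov: straighten $\partial\Omega$ by bi-Lipschitz boundary charts, apply a reflection/Littlewood-Paley half-space extension that is bounded on $W^{s,p}$ uniformly in $s\in\reals$, and glue through a smooth partition of unity. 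This universal construction settles all cases simultaneously.
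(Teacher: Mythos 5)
The paper does not prove this theorem; it is imported verbatim from Triebel (\cite{Trie2002}, \cite{Trie83}), so there is no ``paper's own proof'' to match your argument against. Judged on its own merits, your proposal is a partially-correct sketch rather than a proof, and the gaps are precisely where the real work lives.

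For $s\geq 0$ your reduction to Theorem~\ref{thm3.1} via the identity $W^{s,p}(\Omega)=W^{s,p}(\bar\Omega)$ is fine (with a one-line remark for $s=0$, where the bullet you invoke is stated only for $s>0$ but extension by zero handles $L^p$ directly). For $s<0$, however, the duality machinery you set up never actually produces $S$. Taking adjoints of $R_+\circ E=\mathrm{Id}$ gives $E^*\circ R_+^*=\mathrm{Id}$, i.e.\ a bounded \emph{left} inverse $E^*$ of the inclusion $\iota:\tilde W^{s,p}(\bar\Omega)\hookrightarrow W^{s,p}(\reals^n)$ — but what the theorem demands is a bounded \emph{right} inverse of the restriction $R$, and nothing in the adjoint identity gives you that. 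You recognize this implicitly, since your actual definition of $S$ is $\iota\circ\Phi^{-1}$ and does not use $E^*$ at all; the adjoint step is a dead end. Everything then rests on $\Phi=R\circ\iota$ being a Banach isomorphism. Its injectivity is the triviality of $\{v\in W^{s,p}(\reals^n):\mathrm{supp}\,v\subseteq\partial\Omega\}$, which fails at the exceptional exponents $s\in\{\tfrac1p-1,\tfrac1p-2,\dots\}$; its surjectivity is the assertion that every class in $W^{s,p}(\bar\Omega)$ has a representative supported in $\bar\Omega$, which you label the principal obstacle and do not prove. Your integer-order sketch (write $u$ as a sum of derivatives of $L^p(\Omega)$ functions, extend by zero, differentiate) is not obviously bounded in the quotient norm — you would need to choose the $L^p$ representatives with control — and the fractional case is waved at with ``wavelet or Littlewood–Paley atomic decomposition.'' Finally, the closing move — build a Rychkov-type universal extension — is exactly the content of the cited Triebel theorem: you are, in effect, replacing one citation with another rather than supplying a proof. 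As written, the proposal does not close the gap for any $s<0$.
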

\begin{corollary}\lab{winter90}
As it was pointed out earlier for $s\neq
\frac{1}{p}-1,\frac{1}{p}-2,\cdots$
$W^{s,p}(\Omega)=W^{s,p}(\bar{\Omega})$. Therefore it follows
from the above theorems that if $s\neq
\frac{1}{p}-1,\frac{1}{p}-2,\cdots$, then there exists a
continuous linear extension operator $P:
W^{s,p}(\Omega)\hookrightarrow W^{s,p}(\mathbb{R}^n)$ such that
$(Pu)|_{\Omega}=u$ and $\parallel P
u\parallel_{W^{s,p}(\mathbb{R}^n)}\leq C\parallel u
\parallel_{W^{s,p}(\Omega)}$ for some constant $C$ that may
depend on $s$, $p$, and $\Omega$ but is independent of $u$.
\end{corollary}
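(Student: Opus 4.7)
The plan is to split into cases according to the sign of $s$ and combine the two extension theorems already established.

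\textbf{Case $s>0$.} Here Theorem \ref{thm3.1} directly furnishes a continuous linear extension operator $P:W^{s,p}(\Omega)\to W^{s,p}(\reals^n)$ with $(Pu)|_\Omega = u$ and the claimed bound. No exclusion on $s$ is needed in this range, which is consistent with the fact that the exceptional values $\tfrac{1}{p}-1,\tfrac{1}{p}-2,\dots$ are all strictly negative.

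\textbf{Case $s=0$.} In this case $W^{0,p}(\Omega)=L^p(\Omega)$, and $P=\textrm{ext}^0_{\Omega,\reals^n}$ is obviously a continuous linear extension with $\|Pu\|_{L^p(\reals^n)}=\|u\|_{L^p(\Omega)}$.

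\textbf{Case $s<0$ with $s\neq \tfrac{1}{p}-k$ for $k\in\mathbb{N}$.} By the bullet list preceding the corollary, for such $s$ one has $W^{s,p}(\Omega)=W^{s,p}(\bar\Omega)$ in the sense of equivalent normed spaces, so there is a constant $C_1=C_1(s,p,\Omega)$ with
\begin{equation*}
\tfrac{1}{C_1}\|u\|_{W^{s,p}(\bar\Omega)}\leq \|u\|_{W^{s,p}(\Omega)}\leq C_1 \|u\|_{W^{s,p}(\bar\Omega)}\qquad(u\in W^{s,p}(\Omega)).
\end{equation*}
Theorem \ref{winter89} then supplies a continuous linear operator $S:W^{s,p}(\bar\Omega)\to W^{s,p}(\reals^n)$ with $R\circ S=\mathrm{Id}$, where $R$ is the restriction $v\mapsto v|_\Omega$ interpreted at the level of distributions. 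Set $P:=S$; then $(Pu)|_\Omega = R(Su)=u$ and
\begin{equation*}
\|Pu\|_{W^{s,p}(\reals^n)}\leq \|S\|_{\mathrm{op}}\,\|u\|_{W^{s,p}(\bar\Omega)}\leq C_1\|S\|_{\mathrm{op}}\,\|u\|_{W^{s,p}(\Omega)},
\end{equation*}
which is the required bound with $C:=C_1\|S\|_{\mathrm{op}}$.

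The three cases together establish the corollary; there is no real obstacle beyond bookkeeping, because both pieces---Theorem \ref{thm3.1} for positive $s$, and the identification $W^{s,p}(\Omega)=W^{s,p}(\bar\Omega)$ together with Theorem \ref{winter89} for the remaining $s$---have already been stated. The only subtlety worth flagging is to make sure that ``restriction'' in Theorem \ref{winter89} is interpreted in the distributional sense of $\textrm{res}_{\reals^n,\Omega}$, which matches the usual restriction on regular distributions and hence on $W^{s,p}$ elements, so the identity $(Pu)|_\Omega=u$ makes sense uniformly across the range of $s$.
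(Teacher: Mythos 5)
Your proof is correct and takes essentially the same route as the paper: invoke the identification $W^{s,p}(\Omega)=W^{s,p}(\bar{\Omega})$ (valid for all $s$ outside the excluded set, not only $s<0$) and compose with the extension operator of Theorem \ref{winter89}. The case split into $s>0$, $s=0$, $s<0$ is harmless but unnecessary — the single chain $W^{s,p}(\Omega)\simeq W^{s,p}(\bar{\Omega}) \xrightarrow{S} W^{s,p}(\reals^n)$ from Theorem \ref{winter89} already covers all $s\neq \tfrac{1}{p}-1,\tfrac{1}{p}-2,\dots$ uniformly, which is what the paper's one-line justification amounts to.
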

\begin{corollary}\lab{winter91}
One can easily show that the results of Sobolev multiplication
theorems in the previous section (Theorems \ref{thm4.6},
\ref{thm4.1}, \ref{thm4.3}, and \ref{thm4.5}) hold also for
Sobolev spaces on any Lipschitz domain as long as all the Sobolev spaces involved satisfy $W^{e,q}(\Omega)=W^{e,q}(\bar{\Omega})$ (and so, in particular, existence of an extension operator is guaranteed). Indeed, if $P_1:
W^{s_1,p_1}(\Omega)\rightarrow W^{s_1,p_1}(\reals^n)$ and $P_2:
W^{s_2,p_2}(\Omega)\rightarrow W^{s_2,p_2}(\reals^n)$ are
extension operators, then $(P_1 u)(P_2 v)|_{\Omega}=uv$ and
therefore
\begin{align*}
\parallel uv\parallel_{W^{s,p}(\Omega)}=\parallel uv\parallel_{W^{s,p}(\bar{\Omega})}\leq \parallel (P_1 u)(P_2
v)\parallel_{W^{s,p}(\reals^n)} &\preceq \parallel P_1
u\parallel_{W^{s_1,p_1}(\reals^n)}\parallel P_2
v\parallel_{W^{s_2,p_2}(\reals^n)}\\
&\preceq \parallel u\parallel_{W^{s_1,p_1}(\Omega)}\parallel
v\parallel_{W^{s_2,p_2}(\Omega)}\,.
\end{align*}
\end{corollary}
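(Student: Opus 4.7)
The proposal is to push everything through extension and restriction, and to verify that the result does not depend on the extensions chosen (which is the only delicate point when negative exponents are present).

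First, fix Lipschitz-extension operators $P_1: W^{s_1,p_1}(\Omega)\to W^{s_1,p_1}(\reals^n)$ and $P_2: W^{s_2,p_2}(\Omega)\to W^{s_2,p_2}(\reals^n)$ provided by Theorem \ref{thm3.1} together with Corollary \ref{winter90}, and let $R:W^{s,p}(\reals^n)\to W^{s,p}(\bar\Omega)=W^{s,p}(\Omega)$ be the restriction operator from Theorem \ref{winter89}. In the \emph{nonnegative exponent} regime (Theorems \ref{thm4.6} and \ref{thm4.1}), the elements $u\in W^{s_1,p_1}(\Omega)$ and $v\in W^{s_2,p_2}(\Omega)$ are genuine $L^1_{loc}$-functions; thus $(P_1u)(P_2v)$ is the usual pointwise product, belongs to $W^{s,p}(\reals^n)$ by the Euclidean multiplication theorem, and $R[(P_1u)(P_2v)]=uv$ almost everywhere on $\Omega$. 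The chain of estimates
\begin{equation*}
\|uv\|_{W^{s,p}(\Omega)}\le \|(P_1u)(P_2v)\|_{W^{s,p}(\reals^n)}\preceq \|P_1u\|_{W^{s_1,p_1}(\reals^n)}\|P_2v\|_{W^{s_2,p_2}(\reals^n)}\preceq \|u\|_{W^{s_1,p_1}(\Omega)}\|v\|_{W^{s_2,p_2}(\Omega)}
\end{equation*}
then establishes the claim.

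The substantive case is the \emph{negative exponent} one (Theorems \ref{thm4.3} and \ref{thm4.5}). Here $uv$ is only defined by continuous extension of pointwise multiplication from smooth functions, so I would proceed as follows. Given $u\in W^{s_1,p_1}(\Omega)$ and $v\in W^{s_2,p_2}(\Omega)$ with say $s_1<0$, by density choose a sequence $\{\varphi_i\}\subseteq C^\infty(\reals^n)\cap W^{s_1,p_1}(\reals^n)$ with $\varphi_i\to P_1u$ in $W^{s_1,p_1}(\reals^n)$; by the Euclidean result, $\varphi_i\cdot P_2v$ converges in $W^{s,p}(\reals^n)$ to the product $(P_1u)(P_2v)$, and this limit satisfies the required Euclidean norm bound. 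I would then define the product on $\Omega$ as $uv:=R[(P_1u)(P_2v)]$, and chain the estimate exactly as above using boundedness of $P_1$, $P_2$, and $R$.

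The one real obstacle is \textbf{independence of the choice of extensions}: if $\tilde P_1, \tilde P_2$ is another pair of extensions, I must show $R[(P_1u)(P_2v)]=R[(\tilde P_1u)(\tilde P_2v)]$ as elements of $W^{s,p}(\Omega)=W^{s,p}(\bar\Omega)$. The plan is to test against $\psi\in C_c^\infty(\Omega)$: by the interpretation in Remark \ref{multinterpremark}, both distributions pair with $\psi$ as the limit of $\langle P_2v,\varphi_i\psi\rangle_{D'\times D}$ where $\varphi_i\to P_1u$ in $W^{s_1,p_1}(\reals^n)$. Since $\psi$ has compact support inside $\Omega$, the pairing $\langle P_2v,\varphi_i\psi\rangle$ depends on $P_2v$ and $P_1u$ only through their restrictions to $\Omega$, hence only through $u$ and $v$; symmetrically one may first pass $\varphi_i$ through $\tilde P_1$. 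This yields the same limit, so the two definitions agree as elements of $D'(\Omega)$, hence as elements of $W^{s,p}(\Omega)\hookrightarrow D'(\Omega)$. The hypotheses $W^{e,q}(\Omega)=W^{e,q}(\bar\Omega)$ are used precisely to ensure that each of $W^{s_1,p_1}$, $W^{s_2,p_2}$, $W^{s,p}$ on $\Omega$ admits the extension/restriction machinery of Theorems \ref{thm3.1} and \ref{winter89}, so that the chained estimate closes and the product inherits continuous bilinearity from its Euclidean counterpart.
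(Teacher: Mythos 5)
Your proposal is correct and follows essentially the same route as the paper: split into the nonnegative case (trivial) and the negative case, and in the latter test against $\psi\in C_c^\infty(\Omega)$ to reduce everything to pairings of $P_2v$ with functions of the form $\textrm{ext}^0(\varphi_i|_\Omega\psi)$, which depend only on $u$ and $v$ through restrictions to $\Omega$. The only cosmetic difference is in the framing: the paper's remark treats $uv$ as already defined intrinsically via the limit of $\varphi_i|_\Omega\cdot v$ and shows $(P_1u)(P_2v)|_\Omega=uv$, whereas you \emph{define} $uv:=R[(P_1u)(P_2v)]$ and verify independence of the extensions; the pairing computation underlying both is the same, so this is a matter of bookkeeping rather than a genuinely different argument.
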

\begin{remark}
In the above Corollary, we presumed that $(P_1 u)(P_2 v)|_{\Omega}=uv$. Clearly if $s_1$ and $s_2$ are both nonnegative, the equality holds. But what if at least one of the exponents, say $s_1$, is negative? In order to prove this equality, we may proceed as follows: let $\{\varphi_i\}$ be a sequence in $C^\infty(\reals^n)\cap W^{s_1,p_1}(\reals^n)$ such that $\varphi_i\rightarrow P_1 u$ in $W^{s_1,p_1}(\reals^n)$. By assumption $W^{s_1,p_1}(\Omega)=W^{s_1,p_1}(\bar{\Omega})$, therefore the restriction operator is continuous and $\{\varphi_i|_{\Omega}\}$ is a sequence in $C^\infty(\Omega)\cap W^{s_1,p_1}(\Omega)$ that converges to $u$ in $W^{s_1,p_1}(\Omega)$. For all $\psi\in C_c^\infty(\Omega)$ we have
\begin{align*}
\langle [(P_1 u)(P_2 v)]|_{\Omega},\psi\rangle_{D'(\Omega)\times D(\Omega)}&=\langle (P_1 u)(P_2 v),\textrm{ext}^0_{\Omega,\reals^n}\,\psi\rangle_{D'(\reals^n)\times D(\reals^n)}\\
&\stackrel{\textrm{Remark \ref{multinterpremark}}}{=}\lim_{i\rightarrow\infty}
\langle \varphi_i(P_2 v),\textrm{ext}^0_{\Omega,\reals^n}\,\psi\rangle_{D'(\reals^n)\times D(\reals^n)}\\
&= \lim_{i\rightarrow\infty}
\langle (P_2 v),\varphi_i\textrm{ext}^0_{\Omega,\reals^n}\,\psi\rangle_{D'(\reals^n)\times D(\reals^n)}\\
&= \lim_{i\rightarrow\infty}
\langle (P_2 v),\textrm{ext}^0_{\Omega,\reals^n}\,(\varphi_i|_\Omega\psi)\rangle_{D'(\reals^n)\times D(\reals^n)}\\
&= \lim_{i\rightarrow\infty}
\langle (P_2 v)|_\Omega,\varphi_i|_\Omega\psi\rangle_{D'(\Omega)\times D(\Omega)}\\
&= \lim_{i\rightarrow\infty}
\langle \varphi_i|_\Omega v,\psi\rangle_{D'(\Omega)\times D(\Omega)}\\
&=\langle u v, \psi\rangle_{D'(\Omega)\times D(\Omega)}\\
%&= \langle u v, \psi\rangle_{D'(\Omega)\times D(\Omega)}
\end{align*}
\end{remark}
\begin{theorem}[Embedding Theorem II]\cite{Gris85}\lab{thm3.3}
Let $\Omega$ be a nonempty bounded open subset of $\mathbb{R}^n$ with
Lipschitz continuous boundary or $\Omega=\mathbb{R}^n$. If
$sp>n$, then $W^{s,p}(\Omega)\hookrightarrow
L^{\infty}(\Omega)\cap C^{0}(\Omega)$ and $W^{s,p}(\Omega)$ is a
Banach algebra.
\end{theorem}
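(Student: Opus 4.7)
The plan is to first establish both claims for $\Omega=\mathbb{R}^n$ and then transfer them to bounded Lipschitz domains via the continuous extension operator of Theorem \ref{thm3.1}. For the embedding on $\mathbb{R}^n$, I would split into two cases. If $s\geq 1$, I would apply Theorem \ref{winter86} to embed $W^{s,p}(\mathbb{R}^n)\hookrightarrow W^{1,q}(\mathbb{R}^n)$ for some $q>n$: the condition $sp>n$ yields $s-\frac{n}{p}>0$, which permits a choice of finite $q>n$ satisfying $s-\frac{n}{p}\geq 1-\frac{n}{q}$. The classical Morrey inequality then gives $W^{1,q}(\mathbb{R}^n)\hookrightarrow L^\infty(\mathbb{R}^n)\cap C^{0,1-n/q}(\mathbb{R}^n)$, which is stronger than what is needed. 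If instead $0<s<1$ (so necessarily $p>n$), no such reduction to integer order is available and I would argue directly from Definition \ref{winter76}: a standard ball-averaging argument applied to the Gagliardo double integral, combined with a fractional Poincare-type estimate, produces in each equivalence class of $W^{s,p}(\mathbb{R}^n)$ a Holder continuous representative of exponent $s-\frac{n}{p}$, with the bound $\|u\|_{L^\infty}+[u]_{C^{0,s-n/p}}\preceq \|u\|_{W^{s,p}(\mathbb{R}^n)}$.

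For the Banach algebra property on $\mathbb{R}^n$, I would invoke the Sobolev multiplication Theorem \ref{thm4.1} with $s_1=s_2=s$ and $p_1=p_2=p$. Hypothesis (i) $s_i\geq s$ is immediate; (ii) $s\geq 0$ follows from $sp>n>0$; (iii) reads $0\geq n(\tfrac{1}{p_i}-\tfrac{1}{p})=0$; and (iv) collapses to $s>\tfrac{n}{p}$, which is precisely the assumption $sp>n$. Theorem \ref{thm4.1} then gives that pointwise multiplication is a continuous bilinear map $W^{s,p}(\mathbb{R}^n)\times W^{s,p}(\mathbb{R}^n)\to W^{s,p}(\mathbb{R}^n)$, so $W^{s,p}(\mathbb{R}^n)$ is a Banach algebra.

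To transfer the results to a bounded Lipschitz $\Omega$, recall that $W^{s,p}(\Omega)=W^{s,p}(\bar\Omega)$ for $s>0$ and $1<p<\infty$, so Theorem \ref{thm3.1} supplies a bounded extension $P:W^{s,p}(\Omega)\to W^{s,p}(\mathbb{R}^n)$ with $(Pu)|_{\Omega}=u$. For the embedding, given $u\in W^{s,p}(\Omega)$, apply the $\mathbb{R}^n$ case to $Pu$ to obtain a continuous bounded representative on $\mathbb{R}^n$ whose restriction to $\Omega$ equals $u$ almost everywhere; then $\|u\|_{L^\infty(\Omega)\cap C^0(\Omega)}\leq \|Pu\|_{L^\infty(\mathbb{R}^n)\cap C^0(\mathbb{R}^n)}\preceq \|Pu\|_{W^{s,p}(\mathbb{R}^n)}\preceq \|u\|_{W^{s,p}(\Omega)}$. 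For the algebra property, Corollary \ref{winter91} transports the multiplication estimate from $\mathbb{R}^n$ to $\Omega$ via the extensions $P_1,P_2$ applied to the two factors.

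The main obstacle is the fractional Morrey-type estimate in the range $0<s<1$: since Theorem \ref{winter86} only targets Sobolev spaces of finite integrability exponent, the integer-order Morrey argument cannot be bootstrapped in this regime and one must instead work directly with the double-integral Gagliardo seminorm, exploiting the strict inequality $sp>n$ to close the ball-averaging estimates. Once this is accomplished, the rest of the proof is a routine assembly of the already-cited extension, embedding, and multiplication results.
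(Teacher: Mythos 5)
The paper does not prove Theorem~\ref{thm3.3}: it is stated with a citation to Grisvard \cite{Gris85} and no argument is supplied, so there is no internal proof to compare your proposal against. Evaluated on its own terms, your plan is sound and deduces the result correctly from the ingredients the paper does provide: Theorem~\ref{winter86} and the classical Morrey inequality for $s\geq 1$, Theorem~\ref{thm4.1} for the algebra property on $\reals^n$ (your verification of hypotheses (i)--(iv) with $s_1=s_2=s$, $p_1=p_2=p$ is correct, and (iv) does reduce exactly to $sp>n$), and Theorem~\ref{thm3.1} together with Corollary~\ref{winter91} to transfer both claims to a bounded Lipschitz $\Omega$.

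Two comments on the one real piece of content. First, a small omission in the $s\geq 1$ step: Theorem~\ref{winter86} also requires $q\geq p$, which you do not mention. It is achievable --- if $p\leq n$ pick $q$ just above $n$, and if $p>n$ take $q=p$ (then $1-n/q\leq s-n/p$ reduces to $s\geq 1$) --- but it should be checked explicitly, since the admissible interval for $q$ shrinks to a single point $q=p$ in the borderline case $s=1$. Second, and more importantly, the case $0<s<1$ carries essentially the entire weight of the theorem, and your proposal only sketches it as a ``ball-averaging argument'' plus a ``fractional Poincar\'e-type estimate.'' You correctly identify this as the main obstacle, and the cited approach (a Campanato-type argument on the Gagliardo double integral exploiting the strict inequality $sp>n$) is the standard route; but as written it is a plan, not a proof. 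The rest of the assembly is routine once that estimate is in hand, exactly as you say.
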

\begin{theorem}[Embedding Theorem III]\cite{holstbehzadan2015b}\lab{thm3.4} Let $\Omega$ be a nonempty
bounded open subset of $\mathbb{R}^n$ with Lipschitz continuous
boundary. Suppose $1\leq p, q<\infty$ ($p$ does NOT need to be
less than or equal to $q$) and $0\leq t\leq s$ satisfy
$s-\frac{n}{p}\geq t-\frac{n}{q}$. Then
$W^{s,p}(\Omega)\hookrightarrow W^{t,q}(\Omega)$. In particular,
$W^{s,p}(\Omega)\hookrightarrow W^{t,p}(\Omega)$.
\end{theorem}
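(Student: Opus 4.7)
The plan is to reduce to the Euclidean embedding theorem (Theorem \ref{winter86}) via the extension theorem (Theorem \ref{thm3.1}), splitting into the cases $p \le q$ and $p > q$; the boundedness of $\Omega$ is what allows the statement to hold without the restriction $p \le q$ that appears in Theorem \ref{winter86}.

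First, suppose $p \le q$. Given $u \in W^{s,p}(\Omega)$, let $Pu \in W^{s,p}(\reals^n)$ be the extension produced by Theorem \ref{thm3.1}. The hypotheses $s \ge t$, $p \le q$, and $s - n/p \ge t - n/q$ are exactly those of Theorem \ref{winter86}, so $Pu \in W^{t,q}(\reals^n)$ with the corresponding norm bound. Restricting back to $\Omega$ and using the identification $W^{t,q}(\bar\Omega) = W^{t,q}(\Omega)$ on Lipschitz domains (from the discussion following Theorem \ref{winter89}), I obtain $u \in W^{t,q}(\Omega)$ continuously, with $\|u\|_{W^{t,q}(\Omega)} \preceq \|u\|_{W^{s,p}(\Omega)}$.

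Next, suppose $p > q$. In this case the inequality $s - n/p \ge t - n/q$ is automatic from $s \ge t$, since $n/p - n/q < 0$. I would first apply the previous case with $q$ replaced by $p$ (the intermediate exponent trivially satisfies $p \le p$), obtaining $W^{s,p}(\Omega) \hookrightarrow W^{t,p}(\Omega)$. It then remains to prove $W^{t,p}(\Omega) \hookrightarrow W^{t,q}(\Omega)$ for $q \le p$ on a bounded Lipschitz domain. For $t$ a nonnegative integer this is immediate from Hölder's inequality: $\|\partial^\nu u\|_{L^q(\Omega)} \le |\Omega|^{1/q - 1/p}\,\|\partial^\nu u\|_{L^p(\Omega)}$ for each $|\nu| \le t$, and summation over multi-indices.

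For non-integer $t = k + \theta$ with $\theta \in (0,1)$, the integer contributions to the equivalent norm of Remark \ref{remfallequivnorm120} are handled by the same Hölder argument, and the remaining task is to control $|\partial^\nu u|_{W^{\theta,q}(\Omega)}$ by $\|\partial^\nu u\|_{W^{\theta,p}(\Omega)}$ for $|\nu|=k$. I would split the Gagliardo integral according to whether $|x-y| \le 1$ or $|x-y| > 1$; the far piece is easily dominated by $\|\partial^\nu u\|_{L^q(\Omega)}^q$ (since there $|x-y|^{-(n+\theta q)} \le 1$ and one controls the numerator by $2^{q-1}(|\partial^\nu u(x)|^q + |\partial^\nu u(y)|^q)$), which is under control by the previous step. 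The main obstacle is the near-diagonal piece: a direct Hölder split with exponents $p/q$ and $p/(p-q)$ produces the non-integrable singularity $\iint |x-y|^{-n}\,dx\,dy$, so a more refined argument is required. I expect to resolve this by exploiting the Besov-space identification $W^{\theta,p}(\bar\Omega) = B^\theta_{p,p}(\Omega)$ (cf.\ Remark \ref{remfallsobolevbesov1}) together with the Besov embedding $B^\theta_{p,p}(\Omega) \hookrightarrow B^\theta_{q,q}(\Omega)$ on bounded Lipschitz domains, which itself follows from real interpolation applied to the trivial Lebesgue embedding $L^p(\Omega) \hookrightarrow L^q(\Omega)$ and the integer-order embedding $W^{1,p}(\Omega) \hookrightarrow W^{1,q}(\Omega)$ available for $q \le p$ and $|\Omega| < \infty$.
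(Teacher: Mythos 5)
Your treatment of the case $p\le q$ is the right approach and works: extend via Theorem~\ref{thm3.1}, apply the full-space embedding (Theorem~\ref{winter86}), and restrict, using $W^{t,q}(\bar\Omega)=W^{t,q}(\Omega)$ on Lipschitz domains. (A small loose end: Theorem~\ref{winter86} is stated for $1<p\le q$, so the endpoint $p=1$ allowed in the present theorem would require a separate reference, but this is minor.)

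The case $p>q$ is where there is a genuine gap, and it is exactly at the step you flagged as not yet resolved. Your factorization $W^{s,p}(\Omega)\hookrightarrow W^{t,p}(\Omega)\hookrightarrow W^{t,q}(\Omega)$ throws away the slack in the hypothesis before it is needed. When $p>q$ and $t<s$, the inequality $s-n/p\ge t-n/q$ is automatically \emph{strict}, and it is precisely this room that makes the near-diagonal Gagliardo integral tractable: a H\"older split with exponents $p/q$ and $p/(p-q)$ matching the numerator to the $W^{\theta,p}$-seminorm kernel leaves a residual factor $|x-y|^{-b}$ with
$b=n+\dfrac{pq(\theta'-\theta)}{p-q}<n$
whenever the target smoothness $\theta'$ is strictly below the source smoothness $\theta$; such a $b<n$ gives an integrable kernel on $\Omega\times\Omega$. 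Once you have first descended to $W^{t,p}(\Omega)$, you are left with the equal-smoothness embedding $W^{t,p}(\Omega)\hookrightarrow W^{t,q}(\Omega)$, and at equal order the residual exponent is the borderline $b=n$, which is exactly the divergence you observed. The reduction itself, not merely its last step, needs to change.

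The interpolation you propose to close this gap does not deliver. Real interpolation fixes the microscopic Besov index: $(L^p(\Omega),W^{1,p}(\Omega))_{\theta,r}=B^\theta_{p,r}(\Omega)$ with the \emph{same} $r$ as the $K$-functional parameter, so interpolating the identity between $L^p\hookrightarrow L^q$ and $W^{1,p}\hookrightarrow W^{1,q}$ gives $B^\theta_{p,r}(\Omega)\hookrightarrow B^\theta_{q,r}(\Omega)$ with a common $r$ on both sides. To pass from $W^{\theta,p}(\Omega)=B^\theta_{p,p}(\Omega)$ to $W^{\theta,q}(\Omega)=B^\theta_{q,q}(\Omega)$ you would need $r=p$ on the domain and $r=q$ on the target simultaneously, which is not what the interpolation theorem produces. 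With $r=p$ you land in $B^\theta_{q,p}(\Omega)$, which is a strictly \emph{larger} space than $B^\theta_{q,q}(\Omega)$ when $p>q$ (Besov spaces are monotone increasing in the second index). In fact the claimed link $B^\theta_{p,p}(\Omega)\hookrightarrow B^\theta_{q,q}(\Omega)$ for $q<p$ is false: in the wavelet/sequence-space model, after normalizing, the coefficient condition collapses to $\ell^p\subseteq\ell^q$, which fails in that direction. So the argument must use the strict inequality furnished by $p>q$ while smoothness is still being decreased, and cannot be rescued after factoring through the equal-order space; as written, the $p>q$ branch does not go through.
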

\begin{theorem}\lab{sobislip31}
Let $\Omega$ be a nonempty bounded open subset of $\mathbb{R}^n$ with
Lipschitz continuous boundary. Then $u: \Omega\rightarrow \reals$ is Lipschitz continuous if and only if $u\in W^{1,\infty}(\Omega)$. In particular, every function in $BC^1(\Omega)$ is Lipschitz continuous.
\end{theorem}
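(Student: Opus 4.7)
The plan is to prove the two implications separately, and dispose of the ``in particular'' clause at the end since it is an immediate corollary. For $u \in BC^1(\Omega)$, the boundedness of $\Omega$ together with continuity of the classical partial derivatives yields $u,\partial^i u \in L^\infty(\Omega)$, and these classical derivatives are weak derivatives (integration by parts against test functions in $C_c^\infty(\Omega)$ produces no boundary term), so $u \in W^{1,\infty}(\Omega)$, and the main equivalence then gives the Lipschitz conclusion.

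For the direction ``$u$ Lipschitz $\Rightarrow u \in W^{1,\infty}(\Omega)$,'' I would first observe that a Lipschitz function on a bounded set is bounded (fix one point of $\Omega$ and apply the Lipschitz estimate), hence $u \in L^\infty(\Omega)$. Next, by Rademacher's theorem $u$ is differentiable almost everywhere and $|\nabla u(x)| \leq L$ a.e., where $L$ is the Lipschitz constant. Identifying the a.e.\ pointwise gradient with the distributional gradient is accomplished by a standard mollification argument: mollifying $u$ by $\eta_\varepsilon$ on subdomains compactly contained in $\Omega$ yields smooth $u_\varepsilon$ with $\partial^i u_\varepsilon = u * \partial^i \eta_\varepsilon$, which can be rewritten via integration by parts so that $\partial^i u_\varepsilon \to (\partial^i u)_{\text{classical}}$ a.e.\ and in $L^1_{loc}$; the limit must equal the weak derivative.

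The harder direction is ``$u \in W^{1,\infty}(\Omega) \Rightarrow u$ Lipschitz,'' and the key structural input is \emph{quasi-convexity} of bounded Lipschitz domains: there exists $C=C(\Omega)>0$ such that any two points $x,y \in \Omega$ can be joined by a rectifiable path inside $\Omega$ of length at most $C|x-y|$. This follows from the local Lipschitz chart description of $\partial\Omega$ combined with a compactness and patching argument (cover $\bar\Omega$ by finitely many Lipschitz coordinate patches, in each of which the domain is locally a Lipschitz graph above a ball, hence locally quasi-convex with a uniform constant, then chain the local paths via a partition of unity argument as in Theorem \ref{winter3}). Given quasi-convexity, I would mollify: for each compactly contained subdomain $\Omega' \Subset \Omega$ the mollifications $u_\varepsilon \in C^\infty(\Omega')$ satisfy $\|\nabla u_\varepsilon\|_{L^\infty(\Omega')} \leq \|\nabla u\|_{L^\infty(\Omega)}$, and for $x,y \in \Omega'$ the fundamental theorem of calculus applied along a quasi-convex path $\gamma$ (pushed slightly into $\Omega'$) gives $|u_\varepsilon(x)-u_\varepsilon(y)| \leq \|\nabla u\|_{L^\infty(\Omega)} \cdot C|x-y|$. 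Letting $\varepsilon \to 0$ and then exhausting $\Omega$ by $\Omega' \nearrow \Omega$ produces the Lipschitz bound a.e., after which redefining $u$ on a null set gives a genuinely Lipschitz representative.

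The main obstacle is the quasi-convexity step for Lipschitz domains, and handling the path near $\partial\Omega$ so that it stays in $\Omega'$ where the mollified estimate is valid; the patching across coordinate charts is routine but notation-heavy, and this is where I expect most of the technical work to sit. The fallback, should a fully self-contained quasi-convexity argument be undesirable, is to invoke an $L^\infty$ extension operator (which for Lipschitz domains can be constructed by the same reflection/partition-of-unity scheme used for the $W^{s,p}$ extension operator in Theorem \ref{thm3.1}, preserving the $L^\infty$ norm of derivatives) and reduce to the convex case $\reals^n$ handled by Theorem \ref{thmfallconvexapostol1}.
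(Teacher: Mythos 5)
Your proposal is correct, but it takes a genuinely different route from the paper. The paper simply cites Evans (Chapter 5 of \cite{Evans2010}) and asserts the same argument works for Lipschitz boundaries; Evans' argument proves Lipschitz $\Rightarrow W^{1,\infty}$ via uniform bounds on difference quotients and weak-$*$ compactness in $L^\infty$ (avoiding Rademacher altogether), and proves $W^{1,\infty} \Rightarrow$ Lipschitz by extending $u$ to a compactly supported $W^{1,\infty}$ function on all of $\reals^n$, mollifying there, and exploiting the convexity of $\reals^n$ --- which is exactly your ``fallback.'' Your primary route replaces the extension operator with quasi-convexity of the domain, which has a real advantage: it makes explicit the actual geometric hypothesis doing the work (for a general bounded open set, $W^{1,\infty}(\Omega)$ coincides with the Lipschitz functions precisely when $\Omega$ is quasi-convex), and it sidesteps the need to construct a $W^{1,\infty}$ extension. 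Your Rademacher-based argument for the other direction is also fine, though it invokes a strictly stronger theorem than the difference-quotient argument requires.

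Two small points worth tightening. First, this paper's definition of Lipschitz domain does not require connectedness, so ``any two points of $\Omega$ are joined by a short path in $\Omega$'' is literally false when $\Omega$ has several components; the conclusion survives because a bounded Lipschitz open set has finitely many components at positive mutual distance, so for $x,y$ in different components one uses $|u(x)-u(y)| \leq 2\|u\|_{L^\infty} \leq (2\|u\|_{L^\infty}/d)\,|x-y|$, where $d$ is the minimal gap. Second, the obstacle you flag --- keeping the mollification defined along the path --- is less serious than you suggest: once $x$ and $y$ are fixed, the quasi-convex path $\gamma$ is a compact subset of the open set $\Omega$ and hence lies at some positive distance $\delta_\gamma$ from $\partial\Omega$; choosing $\varepsilon < \delta_\gamma$ makes $u_\varepsilon$ smooth in a neighborhood of $\gamma$, and the FTC estimate followed by $\varepsilon \to 0$ at Lebesgue points of $x$ and $y$ gives the bound a.e., after which you modify $u$ on the null set as you already note. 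The genuinely nonroutine piece of your argument is establishing quasi-convexity itself (the ``up--over--down'' path in each Lipschitz epigraph chart, the Lebesgue-number reduction for nearby points, and the finite path-diameter bound for distant points); calling that ``routine but notation-heavy'' slightly undersells it, but the plan is sound.
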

\begin{proof}
The above theorem is proved in Chapter 5 of \cite{Evans2010} for open sets with $C^1$ boundary. The exact same proof works for open sets with Lipschitz continuous boundary.
\end{proof}
The following theorem (and its corollary) will play an important role in our study of Sobolev spaces on manifolds.
\begin{theorem}[Multiplication by smooth
functions]\lab{thmfallmultsmooth20} Let $\Omega$ be a nonempty
bounded open set in $\reals^n$ with Lipschitz continuous boundary.
\begin{enumerateX}
\item Let $k\in \mathbb{N}_0$ and $1<p<\infty$. If $\varphi\in
BC^k(\Omega)$, then the linear map $W^{k,p}(\Omega)\rightarrow
W^{k,p}(\Omega)$ defined by $u\mapsto \varphi u$ is well-defined
and bounded.
\item Let $s\in (0,\infty)$ and $1<p<\infty$. If $\varphi\in
BC^{\floor{s},1}(\Omega)$ (all partial derivatives of $\varphi$
up to and including order $\floor{s}$ exist and are bounded and
Lipschitz continuous), then the linear map
$W^{s,p}(\Omega)\rightarrow W^{s,p}(\Omega)$ defined by $u\mapsto
\varphi u$ is well-defined and bounded.
\item Let $s\in (-\infty,0)$ and $1<p<\infty$. If $\varphi\in
BC^{\infty,1}(\Omega)$, then the linear map
$W^{s,p}(\Omega)\rightarrow W^{s,p}(\Omega)$ defined by $u\mapsto
\varphi u$ is well-defined and bounded.\\
\textbf{Note:} According to Theorem \ref{sobislip31}, when $\Omega$ is an open bounded set with Lipschitz continuous boundary, every function in $BC^1(\Omega)$ is Lipschitz continuous. As a consequence, $BC^{\infty,1}(\Omega)=BC^\infty(\Omega)$. Of course, as it was discussed after Theorem \ref{thmfallconvexapostol1}, for a general bounded open set $\Omega$ whose boundary is not Lipschitz, functions in $BC^\infty (\Omega)$ are not necessarily Lipschitz.
\end{enumerateX}
\end{theorem}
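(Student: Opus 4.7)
The plan is to treat the three parts in order of increasing difficulty, starting with the integer case, bootstrapping to the fractional case via the Leibniz rule, and then handling negative orders by duality.

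For part (1), I will apply the Leibniz formula $\partial^\alpha(\varphi u)=\sum_{\beta\le\alpha}\binom{\alpha}{\beta}\partial^\beta\varphi\,\partial^{\alpha-\beta}u$ for each multi-index $|\alpha|\le k$. Since $\varphi\in BC^k(\Omega)$, every $\partial^\beta\varphi$ with $|\beta|\le k$ is bounded, so $\|\partial^\beta\varphi\,\partial^{\alpha-\beta}u\|_{L^p(\Omega)}\le\|\partial^\beta\varphi\|_{L^\infty}\|\partial^{\alpha-\beta}u\|_{L^p(\Omega)}$, and summing gives $\|\varphi u\|_{W^{k,p}(\Omega)}\preceq\|\varphi\|_{BC^k(\Omega)}\|u\|_{W^{k,p}(\Omega)}$. (For $u$ a genuine distribution, I will first argue on $C^\infty(\Omega)\cap W^{k,p}(\Omega)$, which is dense by Theorem \ref{thm3.1} and a standard mollification on $\reals^n$.)

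For part (2), write $s=k+\theta$ with $k=\floor{s}$ and $\theta\in(0,1)$. Since $BC^{k,1}(\Omega)\subseteq BC^k(\Omega)$, part (1) already controls the $W^{k,p}$ piece of $\|\varphi u\|_{W^{s,p}}$. It remains to bound, for each $|\alpha|=k$, the Slobodeckij seminorm $|\partial^\alpha(\varphi u)|_{W^{\theta,p}(\Omega)}$. Using Leibniz, it suffices to estimate each product $\partial^\beta\varphi\cdot\partial^{\alpha-\beta}u$ in $W^{\theta,p}(\Omega)$. The standard splitting
\[
f(x)g(x)-f(y)g(y)=f(x)\bigl[g(x)-g(y)\bigr]+g(y)\bigl[f(x)-f(y)\bigr]
\]
with $f=\partial^\beta\varphi$, $g=\partial^{\alpha-\beta}u$, reduces the seminorm to two integrals. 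The first is $\|\partial^\beta\varphi\|_{L^\infty}^p\,|\partial^{\alpha-\beta}u|_{W^{\theta,p}(\Omega)}^p$; when $\beta=0$ this is $\preceq\|u\|_{W^{s,p}}^p$ directly, and when $|\beta|>0$ (so $|\alpha-\beta|<k$) it is controlled via the embedding $W^{s,p}(\Omega)\hookrightarrow W^{|\alpha-\beta|+\theta,p}(\Omega)$ from Theorem \ref{thm3.4}, which is where the Lipschitz boundary hypothesis enters decisively. The second integral uses that $\partial^\beta\varphi$ is Lipschitz with constant $L\preceq\|\varphi\|_{BC^{k,1}(\Omega)}$, giving $|\partial^\beta\varphi(x)-\partial^\beta\varphi(y)|^p\le L^p|x-y|^p$, and hence
\[
\int\!\!\int_{\Omega\times\Omega}\frac{|\partial^{\alpha-\beta}u(y)|^p|x-y|^p}{|x-y|^{n+\theta p}}\,dx\,dy
=\int_\Omega|\partial^{\alpha-\beta}u(y)|^p\!\int_\Omega\frac{dx}{|x-y|^{n+(\theta-1)p}}\,dy,
\]
where the inner integral is uniformly bounded in $y$ because $\Omega$ is bounded and $n+(\theta-1)p<n$ (Remark \ref{remfalldecomint1}, part 3), so the whole expression is $\preceq L^p\|\partial^{\alpha-\beta}u\|_{L^p(\Omega)}^p\preceq L^p\|u\|_{W^{k,p}(\Omega)}^p$.

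For part (3), I will use duality. By definition $W^{s,p}(\Omega)=\bigl(W^{-s,p'}_0(\Omega)\bigr)^*$ with $-s>0$. Given $\varphi\in BC^{\infty,1}(\Omega)=BC^\infty(\Omega)$ (equality by Theorem \ref{sobislip31}) and $\psi\in C_c^\infty(\Omega)$, the product $\varphi\psi$ is again in $C_c^\infty(\Omega)$, and part (2) applied with exponent $-s>0$ yields $\|\varphi\psi\|_{W^{-s,p'}(\Omega)}\preceq\|\psi\|_{W^{-s,p'}(\Omega)}$, where the implicit constant depends on $\varphi$ but not $\psi$. Since $C_c^\infty(\Omega)$ is dense in $W^{-s,p'}_0(\Omega)$, multiplication by $\varphi$ extends to a bounded linear operator $M_\varphi:W^{-s,p'}_0(\Omega)\to W^{-s,p'}_0(\Omega)$. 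Then for $u\in W^{s,p}(\Omega)$ I define $\varphi u\in W^{s,p}(\Omega)$ by $(\varphi u)(\psi):=u(M_\varphi\psi)=u(\varphi\psi)$, giving $\|\varphi u\|_{W^{s,p}(\Omega)}\le\|M_\varphi\|\,\|u\|_{W^{s,p}(\Omega)}$. A short verification using $\varphi\in L^\infty$ confirms that this definition agrees with the classical product when $u$ is a regular distribution.

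The main obstacle is the fractional case (2): the non-trivial step is showing that the ``lower-order'' contributions $|\partial^{\alpha-\beta}u|_{W^{\theta,p}}$ with $|\alpha-\beta|<k$ can in fact be absorbed into $\|u\|_{W^{s,p}}$, because this uses the non-obvious embedding $W^{s,p}(\Omega)\hookrightarrow W^{t,p}(\Omega)$ for $t\le s$, which fails on general (non-Lipschitz) domains (Fact 4 of the introduction). All other estimates are routine once the Lipschitz geometry of $\Omega$ is used to control $\int_\Omega|x-y|^{-n-(\theta-1)p}\,dx$ uniformly and to invoke an extension operator when needed.
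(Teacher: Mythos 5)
Your proof is correct and follows essentially the same strategy as the paper: Leibniz rule for the integer part, a direct Slobodeckij seminorm estimate for the fractional part using the Lipschitz regularity of $\partial^\beta\varphi$ together with the boundedness of $\int_\Omega|x-y|^{-n-(\theta-1)p}\,dx$, the embedding $W^{s,p}(\Omega)\hookrightarrow W^{t,p}(\Omega)$ on Lipschitz domains to absorb the lower-order terms, and duality for $s<0$. The only stylistic difference is that the paper treats the case $0<s<1$ as a black box (citing Demengel--Demengel) and then bootstraps to $s>1$ by applying that case to each Leibniz summand, whereas you unfold the $f(x)g(x)-f(y)g(y)$ splitting inline and treat all fractional $s$ in one unified computation; both routes rely on the same two non-trivial ingredients, namely the Lipschitz-domain embedding (Remark \ref{winter96}) and the Lipschitz continuity of $\partial^\beta\varphi$.
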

\begin{proof}
\leavevmode
\begin{itemizeX}
\item \textbf{Step 1:} $s=k\in \mathbb{N}_0$.
The claim is proved in (\cite{13}, Page 995).

\item \textbf{Step 2:} $0<s<1$.
The proof in Page 194 of \cite{Demengel2012}, with obvious
modifications, shows the validity of the claim for the case where
$s\in (0,1)$.

\item \textbf{Step 3:} $1<s\not \in \mathbb{N}$.
In this case we can proceed as follows: Let $k=\floor{s}$,
$\theta=s-k$.
\begin{align*}
\|\varphi u\|_{s,p}&=\|\varphi u\|_{k,p}+\sum_{|\nu|=k}\|\partial^{\nu}(\varphi u)\|_{\theta,p}\\
&\preceq \|\varphi u\|_{k,p}+\sum_{|\nu|=k}\sum_{\beta\leq
\nu}\|\partial^{\nu-\beta}\varphi  \partial^{\beta}u\|_{\theta,p}\\
&\preceq \|u\|_{k,p}+\sum_{|\nu|=k}\sum_{\beta\leq \nu}\|
\partial^{\beta}u\|_{\theta,p} \quad ({\fontsize{9}{9}{\textrm{by Step1 and
Step2; the implicit constant may depend on $\varphi$}}})\\
&=\|u\|_{s,p}+\sum_{|\nu|=k}\sum_{\beta<\nu}\|
\partial^{\beta}u\|_{\theta,p}\\
&\preceq \|u\|_{s,p}+\sum_{|\nu|=k}\sum_{\beta<\nu}\|u\|_{\theta+|\beta|,p}\quad (\partial^{\beta}:W^{\theta+|\beta|,p}(\Omega)\rightarrow W^{\theta,p}(\Omega) \textrm{is continuous} )\\
&\preceq \|u\|_{s,p}+\sum_{|\nu|=k}\sum_{\beta<\nu}\|u\|_{s,p}
\quad (\theta+|\beta|<s \Rightarrow
W^{s,p}(\Omega)\hookrightarrow W^{\theta+|\beta|,p}(\Omega))\\
&\preceq \|u\|_{s,p}.
\end{align*}
Note that the embedding $W^{s,p}(\Omega)\hookrightarrow
W^{\theta+|\beta|,p}(\Omega)$ is valid due to the extra assumption
that $\Omega$ is bounded with Lipschitz continuous boundary. (See Theorem \ref{thm3.2} and Remark \ref{winter96}).
\item \textbf{Step 4:} $s<0$.
For this case we use a duality argument. Note that since $\varphi\in C^\infty(\Omega)$,  $\varphi u$ is defined as an element of $D'(\Omega)$. Also recall that $W^{s,p}(\Omega)$ is isometrically isomorphic to $[C_c^\infty(\Omega),\|.\|_{-s,p'}]^*$ (see the discussion after Remark \ref{winter34}). So, in order to prove the claim, it is enough to show that multiplication by $\varphi$ is a well-defined continuous operator from $W^{s,p}(\Omega)$ to $A=[C_c^\infty(\Omega),\|.\|_{-s,p'}]^*$. We have
\begin{align*}
\|\varphi u\|_{A}&=\sup_{v\in
C_c^\infty\setminus\{0\}}\frac{|\langle \varphi
u,v\rangle_{D'(\Omega)\times D(\Omega)}|}{\|v\|_{-s,p'}} =\sup_{v\in
C_c^\infty\setminus\{0\}}\frac{|\langle u,\varphi
v\rangle_{D'(\Omega)\times D(\Omega)}|}{\|v\|_{-s,p'}}
\\
&\stackrel{\textrm{Remark \ref{winter99}}}{=}\sup_{v\in
C_c^\infty\setminus\{0\}}\frac{|\langle u,\varphi
v\rangle_{W^{s,p}(\Omega)\times W^{-s,p'}_0(\Omega)}|}{\|v\|_{-s,p'}}
\\
&\leq \sup_{v\in
C_c^\infty\setminus\{0\}}\frac{\|u\|_{s,p}\|\varphi
v\|_{-s,p'}}{\|v\|_{-s,p'}} \preceq \sup_{v\in
C_c^\infty\setminus\{0\}}\frac{\|u\|_{s,p}\|v\|_{-s,p'}}{\|v\|_{-s,p'}}=\|u\|_{s,p}.
\end{align*}
\end{itemizeX}
\end{proof}
\begin{corollary}\lab{corollarywinter92a}
Let $\Omega$ be a nonempty bounded open set in $\reals^n$ with
Lipschitz continuous boundary. Let $K\in \mathcal{K}(\Omega)$.
Suppose $s\in \reals$ and $p\in (1,\infty)$. If $\varphi\in
C^\infty(\Omega)$, then the linear map
$W^{s,p}_K(\Omega)\rightarrow W^{s,p}_K(\Omega)$ defined by
$u\mapsto \varphi u$ is well-defined and bounded.
\end{corollary}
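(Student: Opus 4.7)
The plan is to reduce the statement to Theorem \ref{thmfallmultsmooth20} by replacing the unbounded smooth function $\varphi$ with a compactly supported smooth function that agrees with $\varphi$ in a neighborhood of $K$, and hence agrees with $\varphi u$ for any $u$ supported in $K$.

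First, I would invoke the cutoff theorem cited earlier in the paper (the one guaranteeing $\psi \in C_c^\infty(\Omega)$ with values in $[0,1]$ and $\psi \equiv 1$ on a neighborhood of $K$) to pick such a $\psi$, and set $\tilde{\varphi} := \varphi\psi \in C_c^\infty(\Omega)$. Since $\tilde{\varphi}$ has compact support in $\Omega$, its extension by zero lies in $C_c^\infty(\reals^n)$; because $\reals^n$ is convex, the discussion following Theorem \ref{thmfallconvexapostol1} shows that every derivative of this extension is bounded and Lipschitz, so restricting back gives $\tilde{\varphi} \in BC^{\infty,1}(\Omega)$. Note that this step is where the Lipschitz boundary of $\Omega$ could be dispensed with for $\tilde{\varphi}$, but we do need it globally for the other ingredients used below.

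Next I would establish the identity $\varphi u = \tilde{\varphi} u$ in $D'(\Omega)$ for every $u \in W^{s,p}_K(\Omega)$. For $s\geq 0$ this holds pointwise since $\psi\equiv 1$ on a neighborhood of $K\supseteq \textrm{supp}\,u$. For $s<0$, $u$ is only a distribution, but the product $\varphi u$ still makes sense by Definition \ref{winter62}. For any $\eta \in C_c^\infty(\Omega)$ the test function $(\varphi-\tilde{\varphi})\eta = \varphi\eta(1-\psi)$ vanishes on the open neighborhood of $\textrm{supp}\,u$ where $\psi\equiv 1$, so Theorem \ref{winter67} gives
\[
\langle \varphi u, \eta\rangle - \langle \tilde{\varphi} u, \eta\rangle = \langle u, (\varphi-\tilde{\varphi})\eta\rangle = 0,
\]
which yields the desired identity.

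Having established this, I would apply Theorem \ref{thmfallmultsmooth20} to $\tilde{\varphi}\in BC^{\infty,1}(\Omega)$ (part 1 when $s\in\mathbb{N}_0$, part 2 when $s\in(0,\infty)\setminus\mathbb{N}$, part 3 when $s<0$; all three apply since $BC^{\infty,1}(\Omega)\subseteq BC^{\floor{s},1}(\Omega)\cap BC^{k}(\Omega)$), which gives a constant $C=C(\tilde{\varphi},s,p,\Omega)$ such that
\[
\|\varphi u\|_{W^{s,p}(\Omega)} = \|\tilde{\varphi} u\|_{W^{s,p}(\Omega)} \leq C\, \|u\|_{W^{s,p}(\Omega)}.
\]
Finally, by Theorem \ref{winter67}, $\textrm{supp}(\varphi u)\subseteq \textrm{supp}(\varphi)\cap \textrm{supp}(u)\subseteq K$, so $\varphi u \in W^{s,p}_K(\Omega)$ and the map is bounded as claimed. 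The only delicate step is the distributional identity $\varphi u=\tilde{\varphi} u$ in the case $s<0$, which is handled cleanly through the support characterization in Theorem \ref{winter67}; the rest is bookkeeping.
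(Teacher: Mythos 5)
Your proof is correct and follows essentially the same route as the paper: cut off $\varphi$ by a compactly supported $\psi$ that equals $1$ near $K$, observe $\varphi\psi\in C_c^\infty(\Omega)\subseteq BC^{\infty,1}(\Omega)$, invoke Theorem \ref{thmfallmultsmooth20}, and conclude via $\varphi u=\varphi\psi u$. Your write-up is actually a bit more careful than the paper's — you justify the identity $\varphi u=\tilde\varphi u$ at the distributional level for $s<0$ via Theorem \ref{winter67} rather than calling it obvious, and you explicitly verify the image lands in $W^{s,p}_K(\Omega)$ — but the underlying argument is identical.
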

\begin{proof}
Let $U$ be an open set such that $K\subset U\subseteq
\bar{U}\subseteq \Omega$. Let $\psi\in C_c^\infty (\Omega)$ be
such that $\psi=1$ on $K$ and $\psi=0$ outside $U$. Clearly
$\psi\varphi\in C_c^{\infty}(\Omega)$ and thus $\psi\varphi\in
BC^{\infty,1}(\Omega)$ (see the paragraph above Theorem \ref{thmfalllipbound1}). So it follows from Theorem
\ref{thmfallmultsmooth20} that $\|\psi\varphi u\|_{s,p}\preceq
\|u\|_{s,p}$ where the implicit constant in particular may depend
on $\varphi$ and $\psi$. Now the claim follows from the obvious
observation that for all $u\in W^{s,p}_K(\Omega)$, we have $\psi
\varphi u=\varphi u$.
\end{proof}
\begin{theorem}\lab{lemwinter2003}
Let $\Omega=\reals^n$ or $\Omega$ be a nonempty bounded open set in $\reals^n$ with Lipschitz continuous boundary. Let $K\subseteq \Omega$ be compact, $s\in \reals$ and $p\in (1,\infty)$. Then
\begin{enumerateX}
\item $W^{s,p}_K(\Omega)\subseteq W^{s,p}_0(\Omega)$. That is, every element of $W^{s,p}_K(\Omega)$ is a limit of a sequence in $C_c^\infty (\Omega)$;
\item  if  $K\subseteq V\subseteq K'\subseteq \Omega$ where
and $K'$ is compact and $V$ is open, then for every $u\in W^{s,p}_K(\Omega)$, there exists a sequence in
$C^\infty_{K'}(\Omega)$ that converges to $u$ in $W^{s,p}(\Omega)$.
\end{enumerateX}
\end{theorem}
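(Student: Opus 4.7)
Part (1) follows from part (2) by choosing any open $V$ and compact $K'$ with $K\subset V\subset K'\subset\Omega$; these exist by Theorem \ref{winter2}. The strategy for part (2) is the classical ``extend and mollify''. Namely, extend $u$ by zero across $\partial\Omega$ to a distribution $\tilde u$ on $\reals^n$ with $\supp\tilde u\subset K$ and $\tilde u\in W^{s,p}(\reals^n)$; convolve with a standard mollifier $\eta_\epsilon\in C_c^\infty(\reals^n)$, $\supp\eta_\epsilon\subset\overline{B_\epsilon(0)}$, to form $\tilde u_\epsilon:=\tilde u*\eta_\epsilon\in C^\infty(\reals^n)$ with $\supp\tilde u_\epsilon\subset K+\overline{B_\epsilon(0)}$. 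For $\epsilon<\mathrm{dist}(K,\reals^n\setminus V)$ this support lies in $V\subset K'$, so $\tilde u_\epsilon|_\Omega\in C_{K'}^\infty(\Omega)$, and letting $\epsilon\to 0$ will yield the required sequence. When $\Omega=\reals^n$ no extension is required.

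\textbf{Extension by zero.} The technical lemma is that $\mathrm{ext}^0_{\Omega,\reals^n}:W^{s,p}_K(\Omega)\to W^{s,p}(\reals^n)$ is well-defined and continuous for every $s\in\reals$, with image supported in $K$. For $s=k\in\mathbb{N}_0$ this is classical, since $\mathrm{dist}(K,\partial\Omega)>0$ lets distributional derivatives commute with zero-extension. For $s=k+\theta$ with $\theta\in(0,1)$, apply Remark \ref{remfalldecomint1} with $A=\Omega$: the only new contribution to $|\partial^\nu\tilde u|_{W^{\theta,p}(\reals^n)}^p$ beyond $|\partial^\nu u|_{W^{\theta,p}(\Omega)}^p$ is $2\int_\Omega\int_{\reals^n\setminus\Omega}|\partial^\nu u(x)|^p|x-y|^{-n-\theta p}\,dy\,dx$, and since $\supp\partial^\nu u\subset K$, the inner integral is bounded uniformly on $K$ by a constant depending on $\mathrm{dist}(K,\partial\Omega)$, giving a bound by $C\|u\|_{L^p(K)}^p$. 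For $s<0$ view $u\in\mathcal{E}'(\Omega)$, set $\tilde u:=\mathrm{ext}^0_{\Omega,\reals^n}u\in\mathcal{E}'(\reals^n)$ via Definition \ref{winter69}, and fix $\chi\in C_c^\infty(\Omega)$ equal to $1$ on a neighborhood of $K$. Then for every $\varphi\in C_c^\infty(\reals^n)$,
\[
\langle\tilde u,\varphi\rangle=\langle u,\varphi|_\Omega\rangle=\langle u,\chi\,\varphi|_\Omega\rangle\leq\|u\|_{W^{s,p}(\Omega)}\,\|\chi\,\varphi|_\Omega\|_{W^{-s,p'}(\Omega)},
\]
and the last factor is $\preceq\|\varphi\|_{W^{-s,p'}(\reals^n)}$ by continuity of restriction to $\Omega$ (valid since $-s\geq 0$) together with Corollary \ref{corollarywinter92a}.

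\textbf{Mollification, transfer to $\Omega$, and main obstacle.} Standard Sobolev theory (e.g.\ via the Bessel-potential/Besov characterizations in Remark \ref{remfallsobolevbesov1}) gives $\tilde u_\epsilon\to\tilde u$ in $W^{s,p}(\reals^n)$ for every $s\in\reals$ and $p\in(1,\infty)$. It remains to transfer this convergence to $\Omega$. For $s\geq 0$ this is immediate, since restricting the integration sets in the $L^p$- and Slobodeckij-pieces only decreases the norms: $\|\tilde u_\epsilon|_\Omega-u\|_{W^{s,p}(\Omega)}\leq\|\tilde u_\epsilon-\tilde u\|_{W^{s,p}(\reals^n)}\to 0$. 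The main obstacle is the case $s<0$, where the norm comparison must be performed dually. The key observation is that for all sufficiently small $\epsilon$, $\tilde u_\epsilon-\tilde u$ is supported in one fixed compact set $\hat K\subset\Omega$, so for $\varphi\in C_c^\infty(\Omega)$ with $\|\varphi\|_{W^{-s,p'}(\Omega)}\leq 1$ we may insert a fixed cutoff $\chi\in C_c^\infty(\Omega)$ equal to $1$ on $\hat K$ and estimate
\[
|\langle\tilde u_\epsilon|_\Omega-u,\varphi\rangle|=|\langle\tilde u_\epsilon-\tilde u,\chi\,\mathrm{ext}^0_{\Omega,\reals^n}\varphi\rangle|\leq\|\tilde u_\epsilon-\tilde u\|_{W^{s,p}(\reals^n)}\,\|\chi\,\mathrm{ext}^0_{\Omega,\reals^n}\varphi\|_{W^{-s,p'}(\reals^n)}.
\]
Because $\chi\,\mathrm{ext}^0_{\Omega,\reals^n}\varphi=\mathrm{ext}^0_{\Omega,\reals^n}(\chi\varphi)$ has support in the fixed compact $\supp\chi\Subset\Omega$, the same seminorm decomposition used in the extension-by-zero step (applied now with dual exponent $-s\geq 0$), together with Corollary \ref{corollarywinter92a}, bounds its $W^{-s,p'}(\reals^n)$-norm by $C\|\varphi\|_{W^{-s,p'}(\Omega)}$ with $C$ independent of $\varphi$. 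Taking the supremum yields $\|\tilde u_\epsilon|_\Omega-u\|_{W^{s,p}(\Omega)}\preceq\|\tilde u_\epsilon-\tilde u\|_{W^{s,p}(\reals^n)}\to 0$. The crux throughout is that the supports of the mollified approximants stay inside a single fixed compact subset of $\Omega$, which permits a single cutoff $\chi$ to furnish all estimates uniformly in $\epsilon$.
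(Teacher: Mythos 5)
Your proof is correct but takes a genuinely different route. The paper proves this in two lines: for $s\geq 0$ it uses the Meyers-Serrin theorem (Theorem \ref{winter94}) and for $s<0$ it uses Theorem \ref{winter93} to obtain $\varphi_i\in C^\infty(\Omega)$ with $\varphi_i\to u$ in $W^{s,p}(\Omega)$, then fixes $\psi\in C_c^\infty(\Omega)$ with $\psi\equiv 1$ on $K$ (and $\textrm{supp}\,\psi\subset V$ for part (2)) and applies the multiplication-by-smooth-functions results (Theorems \ref{winter87} and \ref{thmfallmultsmooth20}) to conclude $\psi\varphi_i\to\psi u=u$; this reuses the paper's existing infrastructure and never leaves $\Omega$. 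Your extend-mollify-restrict construction is more self-contained but obliges you to re-derive, by hand, the two-sided extension/restriction estimates valid for all $s\in\reals$ --- exactly the content of Theorem \ref{winter95}, Theorem \ref{thmfallextrensionzeroneg1}, and Corollary \ref{corofallusef1}, already available in the paper --- as well as the mollification convergence $\tilde u_\epsilon\to\tilde u$ in $W^{s,p}(\reals^n)$ for arbitrary real $s$, which is true but is not established in this manuscript. What your route buys is an explicit approximant with support in $K+\overline{B_\epsilon(0)}$, which gives quantitative support control the abstract argument does not. Two small repairs to your writeup: take $\chi=1$ on a \emph{neighborhood} of $\hat K$ (not merely on $\hat K$) so that the replacement $\langle\tilde u_\epsilon-\tilde u,\textrm{ext}^0\varphi\rangle=\langle\tilde u_\epsilon-\tilde u,\chi\,\textrm{ext}^0\varphi\rangle$ is licensed by Theorem \ref{winter67}; and the two appeals to Corollary \ref{corollarywinter92a} should instead cite Theorem \ref{thmfallmultsmooth20} (or Theorem \ref{winter87} when $\Omega=\reals^n$), since the function $\varphi$ you multiply by $\chi$ is not supported in a fixed compact subset of $\Omega$.
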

\begin{proof}
\begin{enumerateX}
\item Let $u\in W^{s,p}_K(\Omega)$. By Theorem \ref{winter93} and Theorem \ref{winter94}, there exists a sequence $\{\varphi_i\}$ in $C^\infty(\Omega)$ such that $\varphi_i\rightarrow u$ in $W^{s,p}(\Omega)$. Let $\psi\in C_c^\infty(\Omega)$ be such that $\psi=1$ on $K$. Since $C_c^\infty(\Omega)\subseteq BC^{\infty,1}(\Omega)$, it follows from Theorem \ref{winter87} and Theorem \ref{thmfallmultsmooth20} that $\psi \varphi_i\rightarrow \psi u$ in $W^{s,p}(\Omega)$. This proves the claim because $\psi\varphi_i\in C_c^\infty(\Omega)$ and $\psi u=u$.
\item In the above argument, choose $\psi\in C_c^\infty(\Omega)$ such that $\psi=1$ on $K$ and $\psi=0$ outside $V$.
 \end{enumerateX}
\end{proof}
\begin{theorem}[(\cite{Trie2002}, Page 496), (\cite{36}, Pages 317, 330, and
332)]\lab{winter92} Let $\Omega$ be a bounded Lipschitz domain in
$\reals^n$. Suppose $1<p<\infty$, $0\leq s<\frac{1}{p}$. Then
$C_c^{\infty}(\Omega)$ is dense in $W^{s,p}(\Omega)$ (thus
$W^{s,p}(\Omega)=W^{s,p}_0(\Omega)$).
%As a consequence for these
%values of $s$ and $p$, $[W^{s,p}(\Omega)]^*$ can be identified
%with a subspace of $D'(\Omega)$. Indeed, we have
%\begin{equation*}
%[W^{s,p}(\Omega)]^*=\tilde{W}^{-s,p'}(\Omega)
%\end{equation*}
\end{theorem}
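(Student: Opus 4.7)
The plan is to handle the case $s=0$ first as a classical fact (density of $C_c^\infty(\Omega)$ in $L^p(\Omega)$ by mollification and cutoff), and then focus on the interesting range $0<s<\tfrac{1}{p}$. The crucial technical input that makes the range $sp<1$ work is that the zero-extension operator
\begin{equation*}
\mathrm{ext}^0_{\Omega,\mathbb{R}^n}:W^{s,p}(\Omega)\rightarrow W^{s,p}(\mathbb{R}^n)
\end{equation*}
is bounded. I would establish this first, and then use translation plus mollification to produce the desired $C_c^\infty(\Omega)$ approximations.

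To prove boundedness of zero extension when $0<s<\tfrac{1}{p}$, I would localize via a finite partition of unity subordinate to a cover by boundary charts (afforded by the Lipschitz boundary hypothesis); each boundary chart can be flattened by a bi-Lipschitz map, and one uses the invariance of $W^{s,p}$ under bi-Lipschitz changes of variables to reduce to the model case $\Omega=\mathbb{R}^n_+$. On the half-space the $L^p$ part of the norm is preserved trivially, while the Slobodeckij seminorm of the zero extension splits, as in Remark \ref{remfalldecomint1}, into a piece over $\mathbb{R}^n_+\times\mathbb{R}^n_+$ (already controlled by $|u|_{W^{s,p}(\mathbb{R}^n_+)}$) and a cross term
\begin{equation*}
2\int_{\mathbb{R}^n_+}\int_{\mathbb{R}^n_-}\frac{|u(x)|^p}{|x-y|^{n+sp}}\,dy\,dx,
\end{equation*}
whose inner integral collapses to a constant multiple of $x_n^{-sp}$. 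Boundedness then reduces to a Hardy-type inequality
\begin{equation*}
\int_{\mathbb{R}^n_+}\frac{|u(x)|^p}{x_n^{sp}}\,dx\;\preceq\;\|u\|_{W^{s,p}(\mathbb{R}^n_+)}^p,
\end{equation*}
which holds precisely because $sp<1$, and this is the main obstacle; the standard proof proceeds via a one-dimensional Hardy inequality applied in the $x_n$ direction combined with the Slobodeckij seminorm.

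Once the extension operator is in hand, the approximation argument runs as follows. Given $u\in W^{s,p}(\Omega)$, set $\tilde u=\mathrm{ext}^0 u\in W^{s,p}(\mathbb{R}^n)$. Using a partition of unity subordinate to a cover consisting of one set compactly contained in $\Omega$ and finitely many boundary charts, it suffices to approximate each piece separately. For the interior piece, standard mollification yields a $C_c^\infty(\Omega)$ approximation directly. For each boundary piece, pull back to the half-space model and form $\tilde u_\tau(x',x_n):=\tilde u(x',x_n-\tau)$ for small $\tau>0$; translation is continuous on $W^{s,p}(\mathbb{R}^n)$, so $\tilde u_\tau\rightarrow \tilde u$ in $W^{s,p}(\mathbb{R}^n)$, while $\mathrm{supp}\,\tilde u_\tau\subseteq\{x_n\geq\tau\}$ lies strictly inside the half-space. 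Convolving $\tilde u_\tau$ with a Friedrichs mollifier of radius less than $\tau/2$ gives a $C_c^\infty(\mathbb{R}^n_+)$ function close to $\tilde u$ in $W^{s,p}(\mathbb{R}^n)$.

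Finally I would push these half-space approximations back through the bi-Lipschitz charts (which preserve $W^{s,p}$ and compact support in the interior) and sum, obtaining a sequence in $C_c^\infty(\Omega)$ that converges to $u$ in $W^{s,p}(\Omega)$; the continuity of the restriction $W^{s,p}(\mathbb{R}^n)\rightarrow W^{s,p}(\Omega)$ ensures the estimates transfer. Everything outside the Hardy estimate is routine: partition of unity, multiplication by smooth cutoffs (Theorem \ref{thmfallmultsmooth20}), bi-Lipschitz invariance, translation continuity, and mollification. The sharpness of the threshold $s=\tfrac{1}{p}$ is already visible in the Hardy step: for $sp\geq 1$ the integral $\int_{\mathbb{R}^n_-}|x-y|^{-n-sp}\,dy$ still produces $x_n^{-sp}$, but the corresponding weighted $L^p$ control fails (this is, up to a logarithmic borderline at $sp=1$, exactly what traces detect), which is why the conclusion cannot be pushed past $s<\tfrac{1}{p}$ without further hypotheses on $u$.
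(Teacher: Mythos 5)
The paper gives no proof of this theorem; it simply cites Triebel's books, where the result is obtained through the abstract machinery of $B^s_{p,q}$ and $F^s_{p,q}$ spaces, multipliers, duality, and interpolation. Your sketch is the standard elementary, direct proof (as in Grisvard, for instance): zero extension controlled by a fractional Hardy inequality, then translation and mollification. That strategy is sound, and your identification of the Hardy inequality $\int_{\mathbb{R}^n_+}|u|^p x_n^{-sp}\,dx\preceq\|u\|^p_{W^{s,p}(\mathbb{R}^n_+)}$, valid precisely for $sp<1$, as the mechanism behind the threshold $s<1/p$ is the right one. One small remark: the bi-Lipschitz invariance of $W^{s,p}$ for $0<s<1$ which you invoke is true (it follows by a direct change of variables in the Gagliardo seminorm), but it is not covered by Theorem \ref{thmfallcoc3} of the paper, which assumes $1$-smoothness; you are implicitly using a slightly stronger invariance statement than the paper records.

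There is, however, a genuine gap in the final step. You convolve the translated function $\tilde u_\tau$ in the half-space model to get a $C_c^\infty(\mathbb{R}^n_+)$ function, and then ``push these half-space approximations back through the bi-Lipschitz charts \ldots obtaining a sequence in $C_c^\infty(\Omega)$.'' This last claim fails: for a domain with merely Lipschitz boundary, the flattening map is only bi-Lipschitz, not $C^1$, so the pull-back of a $C^\infty$ function through it is only a compactly supported Lipschitz function, not an element of $C_c^\infty(\Omega)$. The argument as written does not produce smooth approximants. The fix is a simple reordering: translate inward in the chart, pull back to obtain a $W^{s,p}(\Omega)$ function whose support is a positive distance from $\partial\Omega$, and \emph{then} perform the final Friedrichs mollification in $\Omega$ itself (with mollification radius less than that distance), where smoothness is manufactured directly. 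Alternatively, skip the flattening: in each boundary chart where $\Omega$ is locally the Lipschitz subgraph $\{x_n>\gamma(x')\}$, translate the zero-extension vertically by $u_\tau(x',x_n):=\tilde u(x',x_n-\tau)$, which moves the support to $\{x_n\geq\gamma(x')+\tau\}$, a set at distance $\geq\tau/\sqrt{1+L^2}$ from $\partial\Omega$ ($L$ the Lipschitz constant), and then mollify. Everything else in your sketch — the decomposition of the cross term via Remark \ref{remfalldecomint1}, continuity of translation and of restriction, and the observation about what fails at $sp\geq 1$ — is correct.
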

\subsection{Properties Of Sobolev Spaces on General Domains}
In this section $\Omega$ and $\Omega'$ are arbitrary nonempty
open sets in $\reals^n$. We begin with some facts about the relationship between various Sobolev spaces defined on bounded domains.
\begin{itemizeXALI}
\item Suppose $s\geq 0$ and $\Omega'\subseteq \Omega$. Then for all $u\in
W^{s,p}(\Omega)$, we have $\textrm{res}_{\Omega,\Omega'} u\in
W^{s,p}(\Omega')$. Moreover $\|\textrm{res}_{\Omega,\Omega'}
u\|_{W^{s,p}(\Omega')}\leq \|u\|_{W^{s,p}(\Omega)}$. Indeed, if we let $s=k+\theta$
\begin{align*}
\|u\|_{W^{s,p}(\Omega')}&=\|u\|_{W^{k,p}(\Omega')}+\sum_{|\nu|=k}\big(\int\int_{\Omega'\times
\Omega' }\frac{|\partial^\nu
u(x)-\partial^\nu u(y)|^p}{|x-y|^{n+\theta p}}\,dxdy\big)^{\frac{1}{p}}\\
&=\sum_{|\alpha|\leq
k}\|\partial^\alpha u\|_{L^p(\Omega')}+\sum_{|\nu|=k}\big(\int\int_{\Omega'\times
\Omega' }\frac{|\partial^\nu u(x)-\partial^\nu
u(y)|^p}{|x-y|^{n+\theta
p}}\,dxdy\big)^{\frac{1}{p}}\\
&\leq \sum_{|\alpha|\leq
k}\|\partial^\alpha u\|_{L^p(\Omega)}+\sum_{|\nu|=k}\big(\int\int_{\Omega\times
\Omega}\frac{|\partial^\nu u(x)-\partial^\nu
u(y)|^p}{|x-y|^{n+\theta
p}}\,dxdy\big)^{\frac{1}{p}}=\|u\|_{W^{s,p}(\Omega)}
\end{align*}
So $\textrm{res}_{\Omega,\Omega'}: W^{s,p}(\Omega)\rightarrow
W^{s,p}(\Omega')$ is a continuous linear map. Also as a
consequence for every real number $s\geq 0$
\begin{equation*}
W^{s,p}(\bar{\Omega})\hookrightarrow W^{s,p}(\Omega)
\end{equation*}
Indeed, if $u\in W^{s,p}(\bar{\Omega})$, then there exists $v\in
W^{s,p}(\reals^n)$ such that $\textrm{res}_{\reals^n,\Omega}v=u$
and thus $u\in W^{s,p}(\Omega)$. Moreover, for every such $v$,
$\|u\|_{W^{s,p}(\Omega)}=\|\textrm{res}_{\reals^n,\Omega}v\|_{W^{s,p}(\Omega)}\leq
\|v\|_{W^{s,p}(\reals^n)} $. This implies that
\begin{equation*}
\|u\|_{W^{s,p}(\Omega)}\leq \inf_{v\in W^{s,p}(\reals^n),
v|_\Omega=u}\|v\|_{W^{s,p}(\reals^n)}=\|u\|_{W^{s,p}(\bar{\Omega})}
\end{equation*}
\item Clearly for all $s\geq 0$
\begin{equation*}
W^{s,p}_{00}(\Omega)\hookrightarrow W^{s,p}(\bar{\Omega})
\end{equation*}
\item (\cite{Gris85}, Page 18) For every integer $m>0$
\begin{equation*}
W^{m,p}_0(\Omega)\subseteq W^{m,p}_{00}(\Omega)\subseteq
W^{m,p}(\bar{\Omega})\subseteq W^{m,p}(\Omega)
\end{equation*}
\item Suppose $s\geq 0$. Clearly the restriction map $\textrm{res}_{\reals^n, \Omega}: W^{s,p}(\reals^n)\rightarrow
W^{s,p}(\bar{\Omega})$ is a continuous linear map. This combined
with the fact that $C_c^\infty(\reals^n)$ is dense in
$W^{s,p}(\reals^n)$ implies that $C_c^\infty
(\bar{\Omega}):=\textrm{res}_{\reals^n,
\Omega}(C_c^\infty(\reals^n))$ is dense in
$W^{s,p}(\bar{\Omega})$ for all $s\geq 0$.
\item $\tilde{W}^{s,p}(\bar{\Omega})$ is a closed subspace of
$W^{s,p}(\reals^n)$. Closed subspaces of reflexive spaces are
reflexive, hence $\tilde{W}^{s,p}(\bar{\Omega})$ is a reflexive
space.
\end{itemizeXALI}
\begin{theorem}\lab{thmfallreflexivity1}
Let $\Omega$ be a nonempty open set in $\reals^n$ and
$1<p<\infty$.
\begin{enumerateXALI}
\item For all $s\geq 0$, $W^{s,p}(\Omega)$ is reflexive.
\item For all $s\geq 0$, $W^{s,p}_0(\Omega)$ is reflexive.
\item For all $s<0$, $W^{s,p}(\Omega)$ is reflexive.
\end{enumerateXALI}
\end{theorem}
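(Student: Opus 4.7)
The plan is to handle the three cases in sequence, with the non-negative order case doing all the real work via an isometric embedding into a product of Lebesgue spaces, and then derive the remaining two cases by standard functional-analytic closure/duality arguments.

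First I would treat $W^{s,p}(\Omega)$ for $s=k+\theta\geq 0$ with $k\in\mathbb{N}_0$, $\theta\in[0,1)$. Let $N$ denote the number of multi-indices with $|\alpha|\le k$ and let $M$ denote the number with $|\nu|=k$. Using the equivalent norm from Remark \ref{remfallequivnorm120}, define
\begin{equation*}
J: W^{s,p}(\Omega)\rightarrow [L^p(\Omega)]^{\times N}\times [L^p(\Omega\times\Omega)]^{\times M}
\end{equation*}
by $J(u)=\bigl((\partial^\alpha u)_{|\alpha|\le k},\,(G_\nu u)_{|\nu|=k}\bigr)$, where for $\theta\in(0,1)$
\begin{equation*}
G_\nu u(x,y):=\frac{\partial^\nu u(x)-\partial^\nu u(y)}{|x-y|^{n/p+\theta}}
\end{equation*}
(and $G_\nu$ is simply omitted, with $M=0$, when $\theta=0$). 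By construction $J$ is linear and, with the product space equipped with the sum of the $L^p$-norms, an isometry onto its image. Since Theorem \ref{thmfallnormedreflexive1} tells us that a finite product of reflexive Banach spaces (with the sum norm) is again reflexive, the target of $J$ is a reflexive Banach space. The image $J(W^{s,p}(\Omega))$ is a closed subspace of this target because $W^{s,p}(\Omega)$ is complete (completeness of $W^{s,p}(\Omega)$ is part of the standard package established earlier in the section) and $J$ is an isometry, so Cauchy sequences in the image come from Cauchy sequences in $W^{s,p}(\Omega)$, which converge, and their images are the limits. By Theorem \ref{thmfallnormedreflexive1}(5), closed subspaces of reflexive Banach spaces are reflexive, so $W^{s,p}(\Omega)$ is reflexive.

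Next, for the $W^{s,p}_0(\Omega)$ case with $s\ge 0$: by definition $W^{s,p}_0(\Omega)$ is the closure of $C_c^\infty(\Omega)$ in $W^{s,p}(\Omega)$, hence a closed subspace of a reflexive Banach space. Another application of Theorem \ref{thmfallnormedreflexive1}(5) yields reflexivity.

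Finally, for $s<0$: by Definition \ref{winter84} we have $W^{s,p}(\Omega)=\bigl[W^{-s,p'}_0(\Omega)\bigr]^{*}$. Since $1<p<\infty$ implies $1<p'<\infty$ and $-s>0$, the previous step gives reflexivity of $W^{-s,p'}_0(\Omega)$. Then Theorem \ref{thmfallnormedreflexive1}(2) (a normed space is reflexive iff its dual is reflexive) immediately yields reflexivity of $W^{s,p}(\Omega)$.

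The only delicate point is verifying the closedness of $J(W^{s,p}(\Omega))$ in the fractional case; this rests on the completeness of $W^{s,p}(\Omega)$, which I will simply cite from the earlier treatment of Sobolev-Slobodeckij spaces on general domains (the proof amounts to noting that any Cauchy sequence in $W^{s,p}(\Omega)$ is Cauchy in each factor $L^p(\Omega)$ and $L^p(\Omega\times\Omega)$, and the pointwise a.e.\ limits of $\partial^\alpha u_m$ coincide with the distributional derivatives of the $L^p$-limit $u$, which one checks via testing against $C_c^\infty(\Omega)$).
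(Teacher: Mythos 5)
Your proof is correct and follows essentially the same route as the paper: isometrically embed $W^{s,p}(\Omega)$ into a finite product of $L^p$ spaces, use completeness to conclude the image is closed, invoke that closed subspaces of reflexive Banach spaces are reflexive, and then obtain parts 2 and 3 from the closed-subspace and dual-reflexivity facts in Theorem \ref{thmfallnormedreflexive1}. The only difference is cosmetic: the paper embeds into $W^{k,p}(\Omega)\times[L^p(\Omega\times\Omega)]^{\times r}$ and cites \cite{32} for reflexivity of the integer-order factor, whereas you decompose all the way to $[L^p(\Omega)]^{\times N}\times[L^p(\Omega\times\Omega)]^{\times M}$, which makes the argument uniform in $\theta\in[0,1)$ and self-contained; you also (correctly) use the signed difference $\partial^\nu u(x)-\partial^\nu u(y)$ rather than its absolute value in the Gagliardo kernel, which is what is needed for $J$ to be a \emph{linear} isometry --- the paper writes $|\cdot|$ there but clearly intends the signed version.
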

\begin{proof}
\leavevmode
\begin{enumerateXALI}
\item The proof for $s\in \mathbb{N}_0$ can be found in \cite{32}. Let $s=k+\theta$ where $k\in \mathbb{N}_0$ and
$0< \theta<1$.
\begin{equation*}
r=\#\{\nu\in \mathbb{N}_0^n: |\nu|=k\}
\end{equation*}
Define $P: W^{s,p}(\Omega)\rightarrow W^{k,p}(\Omega)\times
[L^p(\Omega\times \Omega)]^{\times r}$ by
\begin{equation*}
P(u)= (u, \bigg(\frac{|\partial^\nu
 u(x)-\partial^\nu u(y)|}{|x-y|^{\frac{n}{p}+\theta}}\bigg)_{|\nu|=k})
\end{equation*}
The space $W^{k,p}(\Omega)\times [L^p(\Omega\times
\Omega)]^{\times r}$ equipped with the norm
\begin{equation*}
\|(f,v_1,\cdots,v_r)\|:=\|f\|_{W^{k,p}(\Omega)}+\|v_1\|_{L^p(\Omega\times
\Omega )}+\cdots+\|v_r\|_{L^p(\Omega\times \Omega )}
\end{equation*}
is a product of reflexive spaces and so it is reflexive (see
Theorem \ref{thmfallnormedreflexive1}). Clearly the operator $P$
is an isometry from $W^{s,p}(\Omega)$ to $W^{k,p}(\Omega)\times
[L^p(\Omega\times \Omega)]^{\times r}$. Since $W^{s,p}(\Omega)$
is a Banach space, $P(W^{s,p}(\Omega))$ is a closed subspace of
the reflexive space $W^{k,p}(\Omega)\times [L^p(\Omega\times
\Omega)]^{\times r}$ and thus it is reflexive. Hence
$W^{s,p}(\Omega)$ itself is reflexive.
\item $W_0^{s,p}(\Omega)$ is the closure of $C_c^\infty(\Omega)$
in $W^{s,p}(\Omega)$. Closed subspaces of reflexive spaces are
reflexive. Therefore $W^{s,p}_0(\Omega)$ is reflexive.
\item A normed space $X$ is reflexive if and only if $X^*$ is
reflexive (see Theorem \ref{thmfallnormedreflexive1}). Since for
$s<0$ we have $W^{s,p}(\Omega)=[W_0^{-s,p'}(\Omega)]^*$, the
reflexivity of $W^{s,p}(\Omega)$ follows from the reflexivity of
$W^{-s,p'}_0(\Omega)$.
\end{enumerateXALI}
\end{proof}
\begin{theorem}\lab{winter93}
For all $s<0$ and $1<p<\infty$, $C_c^\infty(\Omega)$  is dense in
$W^{s,p}(\Omega)$.
\end{theorem}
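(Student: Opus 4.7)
The plan is to follow the standard duality argument that was used in Theorem \ref{winter82} and that is attributed to Adams in \cite{32}. Recall that by definition $W^{s,p}(\Omega)=[W_0^{-s,p'}(\Omega)]^*$ with $\frac{1}{p}+\frac{1}{p'}=1$. First I would clarify how $C_c^\infty(\Omega)$ sits inside $W^{s,p}(\Omega)$: every $\varphi\in C_c^\infty(\Omega)$ induces a functional $L_\varphi: W_0^{-s,p'}(\Omega)\to\reals$ by $L_\varphi(u)=\int_\Omega u\varphi\,dx$. Since $-s\geq 0$, we have the embedding $W_0^{-s,p'}(\Omega)\hookrightarrow L^{p'}(\Omega)$, so Hölder's inequality gives $|L_\varphi(u)|\leq \|\varphi\|_{L^p(\Omega)}\|u\|_{L^{p'}(\Omega)}\preceq \|\varphi\|_{L^p(\Omega)}\|u\|_{W_0^{-s,p'}(\Omega)}$, and the map $\varphi\mapsto L_\varphi$ is injective (distinct smooth compactly supported functions produce distinct $L^2$-pairings against themselves). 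This identifies $C_c^\infty(\Omega)$ with a linear subspace of $W^{s,p}(\Omega)$.

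Next I would invoke Hahn--Banach together with the reflexivity of $W_0^{-s,p'}(\Omega)$. Suppose for contradiction that the closure $Z$ of $C_c^\infty(\Omega)$ in $W^{s,p}(\Omega)$ is a proper closed subspace. Then there exists a nonzero continuous linear functional $F:W^{s,p}(\Omega)\to\reals$ that vanishes on $Z$, in particular on every $L_\varphi$ with $\varphi\in C_c^\infty(\Omega)$. By Theorem \ref{thmfallreflexivity1}, $W_0^{-s,p'}(\Omega)$ is reflexive, so the canonical map from $W_0^{-s,p'}(\Omega)$ into its bidual $[W^{s,p}(\Omega)]^*$ is surjective, and hence there exists $v\in W_0^{-s,p'}(\Omega)$ with $F(L)=L(v)$ for every $L\in W^{s,p}(\Omega)$. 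Applying this to $L=L_\varphi$ gives $\int_\Omega v\varphi\,dx=0$ for all $\varphi\in C_c^\infty(\Omega)$.

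Since $v\in L^{p'}(\Omega)\subseteq L^1_{loc}(\Omega)$, the vanishing of the pairing against every test function means that $v=0$ as a distribution in $D'(\Omega)$, and therefore $v=0$ as an element of $L^{p'}(\Omega)$. But $W_0^{-s,p'}(\Omega)\hookrightarrow L^{p'}(\Omega)$ continuously and injectively (the inclusion map has trivial kernel because it agrees with the identity on the dense subspace $C_c^\infty(\Omega)$), so $v=0$ in $W_0^{-s,p'}(\Omega)$, which forces $F=0$ and yields the desired contradiction.

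The only potentially delicate point is checking that the reflexivity identification is compatible with the concrete $L^2$-pairing used to embed $C_c^\infty(\Omega)$ into $W^{s,p}(\Omega)$; once one verifies that the canonical map $J:W_0^{-s,p'}(\Omega)\to [W^{s,p}(\Omega)]^*$ satisfies $J(v)(L_\varphi)=L_\varphi(v)=\int_\Omega v\varphi\,dx$, the rest of the argument is a mechanical application of Hahn--Banach. No new estimates are needed beyond Hölder's inequality, the definition $W^{s,p}(\Omega)=[W_0^{-s,p'}(\Omega)]^*$, and the already-established reflexivity of $W_0^{-s,p'}(\Omega)$.
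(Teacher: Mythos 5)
Your argument is correct and reproduces, in fully worked-out form, exactly the reflexivity-plus-Hahn--Banach strategy that the paper invokes by citing the proof of density of $L^p$ in $W^{-m,p}$ on page 65 of \cite{32}, adapted to $C_c^\infty(\Omega)$ and noninteger $s$. The one small nit is the justification you give for the injectivity of $W_0^{-s,p'}(\Omega)\hookrightarrow L^{p'}(\Omega)$: since $-s\geq 0$, elements of $W_0^{-s,p'}(\Omega)$ are by definition elements of $L^{p'}(\Omega)$ and the inclusion is set-theoretic, so no dense-subspace reasoning is needed there.
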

%(Page 24 of Grisvard says $C^\infty(\bar{\Omega})$)
\begin{proof}
The proof of the density of $L^p$ in $W^{m,p}$ in page 65 of \cite{32} for integer order Sobolev
spaces, which is based on the reflexivity of $W^{-m,p'}_0(\Omega)$,
works in the exact same way for establishing the density of $C_c^\infty(\Omega)$ in $W^{s,p}(\Omega)$.
\end{proof}
\begin{theorem}[Meyers-Serrin]\lab{winter94}
For all $s\geq 0$ and $p\in (1,\infty)$, $C^{\infty}(\Omega)\cap
W^{s,p}(\Omega)$ is dense in $W^{s,p}(\Omega)$.
\end{theorem}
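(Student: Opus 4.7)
The plan is to mimic the classical Meyers--Serrin argument: decompose $u$ by a locally finite smooth partition of unity, reduce the problem to approximating each compactly supported piece by a smooth function via mollification, and reassemble so that locally only finitely many terms contribute. The only genuinely new issue in the Sobolev--Slobodeckij range $s\notin\mathbb{N}_0$ is controlling the double integral defining the Slobodeckij seminorm under these two operations; this is where I expect the main technical work. The integer case $s=k$ is essentially the statement in Adams cited in the proof of Theorem \ref{winter78}, so I would focus on $s=k+\theta$ with $k\in\mathbb{N}_0$ and $\theta\in(0,1)$.

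Fix $u\in W^{s,p}(\Omega)$ and $\varepsilon>0$. Using Theorem \ref{winter3}, choose a locally finite open cover $\{V_j\}_{j\in\mathbb{N}_0}$ of $\Omega$ consisting of bounded open sets with $\bar V_j\subset\Omega$, together with a smooth partition of unity $\{\psi_j\}$ with $\mathrm{supp}(\psi_j)\subset V_j$. For each $j$, pick an auxiliary bounded open set $W_j$ with smooth (in particular Lipschitz) boundary such that $\mathrm{supp}(\psi_j)\subset W_j\subset \bar W_j\subset\Omega$; this is possible because $\mathrm{supp}(\psi_j)$ is a compact subset of the open set $\Omega$. Write, at least formally, $u=\sum_j \psi_j u$.

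The main technical claim is: each $\psi_j u$ lies in $W^{s,p}(\Omega)$, its zero extension $\widetilde{\psi_j u}$ lies in $W^{s,p}(\reals^n)$, and mollification converges in $W^{s,p}(\reals^n)$. The first and third of these are routine once the middle one is in hand, since smooth compactly supported functions are dense in $W^{s,p}(\reals^n)$ by Theorem \ref{winter78} and mollification $\eta_\varepsilon*g$ converges to $g$ in $W^{s,p}(\reals^n)$ for any $g\in W^{s,p}(\reals^n)$. For the middle one, Leibniz handles the $W^{k,p}$ portion; for a multi-index $|\nu|=k$, I split the double integral on $\reals^n\times\reals^n$ via the identity in Remark \ref{remfalldecomint1} with $A=W_j$, giving
\begin{equation*}
\int\!\!\int_{\reals^n\times\reals^n}\frac{|\partial^\nu\widetilde{\psi_j u}(x)-\partial^\nu\widetilde{\psi_j u}(y)|^p}{|x-y|^{n+\theta p}}\,dx\,dy=\int\!\!\int_{W_j\times W_j}\cdots+2\int_{W_j}\!\int_{\reals^n\setminus W_j}\frac{|\partial^\nu\widetilde{\psi_j u}(x)|^p}{|x-y|^{n+\theta p}}\,dy\,dx.
\end{equation*}
The cross integral is controlled by parts $(1)$ and $(2)$ of Remark \ref{remfalldecomint1} (using that $\mathrm{supp}(\partial^\nu(\psi_j u))$ is a compact subset of $W_j$) and the $L^p$-norm of $\partial^\nu(\psi_j u)$. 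The $W_j\times W_j$ integral is the $W^{\theta,p}(W_j)$-seminorm of $\partial^\nu(\psi_j u)$, and since $W_j$ is bounded with Lipschitz boundary I can combine the Leibniz expansion, Theorem \ref{thmfallmultsmooth20} on multiplication by smooth functions, and the embedding $W^{s-|\beta|,p}(W_j)\hookrightarrow W^{\theta,p}(W_j)$ from Theorem \ref{thm3.4} to bound every summand. This is the step I expect to require the most care, because it is precisely where the lack of a general embedding $W^{1,p}\hookrightarrow W^{\theta,p}$ on arbitrary domains (Fact~4 in the introduction) must be circumvented by passing through the Lipschitz auxiliary set $W_j$.

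With that claim established, I choose $\varepsilon_j>0$ small enough so that $\mathrm{supp}(\eta_{\varepsilon_j}*\widetilde{\psi_j u})\subset W_j$ and $\|\eta_{\varepsilon_j}*\widetilde{\psi_j u}-\widetilde{\psi_j u}\|_{W^{s,p}(\reals^n)}<\varepsilon/2^{j+1}$, and define $v_j:=(\eta_{\varepsilon_j}*\widetilde{\psi_j u})|_\Omega\in C_c^\infty(\Omega)$. Set $v:=\sum_j v_j$. Because $\{W_j\}$ (and hence $\{\mathrm{supp}(v_j)\}$) is locally finite in $\Omega$, the sum is locally finite, so $v\in C^\infty(\Omega)$. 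Using that $W^{s,p}(\reals^n)\to W^{s,p}(\Omega)$ is norm-decreasing (restriction bounds every term of the Slobodeckij integral) and that $\|\cdot\|_{W^{s,p}(\Omega)}$ is a seminorm which respects countable sums with absolutely summable norms, I obtain
\begin{equation*}
\|u-v\|_{W^{s,p}(\Omega)}\leq \sum_j \|\psi_j u-v_j\|_{W^{s,p}(\Omega)}\leq \sum_j \|\widetilde{\psi_j u}-\eta_{\varepsilon_j}*\widetilde{\psi_j u}\|_{W^{s,p}(\reals^n)}<\varepsilon,
\end{equation*}
which simultaneously shows that the series defining $v$ converges in $W^{s,p}(\Omega)$ and that $v\in C^\infty(\Omega)\cap W^{s,p}(\Omega)$ approximates $u$ to within $\varepsilon$, completing the proof.
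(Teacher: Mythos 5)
The paper states Theorem~\ref{winter94} without proof, so there is no paper argument to compare against; I will evaluate your proposal on its own merits. Your approach is the classical Meyers--Serrin strategy---partition of unity, mollification, locally finite reassembly---and the essential adaptation to the Slobodeckij range is exactly the one needed: you route the multiplication-and-extension step through an auxiliary bounded Lipschitz domain $W_j$, so that the embedding $W^{s-|\beta|,p}(W_j)\hookrightarrow W^{\theta,p}(W_j)$ (Theorem~\ref{thm3.4}) and the multiplication theorem (Theorem~\ref{thmfallmultsmooth20}) are available; this sidesteps the failure of $W^{1,p}(\Omega)\hookrightarrow W^{\theta,p}(\Omega)$ on arbitrary $\Omega$ (Remark~\ref{winter96}). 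Combined with Remark~\ref{remfalldecomint1} to control the cross integral and Theorem~\ref{winter95} for extension by zero, the claim $\widetilde{\psi_j u}\in W^{s,p}(\reals^n)$ indeed goes through. So the proof is essentially correct.

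There is one gap worth closing. As written you only impose $\bar W_j\subset\Omega$, which does not by itself make $\{W_j\}$ (and hence $\{\mathrm{supp}(v_j)\}$) locally finite; that assertion is used both to conclude $v\in C^\infty(\Omega)$ and to pass to the limit in the final estimate. You should additionally require $\mathrm{supp}(\psi_j)\subset W_j\subset\bar W_j\subset V_j$. This is achievable: $\mathrm{supp}(\psi_j)$ is a compact subset of the open set $V_j$, so one may cover it by finitely many balls with closures in $V_j$, build a smooth bump function from these, and take a regular sublevel/superlevel set (Sard) to obtain a bounded $W_j$ with smooth boundary sitting between $\mathrm{supp}(\psi_j)$ and $V_j$; local finiteness of $\{W_j\}$ is then inherited from $\{V_j\}$ in Theorem~\ref{winter3}. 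Also, for the closing inequality, ``the norm respects countable sums'' is more cleanly replaced by a completeness argument: $W^{s,p}(\Omega)$ is a Banach space (reflexive by Theorem~\ref{thmfallnormedreflexive1}/Theorem~\ref{thmfallreflexivity1}), the partial sums $g_N=\sum_{j\le N}(\psi_j u-v_j)$ are Cauchy with $\sup_N\|g_N\|_{W^{s,p}(\Omega)}<\varepsilon$, hence converge to some $g$ in $W^{s,p}(\Omega)$; local finiteness gives $g_N\to u-v$ a.e.\ on $\Omega$, which identifies $g=u-v$ and yields $\|u-v\|_{W^{s,p}(\Omega)}<\varepsilon$.
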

Next we consider \emph{extension by zero} and its properties.
\begin{lemma}(\cite{33}, Page 201)\lab{lemfallextension1}
Let $\Omega$ be a nonempty open set in $\reals^n$ and $u\in
W^{m,p}_0(\Omega)$ where $m\in \mathbb{N}_0$ and $1<p<\infty$.
Then
\begin{enumerate}
\item $\forall\, |\alpha|\leq m$, $\partial^\alpha \tilde{u}=\widetilde{(\partial^\alpha
u)}$ as elements of $D'(\reals^n)$.
\item $\tilde{u}\in W^{m,p}(\reals^n)$ with $\|\tilde{u}\|_{W^{m,p}(\reals^n)}=\|u\|_{W^{m,p}(\Omega)}$
\end{enumerate}
Here, $\tilde{u}:=\textrm{ext}^0_{\Omega,\reals^n} u$ and
$\widetilde{(\partial^\alpha
u)}:=\textrm{ext}^0_{\Omega,\reals^n}(\partial^\alpha u)$.
\end{lemma}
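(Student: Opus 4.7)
The plan is to exploit the definition of $W^{m,p}_0(\Omega)$ as the closure of $C_c^\infty(\Omega)$ in $W^{m,p}(\Omega)$: the lemma is essentially trivial for test functions, and both claims will be obtained by passing to the limit along an approximating sequence. I would fix $u \in W^{m,p}_0(\Omega)$ and choose $\{\varphi_j\}\subseteq C_c^\infty(\Omega)$ with $\varphi_j \to u$ in $W^{m,p}(\Omega)$.

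The first preliminary step is to record the behavior of extension by zero on test functions. For $\varphi \in C_c^\infty(\Omega)$ the pointwise extension $\tilde\varphi=\textrm{ext}^0_{\Omega,\reals^n}\varphi$ lies in $C_c^\infty(\reals^n)$ (it vanishes in a neighborhood of $\partial\Omega$ because $\textrm{supp}\,\varphi$ is a compact subset of $\Omega$), and $\partial^\alpha \tilde\varphi = \widetilde{\partial^\alpha\varphi}$ pointwise for every multi-index $\alpha$. Consequently, for any $|\alpha|\le m$,
\begin{equation*}
\|\partial^\alpha \tilde\varphi\|_{L^p(\reals^n)}=\|\widetilde{\partial^\alpha\varphi}\|_{L^p(\reals^n)}=\|\partial^\alpha\varphi\|_{L^p(\Omega)},
\end{equation*}
so $\|\tilde\varphi\|_{W^{m,p}(\reals^n)}=\|\varphi\|_{W^{m,p}(\Omega)}$, and likewise for differences: $\|\tilde\varphi_j - \tilde\varphi_k\|_{W^{m,p}(\reals^n)}=\|\varphi_j - \varphi_k\|_{W^{m,p}(\Omega)}$.

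Since $\{\varphi_j\}$ is Cauchy in $W^{m,p}(\Omega)$, the isometry above shows $\{\tilde\varphi_j\}$ is Cauchy in $W^{m,p}(\reals^n)$, hence converges to some $v\in W^{m,p}(\reals^n)$ by completeness. On the other hand, since $\tilde\varphi_j \to \tilde u$ in $L^p(\reals^n)$ (again by the isometry applied to $\varphi_j - u$ in the $L^p$ coordinate), and $L^p$ convergence implies convergence in $D'(\reals^n)$, and $W^{m,p}(\reals^n)$ convergence also implies convergence in $D'(\reals^n)$, uniqueness of distributional limits forces $v = \tilde u$. This gives $\tilde u \in W^{m,p}(\reals^n)$ with $\|\tilde u\|_{W^{m,p}(\reals^n)}=\lim_j\|\tilde\varphi_j\|_{W^{m,p}(\reals^n)}=\lim_j \|\varphi_j\|_{W^{m,p}(\Omega)}=\|u\|_{W^{m,p}(\Omega)}$, which is claim (2).

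For claim (1), observe two facts about the same approximating sequence: first, $\partial^\alpha \tilde\varphi_j = \widetilde{\partial^\alpha\varphi_j} \to \widetilde{\partial^\alpha u}$ in $L^p(\reals^n)$ (since $\partial^\alpha \varphi_j \to \partial^\alpha u$ in $L^p(\Omega)$ and extension by zero is an $L^p$-isometry), hence in $D'(\reals^n)$; second, $\tilde\varphi_j \to \tilde u$ in $D'(\reals^n)$ and distributional differentiation is continuous, so $\partial^\alpha \tilde\varphi_j \to \partial^\alpha \tilde u$ in $D'(\reals^n)$. The two limits must coincide, yielding $\partial^\alpha \tilde u = \widetilde{\partial^\alpha u}$ as distributions on $\reals^n$. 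There is no real obstacle here; the only subtlety is the notational one of reconciling the distributional extension operator $\textrm{ext}^0_{\Omega,\reals^n}$ (defined earlier on test functions and compactly supported distributions) with the naive pointwise extension by zero of an $L^p$-function, but since $W^{m,p}_0(\Omega)\subseteq L^p(\Omega)\subseteq L^1_{loc}(\Omega)$, the pointwise extension is itself in $L^1_{loc}(\reals^n)$ and defines a distribution in the obvious way, and both interpretations agree on the dense subset $C_c^\infty(\Omega)$.
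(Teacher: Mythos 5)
Your proof is correct and uses the standard argument: the identity $\partial^\alpha\tilde\varphi=\widetilde{\partial^\alpha\varphi}$ is elementary for $\varphi\in C_c^\infty(\Omega)$, extension by zero is a $W^{m,p}$-isometry on test functions, and the general case follows by density of $C_c^\infty(\Omega)$ in $W^{m,p}_0(\Omega)$ together with uniqueness of distributional limits. The paper gives no proof of its own (it cites \cite{33}, page 201), and the density argument you give is the one any reference would supply for this lemma; nothing is missing.
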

\begin{lemma}[\cite{12},Page 546]\lab{lemfallextension2}
Let $\Omega$ be a nonempty open set in $\reals^n$, $K\in
\mathcal{K}(\Omega)$, $u\in W^{s,p}_K(\Omega)$ where $s\in (0,1)$
and $1<p<\infty$. Then $\textrm{ext}^0_{\Omega,\reals^n}u\in W^{s,p}(\reals^n)$ and
\begin{equation*}
\|\textrm{ext}^0_{\Omega,\reals^n}\|_{W^{s,p}(\reals^n)}\preceq \|u\|_{W^{s,p}(\Omega)}
\end{equation*}
where the implicit constant depends on $n, p, s, K$ and $\Omega$.
\end{lemma}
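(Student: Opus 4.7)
Set $\tilde u := \textrm{ext}^0_{\Omega,\reals^n} u$. Since $u\in L^p(\Omega)$ and the extension is by zero, $\tilde u$ is Lebesgue measurable on $\reals^n$ and
\[
\|\tilde u\|_{L^p(\reals^n)} = \|u\|_{L^p(\Omega)} \leq \|u\|_{W^{s,p}(\Omega)}.
\]
So $\tilde u\in L^p(\reals^n)$, and the only work is to estimate the Gagliardo seminorm of $\tilde u$ on $\reals^n$ by $\|u\|_{W^{s,p}(\Omega)}$.

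The plan is to apply the decomposition in Remark \ref{remfalldecomint1} with $A=\Omega$, which is legitimate since $\textrm{supp}\,\tilde u\subseteq K\subseteq \Omega$. This splits the double integral as
\[
\int\!\!\int_{\reals^n\times \reals^n}\frac{|\tilde u(x)-\tilde u(y)|^p}{|x-y|^{n+sp}}\,dx\,dy
= \int\!\!\int_{\Omega\times \Omega}\frac{|\tilde u(x)-\tilde u(y)|^p}{|x-y|^{n+sp}}\,dx\,dy
+ 2\int_{\Omega}\!\!\int_{\reals^n\setminus \Omega}\frac{|\tilde u(x)|^p}{|x-y|^{n+sp}}\,dy\,dx.
\]
The first term is exactly $|u|_{W^{s,p}(\Omega)}^p$, because $\tilde u\equiv u$ on $\Omega$. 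For the second term, the inner integrand vanishes unless $x\in K$ (since $\tilde u$ is supported in $K$), so it reduces to
\[
2\int_{K}|u(x)|^p\Bigl(\int_{\reals^n\setminus \Omega}\frac{dy}{|x-y|^{n+sp}}\Bigr)dx.
\]

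At this stage I would invoke part (2) of Remark \ref{remfalldecomint1}: since $K$ is compact, $\Omega$ is open, $K\subseteq \Omega$, and $n+sp>n$, there is a constant $C=C(n,p,s,K,\Omega)$ with
\[
\sup_{x\in K}\int_{\reals^n\setminus \Omega}\frac{dy}{|x-y|^{n+sp}}\leq C.
\]
The key point here (and the one place the dependence on $\Omega$ and $K$ enters) is that $R:=\tfrac12\,\mathrm{dist}(K,\Omega^c)>0$, so for every $x\in K$ the complement $\reals^n\setminus \Omega$ is contained in $\reals^n\setminus B_R(x)$, and the integral is controlled by $\sigma(S^{n-1})\int_R^\infty r^{-1-sp}\,dr<\infty$. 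Consequently the second term is bounded by $2C\,\|u\|_{L^p(\Omega)}^p$.

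Putting everything together,
\[
\|\tilde u\|_{W^{s,p}(\reals^n)}^p
= \|u\|_{L^p(\Omega)}^p + |u|_{W^{s,p}(\Omega)}^p + 2\int_{K}\!\!\int_{\reals^n\setminus \Omega}\frac{|u(x)|^p}{|x-y|^{n+sp}}\,dy\,dx
\leq (1+2C)\,\|u\|_{W^{s,p}(\Omega)}^p,
\]
which is the desired estimate with implicit constant depending on $n,p,s,K,\Omega$. There is no serious obstacle in this argument; the only delicate ingredient is the strict separation $\mathrm{dist}(K,\Omega^c)>0$ that makes the ``exterior tail'' integral finite and uniformly bounded on $K$, and this is precisely why the support condition $u\in W^{s,p}_K(\Omega)$ (as opposed to just $u\in W^{s,p}(\Omega)$) is essential.
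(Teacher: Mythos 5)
Your argument is correct, and it is worth noting that the paper itself does \emph{not} supply a proof of Lemma~\ref{lemfallextension2}: it only cites \cite{12}. What you have produced is a clean, self-contained derivation built entirely from the paper's own Remark~\ref{remfalldecomint1}, which is consistent with how that remark is used elsewhere in the manuscript (the continuity estimate in Theorem~\ref{winter78}, and the \emph{reverse} inequality $\|\tilde u\|_{W^{s,p}(\reals^n)}\geq\|u\|_{W^{s,p}(\Omega)}$ in Theorem~\ref{winter95}, both of which invoke the same decomposition). So this is a genuinely different, and more self-contained, route than the paper's citation. The only cosmetic point worth tidying is the final displayed identity: you write $\|\tilde u\|_{W^{s,p}(\reals^n)}^p=\|u\|_{L^p(\Omega)}^p+|u|_{W^{s,p}(\Omega)}^p+\cdots$, whereas the paper's norm is the plain sum $\|\cdot\|_{L^p}+|\cdot|_{W^{s,p}}$, not the $\ell^p$-sum of the two pieces; by Remark~\ref{remfallequivnorm120} the two are equivalent, so your conclusion stands after adjusting the implicit constant, but the equality sign should be a $\simeq$. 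Everything else, in particular the reduction of the cross term to an integral over $K$ and the use of $\operatorname{dist}(K,\Omega^c)>0$ to bound $\int_{\reals^n\setminus\Omega}|x-y|^{-(n+sp)}\,dy$ uniformly over $x\in K$, matches Remark~\ref{remfalldecomint1} item~(2) exactly and is correct.
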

\begin{theorem}[Extension by Zero]\lab{winter95}
Let $s\geq 0$ and $p\in (1,\infty)$. Let $\Omega$ be a nonempty
open set in $\reals^n$ and let $K\in \mathcal{K}(\Omega)$. Suppose
$u\in W_K^{s,p}(\Omega)$. Then
\begin{enumerate}
\item $\textrm{ext}_{\Omega,\reals^n}^0 u\in W^{s,p}(\reals^n)$.
Indeed,
$\|\textrm{ext}^0_{\Omega,\reals^n}u\|_{W^{s,p}(\reals^n)}\preceq
\|u\|_{W^{s,p}(\Omega)}$ where the implicit constant may depend
on $s, p, n, K, \Omega$ but it is independent of $u\in
W^{s,p}_K(\Omega)$.
\item Moreover,
\begin{equation*}
\|\textrm{ext}_{\Omega,\reals^n}^0 u\|_{W^{s,p}(\reals^n)}\geq
\|u\|_{W^{s,p}(\Omega)}
\end{equation*}
\end{enumerate}
In short
$\|\textrm{ext}^0_{\Omega,\reals^n}u\|_{W^{s,p}(\reals^n)}\simeq
\|u\|_{W^{s,p}(\Omega)}$.
\end{theorem}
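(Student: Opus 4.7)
\textbf{Part (2)} is immediate from the definition of the Sobolev norms. Write $\tilde u:=\textrm{ext}^0_{\Omega,\reals^n}u$. Since $\tilde u$ agrees with $u$ on $\Omega$ and its distributional partial derivatives of order $\leq k$ restrict on $\Omega$ to the corresponding weak derivatives of $u$, every summand appearing in one of the equivalent expressions for $\|u\|_{W^{s,p}(\Omega)}$ is an integral over $\Omega$ or $\Omega\times\Omega$ whose integrand is identical to the one in the corresponding summand of $\|\tilde u\|_{W^{s,p}(\reals^n)}$. Enlarging the domain of integration from $\Omega$ to $\reals^n$ (resp.\ $\Omega\times\Omega$ to $\reals^n\times\reals^n$) only increases the integral, which gives $\|u\|_{W^{s,p}(\Omega)}\leq \|\tilde u\|_{W^{s,p}(\reals^n)}$.

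\textbf{Part (1).} Write $s=k+\theta$ with $k\in\mathbb{N}_0$ and $\theta\in[0,1)$. Choose an open set $V$ and a compact set $K'$ with $K\subseteq V\subseteq K'\subseteq\Omega$, and apply Theorem~\ref{lemwinter2003}(2) to obtain a sequence $\{\varphi_i\}\subseteq C^\infty_{K'}(\Omega)$ with $\varphi_i\to u$ in $W^{s,p}(\Omega)$. The plan is to first establish the inequality
\begin{equation*}
\|\tilde\varphi\|_{W^{s,p}(\reals^n)}\leq C\,\|\varphi\|_{W^{s,p}(\Omega)}\qquad\forall\,\varphi\in C^\infty_{K'}(\Omega),
\end{equation*}
with a constant $C=C(K',\Omega,s,p,n)$ independent of $\varphi$, and then to pass to the limit. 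Applied to the differences $\varphi_i-\varphi_j\in C^\infty_{K'}(\Omega)$, this estimate makes $\{\tilde\varphi_i\}$ a Cauchy sequence in $W^{s,p}(\reals^n)$, hence convergent to some $w$. Since $W^{s,p}(\reals^n)\hookrightarrow L^p(\reals^n)$ and $\tilde\varphi_i\to\tilde u$ in $L^p(\reals^n)$ (because $\varphi_i\to u$ in $L^p(\Omega)$), uniqueness of $L^p$ limits forces $w=\tilde u$. Passing to the limit in $\|\tilde\varphi_i\|_{W^{s,p}(\reals^n)}\leq C\|\varphi_i\|_{W^{s,p}(\Omega)}$ then yields the desired conclusion.

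To prove the bound on $C^\infty_{K'}(\Omega)$, note $\tilde\varphi\in C^\infty_c(\reals^n)$ with $\partial^\nu\tilde\varphi=\widetilde{\partial^\nu\varphi}$ for every $|\nu|\leq k$, so the integer part contributes $\|\tilde\varphi\|_{W^{k,p}(\reals^n)}=\|\varphi\|_{W^{k,p}(\Omega)}$ trivially. For the Gagliardo seminorm with $|\nu|=k$ (relevant only when $\theta>0$), apply part~(1) of Remark~\ref{remfalldecomint1} with $A=\Omega$; because $\textrm{supp}\,\partial^\nu\tilde\varphi\subseteq K'\subseteq\Omega$, this decomposition yields
\begin{align*}
\int\!\!\int_{\reals^n\times\reals^n}\frac{|\partial^\nu\tilde\varphi(x)-\partial^\nu\tilde\varphi(y)|^p}{|x-y|^{n+\theta p}}\,dx\,dy
&= \int\!\!\int_{\Omega\times\Omega}\frac{|\partial^\nu\varphi(x)-\partial^\nu\varphi(y)|^p}{|x-y|^{n+\theta p}}\,dx\,dy \\
&\quad + 2\int_{K'}\!\!\int_{\reals^n\setminus\Omega}\frac{|\partial^\nu\varphi(x)|^p}{|x-y|^{n+\theta p}}\,dy\,dx.
\end{align*}
The first term on the right is bounded by $|\partial^\nu\varphi|_{W^{\theta,p}(\Omega)}^p\leq\|\varphi\|_{W^{s,p}(\Omega)}^p$. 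For the second, part~(2) of Remark~\ref{remfalldecomint1}, applied with $\alpha=n+\theta p>n$ and compact set $K'\subseteq\Omega$, bounds the inner integral by a constant $C=C(K',\Omega,n,\theta p)$ uniformly for $x\in K'$, producing the term $\leq C\|\partial^\nu\varphi\|_{L^p(\Omega)}^p\leq C\|\varphi\|_{W^{s,p}(\Omega)}^p$.

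The main obstacle is making the constant in the cross term uniform across the whole approximating sequence. This is exactly why one must \emph{first} pick the intermediate compact set $K'$ with $\textrm{dist}(K',\partial\Omega)>0$ and \emph{then} invoke the density theorem to produce $\varphi_i\in C^\infty_{K'}(\Omega)$: the single positive distance $\textrm{dist}(K',\partial\Omega)$ is what part~(2) of Remark~\ref{remfalldecomint1} requires in order to deliver one constant that works simultaneously for every $\varphi_i$. With that choice of $K'$ in place the Cauchy/density argument closes without further difficulty.
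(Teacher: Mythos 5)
Your proof takes a genuinely different route from the paper's, and it contains a real gap in the density step. The paper proves part~(1) directly, without any approximation: for $s=m+\theta$ it writes $\|\tilde u\|_{W^{s,p}(\reals^n)}=\|\tilde u\|_{W^{m,p}(\reals^n)}+\sum_{|\nu|=m}|\partial^\nu\tilde u|_{W^{\theta,p}(\reals^n)}$, uses Lemma~\ref{lemfallextension1} to identify $\partial^\nu\tilde u=\widetilde{\partial^\nu u}$ and to dispose of the integer part, and then cites Lemma~\ref{lemfallextension2} for the bound $|\widetilde{\partial^\nu u}|_{W^{\theta,p}(\reals^n)}\preceq\|\partial^\nu u\|_{W^{\theta,p}(\Omega)}$. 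You instead prove the estimate for $\varphi\in C^\infty_{K'}(\Omega)$ by hand via Remark~\ref{remfalldecomint1} (this part is a perfectly good re-derivation of the content of Lemma~\ref{lemfallextension2}) and then try to pass to the limit via Theorem~\ref{lemwinter2003}(2).

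The problem is that Theorem~\ref{lemwinter2003} is stated, and proved, only for $\Omega=\reals^n$ or $\Omega$ a nonempty bounded open set with Lipschitz boundary, whereas Theorem~\ref{winter95} concerns an arbitrary nonempty open $\Omega$. The paper's proof of Theorem~\ref{lemwinter2003} runs through Meyers--Serrin followed by multiplication by a smooth cutoff, and the multiplication step is Theorem~\ref{thmfallmultsmooth20}, which requires Lipschitz boundary; indeed Remarks~\ref{winter96} and~\ref{winter97} point out explicitly that for a general open $\Omega$ and noninteger $s>1$ both the sandwich embeddings and the multiplication-by-smooth-functions result can fail, so the cutoff argument does not close. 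Nor can you substitute a mollification argument to produce the approximating sequence: to see that $\tilde u*\eta_\epsilon|_\Omega\to u$ in $W^{s,p}(\Omega)$ one already needs $\tilde u\in W^{s,p}(\reals^n)$, i.e.\ the theorem you are trying to prove. So as written, the first step of your Part~(1) is unjustified precisely for the non-Lipschitz $\Omega$ that the theorem is supposed to cover. The good news is that the density detour is unnecessary: Lemma~\ref{lemfallextension1} gives $\partial^\nu\tilde u=\widetilde{\partial^\nu u}$ in $D'(\reals^n)$ and $\|\tilde u\|_{W^{m,p}(\reals^n)}=\|u\|_{W^{m,p}(\Omega)}$ directly for $u\in W^{s,p}_K(\Omega)$, after which your Remark~\ref{remfalldecomint1} computation applies verbatim to $\widetilde{\partial^\nu u}$ (an $L^p$ function supported in $K$), with no approximating sequence and no Cauchy argument. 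Deleting the density step and invoking Lemma~\ref{lemfallextension1} in its place yields a correct proof that is a self-contained variant of the paper's.
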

\begin{proof}
Let $\tilde{u}=\textrm{ext}^0_{\Omega,\reals^n}u$. If $s\in
\mathbb{N}_0$ then both items follow from Lemma \ref{lemfallextension1}. So let $s=m+\theta$ where $m\in \mathbb{N}_0$ and $\theta\in(0,1)$.
We have
\begin{align*}
\|\tilde{u}\|_{W^{s,p}(\reals^n)}&=\|\tilde{u}\|_{W^{m,p}(\reals^n)}+\sum_{|\nu|=m}|\partial^\nu \tilde{u}|_{W^{\theta,p}(\reals^n)}\\
&=\|u\|_{W^{m,p}(\Omega)}+\sum_{|\nu|=m}|\widetilde{\partial^\nu u}|_{W^{\theta,p}(\reals^n)}\\
&\stackrel{\textrm{Lemma \ref{lemfallextension2}}}{\preceq} \|u\|_{W^{m,p}(\Omega)}+\sum_{|\nu|=m}\|\partial^\nu u\|_{W^{\theta,p}(\Omega)}\\
&\preceq \|u\|_{W^{s,p}(\Omega)}
\end{align*}
The fact that $\|\tilde{u}\|_{W^{s,p}(\reals^n)}\geq \|u\|_{W^{s,p}(\Omega)}$ is a direct consequence of the decomposition stated in item 1. of Remark \ref{remfalldecomint1}.
\end{proof}
\begin{corollary}\lab{corofallextrensionzeropos1}
Let $s\geq 0$ and $p\in (1,\infty)$. Let $\Omega$ and $\Omega'$ be
nonempty open sets in $\reals^n$ with $\Omega'\subseteq \Omega$
and let $K\in \mathcal{K}(\Omega')$. Suppose $u\in
W_K^{s,p}(\Omega')$. Then
\begin{enumerate}
\item $\textrm{ext}_{\Omega',\Omega}^0 u\in W^{s,p}(\Omega)$
\item $\|\textrm{ext}^0_{\Omega',\Omega}u\|_{W^{s,p}(\Omega)}\simeq
\|u\|_{W^{s,p}(\Omega')}$
\end{enumerate}
\end{corollary}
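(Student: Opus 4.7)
The plan is to reduce the corollary to Theorem \ref{winter95}, which handles the extension-by-zero from $\Omega'$ all the way to $\reals^n$, and then transfer the statement back to $\Omega$ via the (already established) continuity of restriction.

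First, I would observe the pointwise identity
\begin{equation*}
\textrm{ext}^0_{\Omega',\Omega}\,u \;=\; \bigl(\textrm{ext}^0_{\Omega',\reals^n}\,u\bigr)\big|_{\Omega},
\end{equation*}
which is immediate since both sides equal $u$ on $\Omega'$ and vanish on $\Omega\setminus\Omega'$. By Theorem \ref{winter95} applied to $u\in W^{s,p}_K(\Omega')$, we have $\textrm{ext}^0_{\Omega',\reals^n}u\in W^{s,p}(\reals^n)$ with
\begin{equation*}
\|\textrm{ext}^0_{\Omega',\reals^n}u\|_{W^{s,p}(\reals^n)} \simeq \|u\|_{W^{s,p}(\Omega')}.
\end{equation*}

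Next I would invoke the restriction property established at the start of Section~7.4: for any nonempty open sets $V\subseteq U\subseteq\reals^n$ and any $s\geq 0$, the map $\textrm{res}_{U,V}\colon W^{s,p}(U)\to W^{s,p}(V)$ is continuous with operator norm $\leq 1$. Applied to the pair $(\reals^n,\Omega)$ this gives
\begin{equation*}
\|\textrm{ext}^0_{\Omega',\Omega}u\|_{W^{s,p}(\Omega)} \;\leq\; \|\textrm{ext}^0_{\Omega',\reals^n}u\|_{W^{s,p}(\reals^n)} \;\preceq\; \|u\|_{W^{s,p}(\Omega')},
\end{equation*}
which yields item (1) together with one direction of the norm equivalence in item (2).

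For the reverse inequality, I would apply the same restriction property to the pair $(\Omega,\Omega')$: since $\textrm{res}_{\Omega,\Omega'}(\textrm{ext}^0_{\Omega',\Omega}u)=u$ as an element of $W^{s,p}(\Omega')$, one obtains
\begin{equation*}
\|u\|_{W^{s,p}(\Omega')} \;\leq\; \|\textrm{ext}^0_{\Omega',\Omega}u\|_{W^{s,p}(\Omega)},
\end{equation*}
completing the equivalence. There is no real obstacle here; the only point requiring a moment of care is checking the pointwise identity at the outset and confirming that the distributional restrictions agree with the ``obvious'' set-theoretic ones, which is clear because the relevant objects are genuine functions in $L^p$ (hence in $L^1_{\mathrm{loc}}$) rather than general distributions.
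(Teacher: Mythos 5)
Your proof is correct and takes essentially the same route as the paper: both reduce to Theorem \ref{winter95} via the identity $\textrm{ext}^0_{\Omega',\Omega}u=\bigl(\textrm{ext}^0_{\Omega',\reals^n}u\bigr)\big|_\Omega$ and the boundedness of restriction for $s\geq 0$ (the paper phrases this as $W^{s,p}(\bar\Omega)\hookrightarrow W^{s,p}(\Omega)$, which is the same fact). The only cosmetic difference is the lower bound in item (2): you use non-expansiveness of $\textrm{res}_{\Omega,\Omega'}$ directly, whereas the paper applies Theorem \ref{winter95} a second time to $\textrm{ext}^0_{\Omega,\reals^n}\circ\textrm{ext}^0_{\Omega',\Omega}u$; both are equivalent one-line arguments.
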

\begin{proof}
\begin{align*}
u\in W_K^{s,p}(\Omega')\Longrightarrow
\textrm{ext}^0_{\Omega',\reals^n}u\in
W^{s,p}(\reals^n)\Longrightarrow
\textrm{ext}^0_{\Omega',\reals^n}u|_\Omega \in
W^{s,p}(\bar{\Omega})
\end{align*}
As it was shown for any arbitrary domain $\Omega$,
$W^{s,p}(\bar{\Omega})\hookrightarrow W^{s,p}(\Omega)$. Also it
is easy to see that
$\textrm{ext}^0_{\Omega',\reals^n}u|_\Omega=\textrm{ext}^0_{\Omega',\Omega}u$.
 Therefore $\textrm{ext}^0_{\Omega',\Omega}u\in W^{s,p}(\Omega)$.
 Moreover
\begin{equation*}
\|\textrm{ext}^0_{\Omega',\Omega}u\|_{W^{s,p}(\Omega)}\simeq
\|\textrm{ext}^0_{\Omega,\reals^n}\circ\textrm{ext}^0_{\Omega',\Omega}u\|_{W^{s,p}(\reals^n)}=\|\textrm{ext}^0_{\Omega',\reals^n}u\|_{W^{s,p}(\reals^n)}\simeq
\|u\|_{W^{s,p}(\Omega')}
\end{equation*}
\end{proof}
Extension by zero for Sobolev spaces with negative exponents will be discussed in Theorem \ref{thmfallextrensionzeroneg1}.
\begin{theorem}[Embedding Theorem IV]\lab{thm3.2}
Let $\Omega\subseteq \reals^n$ be an arbitrary nonempty open set.
\begin{enumerateXALI}
\item  Suppose $1\leq p\leq q<\infty$
 and $0\leq t\leq s$
satisfy $s-\frac{n}{p}\geq t-\frac{n}{q}$. Then
$W^{s,p}(\bar{\Omega})\hookrightarrow W^{t,q}(\bar{\Omega})$.
\item Suppose $1\leq p\leq q<\infty$
 and $0\leq t\leq s$
satisfy $s-\frac{n}{p}\geq t-\frac{n}{q}$. Then
$W^{s,p}_{K}(\Omega)\hookrightarrow W^{t,q}_{K}(\Omega)$ for all
$K\in \mathcal{K}(\Omega)$.
\item For all $k_1, k_2\in \mathbb{N}_0$  with $k_1\leq k_2$ and $1<p<\infty$,
$W^{k_2,p}(\Omega)\hookrightarrow W^{k_1,p}(\Omega)$.
\item If $0\leq t \leq s <1$ and $1<p<\infty$, then $W^{s,p}(\Omega)\hookrightarrow
W^{t,p}(\Omega)$.
\item If $0\leq t \leq s <\infty$ are such that $\floor{s}=\floor{t}$ and
$1<p<\infty$, then $W^{s,p}(\Omega)\hookrightarrow
W^{t,p}(\Omega)$.
\item If $0\leq t \leq s <\infty$, $t\in \mathbb{N}_0$, and
$1<p<\infty$, then $W^{s,p}(\Omega)\hookrightarrow
W^{t,p}(\Omega)$.
\end{enumerateXALI}
\end{theorem}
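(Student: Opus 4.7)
The proof naturally splits into six short pieces, in decreasing order of subtlety. The only genuinely analytic step is part (4); everything else is a formal consequence of the definitions, the $\reals^n$-embedding (Theorem~\ref{winter86}), or the extension-by-zero estimate (Theorem~\ref{winter95}).

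\smallskip\noindent\textbf{Parts (1) and (2): reduction to $\reals^n$.}
For (1), given $u\in W^{s,p}(\bar\Omega)$ pick any $v\in W^{s,p}(\reals^n)$ with $v|_\Omega=u$; by Theorem~\ref{winter86} we have $v\in W^{t,q}(\reals^n)$ with $\|v\|_{W^{t,q}(\reals^n)}\preceq \|v\|_{W^{s,p}(\reals^n)}$, so $u=v|_\Omega\in W^{t,q}(\bar\Omega)$, and taking the infimum over admissible $v$ gives the embedding. For (2), given $u\in W^{s,p}_K(\Omega)$, Theorem~\ref{winter95} yields $\tilde u:=\textrm{ext}^0_{\Omega,\reals^n}u\in W^{s,p}(\reals^n)$ with $\|\tilde u\|_{W^{s,p}(\reals^n)}\preceq \|u\|_{W^{s,p}(\Omega)}$. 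Theorem~\ref{winter86} then places $\tilde u$ in $W^{t,q}(\reals^n)$ with control of norms, and the restriction $\textrm{res}_{\reals^n,\Omega}$ brings it back to $W^{t,q}(\Omega)$ with $\textrm{supp}\subseteq K$.

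\smallskip\noindent\textbf{Parts (3) and (6): integer-order bookkeeping.}
Part (3) is immediate from the definition $\|u\|_{W^{k,p}(\Omega)}=\sum_{|\nu|\leq k}\|\partial^\nu u\|_{L^p(\Omega)}$, since every summand appearing in $\|u\|_{W^{k_1,p}}$ also appears in $\|u\|_{W^{k_2,p}}$. For part (6), write $s=m+\theta$ with $m\in\mathbb{N}_0$, $\theta\in[0,1)$; since $s\geq t=k\in\mathbb{N}_0$ we get $m\geq k$, and the definition of the Sobolev-Slobodeckij norm gives $\|u\|_{W^{k,p}(\Omega)}\leq\|u\|_{W^{m,p}(\Omega)}\leq\|u\|_{W^{s,p}(\Omega)}$.

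\smallskip\noindent\textbf{Part (4): the only non-formal step.}
This is the place where the obstruction of Fact~4 (no general extension for arbitrary $\Omega$) forces a direct argument on $\Omega$. The case $t=0$ is trivial since $W^{s,p}(\Omega)\subseteq L^p(\Omega)=W^{0,p}(\Omega)$ by definition. For $0<t\leq s<1$, the plan is to split the Gagliardo double integral on $\Omega\times\Omega$ into the near-diagonal region $\{|x-y|<1\}$ and the far region $\{|x-y|\geq 1\}$:
\begin{align*}
|u|^p_{W^{t,p}(\Omega)}
&=\iint_{\Omega\times\Omega,\,|x-y|<1}\frac{|u(x)-u(y)|^p}{|x-y|^{n+tp}}\,dx\,dy
+\iint_{\Omega\times\Omega,\,|x-y|\geq 1}\frac{|u(x)-u(y)|^p}{|x-y|^{n+tp}}\,dx\,dy.
\end{align*}
On $\{|x-y|<1\}$ the inequality $|x-y|^{-(n+tp)}\leq|x-y|^{-(n+sp)}$ (using $t\leq s$) bounds the first integral by $|u|^p_{W^{s,p}(\Omega)}$. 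For the second integral, the elementary bound $|u(x)-u(y)|^p\leq 2^{p-1}(|u(x)|^p+|u(y)|^p)$ together with the radial computation
\[
\sup_{x\in\reals^n}\int_{|x-y|\geq 1}\frac{dy}{|x-y|^{n+tp}}=\sigma(S^{n-1})\int_1^\infty r^{-tp-1}\,dr<\infty
\]
(finite precisely because $tp>0$) yields a bound by a constant multiple of $\|u\|^p_{L^p(\Omega)}$. Combining the two estimates produces $\|u\|_{W^{t,p}(\Omega)}\preceq \|u\|_{W^{s,p}(\Omega)}$. The main obstacle here is exactly this low-order far-field integral: it is what fails when $t=0$ would be replaced by an arbitrary lower bound, and it is what forces $t>0$ in this part.

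\smallskip\noindent\textbf{Part (5): reduction to part (4).}
Write $s=k+\theta_s$, $t=k+\theta_t$ with the common integer part $k=\lfloor s\rfloor=\lfloor t\rfloor$ and $0\leq\theta_t\leq\theta_s<1$. The equivalent norm from Remark~\ref{remfallequivnorm120} gives $\|u\|_{W^{s,p}(\Omega)}\simeq\|u\|_{W^{k,p}(\Omega)}+\sum_{|\nu|=k}\|\partial^\nu u\|_{W^{\theta_s,p}(\Omega)}$, and similarly for $t$. Applying part (4) to each $\partial^\nu u\in W^{\theta_s,p}(\Omega)$ (with the convention that the case $\theta_t=0$ is trivial since $W^{\theta_s,p}\subseteq L^p$) bounds $\|\partial^\nu u\|_{W^{\theta_t,p}(\Omega)}\preceq\|\partial^\nu u\|_{W^{\theta_s,p}(\Omega)}$, which assembles into the desired embedding.
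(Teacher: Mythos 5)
Your proof is correct and follows essentially the same skeleton as the paper's: item (2) via extension-by-zero and the $\reals^n$ embedding, item (3) directly from the definition, items (5)--(6) by bookkeeping with the equivalent norm of Remark~\ref{remfallequivnorm120}. The one place where you diverge is that the paper simply \emph{cites} external references for items (1) (Triebel) and (4) (\cite{12}, p.~524), whereas you give self-contained arguments: for (1), the clean reduction to $\reals^n$ via an arbitrary admissible extension $v$ together with the infimum characterization of $\|\cdot\|_{W^{s,p}(\bar\Omega)}$, and for (4), the standard split of the Gagliardo seminorm into the near-diagonal zone $\{|x-y|<1\}$ (where $|x-y|^{-(n+tp)}\leq|x-y|^{-(n+sp)}$) and the far zone (where the tail integral $\int_{|z|\geq 1}|z|^{-n-tp}\,dz$ converges precisely because $tp>0$). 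Both fill-ins are correct. The far-field computation is exactly the step that pinpoints why the embedding $W^{1,p}(\Omega)\hookrightarrow W^{t,p}(\Omega)$ for $0<t<1$ can fail on bad domains (see Remark~\ref{winter96}): that case would require controlling $\sum_{|\nu|=1}|\partial^\nu u|_{W^{t-1+?,p}}$-type pieces by $\|u\|_{W^{1,p}}$, which your argument does not (and should not) yield without domain regularity -- so your remark about the role of $t>0$ is exactly the right diagnostic.
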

\begin{proof}
\leavevmode
\begin{enumerateXALI}
\item This item can be found in (\cite{36}, Section 4.6.1).
\item For all $u\in W^{s,p}_K(\Omega)$ we have
\begin{align*}
\|u\|_{W^{t,q}(\Omega)}\simeq
\|\textrm{ext}^0_{\Omega,\reals^n}u\|_{W^{t,q}(\reals^n)}\preceq
\|\textrm{ext}^0_{\Omega,\reals^n}u\|_{W^{s,p}(\reals^n)}\simeq
\|u\|_{W^{s,p}(\Omega)}
\end{align*}
\item This item is a direct consequence of the definition of
integer order Sobolev spaces.
\item Proof can be found in \cite{12}, Page 524.
\item This is a direct consequence of the previous two items.
\item This is true because $W^{s,p}(\Omega)\hookrightarrow W^{\floor{s},p}(\Omega)\hookrightarrow
W^{t,p}(\Omega)$.
\end{enumerateXALI}
\end{proof}
\begin{remark}\lab{winter96}
For an arbitrary open set $\Omega$ in $\reals^n$ and $0<t<1$, the
 embedding $W^{1,p}(\Omega)\hookrightarrow W^{t,p}(\Omega)$ does
 NOT necessarily hold (see e.g. \cite{12}, Section 9.). Of course,
 as it was discussed, under the extra assumption that $\Omega$ is
 Lipschitz, the latter embedding holds true. So, if $\floor{s}\neq
 \floor{t}$ and $t\not\in\mathbb{N}_0$, then in order to ensure
 that $W^{s,p}(\Omega)\hookrightarrow W^{t,p}(\Omega)$ we need to
 assume some sort of regularity for the domain $\Omega$ (for instance it is enough to assume $\Omega$ is
 Lipschitz).
\end{remark}
\begin{theorem}[Multiplication by smooth functions] \lab{thmfallmultsmooth21}
Let $\Omega$ be any nonempty open set in $\reals^n$. Let $p\in
(1,\infty)$.
\begin{enumerateX}
\item If $0\leq s<1$ and $\varphi\in BC^{0,1}(\Omega)$ (that is, $\varphi\in L^\infty(\Omega)$ and $\varphi$ is Lipschitz), then
\begin{equation*}
m_\varphi: W^{s,p}(\Omega)\rightarrow W^{s,p}(\Omega),\qquad
u\mapsto \varphi u
\end{equation*}
is a well-defined bounded linear map.
\item If $k\in \mathbb{N}_0$ and $\varphi\in BC^k(\Omega)$, then
\begin{equation*}
m_\varphi: W^{k,p}(\Omega)\rightarrow W^{k,p}(\Omega),\qquad
u\mapsto \varphi u
\end{equation*}
is a well-defined bounded linear map.
\item If $-1<s<0$ and $\varphi\in BC^{\infty,1}(\Omega)$ or $s\in
\mathbb{Z}^-$ and $\varphi\in BC^{\infty}(\Omega)$, then
\begin{equation*}
m_\varphi: W^{s,p}(\Omega)\rightarrow W^{s,p}(\Omega),\qquad
u\mapsto \varphi u
\end{equation*}
is a well-defined bounded linear map. ($\varphi u$ is interpreted as the product of a smooth function and a distribution.)
\end{enumerateX}
\end{theorem}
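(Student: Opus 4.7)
I would prove the three items in the order stated, since (3) will be reduced to (1) and (2) via a duality argument. Throughout, the novelty compared with the bounded Lipschitz case handled in Theorem \ref{thmfallmultsmooth20} is that no extension operator is available, so everything must be done by direct estimates on $\Omega$.

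For item (1), the case $s=0$ is the trivial pointwise bound $\|\varphi u\|_{L^p(\Omega)}\leq \|\varphi\|_\infty\|u\|_{L^p(\Omega)}$, so assume $0<s<1$. I would use the decomposition
\[
\varphi(x)u(x)-\varphi(y)u(y)=\varphi(x)\bigl[u(x)-u(y)\bigr]+u(y)\bigl[\varphi(x)-\varphi(y)\bigr]
\]
and plug it into the Slobodeckij seminorm. The first piece is immediately $\leq\|\varphi\|_\infty^p|u|_{W^{s,p}(\Omega)}^p$. For the second piece, write $|\varphi(x)-\varphi(y)|\leq L|x-y|$ on $\{|x-y|\leq 1\}$ and $|\varphi(x)-\varphi(y)|\leq 2\|\varphi\|_\infty$ on $\{|x-y|>1\}$, where $L$ is the Lipschitz constant of $\varphi$. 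The resulting inner integrals $\int_{|x-y|\leq 1}|x-y|^{-(n-(1-s)p)}\,dx$ and $\int_{|x-y|>1}|x-y|^{-(n+sp)}\,dx$ are finite because $(1-s)p>0$ and $sp>0$ (Theorem \ref{winter9} gives a uniform bound independent of $y$), so both contributions are controlled by a constant times $\|u\|_{L^p(\Omega)}^p$. Summed up, $\|\varphi u\|_{W^{s,p}(\Omega)}\preceq\|u\|_{W^{s,p}(\Omega)}$.

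Item (2) is the classical Leibniz argument: for $|\nu|\leq k$,
\[
\partial^\nu(\varphi u)=\sum_{\beta\leq\nu}\binom{\nu}{\beta}\partial^{\nu-\beta}\varphi\,\partial^\beta u
\]
holds in $D'(\Omega)$, and since $\partial^{\nu-\beta}\varphi\in L^\infty(\Omega)$ for every $\beta\leq\nu$, we get $\|\partial^\nu(\varphi u)\|_{L^p(\Omega)}\preceq\|\varphi\|_{BC^k(\Omega)}\|u\|_{W^{k,p}(\Omega)}$.

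For item (3), I would use the identification $W^{s,p}(\Omega)=[W^{-s,p'}_0(\Omega)]^*$ from Definition \ref{winter84}, together with Remark \ref{winter35}. Since $C_c^\infty(\Omega)$ is dense in $W^{-s,p'}_0(\Omega)$ by definition, it suffices to estimate $|\langle\varphi u,\psi\rangle_{D'\times D}|$ for $\psi\in C_c^\infty(\Omega)$. By Definition \ref{winter62}, $\langle\varphi u,\psi\rangle=\langle u,\varphi\psi\rangle$, and since $\varphi$ is smooth on $\Omega$ and $\psi$ is compactly supported, $\varphi\psi\in C_c^\infty(\Omega)\subseteq W^{-s,p'}_0(\Omega)$. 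Thus
\[
|\langle\varphi u,\psi\rangle|\leq\|u\|_{W^{s,p}(\Omega)}\|\varphi\psi\|_{W^{-s,p'}(\Omega)}.
\]
When $-1<s<0$, $-s\in(0,1)$ and $\varphi\in BC^{\infty,1}(\Omega)\subseteq BC^{0,1}(\Omega)$, so item (1) gives $\|\varphi\psi\|_{W^{-s,p'}(\Omega)}\preceq\|\psi\|_{W^{-s,p'}(\Omega)}$; when $s\in\mathbb{Z}^-$, $-s\in\mathbb{N}$ and item (2) gives the same bound. Taking the supremum over $\psi$ produces $\|\varphi u\|_{W^{s,p}(\Omega)}\preceq\|u\|_{W^{s,p}(\Omega)}$.

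\textbf{Main obstacle.} The hard part is item (1): for non-Lipschitz $\Omega$ the usual restriction/extension trick that handles Theorem \ref{thmfallmultsmooth20} is unavailable, so the double-integral estimate has to be engineered by hand. The decisive observation is that the near-diagonal and far-from-diagonal regions demand different bounds on $|\varphi(x)-\varphi(y)|$ — Lipschitz for the former, bounded for the latter — and that the dyadic split $|x-y|\lessgtr 1$ makes both resulting inner integrals finite precisely because $0<s<1$ and $p\in(1,\infty)$. Once item (1) is in hand, items (2) and (3) are essentially bookkeeping.
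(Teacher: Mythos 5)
Your proposal is correct and follows essentially the same route as the paper. For items (1) and (2) the paper simply cites the literature (\cite{12}, Page 547, and \cite{13}, Page 995), and the direct arguments you supply — the split $\varphi u(x)-\varphi u(y)=\varphi(x)[u(x)-u(y)]+u(y)[\varphi(x)-\varphi(y)]$ with the near-/far-diagonal dichotomy $|x-y|\lessgtr 1$, and the Leibniz rule — are precisely the standard proofs those references contain; for item (3) your duality computation coincides with Step~4 of the proof of Theorem~\ref{thmfallmultsmooth20}, which is exactly what the paper invokes.
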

\begin{proof}
\leavevmode
\begin{enumerateX}
\item Proof can be found in \cite{12}, Page 547.
\item Proof can be found in \cite{13}, Page 995.
\item The duality argument in Step 4. of the proof of Theorem
\ref{thmfallmultsmooth20} works for this item too.
\end{enumerateX}
\end{proof}
\begin{remark}\lab{winter97}
Suppose $\varphi\in BC^{\infty,1}(\Omega)$. Note that the above
theorem says nothing about the boundedness of the mapping
$m_\varphi: W^{s,p}(\Omega)\rightarrow W^{s,p}(\Omega)$ in the
case where $s$ is noninteger such that $|s|>1$. Of course, if we
assume $\Omega$ is Lipschitz, then the continuity of $m_\varphi$
follows from Theorem \ref{thmfallmultsmooth20}. It is important
to note that the proof of that theorem for the case $s>1$
(noninteger) uses the embedding
$W^{k+\theta,p}(\Omega)\hookrightarrow W^{k'+\theta,p}(\Omega)$
with $k'<k$ which as we discussed does not hold for an arbitrary
open set $\Omega$. The proof for the case $s<-1$ (noninteger) uses
duality to transfer the problem to $s>1$ and thus again we need
the extra assumption of regularity of the boundary of $\Omega$.
\end{remark}
\begin{theorem}\lab{lemapp3} %\normalfont
Let $\Omega$ be a nonempty open set in $\reals^n$,
$K\in\mathcal{K}(\Omega)$, $p\in (1,\infty)$, and $-1<s<0$ or $s\in\mathbb{Z}^{-}$ or $s\in [0,\infty)$. If $\varphi\in C^{\infty}(\Omega)$, then the linear
map
\begin{equation*}
W^{s,p}_{K}(\Omega)\rightarrow W^{s,p}_{K}(\Omega),\qquad u\mapsto
\varphi u
\end{equation*}
is well-defined and bounded.
\end{theorem}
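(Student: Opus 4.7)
The plan is to reduce everything to multiplication by a \emph{compactly supported} smooth function, at which point we can invoke Theorem \ref{thmfallmultsmooth21} directly (for the negative and integer exponent regime) or transport the problem to $\reals^n$ via extension by zero and apply Theorem \ref{winter87} (for the remaining noninteger positive regime, where Theorem \ref{thmfallmultsmooth21} gives no information on a general open set). The main obstacle is precisely the case $s\in (1,\infty)\setminus \mathbb{N}$: here Theorem \ref{thmfallmultsmooth21} does not apply because the proof of the analogous statement on Lipschitz domains (Theorem \ref{thmfallmultsmooth20}) relied on the embedding $W^{s,p}(\Omega)\hookrightarrow W^{\theta+|\beta|,p}(\Omega)$, which fails on general $\Omega$ (Remark \ref{winter96}).

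First I would fix a cutoff $\psi\in C_c^\infty(\Omega)$ with $\psi\equiv 1$ on an open neighborhood $V$ of $K$, which exists by the usual partition-of-unity machinery. The advantage is that $\varphi\psi\in C_c^\infty(\Omega)$, hence extends by zero to a function $\widetilde{\varphi\psi}\in C_c^\infty(\reals^n)\subseteq BC^{\infty,1}(\reals^n)$, and in particular $\varphi\psi\in BC^{\infty,1}(\Omega)\cap BC^\infty(\Omega)$. For any $u\in W^{s,p}_K(\Omega)$ I claim $\varphi u = (\varphi\psi)u$: for $s\geq 0$ this is immediate from $\psi=1$ on $K$ and $u=0$ a.e.\ on $\Omega\setminus K$; for $s<0$ one applies item~3 of Theorem \ref{winter67}, since for every test function $\chi\in D(\Omega)$ the function $\varphi\chi(1-\psi)$ vanishes on $V\supseteq \textrm{supp}\,u$, so $u\bigl(\varphi\chi(1-\psi)\bigr)=0$.

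For the case $-1<s<0$ or $s\in\mathbb{Z}^-$, the reduction is now trivial: Theorem \ref{thmfallmultsmooth21}(3) (applied on the general open set $\Omega$ with the symbol $\varphi\psi$ in place of $\varphi$) yields
\begin{equation*}
\|\varphi u\|_{W^{s,p}(\Omega)}=\|(\varphi\psi)u\|_{W^{s,p}(\Omega)}\preceq \|u\|_{W^{s,p}(\Omega)},
\end{equation*}
with an implicit constant depending on $\varphi$, $\psi$, and hence on $K$ (but not on $u$). For the case $s\in[0,\infty)$, I would proceed via $\reals^n$. By Theorem \ref{winter95}, $\textrm{ext}^0_{\Omega,\reals^n}u\in W^{s,p}(\reals^n)$ with $\|\textrm{ext}^0_{\Omega,\reals^n}u\|_{W^{s,p}(\reals^n)}\simeq \|u\|_{W^{s,p}(\Omega)}$. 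Theorem \ref{winter87} then gives
\begin{equation*}
\bigl\|\widetilde{\varphi\psi}\cdot \textrm{ext}^0_{\Omega,\reals^n}u\bigr\|_{W^{s,p}(\reals^n)}\preceq \bigl\|\textrm{ext}^0_{\Omega,\reals^n}u\bigr\|_{W^{s,p}(\reals^n)}\simeq \|u\|_{W^{s,p}(\Omega)}.
\end{equation*}
The product $\widetilde{\varphi\psi}\cdot \textrm{ext}^0_{\Omega,\reals^n}u$ is supported in $K$ and restricts on $\Omega$ to $(\varphi\psi)u=\varphi u$, so applying Theorem \ref{winter95} in the other direction,
\begin{equation*}
\|\varphi u\|_{W^{s,p}(\Omega)}\simeq \bigl\|\widetilde{\varphi\psi}\cdot \textrm{ext}^0_{\Omega,\reals^n}u\bigr\|_{W^{s,p}(\reals^n)}\preceq \|u\|_{W^{s,p}(\Omega)}.
\end{equation*}

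Finally, I would verify $\textrm{supp}(\varphi u)\subseteq K$ so that $\varphi u\in W^{s,p}_K(\Omega)$ as required. For $s\geq 0$ this is immediate from $u=0$ a.e.\ on $\Omega\setminus K$, and for $s<0$ it follows from Theorem \ref{winter67} (the support of a product of a smooth function and a distribution is contained in the intersection of their supports), which together with linearity completes the proof.
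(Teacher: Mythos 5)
Your proof is correct, and for the negative exponent cases (the cutoff reduction followed by Theorem \ref{thmfallmultsmooth21}) it matches the paper exactly. For $s\geq 0$, however, you take a genuinely different route. The paper's proof re-runs the Leibniz-rule argument from Theorem \ref{thmfallmultsmooth20} and observes that the one problematic step there --- the embedding $W^{s,p}(\Omega)\hookrightarrow W^{\theta+|\beta|,p}(\Omega)$, which fails on general domains --- is still available in the compactly supported setting (via Theorem \ref{thm3.2}, item 2), so the whole multi-step argument goes through. You instead sidestep this by transporting the problem to $\reals^n$: extend $u$ by zero (Theorem \ref{winter95}), multiply by the globally smooth compactly supported symbol $\widetilde{\varphi\psi}$ using Theorem \ref{winter87}, and restrict back. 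Both approaches exploit compact support in an essential way, but yours collapses the entire noninteger $s>1$ analysis into a single citation of the $\reals^n$ multiplier theorem, which is arguably cleaner and shows more directly why compactness of $K$ is the crucial hypothesis.

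One small presentational point: your final invocation of Theorem \ref{winter95} ``in the other direction'' reads as if it already presupposes $\varphi u\in W^{s,p}_K(\Omega)$. Logically cleaner is to note that the restriction map $W^{s,p}(\reals^n)\rightarrow W^{s,p}(\Omega)$ is bounded (with constant $1$) for $s\geq 0$, so $\varphi u=\bigl(\widetilde{\varphi\psi}\cdot\textrm{ext}^0_{\Omega,\reals^n}u\bigr)|_\Omega\in W^{s,p}(\Omega)$ with controlled norm, and then observe separately that $\textrm{supp}(\varphi u)\subseteq K$. This is what you mean, but worth making explicit so the argument is not circular on its face.
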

\begin{proof}
There exists $\psi\in C_c^{\infty}(\Omega)$ such that $\psi=1$ on
$K$. Clearly $\psi \varphi\in C_c^{\infty}(\Omega)$ and if $u\in
W^{s,p}_{K}(\Omega)$, $\psi \varphi u=\varphi u$ on $\Omega$. Thus
without loss of generality we may assume that $\varphi\in
C_c^{\infty}(\Omega)$. Since $C_c^\infty(\Omega)\subseteq BC^{\infty}(\Omega)$ and $C_c^\infty(\Omega)\subseteq BC^{\infty,1}(\Omega)$, the cases where $-1<s<0$ or $s\in\mathbb{Z}^{-}$ follow from Theorem \ref{thmfallmultsmooth21}. For $s\geq 0$, the proof of Theorem
\ref{thmfallmultsmooth20} works for this theorem as well. The
only place in that proof that the regularity of the boundary of
$\Omega$ was used was for the validity of the embedding
$W^{s,p}(\Omega)\hookrightarrow W^{\theta+|\beta|,p}(\Omega)$.
However, as we know (see Theorem \ref{thm3.2}), this embedding holds for Sobolev spaces with
support in a fixed compact set inside $\Omega$ for a general open
set $\Omega$, that is for $W^{s,p}_K(\Omega)\hookrightarrow
W^{\theta+|\beta|,p}_K(\Omega)$ to be true we do not need to
assume $\Omega$ is Lipschitz.
\end{proof}
\begin{remark}\lab{winter98}
Note that our proofs for $s<0$ are based on duality. As a result
it seems that for the case where $s$ is a noninteger less than $-1$ we cannot have a multiplication by smooth
functions result for $W^{s,p}_K(\Omega)$ similar to the one stated in the above theorem. (Note that there is no fixed compact set $K$ such that every $v\in C_c^\infty(\Omega)$ has compact support in $K$. Thus the technique used in Step 4 of the proof of Theorem \ref{thmfallmultsmooth20} does not work in this case.)
\end{remark}
\begin{theorem}\lab{thmfallextrensionzeroneg1}
Let $s<0$ and $p\in (1,\infty)$. Let $\Omega$ and $\Omega'$ be
nonempty open sets in $\reals^n$ with $\Omega'\subseteq \Omega$
and let $K\in \mathcal{K}(\Omega')$. Suppose $u\in
W_K^{s,p}(\Omega')$. Then
\begin{enumerateX}
\item If $\textrm{ext}^0_{\Omega',\Omega}u\in W^{s,p}(\Omega)$,
then $\|u\|_{W^{s,p}(\Omega')}\preceq
\|\textrm{ext}^0_{\Omega',\Omega}u\|_{W^{s,p}(\Omega)}$ (the
implicit constant may depend on $K$).
\item If $s\in (-\infty,-1]\cap \mathbb{Z}$ or $-1<s<0$, then $\textrm{ext}^0_{\Omega',\Omega}u\in
W^{s,p}(\Omega)$ and\\
$\|\textrm{ext}^0_{\Omega',\Omega}u\|_{W^{s,p}(\Omega)}\simeq
\|u\|_{W^{s,p}(\Omega')}$. This result holds for all $s<0$ if we
further assume that $\Omega$ is Lipschitz or $\Omega=\reals^n$.
\end{enumerateX}
\end{theorem}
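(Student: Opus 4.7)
The plan is to prove both parts by a single duality argument anchored at a fixed smooth cutoff. Choose $\psi\in C_c^\infty(\Omega')$ with $\psi\equiv 1$ on an open neighborhood of $K$, and set $K_\psi:=\textrm{supp}\,\psi\in\mathcal{K}(\Omega')$; extended by zero to $\Omega$, the function $\psi$ lies in $BC^{\infty,1}(\Omega)$ by the paragraph following Theorem \ref{thmfallconvexapostol1}. Throughout, I will use the isometric identification $W^{s,p}(D)=[W^{-s,p'}_0(D)]^*$ together with the density of $C_c^\infty(D)$ in $W^{-s,p'}_0(D)$ (Remark \ref{winter34}), and the fact that $-s\geq 0$ permits Corollary \ref{corofallextrensionzeropos1} to transport the $\|\cdot\|_{W^{-s,p'}}$ norm across extension by zero, up to a constant depending only on the fixed $K_\psi$.

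\textbf{Part 1.} For any $\varphi\in C_c^\infty(\Omega')$, the inclusion $\textrm{supp}\,u\subseteq K$ together with $\psi\equiv 1$ near $K$ and Definition \ref{winter69} give
\begin{align*}
\langle u,\varphi\rangle = \langle u,\psi\varphi\rangle = \langle \textrm{ext}^0_{\Omega',\Omega}u,\textrm{ext}^0_{\Omega',\Omega}(\psi\varphi)\rangle,
\end{align*}
where $\textrm{ext}^0_{\Omega',\Omega}(\psi\varphi)\in C_c^\infty(\Omega)$ is supported in $K_\psi$. The hypothesis $\textrm{ext}^0 u\in W^{s,p}(\Omega)$ then yields
\begin{align*}
|\langle u,\varphi\rangle| \leq \|\textrm{ext}^0 u\|_{W^{s,p}(\Omega)}\cdot \|\textrm{ext}^0_{\Omega',\Omega}(\psi\varphi)\|_{W^{-s,p'}(\Omega)}.
\end{align*}
Corollary \ref{corofallextrensionzeropos1} gives $\|\textrm{ext}^0_{\Omega',\Omega}(\psi\varphi)\|_{W^{-s,p'}(\Omega)}\simeq_{K_\psi}\|\psi\varphi\|_{W^{-s,p'}(\Omega')}$, after which a multiplication-by-$\psi$ estimate on $W^{-s,p'}(\Omega')$ produces $\|\psi\varphi\|_{W^{-s,p'}(\Omega')}\preceq_\psi \|\varphi\|_{W^{-s,p'}(\Omega')}$. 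Taking the supremum over nonzero $\varphi\in C_c^\infty(\Omega')$ yields $\|u\|_{W^{s,p}(\Omega')}\preceq\|\textrm{ext}^0 u\|_{W^{s,p}(\Omega)}$, with an implicit constant that depends on $K$ only through the choice of $\psi$.

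\textbf{Part 2.} The mirror argument will establish $\textrm{ext}^0 u\in W^{s,p}(\Omega)$ together with $\|\textrm{ext}^0 u\|_{W^{s,p}(\Omega)}\preceq\|u\|_{W^{s,p}(\Omega')}$; combined with Part 1 this gives the claimed norm equivalence. For $\phi\in C_c^\infty(\Omega)$, Definition \ref{winter69} and the support of $u$ give
\begin{align*}
\langle \textrm{ext}^0 u,\phi\rangle = \langle u,\phi|_{\Omega'}\rangle = \langle u,\psi\phi|_{\Omega'}\rangle,
\end{align*}
and since $\psi\phi|_{\Omega'}\in C_c^\infty(\Omega')$ is supported in $K_\psi$, the duality $W^{s,p}(\Omega')=[W^{-s,p'}_0(\Omega')]^*$ bounds this by $\|u\|_{W^{s,p}(\Omega')}\|\psi\phi|_{\Omega'}\|_{W^{-s,p'}(\Omega')}$. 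Corollary \ref{corofallextrensionzeropos1} (applied to the compactly supported function $\psi\phi|_{\Omega'}$) yields $\|\psi\phi|_{\Omega'}\|_{W^{-s,p'}(\Omega')}\simeq_{K_\psi}\|\psi\phi\|_{W^{-s,p'}(\Omega)}$, where $\psi\phi$ denotes the product on $\Omega$ with $\psi$ extended by zero. A multiplication-by-$\psi$ estimate on $W^{-s,p'}(\Omega)$ then bounds this by $\|\phi\|_{W^{-s,p'}(\Omega)}$. Taking the supremum over $\phi\in C_c^\infty(\Omega)$ shows that $\textrm{ext}^0 u$ extends by density to a continuous functional on $W^{-s,p'}_0(\Omega)$, i.e., $\textrm{ext}^0 u\in W^{s,p}(\Omega)$ with the desired upper bound.

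\textbf{Main obstacle and case analysis.} The delicate step in both parts is the multiplication-by-$\psi$ estimate on $W^{-s,p'}$. Because $\psi$ is smooth and compactly supported, it belongs to $BC^{0,1}$ and to $BC^k$ for every $k$, and hence Theorem \ref{thmfallmultsmooth21}(1) handles the regime $-1<s<0$ (so that $0\leq -s<1$), while Theorem \ref{thmfallmultsmooth21}(2) handles $s\in\mathbb{Z}\cap(-\infty,-1]$ (so that $-s\in\mathbb{N}$). For the remaining range $s<-1$ noninteger on a general open set, Theorem \ref{thmfallmultsmooth21} is known to fail (Remark \ref{winter97}); this is exactly the gap bridged by the extra hypothesis in the second half of Part 2. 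When $\Omega=\reals^n$, Theorem \ref{winter87} supplies multiplication by $\psi\in BC^{\infty}(\reals^n)$ on $W^{-s,p'}(\reals^n)$ for every $s\in\reals$, and when $\Omega$ is Lipschitz, Theorem \ref{thmfallmultsmooth20}(3) applied to $\psi\in BC^{\infty,1}(\Omega)=BC^\infty(\Omega)$ (the equality using the note in part 3 of that theorem) does the same on $W^{-s,p'}(\Omega)$. This trichotomy on $s$ versus regularity of $\Omega$ is precisely what the statement of Part 2 records.
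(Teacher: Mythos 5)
Your Part 2 is correct and follows essentially the paper's route: fix a cutoff $\chi$ supported in $\Omega'$ with $\chi\equiv 1$ near $K$, pair $\textrm{ext}^0u$ against $\phi\in C_c^\infty(\Omega)$, replace $\phi$ by $\chi\phi$ via $\textrm{supp}\,u\subseteq K$, and then invoke a multiplication estimate for $\chi$ on $W^{-s,p'}(\Omega)$; this is exactly where the trichotomy ($-1<s<0$, $s\in\mathbb{Z}^-$, or $\Omega$ Lipschitz/$\reals^n$) enters, and your case analysis there matches the paper.

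Your Part 1, however, takes a genuinely different route from the paper and does not prove the claimed range. The paper's Part 1 introduces no cutoff: for $\varphi\in D(\Omega')$ it simply sets $\phi=\textrm{ext}^0_{\Omega',\Omega}\varphi$, obtains equality of numerators, and uses the bound $\|\textrm{ext}^0_{\Omega',\Omega}\varphi\|_{W^{-s,p'}(\Omega)}\preceq\|\varphi\|_{W^{-s,p'}(\Omega')}$. No multiplication theorem appears in the paper's Part 1 at all. You instead mirror your Part 2 argument: insert the cutoff $\psi$, use $\langle u,\varphi\rangle=\langle u,\psi\varphi\rangle$, extend by zero via Corollary \ref{corofallextrensionzeropos1} (legitimate, since $\psi\varphi$ is supported in the fixed $K_\psi$), and then require $\|\psi\varphi\|_{W^{-s,p'}(\Omega')}\preceq\|\varphi\|_{W^{-s,p'}(\Omega')}$. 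That last step is a multiplication-by-$\psi$ estimate on $W^{-s,p'}(\Omega')$, uniform over all $\varphi\in C_c^\infty(\Omega')$ with no control on $\textrm{supp}\,\varphi$. As you yourself observe, on a general open set this is available only when $0\le -s<1$ or $-s\in\mathbb{N}$; for noninteger $-s>1$ on a non-Lipschitz domain it is not (Remark \ref{winter97}), and Theorem \ref{lemapp3} does not rescue you because $\varphi$ does not range over a fixed $W^{-s,p'}_{K''}(\Omega')$.

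The problem is that Part 1 of the theorem asserts the inequality for \emph{every} $s<0$ with no hypotheses on $\Omega$ or $\Omega'$, so the missing case is not ``bridged by the extra hypothesis in Part 2''. That hypothesis (i) applies only to the second sentence of Part 2, not to Part 1, and (ii) concerns $\Omega$, whereas the multiplication you need in Part 1 is on $\Omega'$. So your Part 1 proof only covers $s\in(-1,0)\cup\mathbb{Z}^-$, and you have implicitly weakened the statement without noticing. To recover the full claim along the paper's lines, you should abandon the cutoff in Part 1 and instead argue, as the paper does, directly from the bound on $\textrm{ext}^0_{\Omega',\Omega}\varphi$ for test functions $\varphi\in D(\Omega')$; this sidesteps the multiplication theorem entirely and shifts the burden to an extension-of-test-functions estimate whose nature is different from the multiplication estimate you invoked.
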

\begin{proof}
To be completely rigorous, let $i_{D,W}:D(\Omega')\rightarrow
W^{-s,p'}_0(\Omega')$ be the identity map and let $i^*_{D,W}:
W^{s,p}(\Omega')\rightarrow D'(\Omega')$ be its dual with which
we identify $W^{s,p}(\Omega')$ with a subspace of $D'(\Omega')$.
Previously we defined $ext^0_{\Omega',\Omega}$ for distributions
with compact support in $\Omega'$. For any $u\in
W^{s,p}_K(\Omega')$ we let
\begin{equation*}
\textrm{ext}^0_{\Omega',\Omega}u:=\textrm{ext}^0_{\Omega',\Omega}\circ
i_{D,W}^* u
\end{equation*}
which by definition will be an element of $D'(\Omega)$. Note that (see Remark \ref{winter99} and the discussion right after Remark \ref{winter34})
\begin{align*}
&
\|\textrm{ext}^0_{\Omega',\Omega}u\|_{W^{s,p}(\Omega)}=\sup_{0\neq\psi\in
D(\Omega)}\frac{|\langle
\textrm{ext}^0_{\Omega',\Omega}u,\psi\rangle_{D'(\Omega)\times
D(\Omega)} |}{\|\psi\|_{W^{-s,p'}(\Omega)}}\\
& \|u\|_{W^{s,p}(\Omega')}=\sup_{0\neq\varphi\in
D(\Omega')}\frac{|\langle
 u,\varphi\rangle_{D'(\Omega')\times D(\Omega')}
|}{\|\varphi\|_{W^{-s,p'}(\Omega')}}
\end{align*}
So in order to prove the first item we just need to show that
\begin{equation*}
\forall\, 0\neq \varphi\in D(\Omega')\quad \exists\,\psi\in
D(\Omega)\,\,\textrm{s.t.}\,\,\frac{|\langle
 u,\varphi\rangle_{D'(\Omega')\times D(\Omega')}
|}{\|\varphi\|_{W^{-s,p'}(\Omega')}}\preceq \frac{|\langle
\textrm{ext}^0_{\Omega',\Omega}u,\psi\rangle_{D'(\Omega)\times
D(\Omega)} |}{\|\psi\|_{W^{-s,p'}(\Omega)}}
\end{equation*}
Let $\varphi\in D(\Omega')$. Define
$\psi=\textrm{ext}^0_{\Omega',\Omega}\varphi$. Clearly $\psi\in
D(\Omega)$ and $\psi=\varphi$ on $\Omega'$. Therefore
\begin{equation*}
 \langle \textrm{ext}^0_{\Omega',\Omega}u,\psi\rangle_{D'(\Omega)\times
 D(\Omega)}=\langle u,\psi|_{\Omega'}\rangle_{D'(\Omega')\times
 D(\Omega')}=\langle u,\varphi\rangle_{D'(\Omega')\times
 D(\Omega')}
\end{equation*}
Moreover, since $-s>0$
\begin{equation*}
\|\psi\|_{W^{-s,p'}(\Omega)}=\|\textrm{ext}^0_{\Omega',\Omega}\varphi\|_{W^{-s,p'}(\Omega)}\preceq\|\varphi\|_{W^{-s,p'}(\Omega')}
\end{equation*}
This completes the proof of the first item. For the second item
we just need to prove that under the given hypotheses
\begin{equation*}
\forall\, 0\neq \psi\in D(\Omega)\quad \exists\, \varphi\in
D(\Omega')\,\,\textrm{s.t.}\,\, \frac{|\langle
\textrm{ext}^0_{\Omega',\Omega}u,\psi\rangle_{D'(\Omega)\times
D(\Omega)} |}{\|\psi\|_{W^{-s,p'}(\Omega)}}\preceq \frac{|\langle
 u,\varphi\rangle_{D'(\Omega')\times D(\Omega')}
|}{\|\varphi\|_{W^{-s,p'}(\Omega')}}
\end{equation*}
To this end suppose $\psi\in D(\Omega)$. Choose a compact set
$\tilde{K}$ such that $K\subset \mathring{\tilde{K}}\subset
\tilde{K}\subset \Omega'$. Fix $\chi\in D(\Omega)$ such that
$\chi=1$ on $\tilde{K}$ and $\textrm{supp}\,\chi\subset \Omega'$.
Clearly $\psi=\chi\psi$ on a neighborhood of $K$ and if we set $\varphi=\chi\psi|_{\Omega'}$, then $\varphi\in
D(\Omega')$. Therefore
\begin{equation*}
\langle
 \textrm{ext}^0_{\Omega',\Omega}u,\psi\rangle_{D'(\Omega)\times D(\Omega)}=
 \langle
 \textrm{ext}^0_{\Omega',\Omega}u,\chi\psi\rangle_{D'(\Omega)\times D(\Omega)}=
 \langle
  u,\chi\psi|_{\Omega'}\rangle_{D'(\Omega')\times D(\Omega')}=
  \langle u,\varphi\rangle_{D'(\Omega')\times D(\Omega')}
\end{equation*}
%\begin{equation*}
%\langle
 %\textrm{ext}^0_{\Omega',\Omega}u,\psi\rangle_{D'(\Omega)\times D(\Omega)}=\langle %u,\psi|_{\Omega'}\rangle_{\mathcal{E}'(\Omega')\times \mathcal{E}(\Omega')}
 %=\langle u,\varphi\rangle_{D'(\Omega')\times D(\Omega')}
%\end{equation*}
 Also since $-s>0$, we have
\begin{equation*}
\|\varphi\|_{W^{-s,p'}(\Omega')}\leq
\|\textrm{ext}^0_{\Omega',\Omega}\varphi\|_{W^{-s,p'}(\Omega)}=\|\chi
\psi\|_{W^{-s,p'}(\Omega)}\preceq \|\psi\|_{W^{-s,p'}(\Omega)}
\end{equation*}
The latter inequality is the place where we used the assumption
that $s\in (-\infty,-1]\cap \mathbb{Z}$ or $-1<s<0$ or $\Omega$
is Lipschitz or $\Omega=\reals^n$. This completes the proof of the second item.
\end{proof}
\begin{corollary}\lab{corofallusef1}
Let $p\in (1,\infty)$. Let $\Omega$ and $\Omega'$ be nonempty
open sets in $\reals^n$ with $\Omega'\subseteq \Omega$ and let
$K\in \mathcal{K}(\Omega')$. Suppose $u\in W_K^{s,p}(\Omega)$. It
follows from Corollary \ref{corofallextrensionzeropos1} and
Theorem \ref{thmfallextrensionzeroneg1} that
\begin{itemizeX}
\item if $s\in \reals$ is not a noninteger less than $-1$, then
\begin{equation*}
\|u\|_{W^{s,p}(\Omega)}\simeq \|u\|_{W^{s,p}(\Omega')}
\end{equation*}
\item if $\Omega$ is Lipschitz or $\Omega=\reals^n$, then for all $s\in \reals$
\begin{equation*}
\|u\|_{W^{s,p}(\Omega)}\simeq \|u\|_{W^{s,p}(\Omega')}
\end{equation*}
\end{itemizeX}
Note that on the right hand sides of the above expressions, $u$
stands for $\textrm{res}_{\Omega,\Omega'}u$. Clearly
$\textrm{ext}^0_{\Omega',\Omega}\circ
\textrm{res}_{\Omega,\Omega'}u = u$.
\end{corollary}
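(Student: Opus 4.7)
The plan is to reduce both directions of the norm equivalence to the already-established extension-by-zero results, using the crucial observation that because $u\in W^{s,p}_K(\Omega)$ has support inside $K\subseteq \Omega'$, one has the identity $u = \textrm{ext}^0_{\Omega',\Omega}(u|_{\Omega'})$ as distributions on $\Omega$. For $s\geq 0$ this is transparent since $u$ is a locally integrable function vanishing on $\Omega\setminus K$; for $s<0$ it follows by testing against $\varphi\in D(\Omega)$, decomposing $\varphi = \chi\varphi + (1-\chi)\varphi$ with $\chi\in D(\Omega')$ equal to $1$ on a neighborhood of $K$, and noting that $u$ annihilates $(1-\chi)\varphi$ because this test function vanishes on a neighborhood of $\textrm{supp}\,u$.

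With this identity in hand, the forward inequality $\|u\|_{W^{s,p}(\Omega)}\preceq \|u|_{\Omega'}\|_{W^{s,p}(\Omega')}$ is exactly what the relevant extension-by-zero theorem delivers. For $s\geq 0$ I would invoke Corollary \ref{corofallextrensionzeropos1}, which yields $\|\textrm{ext}^0_{\Omega',\Omega}(u|_{\Omega'})\|_{W^{s,p}(\Omega)}\simeq \|u|_{\Omega'}\|_{W^{s,p}(\Omega')}$ with no regularity hypothesis on $\Omega$ or $\Omega'$. For $s<0$ I would invoke part (ii) of Theorem \ref{thmfallextrensionzeroneg1}, which provides the analogous norm equivalence precisely under the additional hypothesis that $s$ is either a negative integer or lies in $(-1,0)$, or that $\Omega$ is Lipschitz (or all of $\reals^n$). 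This is exactly the source of the dichotomy in the statement of the corollary.

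For the reverse inequality $\|u|_{\Omega'}\|_{W^{s,p}(\Omega')}\preceq \|u\|_{W^{s,p}(\Omega)}$, the case $s\geq 0$ is immediate from the continuity of restriction, recorded as the very first bullet in the list of properties of Sobolev spaces on general domains (the direct computation $\|\textrm{res}_{\Omega,\Omega'}v\|_{W^{s,p}(\Omega')}\leq \|v\|_{W^{s,p}(\Omega)}$, valid for any pair $\Omega'\subseteq \Omega$). For $s<0$ this is precisely what part (i) of Theorem \ref{thmfallextrensionzeroneg1} asserts, with implicit constant depending on $K$ but requiring no boundary regularity on $\Omega$; this is why the reverse direction alone carries no extra hypotheses, while the forward direction is what forces the restriction to $s\notin(-\infty,-1)\setminus\mathbb{Z}$ (unless $\Omega$ is itself regular).

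There is no genuine technical obstacle here; the only point that requires care is verifying the identification $u = \textrm{ext}^0_{\Omega',\Omega}(u|_{\Omega'})$ in the distributional setting so that the extension results, which are phrased for elements of $W^{s,p}_K(\Omega')$, can be applied directly. Once that identification is in place, both inequalities are immediate quotations of the cited results in the appropriate direction.
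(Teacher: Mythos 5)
Your proposal is correct and matches exactly what the paper intends: the paper gives no separate proof beyond citing Corollary~\ref{corofallextrensionzeropos1} and Theorem~\ref{thmfallextrensionzeroneg1}, and the filling-in you supply (identifying $u=\textrm{ext}^0_{\Omega',\Omega}(u|_{\Omega'})$ via a cutoff near $K$, then quoting the extension-by-zero results in each direction) is the right reduction. The only point worth spelling out a bit more explicitly is that for $s<0$ the membership $u|_{\Omega'}\in W^{s,p}_K(\Omega')$ is not assumed but is itself produced by running the argument of part~(i) of Theorem~\ref{thmfallextrensionzeroneg1} first, after which part~(ii) can be invoked with $u|_{\Omega'}$ as input.
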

\begin{theorem}\lab{thmfall312}
Let $\Omega$ be any nonempty open set in $\reals^n$, $K\subseteq
\Omega$ be compact, $s>0$, and $p\in (1,\infty)$. Then the
following norms on $W^{s,p}_{K}(\Omega)$ are equivalent:
\begin{align*}
&\|u\|_{W^{s,p}(\Omega)}:=\|u\|_{W^{k,p}(\Omega)}+\sum_{|\nu|=k}
|\partial^{\nu}u|_{W^{\theta,p}(\Omega)}\\
& [u]_{W^{s,p}(\Omega)}:=\|u\|_{W^{k,p}(\Omega)}+\sum_{1\leq
|\nu|\leq k} |\partial^{\nu}u|_{W^{\theta,p}(\Omega)}
\end{align*}
where $s=k+\theta,\, k\in \mathbb{N}_0,\, \theta\in(0,1)$.
Moreover, if we further assume $\Omega$ is Lipschitz, then the
above norms are equivalent on $W^{s,p}(\Omega)$.
%Moreover, if $\Omega$ has Lipschitz continuous boundary, then the
%above norms are equivalent on $W^{s,p}(\Omega)$.
\end{theorem}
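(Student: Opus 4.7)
The plan is to observe that one direction, $\|u\|_{W^{s,p}(\Omega)}\leq [u]_{W^{s,p}(\Omega)}$, is immediate from the definitions, so the real content is the reverse estimate $[u]_{W^{s,p}(\Omega)}\preceq \|u\|_{W^{s,p}(\Omega)}$ on $W^{s,p}_K(\Omega)$. For this it suffices to show that for every multi-index $\nu$ with $1\leq |\nu|\leq k-1$ one has
\begin{equation*}
|\partial^\nu u|_{W^{\theta,p}(\Omega)}\preceq \|u\|_{W^{s,p}(\Omega)},
\end{equation*}
with implicit constant depending on $K$, $s$, $p$, $n$. Summing over such $\nu$ and adding $\|u\|_{W^{k,p}(\Omega)}+\sum_{|\nu|=k}|\partial^\nu u|_{W^{\theta,p}(\Omega)}$ then yields the desired inequality.

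First I would pass to $\reals^n$ by extension by zero. Since $u\in W^{s,p}_K(\Omega)$ and $s\geq 0$, Theorem \ref{winter95} gives $\tilde u:=\textrm{ext}^0_{\Omega,\reals^n}u\in W^{s,p}(\reals^n)$ with
\begin{equation*}
\|\tilde u\|_{W^{s,p}(\reals^n)}\simeq \|u\|_{W^{s,p}(\Omega)}.
\end{equation*}
Next, for any $\nu$ with $|\nu|\leq k-1$, Fact 1 (i.e.\ boundedness of $\partial^\nu:W^{s,p}(\reals^n)\to W^{s-|\nu|,p}(\reals^n)$) combined with Fact 3 (the embedding $W^{s-|\nu|,p}(\reals^n)\hookrightarrow W^{\theta,p}(\reals^n)$, valid since $s-|\nu|\geq \theta$) gives
\begin{equation*}
\|\partial^\nu \tilde u\|_{W^{\theta,p}(\reals^n)}\preceq \|\tilde u\|_{W^{s,p}(\reals^n)}\simeq \|u\|_{W^{s,p}(\Omega)}.
\end{equation*}

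Finally I would descend back to $\Omega$ using the support condition. Because $\textrm{supp}\,\tilde u\subseteq K\subset\Omega$, the same holds for $\partial^\nu\tilde u$, so $\partial^\nu\tilde u|_\Omega=\partial^\nu u$ pointwise a.e.\ on $\Omega$. Then the decomposition of the Slobodeckij integral recorded in Remark \ref{remfalldecomint1}(1), applied with $A=\Omega$, yields
\begin{equation*}
|\partial^\nu \tilde u|^p_{W^{\theta,p}(\reals^n)}=|\partial^\nu u|^p_{W^{\theta,p}(\Omega)}+2\int_\Omega\int_{\reals^n\setminus\Omega}\frac{|\partial^\nu u(x)|^p}{|x-y|^{n+\theta p}}\,dy\,dx,
\end{equation*}
whose right-hand side dominates $|\partial^\nu u|^p_{W^{\theta,p}(\Omega)}$. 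Combined with the preceding estimate this gives $|\partial^\nu u|_{W^{\theta,p}(\Omega)}\preceq \|u\|_{W^{s,p}(\Omega)}$, completing the compact-support case.

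For the final sentence, when $\Omega$ is Lipschitz one cannot assume compact support, but Theorem \ref{thm3.1} (or Corollary \ref{winter90}) produces a bounded extension operator $P:W^{s,p}(\Omega)\to W^{s,p}(\reals^n)$ with $(Pu)|_\Omega=u$. Applying Facts 1 and 3 to $Pu$ on $\reals^n$ gives $\|\partial^\nu(Pu)\|_{W^{\theta,p}(\reals^n)}\preceq \|u\|_{W^{s,p}(\Omega)}$, and restricting to $\Omega$ bounds $|\partial^\nu u|_{W^{\theta,p}(\Omega)}$ in the same way. The main conceptual obstacle is exactly the one flagged by the paper in Fact 4 and Remark \ref{winter96}: on a general $\Omega$ the embedding $W^{e,p}(\Omega)\hookrightarrow W^{\theta,p}(\Omega)$ with $e>\theta$ can fail, so the compact-support hypothesis (allowing extension by zero) or Lipschitz regularity (giving a genuine extension operator) is essential — this is the subtlety the proof must navigate rather than any intricate computation.
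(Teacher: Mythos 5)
Your proof is correct, and it follows a somewhat different route from the paper's. The paper does not pass to $\reals^n$ explicitly inside this proof; instead it uses the algebraic identity
\begin{equation*}
\sum_{|\nu|=i}|\partial^{\nu}u|_{W^{\theta,p}(\Omega)}=\|u\|_{W^{i+\theta,p}(\Omega)}-\|u\|_{W^{i,p}(\Omega)}
\end{equation*}
for $1\leq i<k$, which rewrites $[u]_{W^{s,p}(\Omega)}-\|u\|_{W^{s,p}(\Omega)}$ as a telescoping sum of full Sobolev norms, and then invokes the embedding $W^{s,p}_K(\Omega)\hookrightarrow W^{i+\theta,p}_K(\Omega)$ (Theorem \ref{thm3.2}, item 2; respectively $W^{s,p}(\Omega)\hookrightarrow W^{i+\theta,p}(\Omega)$ when $\Omega$ is Lipschitz). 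You instead extend by zero, apply $\partial^\nu:W^{s,p}(\reals^n)\to W^{s-|\nu|,p}(\reals^n)$ followed by the embedding $W^{s-|\nu|,p}(\reals^n)\hookrightarrow W^{\theta,p}(\reals^n)$, and descend via the Gagliardo-integral decomposition of Remark \ref{remfalldecomint1}. The two arguments meet under the hood — Theorem \ref{thm3.2} item 2 is itself proved precisely by extension by zero followed by the $\reals^n$ embedding — so your proof essentially unpacks the black box the paper relies on, while the paper's formulation is slightly more economical because the single norm comparison $\|u\|_{W^{i+\theta,p}(\Omega)}\preceq\|u\|_{W^{s,p}(\Omega)}$ bounds all $|\nu|=i$ seminorms at once. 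Both approaches correctly identify that the content lies exactly in the failure, for general $\Omega$, of $W^{1,p}(\Omega)\hookrightarrow W^{\theta,p}(\Omega)$, which is what forces the compact-support or Lipschitz hypothesis.
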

\begin{proof}
Clearly for all $u\in W^{s,p}(\Omega)$,
$\|u\|_{W^{s,p}(\Omega)}\leq [u]_{W^{s,p}(\Omega)}$. So it is
enough to show that there is a constant $C>0$ such that for all
$u\in W^{s,p}_{K}(\Omega)$ (or $u\in W^{s,p}(\Omega)$ if $\Omega$
is Lipschitz)
\begin{equation*}
[u]_{W^{s,p}(\Omega)}\leq C \|u\|_{W^{s,p}(\Omega)}%\qquad
%\forall\, u\in W^{s,p}_{K}(\Omega)
\end{equation*}
For each $1\leq i\leq k$ we have
\begin{equation*}
\sum_{|\nu|=i}
|\partial^{\nu}u|_{W^{\theta,p}(\Omega)}=\|u\|_{W^{i+\theta,p}(\Omega)}-\|u\|_{W^{i,p}(\Omega)}
\end{equation*}
Thus
\begin{align*}
[u]_{W^{s,p}(\Omega)}&=\|u\|_{W^{s,p}(\Omega)}+\sum_{1\leq i< k}
\sum_{|\nu|=i}|\partial^{\nu}u|_{W^{\theta,p}(\Omega)}\\
&=\|u\|_{W^{s,p}(\Omega)}+\sum_{1\leq i< k}
\bigg(\|u\|_{W^{i+\theta,p}(\Omega)}-\|u\|_{W^{i,p}(\Omega)}\bigg)
\end{align*}
Therefore it is enough to show that there exists a constant
$C\geq 1$ such that
\begin{equation*}
\sum_{1\leq i< k} \|u\|_{W^{i+\theta,p}(\Omega)}\leq
(C-1)\|u\|_{W^{s,p}(\Omega)}+\sum_{1\leq i< k}
\|u\|_{W^{i,p}(\Omega)}
\end{equation*}
By Theorem \ref{thm3.2}, for each $1\leq i<k$,
$W^{s,p}_{K}(\Omega)\hookrightarrow W_{K}^{i+\theta,p}(\Omega)$
(also we have $W^{s,p}(\Omega)\hookrightarrow
 W^{i+\theta,p}(\Omega)$ with the extra assumption that $\Omega$
is Lipschitz); so there is a constant $C_i$ such that
$\|u\|_{W^{i+\theta,p}(\Omega)}\leq C_i \|u\|_{W^{s,p}(\Omega)}$.
Clearly with $C=1+\sum_{i=1}^{k-1}C_i$ the desired inequality
holds.
\end{proof}

\begin{remark}\lab{winter99}
Let $s\geq 0$ and $1<p<\infty$. Here we summarize the connection
between Sobolev spaces and space of distributions.
\begin{enumerateXALI}
\item \textbf{Question 1:} What does it mean to say $u\in
D'(\Omega)$ belongs to $W^{-s,p'}(\Omega)$?\\
 \textbf{Answer:}
\begin{align*}
&\textrm{$u\in D'(\Omega)$ is in
$W^{-s,p'}(\Omega)$}\Longleftrightarrow \textrm{$u:
(D(\Omega),\|.\|_{s,p})\rightarrow \reals$ is continuous}\\
& \Longleftrightarrow\textrm{$u:D(\Omega)\rightarrow \reals$ has
a unique continuous extension to
$\hat{u}:W^{s,p}_0(\Omega)\rightarrow \reals$ }
\end{align*}
\item \textbf{Question 2:} How should we interpret $W^{-s,p'}(\Omega)\subseteq
D'(\Omega)$?\\
\textbf{Answer:} $i: D(\Omega)\rightarrow W^{s,p}_0(\Omega)$
 is continuous with dense image. Therefore $i^*: W^{-s,p'}(\Omega)\rightarrow
 D'(\Omega)$ is an injective continuous linear map. If $u\in
 W^{-s,p'}(\Omega)$, then $i^* u\in D'(\Omega)$ and
 \begin{equation*}
 \langle i^*u,\varphi\rangle_{D'(\Omega)\times D(\Omega)}=\langle
 u,i\varphi\rangle_{W^{-s,p'}(\Omega)\times
 W^{s,p}_0(\Omega)}=\langle
 u,\varphi\rangle_{W^{-s,p'}(\Omega)\times
 W^{s,p}_0(\Omega)}
 \end{equation*}
 So $i^*u=u|_{D(\Omega)}$ and if we identify with $i^*u$ with $u$ we
 can write
 {\fontsize{10}{10}{\begin{equation*}
 \langle u,\varphi\rangle_{D'(\Omega)\times D(\Omega)}=\langle
 u,\varphi\rangle_{W^{-s,p'}(\Omega)\times
 W^{s,p}_0(\Omega)},\qquad \|u\|_{W^{-s,p'}(\Omega)}=\sup_{0\neq\varphi\in C_c^\infty(\Omega)}\frac{|\langle u,\varphi\rangle_{D'(\Omega)\times D(\Omega)}|}{\|\varphi\|_{W^{s,p}(\Omega)}}
 \end{equation*}}}
 \item \textbf{Question 3:} What does it mean to say $u\in
D'(\Omega)$ belongs to $W^{s,p}(\Omega)$?\\
\textbf{Answer:} It means there exists $f\in W^{s,p}(\Omega)$
such that $u=u_f$.
\item \textbf{Question 4:} How should we interpret $W^{s,p}(\Omega)\subseteq
D'(\Omega)$?\\
\textbf{Answer:} It is a direct consequence of the definition of
$W^{s,p}(\Omega)$ that $W^{s,p}(\Omega)\hookrightarrow
L^p(\Omega)$ for any open set $\Omega$. So any $f\in
W^{s,p}(\Omega)$ can be identified with the regular distribution
$u_f\in D'(\Omega)$ where
\begin{equation*}
\langle u_f,\varphi\rangle=\int f\varphi\qquad
\forall\,\varphi\in D(\Omega)
\end{equation*}
\end{enumerateXALI}
\end{remark}
\begin{remark}\lab{remjanduality1}
Let $\Omega$ be a nonempty open set in $\reals^n$ and $f,g\in C_c^\infty(\Omega)$. Suppose $s\in\reals$ and $p\in (1,\infty)$.
\begin{itemizeX}
\item If $s\geq 0$, then
\begin{equation*}
\|f\|_{W^{-s,p'}(\Omega)}=\sup_{0\neq\varphi\in C_c^\infty(\Omega)}\frac{|\langle f,\varphi\rangle_{D'(\Omega)\times D(\Omega)}|}{\|\varphi\|_{W^{s,p}(\Omega)}}=\sup_{0\neq\varphi\in C_c^\infty(\Omega)}\frac{|\int_\Omega f\varphi\,dx|}{\|\varphi\|_{W^{s,p}(\Omega)}}
\end{equation*}
So for all $\varphi\in C_c^\infty(\Omega)$
\begin{equation*}
|\int_\Omega f\varphi\,dx|\leq \|f\|_{W^{-s,p'}(\Omega)}\|\varphi\|_{W^{s,p}(\Omega)}
\end{equation*}
In particular, for $g$, we have
\begin{equation*}
|\int_\Omega fg\,dx|\leq \|f\|_{W^{-s,p'}(\Omega)}\|g\|_{W^{s,p}(\Omega)}
\end{equation*}
\item If $s<0$, we may replace the roles of $f$ and $g$, and also $(s,p)$ and $(-s,p')$ in the above argument to get the exact same inequality: $|\int_\Omega fg\,dx|\leq \|f\|_{W^{-s,p'}(\Omega)}\|g\|_{W^{s,p}(\Omega)}$.
\end{itemizeX}
\end{remark}
%\subsection{Embedding Properties}

%\subsection{Multiplication Properties}

\subsection{Invariance Under Change of Coordinates, Composition}
\begin{theorem}[\cite{Trie92}, Section 4.3]\lab{thmfallcoc1}
Let $s\in \reals$ and $1<p<\infty$. Suppose that  $T:
\reals^n\rightarrow \reals^n$ is a $C^\infty$-diffeomorphism (i.e.
$T$ is bijective and $T$ and $T^{-1}$ are $C^{\infty}$) with the
property that the partial derivatives (of any order) of the
components of $T$ are bounded on $\reals^n$ (the bound may depend
on the order of the partial derivative) and
$\inf_{\reals^n}|\textrm{det}\,T'|>0$. Then the linear map
\begin{equation*}
W^{s,p}(\reals^n)\rightarrow W^{s,p}(\reals^n),\qquad u\mapsto
u\circ T
\end{equation*}
is well-defined and is bounded.
\end{theorem}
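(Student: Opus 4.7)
The strategy is to treat the cases $s=0$, $s\in\mathbb{N}$, $s\in(0,1)$, general $s>0$, and then $s<0$ separately, bootstrapping each case from the previous ones. The two structural facts we will use repeatedly are: (a) since $T'$ and $(T^{-1})'=[T'\circ T^{-1}]^{-1}$ have bounded entries (the bound on $(T^{-1})'$ coming from the hypothesis $\inf|\det T'|>0$ together with Cramer's rule and boundedness of $T'$), both $T$ and $T^{-1}$ are Lipschitz on the convex set $\reals^n$ by Theorem~\ref{thmfallconvexapostol1}, so there exist constants $c_1,c_2>0$ with $c_1|x-y|\leq |T(x)-T(y)|\leq c_2|x-y|$ for all $x,y\in\reals^n$; and (b) $|\det T'|$ and $|\det(T^{-1})'|$ are both bounded above and below by positive constants.

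First I will handle $s=0$: the change of variables formula (Theorem~\ref{winter8}) together with (b) gives $\|u\circ T\|_{L^p}\simeq \|u\|_{L^p}$. For $s=k\in\mathbb{N}$, I will apply the chain rule (Faà di Bruno): for any multi-index $\alpha$ with $|\alpha|\leq k$, $\partial^\alpha(u\circ T)$ is a finite sum of terms of the form $P_{\alpha,\beta}(x)\,(\partial^\beta u)\circ T$ where $|\beta|\leq |\alpha|$ and $P_{\alpha,\beta}$ is a polynomial in the partial derivatives of the components of $T$ up to order $|\alpha|$; by hypothesis these polynomials are bounded on $\reals^n$, so applying the $s=0$ result to each $\partial^\beta u$ gives boundedness in $W^{k,p}$.

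Next, for $s=\theta\in(0,1)$, I will estimate the Slobodeckij seminorm directly by the substitution $\xi=T(x),\eta=T(y)$:
\begin{equation*}
\int\!\!\int_{\reals^n\times\reals^n}\frac{|u(T(x))-u(T(y))|^p}{|x-y|^{n+\theta p}}\,dx\,dy
=\int\!\!\int_{\reals^n\times\reals^n}\frac{|u(\xi)-u(\eta)|^p}{|T^{-1}(\xi)-T^{-1}(\eta)|^{n+\theta p}}\,|J|\,d\xi\,d\eta,
\end{equation*}
where $|J|=|\det(T^{-1})'(\xi)||\det(T^{-1})'(\eta)|$. Using (a) applied to $T^{-1}$ to control $|T^{-1}(\xi)-T^{-1}(\eta)|$ from below by a constant times $|\xi-\eta|$, and (b) to bound $|J|$ uniformly, the right hand side is bounded by a constant times $|u|^p_{W^{\theta,p}(\reals^n)}$. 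For general $s=k+\theta$ with $\theta\in(0,1)$, I will combine the chain rule expansion above with this fractional estimate applied to each $(\partial^\beta u)\circ T$ (the extra factors $P_{\alpha,\beta}\in BC^\infty(\reals^n)$ contribute via Theorem~\ref{winter87}, since the $P_{\alpha,\beta}$ are smooth with bounded derivatives of all orders).

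Finally, for $s<0$, I will use a duality argument. The natural definition is that for $u\in C_c^\infty(\reals^n)$ and $\varphi\in C_c^\infty(\reals^n)$,
\begin{equation*}
\int_{\reals^n}(u\circ T)(x)\varphi(x)\,dx=\int_{\reals^n}u(y)\,\varphi(T^{-1}(y))\,|\det(T^{-1})'(y)|\,dy,
\end{equation*}
so $u\mapsto u\circ T$ is, on test functions, the formal adjoint of $\varphi\mapsto S\varphi:=(\varphi\circ T^{-1})\,|\det(T^{-1})'|$. Since $T^{-1}$ satisfies the same hypotheses as $T$, and $|\det(T^{-1})'|\in BC^\infty(\reals^n)$, the positive-order results already established together with Theorem~\ref{winter87} show that $S:W^{-s,p'}(\reals^n)\to W^{-s,p'}(\reals^n)$ is bounded; dualizing (and using that $[W^{-s,p'}(\reals^n)]^*=W^{s,p}(\reals^n)$ by reflexivity, Theorem~\ref{winter81}) gives a bounded extension of $u\mapsto u\circ T$ to $W^{s,p}(\reals^n)$, and density of $C_c^\infty(\reals^n)$ in $W^{s,p}(\reals^n)$ (Theorem~\ref{winter82}) ensures this extension is the only reasonable definition.

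The main obstacle will be the fractional case $0<\theta<1$, specifically justifying that $|T^{-1}(\xi)-T^{-1}(\eta)|\simeq |\xi-\eta|$ uniformly on all of $\reals^n$; this is where the combined hypotheses (diffeomorphism on all of $\reals^n$, bounded derivatives, $\inf|\det T'|>0$) are crucial, as none of them alone would suffice. The chain-rule bookkeeping in the $s=k+\theta$ step is tedious but routine once the $(0,1)$ case is in hand.
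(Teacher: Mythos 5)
The paper does not actually prove Theorem~\ref{thmfallcoc1}; it cites the result to Triebel's book \cite{Trie92}, Section~4.3, where the proof goes through the Fourier-analytic machinery of Besov and Triebel--Lizorkin spaces. Your proposal is therefore not a restatement of the paper's argument but a self-contained elementary alternative. It is correct, and in fact it closely parallels the techniques the paper \emph{does} develop for the bounded-domain / compactly-supported versions of this result a few pages later: your $s=0$ step is the change-of-variables argument of Theorem~\ref{lemfall1}(1), your $s\in(0,1)$ substitution in the Slobodeckij seminorm is exactly the computation of Theorem~\ref{thmfallcoc3}, your $s=k+\theta$ bootstrap via the Fa\`a di Bruno expansion plus Theorem~\ref{winter87} mirrors Theorem~\ref{thmfallcoc5}, and your $s<0$ duality step mirrors Theorems~\ref{thmfallcoc2} and~\ref{thmfallcoc6}. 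The one small point worth making explicit in the chain-rule step is that after bounding $\|(\partial^\beta u)\circ T\|_{W^{\theta,p}(\reals^n)}\preceq\|\partial^\beta u\|_{W^{\theta,p}(\reals^n)}$ you need $\|\partial^\beta u\|_{W^{\theta,p}(\reals^n)}\preceq\|u\|_{W^{s,p}(\reals^n)}$ for all $|\beta|\leq k$, which follows from Theorems~\ref{winter88}(1) and~\ref{winter86}; this is routine on $\reals^n$ (and is precisely the kind of embedding that fails on a general open set, which is why the paper is so careful about domains elsewhere). What your route buys, compared to the Fourier-analytic one, is that it stays entirely within the Slobodeckij-norm framework the paper uses throughout and requires no Littlewood--Paley theory; what it gives up is generality, as Triebel's method covers the full $B^s_{p,q}$ and $F^s_{p,q}$ scales in one stroke, whereas the seminorm substitution only works for $W^{s,p}=B^s_{p,p}$ at noninteger $s$.
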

Now let $U$ and $V$ be two nonempty open sets in $\reals^n$. Suppose $T:U\rightarrow V$ is a bijective map.
Similar to \cite{32} we say $T$ is \textbf{$k$-smooth} if all the components of $T$ belong to $BC^k(U)$ and all the components of $T^{-1}$ belong to $BC^{k}(V)$.
\begin{remark}\lab{winter100}
It is useful to note that if $T$ is $1$-smooth, then
\begin{equation*}
\inf_{U}|\textrm{det}\,T'|>0 \quad \textrm{and}\quad
\inf_{V}|\textrm{det}\,(T^{-1})'|>0
\end{equation*}
Indeed, since the first order partial derivatives of the
components of $T$ and $T^{-1}$ are bounded, there exist postive
numbers $M$ and $\tilde{M}$ such that for all $x\in U$ and $y\in
V$
\begin{equation*}
|\textrm{det}\,T'(x)|<M,\qquad
|\textrm{det}\,(T^{-1})'(y)|<\tilde{M}
\end{equation*}
Since $|\textrm{det}\,T'(x)|\times
|\textrm{det}\,(T^{-1})'(T(x))|=1$, we can conclude that for all
$x\in U$ and $y\in V$
\begin{equation*}
|\textrm{det}\,T'(x)|>\frac{1}{\tilde{M}},\qquad
|\textrm{det}\,(T^{-1})'(y)|>\frac{1}{M}
\end{equation*}
which proves the claim.
\end{remark}
\begin{remark}\lab{winter101}
Also it is interesting to note that as a consequence of the
inverse function theorem, if $T:U\rightarrow V$ is a bijective
map that is $C^k$ ($k\in\mathbb{N}$) with the property that $\textrm{det}\,T'(x)\neq
0$ for all $x\in U$, then the inverse of $T$ will be $C^k$ as
well, that is $T$ will automatically be a $C^k$-diffeomorphism
(see e.g. Appendix C in \cite{Lee3} for more details).
\end{remark}
\begin{remark}\lab{winter102}
Note that since we do not assume that $U$ and $V$ are necessarily
convex or Lipschitz, the continuity and boundedness of the partial derivatives
of the components of $T$ do not imply that the components of $T$
are Lipschitz. (See the "Warning" immediately after Theorem
\ref{thmfallconvexapostol1}.)
\end{remark}
\begin{theorem}\lab{lemfall1}[(\cite{13}, Page 1003), (\cite{32}, Pages 77 and 78 )] Let $p\in (1,\infty)$ and $k\in \mathbb{N}$. Suppose that $U$ and $V$ are nonempty
open subsets of $\reals^n$.
\begin{enumerateX}
\item If $T:U\rightarrow V$ is a $1$-smooth map, then the map
\begin{equation*}
L^p(V)\rightarrow L^p(U),\qquad u\mapsto u\circ T
\end{equation*}
is well-defined and is bounded.
\item If $T: U\rightarrow V$ is a $k$-smooth map, then the map
\begin{equation*}
W^{k,p}(V)\rightarrow W^{k,p}(U),\qquad u\mapsto u\circ T
\end{equation*}
is well-defined and is bounded.
\end{enumerateX}
\end{theorem}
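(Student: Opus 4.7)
The plan is to prove part (1) directly from change of variables, and then bootstrap part (2) through the chain rule, boundedness of derivatives of $T$, and Meyers--Serrin approximation.

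\textbf{Part (1).} First observe that any $1$-smooth map is in particular a $C^1$-diffeomorphism from $U$ onto $V$, so Theorem \ref{winter8} applies. Since the first-order partial derivatives of the components of $T$ are bounded on $U$, the Jacobian determinant $|\det T'|$ is bounded above on $U$, and by Remark \ref{winter100} it is also bounded below by a positive constant. Consequently, for any measurable $u\ge 0$ on $V$,
\begin{equation*}
\int_U |u\circ T(x)|^p\,dx \;\simeq\; \int_U |u\circ T(x)|^p\,|\det T'(x)|\,dx \;=\; \int_V |u(y)|^p\,dy,
\end{equation*}
where the last equality is Theorem \ref{winter8} applied to $|u|^p$. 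This shows $u\circ T\in L^p(U)$ and $\|u\circ T\|_{L^p(U)}\preceq \|u\|_{L^p(V)}$ with implicit constant depending only on $T$ and $p$.

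\textbf{Part (2), chain rule for smooth functions.} For $u\in C^\infty(V)$ and any multi-index $\alpha$ with $1\le|\alpha|\le k$, the Fa\`a di Bruno formula gives
\begin{equation*}
\partial^\alpha(u\circ T)(x) \;=\; \sum_{1\le|\beta|\le|\alpha|} \bigl[(\partial^\beta u)\circ T\bigr](x)\cdot P_{\alpha,\beta}(x),
\end{equation*}
where each $P_{\alpha,\beta}$ is a polynomial (with universal coefficients) in the partial derivatives of the components of $T$ of orders $\le|\alpha|\le k$. Since $T$ is $k$-smooth, every such partial derivative is bounded on $U$, and therefore $\|P_{\alpha,\beta}\|_{L^\infty(U)}<\infty$.

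\textbf{Bounding smooth compositions in $W^{k,p}$.} Take $u\in C^\infty(V)\cap W^{k,p}(V)$. Using the chain-rule identity above and applying Part (1) to each $\partial^\beta u\in L^p(V)$, for every $|\alpha|\le k$ one gets
\begin{align*}
\|\partial^\alpha(u\circ T)\|_{L^p(U)} &\le \sum_{1\le|\beta|\le|\alpha|}\|P_{\alpha,\beta}\|_{L^\infty(U)}\,\|(\partial^\beta u)\circ T\|_{L^p(U)}\\
&\preceq \sum_{|\beta|\le k}\|\partial^\beta u\|_{L^p(V)} \;\simeq\; \|u\|_{W^{k,p}(V)}.
\end{align*}
(The case $|\alpha|=0$ is covered by Part (1) directly.) Summing over $|\alpha|\le k$ yields $\|u\circ T\|_{W^{k,p}(U)}\preceq \|u\|_{W^{k,p}(V)}$ for all such $u$.

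\textbf{Extension by density.} By the Meyers--Serrin theorem (Theorem \ref{winter94}), given any $u\in W^{k,p}(V)$ there exists a sequence $\{u_m\}\subset C^\infty(V)\cap W^{k,p}(V)$ with $u_m\to u$ in $W^{k,p}(V)$. The previous bound applied to $u_m-u_l$ shows $\{u_m\circ T\}$ is Cauchy in $W^{k,p}(U)$, hence converges to some $w\in W^{k,p}(U)$. Part (1) applied to $u_m-u$ shows $u_m\circ T\to u\circ T$ in $L^p(U)$; by uniqueness of limits $w=u\circ T$ almost everywhere on $U$, so $u\circ T\in W^{k,p}(U)$ and passing to the limit in the norm bound gives the desired estimate $\|u\circ T\|_{W^{k,p}(U)}\preceq \|u\|_{W^{k,p}(V)}$. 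Linearity of $u\mapsto u\circ T$ is immediate, completing the proof.

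The main technical burden is the bookkeeping in the Fa\`a di Bruno expansion and verifying that every polynomial $P_{\alpha,\beta}$ arising involves only partial derivatives of $T$ of order at most $k$, so that $k$-smoothness is exactly the right hypothesis to guarantee $L^\infty$ bounds on the coefficients. Everything else reduces to Part (1) and the Meyers--Serrin density theorem.
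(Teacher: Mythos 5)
The paper does not give its own proof of Theorem~\ref{lemfall1}; it is cited from the references (\cite{13} and \cite{32}). Your argument is correct and is the standard one: Part (1) is an immediate change-of-variables computation via Theorem~\ref{winter8} together with the two-sided bound on $|\det T'|$ from Remark~\ref{winter100}, and Part (2) combines the Fa\`a di Bruno expansion (with $L^\infty$ control of the coefficients from $k$-smoothness), the bound from Part (1) applied to each $\partial^\beta u$, and Meyers--Serrin density. In fact this is exactly the scheme the paper itself uses in the analogous fractional-order Theorem~\ref{thmfallcoc5}, where the same chain-rule identity and the same two-step ``estimate on smooth functions, then pass to the limit via Cauchy in $W^{k,p}$ and identification in $L^p$'' argument appear, so your proposal is fully in the spirit of the paper. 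The only point worth making explicit (which your use of the change-of-variables identity silently supplies) is that $T^{-1}$ maps null sets to null sets, so that $u\mapsto u\circ T$ is well-defined on $L^p$-equivalence classes; this follows from the same change-of-variables formula applied to $T^{-1}$.
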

%\begin{remark}
%In the statement of the above theorem in \cite{32} it is additionally assumed that the partial derivatives of the components of $T$ and $T^{-1}$ are uniformly continuous.
%However that assumption is not used in the proof of the theorem.
%\end{remark}
\begin{theorem}\lab{thmfallcoc2}
Let $p\in (1,\infty)$ and $k\in \mathbb{Z}^{-}$ ($k$ is a
negative integer). Suppose that $U$ and $V$ are nonempty open
subsets of $\reals^n$, and $T:U\rightarrow V$ is $\infty$-smooth.
Then the map
\begin{equation*}
W^{k,p}(V)\rightarrow W^{k,p}(U),\qquad u\mapsto u\circ T
\end{equation*}
is well-defined and is bounded.
\end{theorem}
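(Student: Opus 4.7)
The plan is to reduce the negative-index case to the positive-index results already established (Theorem~\ref{lemfall1} and Theorem~\ref{thmfallmultsmooth21}) by a duality argument. For $u\in W^{k,p}(V)\subseteq D'(V)$, I interpret $u\circ T$ as the distributional pullback $T^*u\in D'(U)$ defined in the excerpt, namely
\begin{equation*}
\langle T^*u,\varphi\rangle_{D'(U)\times D(U)}
:=\big\langle u,\,|\det(T^{-1})'|\,(\varphi\circ T^{-1})\big\rangle_{D'(V)\times D(V)},
\qquad \varphi\in D(U).
\end{equation*}
Since $T$ is $\infty$-smooth, for every $\varphi\in C_c^\infty(U)$ the function $\varphi\circ T^{-1}$ lies in $C_c^\infty(V)$, and multiplication by the smooth bounded weight $|\det(T^{-1})'|$ keeps it in $C_c^\infty(V)$; so the right-hand pairing makes sense through the embedding $C_c^\infty(V)\hookrightarrow W^{-k,p'}_0(V)$.

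The central step is the norm estimate. Writing $\psi_\varphi:=|\det(T^{-1})'|\,(\varphi\circ T^{-1})$, I would bound
\begin{equation*}
\|\psi_\varphi\|_{W^{-k,p'}(V)}
\preceq \|\varphi\circ T^{-1}\|_{W^{-k,p'}(V)}
\preceq \|\varphi\|_{W^{-k,p'}(U)},
\end{equation*}
where the first inequality uses Theorem~\ref{thmfallmultsmooth21}(2) applied to the positive integer index $-k\in\mathbb{N}$ with $|\det(T^{-1})'|\in BC^{\infty}(V)\subseteq BC^{-k}(V)$, and the second uses Theorem~\ref{lemfall1}(2) applied to the $(-k)$-smooth diffeomorphism $T^{-1}:V\to U$. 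Combining this with the duality bound $|\langle u,\psi\rangle_{D'(V)\times D(V)}|\leq \|u\|_{W^{k,p}(V)}\|\psi\|_{W^{-k,p'}(V)}$ (valid for all $\psi\in C_c^\infty(V)$ in the sense of Remark~\ref{winter99}) yields
\begin{equation*}
|\langle T^*u,\varphi\rangle_{D'(U)\times D(U)}|
\leq C\,\|u\|_{W^{k,p}(V)}\,\|\varphi\|_{W^{-k,p'}(U)}
\qquad \text{for all } \varphi\in C_c^\infty(U),
\end{equation*}
with $C$ independent of $u$ and $\varphi$.

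From this estimate I conclude using the characterization in Remark~\ref{winter99}: a distribution on $U$ belongs to $W^{k,p}(U)=[W^{-k,p'}_0(U)]^{*}$ exactly when it is continuous on $(D(U),\|\cdot\|_{-k,p'})$. Hence $T^*u\in W^{k,p}(U)$ and $\|T^*u\|_{W^{k,p}(U)}\leq C\|u\|_{W^{k,p}(V)}$, so the map $u\mapsto u\circ T$ is well-defined, linear, and bounded.

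The main (minor) obstacle I anticipate is bookkeeping: one must check that the distributional pullback $T^*u$ actually deserves the name $u\circ T$ (trivial when $u$ is smooth by the Jacobian change of variables, and extended consistently via the dense embedding $C_c^\infty(U)\hookrightarrow W^{-k,p'}_0(U)$), and one must verify that the hypotheses of Theorems~\ref{lemfall1} and \ref{thmfallmultsmooth21} are met on the \emph{general} open sets $U,V$ (which requires only the integer order versions used here, so no Lipschitz assumption on $\partial U$ or $\partial V$ is needed). No other subtlety arises because $-k$ is a positive integer, which is precisely the range where both the multiplication theorem and the composition theorem hold on arbitrary open sets.
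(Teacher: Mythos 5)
Your proposal is correct and follows essentially the same duality argument as the paper's proof: identify $u\circ T$ with the distributional pullback $T^*u$, pass the pairing across, and bound $\||\det(T^{-1})'|\,(\varphi\circ T^{-1})\|_{W^{-k,p'}(V)}$ by $\|\varphi\|_{W^{-k,p'}(U)}$ using the integer-order multiplication theorem and the integer-order composition theorem on arbitrary open sets. The only difference is cosmetic: you cite Theorem~\ref{thmfallmultsmooth21}(2) explicitly where the paper simply annotates the step with $|\det(T^{-1})'|\in BC^{\infty}$.
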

\begin{proof}
By definition we have ($T^*u$ denotes the pullback of $u$ by $T$)
\begin{align*}
\|T^*u\|_{W^{k,p}(U)}&=\sup_{\varphi\in
C_c^\infty(U)}\frac{|\langle T^* u,\varphi \rangle_{D'(U)\times
D(U)}
|}{\|\varphi\|_{W^{-k,p'}(U)}}\\
&=\sup_{\varphi\in C_c^\infty(U)}\frac{|\langle
u,|\textrm{det}(T^{-1})'|\varphi\circ T^{-1} \rangle_{D'(V)\times
D(V)}
|}{\|\varphi\|_{W^{-k,p'}(U)}}\\
& \preceq
\sup_{\varphi\in C_c^\infty(U)}\frac{\|u\|_{W^{k,p}(V)}\||\textrm{det}(T^{-1})'|\varphi\circ
T^{-1} \|_{W^{-k,p'}(V)}}{\|\varphi\|_{W^{-k,p'}(U)}}\\
&\stackrel{|\textrm{det}(T^{-1})'|\in BC^{\infty}}{\preceq} \sup_{\varphi\in C_c^\infty(U)}
\frac{\|u\|_{W^{k,p}(V)}\|\varphi\circ T^{-1}
\|_{W^{-k,p'}(V)}}{\|\varphi\|_{W^{-k,p'}(U)}}
\end{align*}
Since $-k$ is a positive integer, by Theorem \ref{lemfall1} we have
{\fontsize{10}{10}{$\|\varphi\circ T^{-1} \|_{W^{-k,p'}(V)}\preceq
\|\varphi\|_{W^{-k,p'}(U)}$}}. Consequently
\begin{equation*}
\|T^*u\|_{W^{k,p}(U)}\preceq \|u\|_{W^{k,p}(V)}
\end{equation*}
\end{proof}
\begin{theorem}\lab{thmfallcoc3}
Let $p\in (1,\infty)$ and $0<s<1$. Suppose that $U$ and $V$ are
nonempty open subsets of $\reals^n$, $T:U\rightarrow V$ is
$1$-smooth, and $T$ is Lipschitz continuous on $U$. Then the map
\begin{equation*}
W^{s,p}(V)\rightarrow W^{s,p}(U),\qquad u\mapsto u\circ T
\end{equation*}
is well-defined and is bounded.
\end{theorem}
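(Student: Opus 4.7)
The plan is to split the $W^{s,p}(U)$-norm of $u\circ T$ into its $L^p$ part and its Gagliardo seminorm, and bound each piece separately. Since $0<s<1$, by definition
\[
\|u\circ T\|_{W^{s,p}(U)}^p = \|u\circ T\|_{L^p(U)}^p + \int\!\!\int_{U\times U}\frac{|u(T(x))-u(T(y))|^p}{|x-y|^{n+sp}}\,dx\,dy.
\]
For the $L^p$ part I would simply invoke Theorem \ref{lemfall1}(1), which already covers this case under the $1$-smoothness hypothesis (the upper bound on $|\det(T^{-1})'|$ from Remark \ref{winter100} is exactly what is needed there).

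For the Gagliardo seminorm, the Lipschitz hypothesis is used in a critical way to exchange the denominator $|x-y|^{n+sp}$ for $|T(x)-T(y)|^{n+sp}$. Specifically, if $L$ is the Lipschitz constant of $T$ on $U$, then for all $x,y\in U$ we have $|T(x)-T(y)|\le L|x-y|$, so
\[
\frac{1}{|x-y|^{n+sp}} \le \frac{L^{n+sp}}{|T(x)-T(y)|^{n+sp}},
\]
which yields
\[
\int\!\!\int_{U\times U}\frac{|u(T(x))-u(T(y))|^p}{|x-y|^{n+sp}}\,dx\,dy \le L^{n+sp}\int\!\!\int_{U\times U}\frac{|u(T(x))-u(T(y))|^p}{|T(x)-T(y)|^{n+sp}}\,dx\,dy.
\]

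Next I would apply the change of variables formula (Theorem \ref{winter8}) to the product diffeomorphism $T\times T : U\times U \to V\times V$, setting $x'=T(x)$, $y'=T(y)$. This is legitimate because $1$-smoothness of $T$ makes $T\times T$ a $C^1$-diffeomorphism with Jacobian $|\det(T^{-1})'(x')|\cdot|\det(T^{-1})'(y')|$, and $1$-smoothness of $T^{-1}$ ensures this Jacobian is bounded above by a constant depending only on $T$. The result is
\[
\int\!\!\int_{U\times U}\frac{|u(T(x))-u(T(y))|^p}{|T(x)-T(y)|^{n+sp}}\,dx\,dy \;\preceq\; \int\!\!\int_{V\times V}\frac{|u(x')-u(y')|^p}{|x'-y'|^{n+sp}}\,dx'\,dy'.
\]
Combining the two estimates and taking $p$-th roots gives $\|u\circ T\|_{W^{s,p}(U)} \preceq \|u\|_{W^{s,p}(V)}$, with implicit constant depending on $L$, $n$, $s$, $p$, and the $BC^1$-bounds on $T$ and $T^{-1}$.

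The main obstacle is not the computation itself but understanding why the Lipschitz hypothesis on $T$ is really needed even though $T$ is already $1$-smooth. The point is that $U$ and $V$ are arbitrary open sets, not necessarily convex or Lipschitz domains, so as emphasized in Remark \ref{winter102} and the warning after Theorem \ref{thmfallconvexapostol1}, boundedness of the first-order partial derivatives of the components of $T$ does not by itself force $T$ to be globally Lipschitz on $U$. Without the separate Lipschitz assumption, the pointwise inequality $|T(x)-T(y)|\le L|x-y|$ used to replace $|x-y|^{n+sp}$ by $|T(x)-T(y)|^{n+sp}$ in the Gagliardo integrand could fail; this is exactly the mechanism that allows the change-of-variables step to close the estimate.
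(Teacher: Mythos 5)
Your proof is correct and takes essentially the same approach as the paper: split into the $L^p$ part (handled by Theorem \ref{lemfall1}) and the Gagliardo seminorm, then combine the Lipschitz inequality $|T(x)-T(y)|\le L|x-y|$ with the change-of-variables formula (and boundedness of the Jacobian coming from $1$-smoothness) to transport the seminorm from $U$ to $V$. The only difference is the order of the two estimation steps: you apply the Lipschitz inequality to the integrand before changing variables, whereas the paper changes variables first and then invokes Lipschitz continuity in the form $|z-w|\le C|T^{-1}(z)-T^{-1}(w)|$; this is a purely cosmetic rearrangement.
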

\begin{proof}
Note that
\begin{equation*}
\|u\circ T\|_{W^{s,p}(U)}=\|u\circ T\|_{L^{p}(U)}+|u\circ
T|_{W^{s,p}(U)}\stackrel{Theorem \ref{lemfall1}}{\preceq}
\|u\|_{L^{p}(V)}+|u\circ T|_{W^{s,p}(U)}
\end{equation*}
So it is enough to show that $|u\circ T|_{W^{s,p}(U)}\preceq
|u|_{W^{s,p}(V)}$
\begin{align*}
|u\circ T|_{W^{s,p}(U)}&=\big(\int\int_{U\times
 U}\frac{|(u\circ T)(x)-(u\circ T)(y)|^p}{|x-y|^{n+s p}}dx
 dy\big)^{\frac{1}{p}}\\
 & \stackrel{\stackrel{z=T(x)}{w=T(y)}}{\preceq}\big(\int\int_{V\times
 V}\frac{|u(z)-u(w)|^p}{|T^{-1}(z)-T^{-1}(w)|^{n+s p}} \frac{1}{|\textrm{det}T'(x)|}
 \frac{1}{|\textrm{det}T'(y)|}dz
 dw\big)^{\frac{1}{p}}\\
 &\preceq\big(\int\int_{V\times
 V}\frac{|u(z)-u(w)|^p}{|T^{-1}(z)-T^{-1}(w)|^{n+s p}} dz
 dw\big)^{\frac{1}{p}}
\end{align*}
$T$ is Lipschitz continuous on $U$; so there exists a constant
$C>0$ such that
\begin{equation*}
|T(x)-T(y)|\leq C|x-y|\Longrightarrow |z-w|\leq
C|T^{-1}(z)-T^{-1}(w)|
\end{equation*}
Therefore
\begin{equation*}
|u\circ T|_{W^{s,p}(U)}\preceq \big(\int\int_{V\times
 V}\frac{|u(z)-u(w)|^p}{|z-w|^{n+s p}} dz
 dw\big)^{\frac{1}{p}}=|u|_{W^{s,p}(V)}
\end{equation*}
\end{proof}
\begin{theorem}\lab{thmfallcoc4}
Let $p\in (1,\infty)$ and $-1<s<0$. Suppose that $U$ and $V$ are
nonempty open subsets of $\reals^n$, $T:U\rightarrow V$ is
$\infty$-smooth, $T^{-1}$ is Lipschitz continuous on $V$, and
$|\textrm{det}(T^{-1})'|$ is in $BC^{0,1}(V)$. Then the map
\begin{equation*}
W^{s,p}(V)\rightarrow W^{s,p}(U),\qquad u\mapsto u\circ T
\end{equation*}
is well-defined and is bounded.
\end{theorem}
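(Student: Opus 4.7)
The natural strategy is a duality argument that parallels Step 4 in the proof of Theorem \ref{thmfallmultsmooth20} and the proof of Theorem \ref{thmfallcoc2}, while the ``positive exponent'' piece will be supplied by Theorem \ref{thmfallcoc3} applied to the inverse diffeomorphism $T^{-1}:V\rightarrow U$. The plan is to first note that $T^*u\in D'(U)$ is already defined in Section~6 via the pushforward formula, and then to estimate its action on test functions using the $W^{-s,p'}$ norm, since by Remark \ref{winter99} this is exactly what is required to conclude $T^*u\in W^{s,p}(U)$ with the desired bound.

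Concretely, for $\varphi\in C_c^\infty(U)$ we have, by the definition of the pullback of a distribution,
\begin{equation*}
\langle T^*u,\varphi\rangle_{D'(U)\times D(U)}
=\langle u,\,|\textrm{det}(T^{-1})'|\,\varphi\circ T^{-1}\rangle_{D'(V)\times D(V)}.
\end{equation*}
Since $-s\in(0,1)$ and $s<0$, Remark \ref{remjanduality1} combined with Remark \ref{winter99} gives
\begin{equation*}
|\langle u,|\textrm{det}(T^{-1})'|\,\varphi\circ T^{-1}\rangle|
\leq \|u\|_{W^{s,p}(V)}\,\bigl\|\,|\textrm{det}(T^{-1})'|\,\varphi\circ T^{-1}\bigr\|_{W^{-s,p'}(V)}.
\end{equation*}
The key point now is to strip off the Jacobian factor: since $|\textrm{det}(T^{-1})'|\in BC^{0,1}(V)$ and $-s\in(0,1)$, the multiplication result Theorem \ref{thmfallmultsmooth21}(1) yields
\begin{equation*}
\bigl\|\,|\textrm{det}(T^{-1})'|\,\varphi\circ T^{-1}\bigr\|_{W^{-s,p'}(V)}
\preceq \|\varphi\circ T^{-1}\|_{W^{-s,p'}(V)},
\end{equation*}
with implicit constant depending on $|\textrm{det}(T^{-1})'|$ but not on $\varphi$.

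To estimate the remaining factor, I would apply Theorem \ref{thmfallcoc3} with roles of $U,V$ interchanged and with $T$ replaced by $T^{-1}:V\rightarrow U$. The hypotheses are met: since $T$ is $\infty$-smooth, $T^{-1}$ is in particular $1$-smooth, and $T^{-1}$ is Lipschitz on $V$ by assumption. With the exponent $-s\in(0,1)$ this yields
\begin{equation*}
\|\varphi\circ T^{-1}\|_{W^{-s,p'}(V)}\preceq \|\varphi\|_{W^{-s,p'}(U)}.
\end{equation*}
Combining the three inequalities and taking the supremum over $0\neq\varphi\in C_c^\infty(U)$ proves $T^*u\in W^{s,p}(U)$ together with the norm bound $\|T^*u\|_{W^{s,p}(U)}\preceq \|u\|_{W^{s,p}(V)}$. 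The main obstacle, and the reason the hypotheses of the theorem are tailored as they are, is precisely the application of Theorem \ref{thmfallmultsmooth21}(1): for negative fractional $s$ on a general (non-Lipschitz) open set one cannot afford a weaker regularity than $BC^{0,1}$ on the multiplier, which is why the Lipschitz hypothesis on $|\textrm{det}(T^{-1})'|$ is imposed; likewise the Lipschitz hypothesis on $T^{-1}$ is forced by the assumptions needed to apply Theorem \ref{thmfallcoc3} in the dual step.
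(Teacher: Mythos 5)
Your proof is correct and is precisely what the paper intends by "the proof of Theorem \ref{thmfallcoc2}, with obvious modifications": you replace the integer-order multiplication and change-of-variables inputs there by Theorem \ref{thmfallmultsmooth21}(1) (using the $BC^{0,1}$ hypothesis on $|\textrm{det}(T^{-1})'|$ and $-s\in(0,1)$) and by Theorem \ref{thmfallcoc3} applied to $T^{-1}$ (using the Lipschitz hypothesis on $T^{-1}$), respectively. Your identification of where each hypothesis is used also matches the paper's rationale.
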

\begin{proof}
The proof of Theorem \ref{thmfallcoc2}, with obvious
modifications, shows the validity of the above claim.
\end{proof}
\begin{remark}\lab{winter103}
By assumption the first order partial derivatives of the
components of $T^{-1}$ are continuous and bounded. Also it is true
that absolute value of a Lipschitz continuous function and the sum
and product of bounded Lipschitz continuous functions will be
Lipschitz continuous. Consequently, in order to ensure that
$|\textrm{det}(T^{-1})'|$ is in $BC^{0,1}(V)$, it is enough to
make sure that the first order partial derivatives of the
components of $T^{-1}$ are bounded and Lipschitz continuous.
\end{remark}
\begin{theorem}\lab{thmfallcoc5}
Let $s=k+\theta$ where $k\in \mathbb{N}$, $\theta\in (0,1)$, and
let $p\in (1,\infty)$. Suppose that $U$ and $V$ are two nonempty
open sets in $\reals^n$. Let $T: U\rightarrow V $ be a Lipschitz continuous
 $k$-smooth map on $U$ such that the
partial derivatives up to and including order $k$ of all the
components of $T$ are Lipschitz continuous on $U$ as well. Then
\begin{enumerate}
\item for each $K\in \mathcal{K}(V)$ the linear map
\begin{equation*}
T^*: W^{s,p}_{K}(V)\rightarrow W^{s,p}_{T^{-1}(K)}(U),\qquad
u\mapsto u\circ T
\end{equation*}
is well-defined and is bounded.
\item if we further assume that $V$ is Lipschitz (and so $U$ is Lipschitz), the linear map
\begin{equation*}
T^*: W^{s,p}(V)\rightarrow W^{s,p}(U),\qquad u\mapsto u\circ T
\end{equation*}
is well-defined and is bounded.\\
\textbf{Note:} When $U$ is a Lipschitz domain, the fact that $T$ is $k$-smooth automatically implies that all the partial derivatives of the components of $T$ up to and including order $k-1$ are Lipschitz continuous (see Theorem \ref{sobislip31}). So in this case, the only extra assumption, in addition to $T$ being $k$-smooth, is that the partial derivatives of the components of $T$ of order $k$ are Lipschitz continuous on $U$.
\end{enumerate}
\end{theorem}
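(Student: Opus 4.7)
The plan is to use the equivalent expression $\|u\circ T\|_{W^{s,p}(U)} = \|u\circ T\|_{W^{k,p}(U)} + \sum_{|\nu|=k} |\partial^{\nu}(u\circ T)|_{W^{\theta,p}(U)}$. The first summand is already controlled by $\|u\|_{W^{k,p}(V)}$ via Theorem \ref{lemfall1}, since $T$ is $k$-smooth. Hence the entire task reduces to estimating the Slobodeckij seminorms $|\partial^{\nu}(u\circ T)|_{W^{\theta,p}(U)}$ for $|\nu|=k$ by a constant multiple of $\|u\|_{W^{s,p}(V)}$. Throughout we work first with $u\in C^{\infty}(V)\cap W^{s,p}(V)$ so that classical calculus applies, and then extend by density using Theorem \ref{winter94} combined with Theorem \ref{lemwinter2003}.

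The key computational step is the Fa\`a di Bruno formula: for $|\nu|=k$,
\begin{equation*}
\partial^{\nu}(u\circ T)=\sum_{1\leq |\beta|\leq k} c_{\beta,\nu}\,\bigl[(\partial^{\beta}u)\circ T\bigr]\cdot Q_{\beta,\nu}(T),
\end{equation*}
where each $Q_{\beta,\nu}(T)$ is a polynomial in the partial derivatives of the components of $T$ of orders between $1$ and $k$. By hypothesis all these derivatives belong to $BC^{0,1}(U)$, and products and sums of bounded Lipschitz functions are again bounded Lipschitz, so $Q_{\beta,\nu}(T)\in BC^{0,1}(U)$. Hence the multiplier $Q_{\beta,\nu}(T)$ acts boundedly on $W^{\theta,p}(U)$ by Theorem \ref{thmfallmultsmooth21}, and it will suffice to bound $|(\partial^{\beta}u)\circ T|_{W^{\theta,p}(U)}$ for each $1\leq |\beta|\leq k$ by $\|u\|_{W^{s,p}(V)}$.

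For this compositional step, Theorem \ref{thmfallcoc3} applies directly provided $\partial^{\beta}u\in W^{\theta,p}(V)$: since $T$ is $1$-smooth and Lipschitz on $U$, the map $v\mapsto v\circ T$ is bounded $W^{\theta,p}(V)\to W^{\theta,p}(U)$. When $|\beta|=k$, by definition $\partial^{\beta}u\in W^{\theta,p}(V)$ with seminorm controlled by $\|u\|_{W^{s,p}(V)}$. When $|\beta|<k$, we have $\partial^{\beta}u\in W^{s-|\beta|,p}$ which is strictly more regular than $W^{\theta,p}$, and we need the embedding $W^{s-|\beta|,p}\hookrightarrow W^{\theta,p}$. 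In part (2), where $V$ is Lipschitz, this embedding holds by Theorem \ref{thm3.2}. In part (1), $u\in W^{s,p}_{K}(V)$ has compact support, so $\partial^{\beta}u\in W^{s-|\beta|,p}_K(V)$, and the corresponding embedding for compactly supported elements holds on any open set (Theorem \ref{thm3.2}, item 2); moreover the support of $(\partial^{\beta}u)\circ T$ lies in $T^{-1}(K)$, ensuring the image lands in $W^{\theta,p}_{T^{-1}(K)}(U)$.

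Assembling the pieces, we obtain
\begin{equation*}
|\partial^{\nu}(u\circ T)|_{W^{\theta,p}(U)}\preceq \sum_{1\leq |\beta|\leq k}\|Q_{\beta,\nu}(T)\|_{BC^{0,1}(U)}\,\|(\partial^{\beta}u)\circ T\|_{W^{\theta,p}(U)}\preceq \|u\|_{W^{s,p}(V)},
\end{equation*}
where the implicit constants depend on $T$ (and on $K$ in part (1)) but not on $u$. The main obstacle in this proof is keeping the two cases (1) and (2) genuinely parallel despite the failure of the embedding $W^{s',p}(V)\hookrightarrow W^{\theta,p}(V)$ on non-Lipschitz $V$: the compact-support version of the embedding plus the fact that differentiation preserves support are what make part (1) go through in the generality stated. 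A secondary technical point to check is the passage from smooth $u$ to general $u\in W^{s,p}$ (or $u\in W^{s,p}_K$) by density, ensuring that the composition $u\circ T$ is identified with the distributional pullback $T^{*}u$ consistently; this follows because the bound established above is continuous in $u$ in the $W^{s,p}$ topology, and approximating smooth functions can be chosen with supports in a fixed compact neighborhood of $K$ in part (1) using Theorem \ref{lemwinter2003}.
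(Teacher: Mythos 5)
Your proposal is correct and follows essentially the same route as the paper's proof: reduce to smooth $u$, split the norm into the $W^{k,p}$ part (handled by Theorem \ref{lemfall1}) and the top-order Slobodeckij seminorms, expand $\partial^{\nu}(u\circ T)$ via Fa\`a di Bruno, use that the coefficient polynomials lie in $BC^{0,1}(U)$ together with Theorem \ref{thmfallmultsmooth21}, handle the compositions $(\partial^{\beta}u)\circ T$ via Theorems \ref{lemfall1} and \ref{thmfallcoc3}, and close the gap between the resulting sum of seminorms and $\|u\|_{W^{s,p}(V)}$ using the embeddings from Theorem \ref{thm3.2} (which also underlie the paper's Theorem \ref{thmfall312}), distinguishing the Lipschitz-domain and compact-support cases exactly as the paper does. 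The density/consistency step you sketch at the end matches the paper's Step 2; the only thing you leave slightly implicit is the identification of the limit with $T^{*}u$, which the paper makes explicit by passing through $L^p$ convergence, but your appeal to choosing approximants with supports in a fixed compact set via Theorem \ref{lemwinter2003} covers it.
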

\begin{proof}
Recall that $C^{\infty}(V)\cap W^{s,p}(V)$ is dense in
$W^{s,p}(V)$. Our proof consists of two steps: in the first step
we addditionally assume that $u\in C^\infty(V)$. Then in the
second step we prove the validity of the claim for $u\in
W^{s,p}_K(V)$ (or $u\in W^{s,p}(V)$ with the assumption that $V$
is Lipschitz).
\begin{itemizeX}
\item \textbf{Step 1:}
We have
\begin{align*}
\|u\circ T\|_{W^{s,p}(U)}&=\|u\circ T\|_{W^{k,p}(U)}+\sum_{
|\nu|= k}|\partial^\nu(u\circ T)|_{W^{\theta,p}(U)}\\
&\stackrel{\textrm{Theorem \ref{lemfall1}}}{\preceq}
\|u\|_{W^{k,p}(V)}+\sum_{|\nu|= k}|\partial^\nu(u\circ
T)|_{W^{\theta,p}(U)}
\end{align*}
Since $u$ and $T$ are both $C^k$, it can be proved by induction
that (see e.g. \cite{32})
\begin{equation*}
\partial^\nu(u\circ T)(x)=\sum_{\beta\leq \nu, 1\leq |\beta|} M_{\nu
\beta}(x)[(\partial^\beta u)\circ T](x)
\end{equation*}
where $M_{\nu \beta}(x)$ are polynomials of degree at most
$|\beta|$ in derivatives of order at most $|\nu|$ of the
components of $T$. In particular, $M_{\nu\beta}\in BC^{0,1}(U)$ . Therefore
{\fontsize{10}{10}{\begin{align*}
|\partial^\nu &(u\circ T)|_{W^{\theta,p}(U)}\leq
\|\partial^\nu(u\circ T)\|_{W^{\theta,p}(U)}=\|\sum_{\beta\leq
\nu, 1\leq |\beta|} M_{\nu \beta}(x)[(\partial^\beta u)\circ
T](x)\|_{W^{\theta,p}(U)}\\
& \stackrel{\textrm{Theorem \ref{thmfallmultsmooth21}}}{\preceq}
\sum_{\beta\leq \nu, 1\leq |\beta|}\|(\partial^\beta u)\circ
T\|_{W^{\theta,p}(U)}=\sum_{\beta\leq \nu, 1\leq
|\beta|}\|(\partial^\beta u)\circ T\|_{L^p(U)}+|(\partial^\beta
u)\circ T|_{W^{\theta,p}(U)}\\
&\stackrel{\textrm{Theorem \ref{lemfall1} and \ref{thmfallcoc3}}}{\preceq}
\sum_{\beta\leq \nu, 1\leq |\beta|}\|\partial^\beta
u\|_{L^p(V)}+|\partial^\beta u|_{W^{\theta,p}(V)} \leq
\|u\|_{W^{k,p}(V)}+\sum_{\beta\leq \nu, 1\leq
|\beta|}|\partial^\beta u|_{W^{\theta,p}(V)}
\end{align*}}}
(The fact that $\partial^\beta u$ belongs to
$W^{\theta,p}(V)\hookrightarrow L^p(V)$ is a consequence of the definition of the
Slobodeckij norm combined with our
embedding theorems for Sobolev spaces of functions with fixed compact support in an arbitrary domain or
 embedding theorems for Sobolev spaces of functions on a Lipschitz domain). Hence
\begin{align*}
\|u\circ T\|_{W^{s,p}(U)}&\preceq \|u\|_{W^{k,p}(V)}+\sum_{1\leq
|\nu|\leq k}\sum_{\beta\leq \nu, 1\leq |\beta|}|\partial^\beta
u|_{W^{\theta,p}(V)}\\
&\preceq \|u\|_{W^{k,p}(V)}+\sum_{1\leq |\alpha|\leq
k}|\partial^\alpha u|_{W^{\theta,p}(V)}\stackrel{Theorem \ref{thmfall312}}{\simeq}\|u\|_{W^{s,p}(V)}
\end{align*}
Note that the last equivalence is due to the assumption that $u\in W^{s,p}_K(V)$ ( or $u\in W^{s,p}(V)$ with $V$ being Lipschitz).
\item \textbf{Step 2:} Now suppose $u$ is an arbitrary element of
$W^{s,p}_{K}(V)$ (or $W^{s,p}(V)$ with $V$ being Lipschitz). There exists a sequence $\{u_m\}_{m\geq 1}$ in
$C^\infty(V)$ such that $u_m\rightarrow u$ in $W^{s,p}(V)$. In
particular, $\{u_m\}$ is Cauchy. By the previous steps we have
\begin{equation*}
\|T^* u_m-T^*u_l\|_{W^{s,p}(U)}\preceq
\|u_m-u_l\|_{W^{s,p}(V)}\rightarrow 0\qquad (\textrm{as
$m,l\rightarrow \infty$})
\end{equation*}
Therefore $\{T^*u_m\}$ is a Cauchy sequence in the Banach space
$W^{s,p}(U)$ and subsequently there exists $v\in W^{s,p}(U)$ such
that $T^*u_m\rightarrow v$ as $m\rightarrow \infty$. It remains
to show that $v=T^*u$ as elements of $W^{s,p}(U)$. As a direct
consequence of the definition of $W^{s,p}$-norm $(s\geq 0)$ we
have
\begin{align*}
& \|T^*u_m-v\|_{L^p(U)}\leq \|T^*u_m-v\|_{W^{s,p}(U)}\rightarrow 0\\
&\|u_m-u\|_{L^p(V)}\leq \|u_m-u\|_{W^{s,p}(V)}\rightarrow 0
\end{align*}
Note that by Theorem \ref{lemfall1}, $u_m\rightarrow u$ in
$L^p(V)$ implies that $T^* u_m\rightarrow T^*u$ in $L^p(U)$. Thus
$T^*u=v$ as elements of $L^p(U)$ and hence as elements of
$W^{s,p}(U)$.
\end{itemizeX}
\end{proof}
\begin{theorem}\lab{thmfallcoc6}
Let $p\in (1,\infty)$ and $s<-1$ be a \textbf{noninteger} number.
Suppose that $U$ and $V$ are two nonempty \textbf{Lipschitz} open
sets in $\reals^n$ and $T: U\rightarrow V$ is a $\infty$-smooth map
 such that $T^{-1}$ is Lipschitz continuous on
$V$ and the partial derivatives up to and including order $k$ of
all the components of $T^{-1}$ are Lipschitz  continuous on $V$.
Then the linear map
\begin{equation*}
T^*: W^{s,p}(V)\rightarrow W^{s,p}(U),\qquad u\mapsto u\circ T
\end{equation*}
is well-defined and is bounded.\\
\textbf{Note:} Since $V$ is a Lipschitz domain, the fact that $T$ is $\infty$-smooth automatically implies that $T^{-1}$ and all the partial derivatives of the components of $T^{-1}$ are Lipschitz continuous (see Theorem \ref{sobislip31}).
\end{theorem}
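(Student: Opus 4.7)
The plan is to mimic the duality argument used in Theorem \ref{thmfallcoc2}, with two additional ingredients that are available precisely because $U$ and $V$ are Lipschitz: the multiplication-by-smooth-functions result for negative noninteger exponents (Theorem \ref{thmfallmultsmooth20}) and the positive-order change-of-coordinates theorem (Theorem \ref{thmfallcoc5}). Write $-s = k+\theta$ with $k\in\mathbb{N}$, $k\geq 1$, and $\theta\in(0,1)$; then $-s>1$ is noninteger. For $u\in W^{s,p}(V)$ and $\varphi\in C_c^\infty(U)$, motivated by the change-of-variables formula for regular distributions, I would set
\begin{equation*}
\langle T^*u,\varphi\rangle := \big\langle u,\ |\det(T^{-1})'|\,(\varphi\circ T^{-1})\big\rangle_{D'(V)\times D(V)}.
\end{equation*}
This pairing makes sense because $T^{-1}$ is $\infty$-smooth with $\inf_V|\det(T^{-1})'|>0$ by Remark \ref{winter100}, so $|\det(T^{-1})'|\in BC^{\infty}(V)$ and $\varphi\circ T^{-1}\in C_c^{\infty}(V)\subseteq W^{-s,p'}_{0}(V)$.

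The core estimate would then be the three-step chain
\begin{align*}
|\langle T^*u,\varphi\rangle|
&\leq \|u\|_{W^{s,p}(V)}\,\big\||\det(T^{-1})'|(\varphi\circ T^{-1})\big\|_{W^{-s,p'}(V)} \\
&\preceq \|u\|_{W^{s,p}(V)}\,\|\varphi\circ T^{-1}\|_{W^{-s,p'}(V)} \\
&\preceq \|u\|_{W^{s,p}(V)}\,\|\varphi\|_{W^{-s,p'}(U)}.
\end{align*}
The first inequality is just the definition $\|u\|_{W^{s,p}(V)}=\|u\|_{[W^{-s,p'}_{0}(V)]^{*}}$. The second invokes Theorem \ref{thmfallmultsmooth20} applied on the Lipschitz domain $V$ with the noninteger negative exponent $s$; the required $BC^{\infty,1}(V)$ regularity of $|\det(T^{-1})'|$ reduces to $BC^{\infty}(V)$ regularity by Theorem \ref{sobislip31}, since on a Lipschitz domain every $BC^{\infty}$-function is automatically Lipschitz to every order. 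The third line applies Theorem \ref{thmfallcoc5}(2) to the map $T^{-1}:V\to U$: the target $U$ is Lipschitz, $T^{-1}$ is $k$-smooth and Lipschitz, and its partial derivatives up to order $k$ are Lipschitz (either by hypothesis or automatically on the Lipschitz domain $V$ by Theorem \ref{sobislip31}), so the pullback $(T^{-1})^{*}:W^{-s,p'}(U)\to W^{-s,p'}(V)$ is bounded.

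Once this chain is in place, the functional $\varphi\mapsto\langle T^*u,\varphi\rangle$ is continuous on $(C_c^{\infty}(U),\|\cdot\|_{W^{-s,p'}(U)})$; by the density of $C_c^{\infty}(U)$ in $W^{-s,p'}_{0}(U)$ and Remark \ref{winter34}, it extends uniquely to an element of $[W^{-s,p'}_{0}(U)]^{*}=W^{s,p}(U)$, which I would identify with $T^{*}u$, with the operator bound $\|T^{*}u\|_{W^{s,p}(U)}\preceq\|u\|_{W^{s,p}(V)}$ being immediate. The main obstacle, and the reason the hypotheses on both domains cannot be relaxed, lies in the middle inequality: as emphasized in Remark \ref{winter97}, multiplication by a smooth bounded function can fail to preserve $W^{s,p}$ for noninteger $s$ with $|s|>1$ on a general open set, and this is precisely why $V$ must be Lipschitz. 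Symmetrically, the positive-order pullback estimate in the last line requires Theorem \ref{thmfallcoc5}(2), which forces $U$ to be Lipschitz as well.
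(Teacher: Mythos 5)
Your proof is correct and follows essentially the same approach as the paper's: both use the duality pairing $\langle T^*u,\varphi\rangle=\langle u,|\det(T^{-1})'|\,(\varphi\circ T^{-1})\rangle$, strip off the Jacobian factor via the Lipschitz-domain multiplication theorem (Theorem \ref{thmfallmultsmooth20}), and then bound $\|\varphi\circ T^{-1}\|_{W^{-s,p'}(V)}$ by $\|\varphi\|_{W^{-s,p'}(U)}$ using the positive-order change-of-coordinates result (Theorem \ref{thmfallcoc5}). The only difference is presentational: you spell out the definition of the distributional pullback and the final density/extension step more explicitly than the paper, which simply declares the argument "completely analogous to the proof of Theorem \ref{thmfallcoc2}."
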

\begin{proof}
The proof is completely analogous to the proof of Theorem
\ref{thmfallcoc2}. We have
\begin{align*}
\|T^*u\|_{W^{s,p}(U)}&=\sup_{\varphi\in
C_c^\infty(U)}\frac{|\langle T^* u,\varphi \rangle_{D'(U)\times
D(U)}
|}{\|\varphi\|_{W^{-s,p'}(U)}}\\
&=\sup_{\varphi\in C_c^\infty(U)}\frac{|\langle
u,|\textrm{det}(T^{-1})'|\varphi\circ T^{-1} \rangle_{D'(V)\times
D(V)}
|}{\|\varphi\|_{W^{-s,p'}(U)}}\\
& \preceq
\frac{\|u\|_{W^{s,p}(V)}\||\textrm{det}(T^{-1})'|\varphi\circ
T^{-1} \|_{W^{-s,p'}(V)}}{\|\varphi\|_{W^{-s,p'}(U)}}\\
&\stackrel{|\textrm{det}(T^{-1})'|\in
BC^{\infty}(V)}{\preceq}
\frac{\|u\|_{W^{s,p}(V)}\|\varphi\circ T^{-1}
\|_{W^{-s,p'}(V)}}{\|\varphi\|_{W^{-s,p'}(U)}}
\end{align*}
Since $-s>0$, it follows from the hypotheses of this theorem and
the result of Theorem \ref{thmfallcoc5} that $\|\varphi\circ
T^{-1} \|_{W^{-s,p'}(V)}\preceq \|\varphi\|_{W^{-s,p'}(U)}$.
Consequently
\begin{equation*}
\|T^*u\|_{W^{s,p}(U)}\preceq \|u\|_{W^{s,p}(V)}
\end{equation*}
\end{proof}
\begin{lemma}\lab{winter104}
Let $U$ and $V$ be two nonempty open sets in $\reals^n$. Suppose
$T:U\rightarrow V$ ($T=(T^1,\cdots,T^n)$) is a
$C^{k+1}$-diffeomorphism for some $k\in\mathbb{N}_0$ and let
$B\subseteq U$ be a nonempty bounded open set such that
$B\subseteq \bar{B}\subseteq U$. Then
\begin{enumerate}
\item $T:B\rightarrow T(B)$ is a $(k+1)$-smooth map.
\item $T: B\rightarrow T(B)$ and $T^{-1}:T(B)\rightarrow B$ are
Lipschitz (the Lipschitz constant may depend on $B$).
\item For all $1\leq i\leq n$ and $|\alpha|\leq k$, $\partial^\alpha T^i\in
BC^{k,1}(B)$ and $\partial^\alpha (T^{-1})^i\in BC^{k,1}(T(B))$.
\end{enumerate}
\end{lemma}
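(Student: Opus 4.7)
The plan is to prove the three items in order, with Theorem \ref{thmfalllipbound1} (together with the compactness of $\bar B$) as the workhorse throughout. Before attacking the individual items, I would establish the following geometric setup: since $T:U\to V$ is a homeomorphism and $\bar B\subseteq U$ is compact, the image $T(\bar B)$ is a compact subset of $V$; moreover $\overline{T(B)}\subseteq T(\bar B)$, so $\overline{T(B)}$ is compact and contained in $V$. This symmetric picture is what allows one to swap the roles of $T$ and $T^{-1}$ when handling the inverse map.

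For item (1), to show that $T:B\to T(B)$ is $(k+1)$-smooth I need each $T^i$ to lie in $BC^{k+1}(B)$ and each $(T^{-1})^i$ to lie in $BC^{k+1}(T(B))$. Since $T\in C^{k+1}(U)$, every $\partial^\alpha T^i$ with $|\alpha|\leq k+1$ is continuous on $U$, hence continuous on the compact set $\bar B$, hence bounded there; in particular bounded on $B$. The analogous argument, using that $T^{-1}\in C^{k+1}(V)$ and $\overline{T(B)}\subseteq T(\bar B)$ is a compact subset of $V$, handles $(T^{-1})^i\in BC^{k+1}(T(B))$.

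For item (2), Theorem \ref{thmfalllipbound1} applies directly: $T\in C^{k+1}(U)\subseteq C^1(U)$ and $B\subseteq \bar B\subseteq U$ is bounded with compact closure in $U$, so $T:B\to V$ is Lipschitz; a fortiori $T:B\to T(B)$ is Lipschitz. By the symmetric setup above, the same theorem applied with $(V,U,T^{-1})$ in place of $(U,V,T)$ and with $T(B)$ playing the role of $B$ (noting $\overline{T(B)}\subseteq V$) yields that $T^{-1}:T(B)\to B$ is Lipschitz. The Lipschitz constants depend on $B$ because they are controlled by the suprema of $|\grad T^i|$ on $\bar B$ and $|\grad (T^{-1})^i|$ on $\overline{T(B)}$.

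For item (3), fix any multi-index $\alpha$ with $|\alpha|\leq k$. Then $\partial^\alpha T^i\in C^{k+1-|\alpha|}(U)\subseteq C^1(U)$, so Theorem \ref{thmfalllipbound1} again gives that $\partial^\alpha T^i:B\to \reals$ is Lipschitz; boundedness of $\partial^\alpha T^i$ (and its derivatives up to the relevant order) on $B$ follows exactly as in item (1), by continuity on the compact closure $\bar B$. The same argument on the $V$-side gives the statement for $\partial^\alpha(T^{-1})^i$ on $T(B)$. I do not anticipate a real obstacle: the proof is essentially bookkeeping on top of Theorem \ref{thmfalllipbound1}, and the only subtle point is the geometric step of verifying that $\overline{T(B)}$ sits inside a compact subset of $V$, which is immediate from the fact that a homeomorphism sends compacts to compacts.
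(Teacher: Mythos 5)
Your proof follows the same approach as the paper's, which is extremely terse (the paper's entire proof reads: item~1 follows because $\bar B$ is compact, so $T(\bar B)$ is compact and continuous functions are bounded on compacts; items~2 and~3 are direct consequences of Theorem~\ref{thmfalllipbound1}). Your write-up is essentially a correct and careful expansion of that.

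One small point worth being aware of: read literally, item~3 asserts $\partial^\alpha T^i \in BC^{k,1}(B)$ for every $|\alpha|\le k$, which would demand that $\partial^{\alpha+\beta}T^i$ be Lipschitz for all $|\beta|\le k$, i.e.\ control on derivatives of $T^i$ up to order $2k$ --- unavailable from $T\in C^{k+1}$ when $k\ge 2$. What your argument actually establishes (and what Theorem~\ref{thmfalllipbound1} delivers, and what Theorems~\ref{thmfallcoc5} and~\ref{winter105} actually consume downstream) is that $\partial^\alpha T^i$ is bounded and Lipschitz on $B$ for every $|\alpha|\le k$, i.e.\ $\partial^\alpha T^i\in BC^{0,1}(B)$, equivalently $T^i\in BC^{k,1}(B)$. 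This is almost certainly the intended reading; the statement as printed appears to be a slip, and you have proved the correct thing.
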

\begin{proof}
Item 1. is true because $\bar{B}$ is compact and so $T(\bar{B})$
is compact and continuous functions are bounded on compact sets.
Items 2. and 3. are direct consequences of Theorem
 \ref{thmfalllipbound1}.
\end{proof}
\begin{theorem}\lab{winter105}
Let $s\in \reals$ and $p\in(1,\infty)$. Suppose that $U$ and $V$
are two nonempty open sets in $\reals^n$ and $T: U\rightarrow V$
is a $C^\infty$-diffeomorphism (if $s\geq 0$ it is enough to assume $T$ is a $C^{\floor{s}+1}$-diffeomorphism). Let $B\subseteq U$ be a
nonempty bounded open set such that $B\subseteq \bar{B}\subseteq
U$. Let $u\in W^{s,p}(V)$ be such that $\textrm{supp} u\subseteq
T(B) $. (Note that if $\textrm{supp} u$ is compact in $V$, then such a $B$ exists.)
\begin{enumerate}
\item If $s$ is NOT a noninteger less than $-1$, then
\begin{equation*}
\|u\circ T\|_{W^{s,p}(U)}\preceq \|u\|_{W^{s,p}(V)}
\end{equation*}
(the implicit constant may depend on $B$ but otherwise is independent of $u$)
\item If $U$ and $V$ are Lipschitz or $\reals^n$, then the above result holds
for all $s\in\reals$.
%\item If $U=V=\reals^n$ and we further assume that $T$ is a $C^\infty$-diffeomorphism, then the above inequality %holds for all $s\in \reals$.
\end{enumerate}
\end{theorem}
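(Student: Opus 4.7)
The plan is to reduce to the change-of-coordinates theorems proved earlier (Theorems \ref{lemfall1}--\ref{thmfallcoc6}) by restricting $T$ to a slightly enlarged bounded open neighborhood $B'$ of $\bar{B}$ with $\bar{B'}\subseteq U$, and using the compact-support hypothesis on $u$ together with Corollary \ref{corofallusef1} to move norms between $V$ and $T(B')$ and between $U$ and $B'$. For part (1) any such bounded $B'$ suffices, while for part (2) I would additionally require $B'$ to have Lipschitz boundary; the latter is possible because $\bar{B}$ is compact in $U$, so a smooth bump combined with a Sard/regular-value argument produces a smooth (hence Lipschitz) sandwich domain $\bar{B}\subseteq B'\subseteq \bar{B'}\subseteq U$. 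Since $T$ is a $C^1$-diffeomorphism, $T(B')$ is then open with $T(B)\subseteq \overline{T(B)}\subseteq T(B')\subseteq V$, and in part (2) $T(B')$ inherits Lipschitz boundary from $B'$ because $T|_{B'}$ is a bi-Lipschitz homeomorphism onto its image.

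Applying Lemma \ref{winter104} to $B'$ produces exactly the hypotheses demanded by Theorems \ref{lemfall1}--\ref{thmfallcoc6}: the restriction $T:B'\to T(B')$ is $k$-smooth for every $k$ that one will need, is Lipschitz, and its components and their derivatives up to the required order are Lipschitz and bounded on $B'$, with the analogous statements holding for $T^{-1}$ on $T(B')$. Because $\textrm{supp}\,u\subseteq T(B)\subseteq T(B')$ with $\overline{T(B)}$ compact in $T(B')$, and because either $s$ is not a noninteger less than $-1$ (part 1) or $V$ is Lipschitz or $\reals^n$ (part 2), Corollary \ref{corofallusef1} yields $\|u\|_{W^{s,p}(T(B'))}\simeq \|u\|_{W^{s,p}(V)}$. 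Analogously, $\textrm{supp}(u\circ T)\subseteq T^{-1}(T(B))=B\subseteq B'$, so the same corollary gives $\|u\circ T\|_{W^{s,p}(U)}\simeq \|u\circ T\|_{W^{s,p}(B')}$.

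Finally I would split on the value of $s$ and invoke the appropriate pullback estimate for $T:B'\to T(B')$: Theorem \ref{lemfall1} for $s\in\mathbb{N}_0$, Theorem \ref{thmfallcoc3} for $0<s<1$, Theorem \ref{thmfallcoc5} (in its $W^{s,p}_K$ form, with $K=\overline{T(B)}$) for noninteger $s>1$, Theorem \ref{thmfallcoc2} for $s\in\mathbb{Z}^-$, Theorem \ref{thmfallcoc4} for $-1<s<0$, and Theorem \ref{thmfallcoc6} for the remaining noninteger $s<-1$ case which arises only in part (2). In each case the output is $\|u\circ T\|_{W^{s,p}(B')}\preceq \|u\|_{W^{s,p}(T(B'))}$, and chaining with the two equivalences above produces the desired estimate $\|u\circ T\|_{W^{s,p}(U)}\preceq \|u\|_{W^{s,p}(V)}$. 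The delicate step, and the reason part (2) imposes the global Lipschitz (or $\reals^n$) hypothesis, is precisely the noninteger $s<-1$ case: Theorem \ref{thmfallcoc6} demands that both $B'$ and $T(B')$ be Lipschitz domains, and Corollary \ref{corofallusef1} in that regime requires $U$ and $V$ themselves to be Lipschitz or $\reals^n$; the other cases are essentially bookkeeping once Lemma \ref{winter104} has localized $T$ to the well-behaved restriction on $B'$.
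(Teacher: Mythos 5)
Your argument follows essentially the same route as the paper's: enlarge $\bar{B}$ to a slightly bigger bounded open $B'$ with $\bar{B'}\subseteq U$, invoke Lemma \ref{winter104} so that the pullback/pushforward theorems apply to $T:B'\to T(B')$, transport norms between the large domains and the sandwich sets via Corollary \ref{corofallusef1} using the compact-support hypothesis, and split on the value of $s$. The one genuine refinement you add is insisting in part (2) that $B'$ have smooth (hence Lipschitz) boundary, which is precisely what makes Theorem \ref{thmfallcoc6} applicable when $s$ is a noninteger less than $-1$; the paper's written proof applies the pullback estimate directly on the pair $(B,T(B))$, which for that exponent range silently presupposes that $B$ and $T(B)$ are Lipschitz even though $B$ is an arbitrary bounded open set given in the hypothesis, so your version is the more careful one. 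You also correctly identify the $W^{s,p}_K$ form of Theorem \ref{thmfallcoc5} as the right tool for noninteger $s>1$ in part (1), matching the paper's second display with $\hat{B}$.

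One justification should be corrected: you argue that $T(B')$ inherits Lipschitz boundary ``because $T|_{B'}$ is a bi-Lipschitz homeomorphism onto its image,'' but bi-Lipschitz maps do \emph{not} in general preserve the Lipschitz-domain property (this is the same subtlety behind the paper's remark that intersections of Lipschitz domains need not be Lipschitz). The argument that works is the one your construction actually provides: since $B'$ is built with smooth boundary and $T$ is a $C^\infty$ (in particular $C^1$) diffeomorphism, $T(\partial B')$ is a $C^1$ hypersurface and so $T(B')$ is a $C^1$, hence Lipschitz, domain. With that substitution the proof is complete.
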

\begin{proof}
If $s$ is an integer or $-1<s<1$, or if $U$ and $V$ are Lipschitz or $\reals^n$
and $s\in \reals$ then as a consequence of the above lemma and the
preceding theorems we may write
\begin{align*}
\|u\circ T\|_{W^{s,p}(U)}\stackrel{\textrm{Corollary
\ref{corofallusef1}}}{\simeq} \|u\circ T\|_{W^{s,p}(B)}\preceq
\|u\|_{W^{s,p}(T(B))}\stackrel{\textrm{Corollary
\ref{corofallusef1}}}{\simeq} \|u\|_{W^{s,p}(V)}
\end{align*}
For general $U$ and $V$, if $s=k+\theta$, we let $\hat{B}$ be an
open set such that $\bar{\hat{B}}$ is a compact subset of $U$ and
$\bar{B}\subseteq{\hat{B}}$. We can apply the previous lemma to
$\hat{B}$ and write
\begin{align*}
\|u\circ T\|_{W^{s,p}(U)}\stackrel{\textrm{Corollary
\ref{corofallusef1}}}{\simeq} \|u\circ
T\|_{W^{s,p}_{\bar{B}}(\hat{B})}\stackrel{\textrm{Theorem
\ref{thmfallcoc5}}}{\preceq}
\|u\|_{W^{s,p}_{T(\bar{B})}(T(\hat{B}))}\stackrel{\textrm{Corollary
\ref{corofallusef1}}}{\simeq} \|u\|_{W^{s,p}(V)}
\end{align*}
\end{proof}
\begin{theorem}\cite{38}\lab{winter106}
Let $s\in [1,\infty)$, $1<p<\infty$, and let
\begin{align*}
m=\begin{cases}
&s,\quad \textrm{if $s$ is an integer}\\
& \floor{s}+1,\quad \textrm{otherwise}
\end{cases}
\end{align*}
If $F\in C^m(\reals)$ is such that $F(0)=0$ and $F,F',\cdots,F^{(m)}\in L^{\infty}(\reals)$ (in particular, note that every $F\in C_c^\infty(\reals)$ with $F(0)=0$ satisfies these conditions), then the map $u\mapsto F(u)$ is well-defined and continuous from $W^{s,p}(\reals^n)\cap W^{1,sp}(\reals^n)$ into $W^{s,p}(\reals^n)$.
\end{theorem}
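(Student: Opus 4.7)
The plan is to reduce the problem to establishing, separately, that $F(u) \in L^p(\reals^n)$, that the top-order integer derivatives $\partial^\alpha F(u)$ (for $|\alpha|=k:=\floor{s}$) lie in $L^p(\reals^n)$, and when $\theta:=s-k\in(0,1)$, that each such $\partial^\alpha F(u)$ has finite Slobodeckij seminorm $|\cdot|_{W^{\theta,p}(\reals^n)}$. The $L^p$ bound is immediate from $F(0)=0$ and $F'\in L^\infty$, which give $|F(u(x))|\leq \|F'\|_\infty|u(x)|$, so $\|F(u)\|_{L^p}\leq \|F'\|_\infty\|u\|_{L^p}$. The engine for the rest is Fa\`a di Bruno's formula: for $|\alpha|\leq k$,
\begin{equation*}
\partial^\alpha F(u)=\sum_{\pi}C_\pi\,F^{(|\pi|)}(u)\prod_{B\in\pi}\partial^{\gamma_B}u,
\end{equation*}
where $\pi$ runs over partitions of $\alpha$ into nonempty multi-indices $\{\gamma_B\}$ with $\sum_B\gamma_B=\alpha$. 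Since $F^{(j)}\in L^\infty(\reals)$ for $0\leq j\leq m$ and $m\geq k$, each prefactor $F^{(|\pi|)}(u)$ is uniformly bounded, so everything reduces to $L^p$ (or $W^{\theta,p}$) estimates on the products $\prod_B\partial^{\gamma_B}u$.

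For the integer derivative bounds ($|\alpha|=k$), I would invoke the Gagliardo--Nirenberg interpolation inequality: for $1\leq j\leq k$,
\begin{equation*}
\|D^j u\|_{L^{kp/j}(\reals^n)}\leq C\,\|u\|_{W^{k,p}(\reals^n)}^{\,(j-1)/(k-1)}\|u\|_{W^{1,kp}(\reals^n)}^{\,(k-j)/(k-1)}
\end{equation*}
(interpreted as $\|Du\|_{L^{kp}}$ when $j=1$ and $\|D^k u\|_{L^p}$ when $j=k$). This is precisely why the hypothesis $u\in W^{s,p}\cap W^{1,sp}$ (with $sp\geq kp$) is imposed. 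A partition $\{\gamma_B\}$ of $\alpha$ with $|\alpha|=k$ satisfies $\sum_B|\gamma_B|/(kp)=1/p$, so H\"older's inequality gives $\|\prod_B\partial^{\gamma_B}u\|_{L^p}\leq \prod_B\|\partial^{\gamma_B}u\|_{L^{kp/|\gamma_B|}}$, and the interpolation bound controls each factor by a polynomial expression in $\|u\|_{W^{k,p}}$ and $\|u\|_{W^{1,kp}}$. This settles the integer case $s=k$.

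For the fractional case $s=k+\theta$ with $\theta\in(0,1)$, I would bound $|\partial^\alpha F(u)|_{W^{\theta,p}}$ term by term in the Fa\`a di Bruno expansion, using the subadditivity of the seminorm. Each summand is a product of the form $F^{(j)}(u)\prod_B \partial^{\gamma_B}u$. For the first factor, the Lipschitz bound $|F^{(j)}(u(x))-F^{(j)}(u(y))|\leq \|F^{(j+1)}\|_\infty|u(x)-u(y)|$ (valid since $j+1\leq m$) translates differences of $F^{(j)}(u)$ into differences of $u$, and hence controls the Slobodeckij seminorm of $F^{(j)}(u)$ by $\|F^{(j+1)}\|_\infty |u|_{W^{\theta,p}}$ after appropriate integrability checks. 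For the product factors, I would apply the Sobolev multiplication theorems for $W^{s_i,p_i}(\reals^n)$ (Theorems \ref{thm4.6}--\ref{thm4.1}) combined with Gagliardo--Nirenberg to distribute smoothness and integrability across the factors so that each factor lies in a suitable $W^{\theta_i,q_i}(\reals^n)$ and the product lands in $W^{\theta,p}(\reals^n)$. The trickiest step, and the main analytic obstacle, is precisely this bookkeeping: one must show that for every admissible partition the exponents in the multiplication theorem are consistent with $\sum_B|\gamma_B|=k$ and with the availability of $u\in W^{1,sp}$ and $u\in W^{s,p}$ only, and that the resulting estimate is polynomial (hence bounded on bounded sets) in the two relevant norms of $u$.

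Finally, continuity of $u\mapsto F(u)$ follows from the same estimates applied to the difference $F(u)-F(v)$. Writing $F(u)-F(v)=\int_0^1 F'(v+t(u-v))\,dt\cdot(u-v)$ and differentiating via Fa\`a di Bruno yields analogous polynomial multilinear expressions, and a standard approximation argument (using the density of $C_c^\infty(\reals^n)$ in $W^{s,p}(\reals^n)\cap W^{1,sp}(\reals^n)$ from Theorem \ref{winter78}, where differentiation and composition commute classically) upgrades the pointwise identities to the distributional identities needed. The main obstacle remains the combinatorial/analytic step in the fractional case: choosing, for each term of Fa\`a di Bruno, a feasible assignment of Sobolev exponents to the factors so that the Sobolev multiplication theorem applies and the resulting estimate depends only on $\|u\|_{W^{s,p}}+\|u\|_{W^{1,sp}}$.
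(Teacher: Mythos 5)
The paper does not prove this theorem; it is cited directly from reference \cite{38} with no accompanying argument, so there is no ``paper's own proof'' to compare your proposal against. What you describe (Fa\`a di Bruno plus Gagliardo--Nirenberg plus Sobolev multiplication) is the natural line of attack and matches how such composition estimates are proved in the literature, and the $L^p$ bound and the integer case $s=k$ are essentially correct as you sketch them: the GN interpolation $\|D^j u\|_{L^{kp/j}}\preceq \|u\|_{W^{k,p}}^{(j-1)/(k-1)}\|u\|_{W^{1,kp}}^{(k-j)/(k-1)}$ has the right homogeneity, and H\"older closes the integer-order estimate. (For $|\alpha|<k$ and multi-block partitions, and for the fractional case where $sp\neq kp$, you also need to interpolate between $L^p$ and $L^{sp}$ to reach $L^{kp/j}$; this is available but worth stating.)

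However, there is a genuine gap in the fractional part, which you yourself flag as ``the trickiest step'' but do not close, and it is more than bookkeeping. If you assign exponents via GN by putting $\partial^{\gamma_B}u\in W^{\sigma_B-|\gamma_B|,\,sp/\sigma_B}$, then the smoothness index $\sigma_B-|\gamma_B|$ available for each factor is strictly less than $\theta$ whenever the partition has more than one block, whereas the paper's multiplication theorems (Theorems \ref{thm4.6} and \ref{thm4.1}) explicitly require each factor's smoothness to be at least the target smoothness $s$ (here $\theta$). So those theorems cannot be invoked directly as you propose. The alternative route, telescoping the Slobodeckij difference of a product so that exactly one factor carries the difference quotient and the rest are held fixed, produces (after translating one variable and applying H\"older in the untranslated variable) a quantity of Besov type $B^\theta_{q,p}$ with $q\neq p$ on the distinguished factor, not a $W^{\theta,q}$ seminorm; one then needs Besov--Sobolev embeddings and multiplication estimates in Besov scales, none of which appear in your outline or in the cited multiplication theorems. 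In short, the architecture is right, but the central analytic step --- a feasible, uniform assignment of indices across all Fa\`a di Bruno terms, or a Besov-space replacement for Theorems \ref{thm4.6}--\ref{thm4.1} --- is missing, and without it the fractional estimate is not established.
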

\begin{corollary}\lab{winter107}
Let $s$, $p$, and $F$ be as in the previous theorem. Moreover suppose $sp>n$. Then the map $u\mapsto F(u)$ is well-defined and continuous from $W^{s,p}(\reals^n)$ into $W^{s,p}(\reals^n)$. The reason is that when $sp>n$, we have $W^{s,p}(\reals^n)\hookrightarrow W^{1,sp}(\reals^n)$.
\end{corollary}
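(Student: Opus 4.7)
The plan is to invoke Theorem \ref{winter106} and then remove its intersection hypothesis by showing that under $sp>n$ one automatically has $W^{s,p}(\reals^n)\hookrightarrow W^{1,sp}(\reals^n)$, so that $W^{s,p}(\reals^n)\cap W^{1,sp}(\reals^n)=W^{s,p}(\reals^n)$ both as sets and as normed spaces (with equivalent norms).

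To establish the embedding, I would apply the Euclidean embedding result Theorem \ref{winter86} with parameters $\tilde{p}=p$, $\tilde{q}=sp$, $\tilde{s}=s$, $\tilde{t}=1$. The monotonicity conditions $p\leq sp$ and $1\leq s$ are immediate from $s\geq 1$, and the Sobolev scaling inequality $s-\frac{n}{p}\geq 1-\frac{n}{sp}$ reduces (after moving terms) to $(s-1)\bigl(1-\frac{n}{sp}\bigr)\geq 0$, which holds because $s\geq 1$ and $sp>n$. Hence Theorem \ref{winter86} yields a continuous inclusion
\begin{equation*}
W^{s,p}(\reals^n)\hookrightarrow W^{1,sp}(\reals^n),
\end{equation*}
so in particular every $u\in W^{s,p}(\reals^n)$ automatically lies in $W^{s,p}(\reals^n)\cap W^{1,sp}(\reals^n)$, and $\|u\|_{W^{s,p}}+\|u\|_{W^{1,sp}}\preceq \|u\|_{W^{s,p}}$.

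With this in hand, I would conclude as follows. By Theorem \ref{winter106}, $F\circ(\cdot)$ is a well-defined continuous map from $W^{s,p}(\reals^n)\cap W^{1,sp}(\reals^n)$ (equipped with the natural sum norm) into $W^{s,p}(\reals^n)$. Composing with the continuous embedding above gives that $u\mapsto F(u)$ is a well-defined continuous map from $W^{s,p}(\reals^n)$ into $W^{s,p}(\reals^n)$, which is precisely the claim.

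There is essentially no obstacle here; the only mild point to keep straight is the verification of the scaling inequality $s-\frac{n}{p}\geq 1-\frac{n}{sp}$ in the borderline case $s=1$ (where both sides coincide and the embedding is trivial since $W^{1,p}=W^{1,sp}$) versus $s>1$ (where $sp>n$ is what makes the inequality work). Everything else is a direct citation of the theorems already proved in the excerpt.
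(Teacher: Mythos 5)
Your argument is correct and is exactly the route the paper takes: the corollary's own justification is the single sentence asserting $W^{s,p}(\reals^n)\hookrightarrow W^{1,sp}(\reals^n)$ when $sp>n$, and you simply supply the (correct) verification of that embedding via Theorem \ref{winter86}, checking $p\leq sp$, $1\leq s$, and $s-\frac{n}{p}\geq 1-\frac{n}{sp}\Leftrightarrow (s-1)\bigl(1-\frac{n}{sp}\bigr)\geq 0$ before citing Theorem \ref{winter106}.
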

\subsection{Differentiation}
\begin{theorem}[(\cite{33}, Pages 598-605), (\cite{Gris85}, Section 1.4)]
\lab{winter88} Let $s\in \reals$, $1<p<\infty$, and $\alpha\in
\mathbb{N}_0^n$. Suppose $\Omega$ is a nonempty open set in $\reals^n$. Then
\begin{enumerateX}
\item the linear operator $\partial^\alpha: W^{s,p}(\reals^n)\rightarrow
W^{s-|\alpha|,p}(\reals^n)$ is well-defined and bounded;
\item for $s<0$, the linear operator $\partial^\alpha: W^{s,p}(\Omega)\rightarrow
W^{s-|\alpha|,p}(\Omega)$ is well-defined and bounded;
\item for $s\geq 0$ and $|\alpha|\leq s$, the linear operator $\partial^\alpha: W^{s,p}(\Omega)\rightarrow
W^{s-|\alpha|,p}(\Omega)$ is well-defined and bounded;
\item if $\Omega$ is bounded with Lipschitz continuous boundary, and if $s\geq 0$, $s-\frac{1}{p}\neq \textrm{integer}$ (i.e. the fractional part of $s$ is not equal to $\frac{1}{p}$), then the linear operator $\partial^\alpha: W^{s,p}(\Omega)\rightarrow W^{s-|\alpha|,p}(\Omega)$ for $|\alpha|>s$ is well-defined and bounded.
\end{enumerateX}
\end{theorem}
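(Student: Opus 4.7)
My plan is to prove the four items in the order (3), (1), (2), (4), since each later statement reduces to earlier ones. Part (3) follows by a direct inspection of the Slobodeckij norm. Writing $s=k+\theta$ with $k\in\mathbb{N}_0$ and $\theta\in[0,1)$, the integer assumption $|\alpha|\le s$ forces $|\alpha|\le k$, so $s-|\alpha|=(k-|\alpha|)+\theta$ and every summand of
\begin{equation*}
\|\partial^\alpha u\|_{W^{s-|\alpha|,p}(\Omega)}=\sum_{|\beta|\le k-|\alpha|}\|\partial^{\beta+\alpha}u\|_{L^p(\Omega)}+\sum_{|\nu|=k-|\alpha|}|\partial^{\nu+\alpha}u|_{W^{\theta,p}(\Omega)}
\end{equation*}
appears, up to the equivalent-norm constant from Remark~\ref{remfallequivnorm120}, among the terms defining $\|u\|_{W^{s,p}(\Omega)}$. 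The same calculation applied with $\Omega=\reals^n$ simultaneously settles the subcase $|\alpha|\le s$ of Part~(1).

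For the remaining part of (1)---the regime $|\alpha|>\max(s,0)$ on $\reals^n$---I would invoke the identifications from Remark~\ref{remfallsobolevbesov1}: $W^{s,p}(\reals^n)=F^s_{p,2}(\reals^n)$ for $s\in\mathbb{Z}$ and $W^{s,p}(\reals^n)=B^s_{p,p}(\reals^n)$ otherwise, together with the standard boundedness $\partial^\alpha:B^s_{p,p}(\reals^n)\to B^{s-|\alpha|}_{p,p}(\reals^n)$ and its $F$-space analogue from \cite{Trie83}. Equivalently, one verifies that $\partial^\alpha(I-\Delta)^{-|\alpha|/2}$ is an $L^p$-multiplier by Mihlin's theorem, which gives $\partial^\alpha:H^s_p(\reals^n)\to H^{s-|\alpha|}_p(\reals^n)$ for every $s\in\reals$, after which the noninteger $W^{s,p}$ case follows by real interpolation.

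Part~(2) is then a pure duality argument powered by Part~(3). For $s<0$ and $u\in W^{s,p}(\Omega)=[W^{-s,p'}_0(\Omega)]^*$, for each $\varphi\in C_c^\infty(\Omega)$ one has
\begin{equation*}
\bigl|\langle\partial^\alpha u,\varphi\rangle_{D'(\Omega)\times D(\Omega)}\bigr|=\bigl|\langle u,\partial^\alpha\varphi\rangle\bigr|\le \|u\|_{W^{s,p}(\Omega)}\|\partial^\alpha\varphi\|_{W^{-s,p'}(\Omega)},
\end{equation*}
and since $-s\ge 0$ and $|\alpha|\le|\alpha|-s$, Part~(3) applied with regularity $|\alpha|-s$ and integrability $p'$ yields $\|\partial^\alpha\varphi\|_{W^{-s,p'}(\Omega)}\preceq\|\varphi\|_{W^{|\alpha|-s,p'}(\Omega)}$; density of $C_c^\infty(\Omega)$ in $W^{|\alpha|-s,p'}_0(\Omega)$ then gives the required continuous extension. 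For Part~(4), the numerical condition $\tfrac{1}{p}-j<0$ for all $j\ge 1$ and $p>1$ automatically gives $W^{s,p}(\Omega)=W^{s,p}(\bar\Omega)$, while the hypothesis $s-\tfrac{1}{p}\notin\mathbb{Z}$ is equivalent to $s-|\alpha|\ne \tfrac{1}{p}-j$ for every $j\ge 1$ and hence suffices to ensure $W^{s-|\alpha|,p}(\Omega)=W^{s-|\alpha|,p}(\bar\Omega)$. I would then extend $u$ to $Pu\in W^{s,p}(\reals^n)$ via Corollary~\ref{winter90}, apply Part~(1) on $\reals^n$, and restrict back: since distributional differentiation commutes with the restriction $\textrm{res}_{\reals^n,\Omega}$ and $(Pu)|_\Omega=u$, we get $(\partial^\alpha Pu)|_\Omega=\partial^\alpha u$ in $D'(\Omega)$, and the restriction map $W^{s-|\alpha|,p}(\reals^n)\to W^{s-|\alpha|,p}(\bar\Omega)$ is continuous by definition.

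The main obstacle is unquestionably the residual part of (1), namely $|\alpha|>\max(s,0)$ on $\reals^n$: a direct Slobodeckij computation cannot see below the integer floor of $s$, and a self-dual bootstrap on $\reals^n$ alone is circular. Resolving this genuinely requires Fourier-analytic input---either Mihlin multipliers acting on Bessel potentials or the Littlewood-Paley characterization of Besov and Triebel-Lizorkin spaces---which, in keeping with the expository spirit of this manuscript, I would quote as a black box from \cite{Trie83} rather than reprove from scratch.
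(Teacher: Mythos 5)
The paper does not prove Theorem~\ref{winter88}; it is stated as a citation to \cite{33} and \cite{Gris85}, so there is no internal argument to compare against. Your proposed proof is correct and well organized: (3) by direct inspection of the Slobodeckij norm (using that $|\alpha|$ is an integer so $|\alpha|\le s=k+\theta$ forces $|\alpha|\le k$, after which every term in $\|\partial^\alpha u\|_{W^{s-|\alpha|,p}(\Omega)}$ occurs among the terms of $\|u\|_{W^{s,p}(\Omega)}$); (1) on $\reals^n$ via the Besov/Triebel--Lizorkin identifications of Remark~\ref{remfallsobolevbesov1} or, equivalently, Mihlin multipliers on Bessel potential spaces plus interpolation; (2) by dualizing (3) through the definition $W^{s,p}(\Omega)=[W^{-s,p'}_0(\Omega)]^*$ and density of $C_c^\infty(\Omega)$; and (4) by extend--differentiate--restrict using Corollary~\ref{winter90} and the identification $W^{t,p}(\Omega)=W^{t,p}(\bar{\Omega})$ for all relevant exponents. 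Your honest flagging that the regime $|\alpha|>\max(s,0)$ on $\reals^n$ genuinely requires Fourier-analytic input (rather than an elementary Slobodeckij computation) is accurate and appropriate in context.

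Two cosmetic points. In Part~(4) you write that $s-\tfrac{1}{p}\notin\mathbb{Z}$ is ``equivalent to'' $s-|\alpha|\neq\tfrac{1}{p}-j$ for every $j\ge 1$; this should be ``implies,'' since the latter only excludes $s-\tfrac{1}{p}$ from the set $\{|\alpha|-1,|\alpha|-2,\dots\}$, a proper subset of $\mathbb{Z}$. In Part~(2) the pairing identity should carry the sign $\langle\partial^\alpha u,\varphi\rangle=(-1)^{|\alpha|}\langle u,\partial^\alpha\varphi\rangle$, which of course disappears inside the absolute value. Neither affects the validity of the argument.
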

\begin{remark}
Comparing the first and last items of the previous theorem, we see that not all the properties of Sobolev-Slobodeckij spaces on $\reals^n$ are fully inherited by Sobolev-Slobodeckij spaces on bounded domains even when the domain has Lipschitz continuous boundary. (Note that the above difference is related to the more fundamental fact that for $s> 0$, even when $\Omega$ is Lipschitz, $C_c^\infty(\Omega)$ is not necessarily dense in $W^{s,p}(\Omega)$ and subsequently $W^{-s,p'}(\Omega)$ is defined as the dual of $W_0^{s,p}(\Omega)$ rather than the dual of $W^{s,p}(\Omega)$ itself.) For this reason, when working with Sobolev spaces on manifolds, we prefer super nice atlases (i.e. we prefer to work with coordinate charts whose image under the coordinate map is the entire $\reals^n$). The next best choice would be GGL or GL atlases.
%Considering this, the following question will be of interest: given a GL or GGL atlas, does there exist a super nice %atlas that is GL compatible with the given atlas?
\end{remark}
\subsection{Spaces of Locally Sobolev Functions}
Material of this section are taken from \cite{holstbehzadan2018c}.
\begin{definition}
Let $s\in \reals$, $1<p<\infty$. Let $\Omega$ be a nonempty open set in
$\reals^n$. We define
\begin{align*}
 W^{s,p}_{loc}(\Omega):=\{u\in D'(\Omega): \forall \varphi\in
C_c^\infty(\Omega)\quad \varphi u\in W^{s,p}(\Omega)\}
\end{align*}
$W^{s,p}_{loc}(\Omega)$ is equipped with the natural topology
induced by the separating family of seminorms $\{|.|_{\varphi}\}_{\varphi\in
C_c^{\infty}(\Omega)}\}$ where
\begin{equation*}
\forall\, u\in W^{s,p}_{loc}(\Omega)\quad \varphi\in
C_c^{\infty}(\Omega)\qquad |u|_\varphi:=\|\varphi
u\|_{W^{s,p}(\Omega)}
\end{equation*}
\end{definition}

\begin{theorem}\lab{thmapp8a}
 Let $s\in \reals$, $1<p<\infty$, and $\alpha\in
\mathbb{N}_0^n$. Suppose $\Omega$ is a nonempty bounded open set in $\reals^n$ with Lipschitz continuous boundary. Then
\begin{enumerateX}
\item the linear operator $\partial^\alpha: W^{s,p}_{loc}(\reals^n)\rightarrow
W^{s-|\alpha|,p}_{loc}(\reals^n)$ is well-defined and continuous;
\item for $s<0$, the linear operator $\partial^\alpha: W^{s,p}_{loc}(\Omega)\rightarrow
W^{s-|\alpha|,p}_{loc}(\Omega)$ is well-defined and continuous;
\item for $s\geq 0$ and $|\alpha|\leq s$, the linear operator $\partial^\alpha: W^{s,p}_{loc}(\Omega)\rightarrow
W^{s-|\alpha|,p}_{loc}(\Omega)$ is well-defined and continuous;
\item if $s\geq 0$, $s-\frac{1}{p}\neq \textrm{integer}$ (i.e. the fractional part of $s$ is not equal to $\frac{1}{p}$), then the linear operator $\partial^\alpha: W^{s,p}_{loc}(\Omega)\rightarrow W^{s-|\alpha|,p}_{loc}(\Omega)$ for $|\alpha|>s$ is well-defined and continuous.
\end{enumerateX}
\end{theorem}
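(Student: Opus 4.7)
The plan is to reduce everything to the already-established Theorem \ref{winter88} via the standard cut-off trick, and to verify continuity by checking the seminorm condition of Theorem \ref{thmapptvconvergence1}. Since the topology on $W^{s,p}_{loc}$ is the natural one induced by the separating family of seminorms $\{|\cdot|_\psi\}_{\psi \in C_c^\infty}$, to prove that $\partial^\alpha$ is well-defined and continuous between the two $W^{\cdot,p}_{loc}$ spaces it suffices to show that for every $\psi \in C_c^\infty(\Omega)$ there exist $\varphi \in C_c^\infty(\Omega)$ and $C>0$ such that
\begin{equation*}
|\partial^\alpha u|_\psi = \|\psi\,\partial^\alpha u\|_{W^{s-|\alpha|,p}(\Omega)} \leq C\,\|\varphi u\|_{W^{s,p}(\Omega)} = C\,|u|_\varphi
\quad\text{for every } u\in W^{s,p}_{loc}(\Omega).
\end{equation*}

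Given $\psi$, the first step is to pick $\varphi \in C_c^\infty(\Omega)$ with $\varphi \equiv 1$ on some open neighborhood $V$ of $\mathrm{supp}\,\psi$ (this is possible by the cut-off construction on the open set $\Omega$). The key distributional identity I will then establish is
\begin{equation*}
\psi\,\partial^\alpha u \;=\; \psi\,\partial^\alpha(\varphi u)\qquad \text{in } D'(\Omega).
\end{equation*}
Indeed, the difference equals $\psi\,\partial^\alpha\bigl((1-\varphi)u\bigr)$, and by Theorem \ref{winter67} we have $\mathrm{supp}\,\partial^\alpha\bigl((1-\varphi)u\bigr) \subseteq \mathrm{supp}\bigl((1-\varphi)u\bigr) \subseteq \mathrm{supp}(1-\varphi)$, which is disjoint from $V \supseteq \mathrm{supp}\,\psi$; hence the product distribution vanishes.

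The second step exploits that, by the very definition of $W^{s,p}_{loc}(\Omega)$, we have $\varphi u \in W^{s,p}(\Omega)$. Applying the appropriate item of Theorem \ref{winter88} gives $\partial^\alpha(\varphi u) \in W^{s-|\alpha|,p}(\Omega)$ with the norm estimate
\begin{equation*}
\|\partial^\alpha(\varphi u)\|_{W^{s-|\alpha|,p}(\Omega)} \preceq \|\varphi u\|_{W^{s,p}(\Omega)}.
\end{equation*}
Here the four cases of Theorem \ref{thmapp8a} correspond exactly to the four cases of Theorem \ref{winter88}; the standing Lipschitz hypothesis on $\Omega$ covers cases (ii)-(iv), while case (i) uses $\Omega = \reals^n$. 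Finally, since $\psi \in C_c^\infty(\Omega) \subset BC^{\infty,1}(\Omega)$ (recall $\Omega$ is bounded Lipschitz, so $BC^\infty$ functions are automatically Lipschitz by Theorem \ref{sobislip31}; alternatively use Theorem \ref{winter87} on $\reals^n$), multiplication by $\psi$ is bounded on $W^{s-|\alpha|,p}(\Omega)$ by Theorem \ref{thmfallmultsmooth20}, yielding
\begin{equation*}
\|\psi\,\partial^\alpha(\varphi u)\|_{W^{s-|\alpha|,p}(\Omega)} \preceq \|\partial^\alpha(\varphi u)\|_{W^{s-|\alpha|,p}(\Omega)}\preceq \|\varphi u\|_{W^{s,p}(\Omega)}.
\end{equation*}
Chaining this with the identity from step one gives the required seminorm estimate, establishing both well-definedness ($\psi\,\partial^\alpha u \in W^{s-|\alpha|,p}(\Omega)$ for every $\psi$) and continuity simultaneously.

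The only mildly delicate point is the support identity in step one, since for $s<0$ the distribution $u$ is not a function and one has to invoke the support calculus for distributions (Theorem \ref{winter67}) rather than a pointwise argument; but this is routine once the cut-off $\varphi$ is chosen to equal $1$ on a \emph{neighborhood} of $\mathrm{supp}\,\psi$, not merely on $\mathrm{supp}\,\psi$ itself. The case split over $s$ and $|\alpha|$ is handled purely by invoking the matching item of Theorem \ref{winter88}, so no new analysis is needed beyond the cut-off reduction.
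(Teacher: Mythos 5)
Your proof is correct. The paper does not display its own proof of this theorem (the surrounding material is attributed to a companion reference), but your cut-off reduction is exactly the standard argument: choosing $\varphi\in C_c^\infty(\Omega)$ with $\varphi\equiv 1$ on a \emph{neighborhood} of $\mathrm{supp}\,\psi$ so that the support calculus of Theorem~\ref{winter67} gives $\psi\,\partial^\alpha u=\psi\,\partial^\alpha(\varphi u)$ in $D'(\Omega)$, then chaining the appropriate item of the Sobolev differentiation theorem~\ref{winter88} with the multiplication-by-smooth-function estimates of Theorems~\ref{thmfallmultsmooth20} and~\ref{winter87}, and finally appealing to the seminorm criterion of Theorem~\ref{thmapptvconvergence1} for continuity between the natural topologies on the $W^{s,p}_{loc}$ spaces. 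Each of the four cases of the theorem is matched to the correct item of Theorem~\ref{winter88}, and the boundary between $\Omega=\reals^n$ (where Theorem~\ref{winter87} is used for the multiplication step) and bounded Lipschitz $\Omega$ (where Theorem~\ref{thmfallmultsmooth20} is used) is handled correctly.
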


%\begin{mdframed}

%\end{mdframed}
The following statements play a key role in our study of Sobolev spaces on Riemannian manifolds with rough metrics.
\begin{theorem}\lab{thmapp8b}
Let $\Omega$ be a nonempty bounded open set in $\reals^n$ with Lipschitz continuous boundary or $\Omega=\reals^n$. Suppose $u\in W^{s,p}_{loc}(\Omega)$ where $sp>n$. Then $u$ has a continuous
version.
\end{theorem}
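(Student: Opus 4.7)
The plan is to localize using the definition of $W^{s,p}_{loc}(\Omega)$, apply a Sobolev embedding to obtain continuous representatives on pieces, and then patch. The hypothesis on $\Omega$ (either $\reals^n$ or bounded Lipschitz) is precisely what is needed to invoke the embedding $W^{s,p}(\Omega)\hookrightarrow L^\infty(\Omega)\cap C^0(\Omega)$ for $sp>n$ (Theorem \ref{thm3.3}).

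First I would fix an arbitrary point $x_0\in\Omega$ and choose an open neighborhood $V$ of $x_0$ with $\bar V\subseteq\Omega$ compact, and then pick a cutoff $\varphi\in C_c^\infty(\Omega)$ with $\varphi\equiv 1$ on $V$ (such a $\varphi$ exists by a standard bump function construction on $\reals^n$). By the definition of $W^{s,p}_{loc}(\Omega)$, we have $\varphi u\in W^{s,p}(\Omega)$. Since $sp>n$ and $\Omega$ is either $\reals^n$ or bounded with Lipschitz boundary, Theorem \ref{thm3.3} yields $W^{s,p}(\Omega)\hookrightarrow L^\infty(\Omega)\cap C^0(\Omega)$, so $\varphi u$ possesses a continuous representative $f_V\in C^0(\Omega)$ with $f_V=\varphi u$ almost everywhere on $\Omega$. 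Because $\varphi\equiv 1$ on $V$, the distribution $u|_V$ equals $(\varphi u)|_V$, and hence $f_V|_V$ is a continuous version of $u|_V$.

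Next I would patch these local continuous versions. Cover $\Omega$ by a (countable) family of such neighborhoods $\{V_\alpha\}$ with corresponding continuous representatives $f_{V_\alpha}$ defined on all of $\Omega$, each of which equals $u$ a.e. on $V_\alpha$. Define $\tilde u:\Omega\to\reals$ by setting $\tilde u(x):=f_{V_\alpha}(x)$ for any $\alpha$ with $x\in V_\alpha$. To see this is well-defined, observe that on any nonempty intersection $V_\alpha\cap V_\beta$ the functions $f_{V_\alpha}$ and $f_{V_\beta}$ are continuous and equal almost everywhere (both agreeing with $u$), and two continuous functions on an open subset of $\reals^n$ that coincide a.e.\ coincide pointwise. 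Thus $\tilde u$ is unambiguously defined, and continuity of $\tilde u$ on $\Omega$ follows because it locally coincides with a continuous function near every point.

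Finally I would verify that $\tilde u=u$ in $D'(\Omega)$: for any $\psi\in C_c^\infty(\Omega)$, compactness of $\mathrm{supp}\,\psi$ lets us choose finitely many $V_{\alpha_1},\dots,V_{\alpha_N}$ covering $\mathrm{supp}\,\psi$, and on each one $\tilde u=u$ a.e., so $\langle u-\tilde u,\psi\rangle=0$. This completes the identification of $\tilde u$ as a continuous version of $u$. The argument is essentially routine; the only point requiring slight care is the patching step, which rests on the elementary fact that continuous functions agreeing almost everywhere on an open set agree pointwise, and on the fact that the hypothesis on $\Omega$ guarantees the global Sobolev embedding is available.
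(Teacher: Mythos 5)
Your proof is correct, and it is the natural localization--embedding--patching argument. The paper itself does not include a proof of this statement (the results of the locally Sobolev section are deferred to \cite{holstbehzadan2018c}), so there is no alternative argument in this manuscript to compare against; but your use of the cutoff $\varphi$, the embedding $W^{s,p}(\Omega)\hookrightarrow L^\infty(\Omega)\cap C^0(\Omega)$ from Theorem \ref{thm3.3}, and the gluing via the elementary fact that continuous functions agreeing a.e.\ on an open set agree pointwise, is exactly what one expects. One small remark: since $\varphi u$ has support in a fixed compact $K\subseteq\Omega$, you could alternatively have used extension by zero (Theorem \ref{winter95}, valid since $s>0$ here) to land in $W^{s,p}(\reals^n)$ and then invoked the embedding on $\reals^n$ only; this would actually sidestep the need for the Lipschitz/whole-space hypothesis at this step, though of course that hypothesis is part of the theorem statement and so is freely available to you. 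Your final verification that $\tilde u=u$ in $D'(\Omega)$ is slightly compressed (one should really use a partition of unity subordinate to the finite subcover, or appeal to the fact that distributions agreeing on an open cover agree on the union), but this is routine and not a gap.
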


\begin{lemma}\lab{lemapp1}
Let $\Omega=\reals^n$ or $\Omega$ be a bounded open set in $\reals^n$ with Lipschitz continuous boundary. Suppose $s_1, s_2, s\in \reals$ and $1<p_1,p_2,p<\infty$ are such that
\begin{equation*}
W^{s_1,p_1}(\Omega)\times W^{s_2,p_2}(\Omega)\hookrightarrow W^{s,p}(\Omega)\,.
\end{equation*}
Then
\begin{enumerate}
\item $W^{s_1,p_1}_{loc}(\Omega)\times W^{s_2,p_2}_{loc}(\Omega)\hookrightarrow
W^{s,p}_{loc}(\Omega)$.
\item For all $K\in \mathcal{K}(\Omega)$, $W^{s_1,p_1}_{loc}(\Omega)\times W^{s_2,p_2}_{K}(\Omega)\hookrightarrow
W^{s,p}(\Omega)$. In particular, if $f\in W^{s_1,p_1}_{loc}(\Omega)$, then the mapping $u\mapsto fu$ is a well-defined continuous linear map from $W^{s_2,p_2}_{K}(\Omega)$ to $W^{s,p}(\Omega)$.
\end{enumerate}
\end{lemma}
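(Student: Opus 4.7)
The plan is to reduce everything to the global multiplication hypothesis via a cutoff localization. For part (1), fix $u\in W^{s_1,p_1}_{loc}(\Omega)$ and $v\in W^{s_2,p_2}_{loc}(\Omega)$ and let $\varphi\in C_c^\infty(\Omega)$ be arbitrary; we want $\varphi(uv)\in W^{s,p}(\Omega)$ together with an estimate in terms of seminorms of $u$ and $v$. First I would choose $\psi\in C_c^\infty(\Omega)$ with $\psi\equiv 1$ on an open neighborhood $V$ of $\textrm{supp}\,\varphi$. By definition of the locally Sobolev spaces, $\psi u\in W^{s_1,p_1}(\Omega)$ and $\psi v\in W^{s_2,p_2}(\Omega)$, so by the assumed continuous bilinear embedding we have $(\psi u)(\psi v)\in W^{s,p}(\Omega)$ with
\begin{equation*}
\|(\psi u)(\psi v)\|_{W^{s,p}(\Omega)}\preceq \|\psi u\|_{W^{s_1,p_1}(\Omega)}\|\psi v\|_{W^{s_2,p_2}(\Omega)}=|u|_\psi\,|v|_\psi.
\end{equation*}
Then since $\varphi\in C_c^\infty(\Omega)\subseteq BC^{\infty,1}(\Omega)$, Theorem \ref{thmfallmultsmooth20} (together with Corollary \ref{corollarywinter92a} in the case of negative $s$) gives $\varphi\cdot(\psi u)(\psi v)\in W^{s,p}(\Omega)$ with a corresponding bound.

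The next step is to define the product $uv\in D'(\Omega)$ so that $\varphi(uv)$ equals $\varphi\cdot(\psi u)(\psi v)$, and to check independence of the cutoff $\psi$. When at least one of $s_1,s_2$ is nonnegative this is straightforward by unwinding the definition of multiplication-of-distributions by a smooth function; in the general case I would appeal to Remark \ref{multinterpremark}: approximating $\psi u$ (respectively $\psi' u$) by sequences in $C^\infty(\Omega)\cap W^{s_1,p_1}(\Omega)$ and using the continuity of the bilinear map in the hypothesis, one shows that $(\psi u)(\psi v)$ and $(\psi' u)(\psi' v)$ agree as distributions on the open set where both $\psi$ and $\psi'$ equal $1$, hence they produce the same element of $D'(\Omega)$ after multiplication by any $\varphi$ supported there. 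With $uv$ defined unambiguously, the displayed estimate shows $uv\in W^{s,p}_{loc}(\Omega)$ and that the bilinear map is continuous: each target seminorm $|\cdot|_\varphi$ is dominated by the product of source seminorms $|\cdot|_\psi$ with $\psi=1$ near $\textrm{supp}\,\varphi$.

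For part (2), fix $K\in\mathcal{K}(\Omega)$ and let $v\in W^{s_2,p_2}_K(\Omega)$. Choose once and for all a cutoff $\psi\in C_c^\infty(\Omega)$ with $\psi\equiv 1$ on a neighborhood of $K$; then $\psi v=v\in W^{s_2,p_2}(\Omega)$ and for $u\in W^{s_1,p_1}_{loc}(\Omega)$ we have $\psi u\in W^{s_1,p_1}(\Omega)$, so the hypothesis gives $(\psi u)v\in W^{s,p}(\Omega)$ with
\begin{equation*}
\|(\psi u)\,v\|_{W^{s,p}(\Omega)}\preceq \|\psi u\|_{W^{s_1,p_1}(\Omega)}\|v\|_{W^{s_2,p_2}(\Omega)}=|u|_\psi\,\|v\|_{W^{s_2,p_2}_K(\Omega)}.
\end{equation*}
One then sets $uv:=(\psi u)v$ and checks (again by the argument used in part (1)) that this agrees with the product from part (1) and is independent of $\psi$, yielding the desired continuous bilinear embedding $W^{s_1,p_1}_{loc}(\Omega)\times W^{s_2,p_2}_K(\Omega)\hookrightarrow W^{s,p}(\Omega)$, from which the parenthetical statement about $u\mapsto fu$ being continuous is immediate.

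The main obstacle I anticipate is the well-definedness of the product when $s_1$ or $s_2$ is negative, since then multiplication of distributions is only meaningful through the continuous bilinear extension guaranteed by the hypothesis, not as a pointwise operation. Handling this requires the approximation viewpoint of Remark \ref{multinterpremark} and a careful application of Theorem \ref{winter67} (support properties of distributions) to show that two different cutoff choices produce the same element of $D'(\Omega)$ after localization; everything else consists of routine applications of the multiplication-by-smooth-functions theorems (Theorem \ref{thmfallmultsmooth20} and Corollary \ref{corollarywinter92a}) and the defining seminorms of $W^{s,p}_{loc}(\Omega)$.
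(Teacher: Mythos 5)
The paper does not prove this lemma — it is imported from \cite{holstbehzadan2018c} — so there is no in-paper argument to compare against. Your localization strategy (cut off by a bump $\psi$ that is $\equiv 1$ near the relevant compact set, apply the global multiplication hypothesis to the truncated pair, patch) is the natural one and does produce the lemma. Two small points: when $\Omega=\reals^n$ the multiplication by $\varphi\in C_c^\infty$ is Theorem~\ref{winter87} (Theorem~\ref{thmfallmultsmooth20} is stated for bounded Lipschitz $\Omega$); and you can skip the final multiplication by $\varphi$ entirely by writing $\varphi(uv)=(\varphi u)(\psi v)$, which lands directly in $W^{s,p}(\Omega)$ with $\|\varphi(uv)\|_{W^{s,p}(\Omega)}\preceq |u|_{\varphi}\,|v|_{\psi}$.

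The one genuine gap is the step you flag yourself: well-definedness of the product when $s_1$ or $s_2$ is negative. The precise statement you need is a \emph{support lemma for the abstract product}: if $w_1\in W^{s_1,p_1}(\Omega)$ and $w_2\in W^{s_2,p_2}(\Omega)$, then $\textrm{supp}(w_1w_2)\subseteq \textrm{supp}\,w_1$ (and by symmetry $\subseteq\textrm{supp}\,w_2$), where $w_1w_2$ is the element produced by the hypothesis embedding. This is not a consequence of Theorem~\ref{winter67} by itself, since that theorem concerns the action of smooth multipliers, not the abstract bilinear extension. The clean proof: fix $\chi\in C_c^\infty(\Omega)$ with $\textrm{supp}\,\chi\cap\textrm{supp}\,w_1=\emptyset$, choose $\eta\in C_c^\infty(\Omega)$ with $\eta\equiv 1$ near $\textrm{supp}\,\chi$ and $\eta\equiv 0$ near $\textrm{supp}\,w_1$; for smooth $\varphi_i\to w_1$ in $W^{s_1,p_1}(\Omega)$ we have $\eta\varphi_i\to\eta w_1=0$ in $W^{s_1,p_1}(\Omega)$, hence $(\eta\varphi_i)w_2\to 0$ in $W^{s,p}(\Omega)$ by the hypothesis embedding, and at the same time
\begin{equation*}
\langle (\eta\varphi_i)w_2,\chi\rangle=\langle w_2,\eta\varphi_i\chi\rangle
=\langle w_2,\varphi_i\chi\rangle=\langle\varphi_i w_2,\chi\rangle\longrightarrow\langle w_1w_2,\chi\rangle,
\end{equation*}
so $\langle w_1w_2,\chi\rangle=0$. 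Applying this with $w_1=(\psi-\psi')u$ shows that $(\psi u)(\psi v)-(\psi'u)(\psi' v)=\big((\psi-\psi')u\big)(\psi v)+(\psi'u)\big((\psi-\psi')v\big)$ is supported off the set where both $\psi$ and $\psi'$ equal $1$, which is exactly the consistency you need to glue the local products into a distribution $uv\in D'(\Omega)$; the same lemma (in the disjoint-supports case) gives independence of $\psi$ in part~(2). Once this is in place, the rest of your argument — the seminorm estimate $|uv|_{\varphi}\preceq|u|_{\psi}|v|_{\psi}$ and the resulting continuity of the bilinear maps — is routine.
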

\begin{remark}
It can be shown that the locally Sobolev spaces on $\Omega$ are metrizable, so the
continuity of the mapping
\begin{equation*}
W^{s_1,p_1}_{loc}(\Omega)\times W^{s_2,p_2}_{loc}(\Omega)\rightarrow
W^{s,p}_{loc}(\Omega),\quad (u,v)\mapsto uv
\end{equation*}
in the above lemma can be interpreted as follows: if $u_i\rightarrow u$ in
$W^{s_1,p_1}_{loc}(\Omega)$ and $v_i\rightarrow v$ in
$W^{s_2,p_2}_{loc}(\Omega)$, then $u_iv_i\rightarrow uv$ in
$W^{s,p}_{loc}(\Omega)$. Also since $W^{s_2,p_2}_{K}(\Omega)$ is considered as a normed
subspace of $W^{s_2,p_2}(\Omega)$, we have a similar interpretation of the continuity of the mapping in item 2.
\end{remark}

\begin{lemma}\lab{lemapp7}
Let $\Omega=\reals^n$ or let $\Omega$ be a nonempty bounded open set in $\reals^n$ with Lipschitz continuous boundary. Let $s\in \reals$ and
$p\in (1,\infty)$ be such that $sp>n$. Let
$B:\Omega\rightarrow \textrm{GL}(k,\reals)$. Suppose for all $x\in \Omega$
and $1\leq i,j\leq k$, $B_{ij}(x)\in W^{s,p}_{loc}(\Omega)$. Then
\begin{enumerate}
\item $\textrm{det}\,B\in W^{s,p}_{loc}(\Omega)$.
\item Moreover if for each $m\in \mathbb{N}$ $B_m: \Omega\rightarrow
\textrm{GL}(k,\reals)$ and for all $1\leq i,j\leq k$
$(B_m)_{ij}\rightarrow B_{ij}$ in $W^{s,p}_{loc}(\Omega)$, then
$\textrm{det}\, B_m\rightarrow \textrm{det}\, B$ in
$W^{s,p}_{loc}(\Omega)$.
\end{enumerate}
\end{lemma}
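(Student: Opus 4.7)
The plan is to exploit the fact that the determinant is a polynomial in the matrix entries, combined with the Banach algebra property of $W^{s,p}(\Omega)$ when $sp>n$. Recall the Leibniz formula
\begin{equation*}
\det B = \sum_{\sigma\in S_k}\mathrm{sgn}(\sigma)\,B_{1\,\sigma(1)}\,B_{2\,\sigma(2)}\cdots B_{k\,\sigma(k)}.
\end{equation*}
So $\det B$ is a finite linear combination of $k$-fold pointwise products of the entries $B_{ij}\in W^{s,p}_{loc}(\Omega)$. If I can show that any such $k$-fold product lies in $W^{s,p}_{loc}(\Omega)$ and depends continuously on its factors, then both assertions will follow by summing (with signs) over the $k!$ permutations.

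The core ingredient is Theorem \ref{thm3.3}: under the assumption $sp>n$ (and with $\Omega$ either $\reals^n$ or a bounded Lipschitz domain), $W^{s,p}(\Omega)$ is a Banach algebra, which is exactly the embedding
\begin{equation*}
W^{s,p}(\Omega)\times W^{s,p}(\Omega)\hookrightarrow W^{s,p}(\Omega).
\end{equation*}
Feeding this into item 1 of Lemma \ref{lemapp1} (with $s_1=s_2=s$, $p_1=p_2=p$) upgrades it to the locally Sobolev statement
\begin{equation*}
W^{s,p}_{loc}(\Omega)\times W^{s,p}_{loc}(\Omega)\hookrightarrow W^{s,p}_{loc}(\Omega),
\end{equation*}
that is, pointwise multiplication is continuous on $W^{s,p}_{loc}(\Omega)$.

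A straightforward induction on the number of factors then shows that for each permutation $\sigma$ the map
\begin{equation*}
\bigl(W^{s,p}_{loc}(\Omega)\bigr)^{\times k}\longrightarrow W^{s,p}_{loc}(\Omega),\qquad (u_1,\dots,u_k)\mapsto u_1 u_2\cdots u_k
\end{equation*}
is well-defined and continuous; indeed the inductive step writes $u_1\cdots u_j = (u_1\cdots u_{j-1})\cdot u_j$ and applies the bilinear continuity above, using that $W^{s,p}_{loc}(\Omega)$ is a metrizable locally convex space so sequential continuity suffices. Applying this to the factors $(B_{1\,\sigma(1)},\dots,B_{k\,\sigma(k)})$ for each $\sigma\in S_k$ and forming the signed sum yields $\det B\in W^{s,p}_{loc}(\Omega)$, proving item 1. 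For item 2, if $(B_m)_{ij}\to B_{ij}$ in $W^{s,p}_{loc}(\Omega)$ for every $i,j$, then for each fixed $\sigma$ the continuity of the $k$-fold product map gives $(B_m)_{1\,\sigma(1)}\cdots (B_m)_{k\,\sigma(k)}\to B_{1\,\sigma(1)}\cdots B_{k\,\sigma(k)}$ in $W^{s,p}_{loc}(\Omega)$, and summing the finitely many signed products preserves convergence, so $\det B_m\to \det B$ in $W^{s,p}_{loc}(\Omega)$.

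There is no substantive obstacle here once the right prior results are invoked; the only thing to be careful about is making sure the Banach algebra property and the transfer lemma apply without an extension-operator step, which is guaranteed by the hypothesis that $\Omega$ is either $\reals^n$ or a bounded Lipschitz domain (exactly the setting of both Theorem \ref{thm3.3} and Lemma \ref{lemapp1}). The invertibility hypothesis $B(x)\in\mathrm{GL}(k,\reals)$ plays no role in producing $\det B$ as an element of $W^{s,p}_{loc}(\Omega)$; it is only relevant if one later wants to conclude that $\det B$ is nowhere zero.
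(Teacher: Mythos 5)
Your argument is correct. The paper itself does not reproduce a proof of Lemma \ref{lemapp7}; the subsection header explicitly states that the material on locally Sobolev functions is taken from \cite{holstbehzadan2018c}, so there is nothing internal to compare against line by line. Your route — expand $\det B$ by the Leibniz formula, invoke the Banach algebra property of Theorem \ref{thm3.3} to get $W^{s,p}(\Omega)\times W^{s,p}(\Omega)\hookrightarrow W^{s,p}(\Omega)$, transfer this to $W^{s,p}_{loc}(\Omega)$ via item 1 of Lemma \ref{lemapp1}, and run an induction on the number of factors — is the natural argument and it goes through. The appeal to sequential continuity in the inductive step is justified by the remark following Lemma \ref{lemapp1}, which notes that $W^{s,p}_{loc}(\Omega)$ is metrizable, so joint continuity of the bilinear multiplication is equivalent to its sequential version; composing that with the inductive hypothesis gives continuity of the $k$-fold product. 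Summing the finitely many signed monomials yields both assertions. Your closing observation is also correct: the invertibility hypothesis $B(x)\in\mathrm{GL}(k,\reals)$ is not consumed anywhere in this lemma — it is carried as a hypothesis because downstream applications (Theorems \ref{thmapp14} and \ref{thmapp15}) need $\det B$ to be bounded away from zero before feeding it into Theorem \ref{thmapp9}.
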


\begin{theorem}\lab{thmapp9}
Let $\Omega=\reals^n$ or let $\Omega$ be a nonempty bounded open set in $\reals^n$ with Lipschitz continuous boundary. Let $s\geq 1$ and
$p\in (1,\infty)$ be such that $sp>n$.
\begin{enumerate}
\item Suppose that $u\in W^{s,p}_{loc}(\Omega)$ and that $u(x)\in I$ for
all $x\in \Omega$ where $I$ is some interval in $\reals$. If
$F:I\rightarrow \reals$ is a smooth function, then $F(u)\in
W^{s,p}_{loc}(\Omega)$.
\item Suppose that $u_m\rightarrow u$ in $W^{s,p}_{loc}(\Omega)$
and that for all $m\geq 1$ and $x\in \Omega$, $u_m(x),u(x)\in I$
where $I$ is some open interval in $\reals$. If $F:I\rightarrow
\reals$ is a smooth function, then $F(u_m)\rightarrow F(u)$ in
 $W^{s,p}_{loc}(\Omega)$.
\item If $F:\reals\rightarrow \reals$ is a smooth function, then
the map taking $u$ to $F(u)$ is continuous from
$W^{s,p}_{loc}(\Omega)$ to $W^{s,p}_{loc}(\Omega)$.
\end{enumerate}
\end{theorem}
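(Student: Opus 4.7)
The strategy for all three parts is to localize and then invoke the global composition result on $\reals^n$ (Corollary \ref{winter107}), after truncating $F$ so that it satisfies the hypotheses of Theorem \ref{winter106}. The Sobolev embedding furnished by $sp>n$ is what makes the truncation possible: it turns $W^{s,p}_{loc}$-information about $u$ into pointwise information about the range of $u$.

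For part 1, I would fix $\varphi\in C_c^\infty(\Omega)$ and choose $\psi\in C_c^\infty(\Omega)$ with $\psi\equiv 1$ on a neighborhood of $K:=\textrm{supp}\,\varphi$. By Theorem \ref{thmapp8b}, $u$ has a continuous version, so $u(K)$ lies in some compact interval $J\Subset I$. Pick $\chi\in C_c^\infty(\reals)$ with $\chi\equiv 1$ on $J$ and $\textrm{supp}\,\chi\subset I$, set $\tilde G=\chi F$ (extended by zero off $I$), and let $G:=\tilde G-\tilde G(0)$. Then $G\in C^\infty(\reals)$, $G(0)=0$, and $G$ together with all of its derivatives is bounded on $\reals$, so $G$ meets the hypotheses of Theorem \ref{winter106}. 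The function $v:=\textrm{ext}^0_{\Omega,\reals^n}(\psi u)$ belongs to $W^{s,p}_{\textrm{supp}\,\psi}(\Omega)$ and hence, by Theorem \ref{winter95}, to $W^{s,p}(\reals^n)$. Corollary \ref{winter107} then gives $G(v)\in W^{s,p}(\reals^n)$. Since $\psi\equiv 1$ on a neighborhood of $K$, we have $v=u\in J$ on $\textrm{supp}\,\varphi$, so $G(v)=F(u)-\tilde G(0)$ there; consequently
\begin{equation*}
\varphi\,F(u)=\varphi\cdot G(v)\big|_\Omega+\tilde G(0)\,\varphi.
\end{equation*}
The right-hand side lies in $W^{s,p}(\Omega)$ by restriction to $\Omega$ followed by multiplication by the compactly supported smooth function $\varphi$ (Theorem \ref{thmfallmultsmooth20}). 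As $\varphi$ was arbitrary, this gives $F(u)\in W^{s,p}_{loc}(\Omega)$.

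For part 2, with $u_m\to u$ in $W^{s,p}_{loc}(\Omega)$ and the same $\varphi,\psi$, the convergence $\psi u_m\to \psi u$ in $W^{s,p}(\Omega)$ combined with the embedding $W^{s,p}\hookrightarrow L^\infty$ from Theorem \ref{thm3.3} yields uniform convergence on a neighborhood of $K$, so $u_m(\textrm{supp}\,\varphi)\subset J'$ for $m$ large, with $J'\Subset I$ a slight enlargement of $u(\textrm{supp}\,\varphi)$. Repeat the construction with $J'$ in place of $J$ to obtain $G$. Since $v_m:=\textrm{ext}^0_{\Omega,\reals^n}(\psi u_m)\to v$ in $W^{s,p}(\reals^n)$ by Theorem \ref{winter95}, and $G$ satisfies the hypotheses of Corollary \ref{winter107}, we get $G(v_m)\to G(v)$ in $W^{s,p}(\reals^n)$. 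The identity $\varphi F(u_m)=\varphi\cdot G(v_m)|_\Omega+\tilde G(0)\varphi$ (valid for $m$ large on $\textrm{supp}\,\varphi$, where $v_m=u_m\in J'$) then yields $\varphi F(u_m)\to \varphi F(u)$ in $W^{s,p}(\Omega)$. Part 3 is the case $I=\reals$ of part 2, since the range condition becomes vacuous and $W^{s,p}_{loc}(\Omega)$ is metrizable so sequential continuity at every point is continuity.

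The main technical obstacle is the construction of $G$: we must engineer $G(0)=0$ and globally bounded derivatives (as demanded by Theorem \ref{winter106}) without altering the value of $F(u)$ where it matters. The key device is the nested cutoff $\psi\equiv 1$ on a neighborhood of $\textrm{supp}\,\varphi$, which guarantees $v=u$ on $\textrm{supp}\,\varphi$ so that the possibly misbehaved $G$ outside $J'$ never gets evaluated where $\varphi\neq 0$; the normalization $G(0)=0$ is then absorbed harmlessly by the smooth compactly supported term $\tilde G(0)\varphi$. A second subtlety, handled by Sobolev embedding, is that for continuity we must upgrade $W^{s,p}_{loc}$-convergence to the locally uniform convergence required to keep $u_m(\textrm{supp}\,\varphi)$ inside a fixed compact subset of $I$ for all large $m$.
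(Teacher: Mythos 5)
Your proof is correct, and the paper itself gives no proof of Theorem \ref{thmapp9} (the whole subsection is deferred to \cite{holstbehzadan2018c}); the strategy you use — cut off $F$ to make it compactly supported, renormalize so $G(0)=0$ and all derivatives are bounded, extend $\psi u$ by zero to $\reals^n$ via Theorem \ref{winter95}, apply Corollary \ref{winter107}, then recover $\varphi F(u)$ by restriction and multiplication — is the natural route given the toolkit laid out in Section 7, and every citation is applied correctly (in particular, Theorem \ref{thmapp8b} and Theorem \ref{thm3.3} both require exactly $\Omega=\reals^n$ or bounded Lipschitz together with $sp>n$, which are the standing hypotheses).

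One small point worth tightening in part 1. The statement there only asserts that $I$ is \emph{some} interval, not an open one, so when $u(K)$ touches an endpoint of $I$ you cannot always choose $J\Subset I$ with $u(K)\subset J$ and $\chi\in C_c^\infty(\reals)$ supported in the interior of $I$ with $\chi\equiv 1$ on $J$; without that, $\chi F$ extended by zero need not be smooth at $\partial I$. The fix is cheap: first extend $F$ smoothly across the (possibly closed) endpoints of $I$ to a smooth function on a strictly larger open interval $\tilde I$ (Seeley/Whitney extension, or simply note that in the applications in this paper $I$ is always open), then run your construction with $\tilde I$ in place of $I$. Your parts 2 and 3 are fine as written, since part 2 explicitly assumes $I$ open and part 3 has $I=\reals$; the appeal to metrizability of $W^{s,p}_{loc}(\Omega)$ to promote sequential continuity to continuity is exactly what is needed and is supplied by the remark following Lemma \ref{lemapp1}.
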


\section{Lebesgue Spaces on Compact Manifolds}
Let $M^n$ be a compact smooth manifold and $E\rightarrow M$
be a smooth vector bundle of rank $r$.
\begin{definition}
A collection $\{(U_\alpha,\varphi_\alpha,\rho_\alpha,\psi_\alpha)\}_{1\leq \alpha\leq N}$ of $4$-tuples is called an \textbf{augmented total trivialization atlas} for $E\rightarrow M$ provided that $\{(U_\alpha,\varphi_\alpha,\rho_\alpha)\}_{1\leq \alpha\leq N}$
is a total trivialization atlas for $E\rightarrow M$ and
$\{\psi_\alpha\}$ is a partition of unity subordinate to the open
cover $\{U_\alpha\}$.
\end{definition}
Let $\{(U_\alpha,\varphi_\alpha,\rho_\alpha,\psi_\alpha)\}_{1\leq \alpha\leq N}$ be an augmented total trivialization atlas for $E\rightarrow M$. Let $g$ be a continuous Riemannian metric on
$M$ and $\langle .,.\rangle_E$ be a fiber metric on $E$ (we
denote the corresponding norm by $|.|_E$). Suppose $1\leq q< \infty$.
\begin{enumerateX}
\item \textbf{Definition 1:} The space $L^q(M,E)$ is the
completion of $C^\infty (M,E)$ with respect to the following norm
\begin{equation*}
\|u\|_{L^q(M,E)}:=\sum_{\alpha=1}^N\sum_{l=1}^r
\|\rho^l_\alpha\circ (\psi_\alpha u ) \circ\varphi_\alpha^{-1}
\|_{L^q(\varphi_\alpha(U_\alpha))}
\end{equation*}
Note that for this definition to make sense it is not necessary
to have metric on $M$ or fiber metric on $E$.
\item \textbf{Definition 2:} The space $L^q(M,E)$ is the
completion of $C^\infty (M,E)$ with respect to the following norm
\begin{equation*}
|u|_{L^q(M,E)}^q:=\int_M|u|_E^q dV_g\qquad %(dV_g=\omega_g)
\end{equation*}
\item \textbf{Definition 3:} The metric $g$ defines a Lebesgue
measure on $M$. Define the following
equivalence relation on $\Gamma(M,E)$:
\begin{equation*}
u\sim v\Longleftrightarrow u=v\,\,a.e.
\end{equation*}
We define
\begin{equation*}
L^q(M,E):=\frac{\{u\in \Gamma(M,E): \|u\|_{L^q(M,E)}^q:=\int_M
|u|_E^q dV_g<\infty\}}{\sim}
\end{equation*}
For $q=\infty$ we define
\begin{equation*}
L^\infty(M,E):=\frac{\{u\in \Gamma(M,E):
\|u\|_{L^\infty(M,E)}:=\textrm{esssup} |u|_E <\infty\}}{\sim}
\end{equation*}
\end{enumerateX}
\textbf{Note:} We may define negligible sets (sets of measure
zero) on a compact manifold using charts (see Chapter 6 in \cite{Lee1}); it can be shown that
this definition is independent of the charts and equivalent to
the one that is obtained using the metric $g$. So it is meaningful
to write $u=v\,\,a.e$ even without using a metric.
\begin{theorem}
Definition 1 is equivalent to Definition 2 (i.e. the norms are
equivalent).
\end{theorem}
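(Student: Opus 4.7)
The plan is to establish both inequalities $\|u\|_{L^q(M,E)} \preceq |u|_{L^q(M,E)}$ and $|u|_{L^q(M,E)} \preceq \|u\|_{L^q(M,E)}$ for $u \in C^\infty(M,E)$ by transferring the global integral to finitely many local coordinate integrals via the augmented total trivialization atlas $\{(U_\alpha,\varphi_\alpha,\rho_\alpha,\psi_\alpha)\}_{1\leq \alpha\leq N}$. The key observation is that compactness of $M$, together with continuity of $g$ and $\langle \cdot,\cdot \rangle_E$, will furnish uniform two-sided bounds on the local representations of both metrics on each compact set $\mathrm{supp}\,\psi_\alpha \subset U_\alpha$.

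First I would set up the local comparisons. For each $\alpha$, let $\{s_{l,\alpha}\}_{l=1}^r$ be the local frame on $U_\alpha$ associated with $\rho_\alpha$, and let $h^\alpha_{lk}(x):=\langle s_{l,\alpha}(x),s_{k,\alpha}(x)\rangle_E$ and $g_\alpha$ be the matrix of $g$ in the chart $(U_\alpha,\varphi_\alpha)$. Since both matrices are continuous, symmetric, and positive definite on the compact set $\mathrm{supp}\,\psi_\alpha$, there exist positive constants $c_\alpha, C_\alpha, c'_\alpha, C'_\alpha$ such that for all $x\in\mathrm{supp}\,\psi_\alpha$ and $v\in E_x$,
\begin{equation*}
c_\alpha \sum_{l=1}^r |\rho_\alpha^l(v)|^2 \leq |v|_E^2 \leq C_\alpha \sum_{l=1}^r |\rho_\alpha^l(v)|^2, \qquad c'_\alpha \leq \sqrt{\det g_\alpha(x)} \leq C'_\alpha.
\end{equation*}
Raising the first inequality to the $q/2$ power and using the equivalence of $\ell^1$ and $\ell^q$ norms on $\reals^r$, then applying the change of variables formula $\int_{U_\alpha} f\,dV_g = \int_{\varphi_\alpha(U_\alpha)} (f\circ \varphi_\alpha^{-1})\sqrt{\det g_\alpha}\circ\varphi_\alpha^{-1}\,dy$, I obtain the two-sided bound
\begin{equation*}
\sum_{l=1}^r \|\rho_\alpha^l\circ(\psi_\alpha u)\circ \varphi_\alpha^{-1}\|_{L^q(\varphi_\alpha(U_\alpha))}^q \simeq \int_{U_\alpha} |\psi_\alpha u|_E^q\,dV_g,
\end{equation*}
with constants depending only on $\alpha$.

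For the inequality $\|u\|_{L^q(M,E)}\preceq |u|_{L^q(M,E)}$, since $0\leq \psi_\alpha \leq 1$ implies $|\psi_\alpha u|_E \leq |u|_E$ pointwise, each local term above is bounded by a constant multiple of $|u|_{L^q(M,E)}^q$; summing over the finitely many indices $\alpha$ and $l$, then applying $\sum a_i \leq N\big(\sum a_i^q\big)^{1/q}$ when passing from $q$-th powers back to norms, yields the claim. For the reverse direction, I write $u=\sum_\alpha \psi_\alpha u$ and use the pointwise triangle inequality $|u|_E \leq \sum_\alpha |\psi_\alpha u|_E$ together with the convexity bound $(\sum_{i=1}^N a_i)^q \leq N^{q-1}\sum_{i=1}^N a_i^q$, obtaining
\begin{equation*}
|u|_{L^q(M,E)}^q \leq N^{q-1}\sum_{\alpha=1}^N \int_{U_\alpha}|\psi_\alpha u|_E^q\,dV_g \preceq \sum_{\alpha,l}\|\rho_\alpha^l\circ(\psi_\alpha u)\circ\varphi_\alpha^{-1}\|_{L^q(\varphi_\alpha(U_\alpha))}^q;
\end{equation*}
the elementary inequality $\sum_i a_i^q \leq \big(\sum_i a_i\big)^q$ (valid for $a_i\geq 0$, $q\geq 1$) then upgrades the right-hand side to $\|u\|_{L^q(M,E)}^q$, completing the proof.

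There is no serious obstacle here: the argument is essentially bookkeeping. The only point requiring modest care is ensuring the lower bound $c_\alpha$ on the fiber metric matrix and the lower bound $c'_\alpha$ on $\sqrt{\det g_\alpha}$ are genuinely positive, which is where compactness of $\mathrm{supp}\,\psi_\alpha$ and continuity of the metrics are used; without compactness of $M$ (hence finiteness of $N$) one could not afford to pay the $N^{q-1}$ and related combinatorial factors when summing the local estimates.
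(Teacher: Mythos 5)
Your proof is correct, and it takes a genuinely different route from the paper's. The paper's proof begins with a reduction: it invokes Theorem \ref{thmfalltrivializametric1} (via Corollary \ref{winter49}) to pass, without loss of generality, to a total trivialization atlas in which the fiber metric is the standard Euclidean one, so that $|u|_E^2 \circ \varphi_\alpha^{-1} = \sum_l (\rho_\alpha^l \circ u \circ \varphi_\alpha^{-1})^2$ exactly. This reduction in turn leans on a \emph{forward reference} to the norm-equivalence theorems of Section 9 (the remark ``note that $L^q = W^{0,q}$''), since the change of atlas must not alter the equivalence class of the Definition 1 norm. Your version avoids both the trivialization lemma and the forward reference by instead observing that the Gram matrix $h^\alpha_{lk} = \langle s_{l,\alpha}, s_{k,\alpha}\rangle_E$ of the fiber metric in the local frame, being continuous, symmetric, and positive definite, has its spectrum uniformly bounded above and away from zero on the compact set $\mathrm{supp}\,\psi_\alpha$; this gives the two-sided comparison directly without any normalization of the atlas. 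This is a cleaner, more self-contained argument.

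The second difference is in how the global-versus-local step is handled. The paper's Step 3 establishes $\int_M |u|_E^q\,dV_g \simeq \sum_\alpha \int_{U_\alpha}|\psi_\alpha u|_E^q\,dV_g$ in one stroke by replacing $\{\psi_\alpha\}$ with the alternate partition of unity $\{\psi_\alpha^q / \sum_\beta \psi_\beta^q\}$ and noting that the weight $1/\sum_\beta \psi_\beta^q$ is bounded above and below on the compact manifold. You instead prove the two inequalities separately: the pointwise bound $|\psi_\alpha u|_E \leq |u|_E$ for one direction, and $|u|_E \leq \sum_\alpha |\psi_\alpha u|_E$ plus the power-mean inequality $(\sum a_i)^q \leq N^{q-1}\sum a_i^q$ for the other. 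Both methods pay the same kind of combinatorial cost in $N$ and $r$ and ultimately rest on the finiteness of the atlas, so neither is strictly stronger; but your version makes the dependence on the partition of unity more transparent, while the paper's version generates a useful reusable identity. In both cases the essential hypotheses are identical: compactness of $M$, continuity of $g$ and $\langle\cdot,\cdot\rangle_E$, and finiteness of $N$.
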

\begin{proof}
Our proof consists of four steps:
\begin{itemizeXALI}
\item \textbf{Step 1:} In the next section it will be proved that different total
trivialization atlases and partitions of unity result in
equivalent norms (note that $L^q=W^{0,q}$). Therefore WLOG we may
assume that $\{(U_\alpha, \varphi_\alpha,\rho_\alpha )\}_{1\leq
\alpha\leq N}$ is a total trivialization atlas that trivializes
the fiber metric $\langle .,.\rangle_E$ (see Theorem \ref{thmfalltrivializametric1} and Corollary \ref{winter49}). So on any bundle chart
$(U,\varphi,\rho)$ and for any section $u$ we have
\begin{equation*}
|u|_E^2\circ\varphi^{-1}=\langle u,u\rangle_E\circ \varphi^{-1}
=\sum_{l=1}^r(\rho^l\circ u\circ \varphi^{-1} )^2
\end{equation*}
\item \textbf{Step 2:} In this step we show that if there is $1\leq \beta\leq
N$ such that $\textrm{supp} u\subseteq U_\beta$, then
\begin{equation*}
|u|_{L^q(M,E)}^q=\int_M |u|_E^q dV_g\simeq
\sum_{l=1}^r\|\rho_\beta^l\circ u\circ \varphi_\beta^{-1}
\|^q_{L^q(\varphi_\beta(U_\beta))}
\end{equation*}
We have
\begin{align*}
\int_M |u|_E^q dV_g &=\int_{\varphi_\beta(U_\beta)}(|u|_E\circ
\varphi_\beta^{-1})^p\sqrt{\det
(g_{ij}\circ\varphi_\beta^{-1})(x)}\,dx^1\cdots dx^n\\
&\simeq \int_{\varphi_\beta(U_\beta)}(|u|_E\circ
\varphi_\beta^{-1})^q\,dx^1\cdots dx^n\qquad
{\fontsize{7}{7}{(\textrm{$\sqrt{\det
(g_{ij}\circ\varphi_\beta^{-1})(x)}$ is bounded by positive constants})}}\\
&=\int_{\varphi_\beta(U_\beta)}\bigg(\,\sqrt{\sum_{l=1}^r(\rho^l_\beta\circ
u\circ \varphi^{-1}_\beta )^2}\,\bigg)^q\,dx^1\cdots dx^n\\
&\simeq
\int_{\varphi_\beta(U_\beta)}[\sum_{l=1}^r|\rho^l_\beta\circ
u\circ \varphi^{-1}_\beta |]^q\,dx^1\cdots dx^n\qquad (\sqrt{\sum
a_l^2}\simeq \sum|a_l|)\\
&\simeq
\int_{\varphi_\beta(U_\beta)}\sum_{l=1}^r|\rho^l_\beta\circ
u\circ \varphi^{-1}_\beta |^q\,dx^1\cdots dx^n\qquad ((\sum
 a_l)^q\simeq \sum a_l^q)\\
 &=\sum_{l=1}^r\int_{\varphi_\beta(U_\beta)} |\rho^l_\beta\circ
u\circ \varphi^{-1}_\beta |^q\,dx^1\cdots
dx^n=\sum_{l=1}^r\|\rho_\beta^l\circ u\circ \varphi_\beta^{-1}
\|^q_{L^q(\varphi_\beta(U_\beta))}
\end{align*}
\item \textbf{Step 3:} In this step we will prove that for all $u\in C^\infty(M,E)$
\begin{equation*}
|u|_{L^q(M,E)}^q\simeq\sum_\alpha|\psi_\alpha u|_{L^q(M,E)}^q
\end{equation*}
We have
\begin{align*}
|u|_{L^q(M,E)}^q &=\int_M|u|_E^q dV_g=\sum_\alpha  \int_M
\frac{\psi_\alpha^q}{\sum_\beta \psi_\beta^q}|u|_E^q dV_g\quad
{\fontsize{7}{7}{(\textrm{$\{\frac{\psi_\alpha^q}{\sum_\beta
\psi_\beta^q}\}$ is a
partition of unity subordinate to $\{U_\alpha\}$})}}\\
& \simeq \sum_\alpha \int_{U_\alpha} \psi_\alpha^q |u|_E^q
dV_g\qquad (\textrm{$\frac{1}{\sum_\beta \psi_\beta^q}$ is bounded
by positive constants})\\
&=\sum_\alpha \int_{U_\alpha}|\psi_\alpha u|_E^q dV_g=\sum_\alpha
\int_{M}|\psi_\alpha u|_E^q dV_g\\
&=\sum_\alpha|\psi_\alpha u|_{L^q(M,E)}^q
\end{align*}
\item \textbf{Step 4:} Let $u$ be an arbitrary element of $C^\infty(M,E)$. We have
{\fontsize{11}{11}{\begin{align*}
|u|_{L^q(M,E)}^q\stackrel{\text{Step
3}}{\simeq}\sum_\alpha|\psi_\alpha
u|_{L^q(M,E)}^q\stackrel{\text{Step 2}}{\simeq}\sum_\alpha\sum_l
\|\rho^l_\alpha\circ (\psi_\alpha u ) \circ \varphi_\alpha^{-1}
\|_{L^q(\varphi_\alpha(U_\alpha))}^q\simeq \|u\|_{L^q(M,E)}^q
\end{align*}}}
\end{itemizeXALI}
\end{proof}
\section{Sobolev Spaces on Compact Manifolds and Alternative Characterizations}
\subsection{The Definition}
Let $M^n$ be a compact smooth manifold. Let $\pi: E\rightarrow M$
be a smooth vector bundle of rank $r$. Let $\Lambda=\{(U_\alpha,\varphi_\alpha,\rho_\alpha,\psi_\alpha)\}_{1\leq \alpha\leq
N}$ be an augmented total trivialization atlas for $E\rightarrow M$. For each $1\leq \alpha \leq N$, let $H_\alpha$ denote the map
$H_{E^\vee,U_\alpha, \varphi_\alpha}$ which was introduced in
 Section 6.
\begin{definition}\lab{defwintermainsobolev}
{\fontsize{11}{11}{\begin{equation*}
W^{e,q}(M,E;\Lambda)=\{u\in D'(M,E): \|u\|_{W^{e,q}(M,E;\Lambda)}=\sum_{\alpha=1}^N\sum_{l=1}^r
\|[H_\alpha (\psi_\alpha u
)]^l\|_{W^{e,q}(\varphi_\alpha(U_\alpha))}<\infty\}
\end{equation*}}}
\end{definition}
\begin{remark}
\leavevmode
\begin{enumerateXALI}
\item If $u\in W^{e,q}(M,E;\Lambda)$ is a regular distribution, it follows
from Remark \ref{remfall135} that
\begin{equation*}
\|u\|_{W^{e,q}(M,E;\Lambda)}=\sum_{\alpha=1}^N\sum_{l=1}^r \|(\rho_\alpha)^l\circ
 (\psi_\alpha u)\circ\varphi_\alpha^{-1}\|_{W^{e,q}(\varphi_\alpha(U_\alpha))}
\end{equation*}
\item It is clear that the collection of functions from $M$ to $\reals$ can be identified with sections of the vector bundle $E=M\times \reals$. For this reason
 $W^{e,q}(M;\Lambda)$ is defined as $W^{e,q}(M,M\times \reals;\Lambda)$. Note that in this case, for each $\alpha$, $\rho_\alpha$ is the identity map. So we may consider an augmented total trivialization atlas $\Lambda$ as a collection of $3$-tuples $\{(U_\alpha,\varphi_\alpha,\psi_\alpha)\}_{1\leq \alpha\leq N}$. In
 particular, if $u\in W^{e,q}(M;\Lambda)$ is a regular distribution, then
\begin{equation*}
\| u\|_{W^{e,q}(M;\Lambda)}=\sum_{\alpha=1}^N \| (\psi_\alpha u)\circ
\varphi_\alpha^{-1} \|_{W^{e,q}(\varphi_\alpha(U_\alpha))}
\end{equation*}
\item Sometimes, when the underlying manifold $M$ and the augmented total trivialization atlas are clear from the context (or when they are irrelevant), we may write $W^{e,q}(E)$ instead of $W^{e,q}(M,E;\Lambda)$. In particular, for tensor bundles, we may write $W^{e,q}(T^k_l M)$ instead of $W^{e,q}(M,T^k_l M;\Lambda)$.
 \end{enumerateXALI}
\end{remark}
\begin{remark}\lab{remvarcharac1}
Here is a list of some alternative, not necessarily equivalent, characterizations of Sobolev
spaces.
\begin{enumerate}
\item Suppose $e\geq 0$.
{\fontsize{10}{10}{\begin{equation*}
W^{e,q}(M,E;\Lambda)=\{u\in L^q(M,E): \|u\|_{W^{e,q}(M,E;\Lambda)}=\sum_{\alpha=1}^N\sum_{l=1}^r
\|(\rho_\alpha)^l\circ
 (\psi_\alpha u)\circ\varphi_\alpha^{-1}\|_{W^{e,q}(\varphi_\alpha(U_\alpha))}<\infty\}
\end{equation*}}}
\item %\textbf{Definition 2:}
{\fontsize{10}{10}{\begin{equation*}
W^{e,q}(M,E;\Lambda)=\{u\in D'(M,E): \|u\|_{W^{e,q}(M,E;\Lambda)}=\sum_{\alpha=1}^N\sum_{l=1}^r
\|\textrm{ext}^0_{\varphi_\alpha(U_\alpha),\reals^n}[H_\alpha
(\psi_\alpha u )]^l\|_{W^{e,q}(\reals^n)}<\infty\}
\end{equation*}}}
\item
{\fontsize{11}{11}{\begin{equation*}
W^{e,q}(M,E;\Lambda)=\{u\in D'(M,E): [H_\alpha (u|_{U_\alpha}
)]^l\in W^{e,q}_{loc}(\varphi_\alpha(U_\alpha)),\,\,\forall\, 1\leq \alpha\leq N,\,\forall\, 1\leq l\leq r\}
\end{equation*}}}
\item %\textbf{Definition 3:}
$W^{e,q}(M,E;\Lambda)$ is the completion of
$C^\infty (M,E)$ with respect to the norm
\begin{equation*}
\|u\|_{W^{e,q}(M,E;\Lambda)}=\sum_{\alpha=1}^N\sum_{l=1}^r \|(\rho_\alpha)^l\circ (\psi_\alpha u) \circ
\varphi_\alpha^{-1}\|_{W^{e,q}(\varphi_\alpha(U_\alpha))}
\end{equation*}
\item %\textbf{Definition 4:}
\begin{itemize}
\item Let $g$ be a smooth Riemannian metric (i.e a fiber metric on $TM$).
So $g^{-1}$ is a fiber metric on $T^*M$.
\item Let $\langle .,.\rangle_E$ be a smooth fiber metric on $E$.
\item Let $\grad^E$ be a metric connection in the vector bundle $\pi:E\rightarrow
M$. %I don't think it is necessary to assume that $\grad^E$ is a
%metric connection!
\end{itemize}
For $k\in \mathbb{N}_0$, $W^{k,q}(M,E;g,\grad^E)$ is the completion of
$C^{\infty}(M,E)$ with respect to the following norm
\begin{equation*}
\|u\|_{W^{k,q}(M,E;g,\grad^E)}^q=\sum_{i=0}^k |(\grad^E)^i
u|_{L^q}^q=\sum_{i=0}^k \int_M
|\underbrace{\grad^E\cdots\grad^E}_{\textrm{$i$ times}} u|_{(T^*M)^{\otimes i}\otimes E}^q
 dV_g
\end{equation*}
In particular, if we denote the Levi Civita connection
corresponding to the smooth Riemannian metric $g$ by $\grad$,
then $W^{k,q}(M;g)$ is the completion of $C^{\infty}(M)$ with
respect to the following norm
\begin{align*}
&\|u\|_{W^{k,q}(M;g)}^q=\sum_{i=0}^k | \grad^i
u|_{L^q}^q=\sum_{i=0}^k\int_M |\underbrace{\grad\cdots\grad}_{\textrm{$i$ times}} u|_{T^iM}^q dV_g
\end{align*}
\end{enumerate}
In the subsequent discussions we will study the relation between
each of these alternative descriptions of Sobolev spaces and Definition \ref{defwintermainsobolev}.
\end{remark}
An important question is whether our definition of Sobolev spaces
(as topological spaces) depends on the augmented total trivialization atlas $\Lambda$. We will answer
this question at 3 levels. Although each level can be considered
as a generalization of the preceding level, the proofs will be
independent of each other. The following theorems show that at
least when $e$ is not a noninteger less than $-1$, the
space $W^{e,q}(M,E;\Lambda)$ and its topology are independent
of the choice of augmented total trivialization atlas.
\begin{remark}
In the following theorems, by the equivalence of two norms
$\|.\|_1$ and $\|.\|_2$ we mean there exist constants $C_1$ and
$C_2$ such that
\begin{equation*}
C_1\|.\|_1\leq \|.\|_2\leq C_2\|.\|_1
\end{equation*}
where $C_1$ and $C_2$ may depend on
\begin{equation*}
n,e,q,\varphi_\alpha,U_\alpha,\tilde{\varphi}_\beta,\tilde{U}_\beta,\psi_\alpha,
\tilde{\psi}_\beta
\end{equation*}
\end{remark}
\begin{theorem}[Equivalence of norms for functions]
Let $e\in \reals$ and $q\in (1,\infty)$. Let $\Lambda=\{(U_\alpha,\varphi_\alpha,\psi_\alpha)\}_{1\leq
\alpha\leq N}$ and $\Upsilon=\{(\tilde{U}_\beta,\tilde{\varphi}_\beta,\tilde{\psi}_\beta)\}_{1\leq \beta\leq
\tilde{N}}$ be two augmented total trivialization atlases for the trivial bundle $M\times \reals\rightarrow M$. Also let $\mathcal{W}$ be any
vector subspace of $W^{e,q}(M;\Upsilon)$ whose elements are regular
distributions (e.g $C^\infty(M)$).
\begin{enumerateX}
\item If $e$ is not a noninteger less than $-1$, then $W$ is a subspace of $W^{e,q}(M;\Lambda)$ as well, and the norms
produced by $\Lambda$ and $\Upsilon$ are equivalent on
$\mathcal{W}$.
\item If $e$ is a noninteger less than $-1$, further assume that the total trivialization atlases corresponding to  $\Lambda$ and $\Upsilon$ are GLC. Then $W$ is a subspace of $W^{e,q}(M;\Lambda)$ as well, and the norms
produced by $\Lambda$ and $\Upsilon$ are equivalent on
$\mathcal{W}$.
\end{enumerateX}
\end{theorem}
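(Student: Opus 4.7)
By symmetry in $\Lambda$ and $\Upsilon$, it suffices in each case to establish the one-sided estimate
\begin{equation*}
\|u\|_{W^{e,q}(M;\Lambda)}\preceq\|u\|_{W^{e,q}(M;\Upsilon)}\qquad\forall\,u\in\mathcal{W}.
\end{equation*}
Because elements of $\mathcal{W}$ are regular distributions, every $(\psi_\alpha u)\circ\varphi_\alpha^{-1}$ is an ordinary function. The key device is to insert the other partition of unity: since $\sum_\beta\tilde{\psi}_\beta\equiv 1$ on $M$,
\begin{equation*}
(\psi_\alpha u)\circ\varphi_\alpha^{-1}=\sum_\beta(\tilde{\psi}_\beta\psi_\alpha u)\circ\varphi_\alpha^{-1},
\end{equation*}
and each summand has support inside the compact set $\varphi_\alpha(\textrm{supp}\,\psi_\alpha\cap\textrm{supp}\,\tilde{\psi}_\beta)\subset V_{\alpha\beta}:=\varphi_\alpha(U_\alpha\cap\tilde{U}_\beta)$. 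A triangle inequality then reduces matters to bounding $\|(\tilde{\psi}_\beta\psi_\alpha u)\circ\varphi_\alpha^{-1}\|_{W^{e,q}(\varphi_\alpha(U_\alpha))}$ by a constant multiple of the $\beta$-summand of $\|u\|_{W^{e,q}(M;\Upsilon)}$, for each pair with $U_\alpha\cap\tilde{U}_\beta\neq\emptyset$.

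Fix such a pair and let $T_{\alpha\beta}:=\tilde{\varphi}_\beta\circ\varphi_\alpha^{-1}:V_{\alpha\beta}\to\tilde{V}_{\alpha\beta}:=\tilde{\varphi}_\beta(U_\alpha\cap\tilde{U}_\beta)$ be the smooth transition map. The plan is a three-step reduction. First, since $(\tilde{\psi}_\beta\psi_\alpha u)\circ\varphi_\alpha^{-1}$ has fixed compact support in $V_{\alpha\beta}$, Corollary \ref{corofallusef1} gives
\begin{equation*}
\|(\tilde{\psi}_\beta\psi_\alpha u)\circ\varphi_\alpha^{-1}\|_{W^{e,q}(\varphi_\alpha(U_\alpha))}\simeq\|(\tilde{\psi}_\beta\psi_\alpha u)\circ\varphi_\alpha^{-1}\|_{W^{e,q}(V_{\alpha\beta})}.
\end{equation*}
Second, using the identity $(\tilde{\psi}_\beta\psi_\alpha u)\circ\varphi_\alpha^{-1}=\big[(\tilde{\psi}_\beta\psi_\alpha u)\circ\tilde{\varphi}_\beta^{-1}\big]\circ T_{\alpha\beta}$ and Theorem \ref{winter105} (with a bounded open $B$ satisfying $\bar{B}\subset V_{\alpha\beta}$ and $T_{\alpha\beta}^{-1}$ of the support contained in $B$), this is bounded by $\|(\tilde{\psi}_\beta\psi_\alpha u)\circ\tilde{\varphi}_\beta^{-1}\|_{W^{e,q}(\tilde{V}_{\alpha\beta})}$. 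A second application of Corollary \ref{corofallusef1} enlarges $\tilde{V}_{\alpha\beta}$ to $\tilde{\varphi}_\beta(\tilde{U}_\beta)$. Third, I factor
\begin{equation*}
(\tilde{\psi}_\beta\psi_\alpha u)\circ\tilde{\varphi}_\beta^{-1}=\big(\psi_\alpha\circ\tilde{\varphi}_\beta^{-1}\big)\cdot\big((\tilde{\psi}_\beta u)\circ\tilde{\varphi}_\beta^{-1}\big),
\end{equation*}
where $\psi_\alpha\circ\tilde{\varphi}_\beta^{-1}\in C_c^\infty(\tilde{\varphi}_\beta(\tilde{U}_\beta))$, and multiplication by a smooth function (Theorem \ref{lemapp3}, using that the right factor has fixed compact support $\tilde{\varphi}_\beta(\textrm{supp}\,\tilde{\psi}_\beta)$) bounds this by the desired $\beta$-summand of $\|u\|_{W^{e,q}(M;\Upsilon)}$. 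Summing over $\alpha,\beta$ completes the proof.

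The main obstacle, and the raison d'\^etre of the GLC hypothesis in part (2), is that each of the three ingredients above is available in the generic range of $e$ without any boundary regularity hypothesis on the chart images, but fails in that generality when $e$ is a noninteger strictly less than $-1$: the restriction/extension equivalence in Corollary \ref{corofallusef1}, the pullback estimate in Theorem \ref{winter105}, and the pointwise multiplication result in Theorem \ref{lemapp3} all require, in that regime, that the ambient open sets $\varphi_\alpha(U_\alpha)$, $\tilde{\varphi}_\beta(\tilde{U}_\beta)$, $V_{\alpha\beta}$, and $\tilde{V}_{\alpha\beta}$ be either $\reals^n$ or bounded Lipschitz. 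The GLC assumption supplies exactly this structural regularity, and under it the same three-step argument goes through verbatim using Theorem \ref{winter105}(2) (or equivalently Theorem \ref{thmfallcoc6}) for the pullback and Theorem \ref{thmfallmultsmooth20} in place of Theorem \ref{lemapp3} for the multiplication. The remaining work is bookkeeping: matching the correct compact set, auxiliary neighborhood, and theorem to each step.
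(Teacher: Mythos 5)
Your proof is correct and follows essentially the same three-step route as the paper: insert the second partition of unity, localize the support via Corollary \ref{corofallusef1}, change coordinates via Theorem \ref{winter105}, and absorb the smooth multiplier via Theorem \ref{lemapp3} (or Corollary \ref{corollarywinter92a} in the GLC case). One small imprecision: $\psi_\alpha\circ\tilde{\varphi}_\beta^{-1}$ need not lie in $C_c^\infty(\tilde{\varphi}_\beta(\tilde{U}_\beta))$ (only in $C^\infty$), but this is harmless since, as you note, Theorem \ref{lemapp3} only needs the \emph{other} factor to have fixed compact support.
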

\begin{proof}
Let $u\in \Gamma_{reg}(M)$. Our goal is to show that the following expressions are comparable:
\begin{align*}
& \sum_{\alpha=1}^N\|(\psi_\alpha u)\circ \varphi_\alpha^{-1}\|_{W^{e,q}(\varphi_\alpha(U_\alpha))}\\
& \sum_{\beta=1}^{\tilde{N}}\|(\tilde{\psi}_\beta u )\circ \tilde{\varphi}_{\beta}^{-1}\|_{W^{e,q}(\tilde{\varphi}_{\beta}(\tilde{U}_\beta))}
\end{align*}
To this end it suffices to show that for each $1\leq \alpha\leq N$
\begin{equation*}
\|(\psi_\alpha u)\circ \varphi_\alpha^{-1}\|_{W^{e,q}(\varphi_\alpha(U_\alpha))}\preceq \sum_{\beta=1}^{\tilde{N}}\|(\tilde{\psi}_\beta u )\circ \tilde{\varphi}_{\beta}^{-1}\|_{W^{e,q}(\tilde{\varphi}_{\beta}(\tilde{U}_\beta))}
\end{equation*}
We have
\begin{align*}
\|(\psi_\alpha u)\circ \varphi_\alpha^{-1}\|_{W^{e,q}(\varphi_\alpha(U_\alpha))}&=\|\sum_{\beta=1}^{\tilde{N}}\tilde{\psi}_\beta(\psi_\alpha u)\circ \varphi_\alpha^{-1}\|_{W^{e,q}(\varphi_\alpha(U_\alpha))}\\
&\leq \sum_{\beta=1}^{\tilde{N}}\|\tilde{\psi}_\beta(\psi_\alpha u)\circ \varphi_\alpha^{-1}\|_{W^{e,q}(\varphi_\alpha(U_\alpha))}\\
&\simeq \sum_{\beta=1}^{\tilde{N}}\|(\tilde{\psi}_\beta\psi_\alpha u)\circ \varphi_\alpha^{-1}\|_{W^{e,q}(\varphi_\alpha(U_\alpha\cap \tilde{U}_\beta))}\\
\end{align*}
The last equality follows from Corollary \ref{corofallusef1} because $(\tilde{\psi}_\beta\psi_\alpha u)\circ \varphi_\alpha^{-1}$ has support in the compact set $\varphi_\alpha (\textrm{supp}\,\psi_\alpha\cap \textrm{supp}\,\tilde{\psi}_\beta)\subseteq \varphi_\alpha(U_\alpha\cap \tilde{U}_\beta)$. Note that here we used the assumption that if $e$ is a noninteger less than $-1$, then $\varphi_\alpha(U_\alpha)$ is Lipschitz or the entire $\reals^n$. Clearly
\begin{equation*}
\sum_{\beta=1}^{\tilde{N}}\|(\tilde{\psi}_\beta\psi_\alpha u)\circ \varphi_\alpha^{-1}\|_{W^{e,q}(\varphi_\alpha(U_\alpha\cap \tilde{U}_\beta))}=\sum_{\beta=1}^{\tilde{N}}\|(\tilde{\psi}_\beta\psi_\alpha u)\circ \tilde{\varphi}_\beta^{-1}\circ \tilde{\varphi}_\beta \circ \varphi_\alpha^{-1}\|_{W^{e,q}(\varphi_\alpha(U_\alpha\cap \tilde{U}_\beta))}
\end{equation*}
Since $\tilde{\varphi}_\beta \circ \varphi_\alpha^{-1}: \varphi_\alpha(U_\alpha\cap \tilde{U}_\beta)\rightarrow \tilde{\varphi}_\beta(U_\alpha\cap\tilde{U}_\beta)$ is a $C^\infty$-diffeomorphism and $(\tilde{\psi}_\beta\psi_\alpha u)\circ \tilde{\varphi}_\beta^{-1}$ has compact support in the compact set $\tilde{\varphi}_\beta (\textrm{supp}\,\psi_\alpha\cap \textrm{supp}\,\tilde{\psi}_\beta)\subseteq \tilde{\varphi}_\beta(U_\alpha\cap \tilde{U}_\beta)$, it follows from Theorem \ref{winter105} that
\begin{equation*}
\sum_{\beta=1}^{\tilde{N}}\|(\tilde{\psi}_\beta\psi_\alpha u)\circ \tilde{\varphi}_\beta^{-1}\circ \tilde{\varphi}_\beta \circ \varphi_\alpha^{-1}\|_{W^{e,q}(\varphi_\alpha(U_\alpha\cap \tilde{U}_\beta))}\preceq \sum_{\beta=1}^{\tilde{N}}\|(\tilde{\psi}_\beta\psi_\alpha u)\circ \tilde{\varphi}_\beta^{-1}\|_{W^{e,q}(\tilde{\varphi}_\beta(U_\alpha\cap \tilde{U}_\beta))}
\end{equation*}
Note that here we used the assumption that if $e$ is a noninteger less than $-1$, then the two total trivialization atlases are GL compatible. As a direct consequence of Corollary \ref{corofallextrensionzeropos1} and Theorem \ref{thmfallextrensionzeroneg1} we have
\begin{align*}
\|(\tilde{\psi}_\beta\psi_\alpha u)\circ \tilde{\varphi}_\beta^{-1}\|_{W^{e,q}(\tilde{\varphi}_\beta(U_\alpha\cap \tilde{U}_\beta))}&\simeq \|(\tilde{\psi}_\beta\psi_\alpha u)\circ \tilde{\varphi}_\beta^{-1}\|_{W^{e,q}(\tilde{\varphi}_\beta(\tilde{U}_\beta))}\\
& =\|(\psi_\alpha\circ \tilde{\varphi}_\beta^{-1}) [(\tilde{\psi}_\beta u)\circ \tilde{\varphi}_\beta^{-1}]\|_{W^{e,q}(\tilde{\varphi}_\beta(\tilde{U}_\beta))}
\end{align*}
Now note that $\psi_\alpha\circ \tilde{\varphi}_\beta^{-1}\in C^\infty (\tilde{\varphi}_\beta(\tilde{U}_\beta))$ and $(\tilde{\psi}_\beta u)\circ \tilde{\varphi}_\beta^{-1}$ has support in the compact set $\tilde{\varphi}_\beta(\textrm{supp}\,\tilde{\psi}_\beta)$. Therefore by Theorem \ref{lemapp3} (for the case where $e$ is not a noninteger less than $-1$) and Corollary \ref{corollarywinter92a} (for the case where $e$ is a noninteger less than $-1$) we have
\begin{equation*}
\|(\psi_\alpha\circ \tilde{\varphi}_\beta^{-1}) [(\tilde{\psi}_\beta u)\circ \tilde{\varphi}_\beta^{-1}]\|_{W^{e,q}(\tilde{\varphi}_\beta(\tilde{U}_\beta))}\preceq \| (\tilde{\psi}_\beta u)\circ \tilde{\varphi}_\beta^{-1}\|_{W^{e,q}(\tilde{\varphi}_\beta(\tilde{U}_\beta))}
\end{equation*}
Hence
\begin{equation*}
\|(\psi_\alpha u)\circ \varphi_\alpha^{-1}\|_{W^{e,q}(\varphi_\alpha(U_\alpha))}\preceq \sum_{\beta=1}^{\tilde{N}} \| (\tilde{\psi}_\beta u)\circ \tilde{\varphi}_\beta^{-1}\|_{W^{e,q}(\tilde{\varphi}_\beta(\tilde{U}_\beta))}
\end{equation*}
\end{proof}
\begin{theorem}[Equivalence of norms for regular sections]\lab{thmwinter20181}
Let $e\in \reals$ and $q\in (1,\infty)$. Let $\Lambda=\{(U_\alpha,\varphi_\alpha,\rho_\alpha,\psi_\alpha)\}_{1\leq
\alpha\leq N}$ and $\Upsilon=\{(\tilde{U}_\beta,\tilde{\varphi}_\beta,\tilde{\rho}_\beta,\tilde{\psi}_\beta)\}_{1\leq \beta\leq
\tilde{N}}$ be two augmented total trivialization atlases for the vector bundle $E\rightarrow M$. Also let $\mathcal{W}$ be any
vector subspace of $W^{e,q}(M,E;\Upsilon)$ whose elements are regular
distributions (e.g $C^\infty(M,E)$).
\begin{enumerateX}
\item If $e$ is not a noninteger less than $-1$, then $W$ is a subspace of $W^{e,q}(M,E;\Lambda)$ as well, and the norms
produced by $\Lambda$ and $\Upsilon$ are equivalent on
$\mathcal{W}$.
\item If $e$ is a noninteger less than $-1$, further assume that the total trivialization atlases corresponding to  $\Lambda$ and $\Upsilon$ are GLC. Then $W$ is a subspace of $W^{e,q}(M,E;\Lambda)$ as well, and the norms
produced by $\Lambda$ and $\Upsilon$ are equivalent on
$\mathcal{W}$.
\end{enumerateX}
\end{theorem}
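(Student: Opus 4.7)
The plan is to imitate the proof for the function case, inserting one extra ingredient: the fact that the two local trivializations $\rho_\alpha$ and $\tilde{\rho}_\beta$ are related on $U_\alpha \cap \tilde{U}_\beta$ by a smooth $\mathrm{GL}(r,\reals)$-valued transition matrix $\tau_{\alpha\beta}$ (Lemma \ref{lemapp8}). Concretely, for any section $v$ of $E$ over $U_\alpha \cap \tilde{U}_\beta$ one has, componentwise,
\begin{equation*}
(\rho_\alpha)^l \circ v = \sum_{j=1}^{r} (\tau_{\alpha\beta})^l{}_j \cdot \big((\tilde{\rho}_\beta)^j \circ v\big),
\end{equation*}
where $(\tau_{\alpha\beta})^l{}_j \in C^\infty(U_\alpha \cap \tilde{U}_\beta)$.

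To prove the theorem, fix $u\in \Gamma_{reg}(M,E)\cap W^{e,q}(M,E;\Upsilon)$, a chart index $\alpha$, and a component $l$. I would show
\begin{equation*}
\|(\rho_\alpha)^l \circ (\psi_\alpha u)\circ \varphi_\alpha^{-1}\|_{W^{e,q}(\varphi_\alpha(U_\alpha))} \;\preceq\; \sum_{\beta=1}^{\tilde{N}}\sum_{j=1}^{r}\|(\tilde{\rho}_\beta)^j\circ(\tilde{\psi}_\beta u)\circ \tilde{\varphi}_\beta^{-1}\|_{W^{e,q}(\tilde{\varphi}_\beta(\tilde{U}_\beta))},
\end{equation*}
after which the reverse direction follows by symmetry. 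First, insert the partition $\sum_\beta \tilde{\psi}_\beta \equiv 1$, apply the triangle inequality, and note that for each $\beta$ the function $(\rho_\alpha)^l\circ(\tilde{\psi}_\beta \psi_\alpha u)\circ \varphi_\alpha^{-1}$ is supported in the compact set $\varphi_\alpha(\mathrm{supp}\,\psi_\alpha\cap \mathrm{supp}\,\tilde{\psi}_\beta) \subseteq \varphi_\alpha(U_\alpha\cap\tilde{U}_\beta)$. By Corollary \ref{corofallusef1}, the $W^{e,q}(\varphi_\alpha(U_\alpha))$ norm of this function is comparable to its norm over $\varphi_\alpha(U_\alpha\cap \tilde{U}_\beta)$; this is the step that requires the GLC hypothesis when $e$ is a noninteger less than $-1$.

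Next, on $U_\alpha\cap \tilde{U}_\beta$ I rewrite the $l$-th component using the transition matrix:
\begin{equation*}
(\rho_\alpha)^l\circ(\tilde{\psi}_\beta\psi_\alpha u)\circ\varphi_\alpha^{-1}
\;=\;\sum_{j=1}^{r}\bigl((\tau_{\alpha\beta})^l{}_j\circ\varphi_\alpha^{-1}\bigr)\cdot\bigl((\tilde{\rho}_\beta)^j\circ(\tilde{\psi}_\beta\psi_\alpha u)\circ\varphi_\alpha^{-1}\bigr).
\end{equation*}
Since $(\tau_{\alpha\beta})^l{}_j\circ \varphi_\alpha^{-1}\in C^\infty(\varphi_\alpha(U_\alpha\cap \tilde U_\beta))$, the multiplication-by-smooth-functions result for compactly-supported distributions (Theorem \ref{lemapp3}, and Corollary \ref{corollarywinter92a} in the noninteger $e<-1$ case, where the GLC assumption again ensures the domain is Lipschitz or $\reals^n$) absorbs the transition factor into the implicit constant. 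Now the push-forward under $\tilde{\varphi}_\beta\circ\varphi_\alpha^{-1}$ is a $C^\infty$-diffeomorphism between bounded Lipschitz or Euclidean domains with image $\tilde{\varphi}_\beta(U_\alpha\cap \tilde{U}_\beta)$, so Theorem \ref{winter105} yields
\begin{equation*}
\|(\tilde{\rho}_\beta)^j\circ(\tilde{\psi}_\beta\psi_\alpha u)\circ \varphi_\alpha^{-1}\|_{W^{e,q}(\varphi_\alpha(U_\alpha\cap \tilde{U}_\beta))} \;\preceq\;  \|(\tilde{\rho}_\beta)^j\circ(\tilde{\psi}_\beta\psi_\alpha u)\circ \tilde{\varphi}_\beta^{-1}\|_{W^{e,q}(\tilde{\varphi}_\beta(U_\alpha\cap \tilde{U}_\beta))}.
\end{equation*}

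Finally, I extend back to $\tilde{\varphi}_\beta(\tilde{U}_\beta)$ via Corollary \ref{corofallusef1} and absorb the factor $\psi_\alpha\circ \tilde{\varphi}_\beta^{-1}\in C^\infty(\tilde{\varphi}_\beta(\tilde{U}_\beta))$ using Theorem \ref{lemapp3} (or Corollary \ref{corollarywinter92a}) one more time, leaving exactly $\|(\tilde{\rho}_\beta)^j\circ(\tilde{\psi}_\beta u)\circ\tilde{\varphi}_\beta^{-1}\|_{W^{e,q}(\tilde{\varphi}_\beta(\tilde{U}_\beta))}$. Summing over $\beta$, $j$, $\alpha$, and $l$ gives the desired equivalence. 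The main obstacle is bookkeeping the noninteger $e<-1$ case: every invocation of the change-of-variables theorem and of multiplication by smooth functions needs a Lipschitz or $\reals^n$ domain on both ends, which is precisely what the GLC hypothesis on the underlying atlases provides via Definition \ref{winter46}; care must be taken that the compact supports inserted by $\psi_\alpha \tilde{\psi}_\beta$ are used whenever the ambient $W^{e,q}$ result would otherwise fail (cf.\ Remark \ref{winter97} and Remark \ref{winter98}).
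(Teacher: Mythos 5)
Your proposal is correct and uses essentially the same approach as the paper's proof: insertion of $\sum_\beta\tilde{\psi}_\beta\equiv 1$, restriction to $\varphi_\alpha(U_\alpha\cap\tilde U_\beta)$ via Corollary \ref{corofallusef1}, the transition matrix from Lemma \ref{lemapp8}, change of coordinates via Theorem \ref{winter105}, and absorption of smooth factors via Theorem \ref{lemapp3}/Corollary \ref{corollarywinter92a}. The only (cosmetic) difference is the order of two commuting steps: you apply the $\tau_{\alpha\beta}$ identity in the $\varphi_\alpha$-coordinates and then change coordinates, whereas the paper first pushes forward under $\tilde\varphi_\beta\circ\varphi_\alpha^{-1}$ and only then unpacks $\rho_\alpha=\pi'\circ\Phi_\alpha\circ\Phi_\beta^{-1}\circ\Phi_\beta$ to extract the transition matrix in the $\tilde\varphi_\beta$-coordinates.
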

\begin{proof}
Let $u\in \Gamma_{reg}(M,E)$. Our goal is to show that the following expressions are comparable:
\begin{align*}
& \sum_{\alpha=1}^N\sum_{l=1}^r\|\rho_\alpha^l\circ (\psi_\alpha u)\circ \varphi_\alpha^{-1}\|_{W^{e,q}(\varphi_\alpha(U_\alpha))}\\
& \sum_{\beta=1}^{\tilde{N}}\sum_{l=1}^r\|\tilde{\rho}_\beta^l\circ (\tilde{\psi}_\beta u)\circ \tilde{\varphi}_\beta^{-1}\|_{W^{e,q}(\tilde{\varphi}_{\beta}(\tilde{U}_\beta))}
\end{align*}
To this end, it is enough to show that for each $1\leq \alpha\leq N$ and $1\leq l\leq r$
\begin{equation*}
\|\rho_\alpha^l\circ (\psi_\alpha u)\circ \varphi_\alpha^{-1}\|_{W^{e,q}(\varphi_\alpha(U_\alpha))}\preceq \sum_{\beta=1}^{\tilde{N}}\sum_{t=1}^r\|\tilde{\rho}_\beta^t\circ (\tilde{\psi}_\beta u)\circ \tilde{\varphi}_\beta^{-1}\|_{W^{e,q}(\tilde{\varphi}_{\beta}(\tilde{U}_\beta))}
\end{equation*}
We have
\begin{align*}
\|\rho_\alpha^l\circ (\psi_\alpha u)\circ \varphi_\alpha^{-1}\|_{W^{e,q}(\varphi_\alpha(U_\alpha))}&=\|\rho_\alpha^l\circ (\sum_{\beta=1}^{\tilde{N}}\tilde{\psi}_\beta\psi_\alpha u)\circ \varphi_\alpha^{-1}\|_{W^{e,q}(\varphi_\alpha(U_\alpha))}\\
& \leq \sum_{\beta=1}^{\tilde{N}}\|\rho_\alpha^l\circ (\tilde{\psi}_\beta\psi_\alpha u)\circ \varphi_\alpha^{-1}\|_{W^{e,q}(\varphi_\alpha(U_\alpha))}\\
&\simeq \sum_{\beta=1}^{\tilde{N}}\|\rho_\alpha^l\circ (\tilde{\psi}_\beta\psi_\alpha u)\circ \varphi_\alpha^{-1}\|_{W^{e,q}(\varphi_\alpha(U_\alpha\cap\tilde{U}_\beta))}
\end{align*}
The last equality follows from Corollary \ref{corofallusef1} because $\rho_\alpha^l\circ (\tilde{\psi}_\beta\psi_\alpha u)\circ \varphi_\alpha^{-1}$ has support in the compact set $\varphi_\alpha (\textrm{supp}\,\psi_\alpha\cap \textrm{supp}\,\tilde{\psi}_\beta)\subseteq \varphi_\alpha(U_\alpha\cap \tilde{U}_\beta)$. Note that here we used the assumption that if $e$ is a noninteger less than $-1$, then $\varphi_\alpha(U_\alpha)$ is either Lipschitz or equal to the entire $\reals^n$. Note that
\begin{align*}
\sum_{\beta=1}^{\tilde{N}}\|\rho_\alpha^l\circ (\tilde{\psi}_\beta\psi_\alpha u)&\circ \varphi_\alpha^{-1}\|_{W^{e,q}(\varphi_\alpha(U_\alpha\cap\tilde{U}_\beta))}\\
&=\sum_{\beta=1}^{\tilde{N}}\|\rho_\alpha^l\circ (\tilde{\psi}_\beta\psi_\alpha u)\circ \tilde{\varphi}_\beta^{-1}\circ \tilde{\varphi}_\beta\circ \varphi_\alpha^{-1}\|_{W^{e,q}(\varphi_\alpha(U_\alpha\cap\tilde{U}_\beta))}\\
&\stackrel{\textrm{Theorem \ref{winter105}}}{\preceq} \sum_{\beta=1}^{\tilde{N}}\|\rho_\alpha^l\circ (\tilde{\psi}_\beta\psi_\alpha u)\circ \tilde{\varphi}_\beta^{-1}\|_{W^{e,q}(\tilde{\varphi}_\beta(U_\alpha\cap\tilde{U}_\beta))}\\
%&\simeq\sum_{\beta=1}^{\tilde{N}}\|\rho_\alpha^l\circ (\tilde{\psi}_\beta\psi_\alpha u)\circ %\tilde{\varphi}_\beta^{-1}\|_{W^{e,q}(\tilde{\varphi}_\beta(\tilde{U}_\beta))}\\
%&\textrm{(Here we used Corollary \ref{corofallextrensionzeropos1} and Theorem \ref{thmfallextrensionzeroneg1})}\\
&=\sum_{\beta=1}^{\tilde{N}}\|(\psi_\alpha\circ\tilde{\varphi}_\beta^{-1})[\rho_\alpha^l\circ (\tilde{\psi}_\beta u)\circ \tilde{\varphi}_\beta^{-1}]\|_{W^{e,q}(\tilde{\varphi}_\beta(U_\alpha\cap\tilde{U}_\beta))}\\
%&\preceq \sum_{\beta=1}^{\tilde{N}}\|\rho_\alpha^l\circ (\tilde{\psi}_\beta u)\circ %\tilde{\varphi}_\beta^{-1}\|_{W^{e,q}(\tilde{\varphi}_\beta(\tilde{U}_\beta))}\\
%&\textrm{(Here we used Theorem \ref{lemapp3} and Corollary \ref{corollarywinter92a})}\\
&=\sum_{\beta=1}^{\tilde{N}}\|(\psi_\alpha\circ\tilde{\varphi}_\beta^{-1})\big[\pi_l\circ \underbrace{\pi'\circ\Phi_\alpha}_{\rho_\alpha}\circ (\tilde{\psi}_\beta  u)\circ \tilde{\varphi}_\beta^{-1}\big]\|_{W^{e,q}(\tilde{\varphi}_\beta(U_\alpha\cap\tilde{U}_\beta))}\\
&=\sum_{\beta=1}^{\tilde{N}}\|(\psi_\alpha\circ\tilde{\varphi}_\beta^{-1})\big[\pi_l\circ \pi'\circ\Phi_\alpha\circ \Phi_\beta^{-1}\circ\Phi_\beta\circ (\tilde{\psi}_\beta  u)\circ \tilde{\varphi}_\beta^{-1}\big]\|_{W^{e,q}(\tilde{\varphi}_\beta(U_\alpha\cap\tilde{U}_\beta))}
\end{align*}
Let $v_\beta:\tilde{\varphi}_\beta(\tilde{U}_\beta)\rightarrow E$ be defined by $v_\beta(x)=(\tilde{\psi}_\beta  u)\circ \tilde{\varphi}_\beta^{-1}$. Clearly $\pi(v_\beta(x))=\tilde{\varphi}_\beta^{-1}(x)$. Therefore
\begin{equation*}
\Phi_\beta(v_\beta(x))=\big(\pi(v_\beta(x)),\tilde{\rho}_\beta(v_\beta(x)) \big)=\big(\tilde{\varphi}_\beta^{-1}(x),\tilde{\rho}_\beta(v_\beta(x))\big)
\end{equation*}
For all $x\in \tilde{\varphi}_\beta(U_\alpha\cap\tilde{U}_\beta)$ we have
\begin{align*}
\pi'&\circ\Phi_\alpha\circ \Phi_\beta^{-1}\big(\Phi_\beta(v_\beta(x))\big)\\
&=\pi'\circ\Phi_\alpha\circ\Phi_\beta^{-1}\big(\tilde{\varphi}_\beta^{-1}(x),\tilde{\rho}_\beta(v_\beta(x))\big)\\
&\stackrel{\textrm{Lemma \ref{lemapp8}}}{=} \pi'\circ\big(\tilde{\varphi}_\beta^{-1}(x),\tau_{\alpha\beta}(\tilde{\varphi}_\beta^{-1}(x))\tilde{\rho}_\beta(v_\beta(x))\big)\\
&=\underbrace{\tau_{\alpha\beta}(\tilde{\varphi}_\beta^{-1}(x))}_{\textrm{an $r\times r$ matrix}}\tilde{\rho}_\beta(v_\beta(x))
\end{align*}
Let $A_{\alpha\beta}=\tau_{\alpha\beta}\circ\tilde{\varphi}_\beta^{-1}$ on $\tilde{\varphi}_\beta(U_\alpha\cap\tilde{U}_\beta)$. So we can write
\begin{align*}
\|\rho_\alpha^l&\circ (\psi_\alpha u)\circ \varphi_\alpha^{-1}\|_{W^{e,q}(\tilde{\varphi}_\beta(U_\alpha\cap\tilde{U}_\beta))}\\
&\preceq \sum_{\beta=1}^{\tilde{N}}\|(\psi_\alpha\circ\tilde{\varphi}_\beta^{-1})(x)\big[\pi_l\circ A_{\alpha\beta}(x)\tilde{\rho}_\beta(v_\beta(x))\big]\|_{W^{e,q}(\tilde{\varphi}_\beta(U_\alpha\cap\tilde{U}_\beta))}\\
&=\sum_{\beta=1}^{\tilde{N}}\|(\psi_\alpha\circ\tilde{\varphi}_\beta^{-1})(x)\big[\sum_{t=1}^r (A_{\alpha\beta}(x))_{lt}\tilde{\rho}_\beta^t(v_\beta(x))\big]\|_{W^{e,q}(\tilde{\varphi}_\beta(U_\alpha\cap\tilde{U}_\beta))}\\
&\leq \sum_{\beta=1}^{\tilde{N}}\sum_{t=1}^r \| (\psi_\alpha\circ\tilde{\varphi}_\beta^{-1})(x) (A_{\alpha\beta}(x))_{lt}\tilde{\rho}_\beta^t(v_\beta(x))\|_{W^{e,q}(\tilde{\varphi}_\beta(U_\alpha\cap\tilde{U}_\beta))}
\end{align*}
Now note that $(A_{\alpha\beta}(x))_{lt}$ are in $C^\infty(\tilde{\varphi}_\beta(U_\alpha\cap\tilde{U}_\beta))$
%in $C^{\infty}(\tilde{\varphi}_\beta(\tilde{U}_\beta))$
 and $(\psi_\alpha\circ\tilde{\varphi}_\beta^{-1})(x)\tilde{\rho}_\beta^t(v_\beta(x))$ has support inside the compact set $\tilde{\varphi}_{\beta}(\textrm{supp}\,\tilde{\psi_\beta}\cap \textrm{supp}\,\psi_\alpha)$. Therefore by Theorem \ref{lemapp3} (for the case where $e$ is not a noninteger less than $-1$) and Corollary \ref{corollarywinter92a} (for the case where $e$ is a noninteger less than $-1$) we have
\begin{equation*}
\sum_{t=1}^r\| (\psi_\alpha\circ\tilde{\varphi}_\beta^{-1})(x) (A_{\alpha\beta}(x))_{lt}\tilde{\rho}_\beta^t(v_\beta(x))\|_{W^{e,q}(\tilde{\varphi}_\beta(U_\alpha\cap\tilde{U}_\beta))}\preceq \sum_{t=1}^r\| (\psi_\alpha\circ\tilde{\varphi}_\beta^{-1})(x)\tilde{\rho}_\beta^t(v_\beta(x))\|_{W^{e,q}(\tilde{\varphi}_\beta(U_\alpha\cap\tilde{U}_\beta))}
\end{equation*}
Therefore
\begin{align*}
\|\rho_\alpha^l&\circ (\psi_\alpha u)\circ \varphi_\alpha^{-1}\|_{W^{e,q}(\varphi_\alpha(U_\alpha))}\\
&\preceq \sum_{\beta=1}^{\tilde{N}}\sum_{t=1}^r \| (\psi_\alpha\circ\tilde{\varphi}_\beta^{-1})(x)  \tilde{\rho}_\beta^t(v_\beta(x))\|_{W^{e,q}(\tilde{\varphi}_\beta(U_\alpha\cap\tilde{U}_\beta))}\\
&\simeq \sum_{\beta=1}^{\tilde{N}}\sum_{t=1}^r \| (\psi_\alpha\circ\tilde{\varphi}_\beta^{-1})(x)  \tilde{\rho}_\beta^t(v_\beta(x))\|_{W^{e,q}(\tilde{\varphi}_\beta(\tilde{U}_\beta))}\\
&\textrm{(Here we used Corollary \ref{corofallextrensionzeropos1} and Theorem \ref{thmfallextrensionzeroneg1})}\\
&\preceq \sum_{\beta=1}^{\tilde{N}}\sum_{t=1}^r \|   \tilde{\rho}_\beta^t(v_\beta(x))\|_{W^{e,q}(\tilde{\varphi}_\beta(\tilde{U}_\beta))}\\
&\textrm{(Here we used Theorem \ref{lemapp3} and Corollary \ref{corollarywinter92a})}\\
&=\sum_{\beta=1}^{\tilde{N}}\sum_{t=1}^r \| \tilde{\rho}_\beta^t\circ (\tilde{\psi}_\beta u)\circ \tilde{\varphi}_\beta^{-1}\|_{W^{e,q}(\tilde{\varphi}_\beta(\tilde{U}_\beta))}
\end{align*}
\end{proof}
\begin{theorem}[Equivalence of norms for distributional sections]
Let $e\in \reals$ and $q\in (1,\infty)$. Let $\Lambda=\{(U_\alpha,\varphi_\alpha,\rho_\alpha,\psi_\alpha)\}_{1\leq
\alpha\leq N}$ and $\Upsilon=\{(\tilde{U}_\beta,\tilde{\varphi}_\beta,\tilde{\rho}_\beta,\tilde{\psi}_\beta)\}_{1\leq \beta\leq
\tilde{N}}$ be two augmented total trivialization atlases for the vector bundle $E\rightarrow M$.
\begin{enumerateX}
\item If $e$ is not a noninteger less than $-1$, then $W^{e,q}(M,E;\Lambda)$ and $W^{e,q}(M,E;\Upsilon)$ are equivalent normed spaces.
\item If $e$ is a noninteger less than $-1$, further assume that the total trivialization atlases corresponding to  $\Lambda$ and $\Upsilon$ are GLC. Then $W^{e,q}(M,E;\Lambda)$ and $W^{e,q}(M,E;\Upsilon)$ are equivalent normed spaces.
\end{enumerateX}
\end{theorem}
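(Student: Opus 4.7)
The plan is to mimic the proof of Theorem \ref{thmwinter20181}, but carry every step out at the level of distributions (not just regular or smooth sections) by exploiting the properties of the isomorphism $H_{E^\vee,U,\varphi}$ collected in Remark \ref{remfall134}. Concretely, fix $u\in D'(M,E)$, write $H_\alpha:=H_{E^\vee,U_\alpha,\varphi_\alpha}$ and $\tilde H_\beta:=H_{E^\vee,\tilde U_\beta,\tilde\varphi_\beta}$, and aim to show that for every $\alpha$ and $l$
\begin{equation*}
\|[H_\alpha(\psi_\alpha u)]^l\|_{W^{e,q}(\varphi_\alpha(U_\alpha))}
\preceq
\sum_{\beta=1}^{\tilde N}\sum_{t=1}^r \|[\tilde H_\beta(\tilde\psi_\beta u)]^t\|_{W^{e,q}(\tilde\varphi_\beta(\tilde U_\beta))}.
\end{equation*}
Summing over $\alpha,l$ and swapping the roles of $\Lambda$ and $\Upsilon$ will yield the equivalence of norms, and in particular membership in one space will imply membership in the other.

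The first step is to insert the partition of unity $\sum_\beta\tilde\psi_\beta=1$ inside $H_\alpha$. By the second observation in Remark \ref{remfall134}, $H_\alpha$ commutes (up to pullback by $\varphi_\alpha^{-1}$) with multiplication by a smooth function, so
\begin{equation*}
[H_\alpha(\psi_\alpha u)]^l=\sum_{\beta}[H_\alpha(\tilde\psi_\beta\psi_\alpha u)]^l
\end{equation*}
and by the first observation in Remark \ref{remfall134} each summand is supported in the compact set $\varphi_\alpha(\mathrm{supp}\,\psi_\alpha\cap\mathrm{supp}\,\tilde\psi_\beta)\subseteq\varphi_\alpha(U_\alpha\cap\tilde U_\beta)$. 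Corollary \ref{corofallusef1} (applicable either because $e$ is not a noninteger less than $-1$, or, in the noninteger $e<-1$ case, because the atlas is GGL so $\varphi_\alpha(U_\alpha)$ is Lipschitz or $\reals^n$) allows us to replace the domain $\varphi_\alpha(U_\alpha)$ by $\varphi_\alpha(U_\alpha\cap\tilde U_\beta)$ without changing the norm up to constants.

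The second step is the crucial one: relate $[H_\alpha(\tilde\psi_\beta\psi_\alpha u)]^l$, which lives on $\varphi_\alpha(U_\alpha\cap\tilde U_\beta)$, to the components $[\tilde H_\beta(\tilde\psi_\beta u)]^t$ on $\tilde\varphi_\beta(U_\alpha\cap\tilde U_\beta)$. On the overlap the vector bundle trivializations are related by the transition matrix $\tau_{\alpha\beta}:U_\alpha\cap\tilde U_\beta\to\mathrm{GL}(r,\reals)$ of Lemma \ref{lemapp8}; writing $A_{\alpha\beta}:=\tau_{\alpha\beta}\circ\tilde\varphi_\beta^{-1}\in C^\infty(\tilde\varphi_\beta(U_\alpha\cap\tilde U_\beta);\mathrm{GL}(r,\reals))$ and $T_{\alpha\beta}:=\varphi_\alpha\circ\tilde\varphi_\beta^{-1}$, the identity
\begin{equation*}
[H_\alpha(\tilde\psi_\beta\psi_\alpha u)]^l=(\psi_\alpha\circ\tilde\varphi_\beta^{-1}\circ T_{\alpha\beta}^{-1})\cdot\sum_{t=1}^r\big((A_{\alpha\beta})_{lt}\,[\tilde H_\beta(\tilde\psi_\beta u)]^t\big)\circ T_{\alpha\beta}^{-1}
\end{equation*}
holds on regular sections by the computation already carried out in Theorem \ref{thmwinter20181}; since both sides depend linearly and continuously on $u\in D'(U_\alpha\cap\tilde U_\beta,E)$ and agree on the dense subspace $\Gamma_{reg}$ (via Remark \ref{remfall135} and density results such as Theorem \ref{winter93} and Theorem \ref{winter94} together with Theorem \ref{thmwinter1101}), the identity persists for all distributional $u$.

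The third step is a chain of standard Sobolev estimates applied termwise to this identity: Theorem \ref{winter105} bounds the $W^{e,q}$ norm under pullback by the diffeomorphism $T_{\alpha\beta}^{-1}$ (here the GLC hypothesis in the noninteger $e<-1$ case guarantees that $\varphi_\alpha(U_\alpha\cap\tilde U_\beta)$ and $\tilde\varphi_\beta(U_\alpha\cap\tilde U_\beta)$ are Lipschitz or $\reals^n$, so the theorem applies); Theorem \ref{lemapp3} or Corollary \ref{corollarywinter92a} absorbs the smooth multipliers $\psi_\alpha\circ\tilde\varphi_\beta^{-1}\circ T_{\alpha\beta}^{-1}$ and $(A_{\alpha\beta})_{lt}$, using that the functions involved have their compact support controlled; and Corollary \ref{corofallusef1} trades the domain $\tilde\varphi_\beta(U_\alpha\cap\tilde U_\beta)$ for $\tilde\varphi_\beta(\tilde U_\beta)$. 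Chaining these estimates gives exactly the desired bound. The main obstacle is the second step — establishing the transition identity at the distributional level — because it requires justifying that $H_\alpha$ commutes properly with multiplication by the smooth transition functions $(A_{\alpha\beta})_{lt}$ and with pullback under $T_{\alpha\beta}$, even when $u$ is not a regular distribution; once this is in hand the rest is essentially the same bookkeeping as in Theorem \ref{thmwinter20181}.
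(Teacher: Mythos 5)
Your overall scheme --- insert the partition of unity $\sum_\beta\tilde\psi_\beta$, establish a transition identity relating $[H_\alpha(\tilde\psi_\beta\psi_\alpha u)]^l$ to the components $[\tilde H_\beta(\tilde\psi_\beta u)]^i$, and then chain Corollary \ref{corofallusef1}, Theorem \ref{winter105}, Theorem \ref{lemapp3}/Corollary \ref{corollarywinter92a} --- is the same decomposition the paper uses, and your step-three bookkeeping is exactly the paper's. The point of divergence is the justification of the transition identity, and there your argument has a genuine gap. The paper proves the identity \emph{directly} at the distributional level: it pairs $[H_\alpha(\tilde\psi_\beta\psi_\alpha u)]^l$ against test functions of the form $\eta\circ\tilde\varphi_\beta\circ\varphi_\alpha^{-1}$, unpacks the definition of $H_\alpha$ through the sections $g_{l,\eta,U,\varphi}$, and then uses Lemma \ref{lemapp8} applied to the bundle $E^\vee$ to compute the transition coefficients. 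This makes the Jacobian factor $\bigl(\det(\varphi_\alpha\circ\tilde\varphi_\beta^{-1})\bigr)^{-1}$ and the $E^\vee$-transition matrix $\tau^{\tilde\beta\alpha}$ appear explicitly and keeps the proof entirely self-contained within the framework of Section 6.

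Your proposal to prove the identity for regular sections and then extend by density and continuity is conceptually plausible, but the density step is not available with the citations you give. Theorem \ref{winter93} and Theorem \ref{winter94} are Euclidean density results for $W^{s,p}(\Omega)$, and Theorem \ref{thmwinter1101} is about topological isomorphisms with closed image --- none of these gives density of $\Gamma_{reg}(M,E)$ (or $\mathcal{E}(M,E)$) in $D'(M,E)$ or in $D'(U_\alpha\cap\tilde U_\beta,E)$ in a topology for which both sides of your identity are continuous. The manifold density result the paper does have, Theorem \ref{thmwinterh22}, is proved \emph{after} this theorem and its proof (through Lemma \ref{lemwinter1103}) invokes the equivalence of $\Lambda$-norms under change of partition of unity, so appealing to it here would be circular. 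To make your route rigorous you would need to separately establish weak-$*$ density of smooth sections in $D'(M,E)$ and verify continuity of $H_\alpha$, multiplication by smooth functions, and distributional pushforward in that topology; all of this is true but none of it is in the paper, which is precisely why the authors chose the direct test-function computation instead. (A secondary point: once you pass to distributions, your ``$\cdots\circ T_{\alpha\beta}^{-1}$'' has to be read as distributional pushforward, which builds the Jacobian factor into the definition --- this is how your $\tau_{\alpha\beta}$-based coefficients reconcile with the paper's $\tau^{\tilde\beta\alpha}$ and the explicit $1/\det$ factor, but it is worth saying so.)
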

\begin{proof}
 Let $u\in D'(M,E)$. We want to show the following expressions are comparable:
\begin{align*}
& \sum_{\alpha=1}^N \sum_{l=1}^r\|[H_\alpha (\psi_\alpha u)
]^l\|_{W^{e,q}(\varphi_\alpha(U_\alpha))}\\
& \sum_{\beta=1}^{\tilde{N}} \sum_{i=1}^r \|[\tilde{H}_\beta(\tilde{\psi}_\beta u
)]^i\|_{W^{e,q}(\tilde{\varphi}_\beta(\tilde{U}_\beta))}
\end{align*}
To this end it is enough to show that for each $1\leq \alpha \leq
 N$ and $1\leq l\leq r$
 \begin{equation*}
\|[H_\alpha (\psi_\alpha u)
]^l\|_{W^{e,q}(\varphi_\alpha(U_\alpha))}\preceq \sum_{\beta=1}^{\tilde{N}}
\sum_{i=1}^r \|[\tilde{H}_\beta(\tilde{\psi}_\beta u
)]^i\|_{W^{e,q}(\tilde{\varphi}_\beta(\tilde{U}_\beta))}
 \end{equation*}
 We have
 \begin{equation*}
 [H_\alpha (\psi_\alpha u)
]^l=[H_\alpha (\sum_{\beta=1}^{\tilde{N}} \tilde{\psi}_\beta\psi_\alpha u)
]^l\stackrel{\textrm{Remark \ref{remfall134}}}{=}\sum_{\beta=1}^{\tilde{N}}[H_\alpha (\tilde{\psi}_\beta\psi_\alpha u) ]^l
 \end{equation*}
 In what follows we will prove that
 \begin{equation}\lab{winterprovlate}
 [H_\alpha (\tilde{\psi}_\beta\psi_\alpha u)
]^l=\sum_{i=1}^r \big((A_{\alpha\beta})_{il}[\tilde{H}_\beta(\tilde{\psi}_\beta\psi_\alpha u)]^i \big)\circ \tilde{\varphi}_\beta\circ\varphi_\alpha^{-1}
 \end{equation}
 for some functions $(A_{\alpha\beta})_{il},\,(1\leq i\leq r)$ in $C^\infty(\tilde{\varphi}_\beta(U_\alpha\cap\tilde{U}_\beta))$. For now let's assume the validity of Equation \ref{winterprovlate} to prove the claim.
 \begin{align*}
\| [H_\alpha(\psi_\alpha
u)&]^l\|_{W^{e,q}(\varphi_\alpha(U_\alpha))}=\| \sum_{\beta=1}^{\tilde{N}}
[H_\alpha(\tilde{\psi}_\beta\psi_\alpha u)]^l
\|_{W^{e,q}(\varphi_\alpha(U_\alpha))}\\
&\leq \sum_{\beta=1}^{\tilde{N}} \|
[H_\alpha(\tilde{\psi}_\beta\psi_\alpha u)]^l
\|_{W^{e,q}(\varphi_\alpha(U_\alpha))}\\
&\stackrel{\textrm{Corollary \ref{corofallusef1}}}{\simeq}\sum_{\beta=1}^{\tilde{N}}\|
[H_\alpha(\tilde{\psi}_\beta\psi_\alpha u)]^l
\|_{W^{e,q}(\varphi_\alpha(U_\alpha\cap \tilde{U}_\beta))}\\
& {}\hspace{-1 cm}{\fontsize{10}{10}{\textrm{(note that by Remark \ref{remfall134} $[H_\alpha(\tilde{\psi}_\beta\psi_\alpha u)]^l$ has support in the compact set $\varphi_\alpha(\textrm{supp}\,\psi_\alpha\cap\textrm{supp}\,\tilde{\psi}_\beta)$)}}}\\
&=\sum_{\beta=1}^{\tilde{N}} \| \sum_{i=1}^r \big((A_{\alpha\beta})_{il}[\tilde{H}_\beta(\tilde{\psi}_\beta\psi_\alpha u)]^i \big)\circ \tilde{\varphi}_\beta\circ\varphi_\alpha^{-1} \|_{W^{e,q}(\varphi_\alpha(U_\alpha\cap \tilde{U}_\beta))}\\
&\leq \sum_{\beta=1}^{\tilde{N}} \sum_{i=1}^r\|  \big((A_{\alpha\beta})_{il}[\tilde{H}_\beta(\tilde{\psi}_\beta\psi_\alpha u)]^i \big)\circ \tilde{\varphi}_\beta\circ\varphi_\alpha^{-1} \|_{W^{e,q}(\varphi_\alpha(U_\alpha\cap \tilde{U}_\beta))}%\\
\end{align*}
\begin{align*}
&\stackrel{\textrm{Theorem \ref{winter105}}}{\preceq} \sum_{\beta=1}^{\tilde{N}} \sum_{i=1}^r\|  (A_{\alpha\beta})_{il}[\tilde{H}_\beta(\tilde{\psi}_\beta\psi_\alpha u)]^i \|_{W^{e,q}(\tilde{\varphi}_\beta(U_\alpha\cap \tilde{U}_\beta))}\\
&=\sum_{\beta=1}^{\tilde{N}} \sum_{i=1}^r\|  (A_{\alpha\beta})_{il}(\psi_\alpha\circ\tilde{\varphi}_\beta^{-1})[\tilde{H}_\beta(\tilde{\psi}_\beta u)]^i \|_{W^{e,q}(\tilde{\varphi}_\beta(U_\alpha\cap \tilde{U}_\beta))}\\
&\preceq\sum_{\beta=1}^{\tilde{N}} \sum_{i=1}^r\|  (\psi_\alpha\circ\tilde{\varphi}_\beta^{-1})[\tilde{H}_\beta(\tilde{\psi}_\beta u)]^i \|_{W^{e,q}(\tilde{\varphi}_\beta(U_\alpha\cap \tilde{U}_\beta))}\\
&\simeq\sum_{\beta=1}^{\tilde{N}} \sum_{i=1}^r\|  (\psi_\alpha\circ\tilde{\varphi}_\beta^{-1})[\tilde{H}_\beta(\tilde{\psi}_\beta u)]^i \|_{W^{e,q}(\tilde{\varphi}_\beta(\tilde{U}_\beta))}\\
&\textrm{(Here we used Corollary \ref{corofallextrensionzeropos1} and Theorem \ref{thmfallextrensionzeroneg1})}\\
&\preceq \sum_{\beta=1}^{\tilde{N}} \sum_{i=1}^r\| [\tilde{H}_\beta(\tilde{\psi}_\beta u)]^i \|_{W^{e,q}(\tilde{\varphi}_\beta(\tilde{U}_\beta))}\\
&\textrm{(Here we used Theorem \ref{lemapp3} and Corollary \ref{corollarywinter92a})}\\
\end{align*}
So it remains to prove Equation \ref{winterprovlate}. Since $\textrm{supp}[H_\alpha (\tilde{\psi}_\beta\psi_\alpha u) ]^l$ is inside the compact set $\varphi_\alpha(\textrm{supp}\psi_\alpha\cap \textrm{supp}\tilde{\psi}_\beta)\subseteq \varphi_\alpha(U_\alpha\cap\tilde{U}_\beta)$, it is enough to consider the action of $[H_\alpha (\tilde{\psi}_\beta\psi_\alpha u) ]^l$ on elements of $C_c^\infty(\varphi_\alpha(U_\alpha\cap\tilde{U}_\beta))$. $\tilde{\varphi}_\beta\circ \varphi_\alpha^{-1}:\varphi_\alpha (U_\alpha\cap \tilde{U}_\beta) \rightarrow \tilde{\varphi}_\beta (U_\alpha\cap
\tilde{U}_\beta)$ is a $C^\infty$-diffeomorphism. Therefore the
map
\begin{equation*}
C_c^{\infty}[\tilde{\varphi}_\beta (U_\alpha\cap
\tilde{U}_\beta)]\rightarrow C_c^{\infty}[\varphi_\alpha
(U_\alpha\cap \tilde{U}_\beta)],\qquad \eta\mapsto \eta\circ
\tilde{\varphi}_\beta\circ \varphi_\alpha^{-1}
\end{equation*}
is bijective. In particular, an arbitrary element of
$C_c^{\infty}[\varphi_\alpha (U_\alpha\cap \tilde{U}_\beta)]$ has
the form $\eta\circ \tilde{\varphi}_\beta\circ
\varphi_\alpha^{-1}$ where $\eta$ is an element of
$C_c^{\infty}[\tilde{\varphi}_\beta (U_\alpha\cap
\tilde{U}_\beta)]$.\\
For all $\eta \in C_c^{\infty}[\tilde{\varphi}_\beta (U_\alpha\cap
\tilde{U}_\beta)]$ we have (see Section 6.2.2)
\begin{equation}\lab{eqnwinter41a}
\langle [H_\alpha (\tilde{\psi}_\beta\psi_\alpha u) ]^l, \eta
\circ \tilde{\varphi}_\beta\circ
\varphi_\alpha^{-1}\rangle=\langle \tilde{\psi}_\beta\psi_\alpha
u, g^\alpha_{l,\eta\circ\tilde{\varphi}_\beta\circ
\varphi_\alpha^{-1}} \rangle
\end{equation}
where $g^\alpha_{l,\eta\circ\tilde{\varphi}_\beta\circ
\varphi_\alpha^{-1}}$ stands for $g_{l,\eta\circ\tilde{\varphi}_\beta\circ
\varphi_\alpha^{-1},U_\alpha,\varphi_\alpha}$.\\
For all $y\in \varphi_\alpha (U_\alpha\cap \tilde{U}_\beta)$
we have $(x=\varphi_\alpha^{-1}(y))$
\begin{align*}
& \rho_{\alpha}^\vee|_{E_x^\vee}\circ
g^\alpha_{l,\eta\circ \tilde{\varphi}_\beta\circ
\varphi_{\alpha}^{-1}} \circ
\underbrace{\varphi_\alpha^{-1}(y)}_{x}
 =(0,\cdots,0,\underbrace{\eta\circ \tilde{\varphi}_\beta\circ \varphi_{\alpha}^{-1}(y)}_{\textrm{$l^{th}$ position}},0,\cdots,0)\\
& \tilde{\rho}_{\beta}^\vee\circ
\tilde{g}_{l,\eta}^\beta\circ
\underbrace{\tilde{\varphi}_\beta^{-1}(\tilde{\varphi}_\beta\circ\varphi_\alpha^{-1}(y))}_{x}=(0,\cdots,0,\underbrace{\eta\circ
\tilde{\varphi}_\beta\circ \varphi_{\alpha}^{-1}(y)}_{\textrm{$l^{th}$ position}},0,\cdots,0)
\end{align*}
Therefore for all $y\in \varphi_\alpha (U_\alpha\cap
\tilde{U}_\beta)$
\begin{equation*}
\rho_{\alpha}^\vee|_{E_x^\vee}\circ g^\alpha_{l,\eta\circ
\tilde{\varphi}_\beta\circ \varphi_{\alpha}^{-1}} \circ
\varphi_\alpha^{-1}(y)=\tilde{\rho}_{\beta}^\vee\circ
\tilde{g}_{l,\eta}^\beta\circ \varphi_\alpha^{-1}(y)
\end{equation*}
which implies that on $U_\alpha\cap \tilde{U}_\beta$
\begin{equation}\lab{eqnwinter41b}
g^\alpha_{l,\eta\circ \tilde{\varphi}_\beta\circ
\varphi_{\alpha}^{-1}}=[\rho_{\alpha}^\vee|_{E_x^\vee}]^{-1}\circ
[\tilde{\rho}_{\beta}^\vee|_{E_x^\vee}]\circ
\tilde{g}^\beta_{l,\eta}
\end{equation}
It follows from Lemma \ref{lemapp8} that for all $a\in E_x^\vee$
\begin{equation*}
[\tilde{\rho}_{\beta}^\vee|_{E_x^\vee}]\circ
[\rho_{\alpha}^\vee|_{E_x^\vee}]^{-1}\circ
[\tilde{\rho}_{\beta}^\vee|_{E_x^\vee}](a)=\underbrace{\tau^{\tilde{\beta}\alpha}(x)}_{r\times
r }(\tilde{\rho}_{\beta}^\vee|_{E_x^\vee}(a))
\end{equation*}
That is,
\begin{equation*}
[\rho_{\alpha}^\vee|_{E_x^\vee}]^{-1}\circ
[\tilde{\rho}_{\beta}^\vee|_{E_x^\vee}](a)=[\tilde{\rho}_{\beta}^\vee|_{E_x^\vee}]^{-1}
[\tau^{\tilde{\beta}\alpha}(x)(\tilde{\rho}_{\beta}^\vee|_{E_x^\vee}(a))]
\end{equation*}
For $a=\tilde{g}^\beta_{l,\eta}(x)$ we have
\begin{equation*}
\tilde{\rho}_{\beta}^\vee|_{E_x^\vee}(a)=\tilde{\rho}_{\beta}^\vee|_{E_x^\vee}(\tilde{g}^\beta_{l,\eta}(x))=(0,\cdots,0,\underbrace{\eta\circ
\tilde{\varphi}_\beta(x)}_{\textrm{$l^{th}$ position}},0,\cdots,0 )
\end{equation*}
So
{\fontsize{11}{11}{\begin{align}
[\rho_{\alpha}^\vee|_{E_x^\vee}]^{-1}\circ
[\tilde{\rho}_{\beta}^\vee|_{E_x^\vee}]\circ
\tilde{g}^\beta_{l,\eta}&=[\tilde{\rho}_{\beta}^\vee|_{E_x^\vee}]^{-1}
[\tau^{\tilde{\beta}\alpha}(x)(\tilde{\rho}_{\beta}^\vee|_{E_x^\vee}(\tilde{g}^\beta_{l,\eta}(x)))]
 =[\tilde{\rho}_{\beta}^\vee|_{E_x^\vee}]^{-1}\big((\eta\circ \tilde{\varphi}_\beta)\begin{bmatrix}
  \tau^{\tilde{\beta}\alpha}_{1l}\\\vdots\\\tau^{\tilde{\beta}\alpha}_{rl}\end{bmatrix}\big)\notag\\
  &=[\tilde{\rho}_{\beta}^\vee|_{E_x^\vee}]^{-1}\big(\begin{bmatrix}(\eta\circ \tilde{\varphi}_\beta) \tau^{\tilde{\beta}\alpha}_{1l}\\0\\\vdots\\0\end{bmatrix}
   +\cdots+\begin{bmatrix}0\\\vdots\\0\\(\eta\circ \tilde{\varphi}_\beta)
   \tau^{\tilde{\beta}\alpha}_{rl}\end{bmatrix}\big)\notag\\
   &=
   \tilde{g}^\beta_{1,(\tau^{\tilde{\beta}\alpha}_{1l}\circ\tilde{\varphi}_\beta^{-1})\eta}+\cdots+
   \tilde{g}^\beta_{r,(\tau^{\tilde{\beta}\alpha}_{rl}\circ\tilde{\varphi}_\beta^{-1})\eta}\lab{eqnwinter41c}
\end{align}}}
It follows from (\ref{eqnwinter41a}),  (\ref{eqnwinter41b}), and (\ref{eqnwinter41c}) that for all $\eta\in C_c^{\infty}[\tilde{\varphi}_\beta (U_\alpha\cap
\tilde{U}_\beta)]$
\begin{align*}
\langle [ &H_\alpha(\tilde{\psi}_\beta\psi_\alpha u
)]^l,\eta\circ \tilde{\varphi}_\beta\circ
\varphi_\alpha^{-1}\rangle =\langle \tilde{\psi}_\beta\psi_\alpha u,
[\rho_{\alpha}^\vee|_{E_x^\vee}]^{-1}\circ
[\tilde{\rho}_{\beta}^\vee|_{E_x^\vee}]\circ
\tilde{g}^\beta_{l,\eta}\rangle\\
&=\langle
\tilde{\psi}_\beta\psi_\alpha u,\sum_{i=1}^r
\tilde{g}^\beta_{i,(\tau^{\tilde{\beta}\alpha}_{il}\circ\tilde{\varphi}_\beta^{-1})\eta}\rangle\\
&= \sum_{i=1}^r \langle [\tilde{H}_\beta(\tilde{\psi}_\beta\psi_\alpha
u)]^i,
(\tau^{\tilde{\beta}\alpha}_{il}\circ\tilde{\varphi}_\beta^{-1})\eta\rangle\\
&=\sum_{i=1}^r
 \langle (\tau^{\tilde{\beta}\alpha}_{il}\circ\tilde{\varphi}_\beta^{-1})[\tilde{H}_\beta(\tilde{\psi}_\beta\psi_\alpha
u)]^i, \eta\rangle\\
&=\sum_{i=1}^r
 \langle (\tau^{\tilde{\beta}\alpha}_{il}\circ\tilde{\varphi}_\beta^{-1})[\tilde{H}_\beta(\tilde{\psi}_\beta\psi_\alpha
u)]^i, \eta\circ \tilde{\varphi}_\beta\circ
\varphi_\alpha^{-1}\circ(\varphi_\alpha\circ\tilde{\varphi}_\beta^{-1})\rangle\\
&=\sum_{i=1}^r
 \langle \frac{1}{\textrm{det}(\varphi_\alpha\circ\tilde{\varphi}_\beta^{-1})}(\tau^{\tilde{\beta}\alpha}_{il}\circ\tilde{\varphi}_\beta^{-1})[\tilde{H}_\beta(\tilde{\psi}_\beta\psi_\alpha
u)]^i\circ\tilde{\varphi}_\beta\circ\varphi_\alpha^{-1}, \eta\circ \tilde{\varphi}_\beta\circ
\varphi_\alpha^{-1}\rangle
\end{align*}
For the last equality we used the following identity
\begin{equation*}
\langle \frac{1}{\textrm{det}T^{-1}}(u\circ T), \varphi\rangle=\langle u, \varphi\circ T^{-1}\rangle
\end{equation*}
Hence
\begin{equation*}
[H_\alpha(\tilde{\psi}_\beta\psi_\alpha u
)]^l=\sum_{i=1}^r
 \frac{1}{\textrm{det}(\varphi_\alpha\circ\tilde{\varphi}_\beta^{-1})}(\tau^{\tilde{\beta}\alpha}_{il}\circ\tilde{\varphi}_\beta^{-1})[\tilde{H}_\beta(\tilde{\psi}_\beta\psi_\alpha
u)]^i\circ\tilde{\varphi}_\beta\circ\varphi_\alpha^{-1}
\end{equation*}
and consequently letting
\begin{equation*}
(A_{\alpha\beta})_{il}=\frac{1}{\textrm{det}(\varphi_\alpha\circ\tilde{\varphi}_\beta^{-1})}(\tau^{\tilde{\beta}\alpha}_{il}\circ\tilde{\varphi}_\beta^{-1})
\end{equation*}
leads to (\ref{winterprovlate}).
\end{proof}
\begin{remark}
Note that the above theorems establish the full independence of {\fontsize{8}{8}{$W^{e,q}(M,E;\Lambda)$}} from $\Lambda$ at least when $e$ is not a noninteger less than $-1$. So it is justified to write $W^{e,q}(M,E)$ instead of $W^{e,q}(M,E;\Lambda)$ at least when $e$ is not a noninteger less than $-1$. Also see Remark \ref{remwinterdualequiv}.
\end{remark}
%\begin{remark}
%Note that the above theorems establish the full independence of $W^{e,q}(M,E)$ of the total trivialization atlas and the corresponding partition of unity only when $e$ is not a noninteger less than $-1$. So to be completely accurate, if $e$ is a noninteger less than $-1$, we need to incorporate a symbol in $W^{e,q}(M,E)$ that represents the equivalence class of the GLC total trivialization atlases used. Alternatively, we may use duality or real interpolation to define $W^{e,q}(M,E)$ when $e$ is a noninteger less than $-1$. In what follows, we will always assume that the smoothness index is not a noninteger less than $-1$. Since in this case, our definition is fully independent of the total trivialization atlas it would be wise to choose an atlas that facilitates the computations. For instance we can always work with nice or super nice total trivialization atlases.
%\end{remark}
\subsection{The Properties}
\subsubsection{Multiplication Properties}
\begin{theorem}\lab{thmwinter1100}
Let $M^n$ be a compact smooth manifold and $E\rightarrow M$ be a vector
bundle with rank $r$. Let $\Lambda=\{(U_\alpha,\varphi_\alpha,\rho_\alpha,\psi_\alpha)\}_{1\leq \alpha
\leq N }$ be an augmented total trivialization atlas for $E$. Suppose $e\in \reals$, $q\in (1,\infty)$,
$\eta\in C^\infty(M)$. If $e$ is a noninteger less than $-1$, further assume that the total trivialization atlas of $\Lambda$ is GGL. Then the linear map
\begin{equation*}
m_\eta: W^{e,q}(M,E;\Lambda)\rightarrow W^{e,q}(M,E;\Lambda),\quad u\mapsto \eta u
\end{equation*}
is well-defined and bounded.
\end{theorem}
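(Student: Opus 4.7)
The plan is to reduce the global assertion to a local multiplication-by-smooth-function problem on each chart in $\Lambda$ and then invoke the appropriate Euclidean-space results already proved. From the definition of the norm,
\begin{equation*}
\|\eta u\|_{W^{e,q}(M,E;\Lambda)}=\sum_{\alpha=1}^N\sum_{l=1}^r\|[H_\alpha(\psi_\alpha\eta u)]^l\|_{W^{e,q}(\varphi_\alpha(U_\alpha))},
\end{equation*}
so I only need uniform (in $u$) bounds of each summand by the corresponding summand for $u$. Part (2) of Remark~\ref{remfall134} (together with the obvious identity $\psi_\alpha(\eta u)=\eta(\psi_\alpha u)$) gives
\begin{equation*}
H_\alpha(\psi_\alpha\eta u)=(\eta\circ\varphi_\alpha^{-1})\,H_\alpha(\psi_\alpha u),
\end{equation*}
and hence $[H_\alpha(\psi_\alpha\eta u)]^l=(\eta\circ\varphi_\alpha^{-1})[H_\alpha(\psi_\alpha u)]^l$. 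Moreover, by part (1) of the same Remark, the distribution $v_\alpha^l:=[H_\alpha(\psi_\alpha u)]^l$ has support contained in the compact set $K_\alpha:=\varphi_\alpha(\textrm{supp}\,\psi_\alpha)\subset\varphi_\alpha(U_\alpha)$, so $v_\alpha^l\in W^{e,q}_{K_\alpha}(\varphi_\alpha(U_\alpha))$. Thus the task reduces to proving, for each $\alpha,l$,
\begin{equation*}
\|(\eta\circ\varphi_\alpha^{-1})\,v_\alpha^l\|_{W^{e,q}(\varphi_\alpha(U_\alpha))}\preceq \|v_\alpha^l\|_{W^{e,q}(\varphi_\alpha(U_\alpha))},
\end{equation*}
with an implicit constant independent of $u$.

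Note that $\eta\circ\varphi_\alpha^{-1}\in C^\infty(\varphi_\alpha(U_\alpha))$ since $\eta$ is smooth on $M$ and $\varphi_\alpha^{-1}$ is smooth. When $e$ is \emph{not} a noninteger less than $-1$, Theorem~\ref{lemapp3} applies directly on the arbitrary open set $\varphi_\alpha(U_\alpha)$ to the multiplier $\eta\circ\varphi_\alpha^{-1}\in C^\infty(\varphi_\alpha(U_\alpha))$ acting on $W^{e,q}_{K_\alpha}(\varphi_\alpha(U_\alpha))$, giving the desired estimate immediately.

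When $e$ is a noninteger less than $-1$, Theorem~\ref{lemapp3} is not available, and this is the only place the GGL assumption is needed. Choose $\chi_\alpha\in C_c^\infty(U_\alpha)$ with $\chi_\alpha\equiv 1$ on an open neighborhood of $\textrm{supp}\,\psi_\alpha$ (Theorem~\ref{winter41} together with the standard cutoff construction). Since $v_\alpha^l$ is supported in $K_\alpha$ and $\chi_\alpha\circ\varphi_\alpha^{-1}\equiv 1$ on a neighborhood of $K_\alpha$, we have the distributional identity
\begin{equation*}
(\eta\circ\varphi_\alpha^{-1})\,v_\alpha^l=\bigl((\chi_\alpha\eta)\circ\varphi_\alpha^{-1}\bigr)\,v_\alpha^l,
\end{equation*}
and $(\chi_\alpha\eta)\circ\varphi_\alpha^{-1}\in C_c^\infty(\varphi_\alpha(U_\alpha))$. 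By the GGL assumption, $\varphi_\alpha(U_\alpha)$ is either all of $\reals^n$ or a bounded open set with Lipschitz boundary. In the former case, $(\chi_\alpha\eta)\circ\varphi_\alpha^{-1}\in C_c^\infty(\reals^n)\subset BC^\infty(\reals^n)$, and Theorem~\ref{winter87} yields the desired bound for all $e\in\reals$. In the latter case, $C_c^\infty(\Omega)\subset BC^{\infty,1}(\Omega)=BC^\infty(\Omega)$ (using Theorem~\ref{sobislip31} on a Lipschitz domain), and part (3) of Theorem~\ref{thmfallmultsmooth20} again yields boundedness for all $e<0$.

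Summing the resulting local estimates over $\alpha\in\{1,\dots,N\}$ and $l\in\{1,\dots,r\}$, with implicit constants depending on $\eta$, $\Lambda$, and the cutoffs $\chi_\alpha$ but not on $u$, produces
\begin{equation*}
\|\eta u\|_{W^{e,q}(M,E;\Lambda)}\preceq \|u\|_{W^{e,q}(M,E;\Lambda)},
\end{equation*}
which proves both well-definedness and boundedness. The only genuine obstacle is the noninteger $e<-1$ regime, where the non-availability of Theorem~\ref{lemapp3} forces us both to invoke the GGL hypothesis on $\varphi_\alpha(U_\alpha)$ and to replace the possibly-unbounded-at-infinity multiplier $\eta\circ\varphi_\alpha^{-1}$ by a compactly supported version via the cutoff $\chi_\alpha$; everything else is bookkeeping.
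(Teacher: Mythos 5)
Your proof is correct and follows essentially the same route as the paper's: reduce to local multiplication by $\eta\circ\varphi_\alpha^{-1}$ via Remark~\ref{remfall134}, apply Theorem~\ref{lemapp3} when $e$ is not a noninteger less than $-1$, and otherwise exploit the GGL hypothesis together with the Euclidean multiplication results. The only refinement you add is making the cutoff $\chi_\alpha$ explicit in the $\varphi_\alpha(U_\alpha)=\reals^n$ subcase before citing Theorem~\ref{winter87}, a minor but genuine tightening since $\eta\circ\varphi_\alpha^{-1}$ need not itself lie in $BC^\infty(\reals^n)$; the paper's cited Corollary~\ref{corollarywinter92a} performs the same cutoff internally for the Lipschitz subcase but leaves it implicit for $\reals^n$.
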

\begin{proof}
\begin{align*}
\|\eta u\|_{W^{e,q}(M,E;\Lambda)}:&=\sum_{\alpha=1}^N\sum_{l=1}^r
\|(H_\alpha(\psi_\alpha \eta
u))^l\|_{W^{e,q}(\varphi_\alpha(U_\alpha))}\\
&\stackrel{Remark
\ref{remfall134}}{=}\sum_{\alpha=1}^N\sum_{l=1}^r \|(\eta\circ
\varphi_\alpha^{-1} )(H_\alpha(\psi_\alpha
u))^l\|_{W^{e,q}(\varphi_\alpha(U_\alpha))}\\
&\preceq \sum_{\alpha=1}^N\sum_{l=1}^r \|(H_\alpha(\psi_\alpha
u))^l\|_{W^{e,q}(\varphi_\alpha(U_\alpha))}=\| u\|_{W^{e,q}(M,E;\Lambda)}
\end{align*}
For the case where $e$ is not a noninteger less than $-1$, the
last inequality follows from Theorem  \ref{lemapp3}. If $e$ is a
noninteger less than $-1$, then by assumption
$\varphi_\alpha(U_\alpha)$ is either entire $\reals^n$ or is Lipschitz, and the last inequality
is due to Theorem \ref{winter87} and Corollary \ref{corollarywinter92a}.
\end{proof}
\begin{theorem}
Let $M^n$ be a compact smooth manifold and $E\rightarrow M$ be a vector
bundle with rank $r$. Let $\Lambda$ be an augmented total trivialization atlas for $E$. Let $s_1, s_2, s\in \reals$ and $p_1,p_2,p\in (1,\infty)$. If any of $s_1$, $s_2$, or $s$ is a noninteger less than $-1$, further assume that the total trivialization atlas of $\Lambda$ is GL compatible with itself.
\begin{enumerateX}
\item If $s_1$, $s_2$, and $s$ are not nonintegers less than $-1$, and if $W^{s_1,p_1}(\reals^n)\times
W^{s_2,p_2}(\reals^n)\hookrightarrow W^{s,p}(\reals^n)$, then
\begin{equation*}
W^{s_1,p_1}(M;\Lambda)\times W^{s_2,p_2}(M,E;\Lambda)\hookrightarrow W^{s,p}(M,E;\Lambda)
\end{equation*}
\item If $s_1$, $s_2$, and $s$ are not nonintegers less than $-1$, and if $W^{s_1,p_1}(\Omega)\times
W^{s_2,p_2}(\Omega)\hookrightarrow W^{s,p}(\Omega)$, for any open ball $\Omega$,  then
\begin{equation*}
W^{s_1,p_1}(M;\Lambda)\times W^{s_2,p_2}(M,E;\Lambda)\hookrightarrow W^{s,p}(M,E;\Lambda)
\end{equation*}
\item If any of $s_1$, $s_2$, or $s$ is a noninteger less than $-1$, and if $W^{s_1,p_1}(\Omega)\times
W^{s_2,p_2}(\Omega)\hookrightarrow W^{s,p}(\Omega)$ for $\Omega=\reals^n$ \textbf{and} for any bounded open set $\Omega$ with Lipschitz continuous boundary, then
\begin{equation*}
W^{s_1,p_1}(M;\Lambda)\times W^{s_2,p_2}(M,E;\Lambda)\hookrightarrow W^{s,p}(M,E;\Lambda)
\end{equation*}
\end{enumerateX}
\end{theorem}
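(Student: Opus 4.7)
The plan is to transfer the multiplication property from the Euclidean setting to the manifold by localizing through the charts of an appropriately chosen augmented total trivialization atlas. The three cases are parallel and differ only in which Euclidean version of the multiplication hypothesis gets applied on the chart images $\varphi_\alpha(U_\alpha)$.

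First I would reduce to a convenient atlas. By the norm equivalence theorems (for distributional sections) established just before, the space $W^{e,q}(M,E;\Lambda)$ and its topology are independent of the augmented atlas in cases 1 and 2, and are independent within the class of GL-compatible atlases in case 3. So in case 2 I would first replace $\Lambda$ with a nice atlas via Lemma \ref{lemfallniceatlas1} (equipping the refinement with its own subordinate partition of unity) so that each $\varphi_\alpha(U_\alpha)$ is an open ball; in case 1 I would keep the original atlas but prepare to extend by zero to $\reals^n$; in case 3 the atlas is already GGL by the assumption that it is GL compatible with itself, hence each $\varphi_\alpha(U_\alpha)$ is either $\reals^n$ or a bounded open set with Lipschitz continuous boundary.

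Next I would localize the product. Fix auxiliary cutoffs $\chi_\alpha\in C_c^\infty(U_\alpha)$ with $\chi_\alpha\equiv 1$ on a neighborhood of $\operatorname{supp}\psi_\alpha$. Given $u\in C^\infty(M)$, $v\in C^\infty(M,E)$, the pointwise identity $\psi_\alpha uv=(\psi_\alpha u)(\chi_\alpha v)$ holds, and combining Remark \ref{remfall135} with the $C^\infty(U_\alpha)$-linearity part of Remark \ref{remfall134} yields
\[
[H_\alpha(\psi_\alpha uv)]^l=\bigl((\psi_\alpha u)\circ\varphi_\alpha^{-1}\bigr)\cdot[H_\alpha(\chi_\alpha v)]^l\qquad\text{on }\varphi_\alpha(U_\alpha),
\]
with both factors compactly supported inside $\varphi_\alpha(U_\alpha)$. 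Applying the appropriate Euclidean multiplication hypothesis on $\Omega_\alpha:=\varphi_\alpha(U_\alpha)$ (after extension by zero to $\reals^n$ in case 1, using Corollary \ref{corofallextrensionzeropos1} and Theorem \ref{thmfallextrensionzeroneg1} to control the extension/restriction in both directions), I obtain
\[
\|[H_\alpha(\psi_\alpha uv)]^l\|_{W^{s,p}(\Omega_\alpha)}\preceq \|(\psi_\alpha u)\circ\varphi_\alpha^{-1}\|_{W^{s_1,p_1}(\Omega_\alpha)}\;\|[H_\alpha(\chi_\alpha v)]^l\|_{W^{s_2,p_2}(\Omega_\alpha)}.
\]
The first factor is precisely a summand in $\|u\|_{W^{s_1,p_1}(M;\Lambda)}$. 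For the second factor I would write $\chi_\alpha v=\sum_{\beta}\psi_\beta\chi_\alpha v$ and, via changes of coordinates between charts (Theorem \ref{winter105}), multiplication by smooth cutoffs (Theorem \ref{lemapp3} or Corollary \ref{corollarywinter92a} depending on the exponent), and the extension-by-zero estimates already invoked, bound each $\|[H_\alpha(\psi_\beta\chi_\alpha v)]^l\|_{W^{s_2,p_2}(\Omega_\alpha)}$ by a constant multiple of $\|[H_\beta(\psi_\beta v)]^i\|_{W^{s_2,p_2}(\Omega_\beta)}$, exactly as in the proof of Theorem \ref{thmwinter20181}. Summing over $\alpha$ and $l$ produces $\|uv\|_{W^{s,p}(M,E;\Lambda)}\preceq \|u\|_{W^{s_1,p_1}(M;\Lambda)}\|v\|_{W^{s_2,p_2}(M,E;\Lambda)}$ for smooth $u$ and $v$.

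Finally, I would pass from smooth to general sections by density. For smooth $u,v$ the map $(u,v)\mapsto uv$ satisfies the above continuity estimate, hence extends uniquely to a continuous bilinear map on the Sobolev spaces in the sense explained in Remark \ref{multinterpremark}; this is the correct distributional interpretation of the product when either exponent is negative, and on $M$ it agrees chart-by-chart with the Euclidean interpretation because the cutoff identity $\psi_\alpha uv=(\psi_\alpha u)(\chi_\alpha v)$ passes to the limit in the relevant dual pairings. The main obstacle will not be any single estimate but rather the bookkeeping: one must track, in each of the three cases, that every invocation of an extension-by-zero, a multiplication by a smooth cutoff, or a change-of-coordinates estimate is legitimate for the given exponent (integer versus noninteger, positive versus negative, and, in case 3, that the atlas being GL-compatible with itself is exactly what makes the arguments go through on each $\varphi_\alpha(U_\alpha\cap U_\beta)$). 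Once the hypotheses of the earlier sections are invoked with the right matching of domain type and exponent type, the estimate assembles directly from the Euclidean input.
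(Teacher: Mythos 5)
Your proposal is correct, but the paper's proof is structurally different and noticeably slicker. The central idea in the paper is to swap the original partition of unity for $\tilde{\psi}_\alpha = \psi_\alpha^2/\sum_\beta\psi_\beta^2$ (a valid partition of unity subordinate to the same cover). This yields
\[
[H_\alpha(\tilde{\psi}_\alpha(fu))]^j=\Bigl(\tfrac{1}{\sum_\beta\psi_\beta^2}\circ\varphi_\alpha^{-1}\Bigr)\bigl((\psi_\alpha f)\circ\varphi_\alpha^{-1}\bigr)\,[H_\alpha(\psi_\alpha u)]^j,
\]
so, after absorbing the $BC^\infty$ prefactor, the two Euclidean factors are \emph{exactly} the localized representatives that appear in $\|f\|_{W^{s_1,p_1}(M;\Lambda_1)}$ and $\|u\|_{W^{s_2,p_2}(M,E;\Lambda_1)}$. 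The paper then only needs to choose the auxiliary base atlas (super nice for item 1, nice for item 2, $\Lambda$ itself for item 3) so that $\varphi_\alpha(U_\alpha)$ matches the domain type in the Euclidean hypothesis, and two applications of the norm-equivalence theorem close the argument. Your decomposition $\psi_\alpha uv=(\psi_\alpha u)(\chi_\alpha v)$ with $\chi_\alpha\equiv 1$ on $\operatorname{supp}\psi_\alpha$ is mathematically sound and the identity $[H_\alpha(\psi_\alpha uv)]^l=\bigl((\psi_\alpha u)\circ\varphi_\alpha^{-1}\bigr)[H_\alpha(\chi_\alpha v)]^l$ is justified by Remark~\ref{remfall134}, but it leaves $[H_\alpha(\chi_\alpha v)]^l$ dangling: that term is not one of the summands in $\|v\|_{W^{s_2,p_2}(M,E;\Lambda)}$, and you are forced into a second decomposition $\chi_\alpha v=\sum_\beta\psi_\beta\chi_\alpha v$ followed by the full cross-chart machinery of Theorem~\ref{thmwinter20181} (change of coordinates, cutoff multiplication, extension by zero, and transition-matrix bookkeeping for the vector bundle). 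That produces a valid estimate with a double sum over $\alpha,\beta$, but it re-proves something the paper gets for free with the squared-partition trick. Your treatment of item 1 (keeping the original atlas and extending by zero to $\reals^n$ via Corollary~\ref{corofallusef1}) also works, though it requires justifying that extension by zero commutes with the pointwise product; the paper sidesteps this by passing to a super nice atlas where $\varphi_\alpha(U_\alpha)=\reals^n$ outright. Your explicit density argument at the end is a welcome addition — the paper's proof is terse on this point and relies implicitly on Remark~\ref{multinterpremark}.
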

\begin{proof}
\begin{enumerateX}
\item Let $\Lambda_1=\{(U_\alpha,\varphi_\alpha,\rho_\alpha,\psi_\alpha)\}_{1\leq \alpha
\leq N }$ be any augmented total trivialization atlas which is super nice. Let $\Lambda_2=\{(U_\alpha,\varphi_\alpha,\rho_\alpha,\tilde{\psi}_\alpha)\}_{1\leq \alpha
\leq N }$ where for each $1\leq \alpha\leq N$, $\tilde{\psi}_{\alpha}=\frac{\psi_\alpha^2}{\sum_{\beta=1}^N
\psi_\beta^2}$. Note that $\frac{1}{\sum_{\beta=1}^N
\psi_\beta^2}\circ \varphi_\alpha^{-1}\in BC^{\infty}(\varphi_\alpha(U_\alpha))$. For
$f\in W^{s_1,p_1}(M;\Lambda)$ and $u\in W^{s_2,p_2}(M,E;\Lambda)$ we have
{\fontsize{10}{10}{\begin{align*}
\|fu\|_{W^{s,p}(M,E;\Lambda)}&\simeq \|fu\|_{W^{s,p}(M,E;\Lambda_2)}=\sum_{\alpha=1}^N \sum_{j=1}^r\|
[H_\alpha(\tilde{\psi}_\alpha (fu))]^j
\|_{W^{s,p}(\varphi_\alpha(U_\alpha))}\\
&\preceq\sum_{\alpha=1}^N \sum_{j=1}^r\| ((\psi_\alpha
f)\circ \varphi_\alpha^{-1})[H_\alpha(\psi_\alpha u)]^j
\|_{W^{s,p}(\varphi_\alpha(U_\alpha))}\\
&\preceq \big(\sum_{\alpha=1}^N \| (\psi_\alpha f)\circ
\varphi_\alpha^{-1}
\|_{W^{s_1,p_1}(\varphi_\alpha(U_\alpha))}\big)\big(\sum_{\alpha=1}^N \sum_{j=1}^r\| [H_\alpha(\psi_\alpha u)]^j
\|_{W^{s_2,p_2}(\varphi_\alpha(U_\alpha))}\big)\\
&=\|f\|_{W^{s_1,p_1}(M;\Lambda_1)}\|u\|_{W^{s_2,p_2}(M,E;\Lambda_1)}\simeq \|f\|_{W^{s_1,p_1}(M;\Lambda)}\|u\|_{W^{s_2,p_2}(M,E;\Lambda)}
\end{align*}}}
\item We can use the exact same argument as item 1. Just choose $\Lambda_1$ to be "nice" instead of "super nice".
\item The exact same argument as item 1. works. Just choose $\Lambda_1=\Lambda$. (The equality $\|fu\|_{W^{s,p}(M,E;\Lambda)}\simeq \|fu\|_{W^{s,p}(M,E;\Lambda_2)}$ holds due to the assumption that $\Lambda=\Lambda_1$ is GL compatible with itself.)
\end{enumerateX}
\end{proof}
\begin{remark}\lab{remspring1}
Suppose $e$ is a noninteger less than $-1$ and $q\in (1,\infty)$. We will prove that if $\Lambda$ and $\tilde{\Lambda}$ are two augmented total trivialization atlases and each of $\Lambda$ and $\tilde{\Lambda}$ is GL compatible with itself, then $W^{e,q}(M,E;\Lambda)=W^{e,q}(M,E;\tilde{\Lambda})$ (see Remark \ref{remwinterdualequiv}). Considering this and the fact that we can choose $\Lambda_1$ to be super nice (or nice) and GL compatible with itself (see Theorem \ref{winter47jan} and Corollary \ref{winter47bjan}), we can remove the assumption "$s_1$, $s_2$, and $s$ are not nonintegers less than $-1$" from part 1 and part 2 of the preceding theorem.
\end{remark}
\subsubsection{Embedding Properties}
\begin{theorem}\lab{thmwinterembedding1}
Let $M^n$ be a compact smooth manifold. Let $\pi:E\rightarrow M$
be a smooth vector bundle of rank $r$ over $M$. Let $\Lambda$ be an augmented total trivialization atlas for $E$. Let $e_1, e_2\in \reals$ and $q_1, q_2\in (1,\infty)$. If any of $e_1$ or $e_2$ is a noninteger less than $-1$, further assume that the total trivialization atlas in $\Lambda$ is GGL.
\begin{enumerateX}
\item If $e_1$ and $e_2$ are not nonintegers less than $-1$ and if $W^{e_1,q_1}(\reals^n)\hookrightarrow W^{e_2,q_2}(\reals^n)$, then $W^{e_1,q_1}(M,E;\Lambda)\hookrightarrow W^{e_2,q_2}(M,E;\Lambda)$.
\item If $e_1$ and $e_2$ are not nonintegers less than $-1$ and if $W^{e_1,q_1}(\Omega)\hookrightarrow W^{e_2,q_2}(\Omega)$ for all open balls $\Omega\subseteq \reals^n$, then $W^{e_1,q_1}(M,E;\Lambda)\hookrightarrow W^{e_2,q_2}(M,E;\Lambda)$.
\item If any of $e_1$ or $e_2$ is a noninteger less than $-1$ and if $W^{e_1,q_1}(\Omega)\hookrightarrow W^{e_2,q_2}(\Omega)$ for $\Omega=\reals^n$ and for any bounded domain $\Omega\subseteq \reals^n$ with
Lipschitz continuous boundary, then $W^{e_1,q_1}(M,E;\Lambda)\hookrightarrow W^{e_2,q_2}(M,E;\Lambda)$.
\end{enumerateX}
\end{theorem}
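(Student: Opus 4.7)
The plan is to reduce the statement to the Euclidean embedding theorems recorded in Section 7 by choosing, in each of the three cases, an auxiliary augmented total trivialization atlas whose coordinate images are exactly the kind of Euclidean domain on which the hypothesized local embedding lives, and then using the equivalence-of-norms theorems from the previous subsection to swap the norm coming from $\Lambda$ with the norm coming from the auxiliary atlas.

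For part 1, I would invoke Theorem \ref{thmfallnicetrivatlas2} to produce a super nice augmented total trivialization atlas $\Lambda_1=\{(V_\beta,\tilde\varphi_\beta,\tilde\rho_\beta,\tilde\psi_\beta)\}$, so that $\tilde\varphi_\beta(V_\beta)=\reals^n$ for every $\beta$. Since neither $e_1$ nor $e_2$ is a noninteger less than $-1$, the equivalence-of-norms theorem for distributional sections gives $\|\cdot\|_{W^{e_i,q_i}(M,E;\Lambda)}\simeq \|\cdot\|_{W^{e_i,q_i}(M,E;\Lambda_1)}$ for $i=1,2$. Given $u\in W^{e_1,q_1}(M,E;\Lambda)$, each local representation $[\tilde H_\beta(\tilde\psi_\beta u)]^l$ lies in $W^{e_1,q_1}(\reals^n)$, so the hypothesized embedding $W^{e_1,q_1}(\reals^n)\hookrightarrow W^{e_2,q_2}(\reals^n)$ bounds it in $W^{e_2,q_2}(\reals^n)$ by a uniform constant. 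Summing over $\beta$ and $l$ delivers
\[
\|u\|_{W^{e_2,q_2}(M,E;\Lambda_1)}\preceq \|u\|_{W^{e_1,q_1}(M,E;\Lambda_1)},
\]
and the equivalence of norms then transfers the estimate back to $\Lambda$.

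Part 2 is handled by exactly the same argument, except that $\Lambda_1$ is chosen to be a nice atlas (again guaranteed by Theorem \ref{thmfallnicetrivatlas2}): now $\tilde\varphi_\beta(V_\beta)$ is an open ball in $\reals^n$, and the Euclidean embedding is applied on each of these balls using the hypothesis formulated for arbitrary open balls. Part 3 is the one that requires care, because when $e_1$ or $e_2$ is a noninteger less than $-1$ the equivalence of norms for distributional sections demands that the two atlases be GLC, so we are not free to replace $\Lambda$ by a nice or super nice atlas. However, the standing assumption that $\Lambda$ itself is GGL means that every $\varphi_\alpha(U_\alpha)$ is either $\reals^n$ or a bounded open set with Lipschitz continuous boundary, and the hypothesis on the local embedding has been phrased to cover precisely both of these kinds of base. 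Therefore I would run the local-to-global argument directly with $\Lambda$: for $u\in W^{e_1,q_1}(M,E;\Lambda)$, apply the local embedding on each $\varphi_\alpha(U_\alpha)$ to $[H_\alpha(\psi_\alpha u)]^l\in W^{e_1,q_1}(\varphi_\alpha(U_\alpha))$ and sum over $\alpha, l$, which immediately gives $\|u\|_{W^{e_2,q_2}(M,E;\Lambda)}\preceq \|u\|_{W^{e_1,q_1}(M,E;\Lambda)}$ without any change of atlas.

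The only nontrivial point in executing this plan is to be honest about which norm-equivalence statement one is appealing to, and when: in parts 1 and 2 the exponent hypothesis is exactly what allows the atlas swap, while in part 3 the GGL hypothesis on $\Lambda$ replaces that luxury by guaranteeing that $\Lambda$ itself is already adequate for the Euclidean estimates. No genuine new analytic input is needed beyond the Euclidean embedding theorems and the equivalence-of-norms theorem; the content of the argument is entirely organizational.
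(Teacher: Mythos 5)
Your proposal is correct and follows essentially the same route as the paper: choose a super nice (resp. nice) auxiliary atlas $\Lambda_1$ in parts 1 and 2, pass to $\Lambda_1$ via the equivalence-of-norms theorem for distributional sections (which the exponent hypothesis permits), apply the Euclidean embedding componentwise, and transfer back; in part 3, where the atlas swap is unavailable, work directly with $\Lambda$, which the GGL hypothesis guarantees is already adapted to the Euclidean embedding hypothesis. The paper's proof compresses part 3 to the phrase ``choose $\Lambda_1=\Lambda$,'' which is exactly your observation.
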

\begin{proof}
\begin{enumerateX}
\item Let $\Lambda_1=\{(U_\alpha,\varphi_\alpha,\rho_\alpha,\psi_\alpha)\}_{1\leq \alpha
\leq N }$ be any augmented total trivialization atlas for $E$ which is super nice. We have
\begin{align*}
\|u\|_{W^{e_2,q_2}(M,E;\Lambda)}&\simeq \|u\|_{W^{e_2,q_2}(M,E;\Lambda_1)}=\sum_{\alpha=1}^N\sum_{l=1}^r  \|[H_\alpha (
\psi_\alpha u)]^l
\|_{W^{e_2,q_2}(\varphi_\alpha(U_\alpha))}\\
&\preceq
\sum_{\alpha=1}^N\sum_{l=1}^r \|[H_\alpha (
\psi_\alpha u)]^l
\|_{W^{e_1,q_1}(\varphi_\alpha(U_\alpha))}\\
&=
\|u\|_{W^{e_1,q_1}(M,E;\Lambda_1)}\simeq \|u\|_{W^{e_1,q_1}(M,E;\Lambda)}
\end{align*}
\item We can use the exact same argument as item 1. Just choose $\Lambda_1$ to be "nice" instead of "super nice".
\item The exact same argument as item 1. works. Just choose $\Lambda_1=\Lambda$.
\end{enumerateX}
\end{proof}
\begin{remark}
If we further assume that $\Lambda$ is GL compatible with itself, then we can remove the assumption "$e_1$ and $e_2$ are not nonintegers less than $-1$" from part 1 and part 2 of the preceding theorem. (See the explanation in Remark \ref{remspring1}.)
\end{remark}
\begin{theorem}
Let $M^n$ be a compact smooth manifold. Let $\pi:E\rightarrow M$
be a smooth vector bundle of rank $r$ over $M$ equipped with
fiber metric $\langle .,.\rangle_E$ (so it is meaningful to talk
about $L^\infty(M,E)$). Suppose $s\in\reals$ and
$p\in (1,\infty)$ are such that $sp>n$. Then
$W^{s,p}(M,E)\hookrightarrow L^\infty(M,E)$. Moreover, every
element $u$ in $W^{s,p}(M,E)$ has a continuous version. (Note that since $s$ is not a noninteger less than $-1$, the choice of the augmented total trivialization atlas is immaterial.)
\end{theorem}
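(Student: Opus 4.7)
The plan is to reduce the claim to the Euclidean embedding $W^{s,p}(\Omega)\hookrightarrow L^\infty(\Omega)\cap C^0(\Omega)$ (Theorem \ref{thm3.3}) through a well-chosen augmented total trivialization atlas. Since $sp>n$ forces $s>n/p>0$, in particular $s$ is not a noninteger less than $-1$, so by the equivalence of norms established in Theorem \ref{thmwinter20181} (and its corollaries) the Sobolev space $W^{s,p}(M,E)$ is independent of the chosen augmented total trivialization atlas. This freedom allows me to pick $\Lambda=\{(U_\alpha,\varphi_\alpha,\rho_\alpha,\psi_\alpha)\}_{1\le\alpha\le N}$ to be a finite super nice (or nice) augmented total trivialization atlas with respect to which the fiber metric $\langle\cdot,\cdot\rangle_E$ trivializes; existence is guaranteed by Corollary \ref{winter49}. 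With this choice, each $\varphi_\alpha(U_\alpha)$ is either the entire $\reals^n$ or a ball (in particular a Lipschitz bounded domain), and for any $v\in E_x$ with $x\in U_\alpha$ one has $|v|_E^2=\sum_{l=1}^r(\rho_\alpha^l(v))^2$.

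First I would unravel the norm: for $u\in W^{s,p}(M,E;\Lambda)$ and each $\alpha,l$, set $u_\alpha^l:=\rho_\alpha^l\circ(\psi_\alpha u)\circ\varphi_\alpha^{-1}\in W^{s,p}(\varphi_\alpha(U_\alpha))$ (this makes sense because for $s\ge 0$ elements of $W^{s,p}(M,E;\Lambda)$ are regular distributions, see Remark \ref{remvarcharac1}). Because $sp>n$ and $\varphi_\alpha(U_\alpha)$ is Lipschitz (or all of $\reals^n$), Theorem \ref{thm3.3} yields a continuous version $\tilde u_\alpha^l$ of $u_\alpha^l$ with
\begin{equation*}
\|\tilde u_\alpha^l\|_{L^\infty(\varphi_\alpha(U_\alpha))}\le C_\alpha\|u_\alpha^l\|_{W^{s,p}(\varphi_\alpha(U_\alpha))}.
\end{equation*}

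Next I would assemble these local pieces into the continuous version of $u$. For each $\alpha$, the section $\psi_\alpha u$ has support inside $U_\alpha$; using the continuous representative $\tilde u_\alpha^l$ of its components and the local frame dual to $\rho_\alpha$, I obtain a continuous section $v_\alpha$ of $E$ supported in $U_\alpha$ (extended by $0$ outside $U_\alpha$, which is legitimate since $\psi_\alpha$ is supported in a compact subset of $U_\alpha$). Then $v:=\sum_{\alpha=1}^N v_\alpha\in C^0(M,E)$ is a continuous section that agrees almost everywhere with $\sum_\alpha\psi_\alpha u=u$, giving the desired continuous version. To prove the $L^\infty$ estimate, I use $u(x)=\sum_\alpha(\psi_\alpha u)(x)$ and the trivialization of the fiber metric:
\begin{align*}
|u(x)|_E&\le\sum_{\alpha:\,x\in U_\alpha}|(\psi_\alpha u)(x)|_E
=\sum_{\alpha:\,x\in U_\alpha}\Bigl(\sum_{l=1}^r(u_\alpha^l(\varphi_\alpha(x)))^2\Bigr)^{1/2}\\
&\le\sum_{\alpha=1}^N\sum_{l=1}^r|\tilde u_\alpha^l(\varphi_\alpha(x))|
\le\sum_{\alpha=1}^N\sum_{l=1}^rC_\alpha\|u_\alpha^l\|_{W^{s,p}(\varphi_\alpha(U_\alpha))},
\end{align*}
uniformly in $x\in M$. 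Taking $C:=\max_\alpha C_\alpha$ and using the definition of $\|u\|_{W^{s,p}(M,E;\Lambda)}$ gives $\|u\|_{L^\infty(M,E)}\le C\|u\|_{W^{s,p}(M,E;\Lambda)}$, establishing the embedding.

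I do not expect any serious obstacle: the only delicate point is making sure the Euclidean embedding is available, which is why I insist on choosing $\Lambda$ with Lipschitz (or full) coordinate images; this is precisely what Corollary \ref{winter49} provides for compact $M$. The trivialization of the fiber metric is what converts the coordinate-wise $L^\infty$ bound on components into a genuine pointwise bound on $|u|_E$, avoiding any dependence on transition matrices. Patching via the partition of unity is standard and relies only on the finiteness of $\Lambda$ and the continuity of each $v_\alpha$.
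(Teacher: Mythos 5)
Your proposal is correct and follows essentially the same route as the paper: choose a finite nice (or super nice) augmented total trivialization atlas trivializing the fiber metric via Corollary \ref{winter49}, apply the Euclidean embedding Theorem \ref{thm3.3} to the local components $\rho_\alpha^l\circ(\psi_\alpha u)\circ\varphi_\alpha^{-1}$, and patch via the finite partition of unity for both the $L^\infty$ bound and the continuous version. The only cosmetic difference is that the paper organizes the $L^\infty$ estimate as two explicit steps (single-chart support, then the general case), whereas you fold the reduction into one chain of inequalities; the mathematical content is the same.
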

\begin{proof}
Let $\{(U_\alpha,\varphi_\alpha,\rho_\alpha)\}_{1\leq \alpha\leq
N }$ be a nice total trivialization atlas for $E\rightarrow M$
that trivializes the fiber metric. Let $\{\psi_\alpha\}_{1\leq
\alpha \leq N}$ be a partition of unity subordinate to
$\{U_\alpha\}$. We need to show that for every $u\in W^{s,p}(M,E)$
\begin{equation*}
|u|_{L^{\infty}(M,E)}\preceq \|u\|_{W^{s,p}(M,E)}
\end{equation*}
Note that since $s>0$, $W^{s,p}(M,E)\hookrightarrow L^p(M,E)$ and
we can treat $u$ as an ordinary section of $E$. We prove the above inequality in
two steps:
\begin{itemizeX}
\item \textbf{Step 1:} Suppose there exists $1\leq \beta\leq N$
such that $\textrm{supp} u\subseteq U_\beta$. We have
\begin{align*}
|u|_{L^{\infty}(M,E)}&=\esssup_{x\in M}|u|_E=\esssup_{x\in
U_\beta}|u|_E\\
&=\esssup_{y\in
\varphi_\beta(U_\beta)}\sqrt{\sum_{l=1}^r|\rho^l_\beta\circ u\circ
\varphi_\beta^{-1}|^2}\qquad ({\fontsize{10}{10}{\textrm{by assumption the triples
trivialize the metric}}})\\
& \leq \esssup_{y\in \varphi_\beta(U_\beta)}
\sum_{l=1}^r|\rho^l_\beta\circ u\circ \varphi_\beta^{-1}|\leq\sum_{l=1}^r
\esssup_{y\in \varphi_\beta(U_\beta)} |\rho^l_\beta\circ u\circ
\varphi_\beta^{-1}|\\
&=\sum_{l=1}^r \|\rho^l_\beta\circ u\circ
\varphi_\beta^{-1}\|_{L^{\infty}(\varphi_\beta(U_\beta))}\\
&\preceq \sum_{l=1}^r \|\rho^l_\beta\circ u\circ
\varphi_\beta^{-1}\|_{W^{s,p}(\varphi_\beta(U_\beta))}\qquad
({\fontsize{10}{10}{\textrm{$sp>n$ so $W^{s,p}(\varphi_\beta(U_\beta))\hookrightarrow
L^{\infty}(\varphi_\beta(U_\beta))$}}})%\\
%& \simeq \|u\|_{W^{s,p}(M,E)} \qquad (\textrm{see Fact 3})
\end{align*}
\item \textbf{Step 2:} Now suppose $u$ is an arbitrary element of
$W^{s,p}(M,E)$. We have
\begin{align*}
|u|_{L^{\infty}(M,E)}&=|\sum_{\alpha=1}^N \psi_\alpha
u|_{L^{\infty}(M,E)}\leq  \sum_{\alpha=1}^N |\psi_\alpha
u|_{L^{\infty}(M,E)}\\
&\stackrel{\text{Step 1}}{\preceq} \sum_{\alpha=1}^N
\sum_{l=1}^r \|\rho^l_\alpha\circ \psi_\alpha u\circ
\varphi_\alpha^{-1}\|_{W^{s,p}(\varphi_\alpha(U_\alpha))}\simeq\|u\|_{W^{s,p}(M,E)}
\end{align*}
\end{itemizeX}
Next we prove that every element $u$ of $W^{s,p}(M,E)$ has a
continuous version. Note that for all $x\in U_\alpha$
\begin{equation*}
\psi_\alpha u(x)=\Phi_\alpha^{-1}(x,\rho_\alpha^1\circ \psi_\alpha
u,\cdots, \rho_\alpha^r\circ \psi_\alpha u)
\end{equation*}
Also for all $1\leq l\leq r$ and $1\leq \alpha \leq N$ we have
\begin{equation*}
\rho_\alpha^l\circ \psi_\alpha u\circ \varphi_\alpha^{-1}\in
W^{s,p}(\varphi_\alpha (U_\alpha))
\end{equation*}
Therefore $\rho_\alpha^l\circ \psi_\alpha u\circ
\varphi_\alpha^{-1}$ has a continuous version which we denote by
$v_\alpha^l$. Suppose $A_\alpha^l$ is the set of measure zero on
which $v_\alpha^l\neq \rho_\alpha^l\circ \psi_\alpha u\circ
\varphi_\alpha^{-1}$. Let $A_\alpha=\cup_{1\leq l\leq r}
A_\alpha^l$. Clearly $A_\alpha$ is a set of measure zero. Since
$\varphi_\alpha: U_\alpha\rightarrow \varphi_\alpha(U_\alpha)$ is
a diffeomorphism, $B_\alpha:=\varphi_\alpha^{-1}(A_\alpha)$ is a
set of measure zero in $U_\alpha$. (In general, if $M$ and $N$ are smooth $n$-manifolds, $F:M\rightarrow N$ is a
smooth map, and $A\subseteq M$ is a subset of measure zero, then
$F(A)$ has measure zero in $N$. See Page 128 in \cite{Lee3}.)\\
Clearly
\begin{equation*}
(x,v_\alpha^1\circ \varphi_\alpha,\cdots, v_\alpha^r\circ
\varphi_\alpha)=(x,\rho_\alpha^1\circ \psi_\alpha u,\cdots,
\rho_\alpha^r\circ \psi_\alpha u)
\end{equation*}
on $U_\alpha\setminus B_\alpha$. So
\begin{equation*}
w_\alpha:=\Phi_\alpha^{-1}(x,v_\alpha^1\circ
\varphi_\alpha,\cdots, v_\alpha^r\circ
\varphi_\alpha)=\Phi_\alpha^{-1}(x,\rho_\alpha^1\circ \psi_\alpha
u,\cdots, \rho_\alpha^r\circ \psi_\alpha u)=\psi_\alpha u
\end{equation*}
on $U_\alpha\setminus B_\alpha$. Note that $w_\alpha:
U_\alpha\rightarrow E$ is a composition of continuous functions
and so it is continuous on $U_\alpha$. Let $\xi_\alpha\in
C_c^{\infty}(U_\alpha)$ be such that $\xi_\alpha=1$ on
$\textrm{supp} \psi_\alpha$. So $\xi_\alpha w_\alpha=\psi_\alpha
u$ on $M\setminus B_\alpha$. Consequently if we let
$w=\sum_{\alpha=1}^N \xi_\alpha w_\alpha$, then $w$ is a continuous
function that agrees with $u=\sum_{\alpha=1}^N \psi_\alpha u$ on
$M\setminus B$ where $B=\cup_{1\leq \alpha\leq N} B_\alpha$.
\end{proof}
\subsubsection{Observations Concerning the Local Representation of Sobolev Functions}
Let $M^n$ be a compact smooth manifold. Let $E\rightarrow M$
be a smooth vector bundle of rank $r$ over $M$. As it was discussed in Section 6, Given a total trivialization triple $(U_\alpha,\varphi_\alpha,\rho_\alpha)$, we can associate with every $u\in D'(M,E)$ and every $f\in \Gamma(M,E)$, a local representation with respect to $(U_\alpha,\varphi_\alpha,\rho_\alpha)$:
\begin{align*}
&u\mapsto (\tilde{u}^1,\cdots,\tilde{u}^r)\in [D'(\varphi_\alpha(U_\alpha))]^{\times r},\qquad \tilde{u}^l=[H_\alpha(u|_{U_\alpha})]^l\\
&f\mapsto (\tilde{f}^1,\cdots,\tilde{f}^r)\in [\textrm{Func}(\varphi_\alpha(U_\alpha),\reals)]^{\times r},\qquad \tilde{f}^l=\rho_\alpha^l\circ (f|_{U_\alpha})\circ\varphi_\alpha^{-1}
\end{align*}
and of course as it was pointed out in Remark \ref{remfall135}, the two representations agree when $u$ is a regular distribution. The goal of this section is to list some useful facts about the local representations of elements of Sobolev spaces. In what follows, when there is no possibility of confusion, we may write $H_\alpha(u)$ instead of $H_\alpha(u|_{U_\alpha})$, or $\rho_\alpha^l\circ f\circ\varphi_\alpha^{-1}$ instead of $\rho_\alpha^l\circ (f|_{U_\alpha})\circ\varphi_\alpha^{-1}$.
\begin{theorem}\lab{thmapp12}
Let $M^n$ be a compact smooth manifold and $E\rightarrow M$ be a
vector bundle of rank $r$. Suppose
$\Lambda=\{(U_\alpha,\varphi_\alpha,\rho_\alpha,\psi_\alpha)\}_{\alpha=1}^N$ is an augmented
total trivialization atlas for $E\rightarrow M$. Let $u\in D'(M,E)$, $e\in \reals$, and $q\in (1,\infty)$. If for
all $1\leq \alpha\leq N$ and $1\leq j\leq r$, $[H_\alpha(u)]^j\in W^{e,q}_{loc}(\varphi_{\alpha}(U_{\alpha}))$,
then $u\in W^{e,q}(M,E;\Lambda)$.
\end{theorem}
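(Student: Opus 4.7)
The plan is to show directly that $\|u\|_{W^{e,q}(M,E;\Lambda)}<\infty$ by examining each summand in the defining norm
\begin{equation*}
\|u\|_{W^{e,q}(M,E;\Lambda)}=\sum_{\alpha=1}^{N}\sum_{l=1}^{r}\|[H_\alpha(\psi_\alpha u)]^l\|_{W^{e,q}(\varphi_\alpha(U_\alpha))}.
\end{equation*}
Since the atlas is finite, it suffices to prove that every single term on the right-hand side is finite. Fix therefore some $\alpha$ and $l$.

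First I would invoke the multiplicative property of $H_\alpha$ recorded in Remark \ref{remfall134}, namely that for any smooth function $f$ on $U_\alpha$ and any distribution $v\in D'(U_\alpha,E_{U_\alpha})$ one has $H_\alpha(fv)=(f\circ\varphi_\alpha^{-1})H_\alpha(v)$. Applying this with $f=\psi_\alpha$ and $v=u|_{U_\alpha}$ yields
\begin{equation*}
[H_\alpha(\psi_\alpha u)]^l=(\psi_\alpha\circ\varphi_\alpha^{-1})\,[H_\alpha(u)]^l.
\end{equation*}
Next I would verify that the scalar factor $\psi_\alpha\circ\varphi_\alpha^{-1}$ lies in $C_c^\infty(\varphi_\alpha(U_\alpha))$. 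It is smooth as a composition of smooth maps, and its support equals $\varphi_\alpha(\operatorname{supp}\psi_\alpha)$; since $M$ is compact the closed set $\operatorname{supp}\psi_\alpha\subseteq U_\alpha$ is compact, and hence $\varphi_\alpha(\operatorname{supp}\psi_\alpha)$ is a compact subset of $\varphi_\alpha(U_\alpha)$.

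With this in hand, the hypothesis $[H_\alpha(u)]^l\in W^{e,q}_{\mathrm{loc}}(\varphi_\alpha(U_\alpha))$ applies directly: by the very definition of $W^{e,q}_{\mathrm{loc}}$, multiplication of a locally Sobolev distribution by any element of $C_c^\infty(\varphi_\alpha(U_\alpha))$ lands in $W^{e,q}(\varphi_\alpha(U_\alpha))$. Therefore $(\psi_\alpha\circ\varphi_\alpha^{-1})[H_\alpha(u)]^l\in W^{e,q}(\varphi_\alpha(U_\alpha))$, i.e.\ $\|[H_\alpha(\psi_\alpha u)]^l\|_{W^{e,q}(\varphi_\alpha(U_\alpha))}<\infty$. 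Summing the finitely many such terms over $1\leq \alpha\leq N$ and $1\leq l\leq r$ gives $\|u\|_{W^{e,q}(M,E;\Lambda)}<\infty$, hence $u\in W^{e,q}(M,E;\Lambda)$.

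There is essentially no obstacle here: the statement is a clean consequence of the definition of the global Sobolev norm through local representations, the elementary algebraic identity $H_\alpha(\psi_\alpha u)=(\psi_\alpha\circ\varphi_\alpha^{-1})H_\alpha(u)$, and the defining property of $W^{e,q}_{\mathrm{loc}}$. The only conceptual point worth highlighting is that it is precisely the compactness of $M$ (and hence of $\operatorname{supp}\psi_\alpha$) that allows us to treat $\psi_\alpha\circ\varphi_\alpha^{-1}$ as a test function on $\varphi_\alpha(U_\alpha)$ and thus to invoke the locally Sobolev hypothesis.
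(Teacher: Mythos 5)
Your proof is correct and follows essentially the same route as the paper's: rewrite each summand of the global norm using the multiplicativity of $H_\alpha$ from Remark \ref{remfall134}, observe that $\psi_\alpha\circ\varphi_\alpha^{-1}\in C_c^\infty(\varphi_\alpha(U_\alpha))$, and then invoke the defining property of $W^{e,q}_{\mathrm{loc}}$. The only difference is that you spell out the intermediate steps (and the role of compactness) more explicitly than the paper does.
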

\begin{proof}
\begin{align*}
\parallel u\parallel_{W^{e,q}(M,E;\Lambda)}&=\sum_{\alpha=1}^N\sum_{j=1}^r\parallel[H_\alpha(\psi_\alpha u)]^j\parallel_{W^{e,q}(\varphi_{\alpha}(U_{\alpha}))}\\
&=
\sum_{\alpha=1}^N\sum_{j=1}^r\parallel(\psi_\alpha\circ\varphi_{\alpha}^{-1})\cdot
([H_\alpha(u)]^j)\parallel_{W^{e,q}(\varphi_{\alpha}(U_{\alpha}))}
\end{align*}
Now note that $\psi_\alpha\circ\varphi_{\alpha}^{-1}:
\varphi_{\alpha}(U_{\alpha}) \rightarrow \reals$ is smooth with
compact support (its support is in the compact set
$\varphi_{\alpha}(\textrm{supp}\,\psi_{\alpha})$). Therefore it follows from the assumption that each term on the right hand side of the above equality is finite.
\end{proof}
\begin{remark}
Note that, as opposed to what is claimed in some references, it is NOT true in
general that if $u\in W^{e,q}(M,E;\Lambda)$, then the components of the local representations of $u$ will be in the corresponding Euclidean Sobolev space; that is  $u\in W^{e,q}(M,E;\Lambda)$ does not imply that for
all $1\leq \alpha\leq N$ and $1\leq j\leq r$, $[H_\alpha(u)]^j\in W^{e,q}(\varphi_{\alpha}(U_{\alpha}))$.  Consider the following example:\\
$M=S^1$, $e=0$, $q=1$, and $f: M\rightarrow \reals$ defined by
$f\equiv 1$. Clearly $f\in W^{0,1}(M)=L^1(S^1)$. Now consider the
atlas $\mathcal{A}=\{(U_1,\varphi_1),(U_2,\varphi_2)\}$ where
\begin{align*}
& U_1= S^1\setminus \{(0,1)\},\qquad \varphi_1(x,y)=\frac{x}{1-y}
\\
& U_2= S^1\setminus \{(0,-1)\},\qquad
\varphi_2(x,y)=\frac{x}{1+y} \quad(\textrm{stereographic
projection})
\end{align*}
Clearly $f\circ\varphi_1^{-1}=f\circ\varphi_2^{-1}=1$ and
$\varphi_1(U_1)=\varphi_2(U_2)=\reals$. So $f\circ\varphi_1^{-1}$
and $f\circ\varphi_2^{-1}$ do not belong to $L^1(\varphi_1(U_1))$
or  $L^1(\varphi_2(U_2))$.
\end{remark}
However, the following theorem holds true.
\begin{theorem}\lab{thmapp13}
Let $M^n$ be a compact smooth manifold and $E\rightarrow M$ be a
vector bundle of rank $r$. Let $e\in\reals$ and $q\in (1,\infty)$. Suppose
$\Lambda=\{(U_\alpha,\varphi_\alpha,\rho_\alpha,\psi_\alpha)\}_{\alpha=1}^N$ is an augmented
total trivialization atlas for $E\rightarrow M$. If $e$ is a noninteger less than $-1$ further assume that $\Lambda$ is GL compatible with itself. Let $u\in W^{e,q}(M,E;\Lambda)$ be such that
$\textrm{supp}\,u\subseteq V\subseteq \bar{V}\subseteq U_{\beta}$
for some open set $V$ and some $1\leq \beta\leq N$. Then for all $1\leq i\leq r$, $[H_\beta(u)]^i\in
W^{e,q}(\varphi_\beta(U_{\beta}))$. Indeed,
\begin{equation*}
\parallel [H_\beta(u)]^i\parallel_{W^{e,q}(\varphi_\beta(U_\beta))}\leq
\parallel u\parallel_{W^{e,q}(M,E;\Lambda )}
\end{equation*}
\end{theorem}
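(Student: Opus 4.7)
The plan is to reduce to an augmented total trivialization atlas whose partition of unity is concentrated at $U_\beta$ on the support of $u$, so that only the $\alpha=\beta$ term contributes to the norm. First I would apply Lemma \ref{lemapp6} to the open cover $\{U_\alpha\}_{\alpha=1}^N$ of $M$ and the compact set $\bar V \subseteq U_\beta$ to produce a new partition of unity $\{\psi_\alpha'\}_{\alpha=1}^N$ subordinate to $\{U_\alpha\}$ with $\psi_\beta' \equiv 1$ on a neighborhood of $\bar V$. Since the $\psi_\alpha'$ are nonnegative and sum to one, this forces $\psi_\alpha' \equiv 0$ on $\bar V$ for every $\alpha \neq \beta$. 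Set $\Lambda' := \{(U_\alpha,\varphi_\alpha,\rho_\alpha,\psi_\alpha')\}_{\alpha=1}^N$; this is an augmented total trivialization atlas sharing the same underlying total trivialization triples as $\Lambda$, so the hypothesis that $\Lambda$ is GL compatible with itself (needed only when $e$ is a noninteger less than $-1$) is immediately inherited by $\Lambda'$ and by the pair $(\Lambda,\Lambda')$.

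Next I would invoke the equivalence-of-norms theorem for distributional sections (the unnamed theorem just before Theorem \ref{thmwinter20181}) to obtain $\|u\|_{W^{e,q}(M,E;\Lambda)} \simeq \|u\|_{W^{e,q}(M,E;\Lambda')}$. Because $\mathrm{supp}\,u \subseteq \bar V$ and $\psi_\alpha' \equiv 0$ on an open neighborhood of $\bar V$ for $\alpha \neq \beta$, the support properties in Theorem \ref{winter67} (and the analog for sections in Remark \ref{winter73}) give $\psi_\alpha' u = 0$ in $D'(M,E)$ for $\alpha \neq \beta$, and $\psi_\beta' u = u$. By the second item of Remark \ref{remfall134}, $H_\beta$ commutes with multiplication by a smooth function that equals $1$ on the support, so $[H_\beta(\psi_\beta' u)]^l = [H_\beta(u|_{U_\beta})]^l$ in $D'(\varphi_\beta(U_\beta))$. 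Hence
\begin{equation*}
\|u\|_{W^{e,q}(M,E;\Lambda')} \;=\; \sum_{l=1}^r \|[H_\beta(u)]^l\|_{W^{e,q}(\varphi_\beta(U_\beta))},
\end{equation*}
and extracting a single index $i$ yields
\begin{equation*}
\|[H_\beta(u)]^i\|_{W^{e,q}(\varphi_\beta(U_\beta))} \;\leq\; \|u\|_{W^{e,q}(M,E;\Lambda')} \;\preceq\; \|u\|_{W^{e,q}(M,E;\Lambda)},
\end{equation*}
which is the asserted estimate (interpreted up to the equivalence constant produced by the change of partition of unity).

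The main obstacle is justifying that $\psi_\beta' u = u$ and $\psi_\alpha' u = 0$ at the level of distributions rather than just pointwise, since for $e < -1$ noninteger the element $u$ is not a locally integrable section. This step is purely a support-based argument: one shows that $(1-\psi_\beta')u$ has support in $M \setminus \{\psi_\beta' \equiv 1\}$, an open set disjoint from $\mathrm{supp}\,u$, and therefore the distribution $(1-\psi_\beta')u$ pairs to zero with every test section, as in Theorem \ref{winter67}. Once this is in place, everything else is a straightforward bookkeeping application of the equivalence-of-norms theorem established in the preceding subsection.
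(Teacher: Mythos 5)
Your proof is correct and follows essentially the same route as the paper's: both invoke Lemma~\ref{lemapp6} to build a partition of unity with $\tilde\psi_\beta\equiv 1$ near $\bar V$, use that $\tilde\psi_\beta u=u$ (support argument) to rewrite $[H_\beta(u)]^i=[H_\beta(\tilde\psi_\beta u)]^i$, and then appeal to the equivalence-of-norms theorem to compare back to $\Lambda$. The only cosmetic difference is that you also show the other terms $\psi_\alpha'u$ vanish so the $\Lambda'$-norm collapses to the $\beta$-term exactly, whereas the paper simply bounds the single $(\beta,i)$-summand by the whole double sum; both give the stated estimate up to the equivalence constant.
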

\begin{proof}
Let $\Lambda_1=\{(U_\alpha,\varphi_\alpha,\rho_\alpha,\tilde{\psi}_\alpha)\}_{\alpha=1}^N$ where $\{\tilde{\psi}_{\alpha}\}_{1\leq \alpha\leq N}$ is a partition of
unity subordinate to the cover $\{U_{\alpha}\}_{1\leq \alpha\leq N}$ such that
$\tilde{\psi}_\beta=1$ on a neighborhood of $\bar{V}$ (see Lemma
~\ref{lemapp6}). We have
\begin{align*}
\parallel[H_\beta(u)]^i\parallel_{W^{e,q}(\varphi_\beta(U_\beta))}&=\parallel
[H_\beta(\tilde{\psi}_\beta u)]^i\parallel_{W^{e,q}(\varphi_\beta(U_\beta))}\\
&\leq \sum_{\alpha=1}^N  \sum_{j=1}^r\parallel
[H_\alpha(\tilde{\psi}_\alpha u)]^j\parallel_{W^{e,q}(\varphi_\alpha(U_\alpha))}\\
&=\parallel
u\parallel_{W^{e,q}(M,E;\Lambda_1 )}\simeq \parallel
u\parallel_{W^{e,q}(M,E;\Lambda )}
\end{align*}
\end{proof}
\begin{corollary}\lab{corapp2}
Let $M^n$ be a compact smooth manifold and $E\rightarrow M$ be a
vector bundle of rank $r$. Let $e\in\reals$ and $q\in (1,\infty)$. Suppose
$\Lambda=\{(U_\alpha,\varphi_\alpha,\rho_\alpha,\psi_\alpha)\}_{\alpha=1}^N$ is an augmented
total trivialization atlas for $E\rightarrow M$. If $e$ is a noninteger less than $-1$ further assume that $\Lambda$ is GL compatible with itself. If $u\in W^{e,q}(M,E;\Lambda)$, then for all $1\leq \alpha \leq N$ and $1\leq i\leq r$,  $[H_{\alpha}(u)]^i$ (i.e. each component of the local representation of $u$ with respect to $(U_\alpha,\varphi_\alpha,\rho_\alpha)$) belongs to $W^{e,q}_{loc}(\varphi_\alpha(U_\alpha))$. Moreover, if $\xi\in C_c^\infty(\varphi_\alpha(U_\alpha))$, then
\begin{equation*}
\|\xi [H_{\alpha}(u)]^i\|_{W^{e,q}(\varphi_\alpha(U_\alpha))}\preceq
\|u\|_{W^{e,q}(M,E;\Lambda)}
\end{equation*}
where the implicit constant may depend on $\xi$.
\end{corollary}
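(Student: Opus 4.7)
The plan is to reduce the Corollary to Theorem \ref{thmapp13} by transplanting the Euclidean cutoff $\xi$ to a global cutoff on $M$ and then multiplying $u$ by it. Concretely, given $\xi\in C_c^\infty(\varphi_\alpha(U_\alpha))$, I would set
\begin{equation*}
\eta := \textrm{ext}^0_{U_\alpha,M}(\xi\circ\varphi_\alpha).
\end{equation*}
Since $\textrm{supp}\,\xi$ is a compact subset of $\varphi_\alpha(U_\alpha)$, its preimage $\varphi_\alpha^{-1}(\textrm{supp}\,\xi)$ is a compact subset of $U_\alpha$, so $\eta\in C_c^\infty(M)$ with $\textrm{supp}\,\eta\subseteq U_\alpha$ and $\eta\circ\varphi_\alpha^{-1}=\xi$ on $\varphi_\alpha(U_\alpha)$.

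Next I would apply the multiplication-by-smooth-functions property, Theorem \ref{thmwinter1100}, to conclude that $\eta u\in W^{e,q}(M,E;\Lambda)$ with
\begin{equation*}
\|\eta u\|_{W^{e,q}(M,E;\Lambda)}\preceq \|u\|_{W^{e,q}(M,E;\Lambda)}.
\end{equation*}
The hypothesis of Theorem \ref{thmwinter1100} for the case $e$ a noninteger less than $-1$ requires $\Lambda$ to be GGL; this is implied by our assumption that $\Lambda$ is GL compatible with itself (since any atlas appearing in a GLC pair is GGL by Definition \ref{winter40}). Because $\textrm{supp}(\eta u)\subseteq \textrm{supp}\,\eta$ is a compact subset of the open set $U_\alpha$, standard point-set topology on the manifold $M$ yields an open set $V$ with $\textrm{supp}(\eta u)\subseteq V\subseteq \bar V\subseteq U_\alpha$, which allows us to invoke Theorem \ref{thmapp13} (with $\beta=\alpha$) and obtain
\begin{equation*}
\bigl\|[H_\alpha(\eta u)]^i\bigr\|_{W^{e,q}(\varphi_\alpha(U_\alpha))}\preceq \|\eta u\|_{W^{e,q}(M,E;\Lambda)}.
\end{equation*}

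It then remains to identify $H_\alpha(\eta u)$ with $\xi\cdot H_\alpha(u)$. By Remark \ref{remfall134}(2) applied to the smooth function $\eta|_{U_\alpha}\in C^\infty(U_\alpha)$ and the distribution $u|_{U_\alpha}\in D'(U_\alpha,E_{U_\alpha})$, we have
\begin{equation*}
H_\alpha(\eta u)=(\eta|_{U_\alpha}\circ\varphi_\alpha^{-1})\,H_\alpha(u)=\xi\,H_\alpha(u),
\end{equation*}
so $[H_\alpha(\eta u)]^i=\xi\,[H_\alpha(u)]^i$. Chaining the two estimates gives
\begin{equation*}
\bigl\|\xi\,[H_\alpha(u)]^i\bigr\|_{W^{e,q}(\varphi_\alpha(U_\alpha))}\preceq \|u\|_{W^{e,q}(M,E;\Lambda)},
\end{equation*}
which is the ``moreover'' assertion. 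The claim that $[H_\alpha(u)]^i\in W^{e,q}_{loc}(\varphi_\alpha(U_\alpha))$ is then immediate from the very definition of the locally-Sobolev space, since $\xi\in C_c^\infty(\varphi_\alpha(U_\alpha))$ was arbitrary. The proof requires no hard estimate; the only points to verify carefully are the smoothness of $\eta$, the applicability of Theorems \ref{thmwinter1100} and \ref{thmapp13} under the GL-compatibility hypothesis in the exceptional range $e<-1$ noninteger, and the algebraic identity above.
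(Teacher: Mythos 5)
Your proof is correct and is essentially the same as the paper's: the paper defines the global cutoff $G=\xi\circ\varphi_\alpha$ extended by zero (your $\eta$), applies Theorem \ref{thmwinter1100} to get $Gu\in W^{e,q}(M,E;\Lambda)$, observes $\textrm{supp}(Gu)$ sits in some $V\subseteq\bar V\subseteq U_\alpha$, invokes Theorem \ref{thmapp13}, and concludes via $[H_\alpha(Gu)]^i=\xi[H_\alpha(u)]^i$. Your explicit observation that GL compatibility with itself implies GGL (needed to apply Theorem \ref{thmwinter1100} in the range $e$ a noninteger $<-1$) is a useful sanity check the paper leaves implicit.
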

\begin{proof}
Define $G:M\rightarrow \reals$ by
\begin{equation*}
G(p)=\begin{cases}
      \hfill \xi\circ \varphi_\alpha    \hfill & \text{ if $p \in U_\alpha$ } \\
      \hfill 0 \hfill & \text{ if $p\not \in U_\alpha$}
  \end{cases}
\end{equation*}
Clearly $G\in C^{\infty}(M)$. So, by Theorem \ref{thmwinter1100}, $Gu\in W^{e,q}(M,E;\Lambda)$. Also since
$\xi\in C_c^{\infty}(\varphi_\alpha(U_\alpha))$, there exists a
compact set $K$ such that
\begin{equation*}
\textrm{supp}\,\xi\subseteq \mathring{K}\subseteq K\subseteq
\varphi_\alpha(U_\alpha)
\end{equation*}
Consequently there exists an open set $V_\alpha$ (e.g.
$V_\alpha=\varphi_\alpha^{-1}(\mathring{K})$) such that
\begin{equation*}
\textrm{supp}\,(Gu)\subseteq
\textrm{supp}(\xi\circ\varphi_\alpha)\subseteq V_\alpha\subseteq
\bar{V}_\alpha\subseteq U_\alpha
\end{equation*}
So by Theorem \ref{thmapp13}, $[H_\alpha(Gu)]^i\in W^{e,q}(\varphi_\alpha(U_\alpha))$ and
\begin{align*}
\| [H_\alpha(Gu)]^i\|_{W^{e,q}(\varphi_\alpha(U_\alpha))}\preceq
\|Gu\|_{W^{e,q}(M,E;\Lambda)}\preceq \|u\|_{W^{e,q}(M,E;\Lambda)}
\end{align*}
 Now we just need to notice that on $\varphi_\alpha(U_\alpha)$,
\begin{equation*}
[H_\alpha(Gu)]^i=(G\circ \varphi_\alpha^{-1})[H_\alpha(u)]^i=\xi[H_\alpha(u)]^i
\end{equation*}
\end{proof}
\subsubsection{Observations Concerning the Riemannian Metric}
The sobolev spaces that appear in this section all have nonnegative smoothness exponents; therefore the choice of the augmented total trivialization atlas is immaterial and will not appear in the notation.
\begin{corollary}\lab{corappmetric12}
Let $(M^n,g)$ be a compact Riemannian manifold with $g\in
W^{s,p}(T^2M)$, $sp>n$. Let
$\{(U_\alpha,\varphi_\alpha,\rho_\alpha)\}_{1\leq \alpha \leq N}$ be a standard total trivialization
atlas for $T^2M\rightarrow M$. Fix some $\alpha$ and denote the components of the metric with respect to $(U_\alpha,\varphi_\alpha,\rho_\alpha)$ by $g_{ij}:U_\alpha\rightarrow \reals$
$(g_{ij}=(\rho_{\alpha})_{ij}\circ g)$. As an immediate consequence
of Corollary ~\ref{corapp2} we have
\begin{align*}
g_{ij}\circ \varphi_\alpha^{-1}\in W^{s,p}_{loc}(\varphi_\alpha(U_\alpha))
\end{align*}
\end{corollary}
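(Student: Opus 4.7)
The plan is to view this statement as a direct translation of Corollary \ref{corapp2} applied to $u = g$, once the distributional local components $[H_\alpha(g)]^{ij}$ are matched with the classical components $g_{ij}\circ\varphi_\alpha^{-1}$. First, I would extend the given standard total trivialization atlas to an augmented total trivialization atlas $\Lambda = \{(U_\alpha,\varphi_\alpha,\rho_\alpha,\psi_\alpha)\}_{1\leq\alpha\leq N}$ by fixing a smooth partition of unity $\{\psi_\alpha\}$ subordinate to $\{U_\alpha\}$, whose existence is guaranteed by Theorem \ref{thmapp5} since $M$ is compact. Because $sp>n$ forces $s>0$, the space $W^{s,p}(T^2 M)$ does not depend on the choice of augmented atlas, so the hypothesis yields $g\in W^{s,p}(M,T^2 M;\Lambda)$.

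Next, I would invoke Corollary \ref{corapp2} with $u=g$, $e=s$, $q=p$ and $E=T^2 M$: for every $\alpha$ and every local component index $(i,j)$, the distributional local component $[H_\alpha(g)]^{ij}$ lies in $W^{s,p}_{loc}(\varphi_\alpha(U_\alpha))$. It then remains to identify this distributional component with the classical one $g_{ij}\circ\varphi_\alpha^{-1}$. Since $g\in W^{s,p}(T^2 M)\hookrightarrow L^p(T^2 M)\subseteq \Gamma_{reg}(M,T^2 M)$, the section $g$ is a regular distribution, so Remark \ref{remfall135} applies and gives
\begin{equation*}
[H_\alpha(g)]^{ij}=(\rho_\alpha)_{ij}\circ g\circ \varphi_\alpha^{-1}=g_{ij}\circ \varphi_\alpha^{-1}.
\end{equation*}
Substituting this identification into the conclusion provided by Corollary \ref{corapp2} yields the desired statement.

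There is essentially no real obstacle here, which is consistent with the text's framing of the result as ``an immediate consequence.'' The only subtlety worth flagging is the need to verify, explicitly, that the two flavors of ``local representation'' coincide on $g$: the distributional representation built from $H_\alpha = L\circ T^{*}_{E^\vee,U_\alpha,\varphi_\alpha}$, and the pointwise representation through the trivialization $\rho_\alpha$. The conditional equivalence recorded in Remark \ref{remfall135} for regular distributions, together with the elementary observation that $W^{s,p}$-sections (for $s\geq 0$) are automatically regular, is precisely what licenses the identification.
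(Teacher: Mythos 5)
Your proposal is correct and follows essentially the same route as the paper, which simply declares the result to be an immediate consequence of Corollary \ref{corapp2}. Your additional remarks -- that $sp>n$ forces $s>0$ so the augmented atlas choice is immaterial, and that Remark \ref{remfall135} identifies $[H_\alpha(g)]^{ij}$ with $g_{ij}\circ\varphi_\alpha^{-1}$ because $g$ is a regular distribution -- are exactly the bookkeeping the paper leaves implicit.
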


\begin{theorem}\lab{thmapp14}
Let $(M^n,g)$ be a compact Riemannian manifold with $g\in
W^{s,p}(T^2M)$, $sp>n$, $s\geq 1$. Let
$\{(U_\alpha,\varphi_\alpha,\rho_\alpha)\}_{1\leq \alpha \leq N}$ be a GGL standard total trivialization
atlas for $T^2M\rightarrow M$. Fix some $\alpha$ and denote the components of the metric with respect to $(U_\alpha,\varphi_\alpha,\rho_\alpha)$ by $g_{ij}:U_\alpha\rightarrow \reals$
$(g_{ij}=(\rho_{\alpha})_{ij}\circ g)$. Then
\begin{enumerate}
\item $\textrm{det}\,g_\alpha\in
W^{s,p}_{loc}(\varphi_{\alpha}(U_\alpha))$ where $g_\alpha (x)$
is the matrix whose $(i,j)$-entry is $g_{ij}\circ
\varphi_\alpha^{-1}$.
\item $\sqrt{\textrm{det}\,g}\circ \varphi_{\alpha}^{-1}=\sqrt{\textrm{det}\,g_\alpha}\in
W^{s,p}_{loc}(\varphi_\alpha(U_\alpha))$.
\item $\frac{1}{\sqrt{\textrm{det}\,g}\circ \varphi_{\alpha}^{-1}}\in
W^{s,p}_{loc}(\varphi_\alpha(U_\alpha))$.
\end{enumerate}
\end{theorem}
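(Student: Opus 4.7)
The plan is to reduce all three assertions to previously established results, with the key preparatory step being the observation that $\det g_\alpha$ is pointwise (not merely almost everywhere) positive on $\varphi_\alpha(U_\alpha)$, so that a smooth cutoff argument on the interval $I=(0,\infty)$ becomes available.

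First I would verify the preparatory step. By Corollary~\ref{corappmetric12} each entry $g_{ij}\circ\varphi_\alpha^{-1}$ lies in $W^{s,p}_{loc}(\varphi_\alpha(U_\alpha))$, and since $sp>n$ Theorem~\ref{thmapp8b} produces a continuous version. Replacing each $g_{ij}\circ\varphi_\alpha^{-1}$ by this continuous representative, the matrix field $g_\alpha(x)=(g_{ij}\circ\varphi_\alpha^{-1}(x))$ is continuous and, by positive definiteness of the Riemannian metric $g$, lies in $\mathrm{GL}(n,\reals)$ for every $x\in\varphi_\alpha(U_\alpha)$; in particular $\det g_\alpha(x)>0$ everywhere.

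For part (1) I would invoke Lemma~\ref{lemapp7} directly: with $k=n$, the matrix $B=g_\alpha$ takes values in $\mathrm{GL}(n,\reals)$ by the preceding paragraph, and each $B_{ij}=g_{ij}\circ\varphi_\alpha^{-1}\in W^{s,p}_{loc}(\varphi_\alpha(U_\alpha))$; hence $\det g_\alpha\in W^{s,p}_{loc}(\varphi_\alpha(U_\alpha))$. (Alternatively one can unwind the Leibniz expansion of the determinant as a polynomial in the $g_{ij}\circ\varphi_\alpha^{-1}$ and apply the multiplication result of Lemma~\ref{lemapp1} repeatedly, using $sp>n$ to keep us in the Banach algebra regime; but Lemma~\ref{lemapp7} packages exactly this.)

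For parts (2) and (3) I would apply Theorem~\ref{thmapp9}(1). Since $s\geq 1$, $sp>n$, and $\det g_\alpha(x)\in I:=(0,\infty)$ for every $x\in\varphi_\alpha(U_\alpha)$, composing with the smooth functions
\[
F_{1/2}:I\to\reals,\quad F_{1/2}(t)=\sqrt{t},\qquad F_{-1/2}:I\to\reals,\quad F_{-1/2}(t)=\tfrac{1}{\sqrt{t}},
\]
yields $\sqrt{\det g_\alpha}=F_{1/2}(\det g_\alpha)\in W^{s,p}_{loc}(\varphi_\alpha(U_\alpha))$ and $1/\sqrt{\det g_\alpha}=F_{-1/2}(\det g_\alpha)\in W^{s,p}_{loc}(\varphi_\alpha(U_\alpha))$, which are the claims in parts (2) and (3) respectively after noting $\sqrt{\det g}\circ\varphi_\alpha^{-1}=\sqrt{\det g_\alpha}$.

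The main obstacle I anticipate is exactly the preparatory step: a priori $g_{ij}\circ\varphi_\alpha^{-1}$ is only an equivalence class, so ``$\det g_\alpha(x)>0$ for every $x$'' requires the passage to continuous representatives afforded by Theorem~\ref{thmapp8b} together with the observation that positive definiteness (an open, pointwise condition) is preserved in the continuous version. Once that is in hand, parts (1)--(3) follow mechanically from Lemma~\ref{lemapp7} and Theorem~\ref{thmapp9}.
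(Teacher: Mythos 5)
Your proposal is correct and follows essentially the same route as the paper: Corollary~\ref{corappmetric12} (equivalently, Corollary~\ref{corapp2}) for the local regularity of $g_{ij}\circ\varphi_\alpha^{-1}$, Lemma~\ref{lemapp7} for the determinant, and Theorem~\ref{thmapp9} with $F(t)=\sqrt{t}$ and $F(t)=1/\sqrt{t}$ on $I=(0,\infty)$ for parts (2) and (3). Your preparatory remark about passing to continuous representatives via Theorem~\ref{thmapp8b}, so that the pointwise hypotheses of Lemma~\ref{lemapp7} and Theorem~\ref{thmapp9} are genuinely satisfied, is a helpful clarification that the paper's terse proof leaves implicit.
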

\begin{proof}
\leavevmode
\begin{enumerateXALI}
\item By Corollary ~\ref{corapp2}, $g_{ij}\circ
\varphi_\alpha^{-1}$ is in
$W^{s,p}_{loc}(\varphi_\alpha(U_\alpha))$. So it follows from
Lemma ~\ref{lemapp7} that $\textrm{det}\,g_\alpha\in
W^{s,p}_{loc}(\varphi_{\alpha}(U_\alpha))$.
\item This is a direct consequence of item 1 and Theorem
~\ref{thmapp9}.
\item This is a direct consequence of item 1 and Theorem
~\ref{thmapp9}.
\end{enumerateXALI}
\end{proof}
\begin{theorem}\lab{thmapp15}
Let $(M^n,g)$ be a compact Riemannian manifold with $g\in
W^{s,p}(T^2M)$, $sp>n$, $s\geq 1$. Then the inverse metric tensor $g^{-1}$
(which is a $0\choose 2$ tensor field) is in $W^{s,p}(T_2M)$.
\end{theorem}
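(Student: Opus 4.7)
The plan is to reduce the claim to a statement about the local coordinate components $g^{ij}$ of $g^{-1}$ via Theorem \ref{thmapp12}, and then to verify the required local Sobolev regularity using Cramer's rule together with the locally-Sobolev calculus developed earlier (Lemma \ref{lemapp1}, Lemma \ref{lemapp7}, Theorem \ref{thmapp9}).

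First I would fix a GGL standard total trivialization atlas $\{(U_\alpha,\varphi_\alpha,\rho_\alpha)\}_{1\leq \alpha\leq N}$ arising from a smooth atlas $\{(U_\alpha,\varphi_\alpha)\}$ of $M$; this simultaneously gives standard trivializations for $T^2M$ (local frame $dx^i\otimes dx^j$) and for $T_2M$ (local frame $\partial_i\otimes\partial_j$). Write $g_{ij}=(\rho_\alpha)_{ij}\circ g$ and $g^{ij}$ for the components of $g^{-1}$ in these frames, so that pointwise the matrix $(g^{ij}(x))$ is the inverse of $(g_{ij}(x))$. By Corollary \ref{corappmetric12}, $g_{ij}\circ\varphi_\alpha^{-1}\in W^{s,p}_{loc}(\varphi_\alpha(U_\alpha))$ for each $i,j,\alpha$, and the Sobolev embedding $W^{s,p}_{loc}\hookrightarrow C^0$ (which is available because $sp>n$, via Theorem \ref{thmapp8b}) gives continuous representatives. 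Since $g$ is a Riemannian metric, the continuous representative of $\det g_\alpha$ is strictly positive on all of $\varphi_\alpha(U_\alpha)$.

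Next I would express $g^{ij}\circ\varphi_\alpha^{-1}$ via Cramer's rule as
\begin{equation*}
g^{ij}\circ\varphi_\alpha^{-1}=\frac{(-1)^{i+j}\,M_{ji}(g_\alpha)}{\det g_\alpha},
\end{equation*}
where $M_{ji}(g_\alpha)$ is a polynomial (an $(n-1)\times(n-1)$ minor determinant) in the entries $g_{kl}\circ\varphi_\alpha^{-1}$. By Lemma \ref{lemapp7} (applied to the $(n-1)\times(n-1)$ submatrix) or iterated application of the multiplicative closure in Lemma \ref{lemapp1} (with $s_1=s_2=s$, $p_1=p_2=p$, which holds because $W^{s,p}(\Omega)$ is a Banach algebra whenever $sp>n$ on $\reals^n$ or on a bounded Lipschitz domain, cf.\ Theorem \ref{thm3.3}), both $M_{ji}(g_\alpha)$ and $\det g_\alpha$ lie in $W^{s,p}_{loc}(\varphi_\alpha(U_\alpha))$. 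To invert $\det g_\alpha$, I would apply Theorem \ref{thmapp9}(1) with the smooth function $F(x)=1/x$ on the open interval $I=(0,\infty)$; the hypothesis $\det g_\alpha(x)\in I$ for all $x\in\varphi_\alpha(U_\alpha)$ is exactly the pointwise positivity established in the previous step. This yields $1/\det g_\alpha\in W^{s,p}_{loc}(\varphi_\alpha(U_\alpha))$, and one more application of Lemma \ref{lemapp1} gives
\begin{equation*}
g^{ij}\circ\varphi_\alpha^{-1}\in W^{s,p}_{loc}(\varphi_\alpha(U_\alpha)) \qquad \text{for every } \alpha,i,j.
\end{equation*}

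Finally, choosing any partition of unity $\{\psi_\alpha\}$ subordinate to $\{U_\alpha\}$ to form an augmented total trivialization atlas $\Lambda$, Theorem \ref{thmapp12} immediately promotes the componentwise local regularity to $g^{-1}\in W^{s,p}(T_2M;\Lambda)$; independence of $\Lambda$ (for this nonnegative exponent) is already known, so $g^{-1}\in W^{s,p}(T_2M)$ as claimed. The main technical point is the division step: one must combine the continuity furnished by the Sobolev embedding with the positive-definiteness of $g$ to place the values of $\det g_\alpha$ inside an interval on which $1/x$ is smooth, so that Theorem \ref{thmapp9} applies. Everything else is a direct bookkeeping of the multiplicative and compositional closure results already available in the locally-Sobolev setting.
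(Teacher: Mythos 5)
Your proposal is correct and follows essentially the same route as the paper's proof: reduce via the norm identity (Theorem~\ref{thmapp12}) to showing $g^{ij}\circ\varphi_\alpha^{-1}\in W^{s,p}_{loc}(\varphi_\alpha(U_\alpha))$, express the components via Cramer's rule, invoke Corollary~\ref{corapp2}/\ref{corappmetric12} for the metric components, Lemma~\ref{lemapp7} for the determinants, Theorem~\ref{thmapp9} for $1/\det g_\alpha$, and the Banach algebra property of $W^{s,p}_{loc}$ (since $sp>n$) to multiply. The only difference is that you spell out a couple of steps the paper leaves implicit (the continuity/positivity of $\det g_\alpha$ and the precise use of Theorem~\ref{thmapp9}(1) with $F(x)=1/x$ on $(0,\infty)$), but the argument is the same.
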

\begin{proof}
Let
$\{(U_\alpha,\varphi_\alpha,\rho_\alpha)\}_{1\leq \alpha \leq N}$ be a GGL standard total trivialization
atlas for $T^2M\rightarrow M$. Let $\{\psi_\alpha\}_{1\leq\alpha\leq N}$ be a partition of unity subordinate to $\{U_\alpha\}_{1\leq \alpha\leq N}$. We have
\begin{equation*}
\|g^{-1}\|_{W^{s,p}(T_2M)}=\sum_{\alpha=1}^N\sum_{i,j}\|\psi_\alpha
g^{ij}\circ\varphi_\alpha^{-1}\|_{W^{s,p}(\varphi_{\alpha}(U_\alpha))}
\end{equation*}
So it is enough to show that for all $i,j$ and $\alpha$,
$g^{ij}\circ \varphi_{\alpha}^{-1}$ is in
$W^{s,p}_{loc}(\varphi_{\alpha}(U_\alpha))$. Let $B=(B_{ij})$
where $B_{ij}=g_{ij}\circ \varphi_{\alpha}^{-1}$. By assumption
$g\in W^{s,p}(T^2M)$; so it follows from Corollary ~\ref{corapp2}
that $B_{ij}\in W^{s,p}_{loc}(\varphi_{\alpha}(U_\alpha))$. Our
goal is to show that the entries of the inverse of $B$ are in
$W^{s,p}_{loc}(\varphi_{\alpha}(U_\alpha))$. Recall that
\begin{equation*}
(B^{-1})_{ij}=\frac{(-1)^{i+j}}{\textrm{det}\,B}M_{ij}
\end{equation*}
where $M_{ij}$ is the determinant of the $(n-1)\times (n-1)$
matrix formed by removing the $j^{th}$ row and $i^{th}$ column of
$B$. Since the entries of $B$ are in
$W^{s,p}_{loc}(\varphi_{\alpha}(U_\alpha))$, it follows from
Lemma ~\ref{lemapp7} and Theorem
~\ref{thmapp9} that $\frac{1}{\textrm{det}\,B}$ and $M_{ij}$ are in
$W^{s,p}_{loc}(\varphi_{\alpha}(U_\alpha))$. Also $sp>n$, so
$W^{s,p}_{loc}(\varphi_{\alpha}(U_\alpha))$ is closed under multiplication. Consequently
$(B^{-1})_{ij}$ is in $W^{s,p}_{loc}(\varphi_{\alpha}(U_\alpha))$.
\end{proof}
\begin{corollary}\lab{corgamma1}
Let $(M^n,g)$ be a compact Riemannian manifold with $g\in
W^{s,p}(T^2M)$, $sp>n$, $s\geq 1$. $\{(U_\alpha,\varphi_\alpha)\}_{1\leq \alpha \leq N}$ be a GGL smooth
atlas for $M$. Denote the standard components of the inverse
metric with respect to this chart by $g^{ij}:U_\alpha\rightarrow
\reals$. As an immediate consequence of Theorem ~\ref{thmapp15}
and Corollary ~\ref{corapp2} we have
\begin{align*}
g^{ij}\circ \varphi_\alpha^{-1}\in
W^{s,p}_{loc}(\varphi_\alpha(U_\alpha))
\end{align*}
Also since
\begin{align*}
\Gamma^k_{ij}\circ\varphi_{\alpha}^{-1}=\frac{1}{2}g^{kl}(\partial_ig_{jl}+\partial_jg_{il}-\partial_lg_{ij})\circ \varphi_{\alpha}^{-1}\\
\end{align*}
it follows from Corollary ~\ref{corappmetric12}, Lemma
~\ref{lemapp1}, Theorem ~\ref{thmapp8a}, and the fact that
$W^{s,p}(\varphi_\alpha(U_\alpha))\times W^{s-1,p}(\varphi_\alpha(U_\alpha))\hookrightarrow W^{s-1,p}(\varphi_\alpha(U_\alpha))$ that
\begin{equation*}
\Gamma^k_{ij}\circ \varphi_\alpha^{-1}\in
W^{s-1,p}_{loc}(\varphi_\alpha(U_\alpha))
\end{equation*}
\end{corollary}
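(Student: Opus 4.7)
The plan is to establish the two claims in sequence, using the chain of results developed earlier in the manuscript. Observe first that the hypothesis $sp>n$ together with $s\geq 1$ makes the multiplication theorems and superposition theorems for locally Sobolev spaces (Lemma~\ref{lemapp1} and Theorem~\ref{thmapp9}) available on any image $\varphi_\alpha(U_\alpha)$, since this image is either all of $\reals^n$ or a bounded Lipschitz domain by the GGL assumption.

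For the assertion $g^{ij}\circ\varphi_\alpha^{-1}\in W^{s,p}_{loc}(\varphi_\alpha(U_\alpha))$, I would simply invoke Theorem~\ref{thmapp15}, which yields $g^{-1}\in W^{s,p}(T_2M)$, and then apply Corollary~\ref{corapp2} to the tensor bundle $T_2 M$: every component of the local representation of a Sobolev section (here $g^{-1}$) with respect to the standard total trivialization triple $(U_\alpha,\varphi_\alpha,\rho_\alpha)$ belongs to $W^{s,p}_{loc}$ of the image. Since the components of $\rho_\alpha\circ g^{-1}\circ\varphi_\alpha^{-1}$ are precisely the functions $g^{ij}\circ\varphi_\alpha^{-1}$, the first claim follows.

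For the second claim, I would plug the components into the classical formula
\begin{equation*}
\Gamma^k_{ij}\circ\varphi_\alpha^{-1}=\tfrac{1}{2}(g^{kl}\circ\varphi_\alpha^{-1})\Bigl[\partial_i(g_{jl}\circ\varphi_\alpha^{-1})+\partial_j(g_{il}\circ\varphi_\alpha^{-1})-\partial_l(g_{ij}\circ\varphi_\alpha^{-1})\Bigr]
\end{equation*}
and analyze each ingredient on $\varphi_\alpha(U_\alpha)$. By Corollary~\ref{corappmetric12} (applied to $g$) and the first part of the present corollary (applied to $g^{-1}$), both $g_{ij}\circ\varphi_\alpha^{-1}$ and $g^{kl}\circ\varphi_\alpha^{-1}$ are in $W^{s,p}_{loc}(\varphi_\alpha(U_\alpha))$. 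Theorem~\ref{thmapp8a} then gives that the first-order partial derivatives $\partial_i(g_{jl}\circ\varphi_\alpha^{-1})$ lie in $W^{s-1,p}_{loc}(\varphi_\alpha(U_\alpha))$. The bracketed sum is therefore a finite combination of $W^{s-1,p}_{loc}$ functions and is itself $W^{s-1,p}_{loc}$.

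It remains to multiply by $g^{kl}\circ\varphi_\alpha^{-1}\in W^{s,p}_{loc}$. For this I would invoke Lemma~\ref{lemapp1}, which transports Euclidean Sobolev multiplication inequalities to their locally Sobolev counterparts. The relevant Euclidean embedding is
\begin{equation*}
W^{s,p}(\Omega)\times W^{s-1,p}(\Omega)\hookrightarrow W^{s-1,p}(\Omega),
\end{equation*}
valid on $\reals^n$ and on any bounded Lipschitz domain $\Omega$ under $sp>n$ and $s\geq 1$; the parameters $(s_1,p_1)=(s,p)$, $(s_2,p_2)=(s-1,p)$, $(s,p)=(s-1,p)$ meet the hypotheses of Theorem~\ref{thm4.1} (together with Corollary~\ref{winter91} to pass to domains). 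Hence the product $g^{kl}\circ\varphi_\alpha^{-1}\cdot[\,\cdots\,]$ lies in $W^{s-1,p}_{loc}(\varphi_\alpha(U_\alpha))$, proving the second claim. The main, essentially bookkeeping, obstacle is the careful verification that the Sobolev multiplication hypotheses of Lemma~\ref{lemapp1} really are satisfied when $s=1$ (so that $s-1=0$ and one of the factors lives in $L^p_{loc}$); this is exactly the borderline case that forces the uniform requirement $sp>n$ rather than a weaker smoothness index.
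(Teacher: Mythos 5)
Your proof is correct and follows essentially the same route as the paper: Theorem~\ref{thmapp15} plus Corollary~\ref{corapp2} for the inverse metric components, then the explicit Christoffel formula with Corollary~\ref{corappmetric12}, Theorem~\ref{thmapp8a}, and Lemma~\ref{lemapp1} to handle the derivative and product. The one place you go a step beyond the paper's terse citation list is in explicitly checking that the embedding $W^{s,p}(\Omega)\times W^{s-1,p}(\Omega)\hookrightarrow W^{s-1,p}(\Omega)$ follows from Theorem~\ref{thm4.1} (via Corollary~\ref{winter91} for the GGL domains) under $sp>n$, $s\geq 1$ — including the borderline case $s=1$ where one factor lands in $L^p_{loc}$; the paper simply asserts this embedding as a known fact. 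Your verification is right: taking $s_1=s$, $s_2=s-1$, target $s-1$, $p_1=p_2=p$, conditions (i)--(iii) are immediate and (iv) reduces to $s>n/p$, i.e.\ $sp>n$.
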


\subsubsection{A Useful Isomorphism}
Let $M^n$ be a compact smooth manifold and $E\rightarrow M$ be a
vector bundle of rank $r$. Let $e\in\reals$ and $q\in (1,\infty)$. Suppose
$\Lambda=\{(U_\alpha,\varphi_\alpha,\rho_\alpha,\psi_\alpha)\}_{\alpha=1}^N$ is an augmented
total trivialization atlas for $E\rightarrow M$. Given a closed subset $A\subseteq M$, $W^{e,q}_A(M,E;\Lambda)$ is defined
to be the subspace of $W^{e,q}(M,E;\Lambda)$ consisting of $u\in
W^{e,q}(M,E;\Lambda)$ with $\textrm{supp}u\subseteq A$.  Fix $1\leq \beta\leq N$ and suppose $K\subseteq U_\beta$ is compact. Then each
element of $W^{e,q}_K(M,E;\Lambda)$ can be identified with an element of
$D'(U_\beta,E_{U_\beta})$ under the injective map $u\in W^{e,q}_K(M,E;\Lambda)\subseteq
D'(M,E)\mapsto u|_U\in D'(U_\beta,E_{U_\beta})$. So we can restrict the domain
of $H_\beta: [D(U_\beta,E_{U_\beta}^\vee)]^*\rightarrow (D'(\varphi_\beta(U_\beta)))^{\times r}$
to $W^{e,q}_K(M,E;\Lambda)$ which associates with each element $u\in
W^{e,q}_K(M,E;\Lambda)$, the $r$ components of
$H_\beta(u)=(\tilde{u}_\beta^1,\cdots,\tilde{u}_\beta^r)$. (Here $H_\beta$ stands for
$H_{E^\vee, U_\beta,\varphi_\beta}$.)
\begin{lemma}\lab{lemwinter1103}
Consider the above setting and further assume that if $e$ is a noninteger less than $-1$, then the total trivialization atlas in $\Lambda$ is GL compatible with itself. Then the linear topological isomorphism $H_\beta:
[D(U_\beta,E_{U_\beta}^\vee)]^*=D'(U_\beta,E_{U_\beta})\rightarrow (D'(\varphi_\beta(U_\beta)))^{\times r}$
restricts to a linear topological isomorphism
\begin{equation*}
\hat{H}_\beta: W^{e,q}_K(M,E;\Lambda)\rightarrow
[W^{e,q}_{\varphi_\beta(K)}(\varphi_\beta(U_\beta))]^{\times r}
\end{equation*}
\end{lemma}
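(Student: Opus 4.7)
The strategy is to recognize $\hat H_\beta$ as the restriction of the known linear topological isomorphism $H_\beta\colon D'(U_\beta,E_{U_\beta})\to[D'(\varphi_\beta(U_\beta))]^{\times r}$ (Theorem \ref{winter75} combined with Remark \ref{remfall134}) to the relevant Sobolev subspaces; since injectivity is then inherited for free, the proof reduces to showing that $\hat H_\beta$ is continuous from $W^{e,q}_K(M,E;\Lambda)$ into $[W^{e,q}_{\varphi_\beta(K)}(\varphi_\beta(U_\beta))]^{\times r}$ and that the distributional inverse $H_\beta^{-1}$ sends the target continuously back into the source.

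For the forward direction, I would pick any open $V$ with $K\subseteq V\subseteq\bar V\subseteq U_\beta$ (which exists by a standard compactness argument, e.g. applied in the chart $\varphi_\beta$ via Theorem \ref{winter2}). Every $u\in W^{e,q}_K(M,E;\Lambda)$ then has support in $\bar V\subseteq U_\beta$, so Theorem \ref{thmapp13} gives
\begin{equation*}
\|[H_\beta(u)]^i\|_{W^{e,q}(\varphi_\beta(U_\beta))}\leq\|u\|_{W^{e,q}(M,E;\Lambda)},
\end{equation*}
and part~1 of Remark \ref{remfall134} places $\textrm{supp}\,[H_\beta(u)]^i\subseteq\varphi_\beta(\textrm{supp}\,u)\subseteq\varphi_\beta(K)$. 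Summing in $i$ yields simultaneously the well-definedness of $\hat H_\beta$ and its continuity.

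For the reverse direction, given $(v^1,\ldots,v^r)$ in the target, set $u_\beta:=H_\beta^{-1}(v^1,\ldots,v^r)\in D'(U_\beta,E_{U_\beta})$. The explicit inverse formula from part~3 of Remark \ref{remfall134} shows that $u_\beta$ annihilates every test section whose support is disjoint from $K$ (since then each component $(\rho_\beta^\vee)^i\circ\xi\circ\varphi_\beta^{-1}$ vanishes on $\varphi_\beta(K)\supseteq\textrm{supp}\,v^i$), so $\textrm{supp}\,u_\beta\subseteq K$, and extension by zero produces $u:=\textrm{ext}^0_{U_\beta,M}u_\beta\in\mathcal{E}'(M,E)$ with support in $K$. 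What remains — and is the principal technical step — is to establish $\|u\|_{W^{e,q}(M,E;\Lambda)}\preceq\sum_l\|v^l\|_{W^{e,q}(\varphi_\beta(U_\beta))}$ by bounding each summand $\|[H_\alpha(\psi_\alpha u)]^l\|_{W^{e,q}(\varphi_\alpha(U_\alpha))}$. Indices $\alpha$ with $\textrm{supp}\,\psi_\alpha\cap K=\emptyset$ contribute nothing; for the rest, $\psi_\alpha u$ has compact support in $\textrm{supp}\,\psi_\alpha\cap K\subseteq U_\alpha\cap U_\beta$, and the transition identity derived in the proof of Theorem \ref{thmwinter20181} (specialized to $\tilde\Lambda=\Lambda$) yields
\begin{equation*}
[H_\alpha(\psi_\alpha u)]^l=\sum_{i=1}^r\bigl((A_{\alpha\beta})_{il}\,[H_\beta(\psi_\alpha u)]^i\bigr)\circ\varphi_\beta\circ\varphi_\alpha^{-1}
\end{equation*}
with smooth coefficients $(A_{\alpha\beta})_{il}$ on $\varphi_\beta(U_\alpha\cap U_\beta)$, while part~2 of Remark \ref{remfall134} identifies $[H_\beta(\psi_\alpha u)]^i=(\psi_\alpha\circ\varphi_\beta^{-1})v^i$. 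I would then chain the estimates: Theorem \ref{lemapp3} (or Corollary \ref{corollarywinter92a} in the noninteger $e<-1$ case) to absorb the smooth cutoff $\psi_\alpha\circ\varphi_\beta^{-1}$ and the transition coefficients; Corollary \ref{corofallextrensionzeropos1} or Theorem \ref{thmfallextrensionzeroneg1} to pass between $\varphi_\beta(U_\beta)$ and $\varphi_\beta(U_\alpha\cap U_\beta)$; and finally Theorem \ref{winter105} to transport the compactly supported distribution through the diffeomorphism $\varphi_\beta\circ\varphi_\alpha^{-1}$. The main obstacle is the noninteger $e<-1$ case, where each of these three mechanisms demands Lipschitz regularity of the Euclidean domains involved — which is exactly what the GL-compatibility hypothesis on $\Lambda$ supplies for $\varphi_\alpha(U_\alpha)$, $\varphi_\beta(U_\beta)$, $\varphi_\alpha(U_\alpha\cap U_\beta)$, and $\varphi_\beta(U_\alpha\cap U_\beta)$, keeping every cited result applicable.
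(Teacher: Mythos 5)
Your proof is correct and follows the same skeleton as the paper's: restrict $H_\beta$ to the Sobolev subspaces, verify support preservation via Remark~\ref{remfall134}, establish continuity in both directions, and let injectivity be inherited from $H_\beta$. The forward estimate, via Theorem~\ref{thmapp13}, is essentially the paper's own mechanism, since the proof of Theorem~\ref{thmapp13} already uses the paper's key device of swapping $\{\psi_\alpha\}$ for an alternative partition of unity $\{\tilde\psi_\alpha\}$ with $\tilde\psi_\beta\equiv 1$ on a neighborhood of $K$. Where you genuinely diverge is the reverse estimate. The paper deploys that same device once more: with this special $\Lambda_1$, one has $\tilde\psi_\alpha u=0$ for $\alpha\neq\beta$ and $\tilde\psi_\beta u=u$, so the entire sum defining $\|u\|_{W^{e,q}(M,E;\Lambda_1)}$ collapses to the single term $\sum_l\|v^l\|_{W^{e,q}(\varphi_\beta(U_\beta))}$, and the equivalence-of-norms theorem converts this into the desired two-sided bound with respect to $\Lambda$, yielding continuity of $\hat H_\beta^{-1}$ almost for free. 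You instead rebuild that estimate by hand, running the distributional transition identity~(\ref{winterprovlate}) with $\tilde\Lambda=\Lambda$ across each $\alpha$, then chaining the multiplication-by-smooth-cutoff, extension-by-zero, and change-of-coordinates lemmas; this is sound (and you correctly flag that GL-compatibility keeps each ingredient applicable in the noninteger $e<-1$ regime), but it amounts to re-proving the relevant special case of the equivalence-of-norms theorem rather than invoking it, so the paper's route is considerably shorter. One small citation slip: the transition identity you quote appears in the proof of the \emph{distributional}-sections equivalence theorem (the one immediately following Theorem~\ref{thmwinter20181}), not in Theorem~\ref{thmwinter20181} itself, which is the regular-sections version.
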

\begin{proof}
In order to simplify the notations we will use $(U,\varphi,\rho)$, $H$, $\hat{H}$, and $\tilde{u}^l$ instead of $(U_\beta,\varphi_\beta,\rho_\beta)$, $H_\beta$, $\hat{H}_\beta$, and $\tilde{u}^l_\beta$. In order to prove this claim, we proceed as follows:
\begin{enumerateX}
\item First we show that $\textrm{supp}\tilde{u}^l\subseteq
\varphi(K)$.
\item Next we show that if $u\in W^{e,q}_K(M,E;\Lambda)$, then {\fontsize{10}{10}{$\|u\|_{W^{e,q}(M,E;\Lambda)}\simeq \sum_{l=1}^r
\|\tilde{u}^l\|_{W^{e,q}(\varphi(U))}$}} which proves that
\begin{enumerate}[(i.)]
\item $\tilde{u}^l$ is indeed an element of $W^{e,q}(\varphi(U))$
and
\item $\hat{H}$ is continuous.
\end{enumerate}
Note that (i) together with the fact that
$\textrm{supp}\tilde{u}^l\subseteq \varphi(K)$ shows that
$\tilde{u}^l$ is indeed an element of
$W^{e,q}_{\varphi(K)}(\varphi(U))$ so $\hat{H}$ is well-defined.
\item We prove that $\hat{H}$ is injective.
\item In order to prove that $\hat{H}$ is surjective we use our explicit
formula for $H^{-1}$ (see Remark \ref{remfall134}).
\end{enumerateX}
Note that the fact that $\hat{H}$ is bijective combined with the equality $\|u\|_{W^{e,q}(M,E;\Lambda)}\simeq \sum_{l=1}^r
\|\tilde{u}^l\|_{W^{e,q}(\varphi(U))}$ implies that
$\hat{H}^{-1}$ is continuous as well.\\
Here are the proofs:
\begin{enumerateX}
\item This item is a direct consequence of item 1. in Remark
\ref{remfall134}.
\item  Define the augmented total trivialization atlas $\Lambda_1$ by $\Lambda_1=\{(U_\alpha,\varphi_\alpha,\rho_\alpha,\tilde{\psi}_\alpha)\}_{\alpha=1}^N$ where $\{\tilde{\psi}_\alpha\}_{1\leq \alpha\leq N}$ is a partition of unity subordinate
 to $\{U_\alpha\}_{1\leq \alpha\leq N}$ such that $\tilde{\psi}_\beta=1$ on a neighborhood of $K$.
 Note that for each $\alpha$, $\tilde{\psi}_\alpha \geq 0$ and $\sum_{\alpha=1}^N
 \tilde{\psi}_\alpha=1$. Thus the assumption $\tilde{\psi}_\beta=1$ on $K$ implies
 that $\tilde{\psi}_\alpha=0$ on $K$ for all $\alpha\neq\beta$. We have
 \begin{align*}
\|u\|_{W^{e,q}(M,E;\Lambda)}&\simeq \|u\|_{W^{e,q}(M,E;\Lambda_1)}\simeq\sum_{\alpha=1}^N\sum_{l=1}^r \|(H_\alpha(\tilde{\psi}_\alpha
u))^l\|_{W^{e,q}(\varphi_\alpha(U_\alpha))}\\
&=\sum_{l=1}^r \|(H(\tilde{\psi}_\beta
u))^l\|_{W^{e,q}(\varphi_\alpha(U_\alpha))}=\sum_{l=1}^r \|[H(
u)]^l\|_{W^{e,q}(\varphi_\alpha(U_\alpha))}
 \end{align*}
Note that $\textrm{supp}u\subseteq K$ and $\tilde{\psi}_\beta=1$ on $K$, so
$\tilde{\psi}_\beta u=u|_U$ as elements of $D'(U,E_U)$. Therefore $H(\tilde{\psi}_\beta
u)=H(u)=(\tilde{u}^1,\cdots,\tilde{u}^r)$.
\item $\hat{H}$ is injective because it is a restriction of the
injective map $H$.
\item Let $(v^1,\cdots,v^r)\in [W^{e,q}_{\varphi(K)}(\varphi(U))]^{\times
r}$. Our goal is to show that $H^{-1}(v^1,\cdots,v^r)\in
W^{e,q}_K(M,E;\Lambda)\simeq W^{e,q}_K(M,E;\Lambda_1)$ (this implies that $\hat{H}$ is surjective). By Remark \ref{remfall134}, for all
$\xi\in D(U,E_U^\vee)$
\begin{align*}
H^{-1}(v^1,\cdots,v^r)(\xi)=\sum_i v^i[(\rho^{\vee})^i\circ
\xi\circ \varphi^{-1}]
\end{align*}
First note it follows from Remark \ref{winter73} that $\textrm{supp}H^{-1}(v^1,\cdots,v^r)\subseteq K$; indeed,
 if $\textrm{supp}\xi\subseteq U\setminus K$, then
$\xi\circ\varphi^{-1}=0$ on $\varphi(K)$. So
$(\rho^{\vee})^i\circ \xi\circ \varphi^{-1}=0$ on $\varphi(K)$.
That is $\textrm{supp}[(\rho^{\vee})^i\circ \xi\circ
\varphi^{-1}]\subseteq \varphi(U)\setminus \varphi(K)$. Thus for
all $i$, $v^i[(\rho^{\vee})^i\circ \xi\circ \varphi^{-1}]=0$
(because by assumption $\textrm{supp}v^i\subseteq
\varphi(K)$). This shows that if $\textrm{supp}\xi\subseteq
U\setminus K$, then $H^{-1}(v^1,\cdots,v^r)(\xi)=0$. Consequently
$\textrm{supp}H^{-1}(v^1,\cdots,v^r)\subseteq K$.\\
Also we have
\begin{equation*}
\|H^{-1}(v^1,\cdots,v^r)\|_{W^{e,q}(M,E;\Lambda_1)}\simeq\sum_{l=1}^r
\|v^l\|_{W^{e,q}(\varphi(U))}<\infty
\end{equation*}
So $H^{-1}(v^1,\cdots,v^r)\in W^{e,q}(M,E;\Lambda)$.
\end{enumerateX}
\end{proof}
It is clear that
$u\in W^{e,q}(M,E;\Lambda)$ if and only if for all $\alpha$, $\psi_\alpha
u\in W^{e,q}_{K_\alpha}(M,E;\Lambda)$ where $K_\alpha$ can be taken as any compact set
such that $\textrm{supp}\psi_\alpha\subseteq K_\alpha\subseteq
U_\alpha$. In fact as a direct consequence of the definition of Sobolev
spaces and the above mentioned isomorphism we have
\begin{align*}
u\in W^{e,q}(M,E;\Lambda)&\Longleftrightarrow \forall\,1\leq \alpha\leq
N\quad H_\alpha(\psi_\alpha u)\in
[W^{e,q}_{\varphi_\alpha(\textrm{supp}\psi_\alpha)}(\varphi_\alpha(U_\alpha))]^{\times
r}\\
&\Longleftrightarrow \forall\,1\leq \alpha\leq N\quad
\psi_\alpha u\in W^{e,q}_{\textrm{supp}\psi_\alpha}(M,E;\Lambda)
\end{align*}

\subsubsection{Completeness; Density of Smooth Functions}
%\begin{itemize}
Our proofs for completeness of Sobolev spaces and density of smooth functions are based on the ideas presented in \cite{Reus1}.
 \begin{lemma}\lab{lemwinterh20}
 Let $M^n$ be a compact smooth manifold and $E\rightarrow M$ be a
vector bundle of rank $r$. Let $e\in\reals$ and $q\in (1,\infty)$. Suppose
$\Lambda=\{(U_\alpha,\varphi_\alpha,\rho_\alpha,\psi_\alpha)\}_{\alpha=1}^N$ is an augmented
total trivialization atlas for $E\rightarrow M$. If $e$ is a noninteger less than $-1$ further assume that $\Lambda$ is GL compatible with itself. Let $K_\alpha$ be a compact subset of
 $U_\alpha$ that contains the support of $\psi_\alpha$. Let $S: W^{e,q}(M,E;\Lambda)\rightarrow \prod_{\alpha=1}^N W^{e,q}_{K_\alpha}(M,E;\Lambda)
 $ be the linear map defined by $S(u)=(\psi_1 u,\cdots,\psi_N u)$. Then $S: W^{e,q}(M,E;\Lambda)\rightarrow
 S(W^{e,q}(M,E;\Lambda))\subseteq \prod_{\alpha=1}^N
 W^{e,q}_{K_\alpha}(M,E;\Lambda)$ is a linear topological isomorphism. Moreover
 $S(W^{e,q}(M,E;\Lambda))$ is closed in $\prod_{\alpha=1}^N
 W^{e,q}_{K_\alpha}(M,E;\Lambda)$.
 \end{lemma}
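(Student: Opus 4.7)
The plan is to apply Theorem \ref{thmwinter1101} with $X = W^{e,q}(M,E;\Lambda)$, $Y = \prod_{\alpha=1}^N W^{e,q}_{K_\alpha}(M,E;\Lambda)$, and $I = S$. To this end, I need to construct a continuous linear left inverse $P: Y \rightarrow X$ of $S$. The natural candidate, dictated entirely by the partition of unity property $\sum_\alpha \psi_\alpha \equiv 1$, is
\begin{equation*}
P(u_1,\ldots,u_N) := \sum_{\alpha=1}^N u_\alpha.
\end{equation*}
Then $P\circ S(u) = \sum_\alpha \psi_\alpha u = u$ for every $u \in W^{e,q}(M,E;\Lambda)$, so indeed $P\circ S = \mathrm{id}_X$.

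Next I verify continuity of the two maps. For $S$, I need to bound $\|\psi_\alpha u\|_{W^{e,q}(M,E;\Lambda)}$ by a constant multiple of $\|u\|_{W^{e,q}(M,E;\Lambda)}$ for each $\alpha$. This is exactly the content of Theorem \ref{thmwinter1100} applied with $\eta = \psi_\alpha \in C^\infty(M)$; note that the hypothesis of that theorem (that $\Lambda$ be GGL when $e$ is a noninteger less than $-1$) is automatically satisfied here, because the standing assumption that $\Lambda$ is GL compatible with itself in that case forces each triple to be GL or super nice, hence $\Lambda$ is GGL. Equipping the product space $Y$ with the natural norm $\|(u_1,\ldots,u_N)\|_Y := \sum_\alpha \|u_\alpha\|_{W^{e,q}(M,E;\Lambda)}$ (which induces the product topology), the continuity of $S$ follows immediately. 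For $P$, the triangle inequality gives
\begin{equation*}
\Bigl\|\sum_{\alpha=1}^N u_\alpha\Bigr\|_{W^{e,q}(M,E;\Lambda)} \leq \sum_{\alpha=1}^N \|u_\alpha\|_{W^{e,q}(M,E;\Lambda)} = \|(u_1,\ldots,u_N)\|_Y,
\end{equation*}
so $P$ is continuous (with operator norm at most $1$).

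With these two ingredients in place, Theorem \ref{thmwinter1101} yields at once that $S$ is a linear topological isomorphism onto its image $S(W^{e,q}(M,E;\Lambda))$ and that this image is closed in $Y$. I do not anticipate a serious obstacle in this argument; the only subtle point to flag is the verification that Theorem \ref{thmwinter1100} applies in the delicate regime $e \in \reals \setminus \mathbb{Z}$, $e < -1$, which is precisely the reason for the additional hypothesis that $\Lambda$ be GL compatible with itself. Everything else is a direct, almost formal, application of the partition of unity identity together with Theorem \ref{thmwinter1101}.
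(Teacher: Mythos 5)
Your proof is correct and follows precisely the argument the paper uses: define $P(u_1,\ldots,u_N)=\sum_\alpha u_\alpha$, note $P\circ S=\mathrm{id}$ from the partition of unity, establish continuity of $S$ via Theorem \ref{thmwinter1100} and of $P$ by the triangle inequality, and conclude with Theorem \ref{thmwinter1101}. Your additional observation that GL compatibility of $\Lambda$ with itself implies the GGL hypothesis needed in Theorem \ref{thmwinter1100} is a worthwhile verification that the paper leaves implicit.
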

 \begin{proof}${}$\\
Each component of $S$ is continuous (see Theorem
\ref{thmwinter1100}), therefore $S$ is continuous.
 Define $P: \prod_{\alpha=1}^N
W^{e,q}_{K_\alpha}(M,E)\rightarrow W^{e,q}(M,E)$ by
\begin{equation*}
P(v_1,\cdots,v_N)=\sum_i v_i
\end{equation*}
Clearly $P$ is continuous. Also $P\circ S =id$.
Now the claim follows from Theorem \ref{thmwinter1101}.
 \end{proof}
 \begin{theorem}\lab{thmwinterh20}
 Let $M^n$ be a compact smooth manifold and $E\rightarrow M$ be a
vector bundle of rank $r$. Let $e\in\reals$ and $q\in (1,\infty)$. Suppose
$\Lambda=\{(U_\alpha,\varphi_\alpha,\rho_\alpha,\psi_\alpha)\}_{\alpha=1}^N$ is an augmented
total trivialization atlas for $E\rightarrow M$. If $e$ is a noninteger less than $-1$ further assume that $\Lambda$ is GL compatible with itself. Then $W^{e,q}(M,E;\Lambda)$ is a Banach space.
 \end{theorem}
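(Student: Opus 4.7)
The plan is to bootstrap from completeness of the Euclidean Sobolev spaces $W^{e,q}(\varphi_\alpha(U_\alpha))$ (a standard fact assumed throughout the paper) via the two structural isomorphisms already established: the ``unwinding'' map of Lemma~\ref{lemwinterh20} and the local-representation isomorphism $\hat H_\alpha$ of Lemma~\ref{lemwinter1103}. Concretely, for each $\alpha$ fix a compact set $K_\alpha \subseteq U_\alpha$ with $\mathrm{supp}\,\psi_\alpha \subseteq K_\alpha$. Then Lemma~\ref{lemwinterh20} provides a linear topological isomorphism
\[
S:W^{e,q}(M,E;\Lambda)\longrightarrow S\bigl(W^{e,q}(M,E;\Lambda)\bigr)\subseteq \prod_{\alpha=1}^N W^{e,q}_{K_\alpha}(M,E;\Lambda),
\]
and moreover identifies the image as a \emph{closed} subspace of the finite product. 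Since completeness is a topological invariant (a linear homeomorphism sends Cauchy sequences to Cauchy sequences in either direction), it suffices to prove that each factor $W^{e,q}_{K_\alpha}(M,E;\Lambda)$ is a Banach space; the result then follows because a finite product of Banach spaces is Banach, and a closed subspace of a Banach space is Banach.

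To handle one factor, I would invoke Lemma~\ref{lemwinter1103}, which gives a linear topological isomorphism
\[
\hat H_\alpha: W^{e,q}_{K_\alpha}(M,E;\Lambda)\longrightarrow \bigl[W^{e,q}_{\varphi_\alpha(K_\alpha)}(\varphi_\alpha(U_\alpha))\bigr]^{\times r}.
\]
(The GL-compatibility assumption in the case $e<-1$, $e\notin\mathbb{Z}$, is precisely what makes Lemma~\ref{lemwinter1103} applicable here, and it is also what allows the norm equivalences $\|u\|_{W^{e,q}(M,E;\Lambda)} \simeq \|u\|_{W^{e,q}(M,E;\Lambda_1)}$ that were used in the supporting lemmas.) Thus the task reduces to showing that $W^{e,q}_{\varphi_\alpha(K_\alpha)}(\varphi_\alpha(U_\alpha))$ is a Banach space.

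This last point is a closed-subspace argument in the Euclidean setting. The space $W^{e,q}(\varphi_\alpha(U_\alpha))$ is a Banach space (this is standard, via the $L^p$ norm, Slobodeckij seminorm, or duality definition, as recorded in the paper). The subset $W^{e,q}_{\varphi_\alpha(K_\alpha)}(\varphi_\alpha(U_\alpha))$ consists of distributions with support in the fixed compact set $\varphi_\alpha(K_\alpha)$, and it is closed because convergence in $W^{e,q}$ implies convergence in $D'$, while the support condition ``$\mathrm{supp}\,u \subseteq \varphi_\alpha(K_\alpha)$'' is preserved under distributional limits (Theorem~\ref{winter67}, last-but-one bullet). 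Hence $W^{e,q}_{\varphi_\alpha(K_\alpha)}(\varphi_\alpha(U_\alpha))$ is Banach, so is its $r$-fold product, so is $W^{e,q}_{K_\alpha}(M,E;\Lambda)$ by $\hat H_\alpha$, so is the product $\prod_\alpha W^{e,q}_{K_\alpha}(M,E;\Lambda)$, so is the closed image $S(W^{e,q}(M,E;\Lambda))$, and finally so is $W^{e,q}(M,E;\Lambda)$ via $S^{-1}$.

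The only step that requires any care is verifying that the norm equivalence implicit in $S$ and $\hat H_\alpha$ transfers completeness in the sense of norms (not merely in the sense of TVS topologies); but this is automatic since both maps are bounded with bounded inverses, so Cauchy sequences with respect to $\|\cdot\|_{W^{e,q}(M,E;\Lambda)}$ correspond bijectively to Cauchy sequences in the target space. No genuine obstacle arises; the substantive work has already been done in Lemmas~\ref{lemwinterh20} and~\ref{lemwinter1103}, together with the closure property of distributional support.
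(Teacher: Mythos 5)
Your proposal is correct and follows the paper's proof essentially step for step: both invoke Lemma~\ref{lemwinterh20} to realize $W^{e,q}(M,E;\Lambda)$ as a closed subspace of $\prod_\alpha W^{e,q}_{K_\alpha}(M,E;\Lambda)$, then use Lemma~\ref{lemwinter1103} to identify each factor with $[W^{e,q}_{\varphi_\alpha(K_\alpha)}(\varphi_\alpha(U_\alpha))]^{\times r}$, and finish with the closed-subspace-of-Banach argument. The one thing you do that the paper leaves implicit is to justify why $W^{e,q}_{\varphi_\alpha(K_\alpha)}(\varphi_\alpha(U_\alpha))$ is itself Banach (closedness via the continuous embedding into $D'$ together with preservation of the support condition under distributional limits, per Theorem~\ref{winter67}); this is a correct and worthwhile bit of bookkeeping, but not a different route.
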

 \begin{proof}
 According to Lemma \ref{lemwinter1103}, for each $1\leq \alpha\leq N$, $W^{e,q}_{K_\alpha}(M,E;\Lambda)$ is isomorphic to the Banach space $[W^{e,q}_{\varphi_{\alpha}(K_\alpha)}(\varphi_\alpha(U_\alpha))]^{\times r}$.
 So $\prod_{\alpha=1}^N W^{e,q}_{K_\alpha}(M,E;\Lambda)$ is a Banach
 space. A closed subspace of a Banach space is Banach. Therefore
 $S(W^{e,q}(M,E;\Lambda))$ is a Banach space. Since $S$ is a linear
 topological isomorphism onto its image, $W^{e,q}(M,E;\Lambda)$ is also a Banach space.
 \end{proof}
%\end{itemize}

%\subsubsection{Density of Smooth Functions}
\begin{theorem}\lab{thmwinterh22}
Let $M^n$ be a compact smooth manifold and $E\rightarrow M$ be a
vector bundle of rank $r$. Let $e\in\reals$ and $q\in (1,\infty)$. Suppose
$\Lambda=\{(U_\alpha,\varphi_\alpha,\rho_\alpha,\psi_\alpha)\}_{\alpha=1}^N$ is an augmented
total trivialization atlas for $E\rightarrow M$. If $e$ is a noninteger less than $-1$ further assume that $\Lambda$ is GL compatible with itself. Then $D(M,E)$ is dense in $W^{e,q}(M,E;\Lambda)$.
\end{theorem}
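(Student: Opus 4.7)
The plan is to reduce the density question on $M$ to the already-established density of smooth compactly supported functions in Euclidean Sobolev spaces, using the machinery built in Lemma~\ref{lemwinterh20} and Lemma~\ref{lemwinter1103}. Let $K_\alpha := \textrm{supp}\,\psi_\alpha \subseteq U_\alpha$. Given $u \in W^{e,q}(M,E;\Lambda)$, the decomposition $u = \sum_{\alpha=1}^N \psi_\alpha u$ with $\psi_\alpha u \in W^{e,q}_{K_\alpha}(M,E;\Lambda)$ (by Theorem~\ref{thmwinter1100}) and the continuity of the map $P(v_1,\dots,v_N) = \sum_i v_i$ from Lemma~\ref{lemwinterh20} show that it suffices to approximate each $\psi_\alpha u$ separately in $W^{e,q}_{K_\alpha}(M,E;\Lambda)$ by elements of $D(M,E)$.

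Fix $\alpha$ and, using Theorem~\ref{winter41} (or directly), choose a compact set $K'_\alpha$ with $K_\alpha \subseteq \mathring{K}'_\alpha \subseteq K'_\alpha \subseteq U_\alpha$. Then $\varphi_\alpha(K_\alpha) \subseteq \mathring{\varphi_\alpha(K'_\alpha)} \subseteq \varphi_\alpha(K'_\alpha) \subseteq \varphi_\alpha(U_\alpha)$. By Lemma~\ref{lemwinter1103} the map
\begin{equation*}
\hat{H}_\alpha : W^{e,q}_{K_\alpha}(M,E;\Lambda) \longrightarrow \bigl[W^{e,q}_{\varphi_\alpha(K_\alpha)}(\varphi_\alpha(U_\alpha))\bigr]^{\times r}
\end{equation*}
is a linear topological isomorphism, so $\hat{H}_\alpha(\psi_\alpha u) = (\tilde{u}^1_\alpha,\dots,\tilde{u}^r_\alpha)$ lies in the target. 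The hypothesis that $\Lambda$ is GL-compatible with itself when $e$ is a noninteger less than $-1$ guarantees that $\varphi_\alpha(U_\alpha)$ is either $\reals^n$ or a bounded Lipschitz domain, so Theorem~\ref{lemwinter2003} applies: for each $1 \leq l \leq r$ there exists a sequence $\{\xi^{l,m}_\alpha\}_m \subseteq C^\infty_{\varphi_\alpha(K'_\alpha)}(\varphi_\alpha(U_\alpha))$ converging to $\tilde{u}^l_\alpha$ in $W^{e,q}(\varphi_\alpha(U_\alpha))$.

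Now set $\sigma^m_\alpha := \hat{H}_\alpha^{-1}\!\bigl(\xi^{1,m}_\alpha,\dots,\xi^{r,m}_\alpha\bigr) \in W^{e,q}_{K'_\alpha}(M,E;\Lambda)$, where we view $\hat{H}_\alpha$ on the enlarged space $W^{e,q}_{K'_\alpha}(M,E;\Lambda)$ (which is legitimate since enlarging the support only enlarges the image of the isomorphism). Because the tuple $(\xi^{1,m}_\alpha,\dots,\xi^{r,m}_\alpha)$ consists of smooth functions, Remark~\ref{remfall135} and the explicit formula for $H_\alpha^{-1}$ in Remark~\ref{remfall134} show that $\sigma^m_\alpha$ is a smooth regular section on $U_\alpha$ whose local components with respect to $(U_\alpha,\varphi_\alpha,\rho_\alpha)$ are precisely $\xi^{l,m}_\alpha \circ \varphi_\alpha$; since these vanish outside the compact set $\varphi_\alpha^{-1}(\varphi_\alpha(K'_\alpha)) = K'_\alpha \subset U_\alpha$, the extension by zero of $\sigma^m_\alpha$ to all of $M$ is a smooth section, i.e.\ an element of $C^\infty(M,E) = D(M,E)$. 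Continuity of $\hat{H}_\alpha^{-1}$ then yields $\sigma^m_\alpha \to \psi_\alpha u$ in $W^{e,q}(M,E;\Lambda)$, and summing over $\alpha$ produces the required smooth approximation $\sigma^m := \sum_\alpha \sigma^m_\alpha \to u$.

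The main technical obstacle is the second paragraph: verifying that the GL-compatibility hypothesis for $e$ a noninteger below $-1$ is exactly what is needed to invoke Theorem~\ref{lemwinter2003}, and checking that $\hat{H}_\alpha^{-1}$ sends tuples of smooth compactly supported Euclidean functions to genuinely smooth sections of $E$ (so that the resulting approximants truly live in $D(M,E)$ rather than merely in some Sobolev space). Once these two points are in hand, completeness of $W^{e,q}(M,E;\Lambda)$ from Theorem~\ref{thmwinterh20} is not needed for the argument itself, although it guarantees that the limit sits in the space.
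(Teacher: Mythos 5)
Your proposal is correct and follows essentially the same route as the paper's proof: decompose $u=\sum_\alpha\psi_\alpha u$, transfer each piece through the isomorphism $\hat{H}_\alpha$ of Lemma~\ref{lemwinter1103} to an $r$-tuple of compactly supported Euclidean Sobolev functions, invoke Theorem~\ref{lemwinter2003} to approximate by $C^\infty$ functions with support in a slightly larger compact set, pull back through $\hat{H}_\alpha^{-1}$, and verify smoothness of the result. The only presentational difference is that the paper sets up $\hat{H}_\alpha$ on the enlarged compact set $\bar V_\alpha$ from the outset and gives a more explicit verification of smoothness (exhibiting $f=\Phi_\alpha^{-1}\circ(\mathrm{Id},\chi\circ\varphi_\alpha)$ and checking $u_f=\hat H_\alpha^{-1}(\chi)$), whereas you enlarge the support post hoc and appeal to Remarks~\ref{remfall134} and \ref{remfall135}; both are adequate.
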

\begin{proof}
Let $K_\alpha=\textrm{supp}\psi_\alpha$. For each $1\leq \alpha\leq N$, let
$V_\alpha$ be an open set such that
\begin{equation*}
K_\alpha\subseteq V_\alpha\subseteq \bar{V}_\alpha\subseteq
U_\alpha
\end{equation*}
Suppose $u\in W^{e,q}(M,E;\Lambda)$ and let $u_\alpha=\psi_\alpha u$.
Clearly $\textrm{supp}u_\alpha\subseteq K_\alpha$. Also according
to Lemma \ref{lemwinter1103}, for each $\alpha$
there exits a linear topological isomorphism
\begin{equation*}
\hat{H}_\alpha: W^{e,q}_{\bar{V}_\alpha}(M,E)\rightarrow
[W^{e,q}_{\varphi_\alpha(\bar{V}_\alpha)}(\varphi_\alpha(U_\alpha))]^{\times
r}
\end{equation*}
Note that $\hat{H}_\alpha(u_\alpha)\in
[W^{e,q}_{\varphi_\alpha(K_\alpha)}(\varphi_\alpha(U_\alpha))]^{\times
r}$. Therefore by Lemma \ref{lemwinter2003} there exists a sequence
$\{(\eta_\alpha)_i\}$ in
$[C^\infty_{\varphi_\alpha(\bar{V}_\alpha)}(\varphi_\alpha(U_\alpha))]^{\times
r}$ (of course we view each component of $(\eta_\alpha)_i$ as a
distribution) that converges to $\hat{H}_\alpha(u_\alpha)$ in
$W^{e,q}$ norm as $i\rightarrow \infty$. Since $\hat{H}_\alpha$
is a linear topological isomorphism, we can conclude that
\begin{equation*}
\hat{H}_\alpha^{-1}((\eta_\alpha)_i)\rightarrow u_\alpha,\quad
(\textrm{in $W^{e,q}_{\bar{V}_\alpha}(M,E;\Lambda)$ as $i\rightarrow
\infty$})
\end{equation*}
(Note that if  a sequence converges in $W^{e,q}_A(M,E;\Lambda)$  where $A$
is a closed subset of $M$, it also obviously converges in
$W^{e,q}(M,E;\Lambda)$.) Let $\xi_i=\sum_{\alpha=1}^N
\hat{H}_\alpha^{-1}((\eta_\alpha)_i)$. This sum makes sense because, as we will shortly prove, each summand is in
$C_c^{\infty}(U_\alpha,E_\alpha)$ and so by extension by zero can be viewed as an element of $C^{\infty}(M,E)$.  Clearly $\xi_i\rightarrow
\sum_\alpha u_\alpha=u$ in $W^{e,q}(M,E;\Lambda)$. It remains to show
that for each $i$, $\xi_i$ is in $C^{\infty}(M,E)$. To this end, it suffices to show that if $\chi=(\chi^1,\cdots,\chi^r)\in
[C_c^{\infty}(\varphi_\alpha(U_\alpha))]^{\times r}$, then
$\hat{H}_\alpha^{-1}(\chi)$ is in
$C_c^{\infty}(U_\alpha,E_\alpha)$ and so can be considered as an
element of $C^{\infty}(M,E)$ (by extension by zero). Note that
$\hat{H}_\alpha^{-1}(\chi)$ is compactly supported in $U_\alpha$
because by definition of $\hat{H}_\alpha$ any distribution in the codomain of
$\hat{H}_\alpha^{-1}$ has compact support in $\bar{V}_\alpha$. So
we just need to prove the smoothness of
$\hat{H}_\alpha^{-1}(\chi)$. That is, we need to show that there
is a smooth section $f\in C^\infty(U_\alpha,E_{U_\alpha})$ such that
$u_f=\hat{H}_\alpha^{-1}(\chi)$. It seems that the natural
candidate for $f(x)$ should be $(\rho_\alpha|_{E_x})^{-1}\circ \chi\circ
\varphi_\alpha(x)$. In fact, if we define $f$ by this formula, then
$\hat{H}_\alpha(u_f)=H_\alpha(u_f)$ and by Remark \ref{remfall135} $H_\alpha(u_f)$ is a
distribution that corresponds to the regular function
$(\tilde{f}^1,\cdots,\tilde{f}^r)=\rho_\alpha\circ f\circ \varphi_\alpha^{-1}$.
Obviously
\begin{equation*}
\rho_\alpha\circ f\circ \varphi_\alpha^{-1}|_{\varphi_\alpha(x)}=\rho_\alpha\circ
(\rho_\alpha|_{E_x})^{-1}\circ \chi\circ \varphi_\alpha\circ
\varphi_\alpha^{-1}|_{\varphi_\alpha(x)}=\chi|_{\varphi_\alpha(x)}
\end{equation*}
So the regular section $f(x)=\rho_\alpha|_{E_x}^{-1}\circ \chi\circ
\varphi_\alpha(x)$ corresponds to $\hat{H}_\alpha^{-1}(\chi)$ and we just
need to show that $f$ is smooth; this is true because $f$ is a
composition of smooth functions. Indeed,
\begin{equation*}
f(x)=\rho_\alpha|_{E_x}^{-1}\circ \chi\circ
\varphi_\alpha(x)=\Phi_\alpha^{-1}(x,\chi\circ \varphi_\alpha(x))\Longrightarrow
f=\Phi_\alpha^{-1}\circ (Id,\chi\circ \varphi_\alpha)
\end{equation*}
and all the maps involved in the above expression are smooth.
\end{proof}

\subsubsection{Dual of Sobolev Spaces}
\begin{lemma}\lab{lemspringapr13}
Let $M^n$ be a compact smooth manifold and let $\pi:E\rightarrow M$ be a
vector bundle of rank $r$ equipped with a fiber metric
$\langle .,.\rangle_E$. Let $e\in\reals$ and $q\in (1,\infty)$. Suppose
$\Lambda=\{(U_\alpha,\varphi_\alpha,\rho_\alpha,\psi_\alpha)\}_{\alpha=1}^N$ is an augmented
total trivialization atlas for $E\rightarrow M$ which trivializes the fiber metric. If $e$ is a noninteger less than $-1$ further assume that the total trivialization atlas in $\Lambda$ is GGL.\\
Fix a positive smooth density $\mu$ on
$M$ (for instance we can equip $M$ with a smooth Riemannian
metric and consider the corresponding Riemannian density). Let
{\fontsize{10}{10}{$T:D(M,E)\rightarrow D(M,E^\vee)$}} be the map that sends $\xi$ to
$T_\xi$ where $T_\xi$ is defined by
\begin{equation*}
\forall\,x\in M\quad T_\xi(x):E_x\rightarrow \mathcal{D}_x,\quad
a\mapsto\langle a,\xi(x)\rangle_E \,\mu(x)
\end{equation*}
Then $T$ is a linear bijective continuous map. (So the adjoint of
$T$ is a well-defined bijective continuous map that can be used
to identify $D'(M,E)=[D(M,E^\vee)]^*$ with $[D(M,E)]^*$.) Moreover,
{\fontsize{10}{10}{$T: (C^{\infty}(M,E),\|.\|_{W^{e,q}(M,E;\Lambda)})\rightarrow
(C^{\infty}(M,E^\vee),\|.\|_{W^{e,q}(M,E^\vee;\Lambda^\vee)})$}} is a topological isomorphism.
\end{lemma}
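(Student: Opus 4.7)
The plan is to reduce the global assertions to a single explicit local identity which exhibits $T$, chart by chart, as componentwise multiplication by a strictly positive smooth scalar. Fix a triple $(U_\alpha,\varphi_\alpha,\rho_\alpha)$ in $\Lambda$ and let $\{s_1,\dots,s_r\}$ be the orthonormal local frame associated to $\rho_\alpha$ (orthonormal because $\Lambda$ trivializes the fiber metric, by the hypothesis and Theorem \ref{thmfalltrivializametric1}). Writing the density locally as $\mu = f_\alpha\,|dx^1\wedge\cdots\wedge dx^n|$, where $f_\alpha := \rho_{\mathcal{D},\varphi_\alpha}\circ\mu\circ\varphi_\alpha^{-1}$ is a strictly positive smooth function on $\varphi_\alpha(U_\alpha)$, and expanding $T_\xi(x)(s_l(x)) = \langle s_l(x),\xi(x)\rangle_E\,\mu(x) = \xi^l(x)\mu(x)$ through the standard trivialization $\rho_\alpha^\vee$ of the functional dual bundle, I would derive the key identity
\begin{equation*}
\rho_\alpha^{\vee,l}\circ T_\xi\circ\varphi_\alpha^{-1} \;=\; f_\alpha\cdot(\rho_\alpha^l\circ\xi\circ\varphi_\alpha^{-1}),\qquad 1\leq l\leq r.
\end{equation*}

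From this identity all three assertions would follow in sequence. Smoothness of $T_\xi$ is immediate (product of smooth functions), and continuity of $T:\mathcal{E}(M,E)\to\mathcal{E}(M,E^\vee)$ is read off from the Leibniz rule applied to the defining seminorms $\|\cdot\|_{l,\alpha,j,K}$. For bijectivity, I would define a candidate inverse pointwise by $T^{-1}\eta(x):=\textrm{flat}_E^{-1}(\phi_x)$, where $\eta(x)=\phi_x(\cdot)\mu(x)$ is the unique decomposition afforded by the nonvanishing of $\mu(x)\in\mathcal{D}_x$; the intrinsic character of this construction guarantees global consistency, and its local expression
\begin{equation*}
\rho_\alpha^l\circ(T^{-1}\eta)\circ\varphi_\alpha^{-1}\;=\;f_\alpha^{-1}\cdot(\rho_\alpha^{\vee,l}\circ\eta\circ\varphi_\alpha^{-1})
\end{equation*}
displays the inverse as componentwise multiplication by the smooth positive function $f_\alpha^{-1}$, whence $T^{-1}\eta\in\mathcal{E}(M,E)$ and $T^{-1}$ is continuous. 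Taking adjoints then identifies $D'(M,E)=[D(M,E^\vee)]^*$ with $[D(M,E)]^*$ as stated.

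For the Sobolev isomorphism, I would exploit the crucial fact that $T$ commutes with multiplication by scalars, so $T(\psi_\alpha\xi)=\psi_\alpha T_\xi$ and therefore
\begin{equation*}
\rho_\alpha^{\vee,l}\circ(\psi_\alpha T_\xi)\circ\varphi_\alpha^{-1}\;=\;f_\alpha\cdot\bigl(\rho_\alpha^l\circ(\psi_\alpha\xi)\circ\varphi_\alpha^{-1}\bigr),
\end{equation*}
the right-hand side being compactly supported inside $\varphi_\alpha(\textrm{supp}\,\psi_\alpha)\subseteq\varphi_\alpha(U_\alpha)$. Applying the multiplication-by-smooth-functions estimate on Sobolev spaces of functions with fixed compact support (Theorem \ref{lemapp3} when $e\geq 0$, $e\in\mathbb{Z}^-$, or $-1<e<0$, and Corollary \ref{corollarywinter92a} when $e$ is a noninteger below $-1$, which is precisely when GGL must be assumed so that $\varphi_\alpha(U_\alpha)$ is either bounded Lipschitz or $\reals^n$) controls the $W^{e,q}$-norm of the product by that of the second factor, with constant depending on $f_\alpha$ but not on $\xi$. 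Summing over $\alpha$ and $l$ yields $\|T_\xi\|_{W^{e,q}(M,E^\vee;\Lambda^\vee)}\preceq\|\xi\|_{W^{e,q}(M,E;\Lambda)}$; the reverse inequality comes from the same computation applied to $T^{-1}$ with multiplier $f_\alpha^{-1}$, establishing the topological isomorphism.

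The principal obstacle I anticipate is the multiplier step in the delicate range where $e$ is a noninteger below $-1$ and $\varphi_\alpha(U_\alpha)=\reals^n$: here $f_\alpha$ is merely smooth and not a priori an element of $BC^{\infty,1}(\reals^n)$, which is the class required by Theorem \ref{winter87}. This will be handled by the standard trimming device, multiplying $f_\alpha$ by a cutoff $\chi\in C_c^\infty(\varphi_\alpha(U_\alpha))$ that equals $1$ on $\varphi_\alpha(\textrm{supp}\,\psi_\alpha)$ so that $\chi f_\alpha\in BC^{\infty,1}(\reals^n)$ without altering the product against the compactly supported factor $\rho_\alpha^l\circ(\psi_\alpha\xi)\circ\varphi_\alpha^{-1}$. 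A minor additional bookkeeping issue, verifying that $\Lambda^\vee$ inherits the GGL property from $\Lambda$, is immediate because $\Lambda^\vee$ shares the same coordinate charts $(U_\alpha,\varphi_\alpha)$.
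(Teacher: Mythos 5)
Your proposal is correct and follows essentially the same route as the paper's proof: both hinge on the local identity $\rho_\alpha^{\vee,l}\circ T_\xi\circ\varphi_\alpha^{-1}=(f_\alpha\circ\varphi_\alpha^{-1})\cdot(\rho_\alpha^l\circ\xi\circ\varphi_\alpha^{-1})$ (the paper's Equation~\ref{eqnwinterdual13}), which exhibits $T$ as componentwise multiplication by a positive smooth function, and then invoke the multiplication-by-smooth-functions estimates for compactly supported elements (Theorem~\ref{lemapp3}, Corollary~\ref{corollarywinter92a}, Theorem~\ref{winter87}) to get both the $D$-continuity and the Sobolev equivalence. Your inverse via the musical isomorphism is the same construction the paper carries out via the Riesz representation theorem, and your explicit cutoff for the case of noninteger $e<-1$ with $\varphi_\alpha(U_\alpha)=\reals^n$ is a correct handling of a point the paper treats more implicitly.
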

\begin{proof}
The fact that $T$ is linear is obvious.
\begin{itemizeX}
\item \textbf{T is one-to-one:} Suppose $\xi\in D(M,E)$ is such that
$T_\xi=0$. Then
\begin{align*}
\forall\,x\in M\quad T_\xi(x)=0 &\Longrightarrow \forall\,x\in
M,\, \forall\, a\in E_x\quad [T_\xi(x)](a)=0\\
&\Longrightarrow
\forall\,x\in M,\, \forall\, a\in E_x\quad \langle
a,\xi(x)\rangle_E=0\\
&\Longrightarrow \forall\,x\in M\quad \langle
\xi(x),\xi(x)\rangle_E=0\Longrightarrow \forall\,x\in M\quad
\xi(x)=0
\end{align*}
\item \textbf{T is onto:} Let $u\in D(M,E^\vee)$. Our goal is to
show that there exists $\xi\in D(M,E)$ such that $u=T_\xi$. Note
that
\begin{equation*}
\forall\,x\in M\,\quad u(x)=T_\xi(x)\Longleftrightarrow
\forall\,x\in M\,\forall\,a\in E_x\quad \langle a,\xi(x)\rangle_E\,
\mu(x)=[u(x)](a)
\end{equation*}
Since $\mathcal{D}_x$ is $1$-dimensional and both $\mu(x)$ (which is a positive smooth density) and $[u(x)][a]$
belong to $\mathcal{D}_x,$, there exists a number $b(x,a)$ such that
\begin{equation*}
[u(x)](a)=b(x,a)\mu(x)
\end{equation*}
So we need to show that there exists $\xi\in D(M,E)$ such that
\begin{equation*}
\forall\,x\in M\, \forall\,a\in E_x\qquad \langle
a,\xi(x)\rangle_E=b(x,a)
\end{equation*}
The above equality uniquely defines a functional on $E_x$ which
gives us a unique element $\xi(x)\in E_x$ by the Riesz
representation theorem. It remains to prove that $\xi$ is smooth.
To this end, we will
show that for each $\alpha$, $\xi|_{U_\alpha}$ is smooth. Let
$(s_1,\cdots,s_r)$ be a smooth orthonormal frame for
$E_{U_\alpha}$.
\begin{equation*}
\forall\,x\in U_\alpha\quad \xi(x)=\xi^1(x)s_1(x)+\cdots+\xi^r(x)s_r(x)
\end{equation*}
It suffices to show that $\xi^1,\cdots,\xi^r$ are smooth
functions (see Theorem \ref{thmfalllocalframe21}). We have
\begin{equation*}
\xi^i(x)=\langle \xi(x),s_i(x)\rangle_E
\end{equation*}
It follows from the definition of $\xi(x)$ that
\begin{equation*}
[u(x)][s_i(x)]=\langle s_i(x),\xi(x)\rangle_E\, \mu(x)
\end{equation*}
Therefore $\xi^i(x)$ satisfies the following equality
\begin{equation*}
[u(x)][s_i(x)]=\xi^i(x)\mu(x)
\end{equation*}
That is, if we define a section of $\mathcal{D}\rightarrow U_\alpha$ by
\begin{equation*}
[u,s_i]:U_\alpha\rightarrow \mathcal{D},\quad x\mapsto [u(x)][s_i(x)]
\end{equation*}
then $\xi^i$ is the component of this section with respect to the
smooth frame $\{\mu(x)\}$ on $U_\alpha$. The smoothness of $\xi^i$ follows from the fact that if
$N$ is any manifold, $E\rightarrow N$ is a vector bundle and $u$
and $v$ are in $\mathcal{E}(N,E^\vee)$ and $\mathcal{E}(N,E)$,
respectively, then $[u,v]$ is in $\mathcal{E}(N,\mathcal{D})$; indeed, the
local representation of $[u,v]$ is $\sum_l\tilde{u}^l\tilde{v}^l$
which is a smooth function because $\tilde{u}^l$ and
$\tilde{v}^l$ are smooth functions.
\item \textbf{$T: D(M,E)\rightarrow D(M,E^\vee)$ is continuous:}\\
We make use of Theorem \ref{thmapptvconvergence1}.
 Recall that
\begin{enumerate}
\item The topology on $D(M,E)$ is induced by the seminorms:
{\fontsize{9}{10}{\begin{equation*}
\forall\,1\leq l\leq r,\forall\,1\leq \alpha \leq N,\forall\, k\in
\mathbb{N},\forall\, K\subseteq U_\alpha
\textrm{(compact)}\qquad
p_{l,\alpha,k,K}(\xi)=\|\rho_\alpha^l\circ\xi\circ\varphi_\alpha^{-1}\|_{\varphi_\alpha(K),k}
\end{equation*}}}
\item The topology on $D(M,E^\vee)$ is induced by the seminorms:
{\fontsize{9}{10}{\begin{equation*}
\forall\,1\leq l\leq r,\forall\,1\leq \alpha \leq N,\forall\, k\in
\mathbb{N},\forall\, K\subseteq U_\alpha
\textrm{(compact)}\qquad
q_{l,\alpha,k,K}(\eta)=\|(\rho_{\alpha}^\vee)^l\circ\eta\circ\varphi_\alpha^{-1}\|_{\varphi_\alpha(K),k}
\end{equation*}}}
\end{enumerate}
For all $\xi\in D(M,E)$ we have
{\fontsize{10}{10}{\begin{align*}
q_{l,\alpha,k,K}(T_\xi)=\|(\rho_{\alpha}^\vee)^l\circ
 T_\xi \circ\varphi_\alpha^{-1}\|_{\varphi_\alpha(K),k}
 =\|(\rho_{\mathcal{D},\varphi_\alpha})\circ (T_\xi\circ\varphi_\alpha^{-1}) \circ\underbrace{(\rho|_{E_x})^{-1}(e_l)}_{s_l(x)}\|_{\varphi_\alpha(K),k}
\end{align*}}}
where $(e_1,\cdots,e_r)$ is the standard basis for $\reals^r$. Let $y=\varphi_\alpha(x)$. Note that
\begin{equation*}
[T_\xi(\varphi_\alpha^{-1}(y))][s_l(x)]=\langle
s_l(x),\xi(x)\rangle_E\,\mu(x)
\end{equation*}
Therefore if we define the smooth function $f_\alpha$ on
$U_\alpha$ by $\mu(x)=f_\alpha(x)|dx^1\wedge\cdots\wedge dx^n|$,
then
{\fontsize{9}{10}{\begin{equation}\lab{eqnwinterdual13}
(\rho_{\mathcal{D},\varphi_\alpha})\circ (T_\xi\circ\varphi_\alpha^{-1})
\circ s_l(x)=\langle s_l(x),\xi(x)\rangle_E
f_\alpha(x)=\xi^l(x)f_\alpha(x)=(\rho_\alpha^l\circ \xi \circ
\varphi_\alpha^{-1}(y)) (f_\alpha \circ \varphi_\alpha^{-1}(y))
\end{equation}}}
So if we let
\begin{equation*}
C=\max_{y\in \varphi_\alpha(K),|\beta|\leq k
}|\partial^{\beta}(f_\alpha \circ \varphi_\alpha^{-1}(y))|
\end{equation*}
Then
{\fontsize{9}{10}{\begin{equation*}
q_{l,\alpha,k,K}(T_\xi)=\|(\rho_\alpha^l\circ \xi \circ
\varphi_\alpha^{-1}(y)) (f_\alpha \circ
\varphi_\alpha^{-1}(y))\|_{\varphi_\alpha(K),k}\leq C
\|\rho_\alpha^l\circ \xi \circ \varphi_\alpha^{-1}(y))
\|_{\varphi(K),k}=C\,p_{l,\alpha,k,K}(\xi)
\end{equation*}}}
\item \textbf{$T: (C^{\infty}(M,E),\|.\|_{e,q})\rightarrow
(C^{\infty}(M,E^\vee),\|.\|_{e,q})$ is a topological isomorphism:}
\begin{equation*}
\|\xi\|_{W^{e,q}(M,E;\Lambda)}=\sum_{\alpha=1}^N\sum_{l=1}^r \|\rho_\alpha^l\circ \psi_\alpha\xi\circ \varphi_\alpha^{-1}\|_{W^{e,q}(\varphi_\alpha(U_\alpha))}
\end{equation*}
\begin{align*}
\|T_\xi\|_{W^{e,q}(M,E^\vee;\Lambda^\vee)}&=\sum_{\alpha=1}^N\sum_{l=1}^r
\|(\rho_{\alpha}^\vee)^l\circ
 \psi_\alpha T_\xi
 \circ\varphi_\alpha^{-1}\|_{W^{e,q}(\varphi_\alpha(U_\alpha))}
\end{align*}
By Equation \ref{eqnwinterdual13}, we have
\begin{equation*}
(\rho_{\alpha}^\vee)^l\circ \psi_\alpha T_\xi \circ\varphi_\alpha^{-1}=\rho_{\mathcal{D},\varphi_\alpha}\circ (\psi_\alpha T_\xi\circ\varphi_\alpha^{-1})\circ s_l(x)=(\rho_\alpha^l\circ \psi_\alpha\xi \circ \varphi_\alpha^{-1})
(f_\alpha \circ
\varphi_\alpha^{-1})
\end{equation*}
Therefore
\begin{equation*}
\|T_\xi\|_{W^{e,q}(M,E^\vee;\Lambda^\vee)}=\sum_{\alpha=1}^N\sum_{l=1}^r
\|(\rho_\alpha^l\circ \psi_\alpha\xi \circ \varphi_\alpha^{-1})
(f_\alpha \circ
\varphi_\alpha^{-1})\|_{W^{e,q}(\varphi_\alpha(U_\alpha))}
\end{equation*}
Now we just need to notice that $f_\alpha \circ
\varphi_\alpha^{-1}$ is a positive function and belongs to
$C^{\infty}(\varphi_\alpha(U_\alpha))$ (so $\frac{1}{f_\alpha \circ
\varphi_\alpha^{-1}}$ is also smooth) and $\rho_\alpha^l\circ \psi_\alpha\xi \circ \varphi_\alpha^{-1}$ has support in the compact set $\varphi_\alpha(\textrm{supp}(\psi_\alpha))$ to conclude that
\begin{equation*}
\|\xi\|_{W^{e,q}(M,E;\Lambda)}\simeq \|T_\xi\|_{W^{e,q}(M,E^\vee;\Lambda^\vee)}
\end{equation*}
\end{itemizeX}
\end{proof}
\begin{lemma}\lab{lemwinterdual11}
Let $M^n$ be a compact smooth manifold and let $\pi:E\rightarrow M$ be a
vector bundle of rank $r$ equipped with a fiber metric
$\langle .,.\rangle_E$. Let $e\in\reals$ and $q\in (1,\infty)$. Suppose
$\Lambda=\{(U_\alpha,\varphi_\alpha,\rho_\alpha,\psi_\alpha)\}_{\alpha=1}^N$ is an augmented
total trivialization atlas for $E\rightarrow M$. If $e$ is a noninteger less than $-1$ further assume that the total trivialization atlas in $\Lambda$ is GGL. Then
$D(M,E)\hookrightarrow W^{e,q}(M,E)\hookrightarrow D'(M,E)$.
\end{lemma}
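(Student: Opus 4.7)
Proof plan. The inclusion $W^{e,q}(M,E;\Lambda)\subseteq D'(M,E)$ is built into Definition \ref{defwintermainsobolev}, so the task for both embeddings is continuity. Since $M$ is compact, $D(M,E)=C^\infty(M,E)=\mathcal{E}(M,E)$ is a Fr\'echet space whose topology is generated by the seminorms $\|\cdot\|_{l,\alpha,j,K}$, and bounded subsets of $D(M,E^\vee)$ are simply those that are bounded with respect to every such seminorm. The strategy in both cases is to estimate the relevant norms chart-by-chart, using the components $\rho_\alpha^l\circ(\psi_\alpha\cdot)\circ\varphi_\alpha^{-1}$ to reduce everything to Euclidean Sobolev spaces, and then to absorb the differentiations of $\psi_\alpha$, $\chi_\alpha$, $\rho_\alpha$ and $\varphi_\alpha$ into constants via the Leibniz rule on compact subsets of $\varphi_\alpha(U_\alpha)$.

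For $D(M,E)\hookrightarrow W^{e,q}(M,E;\Lambda)$, I would take $u\in\mathcal{E}(M,E)$ and expand $\|u\|_{W^{e,q}(M,E;\Lambda)}=\sum_{\alpha,l}\|\rho_\alpha^l\circ(\psi_\alpha u)\circ\varphi_\alpha^{-1}\|_{W^{e,q}(\varphi_\alpha(U_\alpha))}$, noting that each factor is a $C_c^\infty$ function supported in the fixed compact set $K_\alpha=\varphi_\alpha(\textrm{supp}\,\psi_\alpha)\subset\varphi_\alpha(U_\alpha)$. For $e\ge 0$, extension by zero (Theorem \ref{winter95}) together with Theorem \ref{winter78} on $\reals^n$ yields $\|f\|_{W^{e,q}(\Omega)}\preceq\|f\|_{j,K_\alpha}$ for $f\in C^\infty_{K_\alpha}(\Omega)$ and a suitable $j=j(e)$. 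For $e<0$, one has $\|f\|_{W^{e,q}(\Omega)}\le\|f\|_{L^q(\Omega)}$ from the duality definition and H\"older, which is again dominated by a $C^0$-seminorm on $K_\alpha$. Composing with $\varphi_\alpha^{-1}$ and $\rho_\alpha^l$ and multiplying by $\psi_\alpha$ is handled by the chain and product rules and realizes each local seminorm as $\preceq$ an $\mathcal{E}(M,E)$-seminorm of $u$; summing finishes the first embedding.

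For $W^{e,q}(M,E;\Lambda)\hookrightarrow D'(M,E)$ in the strong topology, it suffices to exhibit, for each $\varphi\in D(M,E^\vee)$, an inequality $|\langle u,\varphi\rangle|\le C\|u\|_{W^{e,q}(M,E;\Lambda)}\,p(\varphi)$ with $p$ a seminorm on $\mathcal{E}(M,E^\vee)$; taking the supremum over $\varphi$ in a bounded subset $B\subset D(M,E^\vee)$ then yields $p_B(u)\preceq\|u\|_{W^{e,q}(M,E;\Lambda)}$. I would write $\langle u,\varphi\rangle=\sum_\alpha\langle\psi_\alpha u,\varphi\rangle$, choose cutoffs $\chi_\alpha\in C_c^\infty(U_\alpha)$ with $\chi_\alpha\equiv 1$ on $\textrm{supp}\,\psi_\alpha$ so that $\langle\psi_\alpha u,\varphi\rangle=\langle\psi_\alpha u,\chi_\alpha\varphi\rangle$, and then apply the local trivialization formula from Section 6.2.2 (since $\chi_\alpha\varphi$ is supported in $U_\alpha$) to obtain
\[
\langle\psi_\alpha u,\chi_\alpha\varphi\rangle=\sum_{l=1}^r\bigl\langle[H_\alpha(\psi_\alpha u)]^l,\;g_\alpha^l\bigr\rangle_{D'(\varphi_\alpha(U_\alpha))\times D(\varphi_\alpha(U_\alpha))},\qquad g_\alpha^l=(\rho_\alpha^\vee)^l\circ(\chi_\alpha\varphi)\circ\varphi_\alpha^{-1}.
\]
Each $g_\alpha^l$ lies in $C_c^\infty(\varphi_\alpha(U_\alpha))$ with support in the fixed compact set $\varphi_\alpha(\textrm{supp}\,\chi_\alpha)$, and its $C^j$-seminorms on that set are dominated by an $\mathcal{E}(M,E^\vee)$-seminorm of $\varphi$. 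For $e\ge 0$, H\"older bounds the pairing by $\|[H_\alpha(\psi_\alpha u)]^l\|_{W^{e,q}}\,\|g_\alpha^l\|_{L^{q'}}$; for $e<0$, the definition $W^{e,q}(\Omega)=[W_0^{-e,q'}(\Omega)]^*$ bounds it by $\|[H_\alpha(\psi_\alpha u)]^l\|_{W^{e,q}}\,\|g_\alpha^l\|_{W^{-e,q'}}$, and the \emph{first embedding applied with exponent $-e$} bounds the latter by a $C^j$-seminorm of $g_\alpha^l$. The main point where the hypotheses must be watched is precisely this subcase: when $e$ is a noninteger less than $-1$, the nontrivial duality $W^{e,q}(\Omega)$--$W_0^{-e,q'}(\Omega)$ and the continuous inclusion $D(\Omega)\hookrightarrow W^{-e,q'}(\Omega)$ used to estimate $g_\alpha^l$ require $\Omega=\varphi_\alpha(U_\alpha)$ to be $\reals^n$ or a bounded Lipschitz domain, which is exactly what the GGL assumption supplies. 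Summing over $\alpha$ and $l$ then delivers the required continuity.
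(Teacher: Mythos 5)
Your proof is correct, but it follows a genuinely different route from the paper's. The paper treats this lemma as a corollary of two facts: (a) the integer case, which it cites from \cite{Reus1}, and (b) the embedding chain $D(M,E)\hookrightarrow W^{\floor{e}+1,q}\hookrightarrow W^{e,q}\hookrightarrow W^{\floor{e},q}\hookrightarrow D'(M,E)$ obtained from Theorem \ref{thmwinterembedding1}. You instead give a direct, self-contained chart-by-chart argument: for the first embedding you reduce each term $\rho_\alpha^l\circ(\psi_\alpha u)\circ\varphi_\alpha^{-1}$ to a compactly supported function on $\varphi_\alpha(U_\alpha)$ and control its $W^{e,q}$-norm via extension by zero (Theorem \ref{winter95}) and the Euclidean estimate in Theorem \ref{winter78} (for $e\geq 0$) or via the trivial estimate $\|f\|_{W^{e,q}}\leq\|f\|_{L^q}$ (for $e<0$); for the second embedding you expand the pairing $\langle u,\varphi\rangle$ using $H_\alpha^{-1}$ (Remark \ref{remfall134}) and estimate each term either by H\"older ($e\geq 0$) or by the duality $W^{e,q}=[W_0^{-e,q'}]^*$ together with the same Euclidean compact-support estimate applied to $g_\alpha^l$ (for $e<0$). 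This buys a proof that does not depend on the external integer-order result and makes the constants explicit as $\mathcal{E}$-seminorms, at the cost of length.

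One small inaccuracy in your discussion is worth flagging. You attribute the GGL hypothesis to the $e<0$ duality step, claiming the inclusion $D(\Omega)\hookrightarrow W^{-e,q'}(\Omega)$ and the pairing require $\varphi_\alpha(U_\alpha)$ to be $\reals^n$ or a Lipschitz domain. That is not actually where the constraint arises: the functions $g_\alpha^l$ have support in a fixed compact set, so Theorem \ref{winter95} (extension by zero, valid for \emph{arbitrary} open $\Omega$) together with the $\reals^n$ estimate in Theorem \ref{winter78} yields $\|g_\alpha^l\|_{W^{-e,q'}(\varphi_\alpha(U_\alpha))}\preceq\|g_\alpha^l\|_{j,K}$ without any regularity of $\partial\varphi_\alpha(U_\alpha)$, and the duality $W^{e,q}(\Omega)=[W_0^{-e,q'}(\Omega)]^*$ is simply the definition. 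In fact your direct argument does not use the GGL assumption at all. The hypothesis enters the \emph{paper's} proof through the intermediate embedding $W^{e,q}(M,E;\Lambda)\hookrightarrow W^{\floor{e},q}(M,E;\Lambda)$ of Theorem \ref{thmwinterembedding1}, whose Euclidean analogue $W^{e,q}(\Omega)\hookrightarrow W^{\floor{e},q}(\Omega)$ for noninteger $e<-1$ (equivalently $W^{-\floor{e},q'}(\Omega)\hookrightarrow W^{-e,q'}(\Omega)$ with $-e$ a noninteger exceeding $1$) does require $\Omega$ Lipschitz or $\reals^n$, cf.\ Remark \ref{winter96}.
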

\begin{proof}
For $e\in \mathbb{Z}$ the claim is proved in \cite{Reus1}. For $e\in
\reals\setminus \mathbb{Z}$ we have
\begin{align*}
& W^{e,q}(M,E;\Lambda)\hookrightarrow W^{\floor{e},q}(M,E;\Lambda)\hookrightarrow
D'(M,E)\\
& D(M,E)\hookrightarrow W^{\floor{e}+1,q}(M,E;\Lambda)\hookrightarrow W^{e,q}(M,E;\Lambda)
\end{align*}
\end{proof}
\begin{theorem}\lab{thmwinterdual22}
Let $M^n$ be a compact smooth manifold and let $\pi:E\rightarrow M$ be a
vector bundle of rank $r$ equipped with a fiber metric
$\langle .,.\rangle_E$. Let $e\in\reals$ and $q\in (1,\infty)$. Suppose
$\Lambda=\{(U_\alpha,\varphi_\alpha,\rho_\alpha,\psi_\alpha)\}_{\alpha=1}^N$ is an augmented
total trivialization atlas for $E\rightarrow M$ which trivializes the fiber metric. If $e$ is a noninteger whose magnitude is greater than $1$ further assume that the total trivialization atlas in $\Lambda$ is GL compatible with itself. Fix a positive smooth density $\mu$ on
$M$. \\Consider the $L^2$ inner product on
$D(M,E)$ defined by
\begin{equation*}
\langle u,v\rangle_2=\int_M \langle u,v\rangle_{E} \mu
\end{equation*}
Then
\begin{enumerate}[(i)]
\item $\langle .,.\rangle_2$ extends uniquely to a continuous bilinear pairing
$\langle .,.\rangle_2:W^{-e,q'}(M,E;\Lambda)\times W^{e,q}(M,E;\Lambda)\rightarrow
\reals$. (We are using the same notation (i.e. $\langle
.,.\rangle_2$) for the extended bilinear map!)
\item The map $S: W^{-e,q'}(M,E;\Lambda)\rightarrow
[W^{e,q}(M,E;\Lambda)]^*$ defined by $S(u)=l_u$ where
\begin{equation*}
l_u: W^{e,q}(M,E;\Lambda)\rightarrow \reals,\quad l_u(v)=\langle u,v\rangle_2
\end{equation*}
is a well-defined topological isomorphism.
\end{enumerate}
In particular,  $[W^{e,q}(M,E;\Lambda)]^*$ can be identified with $W^{-e,q'}(M,E;\Lambda)$.
\end{theorem}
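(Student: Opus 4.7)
The plan is to transfer the Euclidean duality $[W^{e,q}_0(\Omega)]^* = W^{-e,q'}(\Omega)$ to the manifold setting using local trivializations, and to use the bijection $T: D(M,E) \to D(M,E^\vee)$ from Lemma \ref{lemspringapr13} to interpret the $L^2$ pairing through the natural $D'(M,E)\times D(M,E^\vee)$ duality. First, for $u,v\in D(M,E)$ I would rewrite
\begin{equation*}
\langle u,v\rangle_2 = \int_M[u,T_v] = \langle u_u, T_v\rangle_{D'(M,E)\times D(M,E^\vee)},
\end{equation*}
insert the partition of unity $\{\psi_\alpha\}$, push each summand to a coordinate chart via the local representation formula from Section 6.2.2, and apply the Euclidean estimate from Remark \ref{remjanduality1} componentwise to each integral $\int_{\varphi_\alpha(U_\alpha)} (\widetilde{\psi_\alpha u})^l\, \widetilde{(T_v)}^l\, dx$. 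Using Lemma \ref{lemspringapr13} to replace $\|T_v\|_{W^{e,q}(M,E^\vee;\Lambda^\vee)}$ by $\|v\|_{W^{e,q}(M,E;\Lambda)}$ and Theorem \ref{thmwinter1100} to absorb the partition of unity, this yields $|\langle u,v\rangle_2|\preceq \|u\|_{W^{-e,q'}(M,E;\Lambda)}\|v\|_{W^{e,q}(M,E;\Lambda)}$ on $D(M,E)\times D(M,E)$. The unique continuous extension to $W^{-e,q'}\times W^{e,q}$ then follows from density of smooth sections (Theorem \ref{thmwinterh22}) via Theorem \ref{thmwinterdualab}, proving (i) and the continuity of $S$ in (ii).

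Injectivity of $S$ is immediate: if $S(u) = 0$, then $\langle u, T_v\rangle_{D'(M,E)\times D(M,E^\vee)} = 0$ for all $v \in D(M,E)$, and since $T$ is a bijection onto $D(M,E^\vee)$, $u$ is the zero distribution. The main obstacle is surjectivity. Given $\ell \in [W^{e,q}(M,E;\Lambda)]^*$, I would define $u \in D'(M,E) = [D(M,E^\vee)]^*$ by $u(\eta) := \ell(T^{-1}\eta)$ for $\eta \in D(M,E^\vee)$. This is well defined and continuous because $T: D(M,E) \to D(M,E^\vee)$ is a topological isomorphism (Lemma \ref{lemspringapr13}) and $D(M,E) \hookrightarrow W^{e,q}(M,E;\Lambda)$ continuously (Lemma \ref{lemwinterdual11}). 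By construction $S(u) = \ell$ on the dense subspace $D(M,E)$, so once $u$ is shown to lie in $W^{-e,q'}(M,E;\Lambda)$, the equality $S(u) = \ell$ extends by continuity.

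The remaining step --- proving each local component $[H_\alpha(\psi_\alpha u)]^l$ belongs to $W^{-e,q'}(\varphi_\alpha(U_\alpha))$ with a norm bounded by $\|\ell\|$ --- is the technical core. For $\xi \in C_c^\infty(\varphi_\alpha(U_\alpha))$, the identity
\begin{equation*}
\langle [H_\alpha(\psi_\alpha u)]^l, \xi\rangle = \langle u, \psi_\alpha g_{l,\xi,U_\alpha,\varphi_\alpha}\rangle = \ell\bigl(T^{-1}(\psi_\alpha g_{l,\xi,U_\alpha,\varphi_\alpha})\bigr)
\end{equation*}
reduces the estimate to the bound $\|T^{-1}(\psi_\alpha g_{l,\xi,U_\alpha,\varphi_\alpha})\|_{W^{e,q}(M,E;\Lambda)} \preceq \|\xi\|_{W^{e,q}(\varphi_\alpha(U_\alpha))}$. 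The latter follows by combining the norm equivalence in Lemma \ref{lemspringapr13} at level $W^{e,q}$, the multiplication-by-smooth-functions estimate of Theorem \ref{thmwinter1100} applied to $\psi_\alpha$, and the fact --- recorded explicitly in Section 6.2.2 --- that the only nonvanishing local component of $g_{l,\xi,U_\alpha,\varphi_\alpha}$ on the chart $(U_\alpha,\varphi_\alpha)$ is $\xi$ itself. Taking the supremum over $\xi$ and summing over $\alpha, l$ yields $\|u\|_{W^{-e,q'}(M,E;\Lambda)} \preceq \|\ell\|$, completing the proof. The hardest bookkeeping will be tracking that the various norm equivalences composed along the way do not depend on $\xi$, so that the supremum characterization of $W^{-e,q'}(\varphi_\alpha(U_\alpha))$ can actually be applied.
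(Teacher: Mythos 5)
Your proposal is correct and follows essentially the same line as the paper's proof: establish continuity of the pairing on smooth sections via a partition of unity plus the Euclidean estimate from Remark~\ref{remjanduality1}, use $T$ from Lemma~\ref{lemspringapr13} to get injectivity of $S$, and prove surjectivity by defining $u=[T^{-1}]^*(\ell|_{D(M,E)})$ and checking each local component $\xi\mapsto\ell(T^{-1}(\psi_\alpha g_{l,\xi,U_\alpha,\varphi_\alpha}))$ is continuous on $(C_c^\infty(\varphi_\alpha(U_\alpha)),\|\cdot\|_{e,q})$. The only place you compress is the injectivity step, where the identity $\langle u,v\rangle_2=\langle u,T_v\rangle_{[D(M,E^\vee)]^*\times D(M,E^\vee)}$ for $u\in W^{-e,q'}(M,E;\Lambda)$ and $v\in D(M,E)$ still needs a short density-and-continuity verification (the paper's Steps~1--2 under injectivity) before it can be invoked.
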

\begin{proof} ${}$
\begin{enumerateX}
\item By Theorem \ref{thmwinterdualab}, in order to prove (i) it is enough to show that
\begin{equation*}
\langle .,.\rangle_2: (C^{\infty}(M,E),\|.\|_{-e,q'})\times
(C^{\infty}(M,E),\|.\|_{e,q}) \rightarrow \reals
\end{equation*}
is a \textbf{continuous} bilinear map. Denote the corresponding
standard trivialization map for the density bundle $\mathcal{D}\rightarrow M$ by
$\rho_{\mathcal{D},\varphi_\alpha}$.  Let $\Lambda_1=\{(U_\alpha,\varphi_\alpha,\rho_\alpha,\tilde{\psi}_\alpha)\}_{\alpha=1}^N$ be an augmented total trivialization atlas for $E$ where
$\tilde{\psi}_{\alpha}=\frac{\psi_\alpha^3}{\sum_{\beta=1}^N
\psi_\beta^3}$. Note that $\frac{1}{\sum_{\beta=1}^N
\psi_\beta^3}\circ \varphi_\alpha^{-1}\in BC^{\infty}(\varphi_\alpha(U_\alpha))$. Let
$K_\alpha=\textrm{supp}\psi_\alpha$.  Recall that on $U_\alpha$ we may write $\mu=h_\alpha |dx^1\wedge \cdots \wedge dx^n|$ where $h_\alpha=\rho_{\mathcal{D},\varphi_\alpha}\circ \mu$ is smooth. Moreover, for any continuous function $f:M\rightarrow \reals$
\begin{align*}
\int_M f\mu &=\sum_{\alpha=1}^N\int_M \tilde{\psi}_\alpha f \mu\\
&=\sum_{\alpha=1}^N \int_{\varphi_\alpha(U_\alpha)}(\varphi_\alpha^{-1})^*(\tilde{\psi}_\alpha f\mu)\\
&=\sum_{\alpha=1}^N \int_{\varphi_\alpha(U_\alpha)}(\tilde{\psi}_\alpha f\circ\varphi_\alpha^{-1})(\varphi_\alpha^{-1})^*\mu\\
&=\sum_{\alpha=1}^N \int_{\varphi_\alpha(U_\alpha)}(\tilde{\psi}_\alpha f\circ\varphi_\alpha^{-1})(h_\alpha\circ\varphi_\alpha^{-1})\,dV\\
&\preceq \sum_{\alpha=1}^N \int_{\varphi_\alpha(U_\alpha)}(\psi_\alpha^2 f\circ\varphi_\alpha^{-1})(\psi_\alpha h_\alpha\circ\varphi_\alpha^{-1})\,dV \quad ({\fontsize{10}{10}{\textrm{$\frac{1}{\sum_{\beta=1}^N
\psi_\beta^3}\circ \varphi_\alpha^{-1}\in BC^{\infty}(\varphi_\alpha(U_\alpha))$}}})
\end{align*}
Therefore we have
\begin{align*}
|\int_M \langle u,v\rangle_E \mu|&=|\sum_{\alpha=1}^N \int_M
\tilde{\psi}_\alpha \langle u,v\rangle_E \mu |\\
&\preceq|\sum_{\alpha=1}^N
\int_{\varphi_\alpha(U_\alpha)} (\psi_\alpha^2 \langle u,v
\rangle_E\circ\varphi_\alpha^{-1})
(\psi_\alpha h_\alpha\circ\varphi_\alpha^{-1})dV |%\\
%&=|\sum_\alpha C_\alpha \int_{\varphi_\alpha(U_\alpha)}
%(\sum_{i=1}^r(\psi_\alpha\circ \varphi_\alpha^{-1}
%\tilde{u}_i)(\psi_\alpha\circ \varphi_\alpha^{-1}\tilde{v}_i))dV|
\end{align*}
Since by assumption the total trivialization atlas in $\Lambda$ trivializes the metric, we get
\begin{align*}
&|\int_M \langle u,v\rangle_E \mu|\preceq \sum_{\alpha=1}^N\sum_{i=1}^r
|\int_{\varphi_\alpha(U_\alpha)} (\psi_\alpha\circ
\varphi_\alpha^{-1} \tilde{u}_i)(\psi_\alpha\circ
\varphi_\alpha^{-1}\tilde{v}_i)(\psi_\alpha h_\alpha\circ\varphi_\alpha^{-1})dV|\\
&\stackrel{\textrm{Remark \ref{remjanduality1}}}{\preceq}
\sum_{\alpha=1}^N\sum_{i=1}^r\|(\psi_\alpha\circ \varphi_\alpha^{-1}
\tilde{u}_i)\|_{W^{-e,q'}(\varphi_\alpha(U_\alpha))}\|(\psi_\alpha\circ
\varphi_\alpha^{-1}\tilde{v}_i)(\psi_\alpha h_\alpha\circ\varphi_\alpha^{-1})\|_{W^{e,q}(\varphi_\alpha(U_\alpha))}\\
&\preceq
\sum_{\alpha=1}^N\sum_{i=1}^r\|(\psi_\alpha\circ \varphi_\alpha^{-1}
\tilde{u}_i)\|_{W^{-e,q'}(\varphi_\alpha(U_\alpha))}\|(\psi_\alpha\circ
\varphi_\alpha^{-1}\tilde{v}_i)\|_{W^{e,q}(\varphi_\alpha(U_\alpha))}\\
&\preceq \big[\sum_{\alpha=1}^N\sum_{i=1}^r\|(\psi_\alpha\circ
\varphi_\alpha^{-1}
\tilde{u}_i)\|_{W^{-e,q'}(\varphi_\alpha(U_\alpha))}\big]\big[\sum_{\alpha=1}^N\sum_{i=1}^r\|(\psi_\alpha\circ
\varphi_\alpha^{-1}\tilde{v}_i)\|_{W^{e,q}(\varphi_\alpha(U_\alpha))}\big]\\
&=\|u\|_{W^{-e,q'}(M,E;\Lambda)}\|v\|_{W^{e,q}(M,E;\Lambda)}
\end{align*}
\item For each $u\in W^{-e,q'}(M,E;\Lambda)$, $l_u$ is continuous because
$\langle.,.\rangle_2$ is continuous. So $S$ is well-defined.
\item $S$ is a  continuous linear map because
\begin{align*}
\forall\,u\in W^{-e,q'}(M,E;\Lambda)\quad &\|S(u)\|_{(W^{e,q}(M,E;\Lambda))^*}=\sup_{0\neq v\in
W^{e,q}(M,E;\Lambda)} \frac{|S(u)v|}{\|v\|_{W^{e,q}(M,E;\Lambda)}}\\
&=\sup_{0\neq v\in W^{e,q}(M,E;\Lambda)}
\frac{|\langle u,v\rangle_2|}{\|v\|_{W^{e,q}(M,E;\Lambda)}}\leq C\|u\|_{W^{-e,q'}(M,E;\Lambda)}
\end{align*}
where $C$ is the norm of the continuous bilinear form $\langle .,.
\rangle_2$.
\item $S$ is injective: suppose $u\in W^{-e,q'}(M,E;\Lambda)$ is such that $S(u)=0$, then
\begin{equation*}
\forall\, v\in W^{e,q}(M,E;\Lambda)\quad  l_u(v)=\langle u,v\rangle_2=0
\end{equation*}
We need to show that $u=0$.
\begin{itemizeX}
\item \textbf{Step 1:} For $\xi$ and $\eta$ in $D(M,E)$
we have
\begin{equation*}
\langle \xi,\eta\rangle_2=\langle
u_\xi,T\eta\rangle_{[D(M,E^\vee)]^*\times D(M,E^\vee)}
\end{equation*}
where $T$ is the map introduced in Lemma \ref{lemspringapr13}. (Note that if we identify $D(M,E)$ with a subset of
$[D(M,E^\vee)]^*$, then we may write $\xi$ instead of $u_\xi$ on
the right hand side of the above equality.) The reason is as
follows
\begin{align*}
\langle u_\xi,T\eta\rangle_{[D(M,E^\vee)]^*\times
D(M,E^\vee)}&=\int_M [T_\eta(x)][\xi(x)]\qquad \textrm{by
definition of $u_\xi$}
\end{align*}
Recall that by definition of $T_\eta$ we have
\begin{equation*}
\forall\,x\in M\quad \forall a\in E_x\qquad
[T_\eta(x)][a]=\langle a,\eta(x)\rangle_E\,\mu
\end{equation*}
In particular
\begin{equation*}
[T_\eta(x)][\xi_(x)]=\langle \xi(x),\eta(x)\rangle_E\,\mu
\end{equation*}
Therefore
\begin{equation*}
\langle u_\xi,T\eta\rangle_{[D(M,E^\vee)]^*\times
D(M,E^\vee)}=\int_M \langle \xi(x),\eta(x)\rangle_E\mu=\langle
\xi,\eta\rangle_2
\end{equation*}
\item \textbf{Step 2:} For $w\in W^{-e,q'}(M,E;\Lambda)$ and $\eta\in
D(M,E)\subseteq W^{e,q}(M,E;\Lambda)$ we have
\begin{equation*}
\langle w,\eta\rangle_2=\langle
w,T\eta\rangle_{[D(M,E^\vee)]^*\times D(M,E^\vee)}
\end{equation*}
Indeed, let $\{\xi_m\}$ be a sequence in {\fontsize{9}{9}{$D(M,E)$}} that
converges to $w$ in {\fontsize{8}{8}{$W^{-e,q'}(M,E;\Lambda)$}}. Note that
$W^{-e,q'}(M,E;\Lambda)\hookrightarrow [D(M,E^\vee)]^*$, so the sequence
converges to $w$ in $[D(M,E^\vee)]^*$ as well. By what was proved in the
first step, for all $m$
\begin{equation*}
\langle \xi_m,\eta\rangle_2=\langle
\xi_m,T\eta\rangle_{[D(M,E^\vee)]^*\times D(M,E^\vee)}
\end{equation*}
Taking the limit as $m\rightarrow \infty$ proves the claim.
\item \textbf{Step 3:}
Finally note that for all $v\in D(M,E)\subseteq W^{e,q}(M,E;\Lambda)$
\begin{equation*}
\langle T^*u,v\rangle_{[D(M,E)]^*\times D(M,E)}=\langle u,Tv
\rangle_{[D(M,E^\vee)]^*\times D(M,E^\vee)}=\langle u,v\rangle_2=0
\end{equation*}
Therefore $T^*u=0$ as an element of $[D(M,E)]^*$. $T$ is a
continuous bijective map, so $T^*$ is injective. It follows that
$u=0$ as an element of $[D(M,E^\vee)]^*$ and so $u=0$ as an element
of $W^{-e,q'}(M,E;\Lambda)$.
\end{itemizeX}
\item $S$ is surjective. Let $F\in [W^{e,q}(M,E;\Lambda)]^*$. We need to
show that there is an element $u\in W^{-e,q'}(M,E;\Lambda)$ such that
$S(u)=F$. Since $D(M,E)$ is dense in $W^{e,q}(M,E;\Lambda)$, it is
enough to show that there exists an element {\fontsize{8}{8}{$u\in W^{-e,q'}(M,E;\Lambda)$}}
with the property that
\begin{equation*}
\forall\, \xi\in D(M,E)\quad F(\xi)=\langle
u,\xi\rangle_2
\end{equation*}
Note that according to what was proved in Step 2
\begin{equation*}
\langle u,\xi\rangle_2=\langle u,T\xi\rangle_{[D(M,E^\vee)]^*\times
D(M,E^\vee)}=\langle T^*u,\xi\rangle_{[D(M,E)]^*\times D(M,E)}
\end{equation*}
So we need to show that there exists an element $u\in
W^{-e,q'}(M,E;\Lambda)$ such that
\begin{equation*}
\forall\, \xi\in D(M,E)\quad F(\xi)=\langle
T^*u,\xi\rangle_{[D(M,E)]^*\times D(M,E)}
\end{equation*}
Since $D(M,E)\hookrightarrow W^{e,q}(M,E;\Lambda)$, $F|_{D(M,E)}$ is an
element of $[D(M,E)]^*$. We let
\begin{equation*}
u:=[T^{-1}]^{*}(F|_{D(M,E)})\in [D(M,E^\vee)]^*
\end{equation*}
Clearly $u$ satisfies the desired equality (note that
$[T^{-1}]^{*}=[T^*]^{-1}$). So we just need to show that $u$ is
indeed an element of $W^{-e,q'}(M,E;\Lambda)$. Note that
\begin{equation*}
u\in W^{-e,q'}(M,E;\Lambda)\Longleftrightarrow \forall\,1\leq \alpha\leq
N\quad H_\alpha(\psi_\alpha u)\in
[W^{-e,q'}_{\varphi_\alpha(\textrm{supp}\psi_\alpha)}(\varphi_\alpha(U_\alpha))]^{\times
r }
\end{equation*}
Since $\textrm{supp}(\psi_\alpha u)\subseteq
\textrm{supp}\psi_\alpha$, it follows from Remark \ref{remfall134} that
\begin{equation*}
\forall\,1\leq l\leq r\quad \textrm{supp}([H_\alpha(\psi_\alpha
u)]^l)\subset \varphi_\alpha (\textrm{supp}\psi_\alpha)
\end{equation*}
It remains to prove that $[H_\alpha(\psi_\alpha u)]^l\in
W^{-e,q'}(\varphi_\alpha(U_\alpha))$. Note that
{\fontsize{10}{10}{\begin{align*}
& \textrm{for}\,e\geq 0\qquad  [W_{0}^{e,q}(\varphi_\alpha(U_\alpha))]^*=W^{-e,q'}(\varphi_\alpha(U_\alpha))\\
& \textrm{for}\,e< 0\qquad
[W_{0}^{e,q}(\varphi_\alpha(U_\alpha))]^*=[W^{e,q}(\varphi_\alpha(U_\alpha))]^*=
W_{0}^{-e,q'}(\varphi_\alpha(U_\alpha))\subseteq
W^{-e,q'}(\varphi_\alpha(U_\alpha))
\end{align*}}}
Consequently for all $e$
\begin{equation*}
[W^{e,q}_0(\varphi_\alpha(U_\alpha))]^*\subseteq
W^{-e,q'}(\varphi_\alpha(U_\alpha))
\end{equation*}
Therefore it is enough to show that
\begin{equation*}
[H_\alpha(\psi_\alpha u)]^l\in [W^{e,q}_0(\varphi_\alpha(U_\alpha))]^*
\end{equation*}
To this end, we need to prove that
\begin{equation*}
[H_\alpha(\psi_\alpha u)]^l:
(C_c^\infty(\varphi_\alpha(U_\alpha)),\|.\|_{e,q})\rightarrow
\reals
\end{equation*}
is continuous. For all $\xi\in
C_c^{\infty}(\varphi_\alpha(U_\alpha))$ we have
{\fontsize{10}{10}{\begin{align*}
[H_\alpha(\psi_\alpha u)]^l(\xi)&=\langle \psi_\alpha
u,g_{l,\xi,U_\alpha,\varphi_\alpha}\rangle_{[D(U_\alpha,E_{U_\alpha}^\vee)]^*\times D(U_\alpha,E_{U_\alpha}^\vee) }=\langle
 u,\psi_\alpha g_{l,\xi,U_\alpha,\varphi_\alpha}\rangle_{[D(M,E^\vee)]^*\times
D(M,E^\vee) }\\
&=\langle
 [T^{-1}]^*F|_{D(M,E)},\psi_\alpha g_{l,\xi,U_\alpha,\varphi_\alpha}\rangle_{[D(M,E^\vee)]^*\times
D(M,E^\vee)}\\
&=\langle F|_{D(M,E)},T^{-1}(\psi_\alpha
g_{l,\xi,U_\alpha,\varphi_\alpha})\rangle_{D^*(M,E)\times D(M,E)}=F(T^{-1}(\psi_\alpha
g_{l,\xi,U_\alpha,\varphi_\alpha}))
\end{align*}}}
Thus $[H_\alpha(\psi_\alpha u)]^l$ is the composition of the following
maps
{\tiny{\begin{align*}
& (C_c^\infty(\varphi_\alpha(U_\alpha)),\|.\|_{e,q})\rightarrow
[W^{e,q}_{\varphi_\alpha(\textrm{supp}\psi_\alpha)}(\varphi_\alpha(U_\alpha))]^{\times
r }\cap [C_c^{\infty}(\varphi_\alpha(U_\alpha))]^{\times
r}\rightarrow
W^{e,q}_{\textrm{supp}\psi_\alpha}(M,E^\vee;\Lambda^\vee)\cap
C^{\infty}(M,E^\vee)\\
 &\hspace{8cm}\rightarrow (C^{\infty}(M,E),\|\|_{e,q})\rightarrow \reals\\
 &\\
& \xi \mapsto (0,\cdots,0 ,\underbrace{(\psi_\alpha\circ
\varphi_\alpha^{-1})\xi}_{\textrm{$l^{th}$
position}},0,\cdots,0)\mapsto H^{-1}_{E^\vee,U_\alpha,\varphi_\alpha}(0,\cdots,0
,(\psi_\alpha\circ
\varphi_\alpha^{-1})\xi,0,\cdots,0)=\psi_\alpha g_{l,\xi,U_\alpha,\varphi_\alpha}\\
&\hspace{8cm}\mapsto
T^{-1}(\psi_\alpha g_{l,\xi,U_\alpha,\varphi_\alpha})\mapsto F(T^{-1}(\psi_\alpha
g_{l,\xi,U_\alpha,\varphi_\alpha}))
\end{align*}}}
which is a composition of continuous maps.
\item $S: W^{-e,q'}(M,E;\Lambda)\rightarrow [W^{e,q}(M,E;\Lambda)]^*$ is a
continuous bijective map, so by the Banach isomorphism theorem,
it is a topological isomorphism.
\end{enumerateX}
\end{proof}
\begin{remark}\lab{remwinterdualequiv}${}$
\begin{enumerateX}
\item The result of Theorem \ref{thmwinterdual22} remains valid even if $\Lambda=\{(U_\alpha,\varphi_\alpha,\rho_\alpha,\psi_\alpha)\}$ does not trivialize the fiber metric. Indeed, if $e$ is not a noninteger whose magnitude is greater than $1$, then the Sobolev spaces $W^{e,q}$ and $W^{-e,q'}$ are independent of the choice of augmented total trivialization atlas. If $e$ is a noninteger whose magnitude is greater than $1$, then by Theorem \ref{thmfalltrivializametric1} there exists an augmented total trivialization atlas $\tilde{\Lambda}=\{(U_\alpha,\varphi_\alpha,\tilde{\rho}_\alpha,\psi_\alpha)\}$ that trivializes the metric and has the same base atlas as $\Lambda$ (so it is GL compatible with $\Lambda$ because by assumption $\Lambda$ is GL compatible with itself). So we can replace $\Lambda$ by $\tilde{\Lambda}$.
\item Let $\Lambda$ be an augmented total trivialization atlas that is GL compatible with itself. Let $e$ be a noninteger less than $-1$ and $q\in (1,\infty)$. By Theorem  \ref{thmwinterdual22} and the above observation, $W^{e,q}(M,E;\Lambda)$ is topologically isomorphic to $[W^{-e,q'}(M,E;\Lambda)]^*$. However, the space $W^{-e,q'}(M,E;\Lambda)$ is independent of $\Lambda$. So we may conclude that even when $e$ is a noninteger less than $-1$, the space $W^{e,q}(M,E;\Lambda)$ is independent of the choice of the augmented total trivialization atlas as long as the corresponding total trivialization atlas is \textbf{GL compatible} with itself.
\end{enumerateX}
\end{remark}

\subsection{On the Relationship Between Various Characterizations}
Here we discuss the relationship between the characterizations of Sobolev spaces given in Remark \ref{remvarcharac1} and our original definition (Definition \ref{defwintermainsobolev}).
\begin{enumerate}
\item Suppose $e\geq 0$.
{\fontsize{10}{10}{\begin{equation*}
W^{e,q}(M,E;\Lambda)=\{u\in L^q(M,E): \|u\|_{W^{e,q}(M,E;\Lambda)}=\sum_{\alpha=1}^N\sum_{l=1}^r
\|(\rho_\alpha)^l\circ
 (\psi_\alpha u)\circ\varphi_\alpha^{-1}\|_{W^{e,q}(\varphi_\alpha(U_\alpha))}<\infty\}
\end{equation*}}}
As a direct consequence of Theorem \ref{thmwinterembedding1}, for $e\geq 0$, $W^{e,q}(M,E;\Lambda)\hookrightarrow L^q(M,E)$. Therefore the above characterization is completely consistent with the original definition.
\item %\textbf{Definition 2:}
{\fontsize{9.5}{10}{\begin{equation*}
W^{e,q}(M,E;\Lambda)=\{u\in D'(M,E): \|u\|_{W^{e,q}(M,E;\Lambda)}=\sum_{\alpha=1}^N\sum_{l=1}^r
\|\textrm{ext}^0_{\varphi_\alpha(U_\alpha),\reals^n}[H_\alpha
(\psi_\alpha u )]^l\|_{W^{e,q}(\reals^n)}<\infty\}
\end{equation*}}}
It follows from Corollary \ref{corofallusef1} that
\begin{itemizeX}
\item if $e$ is not a noninteger less than $-1$, then
\begin{equation*}
\|[H_\alpha
(\psi_\alpha u )]^l\|_{W^{e,q}(\varphi_\alpha(U_\alpha))}\simeq\|\textrm{ext}^0_{\varphi_\alpha(U_\alpha),\reals^n}[H_\alpha
(\psi_\alpha u )]^l\|_{W^{e,q}(\reals^n)}\, ,
\end{equation*}
\item if $e$ is a noninteger less than $-1$ and $\varphi_\alpha(U_\alpha)$ is $\reals^n$ or a bounded open set with Lipschitz continuous boundary, then again the above equality holds.
\end{itemizeX}
Therefore when $e$ is not a noninteger less than $-1$, the above characterization completely agrees with the original definition. If $e$ is a noninteger less than $-1$ and the total trivialization atlas corresponding to $\Lambda$ is GGL, then again the two definitions agree.
\item
{\fontsize{10}{10}{\begin{equation*}
W^{e,q}(M,E;\Lambda)=\{u\in D'(M,E): [H_\alpha (u|_{U_\alpha}
)]^l\in W^{e,q}_{loc}(\varphi_\alpha(U_\alpha)),\,\,\forall\, 1\leq \alpha\leq N,\,\forall\, 1\leq l\leq r\}
\end{equation*}}}
It follows immediately from Theorem \ref{thmapp12} and Corollary \ref{corapp2} that the above characterization of the set of Sobolev functions is equivalent to the set given in the original definition provided we assume that if $e$ is a noninteger less than $-1$, then $\Lambda$ is GL compatible with itself.
\item %\textbf{Definition 3:}
$W^{e,q}(M,E;\Lambda)$ is the completion of
$C^\infty (M,E)$ with respect to the norm
\begin{equation*}
\|u\|_{W^{e,q}(M,E;\Lambda)}=\sum_{\alpha=1}^N\sum_{l=1}^r \|(\rho_\alpha)^l\circ (\psi_\alpha u) \circ
\varphi_\alpha^{-1}\|_{W^{e,q}(\varphi_\alpha(U_\alpha))}
\end{equation*}
It follows from Theorem \ref{thmwinterh22} that if $e$ is not a noninteger less than $-1$ the above characterization of Sobolev spaces is equivalent to the original definition. Also if $e$ is a noninteger less than $-1$ and $\Lambda $ is GL compatible with itself the two characterizations are equivalent.
\end{enumerate}
Now we will focus on proving the equivalence of the original definition and the fifth characterization of Sobolev spaces. In what follows instead of $\|.\|_{W^{k,q}(M,E;g,\grad^E)}$ we just write $|.|_{W^{k,q}(M,E)}$. Also note that since $k$ is a nonnegative integer, the choice of the augmented total trivialization atlas in Definition \ref{defwintermainsobolev} is immaterial. Our proof follows the argument presented in \cite{grosse1} and is based on the following five facts:
\begin{itemizeX}
\item \textbf{Fact 1:} Let $u\in C^\infty (M,E)$ be such that $\textrm{supp} u\subseteq
U_\beta$ for some $1\leq \beta\leq N$. Then
\begin{equation*}
|u|_{L^q(M,E)}^q=\int_M|u|_E^q dV_g\simeq \sum_l \|
\underbrace{\rho_\beta^l\circ u}_{u^l}\circ \varphi_\beta^{-1}
\|_{L^q(\varphi_\beta(U_\beta))}^q
\end{equation*}
\item \textbf{Fact 2:} Let $u\in C^\infty (M,E)$ be such that $\textrm{supp} u\subseteq
U_\beta$ for some $1\leq \beta\leq N$. Then
\begin{equation*}
|u|_{W^{k,q}(M,E)}^q\simeq \sum_{s=0}^k\sum_{a=1}^r\sum_{1\leq
j_1,\cdots,j_s\leq n}\|\big( (\grad^E)^s u\big)^a_{j_1\cdots j_s
}\circ \varphi_\beta^{-1}\|_{L^q(\varphi_\beta(U_\beta))}^q
\end{equation*}
\begin{proof}
\begin{align*}
|u|_{W^{k,q}(M,E)}^q&\simeq
\sum_{s=0}^k|(\grad^E)^s u|^q_{L^q(M,(T^*M)^{\otimes i}\otimes
E)}\\
&\stackrel{\text{Fact
1}}{\simeq}\sum_{s=0}^k\sum_{a=1}^r\sum_{1\leq j_1,\cdots,j_s\leq
n}\|\underbrace{\big( (\grad^E)^s u\big)^a_{j_1\cdots j_s
}}_{\textrm{components w.r.t
$(U_\beta,\varphi_\beta,\rho_\beta)$}}\circ
\varphi_\beta^{-1}\|_{L^q(\varphi_\beta(U_\beta))}^q
\end{align*}
\end{proof}
\item \textbf{Fact 3:} Let $u\in C^\infty(M,E)$ be such that $\textrm{supp}\,u\subseteq
U_\beta$ for some $1\leq \beta\leq N$. Then
\begin{equation*}
\|u\|_{W^{e,q}(M,E)}\simeq \sum_{l=1}^r \|\rho_\beta^l\circ
u\circ \varphi_\beta^{-1}\|_{W^{e,q}(\varphi_\beta(U_\beta))}
\end{equation*}
\begin{proof}
Let $\{\psi_\alpha\}$ be a partition of unity such that
$\psi_\beta=1$ on $\textrm{supp}\,u$ (note that since elements of
a partition of unity are nonnegative and their sum is equal to
$1$, we can conclude that if $\alpha\neq \beta$ then
$\psi_\alpha=0$ on $\textrm{supp}\,u$). We have
\begin{align*}
\|u\|_{W^{e,q}(M,E)}&\simeq\sum_{\alpha=1}^N\sum_{l=1}^r\|\rho_\alpha^l\circ
(\psi_\alpha u)\circ \varphi_\alpha^{-1}
\|_{W^{e,q}(\varphi_\alpha(U_\alpha))}\\
&=\sum_{l=1}^r\|\rho_\beta^l\circ
(\psi_\beta u)\circ \varphi_\beta^{-1}
\|_{W^{e,q}(\varphi_\beta(U_\beta))}=\sum_{l=1}^r\|\rho_\beta^l\circ
u\circ \varphi_\beta^{-1} \|_{W^{e,q}(\varphi_\beta(U_\beta))}
\end{align*}
\end{proof}
\item \textbf{Fact 4:} Let $u\in C^\infty(M,E)$. Then for any
multi-index $\gamma$ and all $1\leq l \leq r$ we have (on any
total trivialization triple $(U,\varphi,\rho)$):
\begin{equation*}
|\partial^\gamma[\rho^l\circ u\circ \varphi^{-1}]|\preceq
\sum_{s\leq |\gamma|}\underbrace{\sum_{a=1}^r\sum_{1\leq
j_1,\cdots,j_s\leq n}}_{\textrm{sum over all components of
$(\grad^E)^s u$ }}|\big( (\grad^E)^s u\big)^a_{j_1\cdots j_s
}\circ \varphi^{-1}|
\end{equation*}
\begin{proof}
For any multi-index $\gamma=(\gamma_1,\cdots,\gamma_n)$ we define
$\textrm{seq}\,\gamma$ to be the following list of numbers
\begin{equation*}
\textrm{seq}\,\gamma=\underbrace{1\cdots 1}_{\textrm{$\gamma_1$
times}}\underbrace{2\cdots 2}_{\textrm{$\gamma_2$ times}}\cdots
\underbrace{n\cdots n}_{\textrm{$\gamma_n$ times}}
\end{equation*}
Note that there are exactly $|\gamma|=\gamma_1+\cdots+\gamma_n$
numbers in $\textrm{seq}\,\gamma$. By Observation 2 in Section 5.5.4 we have
\begin{equation*}
\big( (\grad^E)^{|\gamma|}u\big)^l_{\textrm{seq}\,\gamma}\circ
\varphi^{-1}=\partial^\gamma[\rho^l\circ u\circ
\varphi^{-1}]+\sum_{a=1}^r\sum_{\alpha:|\alpha|<|\gamma|}
C_{\alpha a}\partial^\alpha[\rho^a\circ u\circ \varphi^{-1}]
\end{equation*}
Thus
\begin{align*}
&\partial^\gamma[\rho^l\circ u\circ \varphi^{-1}]=\big(
(\grad^E)^{|\gamma|}u\big)^l_{\textrm{seq}\,\gamma}\circ
\varphi^{-1}-\sum_{a=1}^r\sum_{\alpha:|\alpha|<|\gamma|}
C_{\alpha a}\partial^\alpha[\rho^a\circ u\circ \varphi^{-1}]\\
& \partial^\alpha[\rho^a\circ u\circ \varphi^{-1}]=\big(
(\grad^E)^{|\alpha|}u\big)^a_{\textrm{seq}\,\alpha}\circ
\varphi^{-1}-\sum_{b=1}^r\sum_{\beta:|\beta|<|\alpha|} C_{\beta
b}\partial^\beta[\rho^b\circ u\circ \varphi^{-1}]\\
& \vdots
\end{align*}
where the coefficients $C_{\alpha a}$, $C_{\beta b}$, etc. are
polynomials in terms of christoffel symbols and the metric and so
they are all bounded on the compact manifold $M$. Consequently
\begin{equation*}
|\partial^\gamma[\rho^l\circ u\circ \varphi^{-1}]|\preceq
\sum_{s\leq |\gamma|}\underbrace{\sum_{a=1}^r\sum_{1\leq
j_1,\cdots,j_s\leq n}}_{\textrm{sum over all components of
$(\grad^E)^s u$ }}|\big( (\grad^E)^s u\big)^a_{j_1\cdots j_s
}\circ \varphi_\beta^{-1}|
\end{equation*}
\end{proof}
\item \textbf{Fact 5:} Let $f\in C^{\infty}(M,E)$ and $u\in
W^{k,q}(M,\tilde{E})$ where $\tilde{E}$ is another vector bundle over $M$. Then
\begin{equation*}
\|f\otimes u\|_{W^{k,q}(M,E\otimes \tilde{E})}\preceq
\|u\|_{W^{k,q}(M,\tilde{E})}
\end{equation*}
where the implicit constant may depend on $f$ but it does not depend on $u$.
\begin{proof}
Let $\{(U_\alpha,\varphi_\alpha,\rho_\alpha)\}_{1\leq\alpha\leq N}$ and $\{(U_\alpha,\varphi_\alpha,\tilde{\rho}_\alpha)\}_{1\leq \alpha\leq N}$ be total trivialization atlases for $E$ and $\tilde{E}$, respectively. Let $\{s_{\alpha,a}=\rho_\alpha^{-1}(e_a)\}_{a=1}^r$ be the corresponding local frame for $E$ on $U_\alpha$ and
$\{t_{\alpha,b}=\tilde{\rho}_\alpha^{-1}(e_b)\}_{b=1}^{\tilde{r}}$
be the corresponding local frame for $\tilde{E}$ on $U_\alpha$. Let $G:\{1,\cdots,r\}\times \{1,\cdots,\tilde{r}\}\rightarrow \{1,\cdots,r\tilde{r}\}$ be an arbitrary but fixed bijective function. Then $\{(U_\alpha,\varphi_\alpha,\hat{\rho}_\alpha)\}$ is a total trivialization atlas for $E\otimes \tilde{E}$ where
\begin{equation*}
\hat{\rho}_\alpha(s_{\alpha,a}\otimes t_{\alpha,b})=e_{G(a,b)}\,\, (\textrm{as an element of $\reals^{r\tilde{r}}$})
\end{equation*}
and it is extended by linearity to the $E\otimes \tilde{E}|_{U_\alpha}$. Now we have
\begin{align*}
\|f\otimes u\|_{W^{k,q}(M,E\otimes \tilde{E})}&=\sum_{\alpha=1}^N\sum_{a=1}^r\sum_{b=1}^{\tilde{r}}\|\hat{\rho}_{\alpha}^{a,b}\circ (\psi_\alpha f\otimes u)\circ \varphi_\alpha^{-1}\|_{W^{k,q}(\varphi_\alpha(U_\alpha))}\\
&=\sum_{\alpha=1}^N\sum_{a=1}^r\sum_{b=1}^{\tilde{r}}\|(\psi_\alpha\circ \varphi_\alpha^{-1})(f^a_\alpha\circ \varphi_\alpha^{-1})(u^b_\alpha\circ\varphi_\alpha^{-1})\|_{W^{k,q}(\varphi_\alpha(U_\alpha))}
\end{align*}
where $f=f^a_\alpha s_{\alpha,a}$ and $u=u^b_\alpha t_{\alpha,b}$ on $U_\alpha$. Clearly $f^a_\alpha\circ \varphi_\alpha^{-1}\in C^\infty(\varphi_\alpha(U_\alpha))$. Therefore
\begin{align*}
\|f\otimes u\|_{W^{k,q}(M,E\otimes \tilde{E})}\preceq \sum_{\alpha=1}^N\sum_{b=1}^{\tilde{r}}\|(\psi_\alpha\circ \varphi_\alpha^{-1})(u^b_\alpha\circ\varphi_\alpha^{-1})\|_{W^{k,q}(\varphi_\alpha(U_\alpha))}\simeq \|u\|_{W^{k,q}(M,\tilde{E})}
\end{align*}
\end{proof}
\end{itemizeX}
%\newpage
\begin{itemizeX}
\item \textbf{Part I:} First we prove that $\|u\|_{W^{k,q}(M,E)}\preceq |u|_{W^{k,q}(M,E)}$.\\
${}$\\
\begin{enumerate}
\item \textbf{Case 1:} Suppose there exists $1\leq \beta\leq N$
such that $\textrm{supp}\, u\subseteq U_\beta$. We have
\begin{align*}
\|u\|_{W^{k,q}(M,E)}^q&\stackrel{\text{Fact
3}}{\simeq}\sum_{l=1}^r\|\rho^l_\beta\circ u\circ
\varphi_\beta^{-1}
\|_{W^{k,q}(\varphi_\beta(U_\beta))}^q\simeq\sum_{l=1}^r\sum_{|\gamma|\leq
 k}\|\partial^\gamma(\rho^l_\beta\circ u\circ \varphi_\beta^{-1})
\|_{L^{q}(\varphi_\beta(U_\beta))}^q\\
& \stackrel{\text{Fact 4}}{\preceq}\sum_{l=1}^r\sum_{|\gamma|\leq
 k}\sum_{s\leq
|\gamma|}\sum_{a=1}^r\sum_{1\leq j_1,\cdots,j_s\leq n}\|\big(
(\grad^E)^s u\big)^a_{j_1\cdots j_s }\circ
\varphi_\beta^{-1}\|^q_{L^q(\varphi_\beta(U_\beta))}\\
& \preceq \sum_{s=0}^k \sum_{a=1}^r\sum_{1\leq j_1,\cdots,j_s\leq
n}\|\big( (\grad^E)^s u\big)^a_{j_1\cdots j_s }\circ
\varphi_\beta^{-1}\|^q_{L^q(\varphi_\beta(U_\beta))}\\
& \stackrel{\text{Fact 2}}{\simeq} |u|^q_{W^{k,q}(M,E)}
\end{align*}
\item \textbf{Case 2:} Now let $u$ be an arbitrary element of
$C^\infty(M,E)$. We have
\begin{align*}
\|u\|_{W^{k,q}(M,E)}&=\|\sum_{\alpha=1}^N \psi_\alpha u
\|_{W^{k,q}(M,E)}\leq \sum_{\alpha=1}^N \|\psi_\alpha u
\|_{W^{k,q}(M,E)}\\
& \preceq \sum_{\alpha=1}^N |\psi_\alpha u|_{W^{k,q}(M,E)}\qquad
(\textrm{by what was proved in Case 1})\\
& \stackrel{\text{see the Box}}{\preceq}
\sum_{\alpha=1}^N|u|_{W^{k,q}(M,E)}\simeq |u|_{W^{k,q}(M,E)}
\end{align*}
\end{enumerate}
\end{itemizeX}
{\fontsize{9}{9}{\begin{mdframed}
\begin{align*}
|\psi_\alpha u|_{W^{k,q}(M,E)}^q&=\sum_{i=0}^k\|(\grad^E)^i(\psi_\alpha u
)\|^q_{L^q(M,(T^*M)^{\otimes i}\otimes
E)}\\
&=\sum_{i=0}^k\|\sum_{j=0}^i{i\choose j }\grad^j
\psi_\alpha\otimes (\grad^E)^{i-j}u \|^q_{L^q(M,(T^*M)^{\otimes
i}\otimes E)}\\
& \stackrel{\textrm{Fact 5}}{\preceq} \sum_{i=0}^k
\sum_{j=0}^i\|(\grad^E)^{i-j}u\|^q_{L^q(M,(T^*M)^{\otimes
(i-j)}\otimes E)}\\
&\preceq \sum_{s=0}^k\|(\grad^E)^s u
\|^q_{L^q(M,(T^*M)^{\otimes
 s}\otimes E)}\simeq |u|^q_{W^{k,q}(M,E)}
\end{align*}
\end{mdframed}}}
${}$\\
\begin{itemizeX}
\item \textbf{Part II:} Now we show that $|u|_{W^{k,q}(M,E)}\preceq \|u\|_{W^{k,q}(M,E)}$.\\
${}$\\
\begin{enumerate}
\item \textbf{Case 1:} Suppose there exists $1\leq \beta \leq N$
such that $\textrm{supp}u\subseteq U_\beta$.
\begin{align*}
&|u|^q_{W^{k,q}(M,E)}\stackrel{\textrm{Fact
2}}{\simeq}\sum_{s=0}^k\sum_{a=1}^r\sum_{1\leq j_1,\cdots,j_s\leq
n}\|\big( (\grad^E)^s u\big)^a_{j_1\cdots j_s }\circ
\varphi_\beta^{-1}\|_{L^q(\varphi_\beta(U_\beta))}^q\\
& \stackrel{\textrm{Observation
1 in 5.5.4}}{=}\sum_{s=0}^k\sum_{a=1}^r\sum_{1\leq j_1,\cdots,j_s\leq
n}\|\sum_{|\eta|\leq s}\sum_{l=1}^r(C_{\eta l })^a_{j_1\cdots j_s
}\partial^\eta(\underbrace{u^l}_{\rho_\beta^l\circ u}\circ
\varphi_\beta^{-1})\|_{L^q(\varphi_\beta(U_\beta))}^q\\
&\preceq \sum_{l=1}^r\sum_{|\eta|\leq k}\|\partial^\eta(u^l\circ
\varphi_\beta^{-1}
)\|_{L^q(\varphi_\beta(U_\beta))}^q=\sum_{l=1}^r\|u^l\circ
\varphi_\beta^{-1} \|^q_{W^{k,q}(\varphi_\beta(U_\beta))}\\
&\simeq \|u\|^q_{W^{k,q}(M,E)}
\end{align*}
\item \textbf{Case 2:} Now let $u$ be an arbitrary element of
$C^\infty(M,E)$.
\begin{align*}
|u|_{W^{k,q}(M,E)}&=|\sum_{\alpha=1}^N\psi_\alpha
u|_{W^{k,q}(M,E)}\leq \sum_{\alpha=1}^N|\psi_\alpha
u|_{W^{k,q}(M,E)}\\
&\stackrel{\textrm{Case 1}}{\preceq}\sum_{\alpha=1}^N \|\psi_\alpha u
\|_{W^{k,q}(M,E)}\stackrel{\textrm{Fact
3}}{\simeq}\sum_{\alpha=1}^N\sum_{l=1}^r \|\rho^l_\alpha\circ
(\psi_\alpha u)\circ \varphi_\alpha^{-1}
\|_{W^{k,q}(\varphi_\alpha(U_\alpha))}\\
&\simeq \|u\|_{W^{k,q}(M,E)}
\end{align*}
\end{enumerate}
\end{itemizeX}
\section{Some Results on Differential Operators}
Let $M^n$ be a compact smooth manifold. Let $E$ and $\tilde{E}$ be two vector bundles over $M$ of
ranks $r$ and $\tilde{r}$, respectively. A linear operator $P:
C^{\infty} (M,E)\rightarrow \Gamma (M,\tilde{E})$ is called \textbf{local} if
\begin{equation*}
\forall\, u\in C^{\infty}(M,E)\qquad\textrm{supp}\, Pu\subseteq
\textrm{supp}\,u
\end{equation*}
If $P$ is a local operator, then it is possible to have a well-defined notion of restriction of $P$ to open sets $U\subseteq M$,
that is, if $P: C^{\infty} (M,E)\rightarrow \Gamma (M,\tilde{E})$ is
local and $U\subseteq M$ is open, then we can define a map
\begin{equation*}
P|_U: C^{\infty} (U,E_U)\rightarrow \Gamma (U,\tilde{E}_U)
\end{equation*}
with the property that
\begin{equation*}
\forall\, u\in C^{\infty}(M,E)\qquad (Pu)|_U=P|_U(u|_U)
\end{equation*}
Indeed suppose $u,\tilde{u}\in C^\infty(M,E)$ agree on $U$, then as a result of $P$ being local we have
\begin{equation*}
\textrm{supp}\,(Pu-P\tilde{u})\subseteq \textrm{supp}\,(u-\tilde{u})\subseteq M\setminus U
\end{equation*}
Therefore if $u|_U=\tilde{u}|_U$, then $(Pu)|_U=(P\tilde{u})|_U$. Thus, if $v\in C^{\infty} (U,E_U)$ and $x\in U$, we can define $(P|_U)(v)(x)$ as follows: choose any $u\in C^{\infty}(M,E)$ such that $u=v$ on a neighborhood of $x$ and then let $(P|_U)(v)(x)=(Pu)(x)$.\\\\
Recall that for any nonempty set $V$, $\textrm{Func} (V,\reals^t)$
 denotes the vector space of all functions from $V$ to $\reals^t$.
By the \textbf{local representation of $P$} with respect to the total
trivialization triples $(U,\varphi,\rho)$ of $E$ and
$(U,\varphi,\tilde{\rho})$ of $\tilde{E}$ we mean the linear
transformation $Q: C^{\infty}(\varphi(U),\reals^r)\rightarrow
\textrm{Func}(\varphi(U),\reals^{\tilde{r}})$ defined by
\begin{equation*}
Q(f)=\tilde{\rho}\circ P(\rho^{-1}\circ f\circ \varphi)\circ
\varphi^{-1}
\end{equation*}
Note that $\rho^{-1}\circ f\circ \varphi$ is a section of
$E_U\rightarrow U$. Also note that for all $u\in C^{\infty}(M,E)$
\begin{equation}\lab{eqnjan1}
\tilde{\rho}\circ (P(u|_U))\circ
\varphi^{-1}=Q(\rho\circ(u|_U)\circ \varphi^{-1} )
\end{equation}
Let's denote the components of $f\in
C^{\infty}(\varphi(U),\reals^r)$ by $(f^1,\cdots,f^r)$. Then
we can write $Q(f^1,\cdots,f^r)=(h^1,\cdots,h^{\tilde{r}})$ where for all $1\leq k\leq \tilde{r}$
\begin{equation*}
h^k=\pi_k\circ Q(f^1,\cdots,f^r)\stackrel{\textrm{$Q$ is linear}}{=}\pi_k\circ Q(f^1,0,\cdots,0)+\cdots+\pi_k\circ Q(0,\cdots,0,f^r)
\end{equation*}
So if for each $1\leq k\leq \tilde{r}$ and $1\leq i\leq r$ we define $Q_{ki}:
C^{\infty}(\varphi(U),\reals)\rightarrow
\textrm{Func}(\varphi(U),\reals)$ by
\begin{equation*}
Q_{ki}(g)=\pi_k\circ Q(0,\cdots,0,\underbrace{g}_{\textrm{$i^{th}$ position}},0,\cdots,0)
\end{equation*}
then we have
\begin{equation*}
Q(f^1,\cdots,f^r)=(\sum_{i=1}^r Q_{1i}(f^i),\cdots,\sum_{i=1}^r
Q_{\tilde{r}i}(f^i))
\end{equation*}
%where for each $1\leq k\leq \tilde{r}$, $Q_{ki}:
%C^{\infty}(\varphi(U),\reals)\rightarrow
%\textrm{Func}(\varphi(U),\reals)$ is the following composite
%function
%\begin{equation*}
%g\mapsto (0,\cdots,0,\underbrace{g}_{\textrm{ith position
%}},0,\cdots,0)\xmapsto{Q}
%Q(0,\cdots,0,g,0,\cdots,0)\xmapsto{\pi_k} \pi_k\circ
%Q(0,\cdots,0,g,0,\cdots,0)
%\end{equation*}
In particular, note that the $s^{th}$ component of $\tilde{\rho}\circ Pu \circ \varphi^{-1}$, that is $\tilde{\rho}^s\circ Pu \circ \varphi^{-1}$, is equal to the $s^{th}$ component of $Q(\rho^1\circ u\circ \varphi^{-1},\cdots,\rho^r\circ u\circ \varphi^{-1} )$ (see Equation \ref{eqnjan1}) which is equal to
\begin{equation*}
\sum_{i=1}^r Q_{si}(\rho^i\circ u\circ \varphi^{-1})
\end{equation*}
\begin{theorem}\lab{thmjan2}
Let $M^n$ be a compact smooth manifold. Let $P:C^{\infty}(M,E)\rightarrow \Gamma(M,\tilde{E})$ be a local
operator. Let $\Lambda=\{(U_\alpha,\varphi_\alpha,\rho_\alpha,\psi_\alpha)\}_{1\leq \alpha \leq N}$ and $\tilde{\Lambda}=\{(U_\alpha,\varphi_\alpha,\tilde{\rho}_\alpha,\psi_\alpha)\}_{1\leq \alpha \leq N}$ be two augmented total trivialization atlases for $E$ and $\tilde{E}$, respectively. Suppose the atlas $\{(U_\alpha,\varphi_\alpha)\}_{1\leq \alpha\leq N}$ is GL compatible with itself. For each $1\leq \alpha\leq N$, let $Q^{\alpha}$ denote the local representation of $P$
with respect to the total trivialization triples
$(U_\alpha,\varphi_\alpha, \rho_\alpha)$ and
$(U_\alpha,\varphi_\alpha, \tilde{\rho}_\alpha)$ of $E$ and
$\tilde{E}$, respectively. Suppose for each $1\leq\alpha\leq N$, $1\leq i\leq \tilde{r}$, and $1\leq j\leq r$,
\begin{enumerateX}
\item
$Q^\alpha_{ij}:
(C_c^{\infty}(\varphi_\alpha(U_\alpha)),\|.\|_{e,q})\rightarrow
W^{\tilde{e},\tilde{q}}(\varphi_\alpha(U_\alpha))$ is well-defined and continuous and does not increase support, and
\item if $\Omega=\varphi_\alpha(U_\alpha)$ or $\Omega$ is an open bounded subset of $\varphi_\alpha(U_\alpha)$ with Lipschitz continuous boundary, then for all $h\in C^{\infty}(\Omega)$ and $\eta,\psi\in C_c^{\infty}(\varphi_\alpha(U_\alpha))$ with $\eta h\in C_c^{\infty}(\Omega)$, we have
    \begin{equation}\lab{eqnspring2017}
    \|\eta[Q^\alpha_{ij},\psi]h\|_{W^{\tilde{e},\tilde{q}}(\Omega)}\preceq \|\eta h\|_{W^{e,q}(\Omega)}
    \end{equation}
\end{enumerateX}
where $[Q^\alpha_{ij},\psi]h:=Q^\alpha_{ij}(\psi h)-\psi Q^\alpha_{ij}(h)$ (the implicit constant may depend on $\eta$ and $\psi$ but it does not depend on $h$).\\ Then
\begin{itemize}
\item $P(C^{\infty}(M,E))\subseteq
W^{\tilde{e},\tilde{q}}(M,\tilde{E};\tilde{\Lambda})$
\item $P: (C^{\infty}(M,E),\| .\|_{e,q})\rightarrow
W^{\tilde{e},\tilde{q}}(M,\tilde{E};\tilde{\Lambda})$ is continuous and so it can be
extended to a continuous linear map $P: W^{e,q}(M,E;\Lambda)\rightarrow
W^{\tilde{e},\tilde{q}}(M,\tilde{E};\tilde{\Lambda})$.
\end{itemize}
\end{theorem}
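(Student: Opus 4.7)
The plan is to reduce to proving the operator bound $\|Pu\|_{W^{\tilde{e},\tilde{q}}(M,\tilde{E};\tilde{\Lambda})} \preceq \|u\|_{W^{e,q}(M,E;\Lambda)}$ for every $u\in C^\infty(M,E)$, then extend $P$ by density using Theorem \ref{thmwinterh22} (density of $C^\infty(M,E)$ in $W^{e,q}(M,E;\Lambda)$) together with the completeness of the target space (Theorem \ref{thmwinterh20}). Unpacking the target norm reduces matters to bounding each summand $\|\tilde{\rho}_\alpha^s \circ (\psi_\alpha Pu)\circ \varphi_\alpha^{-1}\|_{W^{\tilde{e},\tilde{q}}(\varphi_\alpha(U_\alpha))}$ by a constant (depending only on $\alpha,s$) times $\|u\|_{W^{e,q}(M,E;\Lambda)}$. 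Writing $\Psi_\alpha := \psi_\alpha\circ \varphi_\alpha^{-1}\in C_c^\infty(\varphi_\alpha(U_\alpha))$ and $\tilde{u}_\alpha^i := \rho_\alpha^i\circ u\circ\varphi_\alpha^{-1}\in C^\infty(\varphi_\alpha(U_\alpha))$, the fiber linearity of $\tilde{\rho}_\alpha^s$ combined with the defining identity for the local representation recasts the summand as $\|\Psi_\alpha\sum_{i=1}^r Q^\alpha_{si}(\tilde{u}_\alpha^i)\|_{W^{\tilde{e},\tilde{q}}(\varphi_\alpha(U_\alpha))}$.

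The chief obstacle is that $\tilde{u}_\alpha^i$ need not be compactly supported, so hypothesis (1) cannot be applied to $Q^\alpha_{si}(\tilde{u}_\alpha^i)$ directly. My plan is to circumvent this via the commutator decomposition
\begin{equation*}
\Psi_\alpha Q^\alpha_{si}(\tilde{u}_\alpha^i) = Q^\alpha_{si}(\Psi_\alpha \tilde{u}_\alpha^i) - [Q^\alpha_{si},\Psi_\alpha]\tilde{u}_\alpha^i.
\end{equation*}
For the first term, observe that $\Psi_\alpha \tilde{u}_\alpha^i = \rho_\alpha^i\circ(\psi_\alpha u)\circ\varphi_\alpha^{-1}\in C_c^\infty(\varphi_\alpha(U_\alpha))$; hypothesis (1) therefore gives $\|Q^\alpha_{si}(\Psi_\alpha \tilde{u}_\alpha^i)\|_{W^{\tilde{e},\tilde{q}}}\preceq \|\Psi_\alpha\tilde{u}_\alpha^i\|_{W^{e,q}}$, and the right side is at most $\|u\|_{W^{e,q}(M,E;\Lambda)}$ directly from the definition of the latter norm. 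For the commutator term, since $Q^\alpha_{si}$ does not increase support and $\Psi_\alpha$ itself is supported in $\varphi_\alpha(\textrm{supp}\,\psi_\alpha)$, both pieces of $[Q^\alpha_{si},\Psi_\alpha]\tilde{u}_\alpha^i = Q^\alpha_{si}(\Psi_\alpha\tilde{u}_\alpha^i)-\Psi_\alpha Q^\alpha_{si}(\tilde{u}_\alpha^i)$ have support in that compact set, hence so does their difference. Choosing $\eta\in C_c^\infty(\varphi_\alpha(U_\alpha))$ with $\eta\equiv 1$ on an open neighborhood of $\textrm{supp}\,\Psi_\alpha$ forces $[Q^\alpha_{si},\Psi_\alpha]\tilde{u}_\alpha^i = \eta[Q^\alpha_{si},\Psi_\alpha]\tilde{u}_\alpha^i$, and hypothesis (2) applied with $\Omega=\varphi_\alpha(U_\alpha)$, $\psi=\Psi_\alpha$, $h=\tilde{u}_\alpha^i$ yields $\|[Q^\alpha_{si},\Psi_\alpha]\tilde{u}_\alpha^i\|_{W^{\tilde{e},\tilde{q}}}\preceq \|\eta\tilde{u}_\alpha^i\|_{W^{e,q}(\varphi_\alpha(U_\alpha))}$. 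Finally, $\eta\tilde{u}_\alpha^i$ is a compactly supported cutoff of the $i$-th component of the local representation of $u$, so Corollary \ref{corapp2} bounds its norm by $\|u\|_{W^{e,q}(M,E;\Lambda)}$.

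Summing the two contributions over $\alpha, s, i$ produces the required operator bound for smooth sections, and the density-plus-completeness argument then delivers the continuous extension to all of $W^{e,q}(M,E;\Lambda)$. The most delicate ingredient is the very last step: the use of Corollary \ref{corapp2} to pass from a locally defined cutoff estimate to the global Sobolev norm rests, via Theorem \ref{thmapp13}, on the change-of-coordinates and partition-of-unity machinery for Sobolev spaces, and precisely in the regime where $e$ is a noninteger less than $-1$ this machinery demands that the base atlas be GL compatible with itself; this is exactly where the standing hypothesis on $\{(U_\alpha,\varphi_\alpha)\}$ is consumed. Hypothesis (2) is indispensable because the naive bound via hypothesis (1) fails in the absence of a compactly supported factor on $\tilde{u}_\alpha^i$; hypotheses (1) and (2) together handle the main and commutator pieces respectively, and everything else reduces to routine accounting of constants and indices.
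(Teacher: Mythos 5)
Your argument is correct, and it takes a genuinely different route from the paper's. The paper's proof inserts the identity $\sum_\beta \psi_\beta = 1$ \emph{before} decomposing with the commutator, which turns the estimate into a double sum over $(\alpha,\beta)$; each summand is then handled by restricting to $\varphi_\alpha(U_\alpha\cap U_\beta)$, applying hypothesis (2) with that Lipschitz subdomain as $\Omega$, changing coordinates from $\varphi_\alpha$ to $\varphi_\beta$ via Theorem~\ref{winter105}, and finally invoking the argument of Theorem~\ref{thmwinter20181} to absorb the transition functions $\tau_{\alpha\beta}$. You instead apply the commutator directly to $Q^\alpha_{si}(\tilde{u}_\alpha^i)$ with $\tilde{u}_\alpha^i$ the full (non-compactly-supported) local representation of $u$, so the main term $Q^\alpha_{si}(\Psi_\alpha\tilde{u}_\alpha^i)$ is controlled by hypothesis (1) and the very definition of the manifold Sobolev norm, and the commutator term is controlled by hypothesis (2) applied only with $\Omega=\varphi_\alpha(U_\alpha)$ followed by a single citation of Corollary~\ref{corapp2}. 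This is strictly cleaner: you avoid the $\beta$-sum, never change coordinates, and use only the $\Omega=\varphi_\alpha(U_\alpha)$ case of hypothesis (2). The partition-of-unity and change-of-coordinates machinery has not disappeared, of course --- it is exactly the content of the equivalence-of-norms result on which Theorem~\ref{thmapp13} and hence Corollary~\ref{corapp2} rest, and as you correctly note, this is where the GL-compatibility hypothesis is consumed when $e$ is a noninteger below $-1$. The paper's route redoes that work inline (ending with the appeal to "the exact same way as the proof of Theorem~\ref{thmwinter20181}"), whereas yours packages it into a citation; what the paper buys is a proof that makes the role of the bounded Lipschitz case of hypothesis (2) (the $\Omega=\varphi_\alpha(U_\alpha\cap U_\beta)$ instances) visible, while what you buy is a shorter argument with fewer moving parts and no reproof of the atlas-independence estimate.
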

\begin{proof}
First note that
\begin{align*}
& \parallel P
u\parallel_{W^{\tilde{e},\tilde{q}}(M,\tilde{E};\tilde{\Lambda})}=\sum_{\alpha=1}^N
\sum_{i=1}^{\tilde{r}}
\parallel \tilde{\rho}^i_{\alpha}\circ (\psi_{\alpha}(P u))\circ \varphi_\alpha^{-1}
\parallel_{W^{\tilde{e},\tilde{q}}(\varphi_\alpha(U_\alpha))}\\
&\parallel u\parallel_{W^{e,q}(M,E;\Lambda)}=\sum_{\alpha=1}^N
\sum_{j=1}^r\parallel \rho_\alpha^j\circ (\psi_{\alpha}u)\circ
\varphi_\alpha^{-1}
\parallel_{W^{e,q}(\varphi_\alpha(U_\alpha))}
\end{align*}
It is enough to show that for all $1\leq \alpha\leq N$, $1\leq
i\leq \tilde{r}$
\begin{equation*}
\parallel
\tilde{\rho}^i_{\alpha}\circ(\psi_{\alpha}(P u))\circ
\varphi_\alpha^{-1}
\parallel_{W^{\tilde{e},\tilde{q}}(\varphi_\alpha(U_\alpha))}\preceq \sum_{\beta=1}^N \sum_{j=1}^r\parallel
\rho_\beta^j\circ(\psi_{\beta}u)\circ \varphi_\beta^{-1}
\parallel_{W^{e,q}(\varphi_\beta(U_\beta))}
\end{equation*}
We have
\begin{align*}
\parallel
\tilde{\rho}_\alpha^i&\circ(\psi_{\alpha}(P u))\circ
\varphi_\alpha^{-1}
\parallel_{W^{\tilde{e},\tilde{q}}(\varphi_\alpha(U_\alpha))}=\parallel (\psi_\alpha\circ\varphi_\alpha^{-1})\cdot(\tilde{\rho}_\alpha^i\circ(P u)\circ
\varphi_\alpha^{-1})\parallel_{W^{\tilde{e},\tilde{q}}(\varphi_\alpha(U_\alpha))}\\
%&\preceq \parallel \tilde{\rho}_\alpha^i\circ(P u)\circ
%\varphi_\alpha^{-1}
%\parallel_{W^{\tilde{e},\tilde{q}}(\varphi_\alpha(U_\alpha))}\\
%&\qquad \textrm{(see Theorem \ref{thmfallmultsmooth20}; note that
%$\psi_\alpha\circ\varphi_\alpha^{-1}\in C_c^\infty(\varphi_\alpha(U_\alpha))\subseteq
%BC^{\infty,1}(\varphi_\alpha(U_\alpha))$)}\\
&\leq\sum_{j=1}^r \parallel (\psi_\alpha\circ\varphi_\alpha^{-1})\cdot
Q_{ij}^{\alpha}(\rho_{\alpha}^j\circ(\sum_{\beta=1}^N \psi_\beta
u)\circ \varphi_\alpha^{-1})
\parallel_{W^{\tilde{e},\tilde{q}}(\varphi_\alpha(U_\alpha))}\\
& {\fontsize{10}{10}{\textrm{(see the paragraph above Theorem \ref{thmjan2})}}}\\
&\leq \sum_{\beta=1}^N \sum_{j=1}^r \parallel (\psi_\alpha\circ\varphi_\alpha^{-1})\cdot
Q_{ij}^{\alpha}(\rho_{\alpha}^j\circ(\psi_\beta u)\circ
\varphi_\alpha^{-1})
\parallel_{W^{\tilde{e},\tilde{q}}(\varphi_\alpha(U_\alpha))}%\\
\end{align*}
\begin{align*}
&\preceq  \sum_{\beta=1}^N \sum_{j=1}^r\bigg(\parallel
Q_{ij}^{\alpha}(\rho_{\alpha}^j\circ(\psi_\alpha\psi_\beta u)\circ
\varphi_\alpha^{-1})
\parallel_{W^{\tilde{e},\tilde{q}}(\varphi_\alpha(U_\alpha))}\\
&\hspace{3cm}+\parallel
[Q_{ij}^{\alpha},\psi_\alpha\circ\varphi_\alpha^{-1}](\rho_{\alpha}^j\circ(\psi_\beta u)\circ
\varphi_\alpha^{-1})
\parallel_{W^{\tilde{e},\tilde{q}}(\varphi_\alpha(U_\alpha))}\bigg)\\
&\preceq  \sum_{\beta=1}^N \sum_{j=1}^r\parallel
\rho_{\alpha}^j\circ(\psi_\alpha\psi_\beta u)\circ \varphi_\alpha^{-1}
\parallel_{W^{e,q}(\varphi_\alpha(U_\alpha))}\\
&\hspace{3cm}+\sum_{\beta=1}^N \sum_{j=1}^r  \parallel
[Q_{ij}^{\alpha},\psi_\alpha\circ\varphi_\alpha^{-1}](\rho_{\alpha}^j\circ(\psi_\beta u)\circ
\varphi_\alpha^{-1})
\parallel_{W^{\tilde{e},\tilde{q}}(\varphi_\alpha(U_\alpha))}
\end{align*}
Note that $\rho_{\alpha}^j\circ(\psi_\alpha\psi_\beta u)\circ \varphi_\alpha^{-1}=(\psi_\alpha\psi_\beta\circ\varphi_\alpha^{-1})(\rho_{\alpha}^j\circ u\circ \varphi_\alpha^{-1})$ and $[Q_{ij}^{\alpha},\psi_\alpha\circ\varphi_\alpha^{-1}](\rho_{\alpha}^j\circ(\psi_\beta u)\circ
\varphi_\alpha^{-1})$ both have compact support in $\varphi_\alpha(U_\alpha\cap U_\beta)$. So it follows from Corollary \ref{corofallusef1} that
\begin{align*}
&\parallel
\rho_{\alpha}^j\circ(\psi_\alpha\psi_\beta u)\circ \varphi_\alpha^{-1}
\parallel_{W^{e,q}(\varphi_\alpha(U_\alpha))}\simeq\parallel
\rho_{\alpha}^j\circ(\psi_\alpha\psi_\beta u)\circ \varphi_\alpha^{-1}
\parallel_{W^{e,q}(\varphi_\alpha(U_\alpha\cap U_\beta))}\\
& \parallel
[Q_{ij}^{\alpha},\psi_\alpha\circ\varphi_\alpha^{-1}](\rho_{\alpha}^j\circ(\psi_\beta u)\circ
\varphi_\alpha^{-1})
\parallel_{W^{\tilde{e},\tilde{q}}(\varphi_\alpha(U_\alpha))}\\
&\hspace{3.5cm}\simeq \parallel
[Q_{ij}^{\alpha},\psi_\alpha\circ\varphi_\alpha^{-1}](\rho_{\alpha}^j\circ(\psi_\beta u)\circ
\varphi_\alpha^{-1})
\parallel_{W^{\tilde{e},\tilde{q}}(\varphi_\alpha(U_\alpha\cap U_\beta))}
\end{align*}
Let $\xi\in C_c^\infty(U_\alpha)$ be such that $\xi=1$ on $\textrm{supp}\,\psi_\alpha$. Clearly we have
\begin{align*}
&\parallel
[Q_{ij}^{\alpha},\psi_\alpha\circ\varphi_\alpha^{-1}](\rho_{\alpha}^j\circ(\psi_\beta u)\circ
\varphi_\alpha^{-1})
\parallel_{W^{\tilde{e},\tilde{q}}(\varphi_\alpha(U_\alpha\cap U_\beta))}\\
&\hspace{3cm}=
\parallel
(\xi\circ \varphi_\alpha^{-1})[Q_{ij}^{\alpha},\psi_\alpha\circ\varphi_\alpha^{-1}](\rho_{\alpha}^j\circ(\psi_\beta u)\circ
\varphi_\alpha^{-1})
\parallel_{W^{\tilde{e},\tilde{q}}(\varphi_\alpha(U_\alpha\cap U_\beta))}\\
&\hspace{2.5cm}\stackrel{\textrm{Equation \ref{eqnspring2017}}}{\preceq} \parallel
\rho_{\alpha}^j\circ(\xi\psi_\beta u)\circ
\varphi_\alpha^{-1}
\parallel_{W^{e,q}(\varphi_\alpha(U_\alpha\cap U_\beta))}
\end{align*}
%Also note that
%{\fontsize{9}{9}{\begin{align*}
%& \parallel
%\rho_{\alpha}^j\circ(\psi_\alpha\psi_\beta u)\circ \varphi_\alpha^{-1}
%\parallel_{W^{e,q}(\varphi_\alpha(U_\alpha\cap U_\beta))}\stackrel{\textrm{Theorem %\ref{thmfallmultsmooth20}}}{\preceq} \parallel
%\rho_{\alpha}^j\circ(\psi_\beta u)\circ \varphi_\alpha^{-1}
%\parallel_{W^{e,q}(\varphi_\alpha(U_\alpha\cap U_\beta))}\\
%& \parallel
%[Q_{ij}^{\alpha},\psi_\alpha\circ\varphi_\alpha^{-1}](\rho_{\alpha}^j\circ(\psi_\beta u)\circ
%\varphi_\alpha^{-1})
%\parallel_{W^{\tilde{e},\tilde{q}}(\varphi_\alpha(U_\alpha\cap U_\beta))}\stackrel{\textrm{Equation %\ref{eqnspring2017}}}{\preceq} \parallel
%\rho_{\alpha}^j\circ(\psi_\beta u)\circ
%\varphi_\alpha^{-1}
%\parallel_{W^{e,q}(\varphi_\alpha(U_\alpha\cap U_\beta))}
%\end{align*}}}
Therefore
\begin{align*}
\parallel
\tilde{\rho}^i_{\alpha}&\circ(\psi_{\alpha}(P u))\circ
\varphi_\alpha^{-1}
\parallel_{W^{\tilde{e},\tilde{q}}(\varphi_\alpha(U_\alpha))}\\
&\preceq
\sum_{\beta=1}^N\sum_{j=1}^r\parallel
\rho_{\alpha}^j\circ(\psi_\alpha\psi_\beta u)\circ \varphi_\alpha^{-1}
\parallel_{W^{e,q}(\varphi_\alpha(U_\alpha\cap U_\beta))}\\
&\hspace{2cm}+\sum_{\beta=1}^N\sum_{j=1}^r\parallel
\rho_{\alpha}^j\circ(\xi\psi_\beta u)\circ \varphi_\alpha^{-1}
\parallel_{W^{e,q}(\varphi_\alpha(U_\alpha\cap U_\beta))}\\
&=\sum_{\beta=1}^N\sum_{j=1}^r\parallel
\rho_\alpha^j\circ(\psi_\alpha\psi_\beta u)\circ
\varphi_\beta^{-1}\circ\varphi_\beta\circ\varphi_\alpha^{-1}
\parallel_{W^{e,q}(\varphi_\alpha(U_\alpha\cap U_\beta))}\\
&\hspace{2cm} +\sum_{\beta=1}^N\sum_{j=1}^r\parallel
\rho_\alpha^j\circ(\xi\psi_\beta u)\circ
\varphi_\beta^{-1}\circ\varphi_\beta\circ\varphi_\alpha^{-1}
\parallel_{W^{e,q}(\varphi_\alpha(U_\alpha\cap U_\beta))}\\
&\stackrel{\textrm{Theorem \ref{winter105}}}{\preceq} \sum_{\beta=1}^N\sum_{j=1}^r\parallel
\rho_\alpha^j\circ(\psi_\alpha\psi_\beta u)\circ \varphi_\beta^{-1}
\parallel_{W^{e,q}(\varphi_\beta(U_\alpha\cap U_\beta))}\\
&\hspace{3cm} +\sum_{\beta=1}^N\sum_{j=1}^r\parallel
\rho_\alpha^j\circ(\xi\psi_\beta u)\circ \varphi_\beta^{-1}
\parallel_{W^{e,q}(\varphi_\beta(U_\alpha\cap U_\beta))}%\\
\end{align*}
So it is enough to prove that $\parallel
\rho_\alpha^j\circ(\psi_\alpha\psi_\beta u)\circ \varphi_\beta^{-1}
\parallel_{W^{e,q}(\varphi_\beta(U_\alpha\cap U_\beta))}$ and  $\parallel
\rho_\alpha^j\circ(\xi\psi_\beta u)\circ \varphi_\beta^{-1}
\parallel_{W^{e,q}(\varphi_\beta(U_\alpha\cap U_\beta))}$ can be bounded by $\sum_{\beta=1}^N \sum_{j=1}^r\parallel
\rho_\beta^j\circ(\psi_{\beta}u)\circ \varphi_\beta^{-1}
\parallel_{W^{e,q}(\varphi_\beta(U_\beta))}$. Since this can be done in the exact same way as the proof of Theorem \ref{thmwinter20181}, we do not repeat the argument here.
\end{proof}

Here we will discuss one simple application of the above theorem. Let $(M^n,g)$ be a compact Riemannian manifold with $g\in W^{s,p}(M,T^2M)$,
$sp>n$, and $s\geq 1$. Consider $d: C^{\infty}(M)\rightarrow
C^{\infty}(T^{*}M)$. The local representations are all assumed to be with respect to charts in a super nice total trivialization atlas that is GL compatible with itself. The local representation of $d$ is
$Q:C^{\infty}(\varphi(U))\rightarrow
C^{\infty}(\varphi(U),\reals^n)$ which is defined by
\begin{align*}
Q(f)(a)&=\tilde{\rho}\circ d(\rho^{-1}\circ f \circ \varphi)\circ
\varphi^{-1}(a)\\
&=\tilde{\rho}\circ (\frac{\partial f}{\partial
x^i}|_{\varphi(\varphi^{-1}(a))}dx^i|_{\varphi^{-1}(a)})\\
&=(\frac{\partial f}{\partial x^1}|_a,\cdots,\frac{\partial
f}{\partial x^n}|_a)
\end{align*}
Here we used $\rho=Id$ and the fact that if $g:M\rightarrow
\reals$ is smooth, then
\begin{equation*}
(dg)(p)=\frac{\partial (g\circ \varphi^{-1})}{\partial
x^i}|_{\varphi(p)}dx^i|_p
\end{equation*}
Clearly each component of $Q$ is a continuous operator from
$(C_c^{\infty}(\varphi(U)),\|.\|_{e,q})$ to
$W^{e-1,q}(\varphi(U))$ (see Theorem \ref{winter88}; note that $\varphi(U)=\reals^n$). Also considering that
\begin{equation*}
\forall\,1\leq i\leq n\qquad Q_{i1}(h)=\frac{\partial h}{\partial x^i},\quad [Q_{i1},\psi]h=\frac{\partial \psi}{\partial x^i}h
\end{equation*}
the required property for $[Q_{i1},\psi]$ holds true. Hence $d$ can be viewed as a continuous
operator from $W^{e,q}(M)$ to $W^{e-1,q}(T^{*}M)$.\\

Several other interesting applications of Theorem \ref{thmjan2} can be found in \cite{holstbehzadan2017c}.

%%%%%%%%%%%%%%%%%%%%%%%%%%%%%%%%%%%%%%%%%%%%%%%%%%%%%%%%%%%%%%%%%%%%%%%%%%%%%%
\section*{Acknowledgments}
   \label{sec:ack}

MH was supported in part by NSF Awards~1262982, 1318480, and 1620366.
AB was supported by NSF Award~1262982.

\bibliographystyle{abbrv}
\bibliography{refssob}

\begin{thebibliography}{10}

\bibitem{Adams75}
R.~A. Adams.
\newblock {\em Sobolev Spaces}.
\newblock Academic Press, New York, 1975.

\bibitem{32}
R.~A. Adams and J.~J.~F. Fournier.
\newblock {\em Sobolev Spaces}.
\newblock New York: Academic Press, second edition, 2003.

\bibitem{Apostol74}
T.~Apostol.
\newblock {\em Mathematical Analysis}.
\newblock Pearson, 1974.
\newblock Second edition.

\bibitem{Aubin1998}
T.~Aubin.
\newblock {\em Some Nonlinear Problems in {R}iemannian Geometry}.
\newblock Springer, 1998.

\bibitem{Bastos2014}
M.~A. Bastos, A.~Lebre, S.~Samko, and I.~M. Spitkovsky.
\newblock {\em Operator Theory, Operator Algebras and Applications}.
\newblock Birkh{\"a}user, 2014.

\bibitem{holstbehzadan2015b}
A.~Behzadan and M.~Holst.
\newblock Multiplication in {S}obolev spaces, revisited.
\newblock Available as arXiv:1512.07379 [math.AP], 2015.

\bibitem{holstbehzadan2017c}
A.~Behzadan and M.~Holst.
\newblock On certain geometric operators between {S}obolev spaces of sections
  of tensor bundles on compact manifolds equipped with rough metrics.
\newblock 2018.

\bibitem{holstbehzadan2018c}
A.~Behzadan and M.~Holst.
\newblock Some remarks on the space of locally {S}obolev-{S}lobodeckij
  functions.
\newblock Available as arXiv:1806.02188 [math.AP], 2018.

\bibitem{holstbehzadan2018d}
A.~Behzadan and M.~Holst.
\newblock Some remarks on {$W^{s,p}$} interior elliptic regularity estimates.
\newblock 2018.

\bibitem{33}
P.~K. Bhattacharyya.
\newblock {\em Distributions: {G}eneralized {f}unctions with {A}pplications in
  {S}obolev {S}paces}.
\newblock de Gruyter, 2012.

\bibitem{38}
H.~Brezis and P.~Mironescu.
\newblock {G}agliardo-{N}irenberg, composition and products in fractional
  {S}obolev spaces.
\newblock {\em Journal of Evolution Equations}, 1(4):387--404, 2001.

\bibitem{debnath2005}
L.~Debnath and P.~Mikusinski.
\newblock {\em Introduction to Hilbert Spaces with Applications}.
\newblock Academic Press, 2005.
\newblock Third edition.

\bibitem{Demengel2012}
F.~Demengel and G.~Demengel.
\newblock {\em Function {S}paces for the {T}heory of {E}lliptic {P}artial
  {D}ifferential {E}quations}.
\newblock Springer, 2012.

\bibitem{13}
B.~Driver.
\newblock Analysis tools with applications.
\newblock 2003.

\bibitem{Eichhorn2007}
J.~Eichhorn.
\newblock {\em Global Analysis on Open Manifolds}.
\newblock Nova Science Publishers, 2007.

\bibitem{Evans2010}
L.~Evans.
\newblock {\em Partial Differential Equations}.
\newblock American Mathematical Society, 2010.
\newblock Second edition.

\bibitem{Folland07}
G.~Folland.
\newblock {\em Real Analysis: Modern Techniques and Their Applications}.
\newblock Wiley, 2007.
\newblock Second edition.

\bibitem{Gavrilov2008}
A.~V. Gavrilov.
\newblock Higher covariant derivatives.
\newblock {\em Siberian Mathematical Journal}, 49(6):997--1007, 2008.

\bibitem{Gris85}
P.~Grisvard.
\newblock {\em Elliptic Problems in Nonsmooth Domains}.
\newblock Pitman Publishing, Marshfield, MA, 1985.

\bibitem{grosse1}
N.~Grosse and C.~Schneider.
\newblock {S}obolev spaces on {R}iemannian manifolds with bounded geometry:
  general coordinates and traces.
\newblock {\em Mathematische Nachrichten}, 286:1586--1613, 2013.

\bibitem{Grosser2001}
M.~Grosser, M.~Kunzinger, M.~Oberguggenberger, and R.~Steinbauer.
\newblock {\em Geometric Theory of Generalized Functions with Applications to
  General Relativity}.
\newblock Springer, 2001.

\bibitem{9}
G.~Grubb.
\newblock {\em Distributions and Operators}.
\newblock Springer-Verlag, New York, NY, 2009.

\bibitem{Hebey96}
E.~Hebey.
\newblock {\em Sobolev spaces on {R}iemannian manifolds}, volume 1635 of {\em
  Lecture notes in mathematics}.
\newblock Springer, Berlin, New York, 1996.

\bibitem{Hebey2000}
E.~Hebey.
\newblock {\em Nonlinear Analysis on Manifolds: {S}obolev Spaces and
  Inequalities}.
\newblock American Mathematical Society, 2000.

\bibitem{Inc2013}
H.~Inci, T.~Kappeler, and P.~Topalov.
\newblock {\em On the Regularity of the Composition of Diffeomorphisms}, volume
  226.
\newblock Memoirs of the American Mathematical Society, 2013.

\bibitem{Moore2009}
{J}ohn~{D}ouglas Moore.
\newblock Lectures on differential geometry.
\newblock 2009.

\bibitem{Lee2}
J.~M. Lee.
\newblock {\em {R}iemannian {M}anifolds: {A}n {I}ntroduction to {C}urvature}.
\newblock Springer, 1997.

\bibitem{Lee1}
J.~M. Lee.
\newblock {\em Introduction to smooth manifolds}.
\newblock Springer, 2002.

\bibitem{Lee3}
J.~M. Lee.
\newblock {\em Introduction to {S}mooth {M}anifolds}.
\newblock Springer, 2012.
\newblock Second edition.

\bibitem{12}
E.~D. Nezza, G.~Palatucci, and E.~Valdinoci.
\newblock Hitchhiker's guide to the fractional {S}obolev spaces.
\newblock {\em Bulletin des Sciences Mathématiques}, 136(5):521--573, 2012.

\bibitem{Palais65}
R.~Palais.
\newblock {\em Seminar on the {A}tiyah-{S}inger {I}ndex {T}heorem}.
\newblock Princeton University Press, Princeton, 1965.

\bibitem{18}
M.~Renardy and R.~Rogers.
\newblock {\em An Introduction to Partial Differential Equations}.
\newblock Springer-Verlag, Berlin Heildelberg New York, second edition, 2004.

\bibitem{Reus1}
M.~D. Reus.
\newblock An introduction to functional spaces.
\newblock Master's thesis, Utrecht University, 2011.

\bibitem{Rudi73}
W.~Rudin.
\newblock {\em Functional Analysis}.
\newblock McGraw-Hill, New York, NY, 1973.

\bibitem{37}
T.~Runst and W.~Sickel.
\newblock {\em {S}obolev Spaces of Fractional Order, {N}emytskij Operators, and
  Nonlinear Partial Differential Equations}.
\newblock Walter de Gruyter, 1996.

\bibitem{Taubes2011}
C.~Taubes.
\newblock {\em Differential Geometry: Bundles, Connections, Metrics and
  Curvature}.
\newblock Oxford University Press, 2011.

\bibitem{Treves1}
F.~Treves.
\newblock {\em Topological {V}ector {S}paces, {D}istributions and {K}ernels}.
\newblock ACADEMIC PRESS, New York, London, 1967.

\bibitem{36}
H.~Triebel.
\newblock {\em Interpolation {T}heory, {F}unction {S}paces, {D}ifferential
  {O}perators}.
\newblock North-Holland Publishing Company, 1977.

\bibitem{Trie83}
H.~Triebel.
\newblock {\em Theory of {F}unction {S}paces}, volume~78 of {\em Monographs in
  Mathematics}.
\newblock Birkh\"auser Verlag, Basel, 1983.

\bibitem{Trie92}
H.~Triebel.
\newblock {\em Theory of {F}unction {S}paces. {II}}, volume~84 of {\em
  Monographs in Mathematics}.
\newblock Birkh\"auser Verlag, Basel, 1992.

\bibitem{Trie2002}
H.~Triebel.
\newblock Function spaces in {L}ipschitz domains and on {L}ipschitz manifolds.
  {C}haracteristic functions as pointwise multipliers.
\newblock {\em Revista Matematica Complutense}, 15(2):475--524, 2002.

\bibitem{loring2011}
L.~Tu.
\newblock {\em An {I}ntroduction to {M}anifolds}.
\newblock Springer, 2011.
\newblock Second edition.

\bibitem{Wal2004}
G.~Walschap.
\newblock {\em Metric {S}tructures in {D}ifferential {G}eometry}.
\newblock Springer, 2004.

\end{thebibliography}
\end{document}